\documentclass[reqno]{amsart}
\usepackage{amssymb,latexsym,amsmath,amsthm,enumerate,amsbsy}
\usepackage[mathscr]{eucal}
\usepackage{framed,color,graphicx}
\usepackage{mathrsfs}
\usepackage[all]{xy}
\usepackage{tikz}
\usepackage{cite}

\usepackage{longtable}
\usepackage{ltxtable}
\usepackage{subcaption}
\usepackage{graphicx}
\usepackage{subfloat}
\usepackage{subfig}

\usepackage{makecell}

\usepackage{subcaption}

\makeatletter
\@namedef{subjclassname@2020}{\textup{2020} Mathematics Subject Classification}
\makeatother

\usetikzlibrary{positioning,decorations.pathreplacing,patterns,decorations.pathmorphing}
\tikzset{%
element/.style={draw, shape=circle, fill=white, inner sep=1.4pt}
}

\DeclareSymbolFont{bbold}{U}{bbold}{m}{n}
\DeclareSymbolFontAlphabet{\mathbbold}{bbold}

\theoremstyle{plain}
\newtheorem{thm}{Theorem}[section]
\newtheorem{lem}[thm]{Lemma}
\newtheorem{cor}[thm]{Corollary}
\newtheorem{pro}[thm]{Proposition}

\theoremstyle{definition}

\newtheorem{remark}[thm]{Remark}

\renewcommand{\ge}{\geqslant}
\renewcommand{\le}{\leqslant}

\newcommand{\bp}{\mathbf{p}}
\newcommand{\bq}{\mathbf{q}}

\newcommand{\bu}{\mathbf{u}}

\newcommand{\bw}{\mathbf{w}}

\begin{document}

\title[The finite basis problem for ai-semirings of order four]
{The finite basis problem for additively idempotent semirings of order four, IV}

\author[Miaomiao Ren et al.]{Miaomiao Ren}
\address{School of Mathematics, Northwest University, Xi'an, 710127, Shaanxi, P.R. China}
\email{miaomiaoren@yeah.net}

\author[]{Mengya Yue}
\address{School of Mathematics, Northwest University, Xi'an, 710127, Shaanxi, P.R. China}
\email{myayue@yeah.net}

\author[]{Mengyu Yuan}
\address{School of Mathematics, Northwest University, Xi'an, 710127, Shaanxi, P.R. China}
\email{yuanmy1005@163.com}

\author[]{Simin Lyu}
\address{School of Mathematics, Northwest University, Xi'an, 710127, Shaanxi, P.R. China}
\email{siminlyu@yeah.net}

\author[]{Chenyu Yang}
\address{School of Mathematics, Northwest University, Xi'an, 710127, Shaanxi, P.R. China}
\email{chenyuyang0421@yeah.net}

\author[]{Ting Yu}
\address{School of Mathematics, Northwest University, Xi'an, 710127, Shaanxi, P.R. China}
\email{yuting20020116@163.com}

\subjclass[2020]{16Y60, 03C05, 08B05, 08B26}
\keywords{additively idempotent semiring, variety, identity, finitely based, nonfinitely based.}

\begin{abstract}
This paper is the fourth in a series devoted to the finite basis problem for $4$-element additively idempotent semirings.
These algebras are divided into five types according to their additive reducts;
we study the largest of these types, namely those whose additive reducts are chains.
Up to isomorphism, there are $386$ such algebras, denoted by $S_{(4, k)}$, $481 \leq k \leq 866$.
We show that all but three of them, namely $S_{(4,545)}$, $S_{(4,634)}$, and $S_{(4,710)}$, are finitely based.
This completes the classification, with respect to the finite basis property,
of all $4$-element additively idempotent semirings with chain additive reducts.
\end{abstract}

\maketitle

\section{Introduction}
An \emph{additively idempotent semiring} (ai-semiring for short)
is an algebra $(S, +, \cdot)$ with two binary operations $+$ and $\cdot$ satisfying the following axioms:
\begin{itemize}
\item The additive reduct $(S, +)$ is a commutative idempotent semigroup;

\item The multiplicative reduct $(S, \cdot)$ is a semigroup;

\item The distributive laws hold:
\[
x(y+z) \approx xy+xz, \quad (x+y)z \approx xz+yz.
\]
\end{itemize}

Let $S$ be an ai-semiring. The additive reduct of $S$ is a semilattice,
and the order $\leq$ on $S$ defined by
\[
a \leq b\;\Leftrightarrow\; a+b=b
\]
is compatible with both addition and multiplication.
Consequently, an ai-semiring is often called a \emph{semilattice-ordered semigroup}.
Whenever an order on an ai-semiring is mentioned, it refers to this order.

A \emph{variety} of ai-semirings is a class of ai-semirings closed under
taking subalgebras, homomorphic images, and arbitrary direct products.
Birkhoff's celebrated theorem tells us that a class of ai-semirings is a variety
if and only if it is an \emph{equational class};
that is, the class of all ai-semirings satisfying a certain set of identities.
Let $\mathcal{V}$ be a variety of ai-semirings.
If $\Sigma$ is a set of identities that defines $\mathcal{V}$, then $\Sigma$ is called an \emph{equational basis} of $\mathcal{V}$.
The variety $\mathcal{V}$ is \emph{finitely based} if it admits a finite equational basis;
otherwise, it is \emph{nonfinitely based}.

For an ai-semiring $S$, let $\mathsf{V}(S)$ denote the variety generated by $S$;
that is, the smallest variety containing $S$.
Equivalently, $\mathsf{V}(S)$ consists of all homomorphic images of subalgebras of direct products of copies of $S$
(or simply, the class of all ai-semirings that satisfy every identity that holds in $S$).
Then $S$ and $\mathsf{V}(S)$ satisfy precisely the same identities.
If $\Sigma$ is an equational basis of $\mathsf{V}(S)$, we also say that $\Sigma$ is an equational basis of $S$.
The ai-semiring $S$ is said to be finitely based (resp., nonfinitely based) if $\mathsf{V}(S)$ is finitely based (resp., nonfinitely based).

Two ai-semirings are \emph{equationally equivalent} if they generate the same variety;
that is, if they satisfy precisely the same identities.
Therefore, if two ai-semirings are equationally equivalent, then they have the same finite basis property.
We shall also use the following elementary fact: if two ai-semirings have the same additive reduct and dual multiplications,
then they also have the same finite basis property.

The \emph{finite basis problem} for a class of ai-semirings,
one of the central problems in universal algebra,
concerns the classification of its members according to whether they are finitely based.
Over the past two decades, this problem has been intensively studied and considerable progress has been made (see~\cite{jrz, rjzl}).
For more information on the finite basis problem for other related algebras such as semigroups and monoids,
we refer the reader to the monograph~\cite{lee}.

The present paper is the fourth in a series devoted to the finite basis problem for $4$-element ai-semirings,
following \cite{rlzc, yrzs, rlyc}.
We focus on those whose additive reducts are chains.
For further background and motivation, we refer the reader to \cite{rlzc, yrzs, rlyc}.

\begin{table}[htbp]
\caption{The 2-element ai-semirings}\label{2}
\begin{tabular}{cccccc}
\hline
Semiring&   $+$ &  $\cdot$ & Semiring &   $+$ &  $\cdot$\\
\Xhline{1.05pt}
$L_2$&
\begin{tabular}{cc}
                    0 & 1  \\
                    1 & 1  \\
\end{tabular}&
\begin{tabular}{cc}
                    0 & 0  \\
                    1 & 1  \\
\end{tabular}
&
$R_2$&   \begin{tabular}{cc}
                    0 & 1  \\
                    1 & 1  \\
\end{tabular}&
\begin{tabular}{cc}
                    0 & 1  \\
                    0 & 1  \\
\end{tabular}
\\
\hline

$M_2$&
\begin{tabular}{cc}
                    0 & 1  \\
                    1 & 1  \\
\end{tabular}&
\begin{tabular}{cc}
                    0 & 1  \\
                    1 & 1  \\
\end{tabular}
&
$D_2$&   \begin{tabular}{cc}
                    0 & 1  \\
                    1 & 1  \\
\end{tabular}&
\begin{tabular}{cc}
                    0 & 0  \\
                    0 & 1  \\
\end{tabular}
\\
\hline
$N_2$&
\begin{tabular}{cc}
                    0 & 1  \\
                    1 & 1  \\
\end{tabular}&
\begin{tabular}{cc}
                    0 & 0  \\
                    0 & 0  \\
\end{tabular}
&
$T_2$&   \begin{tabular}{cc}
                    0 & 1  \\
                    1 & 1  \\
\end{tabular}&
\begin{tabular}{cc}
                    1 & 1  \\
                    1 & 1  \\
\end{tabular}
\\
\hline
\end{tabular}
\end{table}

There are, up to isomorphism, precisely $6$ ai-semirings of order two,
denoted by $L_2$, $R_2$, $M_2$, $D_2$, $N_2$, and $T_2$.
We assume that the carrier set of each of these semirings is $\{0, 1\}$.
Their Cayley tables for addition and multiplication are listed in Table~\ref{2}.
The solution of the equational problem for these algebras will be provided in Lemma~\ref{nlemma1}.
For ai-semirings of order three, there are $61$ isomorphism types,
indexed as $S_i$ for $1 \leq i \leq 61$.
A comprehensive description of these algebras is available in \cite{zrc}.
Rather than listing all $61$ types here,
we will provide the relevant information for a $3$-element ai-semiring only when it is actually used in the sequel.

\setlength{\unitlength}{0.6cm}
\begin{figure}[ht]
\centering
\begin{subfigure}[b]{0.32\textwidth}
\centering
\begin{picture}(7,3)
\put(3.5,2.2){\line(0,-1){1}}
\put(3.5,2.2){\line(2,-1){2}}
\put(3.5,2.2){\line(-2,-1){2}}

\multiput(3.5,2.2)(0,-1){2}{\circle*{0.1}}
\multiput(3.5,2.2)(2,-1){2}{\circle*{0.1}}
\multiput(3.5,2.2)(-2,-1){2}{\circle*{0.1}}
\end{picture}
\caption{Type I} \label{Type I}
\end{subfigure}
\hfill
\begin{subfigure}[b]{0.32\textwidth}
\centering
\begin{picture}(7,3)
\put(3.5,2.2){\line(1,-1){1}}
\put(3.5,2.2){\line(-1,-1){1}}

\put(3.5,0.2){\line(1,1){1}}
\put(3.5,0.2){\line(-1,1){1}}

\multiput(3.5,2.2)(1,-1){2}{\circle*{0.1}}
\multiput(3.5,2.2)(-1,-1){2}{\circle*{0.1}}
\multiput(3.5,0.2)(1,1){2}{\circle*{0.1}}
\end{picture}
\caption{Type II} \label{Type II}
\end{subfigure}
\hfill
\begin{subfigure}[b]{0.32\textwidth}
\centering
\begin{picture}(7,3)
\put(3.5,2.2){\line(0,-1){1}}
\put(3.5,1.2){\line(1,-1){1}}
\put(3.5,1.2){\line(-1,-1){1}}

\multiput(3.5,2.2)(0,-1){2}{\circle*{0.1}}
\multiput(3.5,1.2)(1,-1){2}{\circle*{0.1}}
\multiput(3.5,1.2)(-1,-1){2}{\circle*{0.1}}
\end{picture}
\caption{Type III} \label{Type III}
\end{subfigure}

\begin{subfigure}[b]{0.495\textwidth}
\centering
\begin{picture}(6,3)
\put(3.3,2.2){\line(1,-1){1}}
\put(3.3,2.2){\line(-1,-1){1}}
\put(2.3,1.2){\line(0,-1){1}}

\multiput(3.3,2.2)(1,-1){2}{\circle*{0.1}}
\multiput(2.3,1.2)(0,-1){2}{\circle*{0.1}}
\end{picture}
\caption{Type IV} \label{Type IV}
\end{subfigure}
\hfill
\begin{subfigure}[b]{0.495\textwidth}
\centering
\begin{picture}(6,3)
\put(3.3,2.2){\line(0,-1){2}}
\multiput(3.3,2.2)(0,-1){1}{\circle*{0.1}}
\multiput(3.3,0.2)(0,-1){1}{\circle*{0.1}}
\multiput(3.3,1.54)(0,-1){1}{\circle*{0.1}}
\multiput(3.3,0.86)(0,-1){1}{\circle*{0.1}}
\end{picture}
\caption{Type V} \label{Type V}
\end{subfigure}
\caption{The additive orders of $4$-element ai-semirings}\label{figure1}
\end{figure}

Up to isomorphism, there are exactly $866$ ai-semirings of order four,
denoted by $S_{(4, k)}$, $1 \leq k \leq 866$.
These algebras are classified into five distinct types based on their additive orders, as illustrated in Figure~\ref{figure1}.
The finite basis problem for algebras in the first three types,
namely $\{S_{(4, k)} \mid 1 \leq k \leq 58\}$, $\{S_{(4, k)} \mid 388 \leq k \leq 480\}$, and $\{S_{(4, k)} \mid 276 \leq k \leq 387\}$,
has been resolved by Gao et al.~\cite{gmrz}, Ren et al.~\cite{rlyc, rlzc}, Shaprynski\v{\i}~\cite{shap23},
Wu et al.~\cite{wrz}, and Yue et al.~\cite{yrzs}.

The present paper addresses the finite basis problem for the fifth and largest type,
namely $\{S_{(4,k)} \mid 481 \leq k \leq 866\}$,
which consists of $386$ algebras whose additive reducts are chains.
This type accounts for nearly half of all $4$-element ai-semirings.
Because of its size, this type was initially regarded as the most difficult one, and was expected to be the last to be settled;
it was originally intended to appear in a fifth paper of this series.
However, with the breakthroughs in \cite{yrg}, the difficulties that previously impeded progress have now been overcome,
allowing us to complete this type in the present work.

\setlength{\unitlength}{0.9cm}
\begin{figure}[htbp]
\begin{picture}(35, 3.25)
\put(6.5,2){\line(0,1){1}}
\put(6.5,1){\line(0,1){1}}
\put(6.5,0){\line(0,1){1}}

\put(6.5,3){\circle*{0.1}}
\put(6.5,2){\circle*{0.1}}
\put(6.5,1){\circle*{0.1}}
\put(6.5,0){\circle*{0.1}}

\put(7.0,3){\makebox(0,0){$1$}}
\put(7.0,2){\makebox(0,0){$2$}}
\put(7.0,1){\makebox(0,0){$3$}}
\put(7.0,0){\makebox(0,0){$4$}}
\end{picture}
\caption{The additive order of $S_{(4, k)}$, $481\leq k \leq 866$}\label{figure01}
\end{figure}

We assume that the carrier set of each of these semirings is $\{1, 2, 3, 4\}$.
Their Cayley tables for addition are determined by Figure~\ref{figure01},
and their Cayley tables for multiplication are listed in Table~\ref{tb1}.
Thus all of these algebras have the same additive reduct:
$1$ is the additive maximum element and $4$ is the additive minimum element.

The main result of this paper is the following, whose proof will be developed in the subsequent sections.
\begin{thm}\label{maintheorem}
The only nonfinitely based ai-semirings in $\{S_{(4, k)} \mid 481 \leq k \leq 866\}$ are
$S_{(4, 545)}$, $S_{(4, 634)}$, and $S_{(4, 710)}$.
\end{thm}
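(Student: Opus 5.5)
The proof will be a case-by-case classification of the $386$ algebras, organized so that as few cases as possible need individual treatment. First we reduce the list. Two semirings with the same additive reduct and dual multiplications have the same finite basis property, so we consider the algebras only up to this anti-isomorphism. Equationally equivalent algebras also have the same property. For each remaining $S_{(4,k)}$ we then try to show that $\mathsf{V}(S_{(4,k)})$ is generated by algebras whose finite basis property is already known, or by a direct product of such algebras: the $2$-element semirings of Lemma~\ref{nlemma1}, the $3$-element semirings $S_i$ from \cite{zrc}, and the $4$-element semirings of types I--IV settled in \cite{gmrz, rlyc, rlzc, shap23, wrz, yrzs}. In practice we check by computer which subalgebras and homomorphic images of $S_{(4,k)}$ lie in its variety. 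When $S_{(4,k)}$ is subdirectly embeddable in a product of its proper quotients and subalgebras, $\mathsf{V}(S_{(4,k)})$ is the join of finitely many known varieties. Finite basedness then follows whenever we can compute an explicit finite basis for that join. A standard tool here is to describe, via the known solutions of the word problem, which identities $\bu\approx\bw$ each generator satisfies; for ai-semirings this means comparing sets of words.

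The bulk of the algebras will be handled in families rather than individually. For each family we give an explicit finite set of identities $\Sigma$ and prove that $\Sigma$ derives every identity of $S_{(4,k)}$. The argument goes through the standard reduction: every ai-semiring identity is equivalent to a set of identities of the form $\bu_1+\cdots+\bu_n\approx \bu_1+\cdots+\bu_n+\bw$. So it suffices to characterize, combinatorially, when a word $\bw$ is absorbed by a sum of words in $S_{(4,k)}$, for instance in terms of content, first or last letters, length, or occurrence counts. We then show that $\Sigma$ proves each such absorption. For many algebras the multiplication is very degenerate, for example nil, left-zero-like, or with $1$ or $4$ absorbing. In those cases the results on flat semirings and the criteria from \cite{yrg} should apply directly. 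I would invoke \cite{yrg} wherever the algebra has a multiplicatively absorbing top or bottom, since that is exactly where the earlier obstacles lay.

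For the three exceptional algebras $S_{(4,545)}$, $S_{(4,634)}$ and $S_{(4,710)}$, I would prove nonfinite basedness by the standard syntactic method. For each $n$ we exhibit an identity in $n$ variables, typically of the shape $x_1\cdots x_n+\cdots\approx\cdots$ built from cyclic or Zimin-type words, that holds in $S$. We then show it is not a consequence of the identities of $S$ in fewer than $n$ variables: we construct an ai-semiring in which all identities of $S$ in fewer than $n$ variables hold but the given identity fails. Alternatively, we check whether these algebras generate the same variety as a known nonfinitely based semiring, such as one of the $3$-element $S_7$-like examples or a flat extension of a nonfinitely based semigroup; if so, the nonfinite basedness transfers by equational equivalence.

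The main obstacle will be the handful of finitely based algebras that are subdirectly irreducible and not covered by \cite{yrg}. For these we must both guess a finite basis and verify completeness by hand through a normal-form argument on sums of words. I would first try to generate candidate bases by computing, with a computer search, the identities of $S$ in up to four variables. I would then prove that every word sum can be rewritten modulo these identities into a canonical form determined by the invariants that separate elements in the free algebra.
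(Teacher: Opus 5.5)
Your outline of the finitely based half follows the paper's method in broad strokes: dual multiplications, equational equivalence, subdirect decompositions into $2$- and $3$-element algebras and earlier $4$-element ones, and explicit bases verified by reducing to inequalities $\bq\preceq\bu$ with $\bq$ a word, using the known identities of the factors. But the theorem is about $386$ concrete tables, so its proof \emph{is} the assignment of each algebra to a specific reduction, and you supply none of it. You also miss the shortcut behind Propositions~\ref{pro411}, \ref{pro413} and~\ref{pro414}. There one only checks that $S_{(4,k)}$ satisfies the defining identities of a variety all of whose subvarieties are known to be finitely based: $x^2\approx x$ by \cite{gpz, pas05}, the basis of the variety generated by all $2$-element ai-semirings by \cite{sr}, and the identities of \cite{rz19}. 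Then $\mathsf{V}(S_{(4,k)})$ never has to be identified.

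The genuine gap is the nonfinitely based half, and it comes with a misplacement of \cite{yrg}. Inside your finitely based programme you propose to settle algebras with a multiplicatively absorbing top or bottom by \cite{yrg} and flat-semiring results. Those are exactly the features of the exceptional algebras. The top $1$ of $S_{(4,545)}$ is a multiplicative zero, and one checks that $S_{(4,634)}\cong S_{53}^0$, whose bottom $4$ is a multiplicative zero. Flat-semiring results are irrelevant here anyway: a flat semiring of order $4$ has three pairwise incomparable nonzero elements, so its additive reduct is not a chain.

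In the paper, finitely based algebras of the form $S^0$ are handled by Lemma~\ref{lem001} plus explicit bases (Propositions~\ref{pro63301}, \ref{pro66801}, \ref{pro62701}, \ref{pro62901}). The paper uses \cite{yrg} in the opposite direction. Its main theorem says $S_{(4,545)}$ and $S_{(4,634)}$ are nonfinitely based, and \cite[Proposition 3.4]{yrg} gives $\mathsf{V}(S_{53},D_2)=\mathsf{V}(S_{(4,545)})$. Since $S_{(4,710)}$ is a subdirect product of $S_{53}$ and $D_2$, this settles $S_{(4,710)}$ (Proposition~\ref{proNFB}).

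Your \emph{standard syntactic method} names no family of identities and no separating algebras. Your fallback of matching with a previously known nonfinitely based semiring is not how these cases are settled in the paper either. $\mathsf{V}(S_{(4,545)})$ is the join of the finitely based varieties $\mathsf{V}(S_{53})$ and $\mathsf{V}(D_2)$, and its nonfinite basis property is the new input from \cite{yrg}. So the difficulty the paper says was overcome only by \cite{yrg} is exactly what your proposal leaves unproved. That difficulty is the nonfinitely based algebras, not the subdirectly irreducible finitely based ones you single out as the main obstacle.
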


The proof of Theorem~\ref{maintheorem} is long and consists of a substantial number of auxiliary results.
To help the reader navigate the technical details, we outline the overall strategy here.
Section~\ref{section-prelim} collects the necessary preliminaries, including the basic definitions and basic facts that will be used throughout the proof.

In Section~\ref{section-known}, we apply results from \cite{gpz, pas05, rz19, sr, zrc}
to solve the finite basis problem for 236 algebras in $\{S_{(4, k)} \mid 481 \leq k \leq 866\}$.
For the remaining 150 algebras, we employ two different approaches.
For some of these algebras, we show that they are equationally equivalent to algebras already considered in \cite{rlzc, yrzs, rlyc};
their finite basis property then follows immediately.
For the rest, we proceed as follows:
we first select a representative subset of these algebras and exhibit an explicit finite equational basis for each representative;
we then show that each of the remaining algebras is either equationally equivalent to one of these representatives,
or has the dual multiplication of one of them.

The general strategy for establishing a finite equational basis is as follows:
for each representative algebra, we exhibit a finite set of identities that it satisfies,
and then prove, using standard equational logic, that every identity of this algebra can be derived from this finite set.
This constitutes the main body of the paper and is carried out in Sections~\ref{section-s44}--\ref{section-si}.

Our approach relies on a thorough understanding of the equational problem for ai-semirings of orders two and three,
which provides the necessary technical foundation and information required for the present work.
We thus obtain the complete classification stated in Theorem~\ref{maintheorem}.

\section{Preliminaries}\label{section-prelim}
In this section, we collect the notions, notation, and tools that will be used throughout the paper.
Most of the material is standard and can be found in \cite{yrg}.

Let $X$ be a countably infinite set of variables, and let $X^+$ denote the free semigroup over $X$.
An \textit{ai-semiring term} (or simply a \textit{term}) over $X$ is a finite nonempty set of words in $X^+$.
(In the sequel, terms are denoted by bold lowercase letters $\mathbf{u}, \mathbf{v}, \mathbf{w}, \dots$,
while ordinary lowercase letters $x, y, z, \dots$ represent variables.)
A term is represented as a formal sum of its elements.
Specifically, $\mathbf{u} = \mathbf{u}_1 + \mathbf{u}_2 + \cdots + \mathbf{u}_n$
indicates that $\mathbf{u} = \{ \mathbf{u}_1, \mathbf{u}_2, \dots, \mathbf{u}_n \}$.
The order of the summands in the formal sum is irrelevant,
and multiple occurrences of the same word are collapsed into a single occurrence.
Two terms are equal if and only if their underlying sets coincide.

Let $P_f(X^+)$ denote the collection of all terms over $X$.
This set forms an ai-semiring under the usual term addition and multiplication.
It follows from ~\cite[Theorem 2.5]{kp} that
$P_f(X^+)$ is the free ai-semiring over $X$.
An \textit{ai-semiring substitution} (or just a \textit{substitution})
is defined as a semiring homomorphism from $P_f(X^+)$ to itself.

An \emph{ai-semiring identity} (or simply an \emph{identity})
is a formal expression of the form $\mathbf{u} \approx \mathbf{v}$, where $\mathbf{u}$ and $\mathbf{v}$ are terms.
Let $S$ be an ai-semiring and $\mathbf{u} \approx \mathbf{v}$ an identity.
We say that $S$ \emph{satisfies $\mathbf{u} \approx \mathbf{v}$}, or that $\mathbf{u} \approx \mathbf{v}$ \emph{holds in $S$},
if $\varphi(\mathbf{u}) = \varphi(\mathbf{v})$ for every semiring homomorphism $\varphi: P_f(X^+) \to S$.

We denote by $\mathbf{u} \preceq \mathbf{v}$ (or equivalently $\mathbf{v} \succeq \mathbf{u}$)
the identity $\mathbf{v}\approx \mathbf{v} + \mathbf{u}$,
which we refer to as an \textit{ai-semiring inequality} (or simply an \textit{inequality}).
It is routine to verify that an ai-semiring $S$ satisfies the inequality $\mathbf{u} \preceq \mathbf{v}$
if and only if for every semiring homomorphism $\varphi: P_f(X^+) \to S$,
we have $\varphi(\mathbf{u}) \leq \varphi(\mathbf{v})$.
Consequently, $S$ satisfies an identity $\mathbf{u} \approx \mathbf{v}$ precisely when it satisfies
both inequalities $\mathbf{u} \preceq \mathbf{v}$ and $\mathbf{v} \preceq \mathbf{u}$.
Therefore, $\mathbf{u} \approx \mathbf{v}$ is equivalent to
the inequalities $\mathbf{u} \preceq \mathbf{v}$ and $\mathbf{v} \preceq \mathbf{u}$.
For this reason, when dealing with a set of identities,
we may always assume without loss of generality that it consists of inequalities.
The following discussion shows that it suffices to consider inequalities of a special form.

Now let $\Sigma$ be a set of identities, and let $\mathbf{u} \approx \mathbf{v}$ be an identity such that
\[
\mathbf{u} = \mathbf{u}_1 + \cdots + \mathbf{u}_k, \quad
\mathbf{v} = \mathbf{v}_1 + \cdots + \mathbf{v}_\ell,
\]
where $\mathbf{u}_i, \mathbf{v}_j \in X^+$ for $1 \leq i \leq k$ and $1 \leq j \leq \ell$.
One readily checks that the ai-semiring variety defined by $\mathbf{u} \approx \mathbf{v}$
coincides with the ai-semiring variety defined by the inequalities
\[
\mathbf{u}_i \preceq \mathbf{v}, \quad \mathbf{v}_j \preceq \mathbf{u} \quad (1 \leq i \leq k, \, 1 \leq j \leq \ell).
\]
Consequently, to prove that $\mathbf{u} \approx \mathbf{v}$ is derivable from $\Sigma$,
it suffices to show that for every $i$ and $j$,
the inequalities $\mathbf{u}_i \preceq \mathbf{v}, \, \mathbf{v}_j \preceq \mathbf{u}$ can be derived from $\Sigma$.
In view of this, we always restrict our attention to the inequalities of the form
$\mathbf{q} \preceq \mathbf{u}$, where $\mathbf{q}$ is a word and $\mathbf{u}$ is a term.

We now fix some notation that will be used repeatedly.
Let $\bw$ be a word in $X^+$ and $x$ a variable in $X$. Then
\begin{itemize}
\item $h(\bw)$ denotes the head of $\bw$; that is, the first variable occurring in $\bw$;

\item $t(\bw)$ denotes the tail of $\bw$; that is, the last variable occurring in $\bw$;

\item $c(\bw)$ denotes the content of $\bw$; that is, the set of variables occurring in $\bw$;

\item $\ell(\bw)$ denotes the length of $\bw$; that is, the number of variables occurring in $\bw$ counting multiplicities;

\item $m(x, \bw)$ denotes the number of occurrences of $x$ in $\bw$;

\item $S_2(\bw)$ denotes the set of all subwords of $\bw$ of length $2$;

\item $s(\bw)$ denotes the suffix of $\bw$ obtained by deleting its head; that is, $\bw = h(\bw)s(\bw)$;

\item $p(\bw)$ denotes the prefix of $\bw$ obtained by deleting its tail; that is, $\bw = p(\bw)t(\bw)$.
\end{itemize}
For example, if $\bw=x_1x_3^2x_2x_1x_4^5x_2x_6x_5$, then
\[
h(\bw)=x_1,\; t(\bw)=x_5,\; c(\bw)=\{x_i \mid 1 \leq i \leq 6\},\; \ell(\bw)=11,\; m(x_2, \bw)=2,
\]
\[
S_2(\bw)=\{x_1x_3,\; x_3^2,\; x_3x_2,\; x_2x_1,\; x_1x_4,\; x_4^2,\; x_4x_2,\; x_2x_6,\; x_6x_5\},
\]
\[
s(\bw)=x_3^2x_2x_1x_4^5x_2x_6x_5,\quad p(\bw)=x_1x_3^2x_2x_1x_4^5x_2x_6.
\]

Let $\bu$ be a term such that $\bu=\bu_1+\bu_2+\cdots+\bu_n$,
where $\bu_i \in X^+$, $1 \leq i \leq n$.
Let $\bq$ be a nonempty word, and let $k\geq 1$ be an integer. Then
\begin{itemize}

\item $c(\bu)$ denotes the set $\bigcup_{1\leq i \leq n}c(\bu_i)$;

\item $p(\bu)$ denotes the set $\{p(\bu_i) \mid 1 \leq i \leq n\}$;

\item $L_{\geq k}(\bu)$ denotes the set $\{\bu_i \in \bu \mid \ell(\bu_i)\geq k\}$;

\item $L_{\leq k}(\bu)$ denotes the set $\{\bu_i \in \bu \mid \ell(\bu_i)\leq k\}$;

\item $L_k(\bu)$ denotes the set $\{\bu_i \in \bu \mid \ell(\bu_i)= k\}$;

\item $H_{\bq}(\bu)$ denotes the set $\{\bu_i \in \bu \mid h(\bu_i)=h(\bq)\}$;

\item $T_{\bq}(\bu)$ denotes the set $\{\bu_i \in \bu \mid t(\bu_i)=t(\bq)\}$;

\item $D_{\bq}(\bu)$ denotes the set $\{\bu_i \in \bu \mid c(\bu_i)\subseteq c(\bq)\}$;

\item $S_2(\bu)$ denotes the set $\bigcup_{1\leq i \leq n}S_2(\bu_i)$;

\item $M_{k}(\bq)$ denotes the set $\{x \in c(\bq) \mid m(x, \bq)=k\}$.
\end{itemize}

The following result, which can be found in \cite[Lemma 1.1]{sr}, will be repeatedly used without explicit reference.
\begin{lem}\label{nlemma1}
Let $\bq\preceq \bu$ be a nontrivial inequality such that
$\bu=\bu_1+\cdots+\bu_n$, where $\bu_i, \bq\in X^+$, $1\leq i \leq n$. Then
\begin{itemize}
\item[$(1)$] $\bq\preceq \bu$ holds in $L_2$ if and only if $H_{\bq}(\bu)\neq \emptyset$.

\item[$(2)$] $\bq\preceq \bu$ holds in $R_2$ if and only if $T_{\bq}(\bu)\neq \emptyset$.

\item[$(3)$] $\bq\preceq \bu$ holds in $M_2$ if and only if $c(\bq) \subseteq c(\bu)$.

\item[$(4)$] $\bq\preceq \bu$ holds in $D_2$ if and only if $D_{\bq}(\bu)\neq \emptyset$.

\item[$(5)$] $\bq\preceq \bu$ holds in $N_2$ if and only if $\ell(\bq)\geq 2$.

\item[$(6)$] $\bq\preceq \bu$ holds in $T_2$ if and only if $L_{\geq 2}(\bu)\neq \emptyset$.
\end{itemize}
\end{lem}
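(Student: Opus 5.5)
The plan is to verify each of the six items directly from the Cayley tables in Table~\ref{2}. In every case I first compute, for an arbitrary homomorphism $\varphi\colon P_f(X^+)\to S$ with $S$ one of the six $2$-element ai-semirings, the value of $\varphi$ on a single word. Since all six have additive reduct the chain $0<1$, we have $\varphi(\bu)=1$ if and only if $\varphi(\bu_i)=1$ for some $i$. The inequality $\bq\preceq\bu$ holds exactly when every assignment with $\varphi(\bq)=1$ also gives $\varphi(\bu_i)=1$ for some $i$. So each item reduces to describing the set of assignments making a word equal to $1$.

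For the individual semirings:
\begin{itemize}
\item In $L_2$ we have $xy=x$, so $\varphi(\bw)=\varphi(h(\bw))$.
\item In $R_2$ we have $xy=y$, so $\varphi(\bw)=\varphi(t(\bw))$.
\item In $M_2$ the multiplication is join, so $\varphi(\bw)=1$ if and only if some variable of $c(\bw)$ is sent to $1$.
\item In $D_2$ the multiplication is meet, so $\varphi(\bw)=1$ if and only if all variables of $c(\bw)$ are sent to $1$.
\item In $N_2$ all products are $0$, so $\varphi(\bw)=0$ when $\ell(\bw)\ge2$, and $\varphi(\bw)=\varphi(\bw)$ when $\bw$ is a single variable.
\item In $T_2$ all products are $1$, so $\varphi(\bw)=1$ when $\ell(\bw)\ge2$.
\end{itemize}

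Each criterion then follows by a standard argument. Sufficiency is immediate from these formulas. For necessity, take the assignment sending exactly the "witness" variables of $\bq$ to $1$ and everything else to $0$:
\begin{itemize}
\item for $L_2$, send only $h(\bq)$ to $1$;
\item for $R_2$, send only $t(\bq)$ to $1$;
\item for $M_2$, send a variable $x\in c(\bq)\setminus c(\bu)$ to $1$ and the rest to $0$;
\item for $D_2$, send $c(\bq)$ to $1$ and the rest to $0$, so that a word of $\bu$ takes value $1$ only if its content lies in $c(\bq)$.
\end{itemize}
Such an assignment makes $\varphi(\bq)=1$ while every $\varphi(\bu_i)=0$ whenever the stated condition fails.

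The only slightly delicate points are $N_2$ and $T_2$, where nontriviality is used. For $N_2$: if $\ell(\bq)\ge2$ then $\varphi(\bq)=0$ and the inequality holds. If $\bq=x$ is a variable, nontriviality gives $x\notin\bu$. Sending $x\mapsto1$ and all other variables to $0$ then makes every $\bu_i$ equal to $0$, since each $\bu_i$ either has length at least $2$ or is a different variable.

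For $T_2$: if $L_{\ge2}(\bu)\neq\emptyset$ then $\varphi(\bu)=1$. Otherwise $\bu$ is a sum of variables. If $\ell(\bq)\ge2$, the assignment sending everything to $0$ gives $\varphi(\bq)=1>0=\varphi(\bu)$. If $\bq=x$, nontriviality again gives $x\notin\bu$, and the assignment $x\mapsto1$, others $\mapsto0$ refutes the inequality.

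I expect this use of nontriviality in the length-one cases to be the main point needing care. Everything else is routine.
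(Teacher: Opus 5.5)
Your proof is correct and complete: each item reduces, via the chain order $0<1$, to describing which assignments send a single word to $1$, and your witnessing assignments settle necessity. You invoke nontriviality exactly where it is needed, namely when $\ell(\bq)=1$ for $N_2$ and $T_2$. The paper gives no proof of its own and simply imports the lemma from \cite[Lemma 1.1]{sr}, so your direct verification from Table~\ref{2} supplies the routine argument behind that citation. The only blemish is the tautology ``$\varphi(\bw)=\varphi(\bw)$'' in the $N_2$ bullet, which should read $\varphi(\bw)=\varphi(x)$ when $\bw=x$ is a single variable.
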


Let $S$ be an ai-semiring.
One can construct an ai-semiring $S^0$ from $S$ by adjoining a new element $0$.
The operations on $S^0=S \cup \{0\}$ are defined by:
\[
(\forall a\in S\cup \{0\}) \quad a+0=0+a=a,\quad a0=0a=0,
\]
while preserving the original operations on $S$.
Then $S$ is a subsemiring of $S^0$, and
$0$ is both the additive minimum element and the multiplicative zero of $S^0$.
The following result, which is due to Wu et al. \cite[Proposition 1.5]{wrz},
explores the relationship between the equational theories of $S^0$ and $S$.

\begin{lem}\label{lem001}
Let $\bq\preceq \bu$ be an inequality such that
$\bu=\bu_1+\bu_2+\cdots+\bu_n$ with $\bq, \bu_i\in X^+$ for $1\leq i \leq n$.
Then $\bq\preceq \bu$ is satisfied by ${S}^0$ if and only if
$D_\bq(\bu)\not=\emptyset$ and $\bq\preceq D_\bq(\bu)$ is satisfied by $S$.
\end{lem}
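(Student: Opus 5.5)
The statement is Lemma~\ref{lem001}. I would prove the two directions separately, using the description of satisfaction via homomorphisms $\varphi\colon P_f(X^+)\to S^0$. Such a homomorphism is determined by its values on variables, and sends a word $\bw$ to the product of the values of its letters. Since $0$ is a multiplicative zero, $\varphi(\bw)=0$ exactly when some variable of $c(\bw)$ is sent to $0$. Since $0$ is the additive minimum, a sum in $S^0$ equals the sum of its nonzero summands, or $0$ if all summands are $0$.

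\emph{Necessity.} Suppose $S^0$ satisfies $\bq\preceq\bu$.

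First I show $D_\bq(\bu)\neq\emptyset$. Take $\varphi$ sending every variable of $c(\bq)$ to a fixed $a\in S$ and every other variable to $0$. Then $\varphi(\bq)=a^{\ell(\bq)}\in S$ is nonzero. Every $\bu_i$ with $c(\bu_i)\not\subseteq c(\bq)$ is sent to $0$. If $D_\bq(\bu)$ were empty, then $\varphi(\bu)=0<\varphi(\bq)$, which is a contradiction.

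Next I show $S$ satisfies $\bq\preceq D_\bq(\bu)$. Given any $\psi\colon P_f(X^+)\to S$, define $\varphi$ to agree with $\psi$ on $c(\bq)$ and to send all other variables to $0$. Then $\varphi(\bq)=\psi(\bq)$, and $\varphi(\bu)=\psi(D_\bq(\bu))$ because the remaining summands vanish. Since $S$ is a subsemiring of $S^0$ and the order on $S$ is the restriction of that on $S^0$, the inequality $\varphi(\bq)\le\varphi(\bu)$ gives $\psi(\bq)\le\psi(D_\bq(\bu))$ in $S$.

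\emph{Sufficiency.} Suppose $D_\bq(\bu)\neq\emptyset$ and $S$ satisfies $\bq\preceq D_\bq(\bu)$. Let $\varphi\colon P_f(X^+)\to S^0$ be arbitrary.

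If some variable of $c(\bq)$ is sent to $0$, then $\varphi(\bq)=0$, which lies below everything.

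Otherwise all variables of $c(\bq)$ are sent into $S$. Let $\psi$ be the homomorphism into $S$ agreeing with $\varphi$ on $c(\bq)$, with arbitrary values in $S$ elsewhere. Each word of $D_\bq(\bu)$ involves only variables from $c(\bq)$, so $\varphi$ and $\psi$ agree on $\bq$ and on every word of $D_\bq(\bu)$. Hence
\[
\varphi(\bq)=\psi(\bq)\le \psi(D_\bq(\bu))=\varphi(D_\bq(\bu))\le \varphi(\bu).
\]
The last step uses that $D_\bq(\bu)$ is a subsum of $\bu$ and that adding summands only increases a sum in the semilattice order.

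I do not expect a genuine obstacle. The one point needing care is that $S$ is a subsemiring of $S^0$ whose order is the restricted order, so inequalities transfer between $S$ and $S^0$. I would also note that the nonemptiness of $D_\bq(\bu)$ is needed so that $D_\bq(\bu)$ is a legitimate term.
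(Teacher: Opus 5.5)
Your proof is correct and complete. The two test substitutions do the needed work: sending every variable of $c(\bq)$ to a fixed $a\in S$ and all other variables to $0$, and extending a given $\psi$ by $0$ outside $c(\bq)$. So do the fact that a word is sent to $0$ exactly when one of its variables is, and your remark that the order on $S$ is the restriction of the order on $S^0$. The paper gives no proof of this lemma; it simply quotes it from Wu et al.\ (Proposition~1.5 there), and your direct verification is the standard argument behind that result.
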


\section{Applying known criteria to 4-element ai-semirings}\label{section-known}
In this section, we solve the finite basis problem for several $4$-element ai-semirings using known criteria.
For a class $\mathcal{K}$ of ai-semirings, we denote by $\mathsf{V}(\mathcal{K})$ the variety generated by $\mathcal{K}$.

\begin{pro}\label{proNFB}
The ai-semirings $S_{(4, 545)}$, $S_{(4, 634)}$, and $S_{(4, 710)}$ are all nonfinitely based.
\end{pro}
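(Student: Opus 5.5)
Since this section applies known criteria, I would prove Proposition~\ref{proNFB} by reduction to nonfinitely based ai-semirings already in the literature, not by a fresh syntactic argument. The tool I would reach for is the nonfinite basis criterion of Jackson, Ren and Zhao \cite{jrz}, together with the later refinements in \cite{rjzl} and \cite{yrg}. That criterion says, roughly, that an ai-semiring $S$ is nonfinitely based whenever $\mathsf{V}(S)$ contains a certain flat semiring, for instance the flat extension of a suitable group or of a small nilpotent semigroup such as $S_7$ from \cite{zrc}, and $S$ itself lies in a suitable nonfinitely based interval of varieties. For this reason the first step is to read off the multiplication tables of $S_{(4,545)}$, $S_{(4,634)}$ and $S_{(4,710)}$ from Table~\ref{tb1}. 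I would then locate in each one a subsemiring, or a quotient by a congruence, that is isomorphic to one of the known nonfinitely based $3$-element ai-semirings, or to a flat semiring. The natural candidate is the $3$-element semiring $S_7$ of \cite{zrc}, which is known to be nonfinitely based.

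Once such a copy $T$ is found, I would check that $S$ belongs to the variety interval where the criterion of \cite{jrz} (or the version in \cite{yrg}) applies. The usual formulation is: if $T\in\mathsf{V}(S)$ and $S$ satisfies the identities characterising the relevant class (for example, $S$ is flat, or $S\in\mathsf{V}(T, \text{some finitely based companions})$), then $S$ is nonfinitely based. Concretely, I would attempt to prove that $\mathsf{V}(S)=\mathsf{V}(T,B)$, where $B$ is a finitely based algebra such as $L_2$, $R_2$, $M_2$ or $D_2$, or a product of these. I would verify this with Lemma~\ref{nlemma1}, and with Lemma~\ref{lem001} when $S$ has the form $T^0$. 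The reason this suffices is the result in \cite{jrz}: for many nonfinitely based $T$ (including $S_7$), any $S$ with $T\in\mathsf{V}(S)\subseteq\mathsf{V}(T,B_2^1\text{-free part})$ is again nonfinitely based.

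If none of the three algebras splits this way, I would use the fallback of a direct syntactic argument in the style of \cite{jrz}. This means constructing, for each $n$, an identity of the form $x_1x_2\cdots x_n\cdots \preceq \bu_n$ that holds in $S$ but cannot be derived from identities in fewer than $n$ variables. To do this, I would exhibit a semiring satisfying all $(n-1)$-variable identities of $S$ but failing the $n$-variable one; typically this is a flat semiring built from words.

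The main obstacle, I expect, is matching each of $S_{(4,545)}$, $S_{(4,634)}$, $S_{(4,710)}$ precisely to the hypotheses of an existing criterion. In particular, the hard part is showing that $\mathsf{V}(S)$ does not grow beyond the permitted interval, since that requires controlling all identities of $S$. I would first test the simplest route: check whether $S\cong T^0$, or whether $S$ has the dual multiplication of such an algebra, for $T$ the $3$-element nonfinitely based semiring. In that case Lemma~\ref{lem001} and the results of \cite{jrz,yrg} on $S_7^0$-type extensions give the conclusion immediately.
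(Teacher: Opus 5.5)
\textbf{There is a genuine gap.} Your outline never identifies a mechanism that applies to these three algebras, and the concrete routes you propose fail.

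By \cite{sr}, \cite{zrc} and \cite{jrz}, the only nonfinitely based ai-semiring of order at most three is $S_7$. That algebra is flat: its two non-absorbing elements are additively incomparable. Subalgebras and quotients of an ai-semiring with chain additive reduct again have chain additive reducts. So you cannot find $S_7$, or any other nonfinitely based algebra, as a subsemiring or quotient of $S_{(4,545)}$, $S_{(4,634)}$ or $S_{(4,710)}$. In fact every small piece is finitely based:
\begin{itemize}
\item $S_{(4,634)}\cong S_{53}^0$: the subalgebra $\{1,2,3\}$ is $S_{53}$ and $4$ is the adjoined zero.
\item $S_{(4,710)}$ is a subdirect product of $S_{53}$ and $D_2$, via the congruences with nontrivial blocks $\{1,2\}$ and $\{2,3,4\}$.
\item $S_{(4,545)}$ has $S_{53}$ as its quotient by the block $\{3,4\}$, and $D_2$ as its subalgebra $\{3,4\}$.
\end{itemize}
So being nonfinitely based is emergent here, not inherited, and each of your routes breaks down:
\begin{itemize}
\item Your ``simplest route'' fails. $S_{(4,634)}$ is indeed of the form $T^0$, but with $T=S_{53}$ finitely based. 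Lemma~\ref{lem001} only describes the identities of $T^0$ in terms of those of $T$; it transfers no finite basis information.
\item It is not true that $\mathsf{V}(S)=\mathsf{V}(T,B)$ with $T$ nonfinitely based forces $S$ to be nonfinitely based. A variety with a nonfinitely based member can be finitely based.
\item The interval criterion you attribute to \cite{jrz} is neither stated precisely nor verified. In fact $S_7$ does lie in $\mathsf{V}(S_{53},D_2)$, as a quotient of a subalgebra of $S_{53}\times D_2$, but membership alone proves nothing.
\item The syntactic fallback describes what a nonfinite basis proof would have to do without supplying one.
\end{itemize}

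\textbf{What the paper actually uses is a dedicated result.} That $S_{(4,545)}$ and $S_{(4,634)}$ are nonfinitely based is the main theorem of \cite{yrg}. It is not derived from the earlier criteria; the introduction describes \cite{yrg} as the breakthrough that made this type tractable. For $S_{(4,710)}$, the paper uses the subdirect decomposition above to get $\mathsf{V}(S_{(4,710)})=\mathsf{V}(S_{53},D_2)$. Then \cite[Proposition 3.4]{yrg} gives $\mathsf{V}(S_{53},D_2)=\mathsf{V}(S_{(4,545)})$, and equational equivalence makes $S_{(4,710)}$ nonfinitely based. Your idea of writing $\mathsf{V}(S)$ as a join is the salvageable part. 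It must be used to identify $\mathsf{V}(S)$ with a variety already known to be nonfinitely based, not to inherit the property from a factor, since here both factors $S_{53}$ and $D_2$ are finitely based.
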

\begin{proof}
By the main result of \cite{yrg}, $S_{(4, 545)}$ and $S_{(4, 634)}$ are both nonfinitely based.
It remains to show that $S_{(4,710)}$ is also nonfinitely based.
One checks that $S_{(4, 710)}$ is isomorphic to a subdirect product of $S_{53}$ and $D_2$
via two congruences with nontrivial blocks $\{1, 2\}$ and $\{2, 3, 4\}$;
the Cayley tables of $S_{53}$ are given in Table~\ref{tb24011601}.
Consequently,
\[
\mathsf{V}(S_{(4, 710)}) = \mathsf{V}(S_{53}, D_2).
\]
From \cite[Proposition 3.4]{yrg}, we have that $\mathsf{V}(S_{53}, D_2)=\mathsf{V}(S_{(4, 545)})$,
and so
\[
\mathsf{V}(S_{(4, 710)})=\mathsf{V}(S_{(4, 545)}).
\]
Therefore, $S_{(4, 710)}$ is also nonfinitely based.
\end{proof}

\begin{table}[ht]
\caption{The Cayley tables of $S_{53}$} \label{tb24011601}
\begin{tabular}{c|ccc}
$+$      &$1$ &$2$ &$3$\\
\hline
$1$      &$1$ &$1$ &$3$\\
$2$      &$1$ &$2$ &$3$\\
$3$      &$3$ &$3$ &$3$\\
\end{tabular}\qquad
\begin{tabular}{c|ccc}
$\cdot$      &$1$ &$2$ &$3$\\
\hline
$1$      &$3$ &$1$ &$3$\\
$2$      &$1$ &$2$ &$3$\\
$3$      &$3$ &$3$ &$3$\\
\end{tabular}
\end{table}

\begin{pro}\label{pro411}
The following $4$-element ai-semirings are finitely based:\\
$S_{(4,555)}$, $S_{(4,556)}$, $S_{(4,557)}$, $S_{(4,558)}$, $S_{(4,559)}$, $S_{(4,562)}$, $S_{(4,563)}$,
$S_{(4,568)}$, $S_{(4,569)}$, $S_{(4,570)}$, $S_{(4,571)}$, $S_{(4,572)}$, $S_{(4,574)}$, $S_{(4,576)}$,
$S_{(4,577)}$, $S_{(4,579)}$, $S_{(4,580)}$, $S_{(4,581)}$, $S_{(4,582)}$, $S_{(4,583)}$, $S_{(4,584)}$,
$S_{(4,585)}$, $S_{(4,586)}$, $S_{(4,610)}$, $S_{(4,611)}$, $S_{(4,612)}$, $S_{(4,613)}$, $S_{(4,614)}$,
$S_{(4,618)}$, $S_{(4,619)}$, $S_{(4,637)}$, $S_{(4,638)}$, $S_{(4,639)}$, $S_{(4,640)}$, $S_{(4,641)}$,
$S_{(4,642)}$, $S_{(4,643)}$, $S_{(4,644)}$, $S_{(4,645)}$, $S_{(4,649)}$, $S_{(4,650)}$, $S_{(4,659)}$,
$S_{(4,661)}$, $S_{(4,662)}$, $S_{(4,664)}$, $S_{(4,665)}$, $S_{(4,673)}$, $S_{(4,674)}$, $S_{(4,675)}$,
$S_{(4,676)}$, $S_{(4,679)}$, $S_{(4,680)}$, $S_{(4,699)}$, $S_{(4,700)}$, $S_{(4,701)}$, $S_{(4,702)}$,
$S_{(4,712)}$, $S_{(4,713)}$, $S_{(4,714)}$, $S_{(4,715)}$, $S_{(4,716)}$, $S_{(4,719)}$, $S_{(4,720)}$,
$S_{(4,725)}$, $S_{(4,726)}$, $S_{(4,730)}$, $S_{(4,731)}$, $S_{(4,732)}$, $S_{(4,737)}$, $S_{(4,739)}$,
$S_{(4,740)}$, $S_{(4,742)}$, $S_{(4,744)}$, $S_{(4,745)}$, $S_{(4,752)}$, $S_{(4,753)}$, $S_{(4,754)}$,
$S_{(4,755)}$, $S_{(4,758)}$, $S_{(4,759)}$, $S_{(4,760)}$, $S_{(4,761)}$.
\end{pro}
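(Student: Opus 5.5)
This proposition lies in Section~\ref{section-known}, so the plan is to avoid building equational bases by hand. Instead I will check, algebra by algebra, that each listed $S_{(4,k)}$ satisfies the hypotheses of a known finite basis criterion from \cite{gpz, pas05, rz19, sr, zrc}.

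The first step is to sort the listed algebras by structural features visible in their multiplication tables (Table~\ref{tb1}). I would look for three things.
\begin{itemize}
\item[(a)] Whether the multiplicative reduct is a band, or more specifically a normal band, left/right regular band, or semilattice. Here I would invoke Pastijn's result \cite{pas05} and the results of \cite{gpz, rz19} that every ai-semiring whose multiplicative reduct is such a band is finitely based. Idempotency and the relevant band identities can be checked directly on four elements.
\item[(b)] Whether the algebra has the form $S^0$ for a $3$-element ai-semiring $S$. This happens when $4$, the additive minimum, is a multiplicative zero. In that case Lemma~\ref{lem001} reduces the equational theory of $S^0$ to that of $S$ together with $D_2$-type information. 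I would then use the finite basis results for $3$-element ai-semirings from \cite{zrc}, and the corresponding statement in \cite{sr} for $S^0$ with $S$ finitely based of order three.
\item[(c)] Whether the algebra is a subdirect product of finitely many small ai-semirings whose join variety is already known to be finitely based. This covers, for example, joins of some of $L_2,R_2,M_2,D_2,N_2,T_2$ and a $3$-element algebra, as treated in \cite{sr, rz19}.
\end{itemize}

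For (c) I will find two congruences with trivial intersection, as in the proof of Proposition~\ref{proNFB}. This gives $\mathsf{V}(S_{(4,k)})=\mathsf{V}(A,B)$ with $A,B$ of order at most three. I then cite the finite basis result for that join, or use the fact that $S$ is equationally equivalent to a product of finitely based algebras whose join is known to be finitely based.

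Finally, I would use the duality remark from the introduction. Algebras with dual multiplications have the same finite basis property, so for each pair in the list I only need to handle one representative.

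The main obstacle is coverage. Every one of the 83 algebras must land in some known criterion, and there is a real risk that a few satisfy none of (a)--(c) directly. For those I would try to identify the variety with one generated by a $3$-element algebra plus $2$-element algebras. I would check candidate identities against Lemma~\ref{nlemma1}, which characterizes the inequalities satisfied by each of $L_2,R_2,M_2,D_2,N_2,T_2$, and then quote the basis given in \cite{zrc} or \cite{sr}.
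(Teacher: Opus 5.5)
\textbf{Gap: the single observation that settles every case is missing.} Every multiplication table in the list has diagonal $1,2,3,4$, so each of these algebras satisfies $x^2\approx x$. By \cite{gpz, pas05}, the ai-semiring variety defined by $x^2\approx x$ has only finitely many subvarieties, and all of them are finitely based. Hence every listed $S_{(4,k)}$ is finitely based, and this one check is the paper's proof. Your item (a) is therefore the entire argument, applied uniformly. There is no need to restrict to normal, regular, or commutative bands, and no need for a case split, a subdirect decomposition, or duality. As written, though, you treat (a) as one partial tool among several and explicitly leave open that some algebras may escape all of (a)--(c). So the proposal does not yet establish the statement.

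\textbf{The fallback routes are unsound as stated,} so they could not be used to close such a coverage gap.

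For (b), the finite basis property does not pass from a $3$-element $S$ to $S^0$. Indeed, $S_{(4,634)}\cong S_{53}^0$ via $1\mapsto 2$, $2\mapsto 3$, $3\mapsto 1$, with $4$ as the adjoined zero. Here $S_{53}$ is finitely based by \cite{zrc}, yet $S_{(4,634)}$ is nonfinitely based (Proposition~\ref{proNFB}). Lemma~\ref{lem001} transfers the equational theory, not the existence of a finite basis, so no general result of the kind you attribute to \cite{sr} can exist.

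For (c), a join of two finitely based varieties can be nonfinitely based. For example, $\mathsf{V}(S_{53},D_2)=\mathsf{V}(S_{(4,545)})$ is nonfinitely based although $S_{53}$ and $D_2$ are each finitely based. A subdirect decomposition therefore helps only when you can cite a specific finite basis for that particular join.
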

\begin{proof}
It is easy to check that each algebra in Proposition \ref{pro411} satisfies the identity $x^2\approx x$.
By the main results of~\cite{gpz, pas05}, these algebras are all finitely based.
\end{proof}

\begin{pro}\label{412}
The following $4$-element ai-semirings are finitely based:\\
$S_{(4,483)}$, $S_{(4,497)}$, $S_{(4,484)}$, $S_{(4,498)}$, $S_{(4,499)}$, $S_{(4,549)}$, $S_{(4,488)}$, $S_{(4,506)}$, $S_{(4,507)}$, $S_{(4,550)}$, $S_{(4,490)}$, $S_{(4,513)}$, $S_{(4,515)}$, $S_{(4,606)}$, $S_{(4,520)}$, $S_{(4,534)}$, $S_{(4,535)}$, $S_{(4,552)}$, $S_{(4,523)}$, $S_{(4,541)}$, $S_{(4,542)}$, $S_{(4,553)}$, $S_{(4,764)}$, $S_{(4,768)}$, $S_{(4,776)}$, $S_{(4,778)}$, $S_{(4,788)}$, $S_{(4,805)}$, $S_{(4,845)}$, $S_{(4,625)}$, $S_{(4,652)}$, $S_{(4,666)}$, $S_{(4,723)}$, $S_{(4,626)}$, $S_{(4,655)}$, $S_{(4,670)}$, $S_{(4,729)}$, $S_{(4,762)}$, $S_{(4,767)}$, $S_{(4,770)}$, $S_{(4,773)}$, $S_{(4,787)}$, $S_{(4,804)}$, $S_{(4,844)}$, $S_{(4,783)}$, $S_{(4,784)}$, $S_{(4,800)}$, $S_{(4,808)}$, $S_{(4,809)}$, $S_{(4,810)}$, $S_{(4,847)}$, $S_{(4,855)}$, $S_{(4,861)}$, $S_{(4,865)}$, $S_{(4,482)}$.
\end{pro}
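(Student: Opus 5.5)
We will follow the same pattern as Proposition~\ref{pro411}: find one structural property shared by all $55$ listed algebras and then invoke a known finite basis criterion from \cite{gpz, pas05, rz19, sr, zrc}. The pairing in the list (for instance $S_{(4,483)}$ with $S_{(4,497)}$, and $S_{(4,484)}$ with $S_{(4,498)}$) suggests algebras that have dual multiplications, or that differ by a trivial modification. This points to a criterion that is insensitive to such changes.

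\textbf{Candidate criterion.} The natural candidate is the theorem of Ren and Zhao \cite{rz19} together with \cite{sr}. It states that an ai-semiring satisfying $x^3\approx x$ (more generally, the flat or $x^{n+1}\approx x$ type identities treated there) is finitely based. A second candidate is the result of \cite{zrc} that every ai-semiring of order at most three is finitely based, combined with the $S^0$ construction of Lemma~\ref{lem001}.

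\textbf{Plan of verification.}
\begin{enumerate}
\item Inspect the multiplication tables in Table~\ref{tb1} for each listed $S_{(4,k)}$.
\item Check whether it satisfies $x^3\approx x$. It cannot satisfy $x^2\approx x$, since otherwise it would already appear in Proposition~\ref{pro411}.
\item If some do not satisfy $x^3\approx x$, check instead whether $4$ is a multiplicative zero. In that case $S_{(4,k)}\cong T^0$ for a $3$-element ai-semiring $T$ on $\{1,2,3\}$, because $4$ is the additive minimum.
\item For such an algebra, use Lemma~\ref{lem001} to express the equational theory of $T^0$ through that of $T$. Then apply the result of \cite{rz19} (or \cite{sr}) that $T^0$ is finitely based whenever $T$ is finitely based with a suitable basis, together with the finite basis of $T$ supplied by \cite{zrc}.
\end{enumerate}

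\textbf{Main obstacle.} The hard step is confirming that a single uniform criterion covers every one of the $55$ algebras; some may need the identity route and others the $S^0$ route. In practice I would first test $x^3\approx x$ by checking $a^3=a$ for each $a\in\{1,2,3,4\}$. I expect this to succeed for the whole list, and I would then cite \cite{rz19} directly. The proof would read: each algebra satisfies $x^3\approx x$, so by \cite{rz19} it is finitely based.
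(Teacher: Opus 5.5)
\textbf{Gap: the route through $x^3\approx x$ fails.} The identity $x^3\approx x$, on which your final argument rests, does not hold in any algebra on the list. For instance, in $S_{(4,482)}$ every product equals $1$ except $4\cdot 4=2$, so $2^3=1$ and $4^3=2\cdot 4=1$. More structurally, $S_{(4,482)}$ satisfies $x_1x_2x_3\approx y_1y_2y_3$; together with $x^3\approx x$ this would force $x\approx x^3\approx y^3\approx y$, which is impossible in a nontrivial algebra. The same failure occurs throughout the list, e.g.\ $2^3=1$ in $S_{(4,484)}$, $3^3=4$ in $S_{(4,762)}$, $1^3=4$ in $S_{(4,855)}$. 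These algebras all contain elements whose powers collapse onto the multiplicative zero. Note also that \cite{rz19} is the criterion for the identities $x^2y\approx xy$, $xy^2\approx xy,\dots$ used in Proposition~\ref{pro414}, not a theorem about $x^3\approx x$.

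\textbf{The $S^0$ fallback does not rescue it.} First, $4$ is often not a multiplicative zero: in $S_{(4,482)}$ we have $4\cdot 1=1$, and in $S_{(4,520)}$ we have $4\cdot 2=2$. Second, and more seriously, there is no principle that $T^0$ is finitely based whenever $T$ is. Lemma~\ref{lem001} transfers equational theories, not finite bases. The paper itself contains a counterexample: $S_{(4,634)}\cong S_{53}^0$, since on $\{1,2,3\}$ the element $1$ is a zero, $3$ is an identity, and $2^2=1$. This algebra is nonfinitely based by Proposition~\ref{proNFB}, while $S_{53}$ is finitely based by \cite{zrc}.

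\textbf{Missing idea: equational equivalence with a $3$-element algebra.} The paper shows that each listed $S_{(4,k)}$ generates the same variety as one of $S_{44}$, $S_{45}$, $S_{46}$, $S_{47}$, $S_{53},\dots,S_{60}$. The groups in the list are exactly these equivalence classes, not dual pairs. The argument has two inclusions:
\begin{itemize}
\item embedding the $3$-element algebra $T$ into $S_{(4,k)}$ gives $\mathsf{V}(T)\subseteq\mathsf{V}(S_{(4,k)})$;
\item checking that $S_{(4,k)}$ satisfies the finite basis of $T$ from \cite{zrc} gives the reverse inclusion.
\end{itemize}
For example, $S_{59}$ embeds into $S_{(4,482)}$, and $S_{(4,482)}$ satisfies $x_1x_2x_3\approx y_1y_2y_3$, $y\preceq x^3$, $x^2+y^2\approx xy$, $x\preceq x^2$, which is a basis for $S_{59}$. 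Finite basedness then follows from the theorem of \cite{zrc} that every $3$-element ai-semiring is finitely based. You already named \cite{zrc} as a candidate ingredient. What was missing is this two-inclusion equivalence argument as the bridge to it, in place of $x^3\approx x$ or adjoining a zero.
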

\begin{proof}
It is easy to check that $S_{59}$ embeds into $S_{(4, 482)}$,
and so $\mathsf{V}(S_{59})$ is a subvariety of $\mathsf{V}(S_{(4, 482)})$.
Conversely, $S_{(4, 482)}$ satisfies the identities
\[
x_1x_2x_3 \approx y_1y_2y_3, \quad y \preceq x^3, \quad x^2+y^2 \approx xy,\quad x\preceq x^2,
\]
which form an equational basis of $S_{59}$ (see \cite[Proposition 11]{zrc}).
Hence $\mathsf{V}(S_{(4,482)})$ is a subvariety of $\mathsf{V}(S_{59})$. Therefore,
\[
\mathsf{V}(S_{(4,482)})=\mathsf{V}(S_{59}).
\]

By the same approach, one can prove that each of the remaining algebras in Proposition~\ref{412}
is equationally equivalent to some $3$-element ai-semiring:
\[
\begin{aligned}
&\mathsf{V}(S_{(4,483)})=\mathsf{V}(S_{(4,497)})=\mathsf{V}(S_{59}),\\
&\mathsf{V}(S_{(4,484)})=\mathsf{V}(S_{(4,498)})=\mathsf{V}(S_{(4,499)})=\mathsf{V}(S_{(4,549)})=\mathsf{V}(S_{60}),\\
&\mathsf{V}(S_{(4,488)})=\mathsf{V}(S_{(4,506)})=\mathsf{V}(S_{(4,507)})=\mathsf{V}(S_{(4,550)})=\mathsf{V}(S_{54}),\\
&\mathsf{V}(S_{(4,490)})=\mathsf{V}(S_{(4,513)})=\mathsf{V}(S_{(4,515)})=\mathsf{V}(S_{(4,606)})=\mathsf{V}(S_{56}),\\
&\mathsf{V}(S_{(4,520)})=\mathsf{V}(S_{(4,534)})=\mathsf{V}(S_{(4,535)})=\mathsf{V}(S_{(4,552)})=\mathsf{V}(S_{57}),\\
&\mathsf{V}(S_{(4,523)})=\mathsf{V}(S_{(4,541)})=\mathsf{V}(S_{(4,542)})=\mathsf{V}(S_{(4,553)})=\mathsf{V}(S_{53}),\\
&\mathsf{V}(S_{(4,764)})=\mathsf{V}(S_{(4,768)})=\mathsf{V}(S_{(4,776)})=\mathsf{V}(S_{(4,778)})=\mathsf{V}(S_{45}),\\
&\mathsf{V}(S_{(4,788)})=\mathsf{V}(S_{(4,805)})=\mathsf{V}(S_{(4,845)})=\mathsf{V}(S_{45}),\\
&\mathsf{V}(S_{(4,625)})=\mathsf{V}(S_{(4,652)})=\mathsf{V}(S_{(4,666)})=\mathsf{V}(S_{(4,723)})=\mathsf{V}(S_{58}),\\
&\mathsf{V}(S_{(4,626)})=\mathsf{V}(S_{(4,655)})=\mathsf{V}(S_{(4,670)})=\mathsf{V}(S_{(4,729)})=\mathsf{V}(S_{55}),\\
&\mathsf{V}(S_{(4,762)})=\mathsf{V}(S_{(4,767)})=\mathsf{V}(S_{(4,770)})=\mathsf{V}(S_{(4,773)})=\mathsf{V}(S_{44}),\\
&\mathsf{V}(S_{(4,787)})=\mathsf{V}(S_{(4,804)})=\mathsf{V}(S_{(4,844)})=\mathsf{V}(S_{44}),\\
&\mathsf{V}(S_{(4,783)})=\mathsf{V}(S_{(4,784)})=\mathsf{V}(S_{(4,800)})=\mathsf{V}(S_{(4,808)})=\mathsf{V}(S_{46}),\\
&\mathsf{V}(S_{(4,809)})=\mathsf{V}(S_{(4,810)})=\mathsf{V}(S_{(4,847)})=\mathsf{V}(S_{46}),\\
&\mathsf{V}(S_{(4,855)})=\mathsf{V}(S_{(4,861)})=\mathsf{V}(S_{(4,865)})=\mathsf{V}(S_{47}).
\end{aligned}
\]

By the above observations,
it folows from the main result of~\cite{zrc} that every algebra in Proposition~\ref{412} is finitely based.
\end{proof}

\begin{pro}\label{pro413}
The following $4$-element ai-semirings are finitely based:\\
$S_{(4,481)}$, $S_{(4,547)}$, $S_{(4,554)}$, $S_{(4,560)}$, $S_{(4,595)}$, $S_{(4,601)}$, $S_{(4,820)}$, $S_{(4,825)}$, $S_{(4,857)}, $
$S_{(4,561)}$, $S_{(4,564)}$, $S_{(4,596)}$, $S_{(4,615)}$, $S_{(4,622)}$, $S_{(4,821)}$, $S_{(4,827)}$, $S_{(4,573)}$, $S_{(4,698)}, $
$S_{(4,590)}$, $S_{(4,597)}$, $S_{(4,703)}$, $S_{(4,790)}$, $S_{(4,822)}$, $S_{(4,836)}$, $S_{(4,592)}$, $S_{(4,598)}$, $S_{(4,717)}, $
$S_{(4,795)}$, $S_{(4,823)}$, $S_{(4,838)}$, $S_{(4,594)}$, $S_{(4,819)}$, $S_{(4,602)}$, $S_{(4,609)}$, $S_{(4,617)}$, $S_{(4,620)}, $
$S_{(4,623)}$, $S_{(4,624)}$, $S_{(4,828)}$, $S_{(4,829)}$, $S_{(4,858)}$, $S_{(4,621)}$, $S_{(4,826)}$, $S_{(4,704)}$, $S_{(4,743)}, $
$S_{(4,575)}$, $S_{(4,711)}$, $S_{(4,718)}$, $S_{(4,722)}$, $S_{(4,796)}$, $S_{(4,839)}$, $S_{(4,692)}$, $S_{(4,736)}$, $S_{(4,741)}, $
$S_{(4,779)}$, $S_{(4,797)}$, $S_{(4,806)}$, $S_{(4,840)}$, $S_{(4,846)}$, $S_{(4,859)}$, $S_{(4,782)}$, $S_{(4,802)}$, $S_{(4,842)}, $
$S_{(4,785)}$, $S_{(4,803)}$, $S_{(4,811)}$, $S_{(4,843)}$, $S_{(4,848)}$, $S_{(4,860)}$, $S_{(4,789)}$, $S_{(4,835)}$, $S_{(4,794)}, $
$S_{(4,837)}$, $S_{(4,812)}$, $S_{(4,856)}$, $S_{(4,866)}$.
\end{pro}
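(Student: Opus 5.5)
The plan is to apply the known criteria listed at the start of Section~\ref{section-known} (those of \cite{gpz, pas05, rz19, sr, zrc}) that Propositions~\ref{pro411} and \ref{412} have not yet used. Specifically, I expect these algebras to be handled with Lemma~\ref{lem001} and the results of \cite{rz19, sr} on adjoining a zero. The list is long, so I will first sort it by the shape of the multiplication table in Table~\ref{tb1}. The element $4$ is the additive minimum of every $S_{(4,k)}$. For each algebra in the list I will check whether $4$ is also a multiplicative zero, i.e.\ whether $4a=a4=4$ for all $a$. If so, then $\{1,2,3\}$ is a subsemiring $S$, and $S_{(4,k)}\cong S^0$. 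Here $S$ is a $3$-element ai-semiring whose additive reduct is a chain, so $S$ is one of the $S_i$ from \cite{zrc}.

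For the algebras of the form $S^0$, I will use Lemma~\ref{lem001}: $S^0$ satisfies $\bq\preceq\bu$ exactly when $D_\bq(\bu)\neq\emptyset$ and $S$ satisfies $\bq\preceq D_\bq(\bu)$. Equivalently, $\mathsf{V}(S^0)$ has the same identities as $\mathsf{V}(S, D_2)$ intersected with the ``restricted'' identities. The result of \cite{rz19} (or \cite{sr}) says that if $S$ is finitely based, then so is $S^0$, at least in the relevant cases. Alternatively, one can write a basis of $S^0$ directly: take each identity $\bq\preceq\bu$ in a finite basis of $S$ and replace it by $\bq\preceq \bu+\bq\bw$-style versions whose contents coincide. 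Since \cite{zrc} gives a finite basis for every $S_i$, this yields finite bases for all these algebras. I will tabulate, for each $k$, which $S_i$ appears as $\{1,2,3\}$, and group the $k$'s by $i$ in the write-up, mirroring the display in Proposition~\ref{412}.

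Some algebras in the list will probably not be of the form $S^0$. For those, I will look for the dual situation, namely that $S_{(4,k)}$ has the dual multiplication of an algebra of the form $S^0$, and then invoke the elementary fact about dual multiplications from the introduction. Failing that, I will try a subdirect decomposition, exactly as in Proposition~\ref{proNFB}: find two congruences whose blocks separate points, say with nontrivial blocks $\{1,2\}$ and $\{3,4\}$ or $\{2,3,4\}$, so that $\mathsf{V}(S_{(4,k)})=\mathsf{V}(T,A)$ with $T$ of order at most three. Then I would use the finite-basis results for $\mathsf{V}(S_i^0)$ or for joins with $2$-element algebras that appear in \cite{sr, rz19}.

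The main obstacle is bookkeeping rather than ideas. I have to verify from the Cayley tables that each of the many listed algebras really falls into a case covered by a cited criterion, and identify the correct $3$-element $S_i$ in each case. The risk is an algebra whose $\{1,2,3\}$ part is one of the few $3$-element algebras for which the zero-adjoining result needs extra hypotheses. For such cases I would first check whether $S_{(4,k)}$ satisfies $x^2\approx x$ or another identity that puts it into a finitely based class already treated, before resorting to an ad hoc basis.
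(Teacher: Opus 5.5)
Your proposal has a genuine gap. Its central tool is the claim that finite basability passes from $S$ to $S^0$, which you attribute to \cite{rz19} or \cite{sr}. That is not a theorem, and this very paper contains a counterexample. In $S_{(4,634)}$ the element $4$ is a multiplicative zero and $\{1,2,3\}$ is closed under multiplication. The map $1\mapsto 3$, $2\mapsto 1$, $3\mapsto 2$ is an isomorphism of this subsemiring onto $S_{53}$, so $S_{(4,634)}\cong S_{53}^0$. Yet $S_{53}$ is finitely based \cite{zrc}, as used in Proposition~\ref{412}, while $S_{(4,634)}$ is nonfinitely based by Proposition~\ref{proNFB}. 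Lemma~\ref{lem001} only describes the identities of $S^0$. It gives no way to turn a finite basis of $S$ into one for $S^0$, and your ``contents coincide'' modification of a basis cannot do this in general.

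The $S^0$ reduction also misses most of the list. In $S_{(4,481)}$, $S_{(4,547)}$, $S_{(4,554)}$, $S_{(4,560)}$ and $S_{(4,601)}$, the element $4$ is not a multiplicative zero, and passing to the dual multiplication does not change that. In $S_{(4,866)}$ and $S_{(4,811)}$, $4$ is a zero but $\{1,2,3\}$ is not a subsemiring. Your fallbacks (duals, subdirect decompositions, ad hoc bases) are a list of things to try, not an argument.

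The missing idea is that every algebra in the list lies in a single variety all of whose subvarieties are finitely based. Each of them satisfies
\[
x_1x_2x_3x_4\approx x_1x_3x_2x_4,\quad (xy)^2\approx xy,\quad xz+yx\preceq x+yz .
\]
By \cite[Corollary 2.3]{sr}, these identities form an equational basis of $\mathsf{V}(L_2,R_2,M_2,D_2,N_2,T_2)$, the variety generated by all $2$-element ai-semirings. The main result of \cite{sr} is that every subvariety of this variety is finitely based. So checking these three identities against each Cayley table proves the whole proposition, with no case analysis by $3$-element subalgebras and no zero-adjoining argument.
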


\begin{proof}
It is easy to check that each semiring in Proposition~\ref{pro413} satisfies the identities
\[
x_1x_2x_3x_4 \approx x_1x_3x_2x_4,\quad (xy)^2 \approx xy, \quad xz+yx \preceq x+yz,
\]
which form an an equational basis of the variety generated by all ai-semirings of order two (see \cite[Corollary 2.3]{sr}).
By the main result of \cite{sr}, all these algebras are finitely based.
\end{proof}

\begin{pro}\label{pro414}
The following $4$-element ai-semirings are finitely based:
$S_{(4,578)}$, $S_{(4,593)}$, $S_{(4,599)}$, $S_{(4,646)}$, $S_{(4,647)}$, $S_{(4,651)}$, $S_{(4,658)}$, $S_{(4,660)}$, $S_{(4,663)}$, $S_{(4,687)}$, $S_{(4,689)}$, $S_{(4,691)}$, $S_{(4,738)}$, $S_{(4,781)}$, $S_{(4,798)}$, $S_{(4,824)}$, $S_{(4,832)}$, $S_{(4,833)}$, $S_{(4,834)}$, $S_{(4,841)}$.
\end{pro}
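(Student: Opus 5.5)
The plan is to follow the pattern of Propositions~\ref{pro411}--\ref{pro413}: exhibit a known finitely based variety, or a known finite-basis criterion, that covers all twenty algebras at once. The references in Section~\ref{section-known} not yet used are \cite{rz19} and Lemma~\ref{lem001} (from \cite{wrz}). The natural guess is that each listed algebra has the form $T^0$, obtained by adjoining a zero to a $3$-element ai-semiring $T$, or is equationally equivalent to a semiring covered by \cite{rz19}.

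\textbf{Step 1.} For each listed $S_{(4,k)}$, check from Table~\ref{tb1} whether the additive minimum $4$ is a multiplicative zero. If it is, delete it. The remaining elements $\{1,2,3\}$ form a $3$-element ai-semiring $S_i$ with chain additive reduct, and $S_{(4,k)}\cong S_i^0$.

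\textbf{Step 2.} For such algebras, use Lemma~\ref{lem001}. An inequality $\bq\preceq\bu$ holds in $S_i^0$ exactly when $D_\bq(\bu)\neq\emptyset$ and $\bq\preceq D_\bq(\bu)$ holds in $S_i$. By Lemma~\ref{nlemma1}(4), the first condition says the inequality holds in $D_2$. Hence $\mathsf{V}(S_i^0)=\mathsf{V}(S_i,D_2)$ whenever $D_2\in\mathsf{V}(S_i^0)$, which holds because $\{4,a\}$ with $a$ a multiplicative identity or idempotent gives a copy of $D_2$; this must be checked case by case. The finite basis question then reduces to that of $\mathsf{V}(S_i,D_2)$.

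\textbf{Step 3.} Invoke the main result of \cite{rz19}, which I expect treats exactly such varieties (joins of $3$-element ai-semiring varieties with $D_2$, or semirings with an adjoined zero) and shows they are finitely based when the $S_i$ involved is finitely based and avoids the exceptional cases, such as $S_{53}$, which by Proposition~\ref{proNFB} yields the nonfinitely based $\mathsf{V}(S_{(4,545)})$. Since all $3$-element ai-semirings are finitely based by \cite{zrc}, it remains to confirm for each of the twenty algebras that its $S_i$ meets the hypotheses of \cite{rz19}.

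The main obstacle is Step 1: some listed algebras may not literally be of the form $S_i^0$, for instance if $4$ is absorbing only on one side. For those, I would find a subdirect decomposition, as was done for $S_{(4,710)}$. I would look for two congruences with nontrivial blocks such as $\{1,2\}$ and $\{3,4\}$ whose quotients are a $3$-element semiring of the form handled in \cite{rz19} together with a $2$-element one. Dual multiplication then reduces the remaining cases to already treated ones.
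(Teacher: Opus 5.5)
There is a genuine gap, and it appears at every step.

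Step~2 is false as stated. Lemma~\ref{lem001} requires $\bq\preceq D_\bq(\bu)$ to hold in $S_i$, which is strictly stronger than $\bq\preceq\bu$ holding in $S_i$. So you only get $\mathsf{V}(S_i,D_2)\subseteq\mathsf{V}(S_i^0)$, and the reverse inclusion fails in general. For example, $xy\preceq x+yz$ holds in $M_2$ and in $D_2$. It fails in $M_2^0$ under $x\mapsto 0$, $y\mapsto 1$, $z\mapsto$ the adjoined zero: the left side is $1$ and the right side is $0$ (compare Lemma~\ref{pro422}).

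Even a valid reduction would prove nothing by itself, because finite basedness is not preserved under joins: $\mathsf{V}(S_{53},D_2)$ is nonfinitely based by Proposition~\ref{proNFB}. Step~3 rests on a guessed content of \cite{rz19} rather than on a stated theorem.

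Step~1 does not apply to eleven of the twenty algebras. In $S_{(4,578)}$, $S_{(4,593)}$, $S_{(4,599)}$, $S_{(4,646)}$, $S_{(4,658)}$, $S_{(4,660)}$, $S_{(4,663)}$, $S_{(4,687)}$, $S_{(4,738)}$, $S_{(4,824)}$ and $S_{(4,832)}$, the element $4$ is not a two-sided multiplicative zero. Your fallback via unspecified subdirect decompositions is not an argument.

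The missing idea is that no structural analysis of the algebras is needed. The paper checks that all twenty algebras satisfy one fixed list of identities: $x^2y\approx xy$, $xy^2\approx xy$, $(xy)^2\approx xy$, $xyzx\approx xyxzx$, $z^2\preceq xy+z$, $xyz\preceq xy+z$, $zxy\preceq xy+z$, $xzy\preceq xy+z$. It then invokes \cite[Theorem 2.1, Corollary 3.7]{rz19}, which gives that every ai-semiring satisfying these identities is finitely based. This is the pattern of Propositions~\ref{pro411} and~\ref{pro413} that you cite at the outset: membership in a single variety all of whose subvarieties are finitely based. The work lies in identifying that ambient variety and verifying the identities, not in decomposing each algebra or passing through $S^0$.
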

\begin{proof}
It is easy to verify that each semiring in Proposition~\ref{pro414} satisfies the identities
\[
x^2y \approx xy,\quad xy^2 \approx xy,\quad (xy)^2 \approx xy,\quad xyzx \approx xyxzx,
\]
\[
z^2 \preceq xy + z,\quad xyz \preceq xy + z,\quad zxy \preceq xy + z,\quad xzy \preceq xy + z.
\]
By \cite[Theorem 2.1, Corollary 3.7]{rz19}, all these algebras are finitely based.
\end{proof}

\section{Equational bases for 4-element ai-semirings related to $S_{44}$}\label{section-s44}
In this section, we study the finite basis problem for several 4-element ai-semirings related to
the $3$-element ai-semiring $S_{44}$,
whose Cayley tables are given in Table~\ref{tb26070302}.
\begin{table}[ht]
\caption{The Cayley tables of $S_{44}$} \label{tb26070302}
\begin{tabular}{c|ccc}
$+$      &$1$&$2$&$3$\\
\hline
$1$      &$1$&$1$&$3$\\
$2$      &$1$&$2$&$3$\\
$3$      &$3$&$3$&$3$\\
\end{tabular}\qquad
\begin{tabular}{c|ccc}
$\cdot$  &$1$&$2$&$3$\\
\hline
$1$      &$2$&$2$&$1$\\
$2$      &$2$&$2$&$2$\\
$3$      &$1$&$2$&$3$\\
\end{tabular}
\end{table}

The following result, due to Yue et al.~\cite[Lemma 7.12]{yrzs},
provides some information about the inequalities satisfied by $S_{44}$.
\begin{lem}\label{lem4401}
Let $\mathbf{q} \preceq \mathbf{u}$ be a nontrivial inequality such that $\mathbf{u} = \mathbf{u}_1 + \mathbf{u}_2 + \cdots + \mathbf{u}_n$ with $\mathbf{u}_i, \mathbf{q} \in X^+$ for $1 \leq i \leq n$.
Suppose that $\mathbf{q} \preceq \mathbf{u}$ holds in $S_{44}$.
Then $\ell(\mathbf{q}) \geq 2$ and $D_{\mathbf{q}}(\mathbf{u}) \neq \emptyset$.
Moreover, if $M_1(\mathbf{q}) \neq \emptyset$, then for every $x \in M_1(\mathbf{q})$,
there exists $\mathbf{u}_i \in D_{\mathbf{q}}(\mathbf{u})$ such that $m(x, \mathbf{u}_i) \leq 1$.
\end{lem}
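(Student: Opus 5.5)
The statement concerns $S_{44}$ with order $3<2<1$: $3$ is the additive maximum, since $1+3=3$ and $2+3=3$, and $2<1$ since $1+2=1$. The plan is to refute any violating inequality by an explicit evaluation into $S_{44}$. The multiplication table shows that $3$ is a multiplicative identity and $2$ is a multiplicative zero. On $\{1,3\}$ we have $1\cdot 1=2$, so a product of elements from $\{1,3\}$ equals $3$ if every factor is $3$, equals $1$ if exactly one factor is $1$, and equals $2$ if at least two factors are $1$ (or any factor is $2$). For each conclusion of the lemma we assume it fails and construct a homomorphism $\varphi$ with $\varphi(\bq)\not\le\varphi(\bu)$. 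In this order $\varphi(\bu)$ is the join of the values $\varphi(\bu_i)$, with $3$ the top.

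First, $\ell(\bq)\ge 2$. Suppose $\bq=x$ is a single variable. Since the inequality is nontrivial, $x\notin\bu$, so every $\bu_i\neq x$. Take $\varphi(x)=1$ and $\varphi(y)=2$ for $y\ne x$. A word $\bu_i$ containing some $y\neq x$ evaluates to $2$, because $2$ is a zero. A word $x^k$ with $k\ge 2$ evaluates to $2$ as well. Hence $\varphi(\bu)=2<1=\varphi(x)$, a contradiction.

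Second, $D_\bq(\bu)\ne\emptyset$. This follows from Lemma~\ref{nlemma1}(4) once we note that $D_2$ embeds in $S_{44}$ via $\{2,3\}$: here $3$ is the identity and $2$ the zero, and $2<3$, which is exactly $D_2$. Alternatively, take $\varphi(x)=3$ on $c(\bq)$ and $\varphi(y)=2$ elsewhere. Then $\varphi(\bq)=3$, while every $\bu_i$ contains a variable outside $c(\bq)$ and so evaluates to $2$.

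Third, the main claim. Let $x\in M_1(\bq)$ and suppose every $\bu_i\in D_\bq(\bu)$ has $m(x,\bu_i)\ne 1$. Take $\varphi(x)=1$, $\varphi(y)=3$ for $y\in c(\bq)\setminus\{x\}$, and $\varphi(z)=2$ for $z\notin c(\bq)$. Then $\varphi(\bq)=1$, since $x$ occurs exactly once. For $\bu_i\notin D_\bq(\bu)$ we get $\varphi(\bu_i)=2$. For $\bu_i\in D_\bq(\bu)$ we have $m(x,\bu_i)=0$ or $m(x,\bu_i)\ge 2$, giving $\varphi(\bu_i)=3$ or $2$ respectively. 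So $1\le\varphi(\bu)$ forces $\varphi(\bu)=3$, which requires some $\bu_i\in D_\bq(\bu)$ with $m(x,\bu_i)=0\le 1$, contradicting our assumption. Note that this proves the slightly stronger alternative $m(x,\bu_i)=1$ or $\varphi(\bu)=3$; either way some $\bu_i\in D_\bq(\bu)$ has $m(x,\bu_i)\le 1$.

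The only real obstacle is reading the order correctly. In $S_{44}$ the additive maximum is $3$, not $1$, and getting this wrong would invert the comparisons. Once the order is fixed, each case is a one-line evaluation.
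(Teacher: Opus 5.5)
Your proof is correct. The paper gives no proof of this lemma; it imports it from Yue et al.\ (Lemma 7.12 there). Your three substitutions into $S_{44}$ therefore give a self-contained verification, using the same kind of explicit homomorphism $\varphi\colon P_f(X^+)\to S_{(4,k)}$ that the paper itself uses in its later sections. The citation buys the paper brevity; your argument lets the reader check all three conclusions directly from the Cayley table, using that $3$ is the identity, $2$ is the zero, and $1\cdot 1=2$.

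Two small slips should be fixed.
\begin{itemize}
\item The order on $S_{44}$ is $2<1<3$, not $3<2<1$ as you first write. Your subsequent statements and computations all use $2<1<3$ correctly.
\item In the third step, nothing is actually contradicted: a word with $m(x,\bu_i)=0$ is compatible with your assumption $m(x,\bu_i)\neq 1$. Argue from the true negation instead. Suppose $m(x,\bu_i)\ge 2$ for every $\bu_i\in D_\bq(\bu)$. Then every $\varphi(\bu_i)=2$, so $\varphi(\bu)=2$, while $\varphi(\bq)=1\not\le 2$.
\end{itemize}
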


\begin{pro}\label{pro74701}
The ai-semiring variety
$\mathsf{V}(S_{(4, 747)})$ is defined by the identities
\begin{align}
&xy \approx yx; \label{74701}\\
&xy \preceq x+y;    \label{74703}\\
&xy \preceq x+yz;    \label{74704}\\
&xy \approx x^2y+xy^2;    \label{74705}\\
&xy \preceq x+xyz.    \label{74706}
\end{align}
\end{pro}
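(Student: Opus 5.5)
The plan is the standard two-part argument. First, check that $S_{(4,747)}$ satisfies \eqref{74701}--\eqref{74706}. Second, show that every inequality $\bq\preceq\bu$ valid in $S_{(4,747)}$ is derivable from them.

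For the first part, I would first express $S_{(4,747)}$ as a subdirect product of smaller algebras. The identity \eqref{74703} fails in $T_2$, and the section heading points to $S_{44}$. So I expect a decomposition via two congruences into $S_{44}$ and a $2$-element algebra among $M_2,D_2,N_2$; I would guess $M_2$, giving $\mathsf{V}(S_{(4,747)})=\mathsf{V}(S_{44},M_2)$. This must be confirmed from the Cayley table, as in the proof of Proposition~\ref{proNFB}. The identities can then be checked either directly on the $4$-element table or factorwise in $S_{44}$ and the $2$-element factor, which is finite and routine. It also yields a combinatorial description of the valid inequalities by combining Lemma~\ref{lem4401} with Lemma~\ref{nlemma1}. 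Roughly, $\bq\preceq\bu$ should hold in $S_{(4,747)}$ if and only if all of the following hold:
\begin{itemize}
\item[(a)] $\ell(\bq)\ge 2$;
\item[(b)] $c(\bq)\subseteq c(\bu)$ (or the condition coming from the second factor);
\item[(c)] $D_\bq(\bu)\neq\emptyset$;
\item[(d)] each $x\in M_1(\bq)$ has some $\bu_i\in D_\bq(\bu)$ with $m(x,\bu_i)\le1$.
\end{itemize}
I would first prove the converse of Lemma~\ref{lem4401} for commutative words, by evaluating in $S_{44}$ with $2$ acting as an absorbing "long product" element and $1$ as a square root of $2$. This should make (c)--(d) an exact criterion for $S_{44}$.

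For the derivation, let $\bq\preceq\bu$ hold in $S_{(4,747)}$. By \eqref{74701} we treat all words as commutative monomials. Repeated use of \eqref{74705} shows that $\bq$ is equivalent to a sum of words with the same content in which every variable of $M_1(\bq)$ is "doubled" except in designated positions. Consequently it suffices to derive each word $\bq'$ with $c(\bq')=c(\bq)$ and either $M_1(\bq')=\emptyset$, or $M_1(\bq')$ suitably controlled. For a variable $x$ occurring at least twice, the identities \eqref{74704} and \eqref{74706} let us bound a word by a sum of shorter pieces together with extra factors $z$. For example, \eqref{74706} gives $xy\preceq x+xyz$, which absorbs additional letters from a word $\bu_i\in D_\bq(\bu)$. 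Meanwhile \eqref{74703} splits $\bq'$ into a sum of subproducts, each of which must be bounded by some $\bu_i$.

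The key lemma I would aim for: if $\bu_i\in D_\bq(\bu)$ and every $x\in M_1(\bq)$ has an appropriate witness $\bu_j$ as in (d), then
\[
\bq\preceq \bu_{i}+\sum_{x\in M_1(\bq)}\bu_{j_x}
\]
is derivable. I would prove it by induction on $|c(\bq)|$, using \eqref{74704} to peel off one variable at a time.

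The main obstacle is the case where $M_1(\bq)\neq\emptyset$ and the witnesses $\bu_{j_x}$ differ for different $x$. In that case the word $\bq$ must be reassembled from several summands of $\bu$, and the multiplicities of the remaining variables must not be lowered in the process. Here I would first try to combine \eqref{74705}, which raises multiplicities, with \eqref{74706} to move surplus letters into the $z$-slot, handling one linear variable at a time.
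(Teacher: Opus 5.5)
Your framework is sound: $S_{(4,747)}$ is indeed a subdirect product of $M_2$ and $S_{44}$, and (a)--(d) are exactly the facts the paper extracts. You do not need the converse of Lemma~\ref{lem4401}, since only necessity is used. The gap is in the derivation.

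Your key lemma is false as stated. Its right-hand side consists only of words from $D_\bq(\bu)$, whose contents need not cover $c(\bq)$, so condition (b) has been discarded. For example, $\bq=x^2y^2$ and $\bu=x+y$ satisfy (a)--(d). Yet with $\bu_i=x$ your lemma asserts $x^2y^2\preceq x$, which fails in the subalgebra $\{3,4\}\cong M_2$ (take $x\mapsto 4$, $y\mapsto 3$), so it cannot be derivable. Your alternative of splitting $\bq$ via \eqref{74703} into subproducts, each bounded by some $\bu_i$, also breaks down: $x^2\preceq x^3$ holds in $S_{(4,747)}$ while $x\preceq x^3$ does not. The induction \emph{peeling off one variable at a time} with \eqref{74704} is never made concrete.

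The missing device is as follows. Put $y=x$ in \eqref{74705} to get $x^3\approx x^2$, which you never derive but need for every absorption step. By \eqref{74703}, $\bu\succeq\bp$ where $\bp=\bu_1^2\bu_2^2\cdots\bu_n^2$. Each variable of $c(\bu)\supseteq c(\bq)$ occurs at least twice in $\bp$, so \eqref{74701} and $x^3\approx x^2$ give $\bp\approx\bw\bp$ for every word $\bw$ with $c(\bw)\subseteq c(\bu)$. This is exactly where (b) enters.

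If $M_1(\bq)=\emptyset$, take any $\bu_i\in D_\bq(\bu)$. Then \eqref{74704} gives $\bu\succeq\bu_i+\bp\approx\bu_i+\bq\bp\succeq\bu_i\bq\approx\bq$.

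If $M_1(\bq)\neq\emptyset$, use \eqref{74705} to write $\bq$ as a sum of words of the form $x_1^2\cdots x_k^2y$, each still below $\bu$ in $S_{(4,747)}$. Thus only one linear variable has to be handled at a time, and your \emph{main obstacle} of distinct witnesses never arises. Choose $\bu_\ell\in D_\bq(\bu)$ with $m(y,\bu_\ell)\leq 1$, and let $\bq_1=\bq$ or $\bq_1=x_1^2\cdots x_k^2$ according as $m(y,\bu_\ell)=0$ or $1$. Then \eqref{74706} gives $\bu\succeq\bu_\ell+\bp\approx\bu_\ell+\bu_\ell\bq_1\bp\succeq\bu_\ell\bq_1\approx\bq$.
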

\begin{proof}
It is easy to check that $S_{(4, 747)}$ satisfies the identities \eqref{74701}--\eqref{74706}.
By identifying $y$ and $x$ in \eqref{74705}, one obtains the identity
\begin{align}
x^3& \approx x^2. \label{74702}
\end{align}
It remains to prove that every inequality of $S_{(4, 747)}$
is derivable from \eqref{74701}--\eqref{74702}.
Let $\bq\preceq \bu$ be such a nontrivial inequality, where
$\bu=\bu_1+\bu_2+\cdots+\bu_n$ with $\bu_i, \bq \in X^+$ for $1 \leq i \leq n$.

Observe that $S_{(4,747)}$ is isomorphic to a subdirect product of $M_2$ and $S_{44}$
via two congruences with nontrivial blocks $\{1, 2, 3\}$ and $\{3, 4\}$.
Hence both $M_2$ and $S_{44}$ satisfy $\bq \preceq \bu$,
and so $c(\bq)\subseteq  c(\bu)$; by Lemma~\ref{lem4401}, $\ell(\bq)\geq 2$ and $D_\bq(\bu)\neq\emptyset$.

\textbf{Case 1.}
$M_1(\bq)$ is empty. Then $m(x, \bq)\geq 2$ for every $x\in c(\bq)$.
Choose a word $\bu_i$ in $D_{\bq}(\bu)$, so $c(\mathbf{u}_i) \subseteq c(\mathbf{q})$.
Then
\[
\bu =  \bu_1+\bu_2+\cdots+\bu_n \stackrel{\eqref{74703}}
\succeq \bu_i+\bu_1^2 \bu_2^2 \cdots \bu_n^2 \stackrel{\eqref{74701}, \eqref{74702}}
\approx \bu_i + \bq \bu_1^2 \bu_2^2 \cdots \bu_n^2 \stackrel{\eqref{74704}} \succeq \bu_i \bq \stackrel{\eqref{74701}, \eqref{74702}} \approx \bq.
\]
Here the third step follows from $c(\mathbf{q}) \subseteq c(\mathbf{u})$,
while the final step is justified by $c(\mathbf{u}_i) \subseteq c(\mathbf{q})$
and $m(x, \bq)\geq 2$ for every $x\in c(\bq)$.
This derives the inequality $\bu \succeq \bq$.

\textbf{Case 2.}
$M_1(\bq)$ is nonempty. By Lemma \ref{lem4401},
for every $x \in M_{1}(\bq)$ there exists $\bu_i \in D_\bq(\bu)$ such that $m(x, \bu_i)\leq 1$.
Assume that
\[
c(\bq)=\{x_1, \ldots, x_s, y_1, \ldots, y_t\},
\]
where $m(x_i, \bq )\geq 2$, $m(y_j, \bq )=1$, $0 \leq i \leq s$, $1 \leq j \leq t$.
Then
\begin{align*}
\bq
&\approx x_1^2x_2^2\cdots x_s^2y_1y_2\cdots y_t &&(\text{by}~ \eqref{74701}, \eqref{74702})\\
&\approx\sum\limits_{1\leq j\leq t}x_1^2x_2^2\cdots x_s^2y_1^2\cdots y_{j-1}^2y_{j+1}^2\cdots y_t^2y_j.
&&(\text{by}~\eqref{74701}, \eqref{74705}, \eqref{74702})
\end{align*}
So we only need to consider the case that $\bq=x_1^2x_2^2\cdots x_k^2y$.
By Lemma \ref{lem4401}, there exists $\bu_{\ell} \in D_\bq(\bu)$ such that $m(y, \bu_{\ell})\leq 1$;
in particular, $c(\mathbf{u}_\ell) \subseteq c(\mathbf{q})$.
Then
\begin{align*}
\bu
&= \bu_1+\bu_2+\cdots+\bu_n\\
&\succeq \bu_{\ell}+\bu_1^2\bu_2^2\cdots\bu_n^2 &&(\text{by}~\eqref{74703})\\
&\approx \bu_{\ell}+\bu_{\ell}\bq_1\bu_1^2\bu_2^2\cdots\bu_n^2 &&(\text{by}~\eqref{74701}, \eqref{74702})\\
&\succeq \bu_{\ell}\bq_1 &&(\text{by}~\eqref{74706})\\
&\approx \bq, &&(\text{by}~\eqref{74701}, \eqref{74702})
\end{align*}
where $\bq_1=\bq$ if $m(y, \bu_{\ell})=0$, and $\bq_1=x_1^2x_2^2\cdots x_k^2$ if $m(y, \bu_{\ell})=1$.
Here the last step follows from $c(\bu_{\ell}) \subseteq c(\bq)$.
This derives the inequality $\bu \succeq \bq$.
\end{proof}

\begin{remark}\label{remark26070250}
We note that $\mathsf{V}(S_{(4,747)})=\mathsf{V}(M_2, S_{44})$.
This follows from the fact that $S_{(4,747)}$ is isomorphic to a subdirect product of $M_2$ and $S_{44}$.
\end{remark}

\begin{cor}
The ai-semiring $S_{(4, 587)}$ is finitely based.
\end{cor}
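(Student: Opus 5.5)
The corollary follows the pattern used in Section~\ref{section-known}: I will show that $S_{(4,587)}$ generates the same variety as $S_{(4,747)}$, namely $\mathsf{V}(M_2, S_{44})$ by Remark~\ref{remark26070250}. Proposition~\ref{pro74701} then gives the finite basis \eqref{74701}--\eqref{74706}. If $S_{(4,587)}$ instead turns out to have the dual multiplication of $S_{(4,747)}$, the elementary fact from the introduction finishes the proof. Since \eqref{74701} is commutativity, $S_{(4,747)}$ is its own dual, so I expect the relevant route to be equational equivalence.

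\textbf{Step 1.} Show $\mathsf{V}(S_{(4,587)}) \subseteq \mathsf{V}(S_{(4,747)})$. I will check directly from the Cayley table of $S_{(4,587)}$ in Table~\ref{tb1} that it satisfies \eqref{74701}--\eqref{74706}. Each identity involves at most three variables, so this is a finite verification over $\{1,2,3,4\}^3$.

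\textbf{Step 2.} Prove the reverse inclusion by realizing $M_2$ and $S_{44}$ inside $\mathsf{V}(S_{(4,587)})$. I will look for a congruence on $S_{(4,587)}$ whose quotient is $M_2$, and either a congruence with quotient isomorphic to $S_{44}$ or a $3$-element subsemiring isomorphic to $S_{44}$. By analogy with $S_{(4,747)}$, the natural candidates are:
\begin{itemize}
\item the block $\{3,4\}$, giving $S_{44}$ on the classes $\{1\},\{2\},\{3,4\}$;
\item the block $\{1,2,3\}$, giving $M_2$;
\item alternatively, the subsemiring on $\{1,2,3\}$ or $\{1,2,4\}$ for $S_{44}$.
\end{itemize}
If both quotients exist and the two congruences intersect trivially, then $S_{(4,587)}$ is a subdirect product of $M_2$ and $S_{44}$. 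In that case $\mathsf{V}(S_{(4,587)}) = \mathsf{V}(M_2,S_{44}) = \mathsf{V}(S_{(4,747)})$ directly, and Step 1 is not needed.

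\textbf{Main obstacle.} The difficulty is locating the correct copy of $S_{44}$. In $S_{44}$, the element playing the role of $3$ is the additive maximum and a multiplicative identity, $2$ is a multiplicative zero, and $1^2=2$. This pattern has to be matched against the table of $S_{(4,587)}$ up to an order-preserving relabeling.

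If no such congruence or subalgebra exists, the fallback is to prove the reverse inclusion syntactically. I would show that any inequality $\bq \preceq \bu$ holding in $S_{(4,587)}$ satisfies the necessary conditions used in the proof of Proposition~\ref{pro74701}:
\begin{itemize}
\item $c(\bq)\subseteq c(\bu)$;
\item $\ell(\bq)\ge 2$;
\item $D_\bq(\bu)\ne\emptyset$;
\item for each $x\in M_1(\bq)$, some word of $D_\bq(\bu)$ contains $x$ at most once.
\end{itemize}
Each condition would be obtained by a suitable evaluation into $S_{(4,587)}$. With these in hand, the derivation in that proof applies verbatim and shows that every inequality of $S_{(4,587)}$ follows from \eqref{74701}--\eqref{74706}.
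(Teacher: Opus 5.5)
Your strategy is the paper's: verify that $S_{(4,587)}$ satisfies \eqref{74701}--\eqref{74706}, which it does, and then show that $M_2$ and $S_{44}$ lie in $\mathsf{V}(S_{(4,587)})$. The gap is in Step 2. The one non-routine input is exhibiting $S_{44}$ inside $\mathsf{V}(S_{(4,587)})$, and you do not produce it. Moreover, every concrete candidate you name fails:
\begin{itemize}
\item The partition with block $\{3,4\}$ is a congruence, but its quotient satisfies $x^2\approx x$ (for instance $3\cdot 3=4$ stays in the class $\{3,4\}$), so it is not $S_{44}$.
\item $\{1,2,3\}$ is not a congruence block, since $2\equiv 3$ would force $2\cdot 3=3\equiv 3\cdot 3=4$.
\item $\{1,2,3\}$ is not closed under multiplication ($3\cdot 3=4$).
\item $\{1,2,4\}$ is idempotent, so it is not $S_{44}$ either.
\end{itemize}
In fact the congruences of $S_{(4,587)}$ form the chain $\Delta\subset\theta_{\{3,4\}}\subset\theta_{\{2,3,4\}}\subset\nabla$, where $\theta_Y$ is the congruence with nontrivial block $Y$. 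So $S_{(4,587)}$ is subdirectly irreducible. Your subdirect-product shortcut is therefore unavailable, and Step 1 cannot be skipped. The analogy with $S_{(4,747)}$ misleads because here $1$ is a multiplicative zero sitting on top, with the $S_{44}$-structure underneath it.

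The correct witnesses are subsemirings, as in the paper. First, $\{2,3,4\}\cong S_{44}$ via $3\mapsto 2$, $1\mapsto 3$, $2\mapsto 4$: in this copy $2$ is the additive top and multiplicative identity, $3^2=4$, and $4$ is a zero. Second, $\{1,4\}\cong M_2$; alternatively, $M_2\cong S_{(4,587)}/\theta_{\{2,3,4\}}$, which is the congruence your search for an $M_2$-quotient should have found instead of $\{1,2,3\}$. With these two facts, Step 1 and Remark~\ref{remark26070250} give $\mathsf{V}(S_{(4,587)})=\mathsf{V}(M_2,S_{44})=\mathsf{V}(S_{(4,747)})$, exactly as in the paper. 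Your syntactic fallback is then unnecessary.
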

\begin{proof}
It is straightforward to verify that $S_{(4, 587)}$ satisfies the identities \eqref{74701}--\eqref{74702}.
By Proposition~\ref{pro74701}, $\mathsf{V}(S_{(4, 587)})$ is a subvariety of $\mathsf{V}(S_{(4, 747)})$.
Conversely, since both $M_2$ and $S_{44}$ embed into $S_{(4, 587)}$
($M_2$ is isomorphic to the subalgebra $\{1,4\}$, and $S_{44}$ is isomorphic to the subalgebra $\{2,3,4\}$),
Remark~\ref{remark26070250} implies that $\mathsf{V}(S_{(4, 747)})$ is a subvariety of $\mathsf{V}(S_{(4, 587)})$.
Therefore, $\mathsf{V}(S_{(4, 587)})=\mathsf{V}(S_{(4, 747)})$.
By Proposition~\ref{pro74701}, $S_{(4, 587)}$ is finitely based.
\end{proof}

\begin{pro}\label{pro76901}
The ai-semiring variety
$\mathsf{V}(S_{(4, 769)})$ is defined by the identities
\begin{align}
&xyz \approx xzy; \label{76902}\\
&xy \preceq x;    \label{76903}\\
&xy \approx x^2y+xy^2;    \label{76904}\\
&yx \preceq x+yz;    \label{76905}\\
&xz \preceq xy+zx,    \label{76906}
\end{align}
where $z$ may be empty in \eqref{76905}, and $y$ may be empty in \eqref{76906}.
\end{pro}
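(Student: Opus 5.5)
The plan follows the pattern of the proof of Proposition~\ref{pro74701}. First, check directly from the Cayley tables that $S_{(4,769)}$ satisfies \eqref{76902}--\eqref{76906}. Next, find a subdirect decomposition of $S_{(4,769)}$ into known small algebras. Since $xy\preceq x$ and $yx\preceq x+y$ hold, and $S_{44}$ satisfies $xy\preceq x$ in its order $2<1<3$, I expect $S_{(4,769)}$ to be a subdirect product of $L_2$ and $S_{44}$, via two congruences with suitable two- and three-element blocks. I would confirm this from the table, and adjust to $L_2$ together with another known $3$-element algebra if the table forces it. With such a decomposition, every nontrivial inequality $\bq\preceq\bu$ of $S_{(4,769)}$ satisfies the following:
\begin{itemize}
\item $H_\bq(\bu)\neq\emptyset$, by Lemma~\ref{nlemma1};
\item $\ell(\bq)\ge 2$ and $D_\bq(\bu)\ne\emptyset$, by Lemma~\ref{lem4401};
\item for every $x\in M_1(\bq)$ there is some $\bu_i\in D_\bq(\bu)$ with $m(x,\bu_i)\le 1$, also by Lemma~\ref{lem4401}.
\end{itemize}
It then remains to derive $\bq\preceq\bu$ from \eqref{76902}--\eqref{76906} using only these facts.

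Next I would collect derived identities. Putting $y=x$ in \eqref{76904} gives $x^3\approx x^2$. By \eqref{76902}, every word equals $h(\bw)$ followed by a commutative product of its remaining letters. Applying \eqref{76904} together with \eqref{76902} and $x^3\approx x^2$ as in Case~2 of Proposition~\ref{pro74701}, $\bq$ is a sum of words of a restricted shape. In each such word the head is fixed, every non-head variable except at most one appears squared, and the head's multiplicity is normalized by \eqref{76904}. The tail of $\bq$ need not be preserved, which is consistent with $R_2$ not being a factor. So it suffices to treat $\bq=a\,x_1^2\cdots x_k^2$ and $\bq=a\,x_1^2\cdots x_k^2y$, where $a=h(\bq)$, in each of the cases $m(a,\bq)=1$ and $m(a,\bq)\ge2$.

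For the derivation, I pick $\bu_h\in H_\bq(\bu)$ and $\bu_\ell\in D_\bq(\bu)$, chosen so that $m(y,\bu_\ell)\le1$ when $y$ is present. From $\bu\succeq\bu_h+\bu_\ell$, I first use \eqref{76906}, $xz\preceq xy+zx$ (with $y$ possibly empty), to transport material onto a word beginning with $a$. Concretely, I aim to obtain $\bu\succeq a\,\bu_\ell$, or a word with head $a$ and content $c(\bu_\ell)\cup\{a\}$. I then use \eqref{76905}, $yx\preceq x+yz$, to multiply in the remaining letters of $\bq$. Finally \eqref{76903}, $xy\preceq x$, lets me add squared letters on the right, and \eqref{76902} with $x^3\approx x^2$ normalizes the result to $\bq$. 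This last step uses $c(\bu_\ell)\subseteq c(\bq)$ and $m(y,\bu_\ell)\le1$, exactly as in the final step of Proposition~\ref{pro74701}.

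The main obstacle is the head. In Proposition~\ref{pro74701} commutativity allowed free rearrangement, but here the head is rigid. The word in $D_\bq(\bu)$ controlling multiplicities may not begin with $h(\bq)$, while the word with the right head may have too large a content. The delicate point is therefore to combine these two words via \eqref{76906} without losing control of the single-occurrence variable $y$, or of $a$ when $m(a,\bq)=1$. I would first try to derive an intermediate inequality of the form $a\,\bw\preceq a\bv+\bw$ from \eqref{76906} and \eqref{76903}, and apply it with $\bw=\bu_\ell$. I would then split into subcases according to whether $a\in c(\bu_\ell)$ and whether $h(\bu_\ell)=a$.
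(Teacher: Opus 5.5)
Your overall route is the paper's: the same subdirect decomposition of $S_{(4,769)}$ into $L_2$ and $S_{44}$, the same consequences of Lemma~\ref{lem4401}, and the same normalization by \eqref{76902}, \eqref{76904} and $x^3\approx x^2$. You also use the same split according to whether $a=h(\bq)$ occurs in the controlling word $\bu_\ell$ and whether it is its head. However, the obstacle you identify is left unresolved, and the tool you propose for it fails exactly there.

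Your intermediate inequality $a\bw\preceq a\mathbf{v}+\bw$ needs no derivation: it is \eqref{76905} with $y\mapsto a$, $x\mapsto\bw$, $z\mapsto\mathbf{v}$. Applied with $\bw=\bu_\ell$, it breaks down when $m(a,\bq)=1=m(a,\bu_\ell)$ and $h(\bu_\ell)\neq a$. For example, take $\bq=ax^2$ and $\bu=ay+xa$. The only admissible $\bu_\ell$ is $xa$, and $a\bu_\ell\approx a^2x$ does not lie above $\bq$: putting $a\mapsto 2$, $x\mapsto 1$ in $S_{(4,769)}$ gives $\bq\mapsto 2$ but $a^2x\mapsto 3$, which is below $2$.

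The missing step is to use \eqref{76902} to move the unique $a$ to the end, writing $\bu_\ell\approx h(\bu_\ell)\bu''a$ with $a\notin c(\bu'')$. Then $\bu_\ell$ literally has the shape $zx$ of \eqref{76906}. Taking $x=a$, $y=s(\bu_h)$ (possibly empty) and $z=h(\bu_\ell)\bu''$ gives $\bu\succeq a\,h(\bu_\ell)\bu''\succeq a\,h(\bu_\ell)\bu''s(\bq)\approx\bq$. The new head replaces the old occurrence of $a$ rather than duplicating it. If instead $h(\bu_\ell)=a$, just use $\bu_\ell\succeq\bu_\ell s(\bq)\approx\bq$ by \eqref{76903}. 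This is the only place the paper needs \eqref{76906}; elsewhere \eqref{76905} and \eqref{76903} do the work.

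Second, your final list of forms still contains $\bq=a\,x_1^2\cdots x_k^2y$ with $m(a,\bq)=1$, which has two variables of multiplicity one. No single word of $D_\bq(\bu)$ need control both. For $\bq=ay$ and $\bu=a^2y+ay^2$ (an instance of \eqref{76904}), neither word has both $a$ and $y$ at most once. Moreover, neither word alone lies above $\bq$: evaluate at $a\mapsto2$, $y\mapsto1$, respectively $a\mapsto1$, $y\mapsto2$. So this form must first be split by \eqref{76904} into $a^2x_1^2\cdots x_k^2y+a\,x_1^2\cdots x_k^2y^2$. After that only the paper's three forms remain, namely $x_1^2\cdots x_s^2$, $x_1x_2^2\cdots x_s^2$ and $x_1^2\cdots x_{s-1}^2x_s$, each with at most one variable of multiplicity one.
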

\begin{proof}
It is routine to verify that $S_{(4, 769)}$ satisfies the identities \eqref{76902}--\eqref{76906}.
By identifying $y$ with $x$ in \eqref{76904}, we obtain the identity
\begin{align}
x^3 & \approx x^2. \label{76901}
\end{align}
It remains to show that every inequality of $S_{(4, 769)}$
is derivable from \eqref{76902}--\eqref{76901}.
Let $\bq \preceq \bu$ be such a nontrivial inequality, where
$\bu = \bu_1 + \cdots + \bu_n$ with $\bu_i, \bq \in X^+$ for $1 \leq i \leq n$.

It is easy to see that $S_{(4,769)}$ is isomorphic to a subdirect product of $L_2$ and $S_{44}$
via two congruences with nontrivial blocks $\{1, 2, 3\}$ and $\{3, 4\}$.
Thus both $L_2$ and $S_{44}$ satisfy $\bq \preceq \bu$,
and so $h(\bu_j) = h(\bq)$ for some $\bu_j\in \bu$;
by Lemma~\ref{lem4401}, $\ell(\mathbf{q}) \geq 2$ and $D_{\mathbf{q}}(\mathbf{u}) \neq \emptyset$.
We write $\bq=x_1x_2\cdots x_s$, where the $x_i$'s are not necessarily distinct.
By \eqref{76902} and \eqref{76904}, we have
\[
\bq \approx \sum_{i=1}^s x_1^2 \cdots x_{i-1}^2x_ix_{i+1}^2\cdots x_s^2.
\]
So we only need to consider the case that $\bq=x_1^2 \cdots x_{i-1}^2x_ix_{i+1}^2\cdots x_s^2$ for some $1\leq i \leq s$.
Using \eqref{76902} and \eqref{76901}, we may assume that each variable in $\bq$ occurs with multiplicity $1$ or $2$,
and that at most one variable occurs with multiplicity $1$;
furthermore, if such a variable exists, it must appear either at the head or at the tail of $\bq$.
Hence $\bq$ reduces to one of the following forms:
\[
x_1^2 \cdots x_s^2,\quad x_1x_2^2 \cdots x_s^2,\quad x_1^2 \cdots x_{s-1}^2x_s,
\]
where $x_1,\dots,x_s$ are distinct variables.

\textbf{Case 1.} $\bq=x_1^2 \cdots x_s^2$. Then $m(x, \bq)=2$ for all $x\in c(\bq)$.
Choose a word $\bu_i$ in $D_{\bq}(\bu)$, so $c(\bu_i)\subseteq c(\bq)$.
Then
\[
\begin{aligned}
\bu
&\succeq \bu_i+\bu_j
\stackrel{\eqref{76903}}{\succeq} \bu_i+\bu_js(\bq)
= \bu_i+ h(\bu_j)s(\bu_j)s(\bq) \\
&\stackrel{\eqref{76902}}{\approx} \bu_i+ h(\bq)s(\bq)s(\bu_j)
\approx \bu_i+ \bq s(\bu_j)
\stackrel{\eqref{76905}}{\succeq} \bq \bu_i
\stackrel{\eqref{76902}, \eqref{76901}}{\approx} \bq.
\end{aligned}
\]
The fourth step uses $h(\bu_j) = h(\bq)$,
while the last step follows from $c(\bu_i)\subseteq c(\bq)$ and $m(x, \bq)=2$ for all $x\in c(\bq)$.
This derives the identity $\bu \succeq \bq$.

\textbf{Case 2.} $\bq=x_1^2 \cdots x_{s-1}^2x_s$.
By Lemma \ref{lem4401}, there exists $\bu_{\ell} \in D_\bq(\bu)$ such that $m(x_s, \bu_{\ell})\leq 1$;
in particular, $c(\bu_{\ell}) \subseteq c(\bq)$.
Now we have
\[
\begin{aligned}
\bu
&\succeq \bu_{\ell}+\bu_j
 \stackrel{\eqref{76903}}{\succeq} \bu_{\ell} \bq_1 + \bu_j
 = \bu_{\ell} \bq_1 + h(\bu_j)s(\bu_j) \\
&\stackrel{\eqref{76905}}{\succeq} h(\bu_j) \bu_{\ell} \bq_1
 \approx h(\bq)\bu_{\ell} \bq_1
 \stackrel{\eqref{76902}, \eqref{76901}}{\approx} \bq,
\end{aligned}
\]
where $\bq_1=\bq$ if $m(x_s, \bu_{\ell})=0$, and $\bq_1=x_1^2 \cdots x_{s-1}^2$ if $m(x_s, \bu_{\ell})=1$.
The fifth step follows from $h(\bu_j) = h(\bq)$, while the last step uses $c(\bu_{\ell}) \subseteq c(\bq)$.
This derives the identity $\bu \succeq \bq$.

\textbf{Case 3.} $\bq=x_1x_2^2 \cdots x_s^2$.
By Lemma \ref{lem4401}, there exists $\bu_k \in D_\bq(\bu)$ such that $m(h(\bq), \bu_k)\leq 1$;
in particular, $c(\bu_k)\subseteq c(\bq)$.
Consider the following two subcases.

\textbf{Subase 3.1.} $m(h(\bq), \bu_k)= 0$. Then
\[
\bu \succeq \bu_k+\bu_j = \bu_k + h(\bq)s(\bu_j) \stackrel{\eqref{76905}}\succeq h(\bq)\bu_k
\stackrel{\eqref{76903}}\succeq h(\bq)\bu_ks(\bq)
\stackrel{\eqref{76902}, \eqref{76901}}\approx \bq.
\]
The second step follows from $h(\bu_j)=h(\bq)$,
and the last step uses $c(\bu_{k}) \subseteq c(\bq)$, $m(h(\bq), \bu_k)= 0$, and $h(\bq) \in M_{1}(\bq)$.
This derives the identity $\bu \succeq \bq$.

\textbf{Subase 3.2.} $m(h(\bq), \bu_k)= 1$.
If $h(\bu_k)=h(\bq)$, then
\[
\bu \succeq \bu_k \stackrel{\eqref{76903}}\succeq \bu_ks(\bq)
\stackrel{\eqref{76902}, \eqref{76901}}\approx \bq.
\]
The last step uses $c(\bu_{k}) \subseteq c(\bq)$, $m(h(\bq), \bu_k)= 1$, and $h(\bu_k)=h(\bq)$.
This derives the identity $\bu \succeq \bq$.
If $h(\bu_k)\neq h(\bq)$, then $h(\bq)\in c(s(\bu_k))$.
By \eqref{76902} one derives
\begin{equation}\label{id26071310}
\bu_k \approx h(\bu_k) \bu_k'' h(\bq),
\end{equation}
where $h(\bq)\notin c(\bu_k'')$. Furthermore, we have
\[
\begin{aligned}
\bu
&\succeq \bu_j+\bu_k
 = h(\bu_j)s(\bu_j)+\bu_k =h(\bq)s(\bu_j)+\bu_k\\
&\stackrel{\eqref{id26071310}}{\approx} h(\bq)s(\bu_j) + h(\bu_k) \bu_k'' h(\bq)
 \stackrel{\eqref{76906}}{\succeq} h(\bq)h(\bu_k) \bu_k'' \\
&\stackrel{\eqref{76903}}{\succeq} h(\bq)h(\bu_k) \bu_k'' s(\bq)
 \stackrel{\eqref{76902}, \eqref{76901}}{\approx} \bq.
\end{aligned}
\]
The third step follows $h(\bu_j)=h(\bq)$,
while the last step uses $h(\bu_k)\neq h(\bq)$, $h(\bq)\notin c(\bu_k'')$, and $c(\bu_k)\subseteq c(\bq)$.
This derives the inequality $\bu \succeq \bq$.\qedhere
\end{proof}

\begin{remark}
We note that $\mathsf{V}(S_{(4, 769)})=\mathsf{V}(L_2, S_{44})$,
since $S_{(4,769)}$ is isomorphic to a subdirect product of $L_2$ and $S_{44}$.
\end{remark}

\begin{cor}
The ai-semiring $S_{(4, 748)}$ is finitely based.
\end{cor}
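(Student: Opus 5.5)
The plan follows the template of the corollary on $S_{(4,587)}$: show that $\mathsf{V}(S_{(4,748)})=\mathsf{V}(S_{(4,769)})$. Proposition~\ref{pro76901} then gives a finite basis for $S_{(4,748)}$. (If the identities turn out to match those of $S_{(4,769)}$ only up to reversing products, we would instead compare with the dual of $S_{(4,769)}$ and use the elementary fact that dual multiplications preserve the finite basis property.)

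\textbf{Inclusion $\mathsf{V}(S_{(4,748)})\subseteq\mathsf{V}(S_{(4,769)})$.} We verify directly from the Cayley table of $S_{(4,748)}$ that it satisfies \eqref{76902}--\eqref{76906}, including the degenerate instances with $z$ empty in \eqref{76905} and $y$ empty in \eqref{76906}. This is a finite check over $\{1,2,3,4\}$. It can be shortened by noting that all of the identities are implied by $xyz\approx xzy$, the order conditions, and the behaviour of products of length at most three. Since these identities form a basis of $\mathsf{V}(S_{(4,769)})$ by Proposition~\ref{pro76901}, the inclusion follows.

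\textbf{Inclusion $\mathsf{V}(S_{(4,769)})\subseteq\mathsf{V}(S_{(4,748)})$.} By the remark following Proposition~\ref{pro76901}, $\mathsf{V}(S_{(4,769)})=\mathsf{V}(L_2,S_{44})$. So it suffices to show that both $L_2$ and $S_{44}$ lie in $\mathsf{V}(S_{(4,748)})$. We first look for subalgebras: a two-element subset $\{a,b\}$ with $b<a$ whose products are left-zero ($ab=a$, $ba=b$) would give $L_2$. A three-element subchain on which multiplication matches Table~\ref{tb26070302}, for example $\{2,3,4\}$ or $\{1,3,4\}$ relabelled in order, would give $S_{44}$. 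If no such subalgebras exist, we fall back on quotients: find congruences of $S_{(4,748)}$ with a nontrivial block such as $\{3,4\}$ (quotient $\cong S_{44}$) or $\{1,2,3\}$ (quotient $\cong L_2$). As in Proposition~\ref{pro76901}, the cleanest outcome would be that $S_{(4,748)}$ is itself a subdirect product of $L_2$ and $S_{44}$, which yields equality of the varieties immediately.

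\textbf{Main obstacle.} The only real work is the second inclusion: locating the copies of $L_2$ and $S_{44}$, or the right congruences, in the table of $S_{(4,748)}$. I would first try to copy the structure of $S_{(4,769)}$: a congruence with blocks $\{1,2,3\},\{4\}$ and one with blocks $\{3,4\},\{1\},\{2\}$, and check whether these are compatible with multiplication. If $L_2$ instead arises only as a subsemiring, the argument remains the same. Once both are in hand, $\mathsf{V}(S_{(4,748)})=\mathsf{V}(S_{(4,769)})$, and $S_{(4,748)}$ is finitely based.
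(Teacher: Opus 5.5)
Your main plan cannot succeed, because $\mathsf{V}(S_{(4,748)})\neq\mathsf{V}(S_{(4,769)})$: the multiplication table of $S_{(4,748)}$ is exactly the transpose of that of $S_{(4,769)}$, and both of your inclusions fail.

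For the first inclusion, $S_{(4,748)}$ violates \eqref{76902}. With $x=z=1$ and $y=4$ you get $xyz=(1\cdot 4)\cdot 1=4\cdot 1=3$, whereas $xzy=1\cdot 4=4$. It also violates \eqref{76903}, since $4\cdot 1=3$ lies strictly above $4$ in the additive order.

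For the second inclusion, $L_2$ does not lie in $\mathsf{V}(S_{(4,748)})$ at all. The algebra $S_{(4,748)}$ satisfies the reversed inequality $yx\preceq x$, which fails in the left-zero semiring $L_2$ (take $y=1$, $x=0$). What you will find instead is a copy of $R_2$, namely $\{3,4\}$. In fact $S_{(4,748)}$ is a subdirect product of $R_2$ and $S_{44}$ via the congruences with nontrivial blocks $\{1,2,3\}$ and $\{3,4\}$. Hence $\mathsf{V}(S_{(4,748)})=\mathsf{V}(R_2,S_{44})$, which is the dual of $\mathsf{V}(L_2,S_{44})$ ($S_{44}$ is commutative). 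So your ``main obstacle'' of locating $L_2$ is a dead end.

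Your parenthetical fallback is the correct idea, and it is in fact the entire proof; this is exactly what the paper does. $S_{(4,748)}$ and $S_{(4,769)}$ have the same additive reduct and dual multiplications, so an identity holds in one if and only if its word-by-word reversal holds in the other. Reversing every word in \eqref{76902}--\eqref{76906} therefore gives a finite basis for $S_{(4,748)}$, by Proposition~\ref{pro76901}. No variety comparison, subalgebra search, or congruence computation is needed.
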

\begin{proof}
Since $S_{(4, 748)}$ and $S_{(4, 769)}$ have dual multiplications, it follows from
Proposition {\ref{pro76901}} that $S_{(4, 748)}$ is also finitely based.
\end{proof}

\begin{table}[ht]
\caption{The Cayley tables of $S_{(4,379)}$} \label{tb437901}
\begin{tabular}{c|cccc}
$+$      &$1$ &$2$ &$3$ &$4$\\
\hline
$1$      &$1$ &$2$ &$1$ &$1$\\
$2$      &$2$ &$2$ &$2$ &$2$\\
$3$      &$1$ &$2$ &$3$ &$1$\\
$4$      &$1$ &$2$ &$1$ &$4$\\
\end{tabular}\qquad
\begin{tabular}{c|cccc}
$\cdot$      &$1$ &$2$ &$3$ &$4$\\
\hline
$1$      &$3$ &$1$ &$3$ &$3$\\
$2$      &$1$ &$2$ &$3$ &$1$\\
$3$      &$3$ &$3$ &$3$ &$3$\\
$4$      &$3$ &$1$ &$3$ &$3$\\
\end{tabular}
\end{table}

\begin{pro}
The ai-semiring $S_{(4, 746)}$ is finitely based.
\end{pro}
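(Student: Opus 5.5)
The plan is to show that $S_{(4,746)}$ is equationally equivalent to $S_{(4,379)}$. That algebra belongs to an earlier type; its Cayley tables are displayed in Table~\ref{tb437901}, just before the statement. Its finite basis property was settled in the earlier papers of this series \cite{rlzc, yrzs, rlyc}, and presumably it is finitely based there. Placing the table immediately before the statement strongly suggests this route, in the same spirit as Proposition~\ref{412}.

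\textbf{Step 1: decompose $S_{(4,379)}$.} I will analyse the congruences of $S_{(4,379)}$. The element $3$ is a multiplicative zero. Its additive order has $2$ at the top, $1$ below it, and $3$ and $4$ as incomparable atoms covered by $1$. I expect $S_{(4,379)}$ to be a subdirect product of $S_{44}$- or $S_{53}$-like factors together with a $2$-element semiring such as $D_2$ or $M_2$. So I would compute $\mathsf{V}(S_{(4,379)})$ as $\mathsf{V}(A,B)$ for suitable small algebras $A$ and $B$. From Table~\ref{tb437901}, the subalgebra $\{1,2,3\}$ has exactly the Cayley tables of $S_{53}$, so $S_{53}$ embeds into $S_{(4,379)}$.

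\textbf{Step 2: decompose $S_{(4,746)}$.} Likewise, I will exhibit $S_{(4,746)}$ as a subdirect product of small algebras. The model is $S_{(4,747)}$, which is a subdirect product of $M_2$ and $S_{44}$ via the congruences with nontrivial blocks $\{1,2,3\}$ and $\{3,4\}$. I would look for the same kind of pair of congruences on $S_{(4,746)}$, say blocks $\{1,2,3\}$ and $\{3,4\}$, or $\{1,2\}$ and $\{2,3,4\}$ as in Proposition~\ref{proNFB}. This would give $\mathsf{V}(S_{(4,746)})=\mathsf{V}(C,D)$ for some $2$- or $3$-element $C$ and $D$.

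\textbf{Step 3: match the two varieties.} I will check that the generating sets obtained in Steps 1 and 2 generate the same variety. The cleanest form is that each factor of one decomposition lies in the variety generated by the other algebra, either by embedding as a subalgebra or by being a homomorphic image. This yields $\mathsf{V}(S_{(4,746)})=\mathsf{V}(S_{(4,379)})$, so $S_{(4,746)}$ is finitely based by the earlier result for $S_{(4,379)}$. If instead the decomposition gives $\mathsf{V}(S_{(4,746)})=\mathsf{V}(S_{53},D_2)$, that route fails, since Proposition~\ref{proNFB} shows this variety is nonfinitely based; the factor containing $S_{53}$ must then be paired with something other than $D_2$.

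\textbf{Main obstacle.} The hard part is Step 3, namely confirming that the factors really coincide. If $S_{(4,746)}$ turns out not to be equivalent to $S_{(4,379)}$, I would fall back on a direct proof modelled on Proposition~\ref{pro74701}. This means:
\begin{itemize}
\item write down a candidate finite set of identities satisfied by $S_{(4,746)}$, taking the basis of $S_{(4,379)}$ from the earlier paper as a guide;
\item for a nontrivial inequality $\bq\preceq\bu$ of $S_{(4,746)}$, use the factor semirings, together with Lemma~\ref{nlemma1} and Lemma~\ref{lem4401}, to extract combinatorial conditions on $\bu$ relative to $\bq$;
\item derive $\bq\preceq\bu$ by case analysis on $M_1(\bq)$, as in Proposition~\ref{pro74701}.
\end{itemize}
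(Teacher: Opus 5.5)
You have the paper's route: prove $\mathsf{V}(S_{(4,746)})=\mathsf{V}(S_{(4,379)})$ and quote the earlier result. But the proposal never identifies the factors, and the one concrete identification you make is false and would sink Step~3.

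\textbf{The error in Step~1.} The multiplication of $S_{(4,379)}$ restricted to $\{1,2,3\}$ does coincide literally with that of $S_{53}$, but the addition does not. On $\{1,2,3\}$ the additively absorbing element is $2$, so the order is $2>1>3$; in $S_{53}$ the absorbing element is $3$. Since $2$ is the multiplicative identity and $3$ the zero of this subalgebra, it has its identity on top and its zero at the bottom. So it is $S_{44}$, the order-dual of $S_{53}$, not $S_{53}$. In fact $S_{53}$ lies in neither variety. The inequality $x^2\preceq x+y^2$ holds in $S_{(4,379)}$ and in $S_{(4,746)}$ (a direct check, or because it holds in $S_{44}$, where $x^2\le x$, and in $T_2$, where $y^2$ is the top). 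It fails in $S_{53}$ at $x\mapsto 1$, $y\mapsto 2$, which would require $3\le 1$. So if you carry Step~1 into Step~3, you will conclude that the two varieties differ and abandon the correct route for your unspecified fallback.

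\textbf{What is missing.} The missing content is the pair of decompositions, and that is the whole proof.
\begin{itemize}
\item In $S_{(4,746)}$, with chain $1>2>3>4$, the congruence with nontrivial block $\{3,4\}$ has quotient $S_{44}$. There $1$ acts as the identity and $2\cdot 2=3$ lands in the zero class.
\item The congruence on $S_{(4,746)}$ with block $\{1,2,3\}$ has quotient $T_2$, because every product lies in $\{1,2,3\}$. Your Step~2 guess of these blocks is right, but the $2$-element factor is $T_2$, not $D_2$ or $M_2$.
\item In $S_{(4,379)}$, the congruence with block $\{1,4\}$ has quotient $S_{44}$, and the one with block $\{1,2,3\}$ has quotient $T_2$, again because all products lie in $\{1,2,3\}$.
\end{itemize}
Both pairs of congruences intersect trivially. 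Hence $\mathsf{V}(S_{(4,746)})=\mathsf{V}(S_{44},T_2)=\mathsf{V}(S_{(4,379)})$, and \cite[Proposition 5.4]{rlyc} finishes the proof; no direct basis argument is needed.
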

\begin{proof}
It is easy to check that
$S_{(4,746)}$ is isomorphic to a subdirect product of $S_{44}$ and $T_2$
via two congruences with nontrivial blocks $\{3, 4\}$ and $\{1, 2, 3\}$,
and that $S_{(4,379)}$ is isomorphic to a subdirect product of $S_{44}$ and $T_2$
via two congruences with nontrivial blocks $\{1, 4\}$ and $\{1, 2, 3\}$,
where the Cayley tables of $S_{(4,379)}$ are given in Table~\ref{tb437901}.
Thus $\mathsf{V}(S_{(4, 379)})=\mathsf{V}(S_{(4,746)})$, and so
$S_{(4, 379)}$ and $S_{(4,746)}$ have the same finite basis property.
By \cite[Proposition 5.4]{rlyc}, $S_{(4, 379)}$ is finitely based.
We therefore obtain that $S_{(4,746)}$ is also finitely based.
\end{proof}

\section{Equational bases for 4-element ai-semirings related to $S_{46}$}
In this section, we study the finite basis problem for several 4-element ai-semirings that relate to $S_{46}$,
whose Cayley tables are given in Table~\ref{tb4601}.

\begin{table}[ht]
\caption{The Cayley tables of $S_{46}$} \label{tb4601}
\begin{tabular}{c|ccc}
$+$      &$1$ &$2$ &$3$\\
\hline
$1$      &$1$ &$1$ &$3$\\
$2$      &$1$ &$2$ &$3$\\
$3$      &$3$ &$3$ &$3$\\
\end{tabular}\qquad
\begin{tabular}{c|ccc}
$\cdot$      &$1$ &$2$ &$3$\\
\hline
$1$      &$2$ &$2$ &$2$\\
$2$      &$2$ &$2$ &$2$\\
$3$      &$1$ &$2$ &$3$\\
\end{tabular}
\end{table}

The following result, which is due to Yue et al. \cite[Lemma 7.15]{yrzs},
provides some information about the inequalities satisfied by $S_{46}$.

\begin{lem}\label{lem4601}
Let $\bq \preceq \bu$ be a nontrivial inequality such that
$\bu=\bu_1+\bu_2+\cdots+\bu_n$ with $\bu_i, \bq \in X^+$ for $1\leq i \leq n$.
Suppose that $\bq \preceq \bu$ is satisfied by $S_{46}$.
Then $\ell(\bq)\geq 2$.
Moreover, if $m(t(\bq), \bq)=1$,
then there exists $\bu_i \in D_\bq(\bu)$ such that
$t(\bq)\notin c(p(\bu_i))$.
\end{lem}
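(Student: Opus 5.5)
The plan is to test the inequality $\bq\preceq\bu$ directly against well-chosen assignments into $S_{46}$, using a simple description of how words evaluate there.

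First I would record that description, read off from Table~\ref{tb4601}. The additive order is $2<1<3$. The element $3$ is a left identity: $3a=a$ for all $a$. Every product whose left factor is $1$ or $2$ equals $2$. Consequently, for a word $\bw=z_1\cdots z_k$ and a homomorphism $\varphi$ into $S_{46}$, the value $\varphi(\bw)$ is as follows:
\begin{itemize}
\item[(a)] $\varphi(\bw)=3$ if every letter is sent to $3$;
\item[(b)] $\varphi(\bw)=\varphi(z_k)$ if only the last letter is sent to a non-$3$ element;
\item[(c)] $\varphi(\bw)=2$ otherwise, that is, as soon as some letter in $p(\bw)$ is sent to $1$ or $2$.
\end{itemize}
In particular, $\varphi(\bu)$ equals the maximum of the values $\varphi(\bu_i)$ in the order $2<1<3$.

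\textbf{Step 1: $\ell(\bq)\geq 2$.} Suppose instead that $\bq=x$ is a single variable. Since the inequality is nontrivial, $x\notin\bu$ as a summand. Define $\varphi(x)=1$ and $\varphi(z)=2$ for every other variable $z$. Then $\varphi(\bq)=1$. Any $\bu_i$ containing a variable other than $x$ has a factor $2$, so $\varphi(\bu_i)=2$ by (b) or (c). Any $\bu_i=x^k$ has $k\geq 2$, so $\varphi(\bu_i)=1\cdot 1\cdots=2$. Hence $\varphi(\bu)=2<1=\varphi(\bq)$, which contradicts the assumption that $S_{46}$ satisfies $\bq\preceq\bu$.

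\textbf{Step 2: the conclusion about $t(\bq)$.} Let $y=t(\bq)$ with $m(y,\bq)=1$, and argue by contraposition. Assume that every $\bu_i\in D_\bq(\bu)$ satisfies $y\in c(p(\bu_i))$. Define
\[
\varphi(y)=1,\qquad \varphi(x)=3 \text{ for } x\in c(\bq)\setminus\{y\},\qquad \varphi(z)=2 \text{ for } z\notin c(\bq).
\]
Because $y$ occurs in $\bq$ only as its last letter, (b) gives $\varphi(\bq)=1$. Now evaluate the summands of $\bu$.
\begin{itemize}
\item[(i)] If $\bu_i\notin D_\bq(\bu)$, then $\bu_i$ contains a letter sent to $2$, so $\varphi(\bu_i)=2$. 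This holds whether that letter is the tail (the prefix letters are then sent to $3$, $1$ or $2$, and in every case the product is $2$) or lies in the prefix.
\item[(ii)] If $\bu_i\in D_\bq(\bu)$, then by assumption $y$, which is sent to $1$, occurs in $p(\bu_i)$. Then (c) gives $\varphi(\bu_i)=2$.
\end{itemize}
Hence $\varphi(\bu)=2<1=\varphi(\bq)$, a contradiction. Therefore some $\bu_i\in D_\bq(\bu)$ has $t(\bq)\notin c(p(\bu_i))$.

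This includes the case where $y\notin c(\bu_i)$: such a word would have value $3$ under $\varphi$, which is why it has to be admitted as a witness. The only real obstacle is finding the right assignment in Step 2. Sending the other letters of $c(\bq)$ to $3$ forces $\bq$ to take the value $1$, while every summand of $\bu$ that uses $y$ non-finally, or uses a foreign variable, collapses to the bottom element $2$. The remaining checks are routine applications of (a)--(c).
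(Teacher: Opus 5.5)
Your proof is correct. The evaluation rules (a)--(c) follow from two facts about $S_{46}$: $3$ is a left identity, and every product whose left factor is $1$ or $2$ equals $2$. Both of your substitutions give $\varphi(\bu)=2<1=\varphi(\bq)$, exactly as you claim.

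The paper itself gives no argument for this lemma; it simply quotes it from Yue et al.\ (their Lemma 7.15). Your direct verification therefore makes it self-contained.

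Two small remarks:
\begin{itemize}
\item Your Step~1 substitution takes values in the subalgebra $\{1,2\}$ of $S_{46}$, which is a copy of $N_2$. So Step~1 is just Lemma~\ref{nlemma1}(5).
\item In Step~2 your contrapositive hypothesis may hold vacuously. Hence the same contradiction also shows $D_\bq(\bu)\neq\emptyset$, which the statement implicitly needs.
\end{itemize}
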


\begin{pro}\label{pro79201}
The ai-semiring variety $\mathsf{V}(S_{(4, 792)})$ is defined by the identities
\begin{align}
&x^2y \approx xy; \label{79201}\\
&xyz \approx yxz; \label{79202}\\
&xy \preceq x+y; \label{79203}\\
&xy \preceq x+yz; \label{79204}\\
&yx \preceq x+yz.    \label{79205}
\end{align}
\end{pro}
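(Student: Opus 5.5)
The plan is to follow the pattern of Propositions~\ref{pro74701} and~\ref{pro76901}. First I would check by direct computation that $S_{(4,792)}$ satisfies \eqref{79201}--\eqref{79205}. Then I would locate a subdirect decomposition: I expect $S_{(4,792)}$ to be a subdirect product of $M_2$ and $S_{46}$, via congruences with nontrivial blocks $\{1,2,3\}$ and $\{3,4\}$ as in the $S_{44}$ case. Consequently any nontrivial inequality $\bq\preceq\bu$ of $S_{(4,792)}$, with $\bu=\bu_1+\cdots+\bu_n$, satisfies $c(\bq)\subseteq c(\bu)$ by Lemma~\ref{nlemma1}(3). By Lemma~\ref{lem4601} it also satisfies $\ell(\bq)\geq 2$, and, if $m(t(\bq),\bq)=1$, there is some $\bu_i\in D_\bq(\bu)$ with $t(\bq)\notin c(p(\bu_i))$.

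Next I would fix a normal form. Setting $y=x$ in \eqref{79201} gives $x^3\approx x^2$. By \eqref{79202}, the letters of $p(\bq)$ may be permuted freely, and by \eqref{79201} repeated letters in the prefix collapse. Hence every word of length at least $2$ is equivalent to $x_1x_2\cdots x_k\,t$, where $x_1,\dots,x_k$ are the distinct letters of $p(\bq)$ and $t=t(\bq)$; the letter $t$ may or may not be among the $x_i$. Both $\bq$ and the $\bu_i$ can be put in this form. Repeated use of \eqref{79203} gives $\bu\succeq \bu_j+\bw$, where $\bw=\bu_1\bu_2\cdots\bu_n$ and $c(\bw)=c(\bu)\supseteq c(\bq)$.

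\textbf{Case 1: $m(t(\bq),\bq)\geq 2$.} Here $\bq\approx x_1\cdots x_k$ with $t=x_k$, say, possibly with $t$ also moved to the end. I would use \eqref{79204} and \eqref{79205} to pass from $\bw$ to a word whose prefix contains $c(\bq)$, and then trim the extra variables. The natural first try is \eqref{79205} with $y:=p(\bq)$ and $x:=\bw$, followed by \eqref{79201}/\eqref{79202} to normalise. The difficulty is that the only lower-bound information here is $c(\bq)\subseteq c(\bu)$, so no words of $\bu$ lie inside $c(\bq)$ to help. Removing the variables of $c(\bu)\setminus c(\bq)$ from a product is therefore the main obstacle.

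What I would try first is to show from the identities that a word of the form $\bq\,\bv$, with $c(\bv)\subseteq c(\bq)$ and $t(\bq)\in c(p(\bq))$, collapses to $\bq$. I would then arrange, via \eqref{79204} with $x:=\bq$-type prefixes and $z$ absorbing the extra letters, that the extra letters land only in the suffix part $z$, which \eqref{79204} discards. If this fails, I would return to the tables to see whether $M_2$ should be replaced by $R_2$ or $D_2$ in the decomposition, which would give extra lower-bound information.

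\textbf{Case 2: $m(t(\bq),\bq)=1$.} Take $\bu_i\in D_\bq(\bu)$ with $t\notin c(p(\bu_i))$. If $t\in c(\bu_i)$, then $t(\bu_i)=t$ and $c(p(\bu_i))\subseteq c(p(\bq))$, so \eqref{79201} and \eqref{79202} give $\bq\approx p(\bq)\bu_i$. I would then derive $p(\bq)\bu_i\preceq \bu_i+p(\bq)\bw$ from \eqref{79205} with $x:=\bu_i$, $y:=p(\bq)$ and $z:=\bw$. The term $p(\bq)\bw$ is in turn obtained from $\bw$ using \eqref{79204} or \eqref{79203} together with $c(\bq)\subseteq c(\bw)$. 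If $t\notin c(\bu_i)$, I would use \eqref{79204} with $x:=\bu_i$ and $y:=\bq$ (or a prefix of it) to attach $\bq$ on the right of $\bu_i$. Then $\bu_i\bq\approx\bq$, because $c(\bu_i)\subseteq c(p(\bq))$ and prefix letters commute and are idempotent by \eqref{79201} and \eqref{79202}.
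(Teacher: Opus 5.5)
Your Case~2 is essentially the paper's argument, but Case~1 has a genuine gap, and it is the one you flag yourself. In Case~1 you only know $c(\bq)\subseteq c(\bu)$ and $\ell(\bq)\geq 2$ (Lemma~\ref{lem4601} gives nothing more when $m(t(\bq),\bq)\geq 2$), and these conditions are not enough. For example, $x^2\preceq xy$ has $c(\bq)\subseteq c(\bu)$, $\ell(\bq)=2$ and $m(t(\bq),\bq)=2$. Yet it fails in $S_{(4,792)}$: put $x=1$, $y=3$, so $x^2=1$ and $xy=3$, while $1$ is the additive maximum. So Case~1 cannot be closed from the information you extracted. Your idea of pushing the extra letters into the discarded suffix $z$ of \eqref{79204} stalls for the same reason: it needs a summand $x$ with $\bu\succeq x$ and $c(x)\subseteq c(\bq)$, and you never produce one.

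The missing ingredient is that $D_2$ is isomorphic to the subalgebra $\{1,3\}$ of $S_{(4,792)}$. By Lemma~\ref{nlemma1}(4) this gives some $\bu_i\in D_\bq(\bu)$. Your decomposition into $M_2$ and $S_{46}$ is correct and need not be changed. ($D_2$ also sits inside $S_{46}$ as $\{2,3\}$, but that fact is not among the conclusions recorded in Lemma~\ref{lem4601}.) With such a $\bu_i$, Case~1 is a one-line derivation:
\[
\bu\succeq \bu_i+\bu_1^2\cdots\bu_n^2\approx \bu_i+\bq\,\bu_1^2\cdots\bu_n^2\succeq \bu_i\bq\approx\bq.
\]
The steps use, in order:
\begin{itemize}
\item \eqref{79203};
\item \eqref{79201} and \eqref{79202} together with $c(\bq)\subseteq c(\bu)$;
\item \eqref{79204} with $x=\bu_i$, $y=\bq$;
\item $c(\bu_i)\subseteq c(\bq)=c(p(\bq))$.
\end{itemize}

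Your Case~2 needs one small repair: take $\bw=\bu_1^2\cdots\bu_n^2$ rather than $\bu_1\cdots\bu_n$. The reason is that $\bw\approx\bq\bw$ (or $\bw\approx p(\bq)\bw$) requires $c(\bq)\subseteq c(p(\bw))$. This can fail for $\bu_1\cdots\bu_n$ when $t(\bu_n)$ occurs nowhere else in $\bu$.
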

\begin{proof}
It is easy to check that $S_{(4, 792)}$ satisfies the identities \eqref{79201}--\eqref{79205}.
In the remainder we need only prove that every inequality of $S_{(4, 792)}$
is derivable from \eqref{79201}--\eqref{79205}.
Let $\bq\preceq \bu$ be such a nontrivial inequality, where
$\bu=\bu_1+\bu_2+\cdots+\bu_n$ with $\bu_i, \bq \in X^+$ for $1 \leq i \leq n$.

Since $D_2$ is isomorphic to the subalgebra $\{1,3\}$ of $S_{(4,792)}$,
it follows that $D_2$ satisfies $\bq\preceq \bu$, and so
there exists $\bu_i \in \bu$ such that $c(\bu_i)\subseteq c(\bq)$.
Observe that $S_{(4,792)}$ is isomorphic to a subdirect product of $M_2$ and $S_{46}$
via two congruences with nontrivial blocks $\{1, 2, 3\}$ and $\{3, 4\}$.
Hence both $M_2$ and $S_{46}$ satisfy $\bq\preceq \bu$, and so $c(\bq)\subseteq c(\bu)$,
and by Lemma~\ref{lem4601}, $\ell(\bq)\geq 2$.
Moreover, if $m(t(\bq), \bq)=1$,
then by Lemma~\ref{lem4601} there exists $\bu_j \in D_\bq(\bu)$ such that $t(\bq)\notin c(p(\bu_j))$;
in particular, $c(\bu_j)\subseteq c(\bq)$.
Thus $t(\bq)$ occurs in $\bu_j$ at most once, and if it occurs, it must be the tail of $\bu_j$.

\textbf{Case 1.} $m(t(\bq),\bq)\geq 2$. Then
\[
\bu =  \bu_1+\bu_2+\cdots +\bu_n \stackrel{\eqref{79203}} \succeq
\bu_i+\bu_1^2 \bu_2^2 \cdots \bu_n^2 \stackrel{\eqref{79201}, \eqref{79202}}
\approx \bu_i+\bq\bp\stackrel{\eqref{79204}} \succeq \bu_i \bq\stackrel{\eqref{79201}, \eqref{79202}} \approx \bq.
\]
The third step uses $c(\bq)\subseteq c(\bu)$, and the last step follows from
$c(\bu_i)\subseteq c(\bq)$ and $m(t(\bq),\bq)\geq 2$.
This derives the inequality $\bu\succeq\bq$.

\textbf{Case 2.} $m(t(\bq),\bq)=1$. Then $t(\bq)\notin c(p(\bq))$.

If $t(\bq)\neq t(\bu_j)$,
then $t(\bq)\notin c(\bu_j)$, since $t(\bq)\notin c(p(\bu_j))$.
So we have
\[
\bu =  \bu_1+\bu_2+\cdots +\bu_n \stackrel{\eqref{79203}} \succeq
\bu_j+\bu_1^2 \bu_2^2 \cdots \bu_n^2 \stackrel{\eqref{79201}, \eqref{79202}}
\approx \bu_j+\bq\bp\stackrel{\eqref{79204}} \succeq \bu_j \bq\stackrel{\eqref{79201}, \eqref{79202}} \approx \bq.
\]
The last step follows from $m(t(\bq),\bq)=1$, $t(\bq)\notin c(\bu_j)$, and $c(\bu_j)\subseteq c(\bq)$.
This derives the inequality $\bu\succeq\bq$.

If $t(\bq)=t(\bu_j)$, then
\[
\bu=\bu_1+\bu_2+\cdots + \bu_n \stackrel{\eqref{79203}} \succeq \bu_j+\bu_1^2 \bu_2^2 \cdots \bu_n^2 \stackrel{\eqref{79201}, \eqref{79202}} \approx \bu_j+p(\bq) \bp\stackrel{\eqref{79205}} \succeq p(\bq) \bu_j  \stackrel{\eqref{79201}, \eqref{79202}} \approx \bq.
\]
The last step follows from $t(\bq)= t(\bu_j)$, $t(\bq)\notin c(p(\bu_j))$, and $c(\bu_j)\subseteq c(\bq)$.
This derives the inequality $\bu\succeq\bq$.
\end{proof}

\begin{remark}\label{remark26011201}
We note that $\mathsf{V}(S_{(4, 792)})=\mathsf{V}(M_2, S_{46})$,
since $S_{(4,792)}$ is isomorphic to a subdirect product of $M_2$ and $S_{46}$.
\end{remark}

\begin{cor}\label{coro26071210}
The ai-semiring $S_{(4, 591)}$ is finitely based.
\end{cor}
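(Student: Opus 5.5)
The plan follows the corollary for $S_{(4,587)}$ after Proposition~\ref{pro74701}: show that $\mathsf{V}(S_{(4,591)})=\mathsf{V}(S_{(4,792)})$ and then invoke Proposition~\ref{pro79201}. Since we do not have the Cayley table of $S_{(4,591)}$ in front of us, the concrete checks below are to be carried out from Table~\ref{tb1}.

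\textbf{Inclusion $\mathsf{V}(S_{(4,591)})\subseteq\mathsf{V}(S_{(4,792)})$.} First verify directly that $S_{(4,591)}$ satisfies the identities \eqref{79201}--\eqref{79205}. This is a finite check over assignments of at most three variables into $\{1,2,3,4\}$. By Proposition~\ref{pro79201}, these identities define $\mathsf{V}(S_{(4,792)})$, so the inclusion follows.

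\textbf{Reverse inclusion.} By Remark~\ref{remark26011201}, $\mathsf{V}(S_{(4,792)})=\mathsf{V}(M_2,S_{46})$. So it suffices to show that both $M_2$ and $S_{46}$ lie in $\mathsf{V}(S_{(4,591)})$. The natural route is to locate them as subalgebras of $S_{(4,591)}$, as was done for $S_{(4,587)}$.
\begin{itemize}
\item For $M_2$, look for a two-element chain $\{a,b\}$ with $a$ above $b$ such that $aa=ab=ba=a$ and $bb=b$.
\item For $S_{46}$, look for a three-element chain that is closed under multiplication and behaves like $S_{46}$. Its additive top should act as the left-zero-like element $2$ of $S_{46}$, and its bottom should act as a left identity. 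A likely candidate is $\{2,3,4\}$ or $\{1,2,4\}$.
\end{itemize}

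\textbf{Fallback.} If a direct embedding is unavailable for one of them, find congruences of $S_{(4,591)}$ whose quotient is $M_2$ or $S_{46}$. A natural choice is blocks such as $\{1,2,3\}$ and $\{3,4\}$, mirroring the subdirect decomposition of $S_{(4,792)}$. Homomorphic images also remain in the variety, so this route works equally well.

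\textbf{Main obstacle.} The one delicate step is identifying the copy of $S_{46}$, either as a subalgebra or as a quotient. The rest consists of routine table verification.

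With both inclusions established, $\mathsf{V}(S_{(4,591)})=\mathsf{V}(S_{(4,792)})$. Proposition~\ref{pro79201} then shows that $S_{(4,591)}$ is finitely based.
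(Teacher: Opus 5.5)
Your plan is the paper's argument. The paper checks that $S_{(4,591)}$ satisfies the basis \eqref{79201}--\eqref{79205} of Proposition~\ref{pro79201}. For the reverse inclusion it embeds $M_2$ as $\{1,4\}$ and $S_{46}$ as $\{2,3,4\}$, then applies Remark~\ref{remark26011201}.

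Your candidate $\{2,3,4\}$ is the right one, but your description of $S_{46}$ is reversed, and applied literally it would make you reject that candidate.
\begin{itemize}
\item In $S_{46}$ the additive top $3$ is a left identity, and the additive bottom $2$ is a two-sided multiplicative zero.
\item Correspondingly, in $\{2,3,4\}\subseteq S_{(4,591)}$ the top $2$ satisfies $2x=x$, and the bottom $4$ is a zero.
\item The middle element satisfies $3x=4$ and $2\cdot 3=3$.
\end{itemize}
Your other candidate $\{1,2,4\}$ fails: there $2$ is a two-sided identity, and $S_{46}$ has none.

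Your fallback also does not exist. The block $\{1,2,3\}$ is not a congruence class of $S_{(4,591)}$, since $1\cdot 4=1$ while $2\cdot 4=4$. So the subalgebra route is the one you must use.
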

\begin{proof}
It is straightforward to verify that $S_{(4, 591)}$ satisfies the identities \eqref{79201}--\eqref{79202}.
By Proposition~\ref{pro79201}, $\mathsf V(S_{(4, 591)})$ is a subvariety of $\mathsf{V}(S_{(4, 792)})$.
Conversely, it is easy to check that $M_2$ is isomorphic to the subalgebra $\{1,4\}$ of $S_{(4, 591)}$,
and that $S_{46}$ is isomorphic to the subalgebra $\{2,3,4\}$ of $S_{(4, 591)}$.
By Remark~\ref{remark26011201}, $\mathsf{V}(S_{(4, 792)})$ is a subvariety of $\mathsf V(S_{(4, 591)})$.
Thus $V(S_{(4, 591)})=\mathsf{V}(S_{(4, 792)})$,
and so $S_{(4, 591)}$ and $S_{(4, 792)}$ have the same finite basis property.
By Proposition~\ref{pro79201}, $S_{(4, 792)}$ is finitely based.
We therefore obtain that $S_{(4, 591)}$ is finitely based.
\end{proof}

\begin{cor}
The ai-semiring $S_{(4, 588)}$ and $S_{(4, 750)}$ are both finitely based.
\end{cor}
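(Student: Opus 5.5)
The plan is to reduce both algebras to Proposition~\ref{pro79201}, either through equational equivalence or through duality, in the same way as Corollary~\ref{coro26071210}. The first step is to check, directly from the Cayley tables in Table~\ref{tb1}, whether $S_{(4,588)}$ and $S_{(4,750)}$ satisfy the identities \eqref{79201}--\eqref{79205}. If they do not, I would check their duals, since the dual of \eqref{79201}--\eqref{79205} is the system
\[
xy^2 \approx xy,\quad xyz\approx xzy,\quad xy\preceq x+y,\quad yx\preceq x+zy,\quad xy\preceq x+zy.
\]
I expect one of the two algebras, most likely $S_{(4,750)}$ given the numbering parallel to $S_{(4,792)}$, to have multiplication dual to that of $S_{(4,792)}$ itself. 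In that case it is immediately finitely based by Proposition~\ref{pro79201} and the elementary fact on dual multiplications.

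For an algebra $S$ that satisfies \eqref{79201}--\eqref{79205}, Proposition~\ref{pro79201} gives $\mathsf{V}(S)\subseteq \mathsf{V}(S_{(4,792)})$. For the reverse inclusion, Remark~\ref{remark26011201} gives $\mathsf{V}(S_{(4,792)})=\mathsf{V}(M_2,S_{46})$. It therefore suffices to show that $M_2$ and $S_{46}$ lie in $\mathsf{V}(S)$, and there are two ways to do this.
\begin{itemize}
\item Find subalgebras isomorphic to them, such as a two-element subalgebra $\{a,4\}$ on which the product is the join, and a three-element chain $\{b,c,4\}$ realizing Table~\ref{tb4601}.
\item Exhibit $S$ as a subdirect product of $M_2$ and $S_{46}$ via congruences with nontrivial blocks $\{1,2,3\}$ and $\{3,4\}$, as was done for $S_{(4,792)}$.
\end{itemize}
Either route yields $\mathsf{V}(S)=\mathsf{V}(S_{(4,792)})$. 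Finite basedness then follows from Proposition~\ref{pro79201}.

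If instead the algebra satisfies the dual system, I would run the same argument with the dual algebras $M_2$ (which is self-dual) and $S_{46}^{\mathrm{op}}$. This gives an algebra equationally equivalent to the dual of $S_{(4,792)}$, so it is finitely based by the dual-multiplication fact.

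The main obstacle is the verification itself. Checking the three inequalities \eqref{79203}--\eqref{79205} on a four-element chain means running over all assignments, $4^3$ cases for the three-variable ones, which is routine but tedious. Locating the copy of $S_{46}$ requires care, because its element acting as an identity-like zero must sit at the additive top. If neither algebra contains $S_{46}$ as a subalgebra, I would fall back on the subdirect decomposition: I would search for a congruence whose quotient is $S_{46}$ and check that its intersection with the $M_2$-congruence is trivial.
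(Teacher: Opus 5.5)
Your proposal is correct and is essentially the paper's argument: $S_{(4,750)}$ is exactly the transpose of $S_{(4,792)}$, and $S_{(4,588)}$ falls into your dual branch (it satisfies the dual system, with $\{1,4\}\cong M_2$ and $\{2,3,4\}\cong S_{46}^{\mathrm{op}}$), which is just the dual form of Corollary~\ref{coro26071210}. The paper shortens this second part by observing that $S_{(4,588)}$ is the transpose of $S_{(4,591)}$ and citing that corollary directly.
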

\begin{proof}
It is easy to see that $S_{(4, 588)}$ and $S_{(4, 591)}$ have dual multiplications,
and that $S_{(4, 750)}$ and $S_{(4, 792)}$ have dual multiplications.
By Corollary~\ref{coro26071210} and Proposition \ref{pro79201},
both $S_{(4, 588)}$ and $S_{(4, 750)}$ are finitely based.
\end{proof}

\begin{pro}\label{pro79301}
The ai-semiring variety $\mathsf{V}(S_{(4, 793)})$ is defined by the identities
\begin{align}
&x^2y \approx xy; \label{79301}\\
&xyz \approx yxz; \label{79302}\\
&yx \preceq x; \label{79303}\\
&xz \preceq x+yz, \label{79304}
\end{align}
where $y$ may be empty in \eqref{79304}.
\end{pro}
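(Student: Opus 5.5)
The plan is to follow the template of Proposition~\ref{pro79201}. First I would verify directly on the Cayley table that $S_{(4,793)}$ satisfies \eqref{79301}--\eqref{79304}. I would then take a nontrivial inequality $\bq\preceq\bu$ of $S_{(4,793)}$, with $\bu=\bu_1+\cdots+\bu_n$, and derive it from these identities. To get constraints on $\bq$ and $\bu$, I expect $S_{(4,793)}$ to be a subdirect product of $R_2$ and $S_{46}$, via congruences with nontrivial blocks $\{1,2,3\}$ and $\{3,4\}$. This is the $R_2$-analogue of the decomposition of $S_{(4,792)}$ as a subdirect product of $M_2$ and $S_{46}$, and the identity \eqref{79303} $yx\preceq x$ is exactly the kind of inequality that $R_2$ satisfies while $M_2$ does not. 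I would also check whether $D_2$ embeds as a subalgebra, say $\{1,3\}$. From Lemma~\ref{nlemma1} and Lemma~\ref{lem4601} this yields the following:
\begin{itemize}
\item some $\bu_k$ has $t(\bu_k)=t(\bq)$;
\item $\ell(\bq)\ge 2$;
\item if $m(t(\bq),\bq)=1$, there is $\bu_j\in D_\bq(\bu)$ with $t(\bq)\notin c(p(\bu_j))$;
\item if $D_2$ embeds, some $\bu_i$ has $c(\bu_i)\subseteq c(\bq)$.
\end{itemize}
If $D_2$ does not embed, I would get $D_\bq(\bu)\neq\emptyset$ some other way, or avoid needing it in Case~1.

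Next I would normalize. By \eqref{79301} and \eqref{79302}, every word $\mathbf w$ with $\ell(\mathbf w)\ge 2$ equals $x_1\cdots x_r\, t(\mathbf w)$. Here $x_1,\dots,x_r$ are the distinct variables of $p(\mathbf w)$, listed in any order. So modulo the basis a word is determined by $c(p(\mathbf w))$ and $t(\mathbf w)$. Moreover \eqref{79303} gives $\mathbf p\, t(\bq)\preceq t(\bq)$ and, more generally, $\mathbf a\mathbf w\preceq\mathbf w$. Thus arbitrary letters may be prepended to a summand to move it upward toward $\bq$.

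\textbf{Case 1.} $m(t(\bq),\bq)\ge 2$, so $t(\bq)\in c(p(\bq))$. I take $\bu_k$ with $t(\bu_k)=t(\bq)$ and prepend $p(\bq)$ using \eqref{79303}:
\[
\bu\succeq\bu_k\succeq p(\bq)\bu_k .
\]
The content of $p(\bu_k)$ may exceed $c(\bq)$, and this is where \eqref{79304} $xz\preceq x+yz$ comes in. Taking $x=\bu_i$ with $c(\bu_i)\subseteq c(\bq)$ and $yz=p(\bq)\bu_k$ written with suitable $z$, I aim for $\bu_i\, t(\bq)$, or for $\bu_i\bq$ after prepending. By the normal form this collapses to $\bq$, because $c(\bu_i)\subseteq c(\bq)$ and $t(\bq)$ already occurs in $p(\bq)$. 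Concretely, I would choose $z=p(\bq)t(\bq)$ and let $y$ absorb everything else, via \eqref{79302}.

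\textbf{Case 2.} $m(t(\bq),\bq)=1$. I use $\bu_j$ from Lemma~\ref{lem4601}.
\begin{itemize}
\item If $t(\bu_j)=t(\bq)$, then \eqref{79303} gives $\bu\succeq\bu_j\succeq p(\bq)\bu_j$. The normal form turns this into $\bq$, since $c(p(\bu_j))\subseteq c(p(\bq))$ and $t(\bq)$ occurs only at the tail.
\item If $t(\bq)\notin c(\bu_j)$, I combine $\bu_j$ with $\bu_k$, whose tail is $t(\bq)$, via \eqref{79304}. I write $\bu_k=y\,t(\bq)$, or use $y$ empty when $\bu_k=t(\bq)$. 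This gives $\bu_j t(\bq)\preceq \bu_j+\bu_k$. Prepending $p(\bq)$ by \eqref{79303} then yields $p(\bq)\bu_j t(\bq)\approx\bq$.
\end{itemize}

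The main obstacle is Case~1. There I need \eqref{79304} to replace the uncontrolled prefix of $\bu_k$ by a summand whose content lies inside $c(\bq)$, while keeping the tail $t(\bq)$. This requires confirming the existence of $\bu_i\in D_\bq(\bu)$ from a suitable 2-element subalgebra or quotient. I would first test the embedding of $D_2$, and otherwise extract the needed summand from the $S_{46}$ information.
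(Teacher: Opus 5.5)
Your proposal is correct and is essentially the paper's proof. It uses the same structural facts: $D_2\cong\{1,3\}$, which does hold, so your hedge is unnecessary; and $S_{(4,793)}$ is a subdirect product of $R_2$ and $S_{46}$ via the blocks $\{1,2,3\}$ and $\{3,4\}$. It also has the same split on $m(t(\bq),\bq)$ and the same two subcases in Case~2. The only cosmetic difference is in Case~1: the paper applies \eqref{79304} directly to $\bu_i+p(\bu_j)t(\bq)$ to get $\bu_i t(\bq)$ and then prepends $\bq$ by \eqref{79303}, whereas you prepend $p(\bq)$ first and take $z=\bq$. Both routes give $\bu\succeq\bq$.
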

\begin{proof}
It is easy to check that $S_{(4, 793)}$ satisfies the identities \eqref{79301}--\eqref{79304}.
It remains to prove that every inequality of $S_{(4, 793)}$ is derivable from \eqref{79301}--\eqref{79304}.
Let $\bq \preceq \bu$ be such a nontrivial inequality,
where $\bu=\bu_1+\bu_2+\cdots+\bu_n$ with $\bu_i, \bq\in X^+$ for $1 \leq i \leq n$.

Since $D_2$ is isomorphic to the subalgebra $\{1, 3\}$ of $S_{(4, 793)}$,
it follows that $D_2$ satisfies $\bq \preceq \bu$,
and so $c(\bu_i)\subseteq c(\bq)$ for some $\bu_i\in \bu$.
One can easily verify that $S_{(4,793)}$ is isomorphic to a subdirect product of $R_2$ and $S_{46}$
via two congruences with nontrivial blocks $\{1, 2, 3\}$ and $\{3, 4\}$.
Hence both $R_2$ and $S_{46}$ satisfy $\bq\preceq\bu$,
and so $t(\bu_j)=t(\bq)$ for some $\bu_j\in\bu$, and by Lemma~\ref{lem4601}, $\ell(\bq)\geq 2$.
Moreover, if $m(t(\bq),\bq)=1$, then there exists $\bu_k\in D_\bq(\bu)$ such that $t(\bq)\notin c(p(\bu_k))$;
in particular, $c(\bu_k) \subseteq c(\bq)$.
Thus $t(\bq)$ occurs in $\bu_k$ at most once, and if it occurs, it must be the tail of $\bu_k$.

\textbf{Case 1.} $m(t(\bq),\bq)\geq 2$.
Then
\[
\bu \succeq \bu_i+\bu_j = \bu_i+p(\bu_j)t(\bq) \stackrel{\eqref{79304}}\succeq \bu_i t(\bq)
\stackrel{\eqref{79303}}\succeq \bq\bu_i t(\bq) \stackrel{\eqref{79301}, \eqref{79302}}\approx \bq.
\]
The second step follows from $t(\bu_j)=t(\bq)$,
and the last step relies on $m(t(\bq),\bq)\geq 2$ and $c(\bu_i) \subseteq c(\bq)$.

\textbf{Case 2.} $m(t(\bq), \bq)=1$. Then $t(\bq) \notin c(p(\bq))$.

If $m(t(\bq),\bu_k)=0$, then
\[
\bu \succeq  \bu_k+\bu_j = \bu_k+p(\bu_j)t(\bq) \stackrel{\eqref{79304}} \succeq \bu_k t(\bq) \stackrel{\eqref{79303}} \succeq p(\bq) \bu_k t(\bq) \stackrel{\eqref{79301}, \eqref{79302}}\approx \bq.
\]
The last step follows from $c(\bu_k) \subseteq c(\bq)$ and $m(t(\bq), \bu_k)=0$.

If $m(t(\bq),\bu_k)=1$, then $t(\bu_k)=t(\bq)$, since $t(\bq)\notin c(p(\bu_k))$.
Now we have
\[
\bu \succeq \bu_k \stackrel{\eqref{79303}} \succeq p(\bq) \bu_k =p(\bq) p(\bu_k) t(\bq)
\stackrel{\eqref{79301}, \eqref{79302}} \approx \bq.
\]
The third step uses $t(\bu_k)=t(\bq)$,
and the last step follows from $c(p(\bu_k))\subseteq c(p(\bq))$.
This derives the inequality $\bu \succeq \bq$.
\end{proof}

\begin{remark}
We note that
$\mathsf{V}(S_{(4, 793)})=\mathsf{V}(R_2, S_{46})$,
since $S_{(4,793)}$ is isomorphic to a subdirect product of $R_2$ and $S_{46}$.
\end{remark}

\begin{cor}
The ai-semiring $S_{(4, 775)}$ is finitely based.
\end{cor}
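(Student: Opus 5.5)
We would prove the corollary by the same pattern as the corollaries following Propositions~\ref{pro74701} and~\ref{pro79201}: show that $\mathsf{V}(S_{(4,775)})=\mathsf{V}(S_{(4,793)})$ and then invoke Proposition~\ref{pro79301}. We do not have the multiplication table of $S_{(4,775)}$ in view. The argument therefore depends on two facts that are read off Table~\ref{tb1} in the same routine way as in the earlier corollaries.

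\textbf{Step 1.} We would check directly from the Cayley table of $S_{(4,775)}$ that it satisfies the identities \eqref{79301}--\eqref{79304}: namely $x^2y\approx xy$, $xyz\approx yxz$, $yx\preceq x$, and $xz\preceq x+yz$ with $y$ possibly empty. This is a finite verification over $\{1,2,3,4\}$ with the chain order $4<3<2<1$. Proposition~\ref{pro79301} then gives $\mathsf{V}(S_{(4,775)})\subseteq \mathsf{V}(S_{(4,793)})$.

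\textbf{Step 2.} For the reverse inclusion we use the remark that $\mathsf{V}(S_{(4,793)})=\mathsf{V}(R_2,S_{46})$. It therefore suffices to show that both $R_2$ and $S_{46}$ lie in $\mathsf{V}(S_{(4,775)})$. By analogy with $S_{(4,587)}$ and $S_{(4,591)}$, we expect an embedding of each. One candidate is a two-element subalgebra $\{a,4\}$ or $\{a,b\}$ on which multiplication is $xy=y$, giving $R_2$. Another is the three-element subalgebra $\{2,3,4\}$, isomorphic to $S_{46}$ with $4$ playing the role of the identity-like element $3$ of $S_{46}$.

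If a direct embedding fails for one of them, we would fall back on other ways of placing it in the variety. One option is to find a congruence of $S_{(4,775)}$ whose quotient is $R_2$ or $S_{46}$; for example, collapsing $\{3,4\}$ or $\{1,2,3\}$, mirroring the subdirect decomposition of $S_{(4,793)}$. Another option is to use a subalgebra of $S_{(4,775)}\times S_{(4,775)}$.

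\textbf{Conclusion and main obstacle.} Combining the two inclusions gives $\mathsf{V}(S_{(4,775)})=\mathsf{V}(S_{(4,793)})$. By Proposition~\ref{pro79301}, $S_{(4,775)}$ is then finitely based.

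The main obstacle is Step 2, locating $R_2$ and $S_{46}$ inside $\mathsf{V}(S_{(4,775)})$. Step 1 is purely mechanical.

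If Step 2 turns out to be false, then $S_{(4,775)}$ generates a proper subvariety. In that case the alternative route is to check whether $S_{(4,775)}$ has the dual multiplication of an algebra already handled, such as $S_{(4,793)}$ itself, which would settle the question immediately.
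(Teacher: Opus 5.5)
\textbf{There is a genuine gap: your main route fails.} The varieties $\mathsf{V}(S_{(4,775)})$ and $\mathsf{V}(S_{(4,793)})$ are not equal, and in fact neither contains the other.

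\emph{Step 1 is false.} Read $a\cdot b$ from row $a$, column $b$ of Table~\ref{tb1}. For $S_{(4,775)}$ this gives $2\cdot 1=2$, $2\cdot 2=3$, $1\cdot 2=1\cdot 4=3$, and $3\cdot b=3$, $4\cdot b=4$ for all $b$. With these values $S_{(4,775)}$ violates three of the identities \eqref{79301}--\eqref{79304}:
\begin{itemize}
\item $x=2$, $y=1$ gives $x^2y=3\cdot 1=3\neq 2=xy$;
\item $x=2$, $y=z=1$ gives $xyz=2\neq 3=yxz$;
\item $y=1$, $x=4$ gives $yx=3\nleqslant 4=x$, since $4$ is the least element.
\end{itemize}

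\emph{Step 2 is false as well.} The subalgebra $\{3,4\}$ is a copy of $L_2$, not $R_2$. The set $\{2,3,4\}$ is not a copy of $S_{46}$, because both $3$ and $4$ are left zeros there, while $S_{46}$ has only one. More decisively, $S_{(4,775)}$ satisfies $xy\preceq x$, which fails in $R_2$. It also satisfies $xy^2\approx xy$, which fails in $S_{46}$, since there $3\cdot(1\cdot 1)=3\cdot 2=2\neq 1=3\cdot 1$. So $R_2,S_{46}\notin\mathsf{V}(S_{(4,775)})$. Conversely, $S_{(4,793)}$ violates $xy^2\approx xy$ (take $x=1$, $y=2$). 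Your fallback remark that a failure would only yield a proper subvariety is therefore also off: the two varieties are incomparable.

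\textbf{The fix is the contingency in your last sentence, which is exactly the paper's one-line proof; you need to verify it, not conjecture it.} The additive reducts coincide, and the multiplication table of $S_{(4,775)}$ is the transpose of that of $S_{(4,793)}$. That is, $a\cdot b$ in $S_{(4,775)}$ equals $b\cdot a$ in $S_{(4,793)}$ for all $a,b$. For example, row $1$ of $S_{(4,775)}$ is $1,3,3,3$, which is column $1$ of $S_{(4,793)}$. Hence $S_{(4,775)}$ satisfies $\mathbf{u}\approx\mathbf{v}$ if and only if $S_{(4,793)}$ satisfies the identity obtained by reversing every word. By Proposition~\ref{pro79301}, the reversed identities form a finite basis for $S_{(4,775)}$:
\begin{itemize}
\item $xy^2\approx xy$,
\item $xyz\approx xzy$,
\item $xy\preceq x$,
\item $zx\preceq x+zy$, with $y$ possibly empty.
\end{itemize}
So $\mathsf{V}(S_{(4,775)})$ is the mirror image $\mathsf{V}(L_2,S_{45})$ of $\mathsf{V}(R_2,S_{46})$, not that variety itself.
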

\begin{proof}
Since $S_{(4, 775)}$ and $S_{(4, 793)}$ have dual multiplications, it follows from
Proposition {\ref{pro79301}} that $S_{(4, 775)}$ is finitely based.
\end{proof}

\begin{pro}\label{pro79901}
The ai-semiring variety $\mathsf{V}(S_{(4, 799)})$ is defined by the identities
\begin{align}
&x^2y \approx xy; \label{79901}\\
&xyzt \approx xzyt; \label{79902}\\
&xy \preceq x; \label{79903}\\
&xyz \preceq xz; \label{79905}\\
&xy \preceq y+xz, \label{79904}
\end{align}
where $z$ may be empty in \eqref{79904}.
\end{pro}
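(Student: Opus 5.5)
The plan is to run the same scheme as in Propositions~\ref{pro79201} and~\ref{pro79301}. Here $S_{46}$ supplies the tail condition, and $L_2$ (rather than $M_2$ or $R_2$) supplies the head condition.

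\textbf{Step 1: soundness and the semantic facts.} First I would check by routine computation that $S_{(4,799)}$ satisfies \eqref{79901}--\eqref{79904}. Next I would locate the factors of $S_{(4,799)}$. I expect it to be a subdirect product of $L_2$ and $S_{46}$, via congruences with nontrivial blocks $\{1,2,3\}$ and $\{3,4\}$ (or a similar pair). I also expect $D_2$ to embed as a subalgebra such as $\{1,3\}$.

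Then for a nontrivial inequality $\bq\preceq\bu$ of $S_{(4,799)}$, with $\bu=\bu_1+\cdots+\bu_n$, Lemma~\ref{nlemma1} and Lemma~\ref{lem4601} give the following:
\begin{itemize}
\item some $\bu_j$ satisfies $h(\bu_j)=h(\bq)$;
\item some $\bu_i$ satisfies $c(\bu_i)\subseteq c(\bq)$;
\item $\ell(\bq)\geq 2$;
\item if $m(t(\bq),\bq)=1$, then some $\bu_k\in D_\bq(\bu)$ satisfies $t(\bq)\notin c(p(\bu_k))$.
\end{itemize}
If the subalgebra $\{1,3\}$ turns out not to be $D_2$, I would instead extract $D_\bq(\bu)\neq\emptyset$ from the $S_{46}$ component, possibly via Lemma~\ref{lem001}.

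\textbf{Step 2: normal forms.} From \eqref{79901} and \eqref{79902}, every word of length at least $3$ can be rewritten as $h(\bw)\,\bw'\,t(\bw)$. Here $\bw'$ can be freely permuted, and repeated letters inside it can be collapsed. The collapsing works by multiplying the instance $y^2t\approx yt$ of \eqref{79901} on the left. So a word is determined, modulo the basis, by its head, its tail, its content, and whether $t(\bw)$ recurs before the last position. This is the bookkeeping used to justify the final ``$\approx\bq$'' steps.

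\textbf{Step 3: derivations.} Write $x=h(\bq)$.

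Case 1: $m(t(\bq),\bq)\geq 2$. Apply \eqref{79904} with $y=\bu_i$ and $xz=\bu_j$ (that is, $z=s(\bu_j)$, possibly empty) to obtain $\bu\succeq\bu_i+\bu_j\succeq x\bu_i$. Then \eqref{79903} gives $x\bu_i\succeq x\bu_i s(\bq)$. This word has head $h(\bq)$, tail $t(\bq)$ and content $c(\bq)$. Since $t(\bq)$ already occurs inside $\bq$, Step~2 should give $x\bu_i s(\bq)\approx\bq$.

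Case 2: $m(t(\bq),\bq)=1$. Use $\bu_k$ in the same way to get $\bu\succeq x\bu_k$.
\begin{itemize}
\item If $t(\bq)\notin c(\bu_k)$, append $s(\bq)$ by \eqref{79903}.
\item If $t(\bu_k)=t(\bq)$, use \eqref{79905}, read as $xz\succeq xyz$, to insert the missing interior letters of $p(\bq)$ between the head and $\bu_k$. This keeps $t(\bq)$ as a single final occurrence.
\end{itemize}
In either subcase Step~2 then yields $\bq$. The degenerate situations $\ell(\bq)=2$ and $\bu_k=x$ or $\bu_j=x$ need separate small checks, using the ``$z$ empty'' form of \eqref{79904}.

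\textbf{Main obstacle.} The hard point is Step~2 combined with the final identification. I must make sure that \eqref{79901} and \eqref{79902} really collapse and reorder everything except the head and the tail. I also need to confirm that the head letter, when it recurs inside, causes no trouble, and that short words (length $2$) behave correctly. If the collapse fails for some configuration, I would first try adding \eqref{79905} to insert or absorb letters adjacent to the head, before changing the case split.
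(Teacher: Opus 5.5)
Your proposal is correct and follows the paper's proof essentially step for step. It uses the same facts from $D_2\cong\{1,3\}$ and from the subdirect decomposition into $L_2$ and $S_{46}$ (blocks $\{1,2,3\}$ and $\{3,4\}$), the same case split on $m(t(\bq),\bq)$, and the same derivations via \eqref{79904}, \eqref{79903} and \eqref{79905}; the paper appends $\bq$ rather than $s(\bq)$, which makes no difference, and your Step~2 invariants (head, tail, content, and whether the tail recurs) are exactly what justifies the paper's final ``$\approx\bq$'' steps, while the degenerate cases you flag are already covered by the ``$z$ empty'' clause of \eqref{79904} and need no separate check.
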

\begin{proof}
It is straightforward to check that $S_{(4, 799)}$ satisfies the identities \eqref{79901}--\eqref{79904}.
It remains to show that every inequality of $S_{(4, 799)}$ is derivable from \eqref{79901}--\eqref{79904}.
Let $\bq \preceq \bu$ be such a nontrivial inequality,
where $\bu=\bu_1+\cdots+\bu_n$ with $\bu_i, \bq \in X^+$ for $1 \le i \le n$.

Since $D_2$ is isomorphic to the subalgebra $\{1, 3\}$ of $S_{(4, 799)}$,
it follows that $D_2$ satisfies $\bq \preceq \bu$,
and so $c(\bu_i)\subseteq c(\bq)$ for some $\bu_i\in \bu$.
One can easily verify that $S_{(4,799)}$ is isomorphic to a subdirect product of $L_2$ and $S_{46}$
via two congruences with nontrivial blocks $\{1, 2, 3\}$ and $\{3, 4\}$.
Thus both $L_2$ and $S_{46}$ satisfy $\bq \preceq \bu$,
so $h(\bu_j)=h(\bq)$ for some $\bu_j\in\bu$,
and by Lemma~\ref{lem4601}, $\ell(\bq)\geq 2$.
Moreover, if $m(t(\bq),\bq)=1$,
then there exists $\bu_k\in D_\bq(\bu)$ such that $t(\bq)\notin c(p(\bu_k))$;
in particular, $c(\bu_k) \subseteq c(\bq)$.
Thus $t(\bq)$ occurs in $\bu_k$ at most once, and if it occurs, it must be the tail of $\bu_k$.

\textbf{Case 1.} $m(t(\bq),\bq)\geq 2$. Then
\[
\bu \succeq  \bu_i+\bu_j= \bu_i+h(\bq)s(\bu_j)\stackrel{\eqref{79904}} \succeq h(\bq) \bu_i \stackrel{\eqref{79903}} \succeq h(\bq) \bu_i \bq \stackrel{\eqref{79901}, \eqref{79902}}\approx \bq.
\]
The second step uses $h(\bu_j)=h(\bq)$,
while the last step follows from $c(\bu_i)\subseteq c(\bq)$ and $m(t(\bq),\bq)\geq 2$.

\textbf{Case 2.} $m(t(\bq),\bq)=1$. Then $m(t(\bq), \bu_k)\leq 1$.

If $m(t(\bq), \bu_k)=0$, then
\[
\bu \succeq  \bu_k+\bu_j= \bu_k + h(\bq)s(\bu_j) \stackrel{\eqref{79904}} \succeq h(\bq) \bu_k
\stackrel{\eqref{79903}} \succeq h(\bq) \bu_k \bq \stackrel{\eqref{79901}, \eqref{79902}}\approx \bq.
\]
The last step follows from $m(t(\bq), \bu_k)=0$ and $c(\bu_k) \subseteq c(\bq)$.

If $m(t(\bq), \bu_k)=1$, then $t(\bu_k)=t(\bq)$. Now we have
\[
\bu \succeq  \bu_k+\bu_j= \bu_k + h(\bq)s(\bu_j) \stackrel{\eqref{79904}} \succeq h(\bq) \bu_k
\stackrel{\eqref{79905}} \succeq h(\bq)p(\bq)\bu_k\stackrel{\eqref{79901}, \eqref{79902}}\approx \bq.
\]
The last step uses $t(\bu_k)=t(\bq)$ and $c(\bu_k) \subseteq c(\bq)$.
This derives the inequality $\bu \succeq \bq$.
\end{proof}

\begin{remark}
We have $\mathsf{V}(S_{(4, 799)})=\mathsf{V}(L_2, S_{46})$,
since $S_{(4,799)}$ is isomorphic to a subdirect product of $L_2$ and $S_{46}$.
\end{remark}

\begin{cor}
The ai-semiring $S_{(4, 751)}$ is finitely based.
\end{cor}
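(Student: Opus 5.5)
The plan follows the pattern of the earlier corollaries in this section: show that $S_{(4,751)}$ either has the dual multiplication of $S_{(4,799)}$ or is equationally equivalent to it. Since we do not have the Cayley table of $S_{(4,751)}$ in front of us, the first step is to compare its multiplication table with that of $S_{(4,799)}$ and with its transpose.

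If the table of $S_{(4,751)}$ is the transpose of that of $S_{(4,799)}$, the argument is immediate. The two algebras share the chain additive reduct and have dual multiplications. By the elementary fact from the introduction, they have the same finite basis property, and Proposition~\ref{pro79901} gives the conclusion.

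If the tables are not transposes, I would argue as in Corollary~\ref{coro26071210}.
\begin{itemize}
\item First, check that $S_{(4,751)}$ satisfies the identities \eqref{79901}--\eqref{79904}. This is a finite verification, with $z$ allowed to be empty in \eqref{79904}. By Proposition~\ref{pro79901}, it gives $\mathsf{V}(S_{(4,751)})\subseteq\mathsf{V}(S_{(4,799)})$.
\item Conversely, locate copies of $L_2$ and $S_{46}$ inside $S_{(4,751)}$. The natural candidates are a two-element subalgebra $\{1,4\}$ or $\{a,4\}$ isomorphic to $L_2$, and the three-element chain $\{2,3,4\}$ isomorphic to $S_{46}$.
\item Alternatively, exhibit $S_{(4,751)}$ as a subdirect product of $L_2$ and $S_{46}$ via congruences with nontrivial blocks $\{1,2,3\}$ and $\{3,4\}$.
\item By the remark following Proposition~\ref{pro79901}, $\mathsf{V}(S_{(4,799)})=\mathsf{V}(L_2,S_{46})$. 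Either of the two previous items therefore gives the reverse inclusion, and hence equality of the varieties.
\end{itemize}

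The main obstacle is this case analysis: identifying which relation actually holds and verifying the embeddings or the identities. Based on the section's pattern, where each proposition is followed by a dual corollary, I expect the duality route to succeed. I would check it first and fall back on the subalgebra argument only if it fails.
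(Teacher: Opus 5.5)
Your primary route is the paper's proof. The multiplication table of $S_{(4,751)}$ is exactly the transpose of that of $S_{(4,799)}$ (for instance, row $1$ of $S_{(4,751)}$ is $1,3,3,4$, which is column $1$ of $S_{(4,799)}$), so the two algebras have dual multiplications on the same chain and Proposition~\ref{pro79901} gives the result. The fallback branch is therefore never invoked, and it would in fact fail here: $S_{(4,751)}$ does not satisfy $x^2y\approx xy$ (take $x=2$, $y=1$, giving $3\neq 2$), because it lies in the dual variety.
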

\begin{proof}
Since $S_{(4, 751)}$ and $S_{(4, 799)}$ have dual multiplications, it follows from
Proposition {\ref{pro79901}} that $S_{(4, 751)}$ is finitely based.
\end{proof}

\begin{table}[ht]
\caption{The Cayley tables of $S_{(4,380)}$} \label{tb438001}
\begin{tabular}{c|cccc}
$+$      &$1$ &$2$ &$3$ &$4$\\
\hline
$1$      &$1$ &$2$ &$1$ &$1$\\
$2$      &$2$ &$2$ &$2$ &$2$\\
$3$      &$1$ &$2$ &$3$ &$1$\\
$4$      &$1$ &$2$ &$1$ &$4$\\
\end{tabular}\qquad
\begin{tabular}{c|cccc}
$\cdot$      &$1$ &$2$ &$3$ &$4$\\
\hline
$1$      &$3$ &$3$ &$3$ &$3$\\
$2$      &$1$ &$2$ &$3$ &$1$\\
$3$      &$3$ &$3$ &$3$ &$3$\\
$4$      &$3$ &$3$ &$3$ &$3$\\
\end{tabular}
\end{table}

\begin{pro}
The ai-semirings $S_{(4, 749)}$ and $S_{(4, 791)}$ are both finitely based.
\end{pro}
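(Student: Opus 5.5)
The plan mirrors the proof for $S_{(4,746)}$ in the previous section, with $S_{46}$ in place of $S_{44}$. The table of $S_{(4,380)}$ given just before the statement is the intended comparison algebra.

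First, I will show that $S_{(4,380)}$ is isomorphic to a subdirect product of $S_{46}$ and $T_2$. In $S_{(4,380)}$, the candidate congruences are those with nontrivial blocks $\{1,4\}$ and $\{1,2,3\}$. Merging $1$ and $4$ respects both operations, since rows and columns $1$ and $4$ of the multiplication table agree, and $1+x=4+x$ except on the pair itself. The quotient should be a $3$-element chain under addition with an element acting as left identity, which matches $S_{46}$. The block $\{1,2,3\}$ yields a $2$-element quotient whose products all land in the class of $3$; this is $T_2$. The two congruences intersect trivially, so $\mathsf{V}(S_{(4,380)})=\mathsf{V}(S_{46},T_2)$.

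Second, I will carry out the analogous check for $S_{(4,749)}$ and $S_{(4,791)}$ using their Cayley tables from Table~\ref{tb1}. I expect one of them, probably $S_{(4,791)}$, to be a subdirect product of $S_{46}$ and $T_2$. The likely congruences have nontrivial blocks $\{3,4\}$ and $\{1,2,3\}$, as in the $S_{44}$ case. The other algebra, probably $S_{(4,749)}$, should have the dual multiplication of the first, since $T_2$ is self-dual. Alternatively, the second may itself be a subdirect product of $S_{46}$ (or its dual) and $T_2$. From these checks I obtain $\mathsf{V}(S_{(4,791)})=\mathsf{V}(S_{46},T_2)=\mathsf{V}(S_{(4,380)})$. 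By \cite{rlyc}, where $S_{(4,380)}$ is shown to be finitely based (presumably in the proposition adjacent to the one used for $S_{(4,379)}$), $S_{(4,791)}$ is finitely based. The dual-multiplication fact then gives the same conclusion for $S_{(4,749)}$.

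The main obstacle is the bookkeeping. I must identify the correct congruences in each $4$-element algebra and verify that the quotients are exactly $S_{46}$ and $T_2$, rather than the dual of $S_{46}$. If the algebras instead turn out to decompose via the dual of $S_{46}$, I will pair them with the dual of $S_{(4,380)}$, which is still finitely based. If neither algebra decomposes through $T_2$ in this way, the fallback is a direct equational basis in the style of Proposition~\ref{pro79201}. That argument would use Lemma~\ref{lem4601} together with Lemma~\ref{nlemma1}(6) for $T_2$.
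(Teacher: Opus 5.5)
Your proposal is correct and follows the paper's proof. In the paper, $S_{(4,791)}$ (blocks $\{3,4\}$, $\{1,2,3\}$) and $S_{(4,380)}$ (blocks $\{1,4\}$, $\{1,2,3\}$) are both subdirect products of $S_{46}$ and $T_2$, so they generate the same variety; \cite[Proposition 5.6]{rlyc} then gives that $S_{(4,791)}$ is finitely based, and $S_{(4,749)}$ follows because its multiplication table is the transpose of that of $S_{(4,791)}$. All of your hedged guesses check out, so none of the fallbacks are needed.
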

\begin{proof}
It is easy to check that
$S_{(4,791)}$ is isomorphic to a subdirect product of $S_{46}$ and $T_2$
via two congruences with nontrivial blocks $\{3, 4\}$ and $\{1, 2, 3\}$,
and that $S_{(4,380)}$ (see Table~\ref{tb438001} for its Cayley tables)
is isomorphic to a subdirect product of $S_{46}$ and $T_2$
via two congruences with nontrivial blocks $\{1, 4\}$ and $\{1, 2, 3\}$.
Hence $\mathsf{V}(S_{(4, 791)})=\mathsf{V}(S_{(4, 380)})$,
so $S_{(4, 791)}$ and $S_{(4, 380)}$ have the same finite basis property.
By \cite[Proposition 5.6]{rlyc}, $S_{(4, 380)}$ is finitely based, and hence so is $S_{(4, 791)}$.

Moreover, $S_{(4, 749)}$ and $S_{(4, 791)}$ have dual multiplications.
Therefore, $S_{(4, 749)}$ is also finitely based.
\end{proof}

\section{Equational bases for 4-element ai-semirings related to $S_{47}$}
In this section, we study the finite basis problem for 4-element ai-semirings that relate to $S_{47}$,
whose Cayley tables are given in Table~\ref{tb4701}.

\begin{table}[ht]
\caption{The Cayley tables of $S_{47}$} \label{tb4701}
\begin{tabular}{c|ccc}
$+$      &$1$ &$2$ &$3$\\
\hline
$1$      &$1$ &$1$ &$3$\\
$2$      &$1$ &$2$ &$3$\\
$3$      &$3$ &$3$ &$3$\\
\end{tabular}\qquad
\begin{tabular}{c|ccc}
$\cdot$      &$1$ &$2$ &$3$\\
\hline
$1$      &$2$ &$2$ &$2$\\
$2$      &$2$ &$2$ &$2$\\
$3$      &$2$ &$2$ &$1$\\
\end{tabular}
\end{table}

The following result, which is due to Ren et al.~\cite[Lemma 5.8]{rlyc},
provides some information about the inequalities satisfied by $S_{47}$.

\begin{lem}\label{lem4701}
Let $\bq \preceq \bu$ be a nontrivial ai-semiring identity such that
$\bu=\bu_1+\bu_2+\cdots+\bu_n$ with $\bu_i, \bq \in X^+$ for $1\leq i \leq n$.
Suppose that  $\bq \preceq \bu$ is satisfied by $S_{47}$.
Then either $\ell(\bq)\geq 3$, or $\ell(\bq)=2$ and $L_{\leq2}(\bu) \cap D_\bq(\bu)$ is nonempty.
\end{lem}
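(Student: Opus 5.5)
The plan is a direct semantic argument: assume the conclusion fails and exhibit an evaluation $\varphi\colon P_f(X^+)\to S_{47}$ with $\varphi(\bq)\not\le\varphi(\bu)$. First we record the structure of $S_{47}$ from Table~\ref{tb4701}.

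\emph{Order.} From the addition table we have $2+1=1$ and $1+3=2+3=3$. Hence the additive order is the chain $2<1<3$, with $2$ the minimum and $3$ the maximum.

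\emph{Multiplication.} Every product equals $2$ except $3\cdot 3=1$. So for a word $\bw$ and an assignment sending every variable to $3$ or $2$, we get:
\begin{itemize}
\item $\varphi(\bw)=3$ if $\bw$ is a single variable sent to $3$;
\item $\varphi(\bw)=1$ if $\ell(\bw)=2$ and both letters are sent to $3$;
\item $\varphi(\bw)=2$ in every other case.
\end{itemize}
The last case covers a letter sent to $2$, a length-$1$ word sent to $2$, and words of length $\ge 3$, since $3^3=1\cdot 3=2$.

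We use the evaluation $\varphi$ sending every variable of $c(\bq)$ to $3$ and every other variable to $2$. The sum of several elements of the chain is their maximum.

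\textbf{Case $\ell(\bq)=1$.} Write $\bq=x$, so $\varphi(\bq)=3$. Since the inequality is nontrivial, the word $x$ does not occur in $\bu$. Every $\bu_i$ is therefore either a variable other than $x$, evaluating to $2$, or a word of length at least $2$, evaluating to $1$ or $2$. So $\varphi(\bu)\le 1<3$, contradicting the fact that $S_{47}$ satisfies $\bq\preceq\bu$. Hence $\ell(\bq)\ge 2$.

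\textbf{Case $\ell(\bq)=2$.} Here $\varphi(\bq)=1$, so $S_{47}$ forces $\varphi(\bu)\ge 1$. By the computation above, $\varphi(\bu_i)\in\{1,3\}$ only when $\bu_i$ has length $\le 2$ and all its letters lie in $c(\bq)$. Such a word is exactly an element of $L_{\le 2}(\bu)\cap D_\bq(\bu)$.

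If that set were empty, every summand would evaluate to $2$, giving $\varphi(\bu)=2<1$, a contradiction. So either $\ell(\bq)\ge3$, or $\ell(\bq)=2$ and $L_{\le2}(\bu)\cap D_\bq(\bu)\ne\emptyset$.

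The only delicate point is checking that $3^3=2$, so that words of length at least $3$ built from $c(\bq)$ really evaluate to the minimum $2$. This is what separates the lengths $\le 2$ from the longer ones.
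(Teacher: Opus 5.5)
Your proof is correct and complete. With the additive order $2<1<3$ and the fact that every product except $3\cdot 3$ equals $2$, the single evaluation sending $c(\bq)$ to $3$ and all other variables to $2$ rules out $\ell(\bq)=1$, and when $\ell(\bq)=2$ it forces a summand in $L_{\le 2}(\bu)\cap D_\bq(\bu)$.

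The paper does not prove this lemma itself; it quotes it from \cite[Lemma 5.8]{rlyc}, so there is no in-paper argument to compare against. Your direct substitution argument is the same kind of evaluation the paper uses for its other equational lemmas, so nothing is missing.
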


\begin{pro}\label{pro85001}
The ai-semiring variety $\mathsf{V}(S_{(4, 850)})$ is defined by the identities
\begin{align}
&xy\approx yx;\label{8501}\\
&xy \preceq  x+y;\label{8502}\\
&xy \preceq  x+xyz;\label{8503}\\
&xy \preceq  x^2+xyz;\label{8504}\\
&xyz \preceq  xyzt.\label{8505}
\end{align}
\end{pro}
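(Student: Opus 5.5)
The plan is the same as for Propositions~\ref{pro74701} and~\ref{pro79201}. First I check by the Cayley tables that $S_{(4,850)}$ satisfies \eqref{8501}--\eqref{8505}. Next I look for a subdirect decomposition. I expect $S_{(4,850)}$ to be a subdirect product of $M_2$ and $S_{47}$, via the congruences with nontrivial blocks $\{1,2,3\}$ and $\{3,4\}$ as before; I would also check whether $D_2$ embeds as a subalgebra. This decomposition is a guess that must be verified against the table.

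Granting it, let $\bq\preceq\bu$ be a nontrivial inequality of $S_{(4,850)}$. Lemma~\ref{nlemma1}(3) gives $c(\bq)\subseteq c(\bu)$. Lemma~\ref{lem4701} gives either $\ell(\bq)\geq 3$, or $\ell(\bq)=2$ with some $\bu_i\in L_{\leq 2}(\bu)\cap D_\bq(\bu)$. Before the case split I would derive a normal-form lemma from \eqref{8501}--\eqref{8505}.

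\emph{Normal-form lemma.} If $\bp,\bq$ are words of length $\geq 3$ with $c(\bq)\subseteq c(\bp)$, then $\bq\preceq\bp$ is derivable. In particular any two words of length $\geq 3$ with the same content are provably equal. This is exactly what $M_2\times S_{47}$ predicts, because in $S_{47}$ every product of at least three factors equals $2$.

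\begin{itemize}
\item The direction from short to long is immediate. By \eqref{8501} and \eqref{8505}, $\bq\preceq \bq\bw$ for every word $\bw$.
\item The reverse direction, that $\bq\bw\preceq\bp$, i.e.\ that surplus letters and multiplicities can be removed, is the step I expect to be the main obstacle.
\item I would first try substitution instances of \eqref{8503} and \eqref{8504}. For example, put $x\mapsto x^2$ and $y\mapsto x$ in \eqref{8503}, and put $y\mapsto x$ in \eqref{8504}, and combine with \eqref{8502}. The goal is to obtain $x^4\preceq x^3$, or more generally $xyzt\preceq xyz + \cdots$ in a form usable with the witnesses available.
\item If that fails, I would derive $\bq\bw\preceq\bp$ directly inside the chain of estimates below. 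There the summand $\bu_i$ or $\bq$ itself sits alongside the long word, so \eqref{8503} with $x$ a suitable prefix can absorb the extra factor.
\end{itemize}

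\textbf{Case 1.} $\ell(\bq)\geq 3$. Iterating \eqref{8502} gives
\[
\bu\succeq \bu_1\bu_2\cdots\bu_n,
\]
and raising to a suitable power (which is harmless by the normal-form lemma) gives a word $\bp$ with $c(\bp)=c(\bu)\supseteq c(\bq)$. If $c(\bp)=c(\bq)$, the normal-form lemma gives $\bq\preceq\bp$ directly. In general, write $\bp\approx\bq\bp'$ by \eqref{8501} and the normal form. Then either \eqref{8505} or the normal-form lemma yields $\bq\preceq\bp$.

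\textbf{Case 2.} $\ell(\bq)=2$. Let $\bu_i\in L_{\leq2}(\bu)\cap D_\bq(\bu)$, and let $\bp$ be as in Case 1. Since $c(\bu_i)\subseteq c(\bq)$ and $\ell(\bu_i)\leq 2$, the subcases are as follows.

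\begin{itemize}
\item If $\bu_i=\bq$, there is nothing to prove.
\item If $\bq=xy$ with $x\neq y$ and $\bu_i\in\{x,y\}$, say $\bu_i=x$, then by \eqref{8502} and commutativity $\bu\succeq x+\bp\approx x+xy\bp''$, and \eqref{8503} gives $\bu\succeq xy$.
\item If $\bu_i=x^2$, I use \eqref{8504} in the same way.
\item The subcases $\bq=x^2$ with $\bu_i=x$, and $\bq=xy$ with $\bu_i=x^2$, are handled by specializing $y\mapsto x$ in \eqref{8503}, respectively by using \eqref{8504}.
\item In every subcase the long word $\bp$ supplies the factor $z$ needed to apply \eqref{8503} or \eqref{8504}.
\end{itemize}

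This would complete the derivation of $\bq\preceq\bu$, so \eqref{8501}--\eqref{8505} form a basis.
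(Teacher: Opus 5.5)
Your proof follows the paper's own argument. It uses the same subdirect decomposition into $M_2$ and $S_{47}$ and Lemma~\ref{lem4701}. Case~1 is closed by \eqref{8505}, and Case~2 by \eqref{8503} and \eqref{8504} with exactly the subcases you list.

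The one thing you leave open, the ``reverse direction'' of your normal-form lemma, is not an obstacle, and it is not needed.

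\begin{itemize}
\item Substituting $x,y\mapsto\bw$ in \eqref{8502} gives $\bw^2\preceq\bw$.
\item More generally, $\bu\succeq\bw_1$ and $\bu\succeq\bw_2$ imply $\bu\succeq\bw_1+\bw_2\succeq\bw_1\bw_2$. Hence $\bu\succeq\bu_1^k\bu_2^k\cdots\bu_n^k$ for every $k$, and raising to a power needs no normal form.
\item Choose $k>\ell(\bq)$ instead of merely matching contents. Then every $x\in c(\bq)$ occurs in this product more than $m(x,\bq)$ times, so \eqref{8501} rewrites it as $\bq\bq'$ with $\bq'$ nonempty.
\item When $\ell(\bq)\geq 3$, \eqref{8505} then gives $\bq\preceq\bq\bq'$.
\item When $\ell(\bq)=2$, the word $\bq\bq'$ is precisely the summand $xy\bq'$ or $x^2\bq'$ that \eqref{8503} and \eqref{8504} require.
\end{itemize}

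So only the ``short to long'' direction is ever used, as in the paper, and you do not need to extract $x^4\preceq x^3$ from \eqref{8503} and \eqref{8504}. If you still want your normal-form lemma, it follows in one line: $\bp\succeq\bp^k\approx\bq\bp''\succeq\bq$.
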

\begin{proof}
It is routine to check that $S_{(4, 850)}$ satisfies the identities \eqref{8501}--\eqref{8505}.
We now prove that every inequality of $S_{(4, 850)}$ is derivable from \eqref{8501}--\eqref{8505}.
Let $\bq\preceq \bu$ be such a nontrivial inequality,
where $\bu=\bu_1+\bu_2+\cdots+\bu_n$ with $\bu_i, \bq \in X^+$ for $1 \leq i \leq n$.
Observe that $S_{(4, 850)}$ is isomorphic to a subdirect product of $M_2$ and $S_{47}$
via the congruences defined by the nontrivial blocks $\{1, 2, 3\}$ and $\{3, 4\}$.
Hence both $M_2$ and $S_{47}$ satisfy $\bq\preceq \bu$, and so $c(\bq)\subseteq c(\bu)$,
and by Lemma~\ref{lem4701}, either $\ell(\bq)\geq 3$, or $\ell(\bq)=2$ and $L_{\leq2}(\bu) \cap D_\bq(\bu)$ is nonempty.
Consider the following two cases.

\textbf{Case 1.} $\ell(\bq) \geq 3$. Then
\[
\bu = \bu_1+\bu_2+ \cdots +\bu_n\stackrel{\eqref{8502}}\succeq \bu_1^k\bu_2^k \cdots \bu_n^k \stackrel{\eqref{8501}}\approx \bq\bq' \stackrel{\eqref{8505}}\succeq \bq,
\]
where $k$ is a sufficiently large positive integer.
The third step uses $c(\bq) \subseteq c(\bu)$, while the last step relies on $\ell(\bq) \geq 3$.
This derives the inequality $\bu\succeq \bq$.

\textbf{Case 2.} $\ell(\bq) = 2$.
Then $L_{\leq2}(\bu) \cap D_\bq(\bu)$ is nonempty, and so
there exists $\bu_i\in \bu$ such that $\ell(\bu_i) \leq 2$ and $c(\bu_i) \subseteq c(\bq)$.
Hence
\[
\bu = \bu_i+\bu_1+\bu_2 +\cdots + \bu_n \stackrel{\eqref{8502}} \succeq \bu_i+\bu_1^k\bu_2^k \cdots \bu_n^k
\stackrel{\eqref{8501}}\approx \bu_i + \bq\bq' \stackrel{\eqref{8501}, \eqref{8503},\eqref{8504}}\succeq \bq,
\]
where $k$ is a sufficiently large positive integer.
The third step uses $c(\bq) \subseteq c(\bu)$,
while the last step relies on $\ell(\bq) = 2$, $\ell(\bu_i) \leq 2$, and $c(\bu_i) \subseteq c(\bq)$.
This derives the inequality $\bu\succeq \bq$.
\end{proof}

\begin{remark}\label{remark2607305}
We note that
$\mathsf{V}(S_{(4, 850)})=\mathsf{V}(M_2, S_{47})$,
since $S_{(4, 850)}$ is isomorphic to a subdirect product of $M_2$ and $S_{47}$.
\end{remark}

\begin{cor}
The ai-semiring $S_{(4,600)}$ is finitely based.
\end{cor}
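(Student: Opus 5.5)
The plan follows the pattern of the corollaries to Propositions~\ref{pro74701} and~\ref{pro79201}: we show that $\mathsf{V}(S_{(4,600)})=\mathsf{V}(S_{(4,850)})$ and then invoke Proposition~\ref{pro85001}.

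\textbf{Inclusion $\mathsf{V}(S_{(4,600)})\subseteq\mathsf{V}(S_{(4,850)})$.} We check directly from the Cayley table of $S_{(4,600)}$ in Table~\ref{tb1} that $S_{(4,600)}$ satisfies the identities \eqref{8501}--\eqref{8505}. The check is finite.
\begin{itemize}
\item For \eqref{8501}, we confirm that the multiplication table is symmetric.
\item For \eqref{8502}, \eqref{8503} and \eqref{8504}, we run through all assignments of at most three variables to $\{1,2,3,4\}$ and compare values in the chain order $4<3<2<1$.
\item For \eqref{8505}, it is easiest to first compute the set of all products of length three and note that multiplying by a further element never decreases them.
\end{itemize}
Proposition~\ref{pro85001} then gives this inclusion.

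\textbf{Inclusion $\mathsf{V}(S_{(4,850)})\subseteq\mathsf{V}(S_{(4,600)})$.} By Remark~\ref{remark2607305}, $\mathsf{V}(S_{(4,850)})=\mathsf{V}(M_2,S_{47})$, so it suffices to show that $M_2$ and $S_{47}$ both lie in $\mathsf{V}(S_{(4,600)})$. By analogy with $S_{(4,587)}$ and $S_{(4,591)}$, we expect that $M_2$ is isomorphic to the subalgebra $\{1,4\}$, since $1\cdot1=1$, $1\cdot 4=4\cdot1=4$ and $4\cdot4=4$, giving the $M_2$ table with $4\mapsto 0$ and $1\mapsto 1$. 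We also expect $S_{47}$ to be isomorphic to the subalgebra $\{2,3,4\}$ under $4\mapsto 3$, $3\mapsto 2$, $2\mapsto 1$. The thing to verify is that $4\cdot4=2$ and all other products in $\{2,3,4\}$ equal $3$. Each of these is a closure-and-isomorphism check on a small table.

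\textbf{Main obstacle.} The only real risk is that the suggested subsets are not the correct subalgebras of $S_{(4,600)}$. In that case we would look for other two- and three-element subsets closed under multiplication that realise $M_2$ and $S_{47}$. Failing that, we would realise them as quotients of $S_{(4,600)}$ by congruences with blocks such as $\{1,2,3\}$ and $\{3,4\}$, exactly as in the subdirect decomposition of $S_{(4,850)}$.

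Once both inclusions hold, $S_{(4,600)}$ is equationally equivalent to $S_{(4,850)}$, and Proposition~\ref{pro85001} shows that it is finitely based.
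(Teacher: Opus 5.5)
Your route is the paper's: check \eqref{8501}--\eqref{8505} in $S_{(4,600)}$, embed $M_2$ as $\{1,4\}$ and $S_{47}$ as $\{2,3,4\}$, and apply Remark~\ref{remark2607305} and Proposition~\ref{pro85001}. The first inclusion is fine. However, the table checks you give for the second inclusion are wrong, and the check for $S_{47}$ would fail if carried out as written.

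\textbf{The subalgebra $\{1,4\}$.} In $S_{(4,600)}$ the element $1$ is a multiplicative zero: its row and column are constantly $1$. So $1\cdot 4=4\cdot 1=1$, not $4$. With your values, multiplication on $\{1,4\}$ would be the meet. Under $4\mapsto 0$, $1\mapsto 1$ you would get $1\cdot 0=0$, which is $D_2$, whereas $M_2$ has $1\cdot 0=1$. Your conclusion is right only because two errors cancel. With the correct values, multiplication on $\{1,4\}$ is the join, which is $M_2$.

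\textbf{The subalgebra $\{2,3,4\}$.} The actual products are $2\cdot 2=3$, and every other product of elements of $\{2,3,4\}$ equals $4$. In particular $4\cdot 4=4$, as you yourself stated for $\{1,4\}$. So the condition you propose to verify, ``$4\cdot 4=2$ and all other products equal $3$'', is false.

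Your map $4\mapsto 3$, $3\mapsto 2$, $2\mapsto 1$ is also not an isomorphism. The additive order of $S_{47}$ is $2<1<3$, so your map sends the bottom element $4$ to the top of $S_{47}$ and does not preserve $+$. For example, $4+3=3\mapsto 2$, while the images satisfy $3+2=3$ in $S_{47}$.

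The correct isomorphism is $2\mapsto 3$, $3\mapsto 1$, $4\mapsto 2$. In both algebras the square of the top element is the middle element and every other product is the bottom element; in $S_{47}$ this reads $3\cdot 3=1$ and all other products equal $2$.

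Followed literally, your plan would reject the correct subalgebra and send you to the fallback, which does not help either. For instance, the partition with blocks $\{1,2,3\}$ and $\{4\}$ is not a congruence of $S_{(4,600)}$, since $1\equiv 2$ but $4\cdot 1=1\not\equiv 4=4\cdot 2$. Once the Cayley table is read correctly, your argument coincides with the paper's.
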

\begin{proof}
A straightforward verification shows that $S_{(4, 600)}$ satisfies the identities \eqref{8501}--\eqref{8505},
which form an equational basis for $S_{(4, 850)}$ by Proposition~\ref{pro85001}.
Thus $\mathsf{V}(S_{(4, 600)})$ is a subvariety of $\mathsf{V}(S_{(4, 850)})$.
Conversely,
since $M_2$ is isomorphic to the subalgebra $\{1,4\}$ of $S_{(4, 600)}$,
and $S_{47}$ is isomorphic to the subalgebra $\{2,3,4\}$ of $S_{(4, 600)}$,
it follows from Remark~\ref{remark2607305} that
$\mathsf{V}(S_{(4, 850)})$ is a subvariety of $\mathsf{V}(S_{(4, 600)})$.
Consequently, $\mathsf{V}(S_{(4, 600)})=\mathsf{V}(S_{(4, 850)})$,
so $S_{(4, 600)}$ and $S_{(4, 850)}$ have the same finite basis property.
Proposition~\ref{pro85001} tells us that $S_{(4, 850)}$ is finitely based.
Therefore, $S_{(4, 600)}$ is finitely based.
\end{proof}

\begin{pro}\label{pro85201}
The ai-semiring variety $\mathsf{V}(S_{(4, 852)})$ is defined by the identities
\begin{align}
&xy \preceq x;\label{8521}\\
&xyz \approx xzy;\label{8523}\\
&xyz \preceq xyzt;\label{8527}\\
&xy \preceq  x+xyz;\label{08503}\\
&xy \preceq  y+xyz;\label{085031}\\
&xy \preceq  x^2+xyz;\label{08504}\\
&xy \preceq  y^2+xyz.\label{085041}
\end{align}
\end{pro}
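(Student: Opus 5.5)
Following the pattern of Proposition~\ref{pro85001}, I would first check directly that $S_{(4,852)}$ satisfies \eqref{8521}--\eqref{085041}. Then I would show that every nontrivial inequality $\bq\preceq\bu$ of $S_{(4,852)}$, with $\bu=\bu_1+\cdots+\bu_n$, is derivable from these identities. The structural input I would look for is a subdirect decomposition of $S_{(4,852)}$ into $L_2$ and $S_{47}$, via congruences with nontrivial blocks $\{1,2,3\}$ and $\{3,4\}$. This is analogous to the $M_2\times S_{47}$ decomposition of $S_{(4,850)}$, and the head-sensitive identities \eqref{8521} and \eqref{8523} point to $L_2$ in place of $M_2$. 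Granting this, Lemma~\ref{nlemma1} gives some $\bu_j$ with $h(\bu_j)=h(\bq)$, and Lemma~\ref{lem4701} gives the usual dichotomy: either $\ell(\bq)\ge 3$, or $\ell(\bq)=2$ and some $\bu_i\in L_{\le 2}(\bu)\cap D_\bq(\bu)$.

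\textbf{Case 1: $\ell(\bq)\ge 3$.} I would start from $\bu\succeq\bu_j$. Using \eqref{8521} repeatedly, I would append letters on the right to get $\bu_j\succeq \bu_j\bw$ for any word $\bw$, in particular $\bw=s(\bq)$. By \eqref{8523}, everything after the head commutes, so $\bu_j s(\bq)\approx h(\bq)s(\bq)\,s(\bu_j)=\bq\, s(\bu_j)$ whenever $\ell(\bq)\ge 2$. The difficulty is then to remove the extra factor $s(\bu_j)$. If $s(\bu_j)$ is empty we are already done, since then $\bu_j=h(\bq)$ and $\bu_j s(\bq)=\bq$. Otherwise I would use \eqref{8527}, which lets a word of length at least $3$ lie below its one-letter right extensions. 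Chaining \eqref{8527} together with \eqref{8523} gives $\bq\preceq \bq\bw'$ for every $\bw'$ once $\ell(\bq)\ge 3$, and this yields $\bu\succeq\bq s(\bu_j)\succeq\bq$.

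I should double-check that $S_{47}$ really imposes no further condition in this case. Since $S_{47}$ has $3\cdot 3=1$ and all other products equal to $2$, every product of length $\ge 3$ is constant, so this is plausible.

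\textbf{Case 2: $\ell(\bq)=2$.} Write $\bq=xy$, with $x=h(\bq)$, and possibly $y=x$. Take $\bu_i$ with $\ell(\bu_i)\le 2$ and $c(\bu_i)\subseteq\{x,y\}$, together with $\bu_j$ with head $x$. As in Case 1, $\bu_j\succeq \bq\,s(\bu_j)$, and \eqref{8523} lets me reorder this as $xy\bz$ for a (possibly empty) word $\bz$. The possibilities for $\bu_i$ are $x,y,x^2,y^2,xy,yx$, up to $x=y$. For these I would invoke, respectively, \eqref{08503}, \eqref{085031}, \eqref{08504}, \eqref{085041}, and the trivial case, combined with the term $xy\bz$, to obtain $\bq\preceq \bu_i+xy\bz$. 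The case $\bu_i=yx$ should reduce via \eqref{8521} to $yx\succeq y$-type moves and then to \eqref{085031}. The degenerate subcase in which $\bz$ is empty is trivial, since then $\bq$ appears directly.

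\textbf{Main obstacle.} I expect the hard step to be Case 2 with $\bu_i=yx$, together with the proper handling of $x=y$. There I would first try \eqref{8521} to obtain $yx\succeq y\cdot$ stuff, then \eqref{8523}, and then the $y$-versions \eqref{085031} and \eqref{085041}. I also expect some work in confirming that the subdirect decomposition uses $L_2$ rather than $M_2$. If that fails, I would fall back on describing the inequalities of $S_{(4,852)}$ directly from its table.
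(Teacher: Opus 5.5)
\textbf{Gap in Case~2 with $\bu_i=yx$.} Your route is the paper's route: the same subdirect decomposition into $L_2$ and $S_{47}$, and the same dichotomy from Lemma~\ref{lem4701}. Your Case~1 is correct. In fact \eqref{8527}, with $t$ replaced by an arbitrary word, gives $\bq\preceq\bq\bw'$ in one step once $\ell(\bq)\ge 3$, so no chaining is needed. The real problem is the case you flagged yourself: $\ell(\bq)=2$, $\bq=xy$ with $x\neq y$, and $\bu_i=yx$. Your proposed move runs the wrong way. Identity \eqref{8521} says $yx\preceq y$, so from a summand $yx$ you can only descend to words $yx\bw$. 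You can never climb to $y$ or $y^2$, which is what \eqref{085031} and \eqref{085041} need. No other manipulation will rescue this case either. The inequality $xy\preceq yx+xyz$ holds in $S_{(4,852)}$, but it is not a consequence of \eqref{8521}--\eqref{085041}. It holds in $S_{(4,852)}$ because if $x\mapsto 4$ the left side is the bottom element $4$. Otherwise $xyz\mapsto 3$, and $xy\mapsto 2$ only when $x,y\mapsto 1$, in which case also $yx\mapsto 2$.

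\textbf{A model of all seven identities that fails $xy\preceq yx+xyz$.} Let $A=\{0,e,a,b,t\}$ be the lattice with $0<e<a<t$, $e<b<t$ and $a+b=t$. Put $aa=ab=at=bb=bt=ta=tb=tt=e$ and $ba=0$, and let every product with a factor $0$ or $e$ be $0$.
\begin{itemize}
\item All products lie in $\{0,e\}$, so every triple product is $0$ and associativity holds.
\item Each translation is either $0$ or equals $e$ exactly on a filter and $0$ elsewhere. The filters are $\{a,b,t\}$, $\{b,t\}$ (for $u\mapsto bu$) and $\{a,t\}$ (for $u\mapsto ua$). Their complements are closed under $+$, so both distributive laws hold.
\item $uv=e$ forces $u,v\in\{a,b,t\}$. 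Hence $uv\le u$, $uv\le v$, $uv\le u^2$ and $uv\le v^2$. Since triple products vanish, $A$ satisfies all of \eqref{8521}--\eqref{085041}.
\item Under $x\mapsto a$, $y\mapsto b$ we get $xy\mapsto e$ but $yx+xyz\mapsto 0$.
\end{itemize}

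\textbf{Consequence and repair.} The listed identities therefore do not define $\mathsf{V}(S_{(4,852)})$. The paper's own proof has the same hole: its final appeal to \eqref{08503}--\eqref{085041} silently skips $\bu_j=yx$. Adding $xy\preceq yx+xyz$ to the basis repairs both arguments. You then get $\bu\succeq yx+xy\mathbf{z}\succeq xy$, and if $\mathbf{z}$ is empty then $\bu_j=x\succeq xy$ by \eqref{8521}. The rest of your proof goes through unchanged, so the finite basis conclusion for $S_{(4,852)}$, and hence for $S_{(4,851)}$, survives.
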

\begin{proof}
It is routine to check that $S_{(4,852)}$ satisfies the identities \eqref{8521}--\eqref{085041}.
It remains to show that every inequality of $S_{(4,852)}$ is derivable from \eqref{8521}--\eqref{085041}.
Let $\bq\preceq \bu$ be a nontrivial inequality,
where $\bu=\bu_1+\bu_2+\cdots+\bu_n$ with $\bu_i, \bq \in X^+$ for $1 \leq i \leq n$.
One can obtain that
$S_{(4, 852)}$ is isomorphic to a subdirect product of $L_2$ and $S_{47}$
via the congruences defined by the nontrivial blocks $\{1, 2, 3\}$ and $\{3, 4\}$.
Thus both $L_2$ and $S_{47}$ satisfy $\bq\preceq \bu$,
so $h(\bu_i)=h(\bq)$ for some $\bu_i\in \bu$,
and by Lemma~\ref{lem4701}, either $\ell(\bq)\geq 3$ or
$\ell(\bq)=2$ and $L_{\leq 2}(\bu) \cap D_\bq(\bu)$ is nonempty.

\textbf{Case 1.} $\ell(\bq) \geq 3$. Then
\[
\bu \succeq \bu_i \stackrel{\eqref{8521}} \succeq \bu_i\bq \stackrel{\eqref{8523}} \approx \bq\bq' \stackrel{\eqref{8527}}\succeq \bq .
\]
The third step uses $h(\bu_i)=h(\bq)$, while the last step follows from $\ell(\bq)\geq 3$.
This derives the inequality $\bu \succeq\bq$.

\textbf{Case 2.} $\ell(\bq)=2$. Since $L_{\leq 2}(\bu)\cap D_\bq(\bu) \neq \emptyset$,
there exists $\bu_j\in \bu$ such that $\ell(\bu_j) \leq 2$ and $c(\bu_j) \subseteq c(\bq)$.
Now we have
\[
\bu \succeq \bu_j+\bu_i \stackrel{\eqref{8521}} \succeq \bu_j+\bu_i\bq \stackrel{\eqref{8523}} \approx \bu_j+\bq\bq'
\stackrel{\eqref{08503}-\eqref{085041}} \succeq \bq.
\]
The last step follows from $\ell(\bq)=2$, $\ell(\bu_j) \leq 2$ and $c(\bu_j) \subseteq c(\bq)$.
This derives the inequality $\bu \succeq\bq$.
\end{proof}

\begin{remark}
We have $\mathsf{V}(S_{(4, 852)})=\mathsf{V}(L_2, S_{47})$,
since $S_{(4, 852)}$ is isomorphic to a subdirect product of $L_2$ and $S_{47}$.
\end{remark}

\begin{cor}
The ai-semiring $S_{(4, 851)}$ is finitely based.
\end{cor}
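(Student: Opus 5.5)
The plan is to follow the pattern of the corollaries already proved in this section and identify $\mathsf{V}(S_{(4,851)})$ with a variety whose basis is already known. There are two natural candidates. The first is that $S_{(4,851)}$ has the dual multiplication of $S_{(4,852)}$. The second is that $\mathsf{V}(S_{(4,851)})=\mathsf{V}(L_2,S_{47})$ or $\mathsf{V}(R_2,S_{47})$, in analogy with how $S_{(4,600)}$ was handled relative to $S_{(4,850)}$.

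The first thing I would do is compute, from Table~\ref{tb1}, the multiplication of $S_{(4,851)}$, and compare it with the transpose of the table of $S_{(4,852)}$.

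\emph{If the transpose matches.} Then $S_{(4,851)}$ and $S_{(4,852)}$ have the same additive reduct and dual multiplications. By the elementary fact recalled in the introduction, together with Proposition~\ref{pro85201}, $S_{(4,851)}$ is finitely based, and the proof is complete in one line.

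\emph{If the transpose does not match.} Here I would use the embedding approach of the $S_{(4,600)}$ corollary, in three steps.

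\begin{enumerate}
\item Check that $S_{(4,851)}$ satisfies the identities \eqref{8521}--\eqref{085041}. By Proposition~\ref{pro85201}, this gives $\mathsf{V}(S_{(4,851)})\subseteq\mathsf{V}(S_{(4,852)})$.
\item Locate subalgebras of $S_{(4,851)}$ isomorphic to $L_2$ and to $S_{47}$. Natural candidates are $\{1,4\}$ (or $\{1,3\}$) for $L_2$, and $\{2,3,4\}$ for $S_{47}$. This requires checking closure and the isomorphism against Table~\ref{tb4701}.
\item Use the preceding Remark, $\mathsf{V}(S_{(4,852)})=\mathsf{V}(L_2,S_{47})$, to get the reverse inclusion.
\end{enumerate}

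Together these give $\mathsf{V}(S_{(4,851)})=\mathsf{V}(S_{(4,852)})$, so $S_{(4,851)}$ is finitely based.

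If instead $S_{(4,851)}$ contains $R_2$ rather than $L_2$, I would apply the same argument to its dual. The dual satisfies the basis of $S_{(4,852)}$ and contains $L_2$ and $S_{47}$, so the dual is finitely based, and hence so is $S_{(4,851)}$.

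The only real work is the finite verification that the seven identities hold in a 4-element algebra, and checking the subalgebras. For the identities, I would reduce to substitutions of elements and use the fact that \eqref{8521} forces products to lie below the left factor, which prunes most cases.

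The step I expect to be the main obstacle is deciding which of the two routes applies, since that depends on the Cayley table. I would try the duality check first because it is immediate.
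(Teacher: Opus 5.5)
Your first route is exactly the paper's proof. Transposing the multiplication table of $S_{(4,852)}$, whose columns are $(2,3,3,4)$, $(3,3,3,4)$, $(3,3,3,4)$, $(3,3,3,4)$, gives precisely the table of $S_{(4,851)}$, so the result follows from Proposition~\ref{pro85201} and the duality fact. The fallback route is therefore unnecessary; its step~1 would in fact fail, since in $S_{(4,851)}$ one has $4\cdot 1=3$, which is not $\le 4$, so \eqref{8521} does not hold.
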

\begin{proof}
It is easy to see that $S_{(4, 851)}$ and $S_{(4, 852)}$ have dual multiplications.
By Proposition \ref{pro85201}, we immediately deduce $S_{(4, 851)}$ is finitely based.
\end{proof}

\begin{pro}\label{pro85301}
The ai-semiring variety $\mathsf{V}(S_{(4, 853)})$ is defined by the identities
\begin{align}
&xy     \preceq x;\label{8531}\\
&xy  \preceq x^2;\label{8532}\\
&xy    \approx yx;\label{8533}\\
&xyz^2 \approx xyz.\label{8534}
\end{align}
\end{pro}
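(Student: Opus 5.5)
The plan is to follow the pattern of Propositions~\ref{pro85001} and \ref{pro85201}. First I would check directly on the Cayley tables that $S_{(4,853)}$ satisfies \eqref{8531}--\eqref{8534}. Then I would show that every nontrivial inequality $\bq\preceq\bu$ of $S_{(4,853)}$, with $\bu=\bu_1+\cdots+\bu_n$, is derivable from them.

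To get combinatorial information about such an inequality, I would exhibit $S_{(4,853)}$ as a subdirect product of a $2$-element ai-semiring and $S_{47}$. I expect this to work via the congruences with nontrivial blocks $\{1,2,3\}$ and $\{3,4\}$, as in the previous propositions. Since the multiplication is commutative and satisfies $xy\preceq x$, I expect the $2$-element factor to be $D_2$; alternatively $D_2$ may sit inside $S_{(4,853)}$ as a subalgebra such as $\{1,3\}$. By Lemma~\ref{nlemma1}, satisfaction in $D_2$ gives some $\bu_i\in D_\bq(\bu)$. By Lemma~\ref{lem4701}, either $\ell(\bq)\geq 3$, or $\ell(\bq)=2$ and some $\bu_j\in L_{\leq 2}(\bu)\cap D_\bq(\bu)$.

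The key derived fact is a normal-form observation. By \eqref{8533} and \eqref{8534}, in any word of length at least $3$ the multiplicity of a variable may be raised or lowered freely, provided the variable stays present and the length stays at least $3$. Consequently, if $\ell(\bq)\geq 3$ and $c(\bw)\subseteq c(\bq)$, then $\bq\bw\approx\bq$.

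\textbf{Case $\ell(\bq)\geq 3$.} The derivation is
\[
\bu\succeq\bu_i\stackrel{\eqref{8531}}{\succeq}\bu_i\bq\approx\bq .
\]
Here the second step uses \eqref{8531} together with commutativity, and the last step uses the normal-form fact and $c(\bu_i)\subseteq c(\bq)$.

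\textbf{Case $\ell(\bq)=2$.} Then $\bq\in\{xy,\,x^2\}$ with $x\neq y$, and $\bu_j$ has length at most $2$ with $c(\bu_j)\subseteq c(\bq)$. If $\bu_j=\bq$ (up to \eqref{8533}) there is nothing to prove. Otherwise the possibilities are few and each is covered by one identity:
\begin{itemize}
\item if $\bq=xy$ and $\bu_j\in\{x,y\}$, use \eqref{8531} together with \eqref{8533};
\item if $\bq=xy$ and $\bu_j\in\{x^2,y^2\}$, use \eqref{8532} together with \eqref{8533};
\item if $\bq=x^2$ and $\bu_j=x$, use \eqref{8531} with $y=x$.
\end{itemize}
The only remaining option, $\bq=x^2$ with $\bu_j=xy$, is excluded because $c(\bu_j)\subseteq c(\bq)$.

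I expect the main obstacle to be the first step: correctly identifying the subdirect decomposition, or the relevant subalgebras, so that the $D_2$-condition and Lemma~\ref{lem4701} are really available. It must also be checked that $S_{(4,853)}$ has no further constraint, coming from length-one words or from the element $4$, that is not already captured by $D_2$ and $S_{47}$. Once that is settled, the derivations above are short.
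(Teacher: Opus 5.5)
Your proposal is correct and matches the paper's proof. It uses the same subdirect decomposition of $S_{(4,853)}$ into $D_2$ and $S_{47}$ via the blocks $\{1,2,3\}$ and $\{3,4\}$, the same appeal to Lemma~\ref{lem4701}, and the same derivations: $\bu\succeq\bu_i\succeq\bu_i\bq\approx\bq$ for $\ell(\bq)\geq 3$, and $\bu\succeq\bu_j\succeq\bq$ for $\ell(\bq)=2$, which you spell out case by case in more detail than the paper.

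The only slip is your aside that $\{1,3\}$ might be a copy of $D_2$. It is not closed under multiplication, since $1\cdot 1=2$; the copy of $D_2$ inside $S_{(4,853)}$ is $\{3,4\}$. This does not affect the argument, because your main route through the quotient already gives the $D_2$ condition. Your worry about further constraints is also moot: the argument only uses necessary conditions, so extra constraints from $S_{(4,853)}$ could only help.
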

\begin{proof}
It is easy to verify that $S_{(4,853)}$ satisfies the identities \eqref{8531}--\eqref{8534}.
Now we prove that every inequality of $S_{(4,853)}$ is derivable from \eqref{8531}--\eqref{8534}.
Let $\bq\preceq \bu$ be a nontrivial inequality,
where $\bu=\bu_1+\bu_2+\cdots+\bu_n$ with $\bu_i, \bq \in X^+$ for $1 \leq i \leq n$.
Observe that $S_{(4, 853)}$ is isomorphic to a subdirect product of $D_2$ and $S_{47}$
via the congruences defined by the nontrivial blocks $\{1, 2, 3\}$ and $\{3, 4\}$.
Hence both $D_2$ and $S_{47}$ satisfy the inequality $\bq \preceq \bu$,
so $c(\bu_i) \subseteq c(\bq)$ for some $\bu_i \in \bu$,
and by Lemma \ref{lem4701}, either $\ell(\bq)\geq 3$ or $\ell(\bq)= 2$ and $L_{\leq2}(\bu) \cap D_\bq(\bu)$ is nonempty.

\textbf{Case 1.} $\ell(\bq) \geq 3$. Then
\[
\bu \succeq \bu_i\stackrel{\eqref{8531}}\succeq \bu_i\bq \stackrel{\eqref{8533},\eqref{8534}}\approx \bq.
\]
The last step follows from $c(\bu_i) \subseteq c(\bq)$ and $\ell(\bq) \geq 3$.
This derives the inequality $\bu \succeq\bq$.

\textbf{Case 2.} $\ell(\bq) = 2$.
Since  $L_{\leq 2}(\bu) \cap D_\bq(\bu)$ is nonempty,
there exists $\bu_j\in \bu$ such that $\ell(\bu_j) \leq 2$ and $c(\bu_j) \subseteq c(\bq)$.
Hence
\[
\bu \succeq \bu_j \stackrel{\eqref{8531}-\eqref{8533}}\succeq \bq.
\]
This derives the inequality $\bu \succeq\bq$.
\end{proof}

\begin{remark}
We note that $\mathsf{V}(S_{(4, 850)})=\mathsf{V}(D_2, S_{47})$,
since $S_{(4, 853)}$ is isomorphic to a subdirect product of $D_2$ and $S_{47}$.
\end{remark}

\begin{table}[ht]
\caption{The Cayley tables of $S_{(4, 385)}$} \label{tb38501}
\begin{tabular}{c|cccc}
$+$      &$1$ &$2$ &$3$&$4$\\
\hline
$1$      &$1$ &$2$ &$1$ &$1$\\
$2$      &$2$ &$2$ &$2$ &$2$\\
$3$      &$1$ &$2$ &$3$ &$1$\\
$4$      &$1$ &$2$ &$1$ &$4$\\
\end{tabular}\qquad
\begin{tabular}{c|cccc}
$\cdot$      &$1$ &$2$ &$3$&$4$\\
\hline
$1$      &$3$ &$3$ &$3$ &$3$\\
$2$      &$3$ &$1$ &$3$ &$3$\\
$3$      &$3$ &$3$ &$3$ &$3$\\
$4$      &$3$ &$3$ &$3$ &$3$\\
\end{tabular}
\end{table}

\begin{pro}\label{pro84901}
The ai-semiring $\mathsf{V}(S_{(4, 849)})$ is finitely based.
\end{pro}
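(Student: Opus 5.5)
The plan follows the pattern of the two earlier propositions for $S_{(4,746)}$ and $S_{(4,791)}$, which suggests the relevant tool: Table~\ref{tb38501}, listing $S_{(4,385)}$, is placed immediately before the statement. I will show that $S_{(4,849)}$ and the Type III algebra $S_{(4,385)}$ generate the same variety, and then use the known result from \cite{rlyc} that $S_{(4,385)}$ is finitely based. Since $\mathsf{V}(S_{(4,850)})$, $\mathsf{V}(S_{(4,852)})$ and $\mathsf{V}(S_{(4,853)})$ already cover the joins of $S_{47}$ with $M_2$, $L_2$ and $D_2$, the natural guess is that $S_{(4,849)}$ is a subdirect product of $S_{47}$ and $T_2$ (or possibly $N_2$).

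\textbf{Step 1: decompose $S_{(4,849)}$.} On the chain $1>2>3>4$, I will check that the partitions with nontrivial blocks $\{3,4\}$ and $\{1,2,3\}$ are congruences. This means verifying that $3$ and $4$ act identically under left and right multiplication modulo the block, and likewise for $1,2,3$. The first quotient should be isomorphic to $S_{47}$ and the second to $T_2$. These two congruences intersect trivially, so $S_{(4,849)}$ is a subdirect product of $S_{47}$ and $T_2$, and hence $\mathsf{V}(S_{(4,849)})=\mathsf{V}(S_{47},T_2)$.

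\textbf{Step 2: decompose $S_{(4,385)}$.} Reading Table~\ref{tb38501}, the relevant partitions are those with nontrivial blocks $\{1,4\}$ and $\{1,2,3\}$. Both are congruences: $1,4$ act as $3$ on every product, and the second quotient is $\{\{1,2,3\},4\}$ with $4\cdot 4=3$, which lies in the big block. I will confirm that the quotients are $S_{47}$ and $T_2$. The quotient by $\{1,4\}$ has elements $\bar1=\{1,4\}$, $2$ and $3$, with $2\cdot 2=1$ and every other product equal to $3$, ordered $2>\bar1>3$. After relabelling this matches $S_{47}$, in which only $3\cdot3=1$ and every other product equals $2$, ordered $3>1>2$. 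This gives $\mathsf{V}(S_{(4,385)})=\mathsf{V}(S_{47},T_2)=\mathsf{V}(S_{(4,849)})$.

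\textbf{Step 3: conclude.} I will cite the proposition of \cite{rlyc} establishing that $S_{(4,385)}$ is finitely based; it is presumably the one adjacent to Propositions 5.4 and 5.6 used above. Then $S_{(4,849)}$ is finitely based.

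The main obstacle is Step 1, because I do not have the multiplication table of $S_{(4,849)}$. If the $\{1,2,3\}$ quotient turns out to be $N_2$ rather than $T_2$, the same argument goes through with a different Type III partner, namely the one that is a subdirect product of $S_{47}$ and $N_2$. If no such decomposition exists, the fallback is a direct basis proof modelled on Proposition~\ref{pro85301}, combining Lemma~\ref{lem4701} with the $T_2$ criterion $L_{\geq 2}(\bu)\neq\emptyset$ from Lemma~\ref{nlemma1}.
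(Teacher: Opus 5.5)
Your proposal is correct and follows the paper's proof exactly. The multiplication of $S_{(4,849)}$ has $1\cdot 1=2$ and every other product equal to $3$. It is therefore a subdirect product of $S_{47}$ and $T_2$ via the blocks $\{3,4\}$ and $\{1,2,3\}$, and $S_{(4,385)}$ is a subdirect product of the same two algebras via $\{1,4\}$ and $\{1,2,3\}$, so finite basedness follows from \cite[Proposition 5.9]{rlyc}. Your $N_2$ contingency is unnecessary: every product in $S_{(4,849)}$ lies in $\{2,3\}$, so the $\{1,2,3\}$-quotient is indeed $T_2$.
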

\begin{proof}
It is a routine matter to verify that $S_{(4, 849)}$ is isomorphic to a subdirect product of $S_{47}$ and $T_2$
via two congruences with nontrivial blocks $\{3, 4\}$ and $\{1, 2, 3\}$,
and that $S_{(4, 385)}$ (see Table~\ref{tb38501} for its Cayley tables) is isomorphic to a subdirect product of $S_{47}$ and $T_2$
via two congruences with nontrivial blocks $\{1, 4\}$ and $\{1, 2, 3\}$.
Thus $\mathsf{V}(S_{(4,849)})=\mathsf{V}(S_{(4,385)})$,
so $S_{(4, 849)}$ and $S_{(4, 385)}$ have the same finite basis property.
By \cite[Proposition 5.9]{rlyc}, $S_{(4, 385)}$ is finitely based.
Therefore, $\mathsf{V}(S_{(4, 849)})$ is finitely based.
\end{proof}

\section{Equational bases for 4-element ai-semirings related to $S_{53}$}
In this section, we study the finite basis problem for some 4-element ai-semirings that are related to $S_{53}$,
whose Cayley tables are given in Tbale~\ref{tb5301}.

\begin{table}[ht]
\caption{The Cayley tables of $S_{53}$} \label{tb5301}
\begin{tabular}{c|ccc}
$+$      &$1$ &$2$ &$3$\\
\hline
$1$      &$1$ &$1$ &$3$\\
$2$      &$1$ &$2$ &$3$\\
$3$      &$3$ &$3$ &$3$\\
\end{tabular}\qquad
\begin{tabular}{c|ccc}
$\cdot$      &$1$ &$2$ &$3$\\
\hline
$1$      &$3$ &$1$ &$3$\\
$2$      &$1$ &$2$ &$3$\\
$3$      &$3$ &$3$ &$3$\\
\end{tabular}
\end{table}

The following result, which is due to Ren et al.~\cite[Lemma 4.8]{rlyc},
provides some information about the inequalities satisfied by $S_{53}$.

\begin{lem}\label{lem5301}
Let $\bq\preceq\bu$ be a nontrivial inequality such that
$\bu=\bu_1+\bu_2+\cdots+\bu_n$ and $\bu_i, \bq \in X^+_c$ for all $1\leq i \leq n$.
Suppose that $\bq\preceq\bu$ holds in $S_{53}$.
Then $L_{\geq 2}(\bu)\neq \emptyset$, $c(\bq)\subseteq c(\bu)$,
and for every $\bw\in S_2(\bq)$ there exists $\bw'\in S_2(\bu)$ such that $c(\bw')\subseteq c(\bw)$.
\end{lem}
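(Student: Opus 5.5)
We prove the three conclusions of Lemma~\ref{lem5301} separately. Each is obtained by exhibiting a semiring homomorphism $\varphi\colon P_f(X^+)\to S_{53}$ with $\varphi(\bu)<\varphi(\bq)$ whenever the conclusion fails. We first record the structure of $S_{53}$. Its additive order is the chain $2<1<3$, so $3$ is the top. The element $2$ is a multiplicative identity and $3$ is a multiplicative zero. Moreover $1\cdot 1=3$, so a product of elements of $\{1,2\}$ equals $2$ if all factors are $2$, equals $1$ if exactly one factor is $1$, and equals $3$ otherwise. (The restriction to $X^+_c$ in the statement presumably refers to a harmless normalization of words and plays no role in the argument.)

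\emph{First claim.} Suppose $L_{\geq 2}(\bu)=\emptyset$, so that every $\bu_i$ is a single variable. Map every variable to $1$. Then $\varphi(\bu)=1$. Since the inequality is nontrivial, $\bq\notin\bu$. If $\ell(\bq)\geq 2$, then $\varphi(\bq)=3>1$, a contradiction. If $\bq=x$ is a single variable, then $x\notin\bu$. Sending $x\mapsto 3$ and every other variable to $2$ gives $\varphi(\bq)=3$ and $\varphi(\bu)=2$, again a contradiction.

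\emph{Second claim.} Suppose some $x\in c(\bq)\setminus c(\bu)$. Send $x\mapsto 3$ and all other variables to $2$. Then $\varphi(\bq)=3$, because $3$ is absorbing for multiplication. On the other hand $\varphi(\bu)=2$, since every word of $\bu$ avoids $x$. This is a contradiction.

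\emph{Third claim.} This is the main step. Fix $\bw\in S_2(\bq)$ and suppose no $\bw'\in S_2(\bu)$ satisfies $c(\bw')\subseteq c(\bw)$. Send the variables of $c(\bw)$ to $1$ and all other variables to $2$. Then $\varphi(\bq)=3$, because $\bq$ contains the factor $\bw$, which evaluates to $1\cdot1=3$.

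Now take a word $\bu_i$. Every subword of length $2$ of $\bu_i$ contains a variable outside $c(\bw)$. Hence no two occurrences of variables from $c(\bw)$ are adjacent in $\bu_i$. This is still not enough on its own, since two such occurrences separated by $2$'s still multiply to $1\cdot 2\cdot 1=3$.

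To handle this, we refine the valuation. We send exactly one variable to $1$ and the rest suitably. If $\bw=x^2$, we send $x\mapsto 1$ and everything else to $2$. Any $\bu_i$ containing $x$ at least twice evaluates to $3$, so this case needs further work. We therefore expect to instead use valuations in which the other variables take the value $3$ or $1$ in a controlled way. We intend to follow the proof of \cite[Lemma 4.8]{rlyc} closely.

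Concretely, the approach for the third claim is to choose $\varphi$ so that every word of $\bu$ containing a variable outside $c(\bw)$ is sent to $1$ or $2$, while $\varphi(\bq)=3$. Verifying that such a choice exists for both shapes $\bw=xy$ and $\bw=x^2$ is the delicate part. If that fails, we will simply cite \cite[Lemma 4.8]{rlyc}, from which the statement is taken verbatim.
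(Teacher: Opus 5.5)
Your treatment of the first two conclusions is correct. The description of $S_{53}$ is right: additively $2<1<3$, $2$ is a multiplicative identity, $3$ is a zero, and $1\cdot 1=3$. Your valuations for $L_{\geq 2}(\bu)\neq\emptyset$ and $c(\bq)\subseteq c(\bu)$ do force $\varphi(\bq)\not\leq\varphi(\bu)$.

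The gap is the third conclusion. You leave it unproved, and the route you sketch for it cannot be completed. The culprit is your remark that the hypothesis $\bu_i,\bq\in X^+_c$ ``plays no role''. If $S_2$ is read as adjacent factors of ordinary words, the third conclusion is false for $S_{53}$. Since $S_{53}$ is commutative, the nontrivial inequality $xyz\preceq xzy$ holds in it. Yet for $\bw=xy\in S_2(xyz)$, neither $xz$ nor $zy$ has content inside $\{x,y\}$. Every valuation gives $xyz$ and $xzy$ the same value, so no refinement of the kind you propose (sending other letters to $3$ or $1$ ``in a controlled way'') can yield a contradiction.

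The missing idea is to actually use $X^+_c$, the free commutative semigroup; this is legitimate because $S_{53}$ satisfies $xy\approx yx$. There a length-$2$ subword of $\bu_i$ is any degree-$2$ divisor, i.e.\ the product of any two letter occurrences, adjacent or not. So if the conclusion fails for some $\bw\in S_2(\bq)$, each $\bu_i$ contains at most one occurrence, counted with multiplicity, of letters from $c(\bw)$. Your very first valuation then already works: send $c(\bw)$ to $1$ and all other letters to $2$. Each $\varphi(\bu_i)\in\{1,2\}$, hence $\varphi(\bu)\in\{1,2\}$ since $1+2=1$. On the other hand, $\bq$ has at least two occurrences of letters from $c(\bw)$, so $\varphi(\bq)=3\not\leq\varphi(\bu)$. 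Your worry about $\bw=x^2$ also disappears, since a $\bu_i$ with two occurrences of $x$ has $x^2\in S_2(\bu_i)$.

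The paper itself offers no argument here and just quotes \cite[Lemma 4.8]{rlyc}, so your fallback coincides with it. This three-line valuation argument would make your proof self-contained.
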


\begin{pro}\label{pro54601}
The ai-semiring variety $\mathsf{V}(S_{(4, 546)})$ is defined by the inequalities
\begin{align}
&xy        \approx yx;&\label{5461}\\
&xy     \preceq x^2+yz; &\label{5463}\\
&xyz \approx xy+xz+yz,&\label{5462}.
\end{align}
where $z$ may be empty in \eqref{5463}.
\end{pro}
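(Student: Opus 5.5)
Following the pattern of the earlier propositions, we first check directly that $S_{(4,546)}$ satisfies \eqref{5461}--\eqref{5462}; this is a finite table check. The main work is showing that every nontrivial inequality $\bq\preceq\bu$ of $S_{(4,546)}$, with $\bu=\bu_1+\cdots+\bu_n$, can be derived from these identities. The intended route is to write $S_{(4,546)}$ as a subdirect product of $S_{53}$ and a $2$-element ai-semiring, most likely $M_2$ or $D_2$, via two congruences. One congruence should collapse $\{3,4\}$ (or some comparable block) to give $S_{53}$, and the other should give the $2$-element factor. Both factors then satisfy $\bq\preceq\bu$, so Lemma~\ref{lem5301} applies. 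It gives $L_{\geq 2}(\bu)\neq\emptyset$ and $c(\bq)\subseteq c(\bu)$. It also says that every length-$2$ subword $\bw$ of $\bq$ is dominated in content by some $\bw'\in S_2(\bu)$.

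Next we derive consequences of the basis. Setting $z=y$ in \eqref{5462} gives $xy^2\approx xy+y^2$ (together with $x\approx$-type identities when variables coincide). Setting $x=y=z$ gives $x^3\approx x^2$. The key reduction is that \eqref{5461} and \eqref{5462} make every word of length at least $2$ equal to the sum of all products $ab$ with $a,b$ occurring at distinct positions. By induction on length, any word $\bw$ with $\ell(\bw)\geq 2$ equals $\sum$ of its pairwise products $x_ix_j$, where $x_i^2$ appears exactly when $m(x_i,\bw)\geq 2$. So $\bq$ is either a single variable or a sum of words of length $2$. It therefore suffices to derive $\bu\succeq ab$ for each length-$2$ word $ab$ in this expansion of $\bq$, and also to handle the case $\ell(\bq)=1$. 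The same expansion applies to every $\bu_i$ of length at least $2$, so we may assume $\bu$ consists of variables and length-$2$ words.

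Now we do the case analysis for a target $ab$, with $a\neq b$ or $a=b$. If $ab\in\bu$ after expansion, the derivation is trivial. If $a=b$, Lemma~\ref{lem5301} provides $\bw'\in S_2(\bu)$ with $c(\bw')\subseteq\{a\}$, so $a^2$ already occurs. If $a\neq b$, we get $\bw'\in\{ab,a^2,b^2\}$ up to commutativity. In the cases $a^2$ or $b^2$, identity \eqref{5463} gives $ab\preceq a^2+b$ with $z$ empty. For this we need $b$ itself, or some $bz$ with $z$ arbitrary, among the summands. Here we use $c(\bq)\subseteq c(\bu)$: $b$ occurs in some summand, and after expansion that summand is either $b$ or $bz$, using \eqref{5461}. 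For $\ell(\bq)=1$, nontriviality together with the $2$-element factor, such as $N_2$- or $T_2$-type behaviour, should force a contradiction or a direct derivation. We still have to determine this from the actual tables.

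The main obstacle is pinning down the correct subdirect decomposition and the exact form of the $2$-element factor's constraint. That constraint is what handles single-variable summands and the case $\ell(\bq)=1$, and the precise statement of Lemma~\ref{lem5301} alone may not give $a^2$ versus $b^2$ symmetrically. We will first try the decomposition with blocks $\{1,2,3\}$ and $\{3,4\}$, as in the earlier sections.
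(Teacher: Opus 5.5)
Your treatment of $\ell(\bq)\ge 2$ is the paper's argument. You expand $\bq$ into pairwise products by \eqref{5462}, apply Lemma~\ref{lem5301} to each resulting $ab\preceq\bu$, and close with \eqref{5463} together with $c(\bq)\subseteq c(\bu)$; the instance of \eqref{5463} with empty $z$ covers single-variable summands. The asymmetry you fear between $a^2$ and $b^2$ disappears by \eqref{5461}. The genuine gap is the case $\ell(\bq)=1$, which you leave open. The structural input you plan to use for it cannot supply what is needed.

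First, the decomposition you plan to try does not exist. The block $\{1,2,3\}$ is not a congruence, since $1\cdot 4=1$ but $3\cdot 4=4$. Congruence classes on the additive chain must be intervals, and the only two-block partition compatible with multiplication is $\{1,2\}\mid\{3,4\}$ (quotient $M_2$). That partition identifies $3$ and $4$, so $S_{(4,546)}$ is not a subdirect product of $S_{53}$ and any $2$-element ai-semiring. The decomposition that does exist uses the congruences with nontrivial blocks $\{3,4\}$ and $\{1,2\}$, with quotients $S_{53}$ and $S_{52}$.

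Second, and more to the point, no ``direct derivation'' is possible when $\ell(\bq)=1$. The inequality $x\preceq x^2$ meets every conclusion of Lemma~\ref{lem5301} and holds in $S_{53}$, $M_2$ and $D_2$. Yet it fails in $S_{(4,546)}$: take $x=3$, where $3^2=4<3$. It is also not derivable, because $N_2$ satisfies \eqref{5461}--\eqref{5462}. So this case must be shown to be vacuous, which $M_2$ or $D_2$ cannot do.

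The missing observation is that $\{3,4\}$ is a subalgebra isomorphic to $N_2$. Lemma~\ref{nlemma1}(5) then gives $\ell(\bq)\ge 2$ for every nontrivial inequality of $S_{(4,546)}$. Note that $\{1,2\}\cong T_2$ also embeds, but it only yields $L_{\ge 2}(\bu)\neq\emptyset$, which you already have.

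Since you only use necessary conditions, you need no subdirect decomposition at all. The subalgebras $\{1,2,4\}\cong S_{53}$ and $\{3,4\}\cong N_2$ suffice, and with them your argument becomes the paper's proof.
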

\begin{proof}
It is routine to check that $S_{(4, 546)}$ satisfies the identities \eqref{5461}--\eqref{5462}.
It remains to show that every inequality of $S_{(4, 546)}$ is derivable from \eqref{5461}--\eqref{5462}.
Let $\bq \preceq \bu$ be a nontrivial inequality,
where $\bu=\bu_1+\cdots+\bu_n$ with $\bu_i, \bq\in X_c^+$ for $1\leq i\leq n$.
Since $N_2$ is isomorphic to the subalgebra $\{3, 4\}$ of $S_{(4, 546)}$,
it follows that $N_2$ satisfies $\bq \preceq \bu$, and so $\ell(\bq)\geq 2$.
Since $S_{53}$ is isomorphic to the subalgebra $\{1, 2, 4\}$ of $S_{(4, 546)}$,
$S_{53}$ also satisfies $\bq\preceq\bu$.
By Lemma~\ref{lem5301},
$L_{\geq2}(\bu)\neq\emptyset$, $c(\bq)\subseteq c(\bu)$,
and for every $\bw\in S_2(\bq)$ there exists $\bw'\in S_2(\bu)$ with $c(\bw')\subseteq c(\bw)$.

We may write $\bq=x_1x_2\cdots x_n$ with $n\geq2$, where the variables are not necessarily distinct.
By the identity \eqref{5462}, we have
\[
\bq \approx \sum_{1\leq i<j\leq n} x_ix_j.
\]
Thus it suffices to consider the case that $\ell(\bq)=2$.

\textbf{Case 1.} $\bq=x^2$.
Then there exists $\bw'\in L_2(\bu)$ such that $c(\bw')\subseteq c(x^2)$,
and so $\bw'=x^2$. We may assume that $\bw'\in L_2(\bu_i)$ for some $\bu_i\in \bu$.
Since $\bq \preceq \bu$ is nontrivial, $\ell(\bu_i)\geq 3$. Now we have
\[
\bu \succeq \bu_i \stackrel{\eqref{5462}, \eqref{5461}} \succeq \bq.
\]
This derives the inequality $\bu \succeq \bq$.

\textbf{Case 2.} $\bq=xy$, where $x$ and $y$ are distinct.
Then there exists $\bw'\in L_2(\bu)$ such that $c(\bw')\subseteq c(xy)$,
and so $\bw'\in\{x^2, y^2, xy\}$. We may assume that $\bw'\in L_2(\bu_i)$ for some $\bu_i\in \bu$.
If $\bw'=xy$, then $\ell(\bu_i)\geq 3$, since $\bq \preceq \bu$ is nontrivial.
Then
\[
\bu \succeq \bu_i \stackrel{\eqref{5462}, \eqref{5461}} \succeq \bq.
\]
Now suppose that $\bw'=x^2$. Since $c(\bq)\subseteq c(\bu)$,
there exists $\bu_j \in \bu$ such that $y\in c(\bu_j)$.
Write $\bu_j = yz\bu'_j$, where $z$ is empty if $\ell(\bu_j)=1$.
Then
\[
\bu \succeq \bu_i+\bu_j \stackrel{\eqref{5462}, \eqref{5461}} \succeq x^2+\bu_j
\stackrel{\eqref{5462}, \eqref{5461}} \succeq x^2+yz \stackrel{\eqref{5463}} \succeq xy=\bq.
\]
The case that $\bw'=x^2$ is similar to the preceding one.
\end{proof}

\begin{table}[ht]
\caption{The Cayley tables of $S_{53}$} \label{tbs52}
\begin{tabular}{c|ccc}
$+$      &$1$ &$2$ &$3$\\
\hline
$1$      &$1$ &$1$ &$3$\\
$2$      &$1$ &$2$ &$3$\\
$3$      &$3$ &$3$ &$3$\\
\end{tabular}\qquad
\begin{tabular}{c|ccc}
$\cdot$      &$1$ &$2$ &$3$\\
\hline
$1$      &$2$ &$2$ &$3$\\
$2$      &$2$ &$2$ &$3$\\
$3$      &$3$ &$3$ &$3$\\
\end{tabular}
\end{table}

\begin{remark}\label{remark26080101}
It is a routine matter to verify that $S_{(4, 546)}$ is isomorphic to a subdirect product of $S_{53}$ and $S_{52}$
(see Table~\ref{tbs52} for its Cayley tables)
via the congruences defined by the nontrivial blocks $\{3, 4\}$ and $\{2, 3\}$.
Thus $\mathsf{V}(S_{(4, 546)}) = \mathsf{V}(S_{53}, S_{52})$.
On the other hand,
it is easy to see that $M_2$ and $N_2$ are isomorphic to the subalgebras $\{2, 3\}$ and $\{1, 2\}$ of $S_{52}$, respectively.
Also, $S_{52}$ satisfies the identities
\[
xy \approx yx, \quad xy \preceq xyz, \quad xy \preceq x+y,
\]
which, by \cite{ryy}, form an equational basis of $\mathsf{V}(M_2, N_2)$.
Thus $\mathsf{V}(S_{52})=\mathsf{V}(M_2, N_2)$, and so $\mathsf{V}(S_{(4, 546)})=\mathsf{V}(S_{53}, M_2, N_2)$.
Since $M_2$ embeds into $S_{53}$, we obtain
\[
\mathsf{V}(S_{(4, 546)})=\mathsf{V}(S_{53}, N_2).
\]
\end{remark}

\begin{cor}
The ai-semiring $S_{(4,818)}$ is finitely based.
\end{cor}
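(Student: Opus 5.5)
The plan is to show that $\mathsf{V}(S_{(4,818)})=\mathsf{V}(S_{(4,546)})$. Proposition~\ref{pro54601} then gives that $S_{(4,818)}$ is finitely based. This is the same pattern the paper has used for the other corollaries in Sections~\ref{section-s44} onwards: one inclusion comes from checking a finite basis, the other from exhibiting generators of the variety inside the algebra.

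\textbf{Inclusion $\mathsf{V}(S_{(4,818)})\subseteq \mathsf{V}(S_{(4,546)})$.} I will check directly from the Cayley table of $S_{(4,818)}$ that it satisfies \eqref{5461}, \eqref{5463} (also with $z$ empty) and \eqref{5462}. Commutativity is immediate from the table. For \eqref{5462} and \eqref{5463}, I will use the linear additive order $4<3<2<1$ and go through the finitely many assignments of $x,y,z$. Because the order is a chain, $xy+xz+yz$ is simply the largest of the three pairwise products, so each check reduces to comparing table entries. By Proposition~\ref{pro54601}, these identities define $\mathsf{V}(S_{(4,546)})$, which gives this inclusion.

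\textbf{Inclusion $\mathsf{V}(S_{(4,546)})\subseteq \mathsf{V}(S_{(4,818)})$.} By Remark~\ref{remark26080101}, $\mathsf{V}(S_{(4,546)})=\mathsf{V}(S_{53},N_2)$. So it suffices to show that both $S_{53}$ and $N_2$ lie in $\mathsf{V}(S_{(4,818)})$. First I will look for a three-element subalgebra of $S_{(4,818)}$ isomorphic to $S_{53}$. By analogy with $S_{(4,546)}$, where this subalgebra is $\{1,2,4\}$, the natural candidates are $\{1,2,4\}$ or $\{1,2,3\}$, identified using the chain order and the distinguished elements of $S_{53}$. Next I will look for a two-element subalgebra $\{a,b\}$ with $a<b$ and all products equal to $a$, which is a copy of $N_2$; the likely candidate is $\{3,4\}$. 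If either copy fails to appear as a subalgebra, I will instead realize it as a quotient, either of $S_{(4,818)}$ itself or of a subalgebra, via a congruence collapsing an interval of the chain. That still places it in the variety.

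\textbf{Main obstacle.} The only genuine work is locating the copies of $S_{53}$ and $N_2$, together with the finite table verification in the first inclusion. The delicate point is the second inclusion: if $S_{(4,818)}$ contains no copy of $S_{53}$ as a subalgebra or homomorphic image of a subalgebra, I would fall back on showing that it is a subdirect product of $S_{53}$ and $N_2$ (or of $S_{53}$ and $S_{52}$). I would do this by finding two congruences with nontrivial blocks such as $\{3,4\}$ and $\{1,2\}$ that intersect trivially, exactly as in Remark~\ref{remark26080101}. Combining the two inclusions gives $\mathsf{V}(S_{(4,818)})=\mathsf{V}(S_{(4,546)})$, and Proposition~\ref{pro54601} completes the argument.
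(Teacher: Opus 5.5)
Your two-inclusion plan is sound, and the table check in the first inclusion does succeed. The table of $S_{(4,818)}$ is symmetric and every product lies in $\{2,3,4\}$. The element $2$ is absorbing, and $1\cdot a=a\cdot 1=2$ for all $a$. Hence \eqref{5462} reduces to triples drawn from $\{3,4\}$. For \eqref{5463}, the case $x\neq 4$ is immediate, because then $x^2=2$ is the largest possible product. The case $x=4$ reduces to $4y\le y$ and $4y\le yz$, both of which hold.

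However, every candidate you name for the second inclusion is wrong and must be replaced.
\begin{itemize}
\item $\{1,2,4\}$ is not a copy of $S_{53}$: $4\cdot 1=2$, so its additive minimum is not a multiplicative identity, as it must be in $S_{53}$.
\item $\{1,2,3\}$ is not a copy of $S_{53}$ either, since every product in it equals $2$.
\item $\{3,4\}$ is not even closed, since $3\cdot 3=2$. For the same reason $\{3,4\}$ is not a congruence block, so your fallback decomposition would also fail.
\end{itemize}
The correct copies are $\{2,3,4\}\cong S_{53}$ and $\{1,2\}\cong N_2$. In $\{2,3,4\}$, $4$ is the identity and additive minimum, $2$ is absorbing and additive maximum, and $3^2=2$. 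In $\{1,2\}$, all products equal $2$, the smaller element. With these, your argument is complete.

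The paper argues differently. It notes that the congruences with nontrivial blocks $\{1,2\}$ and $\{2,3,4\}$ intersect trivially and have quotients $S_{53}$ and $N_2$. So $S_{(4,818)}$ is a subdirect product of these, and $\mathsf{V}(S_{(4,818)})=\mathsf{V}(S_{53},N_2)$ follows in one step. Remark~\ref{remark26080101} and Proposition~\ref{pro54601} then finish the proof.

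That route needs no check of the basis identities, only two congruence checks, and it yields both inclusions at once. Yours replaces the congruence checks with a finite verification against the explicit basis of Proposition~\ref{pro54601} plus two subalgebra checks. It is equally valid, just somewhat more computational.

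The same partition drives both proofs, with the roles swapped. In yours, $\{2,3,4\}$ and $\{1,2\}$ are subalgebras isomorphic to $S_{53}$ and $N_2$. In the paper, collapsing $\{1,2\}$ and $\{2,3,4\}$ produces $S_{53}$ and $N_2$ as quotients.
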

\begin{proof}
It is easy to verify that $S_{(4, 818)}$ is isomorphic to a subdirect product of $S_{53}$ and $N_2$
via the congruences defined by the nontrivial blocks $\{1, 2\}$ and $\{2, 3, 4\}$.
Thus $\mathsf V(S_{(4, 818)}) = \mathsf V(S_{53}, N_2)$.
By Remark~\ref{remark26080101}, $\mathsf V(S_{(4, 818)}) =\mathsf{V}(S_{(4, 546)})$.
Proposition \ref{pro54601} tells us that $S_{(4, 546)}$ is finitely based.
Therefore, $S_{(4,818)}$ is finitely based.
\end{proof}

\begin{pro}\label{pro54401}
The ai-semiring variety $\mathsf{V}(S_{(4, 544)})$ is defined by the identities
\begin{align}
& xyz \approx xzy; \label{54401}\\
& x \preceq xy; \label{54402}\\
& xy \preceq x+y^2z; \label{54403}\\
& xy \preceq x+xyz; \label{54404}\\
& xyz \preceq xt+yxz; \label{54416}\\
& xy \preceq xt+y_1y^2y_2; \label{54410}\\
& xy \preceq xt+zxy; \label{54412}\\
& xz \preceq x^2y+zt; \label{54414}\\
& x^2y \preceq x^2z+y_1yy_2; \label{54406}\\
& xyz \preceq xz+xy+tyz; \label{54413}\\
& xyzt \approx xyz+xyt+xzt; \label{54415}\\
& xyz \preceq xy+xz+y_1y^2+z_1z;\label{54417}\\
& x_1^2x_2 \preceq x_1x_5+x_4x_1^2+x_6x_2x_3, \label{54408}
\end{align}
where $y_1$ and $y_2$ may be empty in \eqref{54406} and \eqref{54410},
$t$ may be empty in \eqref{54413}, \eqref{54410} and \eqref{54412},
$z$ and $t$ may be empty in \eqref{54416},
$y$ and $t$ may be empty in \eqref{54414},
$x_5$ and $x_6$ may be empty in \eqref{54408}.
\end{pro}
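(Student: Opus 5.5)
We follow the same template as in the preceding propositions. First we check, by finite computation with the Cayley tables, that $S_{(4,544)}$ satisfies \eqref{54401}--\eqref{54408}, including all the degenerate versions where the indicated variables are empty. The substance of the proof is the converse: every nontrivial inequality $\bq\preceq\bu$ of $S_{(4,544)}$, with $\bu=\bu_1+\cdots+\bu_n$, is derivable from these identities.

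\textbf{Step 1: necessary conditions.} We first extract combinatorial conditions on $\bq\preceq\bu$ from subalgebras and subdirect decompositions of $S_{(4,544)}$. We expect a subalgebra isomorphic to $S_{53}$ (or a quotient equal to it), which would let us invoke Lemma~\ref{lem5301}. We also expect two-element subalgebras or quotients among $L_2$, $M_2$, $D_2$, $N_2$, which via Lemma~\ref{nlemma1} would give conditions such as:
\begin{itemize}
\item some $\bu_j$ with $h(\bu_j)=h(\bq)$;
\item $c(\bq)\subseteq c(\bu)$;
\item $\ell(\bq)\geq 2$, or $\bq$ occurring in $\bu$ when $\ell(\bq)=1$;
\item for each $\bw\in S_2(\bq)$, a length-two subword $\bw'$ of $\bu$ with $c(\bw')\subseteq c(\bw)$.
\end{itemize}
If the conditions obtained this way are not sharp enough, we will prove an ad hoc lemma describing exactly when $S_{(4,544)}$ satisfies $\bq\preceq\bu$. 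We would do this by evaluating under substitutions that send chosen variables to $1$, $2$, $3$ and the remaining ones to the minimum element $4$.

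\textbf{Step 2: normal form for $\bq$.} Write $\bq = x\bq'$. Identity \eqref{54401} makes everything after the second letter commutative. Identity \eqref{54415} splits any word of length $\geq4$ into a sum of words of length $3$ that keep the first letter. So, exactly as in Proposition~\ref{pro54601}, it suffices to treat $\ell(\bq)\le 3$, that is, $\bq\in\{x,\,xy,\,x^2,\,xyz,\,x^2y,\,xy^2,\,x^3,\ldots\}$ up to the commutativity in positions $\geq 2$. This leaves a finite list of shapes, distinguished by which letters coincide.

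\textbf{Step 3: case analysis.} For each shape we use the Step 1 conditions to select a few summands of $\bu$, and we derive $\bq$ from their sum using one targeted identity. The correspondence we expect is:
\begin{itemize}
\item \eqref{54402} and \eqref{54404} let us lengthen words that share the head of $\bq$;
\item \eqref{54403}, \eqref{54410} and \eqref{54406} handle a square $y^2$ that appears inside some summand, supplying the required length-two subword;
\item \eqref{54412}, \eqref{54416} and \eqref{54413} handle the situation where the needed factor appears in a summand whose head is not $h(\bq)$;
\item \eqref{54414}, \eqref{54417} and \eqref{54408} cover the cases $\bq=xz$, $xyz$ and $x_1^2x_2$ in which the witnesses for distinct length-two subwords of $\bq$ are spread over different summands.
\end{itemize}
Before each application we use \eqref{54415} and \eqref{54401} to truncate summands of $\bu$ to length $\le3$ (a summand dominates its length-$3$ truncations), and we use \eqref{54402} to drop or append material.

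\textbf{Main obstacle.} The hard part is Step 1 together with the case $\bq = x_1^2x_2$ and its relatives. Here the satisfaction condition presumably involves several summands jointly, and we need to show that whenever $S_{(4,544)}$ forces $\bq\preceq\bu$, the summands realize exactly the pattern on the right side of \eqref{54408} or \eqref{54417}. To handle this, we will first pin down a precise characterization lemma by testing assignments into $\{1,2,3,4\}$. Then we will match each clause of that lemma to one identity, by choosing the summands that witness the head condition, the square condition and the tail condition respectively.
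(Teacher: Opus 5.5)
Your architecture matches the paper's proof. The paper takes the head condition from the subalgebra $\{3,4\}\cong L_2$. It takes the conditions of Lemma~\ref{lem5301} from the quotient of $S_{(4,544)}$ by the congruence with block $\{3,4\}$, which is $S_{53}$. It uses \eqref{54415} to assume $\ell(\bq)\le 3$ and $\ell(\bu_i)\le 3$, and then splits on $\ell(\bq)$ and on which witness $x^2$, $y^2$, $xy$, $yx$ the lemma supplies. It finishes $\ell(\bq)=3$ by combining $\bu\succeq xy$ and $\bu\succeq xz$ with a witness for $yz$ via \eqref{54413} or \eqref{54417}. Your ``main obstacle'' needs no separate characterization lemma. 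By Remark~\ref{remark26080801}, $\mathsf V(S_{(4,544)})=\mathsf V(S_{53},L_2)$, so the $L_2$ and $S_{53}$ conditions are all the information there is. For the $x_1^2x_2$-type targets, the joint pattern is just a head witness, a square witness and a content witness, fed into \eqref{54406} or \eqref{54408}.

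However, two of the Step~1 conditions you anticipate are wrong, and a case analysis built on them would fail.

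First, $N_2$ (and likewise $D_2$) does not lie in $\mathsf V(S_{(4,544)})$, because the defining inequality \eqref{54402}, $x\preceq xy$, fails in both. So ``$\ell(\bq)\ge 2$, or $\bq$ occurs in $\bu$ when $\ell(\bq)=1$'' is false here. The case $\ell(\bq)=1$ is genuine; it is settled by an $L_2$-witness $\bu_i=h(\bq)s(\bu_i)$ with $\ell(\bu_i)\ge 2$ together with \eqref{54402}.

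Second, the length-two condition comes from $S_{53}$, not from a two-element algebra, and Lemma~\ref{lem5301} is about commutative words. Read with contiguous factors of noncommutative words, it is neither necessary nor sufficient:
\begin{itemize}
\item It is not necessary: $xy\preceq xzy$ holds in $S_{(4,544)}$, yet $S_2(xzy)=\{xz,zy\}$ has no factor with content in $\{x,y\}$.
\item It is not sufficient: $xyx\preceq xy$ passes every test on your list. It fails under $x\mapsto 2$, $y\mapsto 3$, since $2\cdot 3\cdot 2=1$ lies above $2\cdot 3=2$.
\end{itemize}
You must first normalize with \eqref{54401}, which permutes all letters after the first one, not after the second. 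Then $S_2(\bq)$ contains every pair of occurrences, e.g.\ $\{xy,xz,yz\}$ for $\bq=xyz$. A witness for a pair $\{x,y\}$ is then simply a summand of $\bu$ containing $x$ twice, $y$ twice, or both $x$ and $y$.
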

\begin{proof}
It is easy to check that both $S_{(4, 544)}$ satisfies the identities \eqref{54401}--\eqref{54408}.
In the remainder it is enough to prove that every identity that holds in $S_{(4, 544)}$
can be derived by \eqref{54401}--\eqref{54408}.
Let $\bq\preceq \bu$ be such a nontrivial inequality, where
$\bu=\bu_1+\bu_2+\cdots+\bu_n$ and $\bu_i, \bq \in X^+$, $1 \leq i \leq n$.
It follows that $L_2$ is isomorphic to $\{3, 4\}$,
we have that $L_2$ satisfies $\bq\preceq \bu$,
and so there exists $\bu_i\in \bu$ such that $h(\bu_i)=h(\bq)$.
Since $S_{53}$ is isomorphic to the quotient algebra $S_{(4, 544)}/\rho$,
where $\{3, 4\}$ is the nontrivial block of $\rho$,
we have that $S_{53}$ satisfies $\bq\preceq \bu$.
By Lemma \ref{lem5301},
$L_{\geq 2}(\bu)\neq \emptyset$, $c(\bq)\subseteq c(\bu)$,
and for every $\bw\in S_2(\bq)$ there exists $\bw'\in S_2(\bu)$ such that $c(\bw')\subseteq c(\bw)$.
The identity \eqref{54415} tells us that $\ell(\bq)\leq3$ and $\ell(\bu_i)\leq3$ for all $\bu_i\in \bu$.

\textbf{Case 1.} $\ell(\bq)=1$.
Then there exists $\bu_i\in L_{\geq 2}(\bu)$ such that $h(\bu_i)=h(\bq)$. Consequently,
\[
\bu \succeq \bu_i = h(\bq)s(\bu_i) \stackrel{\eqref{54402}}\succeq \bq,
\]
which yields the desired inequality $\bu\succeq\bq$.

\textbf{Case 2.} $\ell(\bq)=2$. Write $\bq=xy$.
By Lemma~\ref{lem5301}, there exists $\bw\in S_2(\bu)$ such that $c(\bw)\subseteq\{x,y\}$;
hence $\bw\in\{x^2,y^2,xy,yx\}$. We consider each possibility.

\textbf{Subcase 2.1.} $\bw=x^2$. Then $\bu_k=p_1x^2p_2$ for some $\bu_k\in\bu$ and $p_1,p_2\in X^*$.
Since $c(\bq)\subseteq c(\bu)$, we can choose $\bu_\ell\in\bu$ with $y\in c(\bu_\ell)$; write $\bu_\ell=p_3yp_4$.
Now we have
\[
\bu \succeq\bu_k+\bu_\ell=x^2p_2+ p_3yp_4 \stackrel{\eqref{54406}}\succeq x^2y\stackrel{\eqref{54402}}\succeq xy=\bq
\]
or
\[
\bu \succeq \bu_i+\bu_k+\bu_\ell=xs(\bu_i)+p_1x^2+ p_3yp_4 \stackrel{\eqref{54408}}\succeq x^2y\stackrel{\eqref{54402}}\succeq xy=\bq.
\]

\textbf{Subcase 2.2.} $\bw=y^2$. Then $\bu_k=p_1y^2p_2$ for some $\bu_k\in\bu$ and $p_1,p_2\in X^*$. Hence
\[
\bu \succeq \bu_i+\bu_k = xs(\bu_i) + p_1y^2p_2 \stackrel{\eqref{54410}}\succeq xy=\bq.
\]

\textbf{Subcase 2.3.} $\bw=xy$. Then $\bu_k=xy\bu_k'$ or $\bu_k=\bu_k'xy$ for some $\bu_k'\in X^*$.
If $\bu_k=xy\bu_k'$, then by \eqref{54402},
\[
\bu \succeq \bu_k = xy\bu_k' \succeq xy=\bq.
\]
If $\bu_k=\bu_k'xy$, then
\[
\bu \succeq \bu_i+\bu_k = xs(\bu_i) + \bu_k'xy \stackrel{\eqref{54412}}\succeq xy=\bq.
\]

\textbf{Subcase 2.4.} $\bw=yx$. Then $\bu_k=yx\bu_k'$ or $\bu_k=\bu_k'xy$ for some $\bu_k'\in X^*$.
If $\bu_k=\bu_k'xy$, the argument is identical to the second case in Subcase 2.3 and yields $\bu\succeq xy$.
It remains to consider $\bu_k=yx\bu_k'$. In this case,
\[
\bu \succeq \bu_i+\bu_k = xs(\bu_i) + yx\bu_k' \stackrel{\eqref{54416}}\succeq xy\bu_k'\stackrel{\eqref{54402}}\succeq xy=\bq.
\]
Thus, in all subcases, $\bu\succeq\bq$ is established.

\textbf{Case 3.} $\ell(\bq)=3$. Write $\bq=xyz$.
The identity \eqref{54401} implies that $S_2(\bq)=\{xy, yz, xz\}$.
By Lemma~\ref{lem5301}, for each $\bw\in S_2(\bq)$, there exists $\bw_1\in S_2(\bu)$ with $c(\bw_1)\subseteq c(\bw)$.
For $\bw=xy$ or $\bw=xz$, the derivation of $\bu\succeq \bw$ follows by an argument analogous to that in Case 2.
Thus it remains to consider the case $\bw=yz$.
For $\bw=yz$, there exists $\bw_1\in S_2(\bu)$ such that $c(\bw_1)\subseteq \{y,z\}$; hence $\bw_1\in\{y^2,z^2,yz,zy\}$.

\textbf{Subcase 3.1.} $\bw_1=y^2$ or $z^2$.
By symmetry, it suffices to consider $\bw_1=y^2$.
Then $\bu_k=p_1y^2p_2$ for some $\bu_k\in\bu$ and $p_1,p_2\in X^*$.
Since $c(\bq)\subseteq c(\bu)$, we have that there $\bu_\ell\in\bu$ such that $z\in c(\bu_\ell)$, and so $\bu_\ell=p_3zp_4$.
Now we have
\[
\bu \succeq\bu_k+\bu_\ell=y^2p_2+ p_3zp_4 \stackrel{\eqref{54406}}\succeq y^2z\stackrel{\eqref{54402}}\succeq yz
\]
or
\[
\bu \succeq \bu_k+\bu_\ell=p_1y^2+zp_4 \stackrel{\eqref{54410}}\succeq zy
\]
or
\[
\bu \succeq xy+xz+\bu_k+\bu_\ell=xy+xz+p_1y^2+p_3z \stackrel{\eqref{54417}}\succeq xyz.
\]

\textbf{Subcase 3.2.} $\bw_1=yz$ or $zy$.
By symmetry, it suffices to consider $\bw_1=yz$.
Then $\bu_k=yz\bu_k'$ or $\bu_k=\bu_k'yz$ for some $\bu_k'\in X^*$.
If $\bu_k=yz\bu_k'$, then by \eqref{54402},
\[
\bu \succeq \bu_k = yz\bu_k' \succeq yz.
\]
If $\bu_k=\bu_k'yz$, then
\[
\bu \succeq xy + xz + \bu_k = xy + xz + \bu_k'yz \stackrel{\eqref{54413}}\succeq xyz.
\]

In each of the above subcases, we have derived that either $\bu \succeq yz$, $\bu \succeq zy$, or directly $\bu \succeq xyz$.
If $\bu \succeq yz$ or $\bu \succeq zy$, then, together with the already established inequalities $\bu \succeq xy$ and $\bu \succeq xz$,
we obtain
\[
\bu \succeq xy+xz+yz \stackrel{\eqref{54413}}\succeq xyz
\]
or
\[
\bu \succeq xz+xy+zy \stackrel{\eqref{54413}}\succeq xzy\stackrel{\eqref{54401}} \approx xyz.
\]
Thus, in all instances, $\bu \succeq xyz = \bq$. This completes the proof of Case 3.
\end{proof}

\begin{table}[ht]
\caption{The Cayley tables of $S_{31}$} \label{tbs31}
\begin{tabular}{c|ccc}
$+$      &$1$ &$2$ &$3$\\
\hline
$1$      &$1$ &$1$ &$3$\\
$2$      &$1$ &$2$ &$3$\\
$3$      &$3$ &$3$ &$3$\\
\end{tabular}\qquad
\begin{tabular}{c|ccc}
$\cdot$      &$1$ &$2$ &$3$\\
\hline
$1$      &$1$ &$1$ &$3$\\
$2$      &$2$ &$2$ &$3$\\
$3$      &$3$ &$3$ &$3$\\
\end{tabular}
\end{table}

\begin{remark}\label{remark26080801}
It is easy to check that $S_{(4, 544)}$ is isomorphic to a subdirect product of $S_{31}$ and $S_{53}$
(see Table~\ref{tbs31} for the Cayley tables of $S_{31}$)
via the congruences defined by the nontrivial blocks $\{1, 2\}$ and $\{3, 4\}$.
Hence $\mathsf{V}(S_{(4, 544)}) = \mathsf{V}(S_{53}, S_{31})$.
On the other hand, $M_2$ and $L_2$ are isomorphic to the subalgebras $\{2, 3\}$ and $\{1, 2\}$ of $S_{31}$, respectively.
Moreover, $S_{31}$ satisfies the identities
\[
x \preceq xy, \quad xyz \approx xzy, \quad xy \preceq x+y,
\]
which, by \cite{ryy}, form an equational basis of $\mathsf{V}(M_2, L_2)$.
Consequently, $\mathsf{V}(S_{31})=\mathsf{V}(M_2, L_2)$,
and therefore $\mathsf{V}(S_{(4, 544)})=\mathsf{V}(S_{53}, M_2, L_2)$.
Since $M_2$ embeds into $S_{53}$,
we obtain
\[
\mathsf{V}(S_{(4, 544)})=\mathsf{V}(S_{53}, L_2).\qedhere
\]
\end{remark}

\begin{cor}
The ai-semiring $S_{(4, 697)}$ is finitely based.
\end{cor}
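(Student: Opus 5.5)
The plan follows the pattern of the corollaries above: show that $S_{(4,697)}$ generates the same variety as $S_{(4,544)}$, and then invoke Proposition~\ref{pro54401}. By Remark~\ref{remark26080801}, $\mathsf{V}(S_{(4,544)})=\mathsf{V}(S_{53},L_2)$. So it suffices to show $\mathsf{V}(S_{(4,697)})=\mathsf{V}(S_{53},L_2)$.

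There are two natural routes, and I would first check the Cayley table of $S_{(4,697)}$ to see which applies.

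The first route is a subdirect decomposition. I would look for a pair of congruences on $S_{(4,697)}$ with trivial intersection whose quotients are $S_{53}$ and $L_2$. Since the carrier is the chain $1>2>3>4$, the candidates are congruences whose nontrivial blocks are intervals. A natural guess is blocks $\{1,2\}$ or $\{3,4\}$ for the $S_{53}$ quotient, and $\{1,2,3\}$ or $\{2,3,4\}$ for the $L_2$ quotient, as in the proof for $S_{(4,818)}$. If such a pair exists, then $S_{(4,697)}$ embeds into $S_{53}\times L_2$ and projects onto both factors. This gives $\mathsf{V}(S_{(4,697)})=\mathsf{V}(S_{53},L_2)$ directly.

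The second route is the sandwich argument used for $S_{(4,587)}$ and $S_{(4,600)}$. For the upper bound, I would verify that $S_{(4,697)}$ satisfies the identities \eqref{54401}--\eqref{54408}, so that $\mathsf{V}(S_{(4,697)})\subseteq\mathsf{V}(S_{(4,544)})$. For the lower bound, I would exhibit $S_{53}$ and $L_2$ as subalgebras or quotients of $S_{(4,697)}$. Candidates are a three-element subalgebra such as $\{1,2,4\}$ or $\{2,3,4\}$ isomorphic to $S_{53}$, and a two-element subalgebra isomorphic to $L_2$. Remark~\ref{remark26080801} then gives the reverse inclusion.

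In either route, the conclusion is $\mathsf{V}(S_{(4,697)})=\mathsf{V}(S_{(4,544)})$, and Proposition~\ref{pro54401} shows that $S_{(4,697)}$ is finitely based. If instead $S_{(4,697)}$ matches the dual of $S_{(4,544)}$'s pattern, I would apply the same argument to the dual multiplication.

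The main obstacle is the finite check in the second route. Verifying the thirteen identities \eqref{54401}--\eqref{54408}, some with optional empty variables, against a four-element table is tedious. For that reason I would try the subdirect-product route first: checking that two equivalence relations are congruences, and computing their quotients, is a much smaller computation.
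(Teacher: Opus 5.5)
Your first route is exactly the paper's proof: the congruences with nontrivial blocks $\{1,2\}$ and $\{2,3,4\}$ (one of your candidate pairs) intersect trivially and have quotients $S_{53}$ and $L_2$ respectively. This gives $\mathsf{V}(S_{(4,697)})=\mathsf{V}(S_{53},L_2)=\mathsf{V}(S_{(4,544)})$ by Remark~\ref{remark26080801}, and Proposition~\ref{pro54401} then finishes the argument. Your sandwich route would also work, since $\{2,3,4\}\cong S_{53}$ and $\{1,2\}\cong L_2$ are subalgebras, but it requires the tedious identity check that the subdirect route avoids.
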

\begin{proof}
It is a routine matter to verify that $S_{(4, 697)}$ is isomorphic to a subdirect product of $S_{53}$ and $L_2$
via the congruences defined by the nontrivial blocks $\{1, 2\}$ and $\{2, 3, 4\}$.
Thus $\mathsf{V}(S_{(4, 697)})=\mathsf{V}(S_{53}, L_2)$.
By Remark~\ref{remark26080801}, $\mathsf{V}(S_{53}, L_2)=\mathsf{V}(S_{(4, 544)})$.
Hence $\mathsf{V}(S_{(4, 697)})=\mathsf{V}(S_{(4, 544)})$,
and so $S_{(4, 697)}$ and $S_{(4, 544)}$ have the same finite basis property.
Proposition \ref{pro54401} tells us that $S_{(4, 544)}$ is finitely based.
Therefore, $S_{(4, 697)}$ is finitely based.
\end{proof}

\begin{cor}\label{cor54301}
The ai-semiring $S_{(4, 543)}$ is finitely based.
\end{cor}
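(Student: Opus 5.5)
The plan is to show that $S_{(4,543)}$ generates the same variety as $S_{(4,544)}$. Equivalently, by Remark~\ref{remark26080801}, we will show $\mathsf{V}(S_{(4,543)})=\mathsf{V}(S_{53}, L_2)$. Proposition~\ref{pro54401} then gives the finite basis property directly.

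Alternatively, if $S_{(4,543)}$ turns out to have the multiplication dual to some algebra equivalent to $S_{(4,544)}$, we would use the dual of $L_2$, which is $R_2$, together with the dual of $S_{53}$. The multiplication table of $S_{53}$ is commutative, so $S_{53}$ is self-dual. In that case we would instead prove $\mathsf{V}(S_{(4,543)})=\mathsf{V}(S_{53},R_2)$, and conclude by duality from Proposition~\ref{pro54401}. We will decide which of the two applies by inspecting the Cayley table of $S_{(4,543)}$ in Table~\ref{tb1}.

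First, we look for a subdirect decomposition of $S_{(4,543)}$, in the style of Remark~\ref{remark26080801} and the corollary for $S_{(4,697)}$. The candidates are two congruences whose nontrivial blocks are among $\{1,2\}$, $\{3,4\}$ or $\{2,3,4\}$, and which intersect trivially. One quotient should be isomorphic to $S_{53}$, presumably via the block $\{3,4\}$, with $4$ absorbed as the zero-like bottom element $3$ of $S_{53}$. The other quotient should be either a $2$-element semiring, $L_2$ or $R_2$, or a $3$-element semiring such as $S_{31}$ (or its dual) whose variety is $\mathsf{V}(M_2,L_2)$ (respectively $\mathsf{V}(M_2,R_2)$) by \cite{ryy}. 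Because $M_2$ embeds in $S_{53}$, the $M_2$ part is absorbed, and we obtain $\mathsf{V}(S_{(4,543)})=\mathsf{V}(S_{53},L_2)$ or its dual version.

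If no such decomposition exists, we fall back on the sandwich argument used for $S_{(4,587)}$ and $S_{(4,591)}$. In the first direction, we verify that $S_{(4,543)}$ satisfies the identities \eqref{54401}--\eqref{54408} (or their duals), so that $\mathsf{V}(S_{(4,543)})\subseteq\mathsf{V}(S_{(4,544)})$. In the second direction, we exhibit $S_{53}$ and $L_2$ as subalgebras or quotients of $S_{(4,543)}$, for instance $L_2$ as $\{3,4\}$ and $S_{53}$ as a quotient by $\{3,4\}$, which gives the reverse inclusion.

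The main obstacle is the verification of the thirteen identities \eqref{54401}--\eqref{54408} in $S_{(4,543)}$, should the fallback be needed. This is a finite but tedious check. The subdirect-product route avoids it entirely, so we try that route first by testing which partitions of $\{1,2,3,4\}$ into order-convex blocks are congruences.
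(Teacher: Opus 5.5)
The branch you lead with is false, and the fallback built on it fails. $S_{(4,543)}$ is not equationally equivalent to $S_{(4,544)}$. In $S_{(4,543)}$ we have $3\cdot 4=4$ and $4\cdot 3=3$, so the subalgebra $\{3,4\}$ is $R_2$, not $L_2$. Moreover, $x=3$, $y=4$ violates \eqref{54402}, since $xy=4<3=x$. Identity \eqref{54401} also fails: $3\cdot 3\cdot 4=4\neq 3=3\cdot 4\cdot 3$. So the thirteen identities you single out as the main obstacle simply do not hold in $S_{(4,543)}$. Nor can you exhibit $L_2$ inside $\mathsf{V}(S_{(4,543)})$: $L_2$ violates $x\preceq yx$, which holds in both $S_{53}$ and $R_2$. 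Since you defer the one inspection that decides between your branches, the proposal as written is a search strategy rather than a proof.

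The conditional branch is the one that applies, and it is much shorter than you expect. The multiplication table of $S_{(4,543)}$ is exactly the transpose of that of $S_{(4,544)}$. The elementary fact that algebras with the same additive reduct and dual multiplications share the finite basis property, together with Proposition~\ref{pro54401}, then finishes the proof. That is the paper's entire argument; no subdirect decomposition is needed.

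If you do run the decomposition, the blocks $\{1,2\}$ and $\{3,4\}$ work. The quotients are $S_{38}$ (the dual of $S_{31}$, generating $\mathsf{V}(M_2,R_2)$) and $S_{53}$. This gives $\mathsf{V}(S_{(4,543)})=\mathsf{V}(S_{53},R_2)$, as in Remark~\ref{remark26080810}, which is the dual of $\mathsf{V}(S_{(4,544)})$. Correct also your description of the $S_{53}$ quotient: the class $\{3,4\}$ maps to the multiplicative identity and additive bottom $2$ of $S_{53}$. The zero $3$ of $S_{53}$ is the image of $1$.
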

\begin{proof}
It is easy to see that $S_{(4, 543)}$ and $S_{(4, 544)}$ have dual multiplications.
By Proposition \ref{pro54401}, we immediately deduce $S_{(4, 543)}$ is finitely based.
\end{proof}

\begin{table}[ht]
\caption{The Cayley tables of $S_{38}$} \label{tb3801}
\begin{tabular}{c|ccc}
$+$      &$1$ &$2$ &$3$\\
\hline
$1$      &$1$ &$1$ &$3$\\
$2$      &$1$ &$2$ &$3$\\
$3$      &$3$ &$3$ &$3$\\
\end{tabular}\qquad
\begin{tabular}{c|ccc}
$\cdot$      &$1$ &$2$ &$3$\\
\hline
$1$      &$1$ &$2$ &$3$\\
$2$      &$1$ &$2$ &$3$\\
$3$      &$3$ &$3$ &$3$\\
\end{tabular}
\end{table}
\begin{remark}\label{remark26080810}
It is a routine matter to verify that $S_{(4, 543)}$ is isomorphic to a subdirect product of $S_{53}$ and $S_{38}$
((see Table~\ref{tb3801} for the Cayley tables of $S_{38}$))
via the congruences defined by the nontrivial blocks $\{1, 2\}$ and $\{3, 4\}$.
Thus $\mathsf{V}(S_{(4, 543)}) = \mathsf{V}(S_{53}, S_{38})$.
On the other hand,
it is easy to see that both $M_2$ and $R_2$ are isomorphic to
the subalgebras $\{1, 3\}$ and $\{1, 2\}$ of $S_{38}$, respectively.
Also, $S_{38}$ satisfies the identities
\[
x \preceq yx, \quad xyz \approx yxz, \quad xy \preceq x+y,
\]
which, by \cite{ryy}, form an equational basis of $\mathsf{V}(M_2, R_2)$.
Hence $\mathsf{V}(S_{38})=\mathsf{V}(M_2, R_2)$ and so $\mathsf{V}(S_{(4, 543)})=\mathsf{V}(S_{53}, M_2, R_2)$.
Since $M_2$ can be embedded into $S_{53}$, we therefore have
\[
\mathsf{V}(S_{(4, 543)})=\mathsf{V}(S_{53}, R_2).
\]
\end{remark}

\begin{cor}
The ai-semiring $S_{(4,608)}$ is finitely based.
\end{cor}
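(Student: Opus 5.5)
The plan follows the pattern of the preceding corollaries for $S_{(4,818)}$ and $S_{(4,697)}$, now using Remark~\ref{remark26080810} instead of Remark~\ref{remark26080101} or Remark~\ref{remark26080801}.

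First, I will exhibit $S_{(4,608)}$ as a subdirect product of $S_{53}$ and $R_2$. Using Table~\ref{tb1}, I will check that two equivalences on $\{1,2,3,4\}$ are congruences of $S_{(4,608)}$ and intersect trivially. The natural candidates, by analogy with $S_{(4,697)}$, have nontrivial blocks $\{1,2\}$ and $\{2,3,4\}$. The quotient by the first should be isomorphic to $S_{53}$, and the quotient by the second should be isomorphic to $R_2$.

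The checks are the following:
\begin{itemize}
\item each partition is compatible with $+$, which is automatic for convex blocks of the chain $1>2>3>4$;
\item each partition is compatible with $\cdot$, to be read off the multiplication table;
\item the two partitions separate points;
\item the quotient multiplications match those of $S_{53}$ and $R_2$.
\end{itemize}
If these particular blocks fail, I will look for another pair of convex blocks realizing the same decomposition. Should the quotient turn out to be $L_2$ instead of $R_2$, I would switch to Remark~\ref{remark26080801} and $S_{(4,544)}$.

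Given the decomposition, $\mathsf{V}(S_{(4,608)})=\mathsf{V}(S_{53},R_2)$. Remark~\ref{remark26080810} gives $\mathsf{V}(S_{53},R_2)=\mathsf{V}(S_{(4,543)})$. Hence $S_{(4,608)}$ and $S_{(4,543)}$ are equationally equivalent and have the same finite basis property. By Corollary~\ref{cor54301}, $S_{(4,543)}$ is finitely based, so $S_{(4,608)}$ is finitely based.

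The only real work is the table verification in the first step, namely confirming that the chosen partition is compatible with multiplication and identifying the quotients. This is a finite check.
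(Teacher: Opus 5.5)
Your proposal is correct and matches the paper's proof step for step. The table check does go through: $S_{(4,608)}$ is a subdirect product of $S_{53}$ and $R_2$ via the congruences with nontrivial blocks $\{1,2\}$ and $\{2,3,4\}$, whose quotients are $S_{53}$ and $R_2$ respectively. Hence $\mathsf{V}(S_{(4,608)})=\mathsf{V}(S_{53},R_2)=\mathsf{V}(S_{(4,543)})$ by Remark~\ref{remark26080810}, Corollary~\ref{cor54301} finishes the argument, and your fallback contingencies are never needed.
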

\begin{proof}
It is easy to verify that $S_{(4, 608)}$ is isomorphic to a subdirect product of $S_{53}$ and $R_2$
via the congruences defined by the nontrivial blocks $\{1, 2\}$ and $\{2, 3, 4\}$.
Hence $\mathsf{V}(S_{(4, 608)}) = \mathsf{V}(S_{53}, R_2)$.
By Remark~\ref{remark26080810}, $\mathsf{V}(S_{53}, R_2)=\mathsf{V}(S_{(4, 543)})$.
Thus $\mathsf{V}(S_{(4, 608)}) = \mathsf{V}(S_{(4, 543)})$,
and so $S_{(4, 608)}$ and $S_{(4, 543)}$ have the same finite basis property.
By Corollary~\ref{cor54301}, $S_{(4, 543)}$ is finitely based.
Therefore, $S_{(4,608)}$ is finitely based.
\end{proof}

\section{Equational bases for 4-element ai-semirings related to $S_{55}$}
In this section, we provide equational bases for some 4-element ai-semirings that are related to $S_{55}$,
whose Cayley tables are given in Table~\ref{tb5501}.

\begin{table}[ht]
\caption{The Cayley tables of $S_{55}$} \label{tb5501}
\begin{tabular}{c|ccc}
$+$      &$1$ &$2$ &$3$\\
\hline
$1$      &$1$ &$1$ &$3$\\
$2$      &$1$ &$2$ &$3$\\
$3$      &$3$ &$3$ &$3$\\
\end{tabular}\qquad
\begin{tabular}{c|ccc}
$\cdot$      &$1$ &$2$ &$3$\\
\hline
$1$      &$3$ &$2$ &$3$\\
$2$      &$2$ &$2$ &$2$\\
$3$      &$3$ &$2$ &$3$\\
\end{tabular}
\end{table}

The following result, which is due to Yue et al.~\cite[Lemma 1.2]{yrzs},
provides a solution of the equational problem for $S_{55}$.

\begin{lem}\label{lem5501}
Let $\bq \preceq \bu$ be a nontrivial inequality,
where $\bu=\bu_1+\bu_2+\cdots+\bu_n$ with $\bu_i, \bq \in X^+$ for $1 \leq i \leq n$.
Then $\bq \preceq \bu$ is satisfied by $S_{55}$
if and only if the intersection $D_\bq(\bu)\cap L_{\geq 2}(\bu)$ is nonempty.
\end{lem}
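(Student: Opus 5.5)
The plan is to compute directly how words evaluate in $S_{55}$. Once that is known, the characterization splits into a sufficiency direction and a necessity direction, and each is a short argument.

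\textbf{Structure of $S_{55}$.} From the addition table the order is the chain $2<1<3$, so $+$ is the maximum operation. From the multiplication table, $2$ is a multiplicative zero. Also $ab=3$ for all $a,b\in\{1,3\}$, so $\{1,3\}$ is closed under products.

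\textbf{Evaluating a word.} Let $\varphi\colon P_f(X^+)\to S_{55}$ be a homomorphism and let $\bw\in X^+$. I will record the following trichotomy:
\begin{itemize}
\item $\varphi(\bw)=2$ if some $x\in c(\bw)$ has $\varphi(x)=2$;
\item otherwise $\varphi(\bw)=3$ when $\ell(\bw)\geq 2$;
\item otherwise $\varphi(\bw)=\varphi(\bw)\in\{1,3\}$ when $\ell(\bw)=1$, i.e. the value of the single variable.
\end{itemize}
For a term, $\varphi(\bu)$ is the maximum of the values $\varphi(\bu_i)$.

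\textbf{Sufficiency.} Assume $\bu_i\in D_\bq(\bu)\cap L_{\geq 2}(\bu)$ and fix any $\varphi$.
\begin{itemize}
\item If $\varphi(\bq)=2$, then $\varphi(\bq)\leq\varphi(\bu)$ trivially, since $2$ is the least element.
\item Otherwise every variable of $c(\bq)$ is sent into $\{1,3\}$. Since $c(\bu_i)\subseteq c(\bq)$ and $\ell(\bu_i)\geq 2$, the trichotomy gives $\varphi(\bu_i)=3$. As $3$ is the top, $\varphi(\bu)=3\geq\varphi(\bq)$.
\end{itemize}
Hence $S_{55}$ satisfies $\bq\preceq\bu$.

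\textbf{Necessity.} Suppose $D_\bq(\bu)\cap L_{\geq 2}(\bu)=\emptyset$. Define $\varphi$ by $\varphi(x)=1$ for $x\in c(\bq)$ and $\varphi(x)=2$ otherwise.
\begin{itemize}
\item Any $\bu_j\notin D_\bq(\bu)$ contains a variable sent to $2$, so $\varphi(\bu_j)=2$.
\item Any $\bu_j\in D_\bq(\bu)$ has length $1$ by hypothesis, so $\bu_j$ is a variable of $c(\bq)$ and $\varphi(\bu_j)=1$.
\end{itemize}
Thus $\varphi(\bu)\leq 1$. Now split on the length of $\bq$.
\begin{itemize}
\item If $\ell(\bq)\geq 2$, then $\varphi(\bq)=3>\varphi(\bu)$, so $\bq\preceq\bu$ fails.
\item If $\ell(\bq)=1$, say $\bq=x$, then the only word that could lie in $D_\bq(\bu)$ is $x$ itself. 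But $x\in\bu$ would make $\bq\preceq\bu$ trivial, which is excluded. Hence $D_\bq(\bu)=\emptyset$, so $\varphi(\bu)=2<1=\varphi(\bq)$, and again the inequality fails.
\end{itemize}

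\textbf{Main obstacle.} The only delicate point is the length-one case of necessity, where nontriviality of the inequality must be invoked to rule out $x\in\bu$. Everything else follows immediately from the evaluation rule above.
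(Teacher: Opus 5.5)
Your proof is correct. The evaluation rule for $S_{55}$ is right: the order is the chain $2<1<3$, $2$ is a multiplicative zero, and all products of elements of $\{1,3\}$ equal $3$. Sufficiency follows directly from this rule. For necessity, the assignment sending $c(\bq)$ to $1$ and all other variables to $2$ works. You use nontriviality exactly where it is needed, namely to force $D_\bq(\bu)=\emptyset$ when $\ell(\bq)=1$. One small fix: the third item of your trichotomy should read $\varphi(\bw)=\varphi(x)$ for $\bw=x$; as written it is a tautology, though the intent is clear.

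The paper gives no proof of this lemma; it imports it from \cite[Lemma 1.2]{yrzs}. Within the paper's own toolkit there is a shorter structural route. The proof of Proposition~\ref{pro68501} records that $S_{55}\cong T_2^0$. Lemma~\ref{lem001} then reduces satisfaction in $S_{55}$ to two conditions: $D_\bq(\bu)\neq\emptyset$, and $T_2$ satisfies $\bq\preceq D_\bq(\bu)$. Since $\bq\notin\bu$ forces $\bq\notin D_\bq(\bu)$, that inequality is again nontrivial. Lemma~\ref{nlemma1}(6) then turns the condition into $L_{\geq 2}(D_\bq(\bu))\neq\emptyset$, which is exactly $D_\bq(\bu)\cap L_{\geq 2}(\bu)\neq\emptyset$. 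This is the same pattern the paper uses for $M_2^0$, $L_2^0$ and $R_2^0$ in Lemmas~\ref{pro422}--\ref{cor424}.

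Your direct computation is essentially what those two lemmas package. Its advantage is that it is self-contained and shows precisely where nontriviality enters. The structural route instead reduces the whole statement to a one-line corollary of results already in the paper.
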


\begin{pro}\label{pro56601}
The ai-semiring variety $\mathsf{V}(S_{55}, M_2)$ is defined by the identities
\begin{align}
&x       \preceq x^2;\label{5661}\\
&x^3   \approx x^2;\label{5660}\\
&xy         \approx yx;\label{5662}\\
&xyz       \preceq x+yz;\label{5663}\\
&xyz  \preceq xy+xyzt.\label{5664}
\end{align}
\end{pro}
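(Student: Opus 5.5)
The plan is the standard two-step argument used throughout the paper. First I check soundness: both $S_{55}$ and $M_2$ satisfy \eqref{5661}--\eqref{5664}. For $M_2$ this is immediate from Lemma~\ref{nlemma1}(3), since every right-hand side contains all variables of the left-hand side. For $S_{55}$ I check each identity against Lemma~\ref{lem5501}; in each case there is a summand of length at least $2$ whose content lies in that of the left side:
\begin{itemize}
\item $x^2$ in \eqref{5661} and \eqref{5660}, and $yx$ in \eqref{5662};
\item $yz$ in \eqref{5663} and $xy$ in \eqref{5664}.
\end{itemize}
Hence these identities hold in $\mathsf{V}(S_{55}, M_2)$.

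For completeness, let $\bq\preceq\bu$ be a nontrivial inequality of $\mathsf{V}(S_{55},M_2)$ with $\bu=\bu_1+\cdots+\bu_n$. By Lemma~\ref{nlemma1}(3), $c(\bq)\subseteq c(\bu)$. By Lemma~\ref{lem5501}, there is $\bu_i\in \bu$ with $\ell(\bu_i)\ge 2$ and $c(\bu_i)\subseteq c(\bq)$. By \eqref{5660} and \eqref{5662}, every word is equivalent to a commutative word in which each exponent is $1$ or $2$. I would work with such normal forms throughout.

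The first key step is to manufacture from $\bu$ a long word carrying all of $c(\bu)$. Applying \eqref{5661} to the whole sum gives $\bu\preceq \bu^2$. Expanding and using \eqref{5663} with suitable choices of $x$ and $yz$, I expect to get a derived inequality $\bu \succeq \bu_i + \bw$. Here $\bw$ is a word with $c(\bw)=c(\bu)$, in which every variable has exponent $2$ (absorbing extra powers by \eqref{5660}). In particular $\bw\approx \bq^2\bw'$, and $\bq$ appears as a factor in the commutative sense.

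The second step is to cut $\bw$ down to $\bq$ using $\bu_i$. Identity \eqref{5664} is tailored for this: instantiate $xy$ as $\bu_i$ and $z$ as a complement such that $\bu_i z\approx \bq$ modulo \eqref{5660}, \eqref{5662}. Let $t$ absorb the rest of $\bw$. This should yield $\bq\preceq \bu_i+\bw\preceq \bu$. The obstacle is multiplicities. If some variable occurs in $\bu_i$ with exponent $2$ but in $\bq$ with exponent $1$, then $\bq$ is not literally of the form $\bu_i z$. In that case I would first split off a single variable using \eqref{5663} in the form $\bq\preceq x+yz$, where $x$ is a letter of $\bq$ and $yz$ is a two-letter subword of $\bu_i$. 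Combined with \eqref{5661} (to pass from $x$ to $x^2$ when needed), this should reduce $\bq$ to a case where the exponents match, or where $\ell(\bq)\le 2$ is handled directly.

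I expect this multiplicity bookkeeping in the second step, rather than the first step, to be the main difficulty. I would organize it as a case split on whether $\bu_i$ divides $\bq$ in the commutative normal form.
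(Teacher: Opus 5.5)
Your soundness check and the overall shape (build a long word from $\bu$, then cut it down with \eqref{5664}) match the paper. The completeness argument, however, has a genuine gap exactly where you flagged the difficulty, and the workaround you propose does not work.

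Splitting $\bq$ with \eqref{5663} means writing $\bq\approx\mathbf{a}\mathbf{v}$ with $\ell(\mathbf v)\ge 2$. You then need both $\mathbf a\preceq\bu$ and $\mathbf v\preceq\bu$, and these are typically not even valid. Take $\bq=xyz$ and $\bu=x^2+y+z$. Then $\bq\preceq\bu$ holds in $M_2$ and in $S_{55}$. The only admissible $\bu_i$ is $x^2$, which has the exponent mismatch on $x$. By Lemma~\ref{lem5501}, none of $y\preceq\bu$, $z\preceq\bu$, $yz\preceq\bu$ holds in $S_{55}$. So no factorization of $xyz$ into a letter times a word of length $2$ can be split off, and $\bq$ is not of the form $\bu_i\mathbf z$ either.

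Your first step also has a direction problem. The inequality $\bu\preceq\bu^2$ bounds $\bu$ from above and cannot feed a chain $\bu\succeq\cdots$. The inequality $\bu\succeq\bu_i+\bw$ comes from \eqref{5663} alone: use $\bu_j\bu_i\preceq\bu_j+\bu_i$ (legitimate because $\ell(\bu_i)\ge 2$), iterate it, and finish with \eqref{5660} and \eqref{5662}.

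The missing idea is that you never need $\bu_i\mathbf z\approx\bq$. Cut down to $\bu_i\bq$ instead, and then descend to $\bq$ with \eqref{5661}.
\begin{itemize}
\item Substitute $x\mapsto h(\bu_i)$, $y\mapsto s(\bu_i)$, $z\mapsto\bq$, $t\mapsto\bw$ in \eqref{5664}. This gives $\bu_i\bq\preceq\bu_i+\bu_i\bq\bw\approx\bu_i+\bw\preceq\bu$. The middle equivalence holds because every variable of $\bw$ has exponent $2$ and $c(\bu_i\bq)\subseteq c(\bw)$.
\item Since $c(\bu_i)\subseteq c(\bq)$, modulo \eqref{5660} and \eqref{5662} the word $\bu_i\bq$ is just $\bq$ with some exponents raised from $1$ to $2$.
\item The inequality $x\preceq x^2$ survives multiplication by arbitrary words: from $\mathbf b\approx\mathbf b+\mathbf a$ one gets $\mathbf b\mathbf c\approx\mathbf b\mathbf c+\mathbf a\mathbf c$. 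Hence $\bq\preceq\bu_i\bq$.
\end{itemize}
This is exactly how the paper finishes; its chain ends $\bu_i+\bu_i\bq\bp\succeq\bu_i\bq\succeq\bq$. With this step there is no multiplicity bookkeeping at all. The case $\ell(\bq)=1$, where $\bu_i=x^k$ with $k\ge 2$, is then immediate.
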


\begin{proof}
It is routine to check that both $S_{55}$ and $M_2$ satisfy the identities \eqref{5661}--\eqref{5664}.
In the remainder we show that every inequality holding in both $S_{55}$ and $M_2$ is derivable from \eqref{5661}--\eqref{5664}.
Let $\bq\preceq \bu$ be a nontrivial inequality,
where $\bu=\bu_1+\bu_2+\cdots+\bu_n$ with $\bu_i, \bq \in X^+$ for $1 \leq i \leq n$.
Since $M_2$ satisfies $\bq\preceq \bu$, we have that $c(\bq)\subseteq c(\bu)$.
Since $S_{55}$ satisfies $\bq\preceq \bu$,
it follows from Lemma~\ref{lem5501} that there exists $\bu_i\in D_\bq(\bu)$ such that $\ell(\bu_i)\geq 2$;
in particular, $c(\bu_i)\subseteq c(\bq)$.

\textbf{Case1.} $\ell(\bq)=1$. Then $\bq=x$ for some $x\in X$,
and so $\bu_i = x^k$ for $k \geq 2$.
Hence
\[
\bu \succeq \bu_i = x^k \stackrel{\eqref{5661}, \eqref{5660}}\succeq  x = \bq.
\]
This proves the inequality $\bu \succeq \bq$.

\textbf{Case2.} $\ell(\bq) \geq 2$. Then
\begin{align*}
&\bu \approx \bu_i+\bu_1 + \bu_2 + \cdots + \bu_n \stackrel{\eqref{5663}} \succeq \bu_i+\bu_1^k \bu_2^k \cdots \bu_n^k  \stackrel{\eqref{5662}} \approx \bu_i+\bq\bp\\
&\approx\bu_i+\bu_i+\bq\bp\stackrel{\eqref{5663}}\succeq \bu_i+\bu_i\bq\bp \stackrel{\eqref{5664}} \succeq \bu_i\bq \stackrel{\eqref{5661},\eqref{5662}} \succeq\bq,
\end{align*}
where $k$ is a sufficiently large positive integer.
Here the second step uses $\ell(\bu_i)\geq 2$,
the third step follows from $c(\bq)\subseteq c(\bu)$,
the sixth step uses $\ell(\bu_i)\geq 2$,
and the last step uses $c(\bu_i)\subseteq c(\bq)$.
This derives the inequality $\bu \succeq \bq$.
\end{proof}

\begin{remark}\label{remark26080820}
We note that $\mathsf{V}(S_{(4, 566)}) = \mathsf{V}(S_{55}, M_2)$.
Indeed, it is easy to verify that $S_{(4, 566)}$ satisfies the identities \eqref{5661}--\eqref{5664}.
By Proposition \ref{pro56601}, $\mathsf{V}(S_{(4, 566)})$ is a subvariety of $\mathsf{V}(S_{55}, M_2)$.
Conversely, $S_{55}$ is isomorphic to the subalgebra $\{2,3,4\}$ of $S_{(4, 566)}$,
and $M_2$ is isomorphic to the subalgebra $\{1,2\}$ of $S_{(4, 566)}$.
This implies that $\mathsf{V}(S_{55}, M_2)$ is a subvariety of $\mathsf{V}(S_{(4, 566)})$.
Consequently, $\mathsf{V}(S_{(4, 566)}) = \mathsf{V}(S_{55}, M_2)$.
\end{remark}

\begin{cor}\label{coro26080815}
The ai-semiring $S_{(4, 566)}$ is finitely based.
\end{cor}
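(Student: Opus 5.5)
This corollary follows at once from Remark~\ref{remark26080820} together with Proposition~\ref{pro56601}. By Remark~\ref{remark26080820}, $\mathsf{V}(S_{(4,566)})=\mathsf{V}(S_{55}, M_2)$. Hence $S_{(4,566)}$ and $\mathsf{V}(S_{55}, M_2)$ satisfy precisely the same identities, and so they have the same finite basis property.

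Proposition~\ref{pro56601} shows that $\mathsf{V}(S_{55}, M_2)$ is defined by the five identities \eqref{5661}--\eqref{5664}, which form a finite set. Therefore $\mathsf{V}(S_{(4,566)})$ has a finite equational basis, namely \eqref{5661}--\eqref{5664}, and $S_{(4,566)}$ is finitely based.

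There is no real obstacle here. The only points requiring verification are already handled in Remark~\ref{remark26080820}. First, $S_{(4,566)}$ satisfies \eqref{5661}--\eqref{5664}; this is a finite check of the Cayley tables. Second, $S_{55}$ and $M_2$ embed into $S_{(4,566)}$ as the subalgebras $\{2,3,4\}$ and $\{1,2\}$. If one wished to be fully self-contained, I would recheck these two facts directly against the multiplication table of $S_{(4,566)}$ in Table~\ref{tb1}. The first is the only step involving any computation.
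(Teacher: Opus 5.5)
Your proposal is correct and matches the paper's argument: the paper likewise derives the corollary immediately from Remark~\ref{remark26080820}, which gives $\mathsf{V}(S_{(4,566)})=\mathsf{V}(S_{55},M_2)$, together with the finite basis for $\mathsf{V}(S_{55},M_2)$ in Proposition~\ref{pro56601}. The two facts you flag for rechecking do hold against the Cayley table: $S_{(4,566)}$ satisfies the five identities, and $\{2,3,4\}$ and $\{1,2\}$ are subalgebras isomorphic to $S_{55}$ and $M_2$.
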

\begin{proof}
This follows immediately from Proposition~\ref{pro56601} and Remark~\ref{remark26080820}.
\end{proof}

\begin{cor}
The ai-semiring $S_{(4, 656)}$ is finitely based.
\end{cor}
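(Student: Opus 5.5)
The plan is to show that $\mathsf{V}(S_{(4,656)})=\mathsf{V}(S_{55},M_2)$ and then conclude with Proposition~\ref{pro56601}, following the template of Corollary~\ref{coro26080815} and Remark~\ref{remark26080820}. A fallback, if the identities fail, is to check whether $S_{(4,656)}$ has the dual multiplication of $S_{(4,566)}$ or of a semiring equationally equivalent to it.

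\textbf{First inclusion.} Check from the Cayley table of $S_{(4,656)}$ that it satisfies \eqref{5661}--\eqref{5664}:
\[
x\preceq x^2,\quad x^3\approx x^2,\quad xy\approx yx,\quad xyz\preceq x+yz,\quad xyz\preceq xy+xyzt.
\]
Commutativity and $x^3\approx x^2$ are direct table inspections. For the two inequalities, use that the additive order is the chain $4<3<2<1$, so each becomes a comparison of a product with a maximum. This is a finite check over at most $4^4$ assignments. Once it is done, Proposition~\ref{pro56601} gives $\mathsf{V}(S_{(4,656)})\subseteq\mathsf{V}(S_{55},M_2)$.

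\textbf{Reverse inclusion.} Exhibit $S_{55}$ and $M_2$ inside $\mathsf{V}(S_{(4,656)})$, either as subalgebras or as quotients. For $S_{55}$, look for a $3$-element subset closed under $+$ and $\cdot$ on which multiplication has the following shape: one element is absorbing, another is a multiplicative identity for everything except the absorbing one, and the remaining element squares to the identity element. By analogy with $S_{(4,566)}$, the likely candidate is $\{2,3,4\}$ or $\{1,2,3\}$. For $M_2$, look for a $2$-element subalgebra $\{a<b\}$ with $aa=a$ and every other product equal to $b$.

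If no suitable subalgebras exist, use instead a subdirect decomposition via two congruences, as in the earlier sections, for example by collapsing $\{3,4\}$ and $\{1,2,3\}$.

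\textbf{Main obstacle.} The hard part is guaranteeing that $S_{55}$ genuinely lies in $\mathsf{V}(S_{(4,656)})$, rather than only a proper subvariety of $\mathsf{V}(S_{55},M_2)$. If it does not, I would fall back on the duality route described at the start.
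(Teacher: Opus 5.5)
Your overall strategy is sound, and your fallback is exactly the paper's proof. The relations with nontrivial blocks $\{3,4\}$ and $\{1,2,3\}$ are congruences of $S_{(4,656)}$ with trivial intersection, and the two quotients are $S_{55}$ and $M_2$. This gives $\mathsf{V}(S_{(4,656)})=\mathsf{V}(S_{55},M_2)$ in one stroke, after which the paper invokes Remark~\ref{remark26080820} and Corollary~\ref{coro26080815}. Done this way, the separate table check of \eqref{5661}--\eqref{5664} is unnecessary: those identities hold automatically because $S_{(4,656)}\in\mathsf{V}(S_{55},M_2)$. Your ``main obstacle'' also disappears, since $S_{55}$ is literally a quotient of $S_{(4,656)}$ (and also a subalgebra).

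There is, however, a concrete error in your primary route: the multiplicative shape you propose to search for is not $S_{55}$. From Table~\ref{tb5501}, $S_{55}\cong T_2^0$. Its additive bottom is a multiplicative zero, and every product of the other two elements equals the additive top; it has no multiplicative identity. The shape you describe (a zero, an identity $e$, and an element $a$ with $a^2=e$) cannot occur in any ai-semiring. The subset $\{e,a\}$ would be a $2$-element ai-semiring whose multiplicative reduct is a group, and none of $L_2,\dots,T_2$ is one. So your search would come up empty, and you might wrongly conclude that $S_{55}$ does not embed.

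With the correct shape, $\{1,2,3\}$ is a copy of $S_{55}$: $3$ is absorbing and lies below $1$ and $2$, and $xy=1$ for all $x,y\in\{1,2\}$. By contrast, $\{2,3,4\}$ is not even closed, since $2\cdot 2=1$. The copy of $M_2$ is $\{3,4\}$: $4\cdot 4=4$ and all other products equal $3$.

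Finally, your duality fallback is vacuous. $S_{(4,656)}$ is commutative (you check $xy\approx yx$ yourself), so its dual multiplication is its own.
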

\begin{proof}
It is a routine matter to verify that $S_{(4, 656)}$ is isomorphic to a subdirect product of $S_{55}$ and $M_2$
via the congruences defined by the nontrivial blocks $\{3, 4\}$ and $\{1, 2, 3\}$.
Thus $\mathsf{V}(S_{(4, 656)})=\mathsf{V}(S_{55}, M_2)$.
By Remark~\ref{remark26080820}, $\mathsf{V}(S_{55}, M_2)=\mathsf{V}(S_{(4, 566)})$.
Hence $\mathsf{V}(S_{(4, 656)})=\mathsf{V}(S_{(4, 566)})$,
so $S_{(4, 656)}$ and $S_{(4, 566)}$ have the same finite basis property.
By Corollary~\ref{coro26080815}, $S_{(4, 566)}$ is finitely based.
Therefore, $S_{(4, 656)}$ is finitely based.
\end{proof}

\begin{pro}\label{pro66901}
The ai-semiring variety $\mathsf{V}(S_{(4, 669)})$ is defined by the identities
\begin{align}
&x       \preceq x^2;\label{6691}\\
&x^3   \approx x^2;\label{6690}\\
&xyz         \preceq xy;\label{6692}\\
&xyz     \preceq x+yz;\label{6693}\\
&zxy    \preceq ztxy+xy;\label{6694}\\
&zxy        \approx zyx.\label{6695}
\end{align}
\end{pro}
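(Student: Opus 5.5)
Following the pattern of the earlier propositions, we first check directly that $S_{(4,669)}$ satisfies \eqref{6691}--\eqref{6695}, and then show that every nontrivial inequality $\bq\preceq\bu$ of $S_{(4,669)}$, with $\bu=\bu_1+\cdots+\bu_n$ and $\bu_i,\bq\in X^+$, is derivable from them.

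The first step is a structural decomposition. We expect $S_{(4,669)}$ to be a subdirect product of $S_{55}$ and $L_2$, probably via congruences with nontrivial blocks $\{3,4\}$ and $\{1,2,3\}$ (or $\{1,2\}$ and $\{2,3,4\}$; the exact pairing has to be read off the table). Identity \eqref{6692} points to $L_2$, and \eqref{6695} is the left-normal law. Alternatively $L_2$ and $S_{55}$ may simply embed as subalgebras. Either way, Lemma~\ref{nlemma1} gives some $\bu_j$ with $h(\bu_j)=h(\bq)$, and Lemma~\ref{lem5501} gives some $\bu_i\in D_\bq(\bu)$ with $\ell(\bu_i)\ge 2$.

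Next we derive normal-form consequences.
\begin{itemize}
\item From \eqref{6692}, \eqref{6691} and \eqref{6690} we get $x\preceq x^2\preceq x\cdot x$ and $xy\preceq x$ up to length adjustment. In particular $xs\preceq x^2 s$ and $x^2s\preceq x$, so a word starting with $x$ dominates words starting with $x$.
\item By \eqref{6695}, every word of length at least $2$ is determined by its head, its tail content, and multiplicities. Using \eqref{6690} we can try to normalize multiplicities to at most $2$, although \eqref{6690} only applies to cubes, so some care is needed.
\end{itemize}

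Then we split into cases.

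\textbf{Case 1: $\ell(\bq)=1$, $\bq=x$.} Here $\bu_i=x^k$ with $k\ge2$, and $\bu\succeq x^k\succeq x$ by \eqref{6691} and \eqref{6692}.

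\textbf{Case 2: $\ell(\bq)\ge2$.}
\begin{enumerate}
\item Start from $\bu\succeq\bu_j+\bu_i$ with $h(\bu_j)=h(\bq)$.
\item Use \eqref{6693} to get $h(\bq)\bu_i\preceq h(\bq)+\bu_i$. Alternatively, use $\bu_j\succeq h(\bq)\cdots$ through \eqref{6692} to obtain a word $h(\bq)\bw$ with arbitrary tail, then apply \eqref{6693} to get $h(\bq)\bu_i\bq'$.
\item Use \eqref{6694} to insert the remaining variables of $\bq$ between the head and $\bu_i$: from $zxy$ with $xy=\bu_i$ of length at least $2$ we get $z t xy$. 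This is exactly where $\ell(\bu_i)\ge2$ is used.
\item Finally reorder by \eqref{6695} and collapse by \eqref{6690} to reach $\bq$, using $c(\bu_i)\subseteq c(\bq)$.
\end{enumerate}

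The main obstacle is multiplicity control in the last step. Variables of $\bu_i$ may appear in $\bq$ only once, while the derived word contains them more than once. Since we lack an identity like $x^2y\approx xy$, we must check what $S_{55}$ forces. The multiplication of $S_{55}$ sends everything involving $2$ to $2$, which suggests the equational theory does not see multiplicities beyond length. If needed, we would first derive $\bq\preceq \bq\bw$-type absorption from \eqref{6694} and \eqref{6692}, or derive $xy\preceq x^2y$ from \eqref{6691}. If this fails, we would refine the case split by whether $h(\bq)$ occurs in $\bu_i$, handling the case $h(\bu_i)=h(\bq)$ directly via \eqref{6692}.
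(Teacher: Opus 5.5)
Your Case~2 has a genuine gap, caused by reading \eqref{6694} in the wrong direction. The inequality $zxy\preceq ztxy+xy$ says that from the summands $ztxy$ and $xy$ you may \emph{delete} the middle factor $t$; it never inserts anything. Inserting or appending letters is what \eqref{6692} does ($\mathbf w\succeq\mathbf w\mathbf p$ whenever $\ell(\mathbf w)\ge 2$), together with \eqref{6695}.

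As a result, your plan never deals with the actual obstacle. The summand with the right head is $\bu_j=h(\bq)s(\bu_j)$, not $h(\bq)$, and $s(\bu_j)$ may contain letters outside $c(\bq)$. None of your steps removes them:
\begin{itemize}
\item The instance $h(\bq)\bu_i\preceq h(\bq)+\bu_i$ of \eqref{6693} does not help, because $\bu\succeq h(\bq)$ fails in general (already in $S_{55}$).
\item A word $h(\bq)\mathbf w$ with arbitrary tail is not available, because \eqref{6692} only appends, so the tail always keeps $s(\bu_j)$.
\item Reordering by \eqref{6695} or collapsing by \eqref{6690} in step~4 cannot remove these letters.
\end{itemize}
The missing step is exactly the deletion that \eqref{6694} provides:
\[
\bu\succeq\bu_j+\bu_i+\bu_i\stackrel{\eqref{6693}}{\succeq}\bu_j\bu_i+\bu_i\stackrel{\eqref{6694}}{\succeq}h(\bq)\bu_i\stackrel{\eqref{6692}}{\succeq}h(\bq)\bu_i\bq\stackrel{\eqref{6695},\eqref{6691}}{\succeq}\bq.
\]
Here \eqref{6694} is applied with $z\mapsto h(\bq)$, $t\mapsto s(\bu_j)$, $xy\mapsto\bu_i$; skip this step if $s(\bu_j)$ is empty. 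Both \eqref{6693} and \eqref{6694} need $\ell(\bu_i)\ge 2$. The paper's chain is a slightly longer version of this. It first appends $\bq$ to $\bu_i$ by \eqref{6692}, multiplies by $\bu_j$ via \eqref{6693}, moves $\bu_i$ to the end by \eqref{6695}, and then deletes $s(\bu_j)\bq$ with \eqref{6694}.

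The ``multiplicity control'' you flag as the main obstacle is not one. Since $x\preceq x^2$, raising a multiplicity moves a word \emph{up}. So by \eqref{6691} and \eqref{6695}, every word with head $h(\bq)$, content $c(\bq)$, and multiplicities at least those of $\bq$ lies above $\bq$. That is the last step in the display, and no law like $x^2y\approx xy$ is needed.

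On the other hand, two of the ``normal-form consequences'' you list are false in $S_{(4,669)}$. For $x=2$ and $y=s=1$ one gets $xy=x^2s=1$, which is not below $2$ because $1$ is the additive maximum. So neither $xy\preceq x$ nor $x^2s\preceq x$ is derivable, and any step resting on them would be unsound.
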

\begin{proof}
It is easy to verify that $S_{(4,669)}$ satisfies the inequalities \eqref{6691}--\eqref{6695}.
It remains to show that every inequality of $S_{(4,669)}$ is derivable from \eqref{6691}--\eqref{6695}.
Let $\bq \preceq \bu$ be such a nontrivial inequality,
where $\bu = \bu_1+\bu_2+\cdots+\bu_n$ with $\bu_i, \bq \in X^+$ for $1 \leq i \leq n$.
It is routine to check that $S_{(4, 669)}$ is isomorphic to a subdirect product of
$L_2$ and $S_{55}$
via the congruences defined by the nontrivial blocks $\{1, 2, 3\}$ and $\{3, 4\}$.
Thus both $L_2$ and $S_{55}$ satisfies $\bq\preceq \bu$,
so $h(\bu_i)=h(\bq)$ for some $\bu_i \in \bu$,
and by Lemma~\ref{lem5501}, there exists $\bu_j\in D_\bq(\bu)$ such that $\ell(\bu_j)\geq 2$;
in particular, $c(\bu_j) \subseteq c(\bq)$.

\textbf{Case1.} $\ell(\bq)=1$.
Then $\bq=x$ for some $x\in X$, and so $\bu_j = x^k$ for $k \geq 2$. Hence
\[
\bu \succeq \bu_i = x^k \stackrel{\eqref{6690}} \approx x^2 \stackrel{\eqref{6691}}\succeq  x = \bq .
\]
This derives the inequality $\bu \succeq \bq$.

\textbf{Case2.} $\ell(\bq) \geq 2$. Then
\begin{align*}
&\bu \succeq \bu_i + \bu_j = h(\bq)s(\bu_i) + \bu_j\approx h(\bq)s(\bu_i) + \bu_j+ \bu_j
\stackrel{\eqref{6692}} \succeq h(\bq)s(\bu_i) + \bu_j + \bu_j\bq\\
&\stackrel{\eqref{6693}} \succeq h(\bq)s(\bu_i)\bu_j\bq + \bu_j
\stackrel{\eqref{6695}}\approx h(\bq)s(\bu_i)\bq\bu_j + \bu_j
\stackrel{\eqref{6694}} \succeq h(\bq)\bu_j
\stackrel{\eqref{6692}} \succeq h(\bq)\bu_j\bq
\stackrel{\eqref{6695},\eqref{6691}} \succeq \bq.
\end{align*}
The second step uses $h(\bu_i)=h(\bq)$,
the fourth step relies on $\ell(\bu_j)\geq 2$,
and the last step follows from $c(\bu_j) \subseteq c(\bq)$.
This derives the inequality $\bu \succeq \bq$.
\end{proof}


\begin{remark}
We note that $\mathsf{V}(S_{(4, 669)})=\mathsf{V}(S_{55}, L_2)$,
since $S_{(4, 669)}$ is isomorphic to a subdirect product of $S_{55}$ and $L_2$.
\end{remark}

\begin{cor}
The ai-semiring $S_{(4, 657)}$ is finitely based.
\end{cor}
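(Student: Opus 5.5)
The pattern of this section suggests two routes to the finite basis property of $S_{(4,657)}$. One is to show that $S_{(4,657)}$ and $S_{(4,669)}$ have dual multiplications. The other is to show that $S_{(4,657)}$ generates the same variety as $S_{(4,669)}$, namely $\mathsf{V}(S_{55}, L_2)$, or its dual counterpart $\mathsf{V}(S_{55}, R_2)$. I would first check duality directly from the Cayley tables in Table~\ref{tb1}: transpose the multiplication table of $S_{(4,669)}$ and compare it with that of $S_{(4,657)}$, keeping in mind that both algebras have the same additive chain $4<3<2<1$. Since $S_{55}$ is self-dual (its multiplication table is symmetric), the dual of $S_{(4,669)}$ generates $\mathsf{V}(S_{55}, R_2)$.

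If the tables are not simply transposes, I would use the second route, modelled on the proof for $S_{(4,656)}$. I would look for two congruences on $S_{(4,657)}$ with trivial intersection, with nontrivial blocks $\{3,4\}$ and $\{1,2,3\}$ (or $\{1,2\}$ and $\{2,3,4\}$). The quotients should then be isomorphic to $S_{55}$ and to $L_2$ (or $R_2$). This exhibits $S_{(4,657)}$ as a subdirect product, giving $\mathsf{V}(S_{(4,657)})=\mathsf{V}(S_{55}, L_2)$.

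An alternative version of this route is to check that $S_{(4,657)}$ satisfies the identities \eqref{6691}--\eqref{6695} of Proposition~\ref{pro66901}. Then I would exhibit $S_{55}$ and $L_2$ as subalgebras of $S_{(4,657)}$, for instance $\{2,3,4\}$ and a two-element subset on which multiplication is left-zero. By the remark following Proposition~\ref{pro66901}, this again gives $\mathsf{V}(S_{(4,657)})=\mathsf{V}(S_{(4,669)})$.

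In every case, Proposition~\ref{pro66901}, together with the elementary fact about dual multiplications if needed, then shows that $S_{(4,657)}$ is finitely based.

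The main obstacle is purely computational. It is verifying that the chosen equivalence relations are compatible with both operations and that the quotients or subalgebras really are the claimed small algebras. I would try the dual-multiplication comparison first, since it needs only a table comparison.
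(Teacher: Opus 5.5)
Your first route is exactly the paper's proof: the columns of the multiplication table of $S_{(4,669)}$ are $(1,1,3,4)$, $(1,1,3,4)$, $(3,3,3,4)$, $(3,3,3,4)$, and these are precisely the rows of the table of $S_{(4,657)}$, so the two algebras have dual multiplications and Proposition~\ref{pro66901} finishes the argument. Your fallback routes are never needed, which is fortunate, because as stated they would fail: $S_{(4,657)}$ generates the dual variety $\mathsf{V}(S_{55},R_2)\neq\mathsf{V}(S_{55},L_2)$; it violates \eqref{6692} (take $x=1$, $y=4$, $z=1$, giving $xyz=3\not\leq 4=xy$ in the additive order); and it has no left-zero two-element subalgebra (its subalgebra $\{3,4\}$ is a copy of $R_2$).
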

\begin{proof}
It is easy to see that $S_{(4, 657)}$ and $S_{(4, 669)}$ have dual multiplications.
By Proposition \ref{pro66901}, $S_{(4, 657)}$ is finitely based.
\end{proof}

\begin{table}[ht]
\caption{The Cayley tables of $S_{49}$} \label{tb4901}
\begin{tabular}{c|ccc}
$+$      &$1$ &$2$ &$3$\\
\hline
$1$      &$1$ &$1$ &$3$\\
$2$      &$1$ &$2$ &$3$\\
$3$      &$3$ &$3$ &$3$\\
\end{tabular}\qquad
\begin{tabular}{c|ccc}
$\cdot$      &$1$ &$2$ &$3$\\
\hline
$1$      &$2$ &$2$ &$2$\\
$2$      &$2$ &$2$ &$2$\\
$3$      &$2$ &$2$ &$3$\\
\end{tabular}
\end{table}

\begin{pro}\label{pro68501}
The ai-semirings $S_{(4, 685)}$ and $S_{(4, 831)}$ are both finitely based.
\end{pro}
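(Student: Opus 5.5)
The plan follows the pattern of the earlier $T_2$-type arguments, such as the proofs for $S_{(4,746)}$, $S_{(4,791)}$ and $S_{(4,849)}$. The key tool is $S_{49}$, whose Cayley tables are given in Table~\ref{tb4901} just before the statement.

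\textbf{Step 1: decompose $S_{(4,685)}$.} From the Cayley tables of $S_{(4,685)}$, I expect it to be isomorphic to a subdirect product of $S_{49}$ and a $2$-element ai-semiring. The most likely factor is $T_2$, and otherwise $N_2$ or $M_2$. I would look for this via two congruences whose nontrivial blocks are $\{3,4\}$ and $\{1,2,3\}$, or possibly $\{1,2\}$ and $\{2,3,4\}$. To verify the decomposition, I would check three things:
\begin{itemize}
\item each partition is compatible with $+$ and $\cdot$;
\item the two congruences intersect trivially;
\item the quotients are isomorphic to $S_{49}$ and to the $2$-element factor, respectively.
\end{itemize}
This gives $\mathsf{V}(S_{(4,685)})=\mathsf{V}(S_{49},T_2)$, or the analogous equality with the other $2$-element factor.

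\textbf{Step 2: transfer to a type III algebra.} Next I would find a $4$-element ai-semiring from the type III family, say $S_{(4,k)}$ with $276\le k\le 387$. It should also be a subdirect product of $S_{49}$ and the same $2$-element algebra, this time via congruences with nontrivial blocks $\{1,4\}$ and $\{1,2,3\}$. This mirrors how $S_{(4,379)}$, $S_{(4,380)}$ and $S_{(4,385)}$ were used earlier. Then $\mathsf{V}(S_{(4,685)})=\mathsf{V}(S_{(4,k)})$, and the finite basis property of $S_{(4,k)}$ would be cited from \cite{rlyc}. If no such partner exists, I would fall back on the $3$-element results of \cite{zrc}. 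In that case I would show that $\mathsf{V}(S_{49},T_2)$ coincides with the variety of a $3$-element ai-semiring or of an algebra already handled, by embedding the generators and checking a known basis.

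\textbf{Step 3: handle $S_{(4,831)}$.} I expect $S_{(4,831)}$ either to have the dual multiplication of $S_{(4,685)}$, or to be itself a subdirect product of $S_{49}$ and the same $2$-element algebra. $S_{49}$ is self-dual, since its multiplication table is symmetric. In the first case the result follows from the elementary fact about dual multiplications. In the second case it follows from the same variety equality as in Step 1.

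\textbf{Main obstacle.} The hard part is Step 2: identifying an already-classified algebra that generates $\mathsf{V}(S_{49},T_2)$. If none is available, I would write down an explicit basis directly. I would model it on Proposition~\ref{pro85001}, adapted to $S_{49}$, using identities such as $xy\approx yx$, $xy\preceq x+y$ and $xyz\preceq xyzt$. I would then derive every inequality $\bq\preceq\bu$ by cases on $\ell(\bq)$, using the equational description of $S_{49}$ together with Lemma~\ref{nlemma1}(6).
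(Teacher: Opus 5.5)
There is a genuine gap: the decomposition in Step~1 does not exist, and no variety of the form $\mathsf{V}(S_{49},A)$ with $A$ of order two can be the right one. In $S_{(4,685)}$ the product $xy$ equals $1$ when $x,y\in\{1,2\}$ and $4$ otherwise. So a congruence that identifies an element of $\{1,2\}$ with one of $\{3,4\}$ must be total; for instance, $1\equiv 3$ forces $1=1\cdot 1\equiv 1\cdot 3=4$. Hence $\{1,2,3\}$ and $\{2,3,4\}$ are not congruence blocks. The only subdirect decomposition uses the blocks $\{1,2\}$ and $\{3,4\}$, whose quotients are $S_{49}$ and the three-element $S_{55}\cong T_2^0$.

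That second factor cannot be traded for a two-element algebra. The inequality $x^2\preceq x+y^2$ holds in all six two-element ai-semirings (by Lemma~\ref{nlemma1}) and in $S_{49}$. It fails in $S_{(4,685)}$ under $\varphi(x)=2$, $\varphi(y)=3$, since $\varphi(x^2)=1\not\leq 2=\varphi(x+y^2)$. It also fails in $S_{(4,831)}$ under $\varphi(x)=3$, $\varphi(y)=4$. So your Step~2 and your fallback basis for $\mathsf{V}(S_{49},T_2)$ aim at the wrong variety. Step~3 fails as well: both multiplication tables are symmetric, so neither algebra is the dual of the other.

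The missing idea is to recognize the $\{3,4\}$-quotient as $S_{55}\cong T_2^0$ and then absorb $S_{49}$. Both $N_2$ and $D_2$ embed in $S_{49}$, and $S_{49}$ satisfies the basis $xy\approx yx$, $x^2\preceq x$, $x^2y\preceq xy$ of $\mathsf{V}(N_2,D_2)$, so $\mathsf{V}(S_{49})=\mathsf{V}(N_2,D_2)$. Since $D_2$ already embeds in $S_{55}$, this gives $\mathsf{V}(S_{(4,685)})=\mathsf{V}(S_{55},N_2)$. Separately, $S_{(4,831)}$ is a subdirect product of $S_{55}$ and $N_2$ via the blocks $\{1,2\}$ and $\{2,3,4\}$, so it generates the same variety. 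The finite basis property for both then follows from the known result that $\mathsf{V}(T_2^0,N_2)$ is finitely based \cite[Proposition 7.1, Remark 7.2]{yrzs}, not from a type III partner or a newly constructed basis.
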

\begin{proof}
It is easy to verify that $S_{(4, 685)}$ is isomorphic to a subdirect product of $S_{55}$ and $S_{49}$
(see Table~\ref{tb4901} for its Cayley tables)
via the congruences defined by the nontrivial blocks $\{1, 2\}$ and $\{3, 4\}$.
Hence $\mathsf{V}(S_{(4, 685)}) = \mathsf{V}(S_{55}, S_{49})$.
Now $N_2$ and $D_2$ are isomorphic to the subalgebras $\{1, 2\}$ and $\{2, 3\}$ of $S_{49}$, respectively.
Moreover, $S_{49}$ satisfies the identities
\[
xy\approx yx, \quad x^2 \preceq x, \quad x^2y \preceq xy,
\]
which, by \cite{ryy}, form an equational basis of $\mathsf{V}(N_2, D_2)$.
Thus $\mathsf{V}(S_{49})=\mathsf{V}(N_2, D_2)$, and so
\[
\mathsf{V}(S_{(4, 685)})=\mathsf{V}(S_{55}, N_2, D_2).
\]
Since $D_2$ is isomorphic to the subalgebra $\{2, 3\}$ of $S_{55}$, we therefore have
\[
\mathsf{V}(S_{(4, 685)})=\mathsf{V}(S_{55}, N_2).
\]

It is easy to verify that $S_{(4, 831)}$ is isomorphic to a subdirect product of $S_{55}$ and $N_2$
via the congruences defined by the nontrivial blocks $\{1, 2\}$ and $\{2, 3, 4\}$.
Thus
\[
\mathsf{V}(S_{(4, 831)})=\mathsf{V}(S_{55}, N_2).
\]

By \cite[Proposition 7.1, Remark 7.2]{yrzs}, $\mathsf{V}(T_2^0, N_2)$ is finitely based.
Since $T_2^0$ is isomorphic to $S_{55}$, we conclude that $\mathsf{V}(S_{55}, N_2)$ is finitely based.
Therefore, $S_{(4, 685)}$ and $S_{(4, 831)}$ are both finitely based.
\end{proof}

\section{Equational bases for 4-element ai-semirings related to $S_{57}$}
In this section, we provide equational bases for some 4-element ai-semirings that relate to $S_{57}$,
whose Cayley tables are given in Table~\ref{tb5701}.

\begin{table}[ht]
\caption{The Cayley tables of $S_{57}$} \label{tb5701}
\begin{tabular}{c|ccc}
$+$      &$1$ &$2$ &$3$\\
\hline
$1$      &$1$ &$1$ &$3$\\
$2$      &$1$ &$2$ &$3$\\
$3$      &$3$ &$3$ &$3$\\
\end{tabular}\qquad
\begin{tabular}{c|ccc}
$\cdot$      &$1$ &$2$ &$3$\\
\hline
$1$      &$3$ &$3$ &$3$\\
$2$      &$1$ &$2$ &$3$\\
$3$      &$3$ &$3$ &$3$\\
\end{tabular}
\end{table}

The following result, which is due to Yue et al.~\cite[Lemma 3.1]{yrzs},
provides some information about the inequalities satisfied by $S_{57}$.

\begin{lem}\label{lem5701}
Let $\bq \preceq \bu$ be a nontrivial inequality,
where $ \bu = \bu_1 + \bu_2 + \cdots + \bu_n$ with $\bu_i, \bq\in X^+$ for $1\leq i \leq n$.
If $\bq \preceq \bu$ holds in $S_{57}$, then $L_{\geq 2}(\bu)\neq \emptyset$,
$c(p(\bq))\subseteq c(p(\bu))$, and $t(\bq)\in c(\bu)$.
\end{lem}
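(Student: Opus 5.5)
The plan is to argue by contraposition. For each of the three conclusions, I will exhibit a homomorphism $\varphi\colon P_f(X^+)\to S_{57}$ with $\varphi(\bq)\not\leq\varphi(\bu)$ whenever that conclusion fails. First I record the structure of $S_{57}$ from Table~\ref{tb5701}. The additive order is the chain $2<1<3$, so $3$ is the top. The element $2$ is a left identity: $2a=a$ for all $a$. Every other product equals $3$: $1a=3a=3$ and $a3=3$.

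Consequently, under an assignment taking values in $\{1,2,3\}$, a word evaluates as follows:
\begin{itemize}
\item to $3$ if some variable is sent to $3$, or some occurrence of a variable sent to $1$ is not the last letter;
\item to $1$ if exactly its last letter is sent to $1$ and all others to $2$;
\item to $2$ if all its letters are sent to $2$.
\end{itemize}
This description follows by evaluating the product from the right using $2a=a$ and $1a=3$.

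\textbf{The inclusion $c(p(\bq))\subseteq c(p(\bu))$.} Suppose some $x\in c(p(\bq))$ does not lie in $c(p(\bu))$. Send $x\mapsto 1$ and every other variable to $2$. Then $x$ occurs in $\bq$ at a non-final position, so $\varphi(\bq)=3$. In each $\bu_i$ the letter $x$ occurs at most as the tail, so $\varphi(\bu_i)\in\{1,2\}$. Hence $\varphi(\bu)\leq 1<3$, a contradiction.

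\textbf{The condition $t(\bq)\in c(\bu)$.} Suppose $t(\bq)\notin c(\bu)$. Send $t(\bq)\mapsto 3$ and all other variables to $2$. Then $\varphi(\bq)=3$ while $\varphi(\bu)=2$, again a contradiction. The same assignment in fact gives $c(\bq)\subseteq c(\bu)$.

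\textbf{The condition $L_{\geq 2}(\bu)\neq\emptyset$.} Suppose every $\bu_i$ is a single letter. If $\ell(\bq)\geq 2$, send every variable to $1$. Then $\varphi(\bq)=3$ while $\varphi(\bu)=1$, a contradiction. If $\ell(\bq)=1$, say $\bq=x$, then nontriviality of $\bq\preceq\bu$ forces $x\notin\bu$. Since $\bu$ consists of letters, $x\notin c(\bu)$, and sending $x\mapsto 1$ and the rest to $2$ gives $\varphi(\bq)=1>2=\varphi(\bu)$.

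The only point needing care is the word-evaluation description above, in particular that a $1$ in a non-final position forces the value $3$. This holds because $1\cdot a=3$ for every $a$ and $3$ is absorbing on both sides ($3a=a3=3$). Everything else is routine checking against the Cayley table.
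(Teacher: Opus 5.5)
Your proof is correct. Your evaluation rule for words is exactly right for the Cayley table of $S_{57}$: a word takes the value of its last letter if every earlier letter is sent to $2$, and takes the value $3$ otherwise. With that rule, each of your three substitutions gives $\varphi(\bq)\not\leq\varphi(\bu)$ in the chain $2<1<3$. You also use nontriviality in the case $\ell(\bq)=1$ with all $\bu_i$ single letters, and it is genuinely needed there, since $x\preceq x$ satisfies the hypotheses but not $L_{\geq 2}(\bu)\neq\emptyset$.

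The paper gives no argument of its own for this lemma; it imports it from Yue et al.\ \cite[Lemma 3.1]{yrzs}. Your proof is therefore a self-contained replacement for that citation. It relies only on two facts read off the table: $2$ is a left identity, and every other product is $3$. It uses the same explicit-substitution technique the paper applies elsewhere, for example in Section~\ref{section-si}, to extract necessary conditions from an inequality.
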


\begin{table}[ht]
\caption{The Cayley tables of $S_{38}$} \label{tb3801}
\begin{tabular}{c|ccc}
$+$      &$1$ &$2$ &$3$\\
\hline
$1$      &$1$ &$1$ &$3$\\
$2$      &$1$ &$2$ &$3$\\
$3$      &$3$ &$3$ &$3$\\
\end{tabular}\qquad
\begin{tabular}{c|ccc}
$\cdot$      &$1$ &$2$ &$3$\\
\hline
$1$      &$1$ &$2$ &$3$\\
$2$      &$1$ &$2$ &$3$\\
$3$      &$3$ &$3$ &$3$\\
\end{tabular}
\end{table}

\begin{pro}
The ai-semirings $S_{(4, 536)}$, $S_{(4, 607)}$, $S_{(4, 509)}$, and $S_{(4, 695)}$ are all finitely based.
\end{pro}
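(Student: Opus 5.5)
I will follow the pattern already used for $S_{53}$ and $S_{55}$: express each of the four algebras, up to equational equivalence, through $S_{57}$ together with $2$-element ai-semirings, and then reduce to a single variety for which a finite basis is exhibited.

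First, I will check the congruence structure of $S_{(4,536)}$ and $S_{(4,607)}$. I expect each of them to be a subdirect product of $S_{57}$ and a $3$-element algebra, or of $S_{57}$ and a $2$-element algebra. The candidate blocks are $\{1,2\}$ and $\{3,4\}$ for the former, and $\{1,2\}$ and $\{2,3,4\}$ for the latter. The table of $S_{38}$ was placed just before the statement, so the natural guess is $\mathsf{V}(S_{(4,536)})=\mathsf{V}(S_{57},S_{38})$. By \cite{ryy} and Remark~\ref{remark26080810}, $\mathsf{V}(S_{38})=\mathsf{V}(M_2,R_2)$. Since $M_2$ embeds into $S_{57}$ (as the subalgebra $\{2,3\}$), this gives $\mathsf{V}(S_{(4,536)})=\mathsf{V}(S_{57},R_2)$. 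I then expect $S_{(4,607)}$ to be a subdirect product of $S_{57}$ and $R_2$, so that $\mathsf{V}(S_{(4,607)})=\mathsf{V}(S_{(4,536)})$.

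For $S_{(4,509)}$ and $S_{(4,695)}$, I expect one of the following. Either they have the dual multiplications of $S_{(4,536)}$ and $S_{(4,607)}$, so that they are linked to $\mathsf{V}(S_{57}^{d},L_2)$ and are settled by duality. Or they are similarly equivalent to $\mathsf{V}(S_{57},R_2)$, or to $\mathsf{V}(S_{57},L_2)$ or $\mathsf{V}(S_{57},D_2)$. In the latter cases I will look for a $4$-element algebra of Type II or III from \cite{rlyc,yrzs} that generates the same variety. Such an algebra would be a subdirect product of $S_{57}$ and the relevant $2$-element algebra with blocks of the form $\{1,4\}$, exactly as was done for $S_{(4,379)}$ and $S_{(4,380)}$. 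If I find one, finite basedness follows from those papers.

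If no earlier algebra is available, I will prove directly that $\mathsf{V}(S_{57},R_2)$ is finitely based. Take a nontrivial $\bq\preceq\bu$ satisfied by both algebras. Lemma~\ref{lem5701} gives $L_{\geq2}(\bu)\neq\emptyset$, $c(p(\bq))\subseteq c(p(\bu))$ and $t(\bq)\in c(\bu)$, and Lemma~\ref{nlemma1} gives some $\bu_j$ with $t(\bu_j)=t(\bq)$. I would propose identities modelled on Proposition~\ref{pro79301}: $xyz\approx yxz$ (or the weaker $xyzt\approx yxzt$), $x\preceq yx$, an idempotency-type law such as $x^2y\approx xy$, and a law of the form $xyz\preceq xy+zt$ to assemble $p(\bq)$ out of the prefixes of several $\bu_i$. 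The derivation would combine the prefixes into one word carrying the content of $p(\bq)$, then append $t(\bq)$ using $\bu_j$ and the rule $x\preceq yx$.

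The main obstacle is this last step. I need to find the right short list of identities that glues letters taken from prefixes of different words of $\bu$ into a single word ending with $t(\bq)$. Before doing that, I will first test computationally whether the sums $x\,y$ with $x$ from a prefix are really absorbed.
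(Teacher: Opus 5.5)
Your reduction is correct and matches the paper's proof. $S_{(4,536)}$ is a subdirect product of $S_{38}$ and $S_{57}$ via the blocks $\{1,2\}$ and $\{3,4\}$. By \cite{ryy}, $\mathsf{V}(S_{38})=\mathsf{V}(M_2,R_2)$, and $M_2$ embeds in $S_{57}$. $S_{(4,607)}$ is a subdirect product of $S_{57}$ and $R_2$ via $\{1,2\}$ and $\{2,3,4\}$. So both algebras generate $\mathsf{V}(S_{57},R_2)$. Your first alternative for the remaining two algebras is also the right one: the multiplication table of $S_{(4,509)}$ is the transpose of that of $S_{(4,536)}$, and that of $S_{(4,695)}$ is the transpose of that of $S_{(4,607)}$. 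Duality therefore finishes them, and the other contingencies you list are not needed.

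The gap is the one nontrivial input: you never establish that $\mathsf{V}(S_{57},R_2)$ is finitely based. The paper takes this from \cite[Proposition 3.2, Remark 3.3]{yrzs}, which is exactly the kind of earlier result your plan says you would ``look for''. Until you pin it down, the argument is conditional.

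Your fallback direct proof is not carried out. Moreover, the one concrete gluing law you suggest cannot belong to any basis. In $R_2$ we have $xy=y$, so $xyz\preceq xy+zt$ reads $z\le y+t$, which fails for $z=1$, $y=t=0$. This is forced by Lemma~\ref{nlemma1}(2), since no summand on the right ends in $z$. Any law used to assemble $p(\bq)$ from prefixes of several $\bu_i$ must keep a right-hand summand whose tail equals the tail of the left side. For instance, $xz\preceq xy+tz$ does hold in both $S_{57}$ and $R_2$. You would then still have to prove that your list derives every inequality satisfying the conditions of Lemmas~\ref{nlemma1}(2) and~\ref{lem5701}. Citing \cite{yrzs} closes the proof at once.
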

\begin{proof}
It is easy to check that $S_{(4, 536)}$ is isomorphic to a subdirect product of
$S_{38}$ (see Table~\ref{tb3801} for its Cayley tables) and $S_{57}$
via the congruences defined by the nontrivial blocks $\{1, 2\}$ and $\{3, 4\}$.
Thus $\mathsf{V}(S_{(4, 536)}) = \mathsf{V}(S_{38}, S_{57})$.

Now $R_2$ and $M_2$ are isomorphic to the subalgebras $\{1, 2\}$ and $\{2, 3\}$ of $S_{38}$, respectively.
Moreover, $S_{38}$ satisfies the identities
\[
y \preceq xy, \quad xyz \approx yxz, \quad xy \preceq x+y,
\]
which, by \cite{ryy}, form an equational basis of $\mathsf{V}(R_2, M_2)$.
Thus $\mathsf{V}(S_{38})=\mathsf{V}(R_2, M_2)$, so $\mathsf{V}(S_{(4, 536)})=\mathsf{V}(S_{57}, R_2, M_2)$.
Since $M_2$ embeds into $S_{57}$ (indeed, $M_2$ is isomorphic to the subalgebra $\{2, 3\}$ of $S_{57}$),
we obtain
\[
\mathsf{V}(S_{(4, 536)})=\mathsf{V}(S_{57}, R_2).
\]

It is routine to verify that $S_{(4, 607)}$ is isomorphic to a subdirect product of $S_{57}$ and $R_2$
via the congruences defined by the nontrivial blocks $\{1, 2\}$ and $\{2, 3,4\}$.
Hence $\mathsf{V}(S_{(4, 607)})=\mathsf{V}(S_{57}, R_2)$.

By \cite[Proposition 3.2, Remark 3.3]{yrzs}, $\mathsf{V}(S_{57}, R_2)$ is finitely based.
Therefore, $S_{(4, 536)}$ and $S_{(4, 607)}$ are both finitely based.

Finally, $S_{(4, 509)}$ and $S_{(4, 536)}$ have dual multiplications,
and $S_{(4,695)}$ and $S_{(4,607)}$ have dual multiplications.
Hence $S_{(4,509)}$ and $S_{(4,695)}$ are also finitely based.
\end{proof}

\begin{pro}\label{pro312}
The ai-semiring variety $\mathsf{V}(S_{(4, 538)})$ is defined by the identities
\begin{align}
&x \preceq xy; \label{5381}\\
&x^2y \approx xy; \label{5382}\\
&xyzt \approx xzyt; \label{5383}\\
&yx \preceq x+yz; \label{5384}\\
&yz \preceq y+xyz; \label{5385}\\
&xy+xz \preceq xyz. \label{5386}
\end{align}
\end{pro}
\begin{proof}
It is routine to check that $S_{(4, 538)}$ satisfies the identities \eqref{5381}--\eqref{5386}.
It remains to show that every inequality of $S_{(4, 538)}$ is derivable from \eqref{5381}--\eqref{5386}.
Let $\bq \preceq \bu$ be such an inequality,
where $\bu=\bu_1+\bu_2+\cdots+\bu_n$ with $\bu_i, \bq \in X^+$ for $1 \leq i \leq n$.
Since $L_2$ is isomorphic to the subalgebra $\{3, 4\}$ of $S_{(4, 538)}$,
$L_2$ satisfies $\bq\preceq\bu$; hence $h(\bu_i)=h(\bq)$ for some $\bu_i \in \bu$.
Since $S_{57}$ is isomorphic to the subalgebra $\{1, 2, 3\}$ of $S_{(4, 538)}$,
it follows that $S_{57}$ satisfies $\bq\preceq\bu$.
By Lemma~\ref{lem5701}, $\ell(\bu_j)\geq 2$ for some $\bu_j\in\bu$,
$c(p(\bq))\subseteq c(p(\bu))$, and $t(\bq)\in c(\bu)$.

Suppose that $c(p(\bu))=\{x_1, x_2, \ldots, x_m\}$ and that $t(\bu)=\{y_1, y_2, \ldots, y_n\}$.
Then
\[
c(p(\bq)) \subseteq \{x_1, x_2, \ldots, x_m\}, t(\bq)\in \{x_1, x_2, \ldots, x_m,y_1, y_2, \ldots, y_n\}.
\]
Now we have
\[
\bu\succeq \bu_j+\bu_1+\bu_2+\cdots+\bu_n\stackrel{\eqref{5382},\eqref{5383},\eqref{5384}}\succeq x_j^2x_1^2x_2^2\cdots x_m^2(y_1+y_2+\cdots+y_n),
\]
where \(x_j = h(\bu_j)\).

\textbf{Case 1.} For every $\bu_k\in\bu$, there exists $\bu_\ell\in\bu$ such that $m(t(\bu_k),p(\bu_\ell))\geq 1$.
Then $c(\bq) \subseteq \{x_1, x_2, \ldots, x_m\}$.
If $\ell(\bu_i)=1$, then
\begin{align*}
\bu
&\succeq \bu_i+x_j^2x_1^2x_2^2\cdots x_m^2(y_1+y_2+\cdots+y_n)\\
&\approx \bu_i+\bq'\bq(y_1+y_2+\cdots+y_n)&&(\text{by}~{\eqref{5383}})\\
&\succeq \bq(y_1+y_2+\cdots+y_n)&&(\text{by}~{\eqref{5385}})\\
&\succeq\bq.&&(\text{by}~{\eqref{5386}})
\end{align*}
where $\bq'\bq= x_j^2x_1^2x_2^2\cdots x_m^2$.
If $\ell(\bu_i)\geq 2$, we obtain
\begin{align*}
\bu
&\succeq \bu_i+\bu_1+\bu_2+\cdots+\bu_n\\
&\succeq x_i^2x_1^2x_2^2\cdots x_m^2(y_1+y_2+\cdots+y_n)&&(\text{by}~{\eqref{5382},\eqref{5383},\eqref{5384}})\\
&\succeq \bq\bq'(y_1+y_2+\cdots+y_n)&&(\text{by}~{\eqref{5383}})\\
&\succeq\bq.&&(\text{by}~{\eqref{5386}, \eqref{5381}})
\end{align*}
where $\bq\bq'= x_i^2x_1^2x_2^2\cdots x_m^2$.

\textbf{Case 2.} There exists $\bu_k\in\bu$ such that for every $\bu_\ell\in\bu$, $m(t(\bu_k),p(\bu_\ell))=0$.

\textbf{Subcase 2.1.} $c(\bq)\subseteq c(p(\bu))$. This reduces to Case 1.

\textbf{Subcase 2.2.} $c(\bq) \nsubseteq c(p(\bu))$.
There exists $\bu_k\in \bu$ such that $t(\bu_k)=t(\bq)$ and $c(p(\bq))\subseteq c(p(\bu))$.
If $m(t(\bq),\bq)\geq 2$ , then $c(\bq)=c(p(\bq))\subseteq c(p(\bu))$, and this also reduces to Case 1.
Now suppose that $m(t(\bq),\bq)=1$.
Let $y_i\in c(\bq)$ be such that $t(\bu_k)=t(\bq)=y_i$.
If $\ell(\bu_i)=1$, then
\begin{align*}
\bu
&\succeq \bu_i+x_j^2x_1^2x_2^2\cdots x_m^2(y_1+y_2+\cdots+y_n)\\
&\approx \bu_i+x_j^2x_1^2x_2^2\cdots x_m^2y_i \\
&\approx \bu_i+\bq'\bq&&(\text{by}~{\eqref{5383}})\\
&\succeq \bq.&&(\text{by}~{\eqref{5385}})
\end{align*}
where $\bq'\bq= x_j^2x_1^2x_2^2\cdots x_m^2y_i$.
If $\ell(\bu_i)\geq 2$, then
\begin{align*}
\bu
&\succeq \bu_i+\bu_1+\bu_2+\cdots+\bu_n\\
&\succeq x_i^2x_1^2x_2^2\cdots x_m^2(y_1+y_2+\cdots+y_n)&&(\text{by}~\eqref{5382},\eqref{5383},\eqref{5384})\\
&\succeq \bq'p y_i&&(\text{by}~{\eqref{5383}})\\
&\succeq\bq,&&(\text{by}~{\eqref{5386}}or{\eqref{5381}})
\end{align*}
where $\bq'p= x_i^2x_1^2x_2^2\cdots x_m^2$.
This derives the identity $\bu \succeq \bq$.
\end{proof}

\begin{table}[ht]
\caption{The Cayley tables of $S_{31}$} \label{tb3101}
\begin{tabular}{c|ccc}
$+$      &$1$ &$2$ &$3$\\
\hline
$1$      &$1$ &$1$ &$3$\\
$2$      &$1$ &$2$ &$3$\\
$3$      &$3$ &$3$ &$3$\\
\end{tabular}\qquad
\begin{tabular}{c|ccc}
$\cdot$      &$1$ &$2$ &$3$\\
\hline
$1$      &$1$ &$1$ &$3$\\
$2$      &$2$ &$2$ &$3$\\
$3$      &$3$ &$3$ &$3$\\
\end{tabular}
\end{table}

\begin{remark}\label{remark26080910}
We note that $\mathsf{V}(S_{(4, 538)})=\mathsf{V}(S_{57}, L_2)$.
Indeed, it is easy to verify that $S_{(4, 538)}$ is isomorphic to a subdirect product of
$S_{31}$ (see Table~\ref{tb3101} for its Cayley tables) and $S_{57}$
via the congruences defined by the nontrivial blocks $\{1, 2\}$ and $\{3, 4\}$.
Thus $\mathsf V(S_{(4, 538)}) = \mathsf V(S_{31}, S_{57})$.

Now $L_2$ and $M_2$ are isomorphic to the subalgebras $\{1, 2\}$ and $\{1, 3\}$ of $S_{31}$, respectively.
Moreover, $S_{31}$ satisfies the identities
\[
x \preceq xy, \quad xyz \approx xzy, \quad xy \preceq x+y,
\]
which, by \cite{ryy}, form an equational basis of $\mathsf V(L_2, M_2)$.
Thus $\mathsf V(S_{31})=\mathsf V(L_2, M_2)$.

Therefore, $\mathsf{V}(S_{(4, 538)}) = \mathsf{V}(S_{57}, L_2, M_2)$.
Since $M_2$ embeds into $S_{57}$, we obtain
\[
\mathsf{V}(S_{(4, 538)})=\mathsf{V}(S_{57}, L_2).
\]
\end{remark}

\begin{cor}
The ai-semiring $S_{(4, 508)}$ is finitely based.
\end{cor}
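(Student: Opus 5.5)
The plan is to reduce $S_{(4,508)}$ to the algebra $S_{(4,538)}$ treated in Proposition~\ref{pro312}. The pattern of the preceding corollaries suggests two routes. The first, and the one I expect to work, is that $S_{(4,508)}$ and $S_{(4,538)}$ have dual multiplications. I would check this directly from Table~\ref{tb1}: reverse the multiplication table of $S_{(4,538)}$ (that is, define $a\ast b=ba$) and compare it with the table of $S_{(4,508)}$. Both algebras have the same additive reduct, the chain $1>2>3>4$, and isomorphism types are indexed up to isomorphism. So this may require a relabelling that preserves the chain order. On a $4$-element chain the only order automorphism is the identity, so the comparison is in fact literal.

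If the duality holds, the argument is immediate. Proposition~\ref{pro312} gives a finite basis \eqref{5381}--\eqref{5386} for $S_{(4,538)}$. By the elementary fact recalled in the introduction, algebras with the same additive reduct and dual multiplications share the finite basis property. An explicit basis for $S_{(4,508)}$ is obtained by reversing every word in \eqref{5381}--\eqref{5386}. The same reasoning was used for $S_{(4,509)}$ versus $S_{(4,536)}$.

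As a fallback, if the duality fails, I would instead show that $S_{(4,508)}$ is isomorphic to a subdirect product of $S_{57}$ and $L_2$. I would look for two congruences whose nontrivial blocks are $\{1,2\}$ and $\{2,3,4\}$ (or $\{3,4\}$ and $\{1,2,3\}$), giving $\mathsf{V}(S_{(4,508)})=\mathsf{V}(S_{57},L_2)$. Remark~\ref{remark26080910} then identifies this variety with $\mathsf{V}(S_{(4,538)})$, which is finitely based by Proposition~\ref{pro312}. The subdirect decomposition can be verified by checking the following:
\begin{itemize}
\item each partition is compatible with $+$ and $\cdot$;
\item the two partitions intersect trivially;
\item the quotients are isomorphic to $S_{57}$ and $L_2$, respectively.
\end{itemize}

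The only real obstacle is the finite table check: identifying which of the two relations (duality or subdirect decomposition) actually holds for $S_{(4,508)}$. Once that is settled, no further equational derivation is needed.
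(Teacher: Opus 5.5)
Your primary route is exactly the paper's proof: transposing the multiplication table of $S_{(4,538)}$ reproduces that of $S_{(4,508)}$ entry for entry, so the corollary follows from Proposition~\ref{pro312} by the duality principle. The fallback is never needed. It would not have worked in any case: $\{2,3,4\}$ is not a congruence block since $2\cdot 2=1$ while $2\cdot 3=2$, and $\{1,2,3\}$ is not one either since $1\cdot 4=1$ while $3\cdot 4=4$.
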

\begin{proof}
It is readily seen that $S_{(4, 508)}$ and $S_{(4, 538)}$ have dual multiplications.
By Proposition~\ref{pro312}, $S_{(4, 538)}$ is finitely based.
Consequently, $S_{(4, 508)}$ is also finitely based.
\end{proof}

\begin{cor}
The ai-semirings $S_{(4, 696)}$ and $S_{(4, 605)}$ are both finitely based.
\end{cor}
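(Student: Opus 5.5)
The plan mirrors the pattern used for $S_{(4,656)}$ and $S_{(4,607)}$: show that both algebras generate $\mathsf{V}(S_{57}, L_2)$. By Remark~\ref{remark26080910} this variety equals $\mathsf{V}(S_{(4,538)})$, which Proposition~\ref{pro312} shows is finitely based. If that identification fails for one of the two algebras, the fallback is its dual $\mathsf{V}(S_{57}^{d}, R_2)$, which equals $\mathsf{V}(S_{(4,508)})$.

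For $S_{(4,696)}$, I would first look for two congruences on $\{1,2,3,4\}$ whose intersection is trivial, with quotients isomorphic to $S_{57}$ and $L_2$. By analogy with Proposition on $S_{(4,607)}$, I would try the nontrivial blocks $\{1,2\}$ and $\{2,3,4\}$, or else $\{3,4\}$ and $\{1,2,3\}$. Checking that each partition is a congruence and identifying the quotient tables is a routine finite check against Table~\ref{tb1}. This gives $\mathsf{V}(S_{(4,696)})=\mathsf{V}(S_{57},L_2)$, and Remark~\ref{remark26080910} then gives $\mathsf{V}(S_{(4,696)})=\mathsf{V}(S_{(4,538)})$. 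Finite basedness follows from Proposition~\ref{pro312}.

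For $S_{(4,605)}$, I expect it to be dual to $S_{(4,696)}$, just as $S_{(4,695)}$ is dual to $S_{(4,607)}$ and $S_{(4,508)}$ is dual to $S_{(4,538)}$. If so, the dual-multiplication fact from the introduction finishes it. If instead $S_{(4,605)}$ is a subdirect product of $S_{57}$ and $R_2$, I would cite \cite[Proposition 3.2, Remark 3.3]{yrzs} as in the earlier proposition. If it is a subdirect product of $S_{57}^{d}$ and $R_2$, I would dualize Remark~\ref{remark26080910} and use $S_{(4,508)}$.

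The main obstacle is purely computational: finding the right pair of congruences from the Cayley tables and deciding which 3-element quotient appears, $S_{57}$ or its dual, and with which of $L_2$ and $R_2$. If no subdirect decomposition exists, I would instead verify the identities \eqref{5381}--\eqref{5386}, or their duals, directly in the algebra, and exhibit $S_{57}$ and $L_2$ as subalgebras or quotients, as was done in Corollary~\ref{coro26071210}.
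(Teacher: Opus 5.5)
Your plan is correct and matches the paper's proof. $S_{(4,696)}$ is a subdirect product of $S_{57}$ and $L_2$ via the blocks $\{1,2\}$ and $\{2,3,4\}$, which was your first candidate. Your alternative block $\{3,4\}$ is not a congruence, since $3\cdot 3=2$ but $4\cdot 3=3$. Remark~\ref{remark26080910} and Proposition~\ref{pro312} then apply, and the multiplication table of $S_{(4,605)}$ is exactly the transpose of that of $S_{(4,696)}$, so none of your fallbacks are needed.
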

\begin{proof}
It is routine to verify that $S_{(4, 696)}$ is isomorphic to a subdirect product of $S_{57}$ and $L_2$
via the congruences defined by the nontrivial blocks $\{1, 2\}$ and $\{2, 3, 4\}$.
Thus $\mathsf{V}(S_{(4, 696)})=\mathsf{V}(S_{57}, L_2)$.
By Remark~\ref{remark26080910}, $\mathsf{V}(S_{57}, L_2)=\mathsf{V}(S_{(4, 538)})$.
Hence $\mathsf{V}(S_{(4, 696)})=\mathsf{V}(S_{(4, 538)})$,
so $S_{(4, 696)}$ and $S_{(4, 538)}$ have the same finite basis property.
By Proposition~\ref{pro312}, $S_{(4, 538)}$ is finitely based.
Therefore, $S_{(4, 696)}$ is finitely based.

Finally, $S_{(4, 605)}$ and $S_{(4,696)}$ have dual multiplications.
Hence $S_{(4, 605)}$ is also finitely based.
\end{proof}

\begin{pro}\label{pro313}
The ai-semiring variety $\mathsf{V}(S_{(4, 539)})$ is defined by the identities
\begin{align}
&x\preceq x^{2};\label{5391}\\
&x^{2}y\approx xy; \label{5392}\\
&xyz \approx yxz;  \label{5393}\\
&yx \preceq x+yz; \label{5394}\\
&yz \preceq z+xyz; \label{5395}\\
&xy+xz \preceq x+xyz. \label{5396}
\end{align}
\end{pro}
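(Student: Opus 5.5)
Following the pattern of Proposition~\ref{pro312}, I first check directly on the Cayley tables of $S_{(4,539)}$ that \eqref{5391}--\eqref{5396} hold. It then suffices to derive from \eqref{5391}--\eqref{5396} every nontrivial inequality $\bq\preceq\bu$ of $S_{(4,539)}$, with $\bu=\bu_1+\cdots+\bu_n$ and $\bu_i,\bq\in X^+$. Since \eqref{5393} is the dual of \eqref{5383}-type commutation and \eqref{5394} matches \eqref{5384}, I expect $S_{(4,539)}$ to be a subdirect product of $S_{57}$ and a two-element algebra via the blocks $\{1,2\}$ and $\{3,4\}$ (or to contain $S_{57}$ and $M_2$ or $D_2$ as subalgebras). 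I will record the resulting necessary conditions. From $S_{57}$ and Lemma~\ref{lem5701}: $L_{\geq 2}(\bu)\neq\emptyset$, $c(p(\bq))\subseteq c(p(\bu))$ and $t(\bq)\in c(\bu)$. From the small factor ($M_2$ is embedded in $S_{57}$ anyway), and most likely also a tail condition from $R_2$: some $\bu_k$ with $t(\bu_k)=t(\bq)$.

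The basic tool is a "product of all words" inequality. Using \eqref{5394} repeatedly (with $z$ empty or not), together with \eqref{5392} and \eqref{5393}, I will derive
\[
\bu\succeq \Bigl(\prod_{x\in c(p(\bu))}x^2\Bigr)\,(y_1+\cdots+y_r),
\]
where the $y$'s are the tails of the words of $\bu$. The point is that \eqref{5393} lets me permute everything except the last letter, and \eqref{5392} collapses powers. Since $c(p(\bq))\subseteq c(p(\bu))$, the prefix of any such product can be rearranged by \eqref{5393} into the form $\bp\,p(\bq)$ with extra letters in front. I then choose the summand with tail $t(\bq)$, using the $R_2$ condition, to get $\bu\succeq \bp\,\bq$.

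Next I remove the extra prefix $\bp$. Here \eqref{5395}, which reads $yz\preceq z+xyz$, is the key: it lets me delete a leading block provided a suitable short summand is present. Identity \eqref{5391} handles the case where $\bq$ is a single variable. I will split into cases. In the first case, $t(\bq)$ also occurs in $p(\bq)$ or in $p(\bu)$; then $\bq$ reduces via \eqref{5392} and \eqref{5393} to a product of squares times its tail, and the extra letters are absorbed using \eqref{5391} and \eqref{5396}. In the second case, $t(\bq)$ occurs only as a tail; then I argue as in Subcase~2.2 of Proposition~\ref{pro312}, combining $\bu_k$ (tail $t(\bq)$) with the big product via \eqref{5395}.

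Identity \eqref{5396}, $xy+xz\preceq x+xyz$, is the device for collapsing the sum $y_1+\cdots+y_r$ of tails back to the single required tail. This mirrors the use of \eqref{5386} in Proposition~\ref{pro312}.

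I expect the main obstacle to be exactly this final step: eliminating the spurious prefix and the other tails when $\bu$ contains no word of length at least two sharing the right letters. Before committing to the case split, I would verify precisely which two-element quotient accompanies $S_{57}$, since that determines whether a head condition or a tail condition is available.
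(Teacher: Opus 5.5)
There is a genuine gap. Some of the necessary conditions you plan to use are false, and the condition that actually drives the proof is missing.

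First, $R_2$ does not lie in $\mathsf{V}(S_{(4,539)})$: in $R_2$ the identity \eqref{5396} reads $y+z\preceq x+z$, which fails for $x=z=0$, $y=1$. Accordingly, no tail condition holds. For example, $xy\preceq yx^2$ holds in $S_{(4,539)}$ (a direct check on the Cayley table), yet the only word on the right has tail $x$. So the step ``choose the summand with tail $t(\bq)$'' is unavailable whenever $t(\bq)\in c(p(\bu))$.

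Also, \eqref{5394} may not be used with $z$ empty: $y=2$, $x=3$ gives $yx=1\not\leq 2=x+y$. This does not matter for your product step, which is otherwise the paper's first step, since one-letter summands contribute no prefix letters.

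The companion of $S_{57}$ is not a two-element algebra. $S_{(4,539)}$ is a subdirect product of $S_{57}$ and $S_{43}$, and it contains $D_2\cong\{3,4\}$. That $D_2$ gives the condition you never record: some summand $\bu_i$ with $c(\bu_i)\subseteq c(\bq)$. This condition is indispensable. For instance, $xy\preceq xzy$ satisfies every condition on your list, including your tail condition, yet it fails in $S_{(4,539)}$ at $x=y=3$, $z=4$.

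In your derivation the gap appears when you remove the extra prefix letters. Identities \eqref{5394} and \eqref{5396} never shrink prefix content below that of the summand you start from, and \eqref{5395} only interpolates between $z$ and $xyz$. So both the ``suitable short summand'' for \eqref{5395} and the ``absorption via \eqref{5391}, \eqref{5396}'' silently require such a $\bu_i$.

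The paper uses exactly this $\bu_i$ together with your product $Xy_k$, where $X=x_1^2\cdots x_m^2$ and $\{x_1,\dots,x_m\}=c(p(\bu))$.

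\emph{Case $c(\bq)\subseteq c(p(\bu))$.} By \eqref{5392} and \eqref{5393}, $Xy_k\approx \bu_i\bq Xy_k$ for any tail $y_k$. Then \eqref{5396} with $x\mapsto\bu_i$ gives $\bu\succeq\bu_i\bq$. Because $c(\bu_i)\subseteq c(\bq)$, the word $\bu_i\bq$ equals $\bq$ or $\bq^2$ modulo \eqref{5392} and \eqref{5393}, and \eqref{5391} finishes. This is where the tail gets changed, with no tail condition at all.

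\emph{Case $t(\bq)\notin c(p(\bu))$.} Only here does $t(\bq)\in c(\bu)$ force a summand with tail $t(\bq)$, which gives $\bu\succeq Xt(\bq)$. If $t(\bu_i)=t(\bq)$, apply \eqref{5395} with $z\mapsto\bu_i$, $y\mapsto p(\bq)$. Otherwise, first use \eqref{5396} with $x\mapsto\bu_i$, $z\mapsto t(\bq)$ to get $\bu\succeq\bu_i t(\bq)$, then apply \eqref{5395} in the same way.
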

\begin{proof}
It is easy to check that $S_{(4, 539)}$ satisfies the identities \eqref{5391}--\eqref{5396}.
In the remainder it is enough to prove that every ai-semiring identity of $S_{(4, 539)}$
can be derived by \eqref{5391}--\eqref{5396}.
Let $\bq\preceq\bu$ be such an inequality,
where $\bu=\bu_1+\bu_2+\cdots+\bu_n$ and $\bu_i, \bq \in X^+$, $1 \leq i \leq n$.
Since $D_2$ is isomorphic to $\{3,4\}$,
it satisfies $\bq\preceq\bu$; hence $c(\bu_i)\subseteq c(\bq)$ for some $\bu_i\in\bu$.
Also, as $S_{57}$ is isomorphic to $\{1,2,3\}$, it follows that $S_{57}$ satisfies $\bq\preceq\bu$.
By Lemma~\ref{lem5701}, we have $\ell(\bu_j)\geq 2$ for some $\bu_j\in\bu$,
$c(p(\bq))\subseteq c(p(\bu))$, and $t(\bq)\in c(\bu)$.

Suppose that $c(p(\bu))=\{x_1, x_2, \ldots, x_m\}$
and that $t(\bu)=\{y_1, y_2, \ldots, y_n\}$.
Then
\[
c(p(\bq)) \subseteq \{x_1, x_2, \ldots, x_m\}, t(\bq)\in \{x_1, x_2, \ldots, x_m,y_1, y_2, \ldots, y_n\}.
\]
If $\ell(\bq)=1$, then
\[
\bu\succeq \bu_i\approx \bq^k\stackrel{\eqref{5392}}\approx \bq^2 \stackrel{\eqref{5391}}\succeq \bq.
\]
If $\ell(\bq)\geq 2$, then
\[
\bu\succeq \bu_j+\bu_1+\bu_2+\cdots+\bu_n\stackrel{\eqref{5392},\eqref{5393},\eqref{5394}}\succeq x_1^2x_2^2\cdots x_m^2(y_1+y_2+\cdots+y_n).
\]

\textbf{Case 1.} For every $\bu_k\in\bu$, there exists $\bu_\ell\in\bu$ such that $m(t(\bu_k),p(\bu_\ell))\geq 1$.
Then $c(\bq)\subseteq \{x_1,\ldots,x_m\}$, and hence
\[
\bu\succeq \bu_i+x_1^2x_2^2\cdots x_m^2(y_1+y_2+\cdots+y_n)\stackrel{\eqref{5393}}\succeq \bu_i+\bq\bq'(y_1+y_2+\cdots+y_n)\stackrel{\eqref{5392},\eqref{5396}} \succeq\bq,
\]
where $\bq\bq'= x_1^2x_2^2\cdots x_m^2$. This derives the identity $\bu \succeq \bq$.

\textbf{Case 2.} There exists $\bu_k\in\bu$ such that for every $\bu_\ell\in\bu$, $m(t(\bu_k),p(\bu_\ell))=0$.

\textbf{Subcase 2.1.} $c(\bq)\subseteq c(p(\bu))$. This reduces to Case 1.

\textbf{Subcase 2.2.} $c(\bq)\nsubseteq c(p(\bu))$.
Then there exists $\bu_k\in\bu$ such that $t(\bu_k)=t(\bq)$ and $c(p(\bq))\subseteq c(p(\bu))$.
If $m(t(\bq),\bq)\geq 2$, then $c(\bq)=c(p(\bq))\subseteq c(p(\bu))$; this also reduces to Case 1.
Now suppose that $m(t(\bq),\bq)=1$. Let $y_i\in c(\bq)$ be such that $t(\bu_k)=t(\bq)=y_i$.
Then
\[
\bu\succeq \bu_i+x_1^2x_2^2\cdots x_m^2(y_1+y_2+\cdots+y_n)\approx \bu_i+x_1^2x_2^2\cdots x_m^2y_i \stackrel{\eqref{5393}}\approx \bu_i+\bq'\bq\stackrel{\eqref{5393},\eqref{5395},\eqref{5396}}\succeq \bq,
\]
where $\bq'\bq= x_1^2x_2^2\cdots x_m^2y_i$. This derives the identity $\bu \succeq \bq$.
\end{proof}

\begin{remark}
It is a routine matter to verify that $S_{(4, 539)}$ is isomorphic to a subdirect product of $S_{43}$ and $S_{57}$
via the congruences defined by the nontrivial blocks $\{1, 2\}$ and $\{3, 4\}$.
So $\mathsf V(S_{(4, 539)}) = \mathsf V(S_{43}, S_{57})$. On the other hand, it is easy
to see that both $D_2$ and $M_2$ can be embedded into $S_{43}$. Also,
$S_{43}$ satisfies an equational basis of $\mathsf V(D_2, M_2)$ that can be found in \cite[Table 3]{sr}.
It follows that $\mathsf V(S_{43})=\mathsf V(D_2, M_2)$, and so
$\mathsf V(S_{(4, 539)}) = \mathsf V(S_{57}, D_2, M_2)$. Since $M_2$ can be embedded into $S_{57}$, we therefore obtain
\[
\mathsf{V}(S_{(4, 539)})=\mathsf{V}(S_{57}, D_2).
\]
\end{remark}

\begin{cor}
The ai-semirings $S_{(4,709)}$, $S_{(4, 510)}$ and $S_{(4,707)}$ are all finitely based.
\end{cor}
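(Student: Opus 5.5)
The plan follows the template of the preceding corollaries for $S_{(4,538)}$: reduce everything to Proposition~\ref{pro313} through the remark just established, namely $\mathsf{V}(S_{(4,539)})=\mathsf{V}(S_{57},D_2)$. First I show that $S_{(4,709)}$ generates $\mathsf{V}(S_{57},D_2)$. The natural guess, by analogy with $S_{(4,696)}$ and $S_{(4,607)}$, is that $S_{(4,709)}$ is isomorphic to a subdirect product of $S_{57}$ and $D_2$. The two congruences would have nontrivial blocks $\{1,2\}$ and $\{2,3,4\}$. This is checked directly from the Cayley table of $S_{(4,709)}$ in Table~\ref{tb1} by verifying three things:
\begin{itemize}
\item both partitions are compatible with $+$ and $\cdot$;
\item the quotients are $S_{57}$ and $D_2$, respectively;
\item the two congruences intersect trivially. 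This is automatic, since the blocks $\{1,2\}$ and $\{2,3,4\}$ meet only in $\{2\}$.
\end{itemize}
It then follows that $\mathsf{V}(S_{(4,709)})=\mathsf{V}(S_{57},D_2)=\mathsf{V}(S_{(4,539)})$. By Proposition~\ref{pro313}, $S_{(4,709)}$ is finitely based.

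If the subdirect decomposition fails for the actual table, I would argue by two inclusions instead, as in Remark~\ref{remark26080820}.
\begin{itemize}
\item Check that $S_{(4,709)}$ satisfies \eqref{5391}--\eqref{5396}, so $\mathsf{V}(S_{(4,709)})\subseteq\mathsf{V}(S_{(4,539)})$.
\item Exhibit $S_{57}$ and $D_2$ as subalgebras or quotients of $S_{(4,709)}$. Likely candidates are a three-element subalgebra or a quotient by a block $\{3,4\}$ for $S_{57}$, and a two-element subalgebra for $D_2$. This gives the reverse inclusion.
\end{itemize}

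For the remaining two algebras I use duality. I expect that $S_{(4,510)}$ has the dual multiplication of $S_{(4,539)}$ and that $S_{(4,707)}$ has the dual multiplication of $S_{(4,709)}$; this is a direct table comparison. Since dual multiplications preserve the finite basis property, both are finitely based. If the pairing turns out different, for example $S_{(4,707)}$ dual to $S_{(4,539)}$ and $S_{(4,510)}$ equationally equivalent to $S_{(4,709)}$, the same two tools cover it.

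The main obstacle is purely computational: locating the right congruences, or the right subalgebras, in the specific Cayley tables of $S_{(4,709)}$, $S_{(4,510)}$ and $S_{(4,707)}$. I would try the block pair $\{1,2\}$, $\{2,3,4\}$ first, since it worked for the analogous algebras, and fall back to $\{3,4\}$, $\{1,2,3\}$ otherwise. No new derivation of identities should be needed.
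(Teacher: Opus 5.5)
Your proposal is correct, and your primary route works exactly as stated. In the table of $S_{(4,709)}$, the partitions with nontrivial blocks $\{1,2\}$ and $\{2,3,4\}$ are both congruences. They are intervals of the additive chain, and multiplicative compatibility checks directly against the table. The quotient by $\{1,2\}$ is $S_{57}$, via $[1]\mapsto 3$, $[3]\mapsto 1$, $[4]\mapsto 2$. The quotient by $\{2,3,4\}$ is $D_2$. The two congruences separate points, so $\mathsf{V}(S_{(4,709)})=\mathsf{V}(S_{57},D_2)=\mathsf{V}(S_{(4,539)})$, and your fallbacks are never needed. The duality pairings you guessed are exactly the ones the paper uses: $S_{(4,510)}$ is the transpose of $S_{(4,539)}$, and $S_{(4,707)}$ is the transpose of $S_{(4,709)}$.

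The only difference from the paper is how $S_{(4,709)}$ is decomposed. The paper uses the finer congruence with block $\{2,3\}$ in place of $\{2,3,4\}$. This yields a subdirect product of $S_{57}$ and the $3$-element algebra $S_{22}$. The paper then needs two further steps: the external fact that $S_{22}$ satisfies a known basis of $\mathsf{V}(M_2,D_2)$, so that $\mathsf{V}(S_{22})=\mathsf{V}(M_2,D_2)$, and absorbing $M_2$ into $\mathsf{V}(S_{57})$.

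Your coarser congruence produces $D_2$ directly as a quotient. This skips both the appeal to the basis of $\mathsf{V}(M_2,D_2)$ and the absorption step, so your argument is shorter and more self-contained. The paper's version mainly parallels how its remark on $S_{(4,539)}$ was proved (through $S_{43}$), and gains nothing beyond that.
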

\begin{proof}
It is easy to check that $S_{(4, 709)}$ is isomorphic to a subdirect product of $S_{57}$ and $S_{22}$ via the congruences defined by the nontrivial blocks $\{1, 2\}$ and $\{2, 3\}$.
This implies that $\mathsf{V}(S_{(4, 709)}) = \mathsf{V}(S_{57}, S_{22})$.
On the other hand, it is easy
to see that both $M_2$ and $D_2$ can be embedded into $S_{22}$.
Also, $S_{22}$ satisfies the equational basis of $\mathsf{V}(M_2, D_2)$ that can be found in \cite{sr}.
Hence $\mathsf{V}(S_{22})=\mathsf{V}(M_2, D_2)$ and so $\mathsf{V}(S_{(4, 709)})=\mathsf{V}(S_{57}, M_2, D_2)$.
Since $M_2$ can be embedded into $S_{57}$, we therefore obtain
\[
\mathsf{V}(S_{(4, 709)})=\mathsf{V}(S_{57}, D_2)=\mathsf{V}(S_{(4, 539)}).
\]
Since $S_{(4, 510)}$ and $S_{(4, 539)}$ have dual multiplications, it follows from
Proposition {\ref{pro313}} that $S_{(4, 510)}$ is finitely based.
And the multiplication Cayley tables of $S_{(4,707)}$ and $S_{(4,709)}$ are dual.
Therefore,  $S_{(4,707)}$ is finitely based.
\end{proof}

\begin{pro}\label{pro314}
The ai-semiring variety $\mathsf{V}(S_{(4, 540)})$ is defined by the identities
\begin{align}
&xz \preceq xy+zt; \label{5401}\\
&xt \preceq xy+zt; \label{5402}\\
&xz \preceq xy+z; \label{5403}\\
&xyz \approx xy+yz+xz. \label{5404}
\end{align}
\end{pro}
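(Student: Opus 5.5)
We will follow the pattern of the preceding propositions. First we check, by direct computation in $S_{(4,540)}$, that the identities \eqref{5401}--\eqref{5404} hold. The rest of the work is to show that every nontrivial inequality $\bq\preceq\bu$ of $S_{(4,540)}$, with $\bu=\bu_1+\cdots+\bu_n$ and $\bq,\bu_i\in X^+$, is derivable from them.

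\textbf{Step 1: necessary conditions.} We will locate small subalgebras or quotients of $S_{(4,540)}$. We expect $S_{57}$ to be a subalgebra (presumably $\{1,2,3\}$) or a quotient by the congruence with nontrivial block $\{3,4\}$, and $N_2$ to be a subalgebra (presumably $\{3,4\}$). This is suggested by \eqref{5404}, which forces every word of length $\geq 3$ to collapse to its length-two pieces and is characteristic of $N_2$-like behaviour. From Lemma~\ref{nlemma1}(5) we get $\ell(\bq)\geq 2$. From Lemma~\ref{lem5701} we get
\[
L_{\geq 2}(\bu)\neq\emptyset,\qquad c(p(\bq))\subseteq c(p(\bu)),\qquad t(\bq)\in c(\bu).
\]
Identifying the right subalgebras is a finite table check, and it is the one point where I cannot be sure in advance which embeddings are available. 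If $N_2$ does not embed, $\ell(\bq)\geq 2$ must instead be extracted from a second subdirect factor.

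\textbf{Step 2: reduction to length two.} Iterating \eqref{5404} gives, for any word $\bw=a_1a_2\cdots a_k$ with $k\geq 2$,
\[
\bw\approx\sum_{1\leq i<j\leq k}a_ia_j .
\]
Apply this to $\bq$. It then suffices to derive $\bu\succeq x_ix_j$ for each ordered pair $i<j$ of positions of $\bq=x_1\cdots x_s$. Apply it also to every $\bu_k$ of length $\geq 2$. Then $\bu$ becomes a sum of length-one words together with all ordered pairs $ab$ with $a$ occurring before $b$ in some $\bu_k$.

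Fix a target $xy=x_ix_j$. Since $i<s$, we have $x\in c(p(\bq))\subseteq c(p(\bu))$. Hence $x$ occurs non-finally in some $\bu_k$, so $\bu\succeq xa$ for some variable $a$. For $y$ there are three cases.
\begin{itemize}
\item If $y$ occurs non-finally in some word, then $\bu\succeq yb$ for some $b$, and \eqref{5401} with $z=y$ gives $xa+yb\succeq xy$.
\item If $y$ occurs non-initially in some word, then $\bu\succeq by$ for some $b$, and \eqref{5402} with $t=y$ gives $xa+by\succeq xy$.
\item Otherwise $y$ occurs in $\bu$ only as a length-one summand $y$, and \eqref{5403} gives $xa+y\succeq xy$.
\end{itemize}
The hypothesis $y\in c(\bu)$ is needed here, but it is not directly given by Step 1 for non-final positions $j<s$. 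In that case $y\in c(p(\bq))\subseteq c(p(\bu))$, which is stronger. For $j=s$ we use $t(\bq)\in c(\bu)$. The case $x=y$ (a square $x^2$) is covered by the same three substitutions.

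\textbf{Step 3: summing up.} Summing the derived inequalities over all pairs $i<j$ and using \eqref{5404} once more yields $\bu\succeq\bq$.

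\textbf{Main obstacle.} The delicate point is bookkeeping in degenerate cases. One is when $\bu$ has words of length one that interact with \eqref{5403}. Another is when $a$ or $b$ coincides with $x$ or $y$, which needs care in the substitution instances of \eqref{5401} and \eqref{5402}. If these are fine, then as a byproduct we would also identify $\mathsf V(S_{(4,540)})$ as $\mathsf V(S_{57},N_2)$ or a similar join, from which related algebras can be handled as in the earlier corollaries.
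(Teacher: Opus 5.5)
Your proposal is correct and follows essentially the paper's proof. You use $N_2\cong\{3,4\}$ to get $\ell(\bq)\ge 2$ and reduce to length-two words via \eqref{5404}. You then use Lemma~\ref{lem5701} to get a summand yielding $xa$ and a summand containing $y$, and finish with \eqref{5401}, \eqref{5402} or \eqref{5403} according to where $y$ occurs. One small correction: $\{1,2,3\}$ is not a subalgebra, since $3\cdot 3=4$. The paper uses the subalgebra $\{1,2,4\}\cong S_{57}$, and the quotient by the block $\{3,4\}$ that you offer as the alternative is also $S_{57}$, so your Step~1 conclusions stand.
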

\begin{proof}
It is easy to check that $S_{(4, 540)}$ satisfies identities \eqref{5401}--\eqref{5404}.
It remains to show that every inequality of $S_{(4, 540)}$ is derivable from \eqref{5401}--\eqref{5404}.
Let $\bq \preceq \bu$ be such a nontrivial inequality,
where $\bu=\bu_1+\bu_2+\cdots+\bu_n$ with $\bu_i, \bq \in X^+$ for $1 \leq i \leq n$.
By the identity \eqref{5404}, every word of length at least $3$ can be reduced to a sum of words of length $2$.
Thus we may assume that $\bq$ and each $\bu_i$ have length at most $2$.

Since $N_2$ is isomorphic to the subalgebra $\{3, 4\}$ of $S_{(4, 540)}$,
$N_2$ satisfies $\bq \preceq \bu$, and so $\ell(\bq)=2$.
Write $\bq=xy$, where $x$ and $y$ may be equal.
Since $S_{57}$ is isomorphic to the subalgebra $\{1, 2, 4\}$ of $S_{(4, 540)}$,
$S_{57}$ satisfies $\bq \preceq \bu$.
By Lemma~\ref{lem5701}, $c(p(\bq))\subseteq c(p(\bu))$ and $t(\bq)\in c(\bu)$.
Hence there exist $\bu_i, \bu_j\in \bu$ such that $\ell(\bu_i)=2$, $h(\bu_i)=x$, and $y\in c(\bu_j)$.
Write $\bu_i=xz$, and $\bu_j=yt$ or $\bu_j=ty$, where $t$ may be empty.
If $\bu_j=yt$, then
\[
\bu \succeq \bu_i+\bu_j=xz+yt\stackrel{\eqref{5401}, \eqref{5403}}{\succeq} xy=\bq.
\]
If $\bu_j=ty$, then
\[
\bu \succeq \bu_i+\bu_j=xz+ty\stackrel{\eqref{5402}, \eqref{5403}}{\succeq} xy=\bq.
\]
This proves the inequality $\bu \succeq \bq$.
\end{proof}

\begin{cor}
The ai-semiring $S_{(4, 512)}$ is finitely based.
\end{cor}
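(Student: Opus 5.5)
The plan is to follow the pattern of the corollaries above: show that $S_{(4,512)}$ is either equationally equivalent to $S_{(4,540)}$, or has the dual multiplication of it, and then invoke Proposition~\ref{pro314}.

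The first thing to try is duality. I would compare the multiplication table of $S_{(4,512)}$ with the transpose of that of $S_{(4,540)}$. In this series, algebras with small index are typically paired with larger-index duals (for example $508$ with $538$, $509$ with $536$, $510$ with $539$). So I expect $S_{(4,512)}$ and $S_{(4,540)}$ to have dual multiplications. In that case the elementary fact from the introduction, that dual multiplications over the same additive reduct preserve the finite basis property, finishes the proof immediately.

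If the tables turn out not to be dual, I would use the equational-equivalence route instead. This has two steps.

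\begin{itemize}
\item First, check directly that $S_{(4,512)}$ satisfies \eqref{5401}--\eqref{5404}. Proposition~\ref{pro314} then gives $\mathsf V(S_{(4,512)})\subseteq \mathsf V(S_{(4,540)})$.
\item Second, prove the reverse inclusion. The proof of Proposition~\ref{pro314} shows that $S_{(4,540)}$ contains copies of $N_2$ (on $\{3,4\}$) and $S_{57}$ (on $\{1,2,4\}$). I would check that these two subalgebras generate $\mathsf V(S_{(4,540)})$, by exhibiting $S_{(4,540)}$ as a subdirect product of them or of quotients in their varieties. It would then suffice to find $N_2$ and $S_{57}$ as subalgebras or quotients of $S_{(4,512)}$.
\end{itemize}

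The main obstacle is this reverse inclusion, namely confirming that $\mathsf V(S_{(4,540)})=\mathsf V(S_{57},N_2)$. If no subdirect decomposition is available, I would argue directly from Lemma~\ref{lem5701} and Lemma~\ref{nlemma1}. The aim is to show that every inequality $\bq\preceq\bu$ holding in both $S_{57}$ and $N_2$ is derivable from \eqref{5401}--\eqref{5404}. The proof of Proposition~\ref{pro314} uses only the consequences of those two algebras, so that argument already establishes this. I therefore expect this step to go through cleanly.
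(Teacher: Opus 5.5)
Your primary route is exactly the paper's: transposing the multiplication table of $S_{(4,540)}$ gives precisely the table of $S_{(4,512)}$, so the two have dual multiplications and Proposition~\ref{pro314} finishes the proof. The fallback is unnecessary, and it would not have worked anyway, because $S_{(4,512)}$ is not equationally equivalent to $S_{(4,540)}$: it violates \eqref{5403} at $x=3$, $y=3$, $z=2$, where $xz=1$ but $xy+z=4+2=2$, and $1\not\le 2$ since $1$ is the additive maximum.
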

\begin{proof}
It is readily seen that $S_{(4, 512)}$ and $S_{(4, 540)}$ have dual multiplications.
Thus they have the same finite basis property.
By Proposition~\ref{pro314}, $S_{(4, 540)}$ is finitely based.
Therefore, $S_{(4, 512)}$ is also finitely based.
\end{proof}

\begin{remark}\label{remark26080920}
We note that $\mathsf{V}(S_{(4, 540)})=\mathsf{V}(S_{57}, N_2)$.
Indeed, it is a routine matter to verify that $S_{(4, 540)}$ is isomorphic to a subdirect product of $S_{52}$ and $S_{57}$
via the congruences defined by the nontrivial blocks $\{1, 2\}$ and $\{3, 4\}$.
So $\mathsf V(S_{(4, 540)}) = \mathsf V(S_{52}, S_{57})$. On the other hand, it is easy
to see that both $M_2$ and $N_2$ can be embedded into $S_{52}$. Also,
$S_{52}$ satisfies an equational basis of $\mathsf V(M_2, N_2)$ that can be found in \cite[Table 3]{sr}.
It follows that $\mathsf V(S_{52})=\mathsf V(M_2, N_2)$, and so
$\mathsf V(S_{(4, 540)}) = \mathsf V(S_{57}, M_2, N_2)$. Since $M_2$ can be embedded into $S_{57}$, we therefore obtain
\[
\mathsf{V}(S_{(4, 540)})=\mathsf{V}(S_{57}, N_2).
\]
\end{remark}

\begin{table}[ht]
\caption{The Cayley tables of $S_{21}$} \label{tb2101}
\begin{tabular}{c|ccc}
$+$      &$1$ &$2$ &$3$\\
\hline
$1$      &$1$ &$1$ &$3$\\
$2$      &$1$ &$2$ &$3$\\
$3$      &$3$ &$3$ &$3$\\
\end{tabular}\qquad
\begin{tabular}{c|ccc}
$\cdot$      &$1$ &$2$ &$3$\\
\hline
$1$      &$1$ &$1$ &$1$\\
$2$      &$1$ &$2$ &$1$\\
$3$      &$1$ &$1$ &$1$\\
\end{tabular}
\end{table}

\begin{cor}
The ai-semirings $S_{(4,817)}$ and $S_{(4,815)}$ are both finitely based.
\end{cor}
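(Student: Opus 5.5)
The plan mirrors the preceding corollaries: identify the variety generated by $S_{(4,817)}$ with one already shown to be finitely based, namely $\mathsf{V}(S_{(4,540)})=\mathsf{V}(S_{57},N_2)$ from Remark~\ref{remark26080920}. Then $S_{(4,815)}$ will be handled by duality.

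The first step is to verify from the Cayley tables that $S_{(4,817)}$ is a subdirect product of two smaller algebras, via two congruences whose intersection is trivial. The natural candidate, by analogy with $S_{(4,818)}$, $S_{(4,697)}$, $S_{(4,608)}$ and $S_{(4,696)}$, uses the nontrivial blocks $\{1,2\}$ and $\{2,3,4\}$. Both are convex in the additive chain, so they are automatically compatible with $+$; only compatibility with multiplication must be checked.

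\begin{itemize}
\item The quotient by $\{1,2\}$ should be $S_{57}$.
\item The quotient by $\{2,3,4\}$ should be $N_2$.
\end{itemize}

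The table of $S_{21}$ is displayed just before the statement, which suggests it enters somewhere. So an alternative decomposition may be needed: one factor $S_{57}$, and the other a $3$-element algebra such as $S_{21}$ or $S_{52}$. That factor would then be shown equationally equivalent to $\mathsf{V}(M_2,N_2)$ or a similar variety, using \cite{ryy} or \cite[Table 3]{sr}.

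I expect $S_{21}$ to be the factor for $S_{(4,815)}$ or $S_{(4,817)}$ when the blocks are $\{1,2\}$ and $\{3,4\}$. If so, the next step is to show $\mathsf{V}(S_{21})$ is generated by two-element algebras, checking which of $N_2,T_2,M_2,\dots$ embed into $S_{21}$. Its multiplication makes $1$ absorbing except $2\cdot2=2$, so $\{1,2\}$ is a copy of $D_2$-like or $M_2$-like structure and the rest collapses. The third step is to absorb any two-element generator that already embeds into $S_{57}$, as $M_2$ does via $\{2,3\}$. This yields $\mathsf{V}(S_{(4,817)})=\mathsf{V}(S_{57},N_2)$, which by Remark~\ref{remark26080920} equals $\mathsf{V}(S_{(4,540)})$, finitely based by Proposition~\ref{pro314}.

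For $S_{(4,815)}$, I would first check whether it has the multiplication dual to $S_{(4,817)}$. If so, finite basis follows immediately. Otherwise, I would run the same subdirect-product argument directly, landing in either $\mathsf{V}(S_{57},N_2)$ or its dual $\mathsf{V}(S_{(4,512)})$, both of which are finitely based.

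The main obstacle is the case-by-case verification: finding the correct pair of congruences, and confirming that the second factor really generates the same variety as the claimed two-element algebras. The latter requires checking that the factor satisfies a known equational basis of the target variety, in the style of Remark~\ref{remark26080920}.
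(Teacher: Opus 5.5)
Your primary decomposition is correct, and it is a different one from the paper's. The checks you leave as ``should be'' all go through:
\begin{itemize}
\item In $S_{(4,817)}$, rows $1,2$ coincide and columns $1,2$ coincide, so $\{1,2\}$ is a congruence block. In the quotient, $\overline{1}=\overline{2}$ is both the additive top and a multiplicative zero, $\overline{4}\cdot\overline{3}=\overline{3}$, $\overline{4}^2=\overline{4}$, and every other product is $\overline{2}$. This quotient is exactly $S_{57}$.
\item Every product lies in $\{2,3,4\}$, so that set is a block, and the quotient is $N_2$.
\item The two congruences meet trivially.
\end{itemize}
Hence $\mathsf{V}(S_{(4,817)})=\mathsf{V}(S_{57},N_2)$ at once. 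Remark~\ref{remark26080920} and Proposition~\ref{pro314} then finish, and $S_{(4,815)}$ follows by duality exactly as in the paper.

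The paper instead uses blocks $\{1,2\}$ and $\{2,3\}$, which give the factors $S_{57}$ and $S_{21}$. It then needs the equational basis of $\mathsf{V}(M_2,N_2)$ from \cite{ryy} to get $\mathsf{V}(S_{21})=\mathsf{V}(M_2,N_2)$, and finally absorbs $M_2$ into $S_{57}$. Your route avoids that external input entirely. It is also the pattern the paper itself uses for $S_{(4,818)}$, $S_{(4,697)}$, $S_{(4,608)}$ and $S_{(4,696)}$. The paper's route buys only uniformity with its treatment of $S_{(4,540)}$.

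The $S_{21}$ fallback is therefore unnecessary, and as sketched it would not work. The blocks you guess for it, $\{1,2\}$ and $\{3,4\}$, fail for $S_{(4,817)}$: $3\cdot 3=2$ while $4\cdot 3=3$, so $\{3,4\}$ is not a congruence block. The paper's $S_{21}$ factor comes from the block $\{2,3\}$. Also, inside $S_{21}$ the subalgebra $\{1,2\}$ is a copy of $M_2$, not $D_2$, and $\{1,3\}$ is a copy of $N_2$.
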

\begin{proof}
It is easy to check that $S_{(4, 817)}$ is isomorphic to a subdirect product of $S_{57}$ and $S_{21}$
(see Tbale~\ref{tb2101} for its Cayley tables)
via the congruences defined by the nontrivial blocks $\{1, 2\}$ and $\{2, 3\}$.
Thus $\mathsf{V}(S_{(4, 817)}) = \mathsf{V}(S_{57}, S_{21})$.

Now both $M_2$ and $N_2$ are isomorphic to the subalgebras $\{1, 2\}$ and $\{1, 3\}$ of $S_{21}$.
Moreover, $S_{21}$ satisfies the identities
\[
xy\approx yx, \quad xy \preceq xyz, \quad xy \preceq x+y,
\]
which, by \cite{ryy}, form an equational basis of $\mathsf{V}(M_2, N_2)$.
Thus $\mathsf{V}(S_{21})=\mathsf{V}(M_2, N_2)$, so $\mathsf{V}(S_{(4, 817)})=\mathsf{V}(S_{57}, M_2, N_2)$.
Since $M_2$ embeds into $S_{57}$, we obtain $\mathsf{V}(S_{(4, 817)})=\mathsf{V}(S_{57}, N_2)$.
By Remark~\ref{remark26080920}, $\mathsf{V}(S_{57}, N_2)=\mathsf{V}(S_{(4, 540)})$.
Hence $\mathsf{V}(S_{(4, 817)})=\mathsf{V}(S_{(4, 540)})$,
so $S_{(4, 817)}$ and $S_{(4, 540)}$ have the same finite basis property.
By Proposition~\ref{pro314}, $S_{(4, 540)}$ is finitely based.
Therefore, $S_{(4, 817)}$ is finitely based.

Finally, $S_{(4,815)}$ and $S_{(4,817)}$ have dual multiplications.
Thus $S_{(4,815)}$ has the same finite basis property as $S_{(4,817)}$.
Therefore, $S_{(4,815)}$ is finitely based.
\end{proof}

\begin{pro}\label{pro63301}
The ai-semiring variety $\mathsf{V}(S_{(4, 633)})$ is defined by the identities
\begin{align}
& x^2y \approx xy; \label{63302}\\
& xyz \approx yxz; \label{63303}\\
& x^2y^2 \approx y^2x^2; \label{63304}\\
& x \preceq x^2; \label{63305}\\
& xyz \preceq xy+yz; \label{63301}\\
& xyz \preceq xz+yz; \label{63306}\\
& xyzt^2 \preceq xy^2+zt^2. \label{63307}
\end{align}
\end{pro}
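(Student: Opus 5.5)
Following the pattern of the preceding propositions, I would first check that $S_{(4,633)}$ satisfies \eqref{63302}--\eqref{63307}; this is a finite Cayley-table computation. I would then identify the relevant quotients and subalgebras. Since this section concerns $S_{57}$, I expect $S_{(4,633)}$ to be a subdirect product of $S_{57}$ (or a subalgebra isomorphic to it) with a small factor such as $D_2$ or $M_2$. So I would first find the congruences, presumably with nontrivial blocks $\{1,2,3\}$ and $\{3,4\}$ or $\{1,2\}$ and $\{3,4\}$. Applying Lemma~\ref{lem5701} then gives that for a nontrivial inequality $\bq\preceq\bu$ of $S_{(4,633)}$ we have $L_{\geq 2}(\bu)\neq\emptyset$, $c(p(\bq))\subseteq c(p(\bu))$ and $t(\bq)\in c(\bu)$, together with the constraint from the second factor (for instance $D_\bq(\bu)\neq\emptyset$ if $D_2$ embeds, or $c(\bq)\subseteq c(\bu)$ for $M_2$).

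Next I would normalize words. By \eqref{63302} and \eqref{63303}, every word of length at least $3$ equals $\bw\, t(\bw)$ with the prefix part commutative and idempotent up to squares. Combined with \eqref{63304}, a word $\bq$ of length at least $2$ reduces to $x_1^2\cdots x_m^2 y$, where $\{x_1,\dots,x_m\}=c(p(\bq))$ and $y=t(\bq)$, or possibly $y$ is absorbed when $y\in c(p(\bq))$. The length-one case $\bq=x$ is handled via \eqref{63305} from a summand $x^k$. This is exactly what the second factor should force, as in Case 1 of Proposition~\ref{pro56601}.

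The main derivation for $\ell(\bq)\geq 2$ would proceed as follows. For each $x\in c(p(\bq))$, pick $\bu_k$ with $x\in c(p(\bu_k))$. Using \eqref{63301} and \eqref{63306}, extract from $\bu_k$ a two-letter word $xz$ or $zx$ lying below $\bu$. Then use \eqref{63307}, together with \eqref{63303}--\eqref{63304}, to merge these pieces. The form $xyzt^2\preceq xy^2+zt^2$ lets one glue a word ending in a square to another word, accumulating $x_1^2\cdots x_m^2$ into a single term. Finally, attach the tail $t(\bq)$, taken from a summand containing it, using \eqref{63301}/\eqref{63306} read in reverse to shorten.

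I expect the main obstacle to be exactly this gluing. With only \eqref{63307} available, combining pieces requires the left summand to have a square in second position and the right one to end in a square. So I would first derive auxiliary consequences of the form $x^2 y^2\preceq \bu$ whenever $x,y\in c(p(\bu))$, and $\bp\, y\preceq \bu$ whenever $y=t(\bu_j)$. I would also split cases according to whether $t(\bq)\in c(p(\bq))$ and whether $m(t(\bq),\bq)=1$, mirroring Case 2 of Proposition~\ref{pro312}. If the case split fails, I would fall back on proving the inequalities summand-by-summand, with $\bq$ in normal form.
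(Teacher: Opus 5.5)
Your plan has a genuine gap at the first step, and the gap carries through the rest of the argument. $S_{(4,633)}$ is not a subdirect product of $S_{57}$ with $D_2$ or $M_2$. Its only congruences are the identity, the one with block $\{1,2\}$, the one with block $\{1,2,3\}$, and the total one. In particular $\{3,4\}$ is not a block: $3\cdot 2=2$ but $4\cdot 2=4$. So $S_{(4,633)}$ is subdirectly irreducible. What actually holds is that $4$ is both the additive identity and a multiplicative zero, so $S_{(4,633)}\cong S_{57}^0$.

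Your plan only extracts that $S_{57}$ and $D_2$ (or $M_2$) satisfy $\bq\preceq\bu$, whether via quotients or via the subalgebras $\{1,2,3\}$ and $\{3,4\}$. That information is strictly too weak. For example, $yx\preceq x+yz$ holds in $S_{57}$, $D_2$ and $M_2$ (it is \eqref{5394}), but it fails in $S_{(4,633)}$ under $y\mapsto 1$, $x\mapsto 3$, $z\mapsto 4$. So no derivation resting on your listed necessary conditions can succeed.

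The missing idea is Lemma~\ref{lem001}: $S_{(4,633)}$ satisfies $\bq\preceq\bu$ if and only if $D_\bq(\bu)\neq\emptyset$ and $S_{57}$ satisfies $\bq\preceq D_\bq(\bu)$. Applying Lemma~\ref{lem5701} to $D_\bq(\bu)$ gives $L_{\geq 2}(D_\bq(\bu))\neq\emptyset$, $c(p(\bq))\subseteq c(p(D_\bq(\bu)))$ and $t(\bq)\in c(D_\bq(\bu))$. Hence $c(\bq)=c(D_\bq(\bu))$.

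The same point invalidates your proposed manipulations. Since $D_2$ is a quotient of $S_{(4,633)}$, every consequence $\bu\succeq\bq$ of the basis has a summand with content inside $c(\bq)$, so no derivation can delete a variable from a summand. This rules out each of the following steps:
\begin{itemize}
\item extracting a two-letter word $xz$ from a longer summand $\bu_k$;
\item using \eqref{63301} or \eqref{63306} ``in reverse'' to shorten, since these only build $xyz$ out of shorter words;
\item the auxiliary claim $x^2y^2\preceq\bu$ whenever $x,y\in c(p(\bu))$, which fails for $\bu=xz+yw$ already in $D_2$;
\item choosing an arbitrary $\bu_k$ with $x\in c(p(\bu_k))$, which imports variables you can never remove.
\end{itemize}

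The working procedure is to discard the summands outside $D_\bq(\bu)$ and glue all summands of $D_\bq(\bu)$ into a single product via \eqref{63301}, \eqref{63306} and \eqref{63307}. Because $c(\bq)=c(D_\bq(\bu))$, the identities \eqref{63302}--\eqref{63304} then normalize this product as follows. If $t(\bq)\in c(p(D_\bq(\bu)))$, the product normalizes to $\bq^2$, and \eqref{63305} finishes. Otherwise, placing last a summand whose tail is $t(\bq)$, it normalizes directly to $p(\bq)t(\bq)=\bq$.
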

\begin{proof}
It is straightforward to check that $S_{(4, 633)}$ satisfies the identities \eqref{63302}--\eqref{63307}.
It suffices to show that every inequality of $S_{(4, 633)}$
is derivable from \eqref{63302}--\eqref{63307}.
Let $\bq\preceq \bu$ be such a nontrivial inequality, where
$\bu=\bu_1+\bu_2+\cdots+\bu_n$ with $\bu_i, \bq \in X^+$ for $1 \leq i \leq n$.
Since $S_{57}^0$ is isomorphic to $S_{(4, 633)}$, it follows that
$S_{57}$ satisfies $\bq\preceq D_\bq(\bu)$.
By Lemma \ref{lem5701}, $\ell(\bu_k)\geq 2$ for some $\bu_k \in D_\bq(\bu)$,
$c(p(\bq))\subseteq c(p(D_\bq(\bu)))$, and $t(\bq)\in c(D_\bq(\bu))$.
This implies that $c(\bq)=c(D_\bq(\bu))$.
We may assume that $D_\bq(\bu)=\{\bu_1, \bu_2, \ldots, \bu_k\}$ and $c(D_\bq(\bu))=\{x_1, x_2, \dots, x_m\}$.

\textbf{Case 1.} $t(\bq)\in c(p(D_\bq(\bu)))$. Then $c(\bq)\subseteq c(p(D_\bq(\bu)))$.
Since $c(\bq)=c(D_\bq(\bu))$, we have that for any $\bu_i\in D_\bq(\bu)$, there exists $\bu_j\in D_\bq(\bu)$
such that $t(\bu_i)\in c(p(\bu_j))$.
Now we have
\begin{align*}
\bu
&\succeq \bu_1+\bu_2+\cdots+\bu_k+\bu_1 \\
&\succeq\bu_1\bu_2\cdots\bu_k\bu_1 &&(\text{by}~\eqref{63301}, \eqref{63307})\\
& \approx x_1^2x_2^2\cdots x_m^2 &&(\text{by}~\eqref{63302}, \eqref{63303}, \eqref{63304})\\
& \approx \bq^2 &&(\text{by}~\eqref{63302}, \eqref{63303}, \eqref{63304})\\
& \succeq \bq. &&(\text{by}~\eqref{63305})
\end{align*}
This derives the inequality $\bu \succeq\bq$.

\textbf{Case 2.} $t(\bq)\in t(D_\bq(\bu))$. Then we may assume that $t(\bq)=t(\bu_1)$.
Since $c(\bq)=c(D_\bq(\bu))$, for each $i\in\{2, 3, \ldots,k\}$,
either $t(\bu_i)=t(\bu_1)$ or there exists $\bu_j\in D_\bq(\bu)$ such that $m(t(\bu_i), p(\bu_j))\geq 1$.
Consequently,
\[
\bu \succeq \bu_2+\cdots+\bu_k+\bu_1 \stackrel{\eqref{63301}, \eqref{63306}, \eqref{63307}}\succeq\bu_2\cdots\bu_k\bu_1 \stackrel{\eqref{63302}, \eqref{63303}}\approx p(\bq)t(\bq)= \bq.
\]
This derives the inequality $\bu \succeq\bq$.
\end{proof}

\begin{cor}
The ai-semiring $S_{(4, 630)}$ is finitely based.
\end{cor}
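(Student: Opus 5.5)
The plan is to use the same two reductions that settled the other corollaries in this paper: either show that $S_{(4,630)}$ generates the same variety as $S_{(4,633)}$, or show that its multiplication is dual to that of an algebra already known to be finitely based. We have $S_{(4,633)}\cong S_{57}^0$, and by Proposition~\ref{pro63301} its variety is defined by the finite set of identities \eqref{63302}--\eqref{63307}. So the natural first attempt is to prove $\mathsf{V}(S_{(4,630)})=\mathsf{V}(S_{(4,633)})$.

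First I would check directly from the Cayley tables that $S_{(4,630)}$ satisfies \eqref{63302}--\eqref{63307}. This is a finite verification. By Proposition~\ref{pro63301} it gives the inclusion $\mathsf{V}(S_{(4,630)})\subseteq\mathsf{V}(S_{(4,633)})$.

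For the reverse inclusion I would use $\mathsf{V}(S_{(4,633)})=\mathsf{V}(S_{57}^0)$. I would look for a copy of $S_{(4,633)}$, or of a semiring equationally equivalent to $S_{57}^0$, inside $\mathsf{V}(S_{(4,630)})$, trying the following in order:
\begin{itemize}
\item an embedding of $S_{57}^0$ into $S_{(4,630)}$ as the whole algebra under a relabelling, or as a subalgebra;
\item a subdirect decomposition of $S_{(4,630)}$ via two congruences, as in the earlier sections, with one factor $S_{57}$ and the other factor, together with $S_{57}$, generating $\mathsf{V}(S_{57}^0)$.
\end{itemize}
In the second route I would rely on Lemma~\ref{lem001}. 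It says that an inequality $\bq\preceq\bu$ holds in $S_{57}^0$ exactly when $D_\bq(\bu)\neq\emptyset$ and $\bq\preceq D_\bq(\bu)$ holds in $S_{57}$. The $D_2$-type condition $D_\bq(\bu)\neq\emptyset$ would be witnessed by a subalgebra $\{a,4\}\cong D_2$. The remaining part would have to come from $S_{57}$ sitting inside $S_{(4,630)}$ with the correct absorbing behaviour.

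If neither route works, I would fall back on duality. I would check whether the multiplication of $S_{(4,630)}$ is the dual of that of $S_{(4,633)}$, or of another algebra already shown finitely based. Then the elementary fact on dual multiplications from the introduction finishes the proof.

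The main obstacle is the reverse inclusion. If $S_{(4,630)}$ turns out to be a proper subvariety generator, say one missing the zero-adjoined behaviour, then the basis \eqref{63302}--\eqref{63307} would not transfer. In that case I would expect to need a separate basis, obtained by repeating the case analysis of Proposition~\ref{pro63301} with Lemma~\ref{lem5701} applied to $\bu$ itself instead of $D_\bq(\bu)$.
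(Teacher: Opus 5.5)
\textbf{The route you put first fails at its first step.} You call the check that $S_{(4,630)}$ satisfies \eqref{63302}--\eqref{63307} ``a finite verification'', but it comes out negative. Take $x=2$, $y=3$ in $S_{(4,630)}$: then $x^2y=(2\cdot 2)\cdot 3=1\cdot 3=1$, whereas $xy=2\cdot 3=2$. So \eqref{63302} fails. Likewise \eqref{63303} fails at $x=2$, $y=z=3$, giving $2$ versus $1$. Hence $S_{(4,630)}\notin\mathsf{V}(S_{(4,633)})$.

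Conversely, $yx^2\approx yx$ holds in $S_{(4,630)}$ but fails in $S_{(4,633)}$ at $x=2$, $y=3$. So neither inclusion between the two varieties holds. No embedding of $S_{57}^0$ and no subdirect decomposition can make them equal.

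Your ``main obstacle'' paragraph also misdiagnoses the situation. $S_{(4,630)}$ is not a generator of a proper subvariety lacking the zero-adjoined behaviour. It is $S_{57}^0$ with its multiplication reversed, so its variety is the mirror image of $\mathsf{V}(S_{(4,633)})$. Redoing the case analysis of Proposition~\ref{pro63301} with Lemma~\ref{lem5701} applied to $\bu$ would not be the right repair.

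\textbf{The argument that proves the statement is the one you list only as a fallback.} It is exactly the paper's proof. Transpose the multiplication table of $S_{(4,633)}$: its columns $(1,1,1,4)$, $(1,1,2,4)$, $(1,1,3,4)$, $(4,4,4,4)$ are precisely the rows of the table of $S_{(4,630)}$. The additive reducts are the same chain, so $S_{(4,630)}$ has the dual multiplication of $S_{(4,633)}$. Proposition~\ref{pro63301} shows $S_{(4,633)}$ is finitely based, and the elementary fact on dual multiplications finishes the proof.

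You should lead with this one-line check and actually carry it out. As written, your proposal commits first to a route that is false. It reaches the correct argument only conditionally (``if neither route works'') and never verifies the duality.
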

\begin{proof}
It is easy to see that $S_{(4, 630)}$ and $S_{(4, 633)}$ have dual multiplications.
By Proposition~\ref{pro63301}, $S_{(4, 633)}$ is finitely based.
Therefore, $S_{(4, 630)}$ is also finitely based as required.
\end{proof}

\section{Equational bases for 4-element ai-semirings related to $S_{58}$}
In this section, we provide equational bases for some 4-element ai-semirings that relate to $S_{58}$,
whose Cayley tables are given in Table~\ref{tb5801}.

\begin{table}[ht]
\caption{The Cayley tables of $S_{58}$} \label{tb5801}
\begin{tabular}{c|ccc}
$+$      &$1$ &$2$ &$3$\\
\hline
$1$      &$1$ &$1$ &$3$\\
$2$      &$1$ &$2$ &$3$\\
$3$      &$3$ &$3$ &$3$\\
\end{tabular}\qquad
\begin{tabular}{c|ccc}
$\cdot$      &$1$ &$2$ &$3$\\
\hline
$1$      &$3$ &$3$ &$3$\\
$2$      &$2$ &$2$ &$2$\\
$3$      &$3$ &$3$ &$3$\\
\end{tabular}
\end{table}

The following result, which is due to Yue at al. \cite[Lemma 4.1]{yrzs},
provides some information about the identities of $S_{58}$.

\begin{lem}\label{lem5801}
Let $\bq\preceq \bu$ be a nontrivial inequality,
where $\bu = \bu_1 + \bu_2 + \cdots + \bu_n$ for $\bu_i, \bq\in X^+$ with $1\leq i \leq n$.
Suppose that $\bq\preceq \bu$ is satisfied by $S_{58}$.
Then $\bu$ and $\bq$ satisfy the following conditions:
\begin{itemize}
\item[$(1)$] $L_{\geq2}(\bu)\neq\emptyset$;

\item[$(2)$] $H_q(\bu)\neq\emptyset$;

\item[$(3)$]  If $\ell(\bq)\geq 2$, then $L_{\geq2}(\bu)\cap H_q(\bu)\neq\emptyset$.
\end{itemize}
\end{lem}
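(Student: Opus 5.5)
Since $S_{58}$ is finite, I will prove the three conditions by contraposition: for each one that fails, I will exhibit a homomorphism $\varphi\colon P_f(X^+)\to S_{58}$ with $\varphi(\bq)\not\le\varphi(\bu)$. First I need the structure of $S_{58}$. Its additive order is the chain $2<1<3$, since $1+2=1$ and $3$ is absorbing. Its multiplication is $ab=3$ when $a\in\{1,3\}$ and $ab=2$ when $a=2$. So the product of a word evaluated at an assignment depends only on the value of its first letter, provided the word has length at least $2$. Concretely, for a word $\bw$ with $\ell(\bw)\ge2$ we have $\varphi(\bw)=2$ if $\varphi(h(\bw))=2$, and $\varphi(\bw)=3$ otherwise. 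For a word of length $1$, $\varphi(\bw)=\varphi(\bw)$ is just the assigned value. Finally, because $\bq\preceq\bu$ is nontrivial, $\bq\notin\bu$.

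\textbf{Condition (2).} Suppose $H_\bq(\bu)=\emptyset$. I assign $h(\bq)\mapsto 1$ and every other variable $\mapsto 2$. Each $\bu_i$ has head different from $h(\bq)$, so $\varphi(\bu_i)=2$ whatever its length. Hence $\varphi(\bu)=2$. On the other side, $\varphi(\bq)$ is $1$ if $\ell(\bq)=1$ and $3$ otherwise. In either case $\varphi(\bq)>2=\varphi(\bu)$, which is a contradiction.

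\textbf{Condition (1).} Suppose every $\bu_i$ is a single variable. If $\ell(\bq)\ge2$, I assign all variables $\mapsto 1$. Then $\varphi(\bu)=1$ while $\varphi(\bq)=3$, a contradiction. If $\ell(\bq)=1$, say $\bq=x$, then nontriviality gives $x\notin c(\bu)$. I assign $x\mapsto 3$ and the rest $\mapsto 2$, which gives $\varphi(\bu)=2<3=\varphi(\bq)$.

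\textbf{Condition (3).} Let $\ell(\bq)\ge2$ and suppose every word of $\bu$ with head $h(\bq)$ has length $1$, so each such word is the single variable $h(\bq)$. I assign $h(\bq)\mapsto 1$ and all other variables $\mapsto 2$. Then the words in $H_\bq(\bu)$ evaluate to $1$, and all remaining words have head valued $2$ and evaluate to $2$. So $\varphi(\bu)\le1$ while $\varphi(\bq)=3$, a contradiction.

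The only delicate step is choosing the assignments correctly. Words headed by a variable valued $2$ are pinned at the bottom value, and words of length at least $2$ headed by a variable valued $1$ are forced up to the top value $3$. Once the multiplication table is read in this way, each case is a one-line check.
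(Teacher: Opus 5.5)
Your proof is correct. You read $S_{58}$ correctly: its additive order is the chain $2<1<3$, and $a\cdot b$ depends only on $a$, with $2\cdot b=2$ and $1\cdot b=3\cdot b=3$. Under each negated condition, your assignment gives $\varphi(\bq)\not\leq\varphi(\bu)$.

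The paper does not prove this lemma; it imports it from \cite[Lemma~4.1]{yrzs}. Your write-up is therefore a self-contained replacement, using the same substitution technique the paper applies elsewhere, for instance in the propositions for $S_{(4,654)}$ and $S_{(4,667)}$.

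One small economy is available: condition (1) follows from (2), (3) and nontriviality, so its separate case analysis can be dropped.
\begin{itemize}
\item If $\ell(\bq)\geq 2$, condition (3) already supplies a word of length at least $2$.
\item If $\ell(\bq)=1$, the word $\bu_i$ with $h(\bu_i)=\bq$ given by (2) cannot equal $\bq$, so $\ell(\bu_i)\geq 2$.
\end{itemize}
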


\begin{table}[ht]
\caption{The Cayley tables of $S_{50}$} \label{tb5001}
\begin{tabular}{c|ccc}
$+$      &$1$ &$2$ &$3$\\
\hline
$1$      &$1$ &$1$ &$3$\\
$2$      &$1$ &$2$ &$3$\\
$3$      &$3$ &$3$ &$3$\\
\end{tabular}\qquad
\begin{tabular}{c|ccc}
$\cdot$      &$1$ &$2$ &$3$\\
\hline
$1$      &$2$ &$2$ &$2$\\
$2$      &$2$ &$2$ &$2$\\
$3$      &$3$ &$3$ &$3$\\
\end{tabular}
\end{table}

\begin{pro}
The ai-semirings $S_{(4, 516)}$, $S_{(4, 683)}$, $S_{(4, 816)}$, and $S_{(4, 830)}$ are all finitely based.
\end{pro}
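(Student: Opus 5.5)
The plan follows the pattern of Proposition~\ref{pro68501} and the corollaries for $S_{(4,696)}$ and $S_{(4,817)}$: reduce all four algebras to a variety generated by $S_{58}$ together with a $2$-element ai-semiring, and then invoke a known finite basis result from \cite{yrzs}. The table of $S_{50}$ is displayed just before the statement, so $S_{50}$ should be one subdirect factor.

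\textbf{Step 1.} I will check that $S_{(4,516)}$ is isomorphic to a subdirect product of $S_{58}$ and $S_{50}$, via the congruences with nontrivial blocks $\{1,2\}$ and $\{3,4\}$ (or a similar pair, read off from the Cayley table). This gives $\mathsf{V}(S_{(4,516)})=\mathsf{V}(S_{58},S_{50})$.

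\textbf{Step 2.} Next I will identify $\mathsf{V}(S_{50})$. In $S_{50}$ the subalgebra $\{1,2\}$ is $N_2$, and the subalgebra $\{2,3\}$ is $L_2$ (since $3\cdot a=3$ and $2\cdot a=2$). I will verify that $S_{50}$ satisfies an equational basis of $\mathsf{V}(L_2,N_2)$ taken from \cite{ryy} or \cite[Table 3]{sr}; presumably this basis is $xyz\approx xzy$, $xy\preceq xyz$, $xy\preceq x+y$ and $x^2\preceq$-type relations. This yields $\mathsf{V}(S_{50})=\mathsf{V}(L_2,N_2)$. Since $L_2$ embeds into $S_{58}$ (the subalgebra $\{2,3\}$ has $2a=2$ and $3a=3$), we get $\mathsf{V}(S_{(4,516)})=\mathsf{V}(S_{58},N_2)$.

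\textbf{Step 3.} I will show that $S_{(4,683)}$ (or $S_{(4,830)}$) is a subdirect product of $S_{58}$ and $N_2$, via blocks $\{1,2\}$ and $\{2,3,4\}$, so that it generates the same variety. The remaining two algebras should have multiplications dual to those of $S_{(4,516)}$ and $S_{(4,683)}$, so they inherit the finite basis property.

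\textbf{Step 4.} Finally I need $\mathsf{V}(S_{58},N_2)$ to be finitely based. I expect this is proved in \cite{yrzs}, in the section on $S_{58}$, in analogy with \cite[Proposition 7.1]{yrzs} for $S_{55}$ and \cite[Proposition 3.2]{yrzs} for $S_{57}$; I will cite that result.

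\textbf{Main obstacle.} The main risk is in Step 4: that exact result may not appear in \cite{yrzs}. In that case I would give a basis directly. Using Lemma~\ref{lem5801} and $\ell(\bq)\geq 2$ from $N_2$, I would derive each $\bq\preceq\bu$ from a word $\bu_i\in L_{\ge2}(\bu)\cap H_\bq(\bu)$ with identities such as $xy\preceq xz$, $xy\approx xyz$-type absorption, and $xyz\approx xzy$. I would adjust the case split according to whether $\ell(\bq)=1$ is excluded, which it is because $N_2$ forces $\ell(\bq)\ge2$.
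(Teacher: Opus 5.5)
Your overall route is the paper's: pass through $S_{50}$ and $S_{58}$, use $\mathsf{V}(S_{50})=\mathsf{V}(L_2,N_2)$ and the embedding of $L_2$ into $S_{58}$ to reach $\mathsf{V}(S_{58},N_2)$, realize a second algebra as a subdirect product of $S_{58}$ and $N_2$, cite \cite{yrzs}, and finish by duality. However, you attached the decompositions to the wrong algebras, so \textbf{Step 1 fails as written}.

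Why Step 1 fails: every row of the table of $S_{(4,516)}$ is $1\,1\,4\,4$, so $ab$ depends only on $b$. Thus $S_{(4,516)}$ satisfies $xy\approx zy$ but not $xy\approx xz$, since $1\cdot 1=1\neq 4=1\cdot 3$. Both $S_{58}$ and $S_{50}$ have constant rows, so they satisfy $xy\approx xz$, and so does every subdirect product of them. Hence $S_{(4,516)}$ is not such a subdirect product. In fact $\mathsf{V}(S_{(4,516)})\neq\mathsf{V}(S_{58},N_2)$, because $S_{58}$ fails $xy\approx zy$ ($2\cdot 1=2\neq 3=1\cdot 1$).

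The algebra that decomposes this way (blocks $\{1,2\}$ and $\{3,4\}$) is $S_{(4,683)}$, the dual of $S_{(4,516)}$. Similarly, the option $S_{(4,683)}$ in Step 3 fails: $\{2,3,4\}$ is not a congruence block there, since $2\cdot 1=1$ while $3\cdot 1=4$. Indeed $L_2$ is its only two-element quotient. Only $S_{(4,830)}$ is a subdirect product of $S_{58}$ and $N_2$. The correct duality pairs are $S_{(4,516)}\leftrightarrow S_{(4,683)}$ and $S_{(4,816)}\leftrightarrow S_{(4,830)}$; no algebra in the list other than $S_{(4,516)}$ is dual to $S_{(4,683)}$.

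Once you relabel (Steps 1--2 for $S_{(4,683)}$, Step 3 for $S_{(4,830)}$, then dualize), your argument is exactly the paper's. Two smaller points:
\begin{itemize}
\item Your guessed basis for $\mathsf{V}(L_2,N_2)$ is wrong. The identities $xyz\approx xzy$, $xy\preceq xyz$, $xy\preceq x+y$ all hold in $M_2$, which is not in $\mathsf{V}(L_2,N_2)$ because $L_2$ and $N_2$ satisfy $xy\approx xz$. The paper uses $xy\preceq x$, $xy\approx xz$ from \cite{ryy}.
\item Your Step 4 worry is unfounded. \cite[Remark~4.6]{yrzs} gives $\mathsf{V}(S_{58},N_2)=\mathsf{V}(S_{(4,475)})$, which is finitely based by \cite[Proposition~4.5]{yrzs}, so no direct basis is needed.
\end{itemize}
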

\begin{proof}
First, $S_{(4, 683)}$ is isomorphic to a subdirect product of $S_{50}$ and $S_{58}$ via the congruences defined by the nontrivial blocks $\{1, 2\}$ and $\{3, 4\}$; the Cayley tables of $S_{50}$ are given in Table~\ref{tb5001}.
Thus $\mathsf{V}(S_{(4, 683)}) = \mathsf{V}(S_{50}, S_{58})$.
Moreover, $L_2$ and $N_2$ are isomorphic to the subalgebras $\{2, 3\}$ and $\{1, 2\}$ of $S_{50}$, respectively.
$S_{50}$ satisfies the identities
\[
xy \preceq x, \quad xy \approx xz,
\]
which form an equational basis of $\mathsf{V}(L_2, N_2)$ (see \cite{ryy}).
Thus $\mathsf{V}(S_{50})=\mathsf{V}(L_2, N_2)$, so $\mathsf{V}(S_{(4, 683)})=\mathsf{V}(S_{58}, L_2, N_2)$.
Since $L_2$ embeds into $S_{58}$ (as the subalgebra $\{2, 3\}$), we obtain
\[
\mathsf{V}(S_{(4, 683)})=\mathsf{V}(S_{58}, N_2).
\]
By \cite[Remark~4.6]{yrzs}, $\mathsf{V}(S_{58}, N_2)=\mathsf{V}(S_{(4, 475)})$,
so $\mathsf{V}(S_{(4, 683)})=\mathsf{V}(S_{(4, 475)})$.
Consequently, $S_{(4, 683)}$ and $S_{(4, 475)}$ have the same finite basis property.
By \cite[Proposition~4.5]{yrzs}, $S_{(4, 475)}$ is finitely based.
Therefore, $S_{(4, 683)}$ is finitely based.

Next, $S_{(4, 830)}$ is isomorphic to a subdirect product of $S_{58}$ and $N_2$
via the congruences defined by the nontrivial blocks $\{1, 2\}$ and $\{2, 3, 4\}$.
Thus
\[
\mathsf{V}(S_{(4, 830)})=\mathsf{V}(S_{58}, N_2)= \mathsf{V}(S_{(4,683)}).
\]
Hence $S_{(4, 830)}$ is also finitely based.

Finally, $S_{(4, 516)}$ and $S_{(4, 683)}$ have dual multiplications,
and $S_{(4,816)}$ and $S_{(4,830)}$ have dual multiplications.
Therefore, $S_{(4,516)}$ and $S_{(4,816)}$ are both finitely based.
\end{proof}

\begin{pro}
The ai-semirings $S_{(4, 708)}$ and $S_{(4, 728)}$ are both finitely based.
\end{pro}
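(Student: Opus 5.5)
We would follow the same template as the preceding proposition on $S_{(4,516)}$, $S_{(4,683)}$, $S_{(4,816)}$ and $S_{(4,830)}$: reduce both algebras to a variety generated by $S_{58}$ together with a $2$-element ai-semiring, and then identify that variety with one already treated.

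The first step is to inspect the Cayley table of $S_{(4,708)}$ and locate two congruences whose intersection is trivial. Since we are in the $S_{58}$ section, the natural candidates are one with nontrivial block $\{1,2\}$ and one with nontrivial block $\{3,4\}$ or $\{2,3,4\}$. We would check that one quotient is isomorphic to $S_{58}$ and the other to a small algebra $A$. If $A$ is a $3$-element algebra such as $S_{50}$, $S_{21}$, $S_{31}$ or $S_{38}$, we would proceed as in the earlier remarks. We would verify that the two $2$-element subalgebras of $A$ embed, and that $A$ satisfies the known basis from \cite{ryy} of the variety they generate. This gives $\mathsf V(A)=\mathsf V(P_2,Q_2)$ for some $P_2,Q_2\in\{L_2,R_2,M_2,D_2,N_2,T_2\}$. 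Since $L_2$, $M_2$ and $D_2$ embed into $S_{58}$ (for example $L_2$ as $\{2,3\}$), redundant factors can then be dropped. The aim is a single equality $\mathsf V(S_{(4,708)})=\mathsf V(S_{58},B)$ for one $2$-element $B$.

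The natural guesses for $B$ are $N_2$, $T_2$, $R_2$ or $D_2$. If $B=N_2$, the earlier proposition already gives $\mathsf V(S_{58},N_2)=\mathsf V(S_{(4,475)})$, which is finitely based. Otherwise we would look for a match in the earlier papers \cite{yrzs, rlyc}, which treat $\mathsf V(S_{58},B)$ for the other small $B$. For example, for $T_2$ we would find an algebra of Type II or III that is a subdirect product of $S_{58}$ and $T_2$, as was done with $S_{(4,379)}$ and $S_{(4,380)}$. We would then carry out the same analysis for $S_{(4,728)}$. If $S_{(4,728)}$ is dual to $S_{(4,708)}$, the elementary fact on dual multiplications finishes the proof at once. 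Otherwise we would show it is a subdirect product of $S_{58}$ and the same $B$, or satisfies a basis already obtained.

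The main obstacle is the fallback case, where the resulting variety $\mathsf V(S_{58},B)$ is not in the literature. There we would write an explicit finite basis in the style of the propositions in this paper. Lemma~\ref{lem5801} combined with Lemma~\ref{nlemma1} for $B$ would supply the combinatorial conditions, namely a word of length at least $2$ with the right head. These would be fed into a derivation $\bu\succeq\bu_i+\bu_1^k\cdots\bu_n^k\succeq\bq$ using identities such as $xy\approx xz$-type absorption. Finding the right finite set of identities is the hard part.
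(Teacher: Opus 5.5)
Your overall template (a subdirect decomposition into $S_{58}$ and a $2$-element algebra, a lookup in \cite{yrzs}, then duality) is exactly what the paper does. However, you never carry it out, and the facts you plan to use to prune the case analysis are wrong. As written, there is a genuine gap.

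The only $2$-element subalgebras of $S_{58}$ are $\{2,3\}\cong L_2$ and $\{1,3\}\cong T_2$. Neither $M_2$ nor $D_2$ embeds into $S_{58}$. In fact neither lies in $\mathsf V(S_{58})$: $S_{58}$ satisfies $xy\approx xz$, since its product depends only on the left factor, while $M_2$ and $D_2$ do not. So your rule of discarding $D_2$ as redundant would give the false equality $\mathsf V(S_{(4,728)})=\mathsf V(S_{58})$. Conversely, $T_2$, which you list as a natural candidate for $B$, is the factor that really is redundant.

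Your first check also fails if you start from $S_{(4,708)}$ as proposed. Its quotient by the block $\{1,2\}$ is not $S_{58}$ but the dual of $S_{58}$, since there $xy$ depends only on $y$.

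The missing concrete step is to work with $S_{(4,728)}$ instead. Check that the equivalences with nontrivial blocks $\{1,2\}$ and $\{2,3,4\}$ are congruences with trivial intersection, whose quotients are $S_{58}$ and $D_2$ respectively. This gives $\mathsf V(S_{(4,728)})=\mathsf V(S_{58},D_2)$. By \cite[Remark 4.3]{yrzs} this equals $\mathsf V(S_{(4,453)})$, which is finitely based by \cite[Proposition 4.2]{yrzs}.

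The multiplication table of $S_{(4,708)}$ is the transpose of that of $S_{(4,728)}$, so $S_{(4,708)}$ follows by duality. Once $B=D_2$ is pinned down, your ``fallback'' of constructing a new finite basis is unnecessary. In your proposal that fallback is left entirely undone, so nothing in it actually establishes finite basedness.
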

\begin{proof}
It is easy to check that $S_{(4, 728)}$ is isomorphic to a subdirect product of $S_{58}$ and $D_{2}$ via the congruences defined by the nontrivial blocks $\{1, 2\}$ and $\{2, 3, 4\}$.
Thus
\[
\mathsf{V}(S_{(4, 728)})=\mathsf{V}(S_{58}, D_2).
\]
By \cite[Remark 4.3]{yrzs}, $\mathsf{V}(S_{58}, D_2)=\mathsf{V}(S_{(4, 453)})$,
and so $\mathsf{V}(S_{(4, 728)})=\mathsf{V}(S_{(4, 453)})$.
Hence $S_{(4, 728)}$ and $S_{(4, 453)}$ have the same finite basis property.
\cite[Proposition 4.2]{yrzs} tells us that $S_{(4, 453)}$ is finitely based.
Consequently, $S_{(4, 728)}$ is finitely based.

Finally, $S_{(4, 708)}$ and $S_{(4, 728)}$ have dual multiplications.
Therefore, $S_{(4,708)}$ is also finitely based.
\end{proof}

\begin{pro}\label{pro322}
The ai-semiring variety $\mathsf{V}(S_{(4, 653)})$ is defined by the identities
\begin{align}
&x \preceq xy; \label{5651}\\
&xyz \approx xzy; \label{5652}\\
&xyz \preceq xy+z. \label{5653}
\end{align}
\end{pro}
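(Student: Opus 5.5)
The plan follows the pattern of the preceding propositions. First I check, directly from the Cayley tables, that $S_{(4,653)}$ satisfies \eqref{5651}--\eqref{5653}. Then I show that every nontrivial inequality $\bq\preceq\bu$ of $S_{(4,653)}$, with $\bu=\bu_1+\cdots+\bu_n$ and $\bu_i,\bq\in X^+$, is derivable from them.

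To extract combinatorial information I will look for a subdirect decomposition of $S_{(4,653)}$, as was done for $S_{(4,728)}$ and $S_{(4,830)}$. Since the proposition sits in the section on $S_{58}$, I expect $S_{(4,653)}$ to be a subdirect product of $S_{58}$ and a $2$-element semiring, most likely $L_2$ or $M_2$. I would check congruences with nontrivial blocks $\{1,2\}$ and $\{2,3,4\}$, or $\{1,2,3\}$ and $\{3,4\}$.

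From Lemma~\ref{lem5801} I then get three facts:
\begin{itemize}
\item[(i)] $L_{\geq 2}(\bu)\neq\emptyset$;
\item[(ii)] $H_\bq(\bu)\neq\emptyset$;
\item[(iii)] if $\ell(\bq)\geq 2$, there is $\bu_i$ with $\ell(\bu_i)\ge 2$ and $h(\bu_i)=h(\bq)$.
\end{itemize}
From the second factor (or from an embedded copy of $M_2$, if there is one) I hope to get $c(\bq)\subseteq c(\bu)$. If that fails, I will instead determine exactly which content condition the variety imposes, by testing $\bq\preceq\bu$ under evaluations into $S_{(4,653)}$.

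The derivation then splits on the length of $\bq$.
\begin{itemize}
\item If $\ell(\bq)=1$, take $\bu_i\in H_\bq(\bu)$. Then $\bu\succeq\bu_i=h(\bq)s(\bu_i)\succeq h(\bq)=\bq$ by \eqref{5651}.
\item If $\ell(\bq)\ge 2$, take $\bu_i=x\bw$ with $x=h(\bq)$ and $\bw$ nonempty. Apply \eqref{5653} repeatedly, with $y=\bw$ and $z$ running through the summands $\bu_1,\ldots,\bu_n$, to obtain
\[
\bu\succeq \bu_i+\bu_1+\cdots+\bu_n\succeq x\bw\bu_1\bu_2\cdots\bu_n .
\]
Next, use \eqref{5652} to permute everything after the head, so that the word becomes $\bq\bp$ for a suitable word $\bp$ (or equals $\bq$ after collapsing). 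Finally, use \eqref{5651} in the form $\bq\preceq\bq\bp$ to conclude $\bu\succeq\bq$.
\end{itemize}

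The main obstacle is this final step. The identity \eqref{5651} only lets a word lie above its prefixes, so from $x\bw\bu_1\cdots\bu_n$ I can descend to $\bq$ only after arranging that $\bq$ is a prefix modulo \eqref{5652}. This requires $c(s(\bq))\subseteq c(\bw\bu_1\cdots\bu_n)$, and it also requires controlling the multiplicities of letters. Multiplicities can be adjusted, since \eqref{5652} permutes freely and \eqref{5651} lets me keep only a prefix. So the real issue is the content condition. I will first try to obtain $c(\bq)\subseteq c(\bu)$ from the decomposition.

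If the content condition is not forced by the algebra, the fallback is to prove, as a lemma, that $S_{(4,653)}$ satisfies $\bq\preceq\bu$ exactly when $H_\bq(\bu)\cap L_{\ge 2}(\bu)\neq\emptyset$ (or $H_\bq(\bu)\ne\emptyset$ when $\ell(\bq)=1$) together with the appropriate content condition. I would then redo the derivation, using \eqref{5653} with $z$ instantiated by the required words.
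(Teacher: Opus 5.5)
Your proposal is correct and matches the paper's proof essentially step for step. The paper uses the subdirect decomposition of $S_{(4,653)}$ into $M_2$ and $S_{58}$ via the congruences with nontrivial blocks $\{1,2,3\}$ and $\{3,4\}$, which is your second guess. This gives $c(\bq)\subseteq c(\bu)$ together with the conclusions of Lemma~\ref{lem5801}, so your fallback lemma is not needed. The paper then treats $\ell(\bq)=1$ with \eqref{5651}, and $\ell(\bq)\ge 2$ by $\bu\succeq\bu_j+\bu_1+\cdots+\bu_n\succeq\bu_j\bu_1\cdots\bu_n\approx\bq\bq'\succeq\bq$.

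The only point to tighten is your remark on multiplicities. Truncating via \eqref{5651} removes surplus letters, but it cannot cover a deficit. For that, run $z$ through $\bu_1,\ldots,\bu_n$ several times in \eqref{5653}, which is legitimate since $\bu+\bu_k=\bu$. This yields $\bu\succeq\bu_j(\bu_1\cdots\bu_n)^N$ for large $N$, which can then be rearranged to $\bq\bq'$. The paper's own ``$\approx\bq\bq'$'' step leaves this detail implicit as well.
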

\begin{proof}
It is easy to verify that $S_{(4, 653)}$ satisfies the identities \eqref{5651}--\eqref{5653}.
It remains to prove that every inequality of $S_{(4, 653)}$ is derivable from \eqref{5651}--\eqref{5653}.
Let $\bq \preceq \bu$ be such an inequality,
where $\bu=\bu_1+\bu_2+\cdots+\bu_n$ for $\bu_i, \bq \in X^+$ with $1 \leq i \leq n$.

It is routine to check that $S_{(4, 653)}$ is isomorphic to a subdirect product of $M_{2}$ and $S_{58}$
via the congruences defined by the nontrivial blocks $\{1, 2, 3\}$ and $\{3, 4\}$.
Then both $M_{2}$ and $S_{58}$ satisfy $\bq \preceq \bu$,
so $c(\bq) \subseteq c(\bu)$.
By Lemma~\ref{lem5801}, we have $L_{\geq2}(\bu)\neq\emptyset$ and $H_q(\bu)\neq\emptyset$;
moreover, if $\ell(\bq)\geq 2$, then $L_{\geq2}(\bu)\cap H_q(\bu)\neq\emptyset$.

\textbf{Case 1.} $\ell(\bq)=1$.
Then there exists $\bu_i\in \bu$ such that $h(\bu_i)=h(\bq)$.
Since $\ell(\bq)=1$ and $\bq \preceq \bu$ is nontrivial,
it follows that $\ell(\bu_i)\geq 2$. So we have
\[
\bu\succeq \bu_i = \bq s(\bu_i) \stackrel{\eqref{5651}} \succeq \bq.
\]
The second step uses $h(\bu_i)=h(\bq)$.
This derives the identity $\bu \succeq \bq$.

\textbf{Case 2.} $\ell(\bq)\geq 2$.
Then there exists $\bu_j \in \bu$ such that $\ell(\bu_j)\geq 2$ and $h(\bu_j)=h(\bq)$.
Now we have
\[
\bu \succeq\bu_j+\bu_1+\bu_2+\cdots+\bu_n\stackrel{\eqref{5653}}\succeq\bu_j\bu_1\bu_2\cdots\bu_n\stackrel{\eqref{5652}}\approx \bq\bq'\stackrel{\eqref{5651}}\succeq\bq,
\]
The second step relies on $\bu_j \in \bu$,
and the third step relies on $h(\bu_j)=h(\bq)$ and $c(\bq) \subseteq c(\bu)$.
This proves the inequality $\bu \succeq \bq$.
\end{proof}

\begin{table}[ht]
\centering
\caption{The Cayley tables of $S_{23}$} \label{tb2301}
\begin{tabular}{c|ccc}
$+$      &$1$ &$2$ &$3$\\
\hline
$1$      &$1$ &$1$ &$3$\\
$2$      &$1$ &$2$ &$3$\\
$3$      &$3$ &$3$ &$3$\\
\end{tabular}\qquad
\begin{tabular}{c|ccc}
$\cdot$      &$1$ &$2$ &$3$\\
\hline
$1$      &$1$ &$1$ &$1$\\
$2$      &$1$ &$2$ &$1$\\
$3$      &$3$ &$3$ &$3$\\
\end{tabular}
\end{table}

\begin{remark}\label{remark26081901}
We note that $\mathsf{V}(S_{(4, 653)})=\mathsf{V}(S_{58}, M_2)$.
Indeed, it is a routine matter to verify that $S_{(4, 653)}$ is isomorphic to a subdirect product of
$S_{23}$ and $S_{58}$
via the congruences defined by the nontrivial blocks $\{1, 2\}$ and $\{3, 4\}$;
the Cayley tables of $S_{23}$ are given in Table~\ref{tb2301}.
Thus $\mathsf V(S_{(4, 653)}) = \mathsf V(S_{23}, S_{58})$.
Moreover, $L_2$ and $M_2$ are isomorphic to the subalgebras $\{1, 3\}$ and $\{1, 2\}$ of $S_{23}$, respectively.
$S_{23}$ satisfies the identities
\[
x \preceq xy, \quad xyz \approx xzy, \quad xy \preceq x+y,
\]
which form an equational basis of $\mathsf V(L_2, M_2)$ (see \cite{ryy}).
It follows that $\mathsf V(S_{23})=\mathsf V(L_2, M_2)$, and so
$\mathsf V(S_{(4, 653)}) = \mathsf V(S_{58}, L_2, M_2)$.
Since $L_2$ embeds into $S_{58}$, we obtain
\[
\mathsf{V}(S_{(4, 653)})=\mathsf{V}(S_{58}, M_2).
\]
\end{remark}

\begin{cor}
The ai-semirings $S_{(4, 514)}$, $S_{(4, 551)}$, and $S_{(4, 565)}$ are all finitely based.
\end{cor}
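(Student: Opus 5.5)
The plan is to reduce each of the three algebras to $S_{(4,653)}$, whose finite basis is given by Proposition~\ref{pro322} and whose variety equals $\mathsf{V}(S_{58}, M_2)$ by Remark~\ref{remark26081901}. In each case the method is the same as in the earlier corollaries of this section. Either we realise the algebra as a subdirect product of $S_{58}$ (or of something generating the same variety together with $M_2$) and $M_2$. Or we check directly that it satisfies \eqref{5651}--\eqref{5653} and contains copies of $S_{58}$ and $M_2$ as subalgebras. Failing both, we pass to the dual multiplication.

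For the first algebra, I would try $S_{(4,565)}$. From its Cayley table I would look for a subalgebra $\{2,3,4\}$ (or $\{1,2,4\}$) isomorphic to $S_{58}$ and a two-element subalgebra isomorphic to $M_2$, say $\{1,4\}$ or $\{1,2\}$. These give $\mathsf{V}(S_{58},M_2)\subseteq \mathsf{V}(S_{(4,565)})$. Conversely, I would verify that $S_{(4,565)}$ satisfies $x\preceq xy$, $xyz\approx xzy$ and $xyz\preceq xy+z$, giving the reverse inclusion by Proposition~\ref{pro322}. This yields $\mathsf{V}(S_{(4,565)})=\mathsf{V}(S_{(4,653)})$, which is finitely based.

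For $S_{(4,551)}$ I would look for two congruences on $\{1,2,3,4\}$, with nontrivial blocks such as $\{1,2\}$ and $\{2,3,4\}$, or $\{3,4\}$ and $\{1,2,3\}$. Their quotients should be $S_{58}$ and $M_2$ (or $S_{23}$, whose variety is $\mathsf{V}(L_2,M_2)$ and absorbs $L_2$ into $S_{58}$), and the congruences should intersect trivially. Then $\mathsf{V}(S_{(4,551)})=\mathsf{V}(S_{58},M_2)$ and we conclude as before. Finally, $S_{(4,514)}$ should turn out to have multiplication dual to one of $S_{(4,653)}$, $S_{(4,565)}$, $S_{(4,551)}$, so it inherits the finite basis property.

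The main obstacle is purely computational: identifying from Table~\ref{tb1} which of the three algebras admits which decomposition, and confirming that the duality pairing is correct. If direct embedding fails for one of them, I would fall back to checking the three identities of Proposition~\ref{pro322} by exhausting all assignments in the 4-element table. For the reverse inclusion I would then use quotients instead of subalgebras.
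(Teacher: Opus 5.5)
Your treatment of $S_{(4,565)}$ matches the paper's argument. It satisfies \eqref{5651}--\eqref{5653}, and $\{2,3,4\}$ and $\{1,4\}$ are subalgebras isomorphic to $S_{58}$ and $M_2$, so $\mathsf{V}(S_{(4,565)})=\mathsf{V}(S_{(4,653)})$. Your expectation for $S_{(4,514)}$ is also right: its multiplication table is the transpose of that of $S_{(4,653)}$.

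\textbf{The step that fails is your plan for $S_{(4,551)}$.} In $S_{(4,551)}$ we have $ab=1$ if $1\in\{a,b\}$, and otherwise $ab=g(b)$ with $g(2)=g(3)=2$ and $g(4)=4$. This table is exactly the transpose of the table of $S_{(4,565)}$. Consequently $S_{(4,551)}$ satisfies $xyz\approx yxz$ but not $xyz\approx xzy$: take $x=y=2$ and $z=4$, giving $xyz=4$ while $xzy=2$. So $S_{(4,551)}\notin\mathsf{V}(S_{(4,653)})$, and the equality $\mathsf{V}(S_{(4,551)})=\mathsf{V}(S_{58},M_2)$ you aim for is false.

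No subdirect decomposition of the kind you describe can exist. In $S_{58}$ the product $xy$ depends only on $x$, so $xyz$ depends only on $x$ and $S_{58}$ fails $xyz\approx yxz$. Hence $S_{58}$ does not even lie in $\mathsf{V}(S_{(4,551)})$, let alone occur as a quotient of $S_{(4,551)}$. The same holds for $S_{23}$, since it contains a copy of $L_2$.

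The repair is the fallback you already listed: treat $S_{(4,551)}$ by duality with $S_{(4,565)}$, just as $S_{(4,514)}$ is treated by duality with $S_{(4,653)}$. So only $S_{(4,565)}$ is equationally equivalent to $S_{(4,653)}$. The other two generate the variety obtained by dualizing the multiplication, and they inherit the finite basis property through that duality.
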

\begin{proof}
First, one can see that $S_{(4, 514)}$ and $S_{(4, 653)}$ have dual multiplications,
and so they have the same finite basis property.
By Proposition~\ref{pro322}, $S_{(4, 514)}$ is finitely based.

Next,
a straightforward verification shows that $S_{(4, 565)}$ satisfies the identities \eqref{5651}--\eqref{5653},
which form an equational basis of $\mathsf{V}(S_{(4, 653)})$ by Proposition~\ref{pro322}.
Hence $\mathsf{V}(S_{(4, 565)})$ is a subvariety of $\mathsf{V}(S_{(4, 653)})$.
Conversely, $S_{58}$ and $M_2$ are isomorphic to the subalgebras $\{2,3,4\}$ and $\{1, 4\}$ of $S_{(4, 565)}$, respectively.
It follows that $\mathsf{V}(S_{58}, M_2)$ is a subvariety of $\mathsf{V}(S_{(4, 565)})$.
Remark~\ref{remark26081901} tells us that $\mathsf{V}(S_{(4, 653)})=\mathsf{V}(S_{58}, M_2)$.
Thus $\mathsf{V}(S_{(4, 653)})$ is a subvariety of $\mathsf{V}(S_{(4, 565)})$,
and so $\mathsf{V}(S_{(4, 565)})=\mathsf{V}(S_{(4, 653)})$.
Consequently, $S_{(4, 565)}$ and $S_{(4, 653)}$ have the same finite basis property.
By Proposition~\ref{pro322}, $S_{(4, 565)}$ is finitely based.

Finally, it is easy to see that
the multiplication Cayley tables of $S_{(4,551)}$ and $S_{(4,565)}$ are dual.
Therefore,  $S_{(4,551)}$ is finitely based.
\end{proof}

\begin{pro}\label{pro66801}
The ai-semiring variety $\mathsf{V}(S_{(4, 668)})$ is defined by the identities
\begin{align}
&x^2y\approx xy;\label{6681}\\
&x \preceq x^2;\label{6682}\\
&xyz \preceq xy;\label{6683}\\
&xyz  \approx xzy.\label{6684}
\end{align}
\end{pro}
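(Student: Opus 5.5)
We follow the pattern of the earlier propositions in this section. First check directly that $S_{(4,668)}$ satisfies \eqref{6681}--\eqref{6684}. Note that \eqref{6684} together with \eqref{6681} lets every word be rewritten with its head fixed and the remaining variables in any order. Combined with $x^3\approx x^2$ (from \eqref{6681} with $y=x$), this shows that a word $\bw$ with $\ell(\bw)\ge 2$ is determined, modulo these identities, by $h(\bw)$ and $c(s(\bw))$.

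Next we locate $S_{(4,668)}$ structurally. We expect it to be a subdirect product of $L_2$ and $S_{58}$ via the congruences with nontrivial blocks $\{1,2,3\}$ and $\{3,4\}$, exactly as in Propositions~\ref{pro66901} and \ref{pro322}. Then every nontrivial inequality $\bq\preceq\bu$ of $S_{(4,668)}$ holds in $L_2$ and in $S_{58}$. By Lemma~\ref{lem5801}, $H_\bq(\bu)\ne\emptyset$, and if $\ell(\bq)\ge2$ there is $\bu_j\in\bu$ with $\ell(\bu_j)\ge2$ and $h(\bu_j)=h(\bq)$. Since \eqref{6682} and \eqref{6683} only shorten or keep words, no content condition like $c(\bq)\subseteq c(\bu)$ should be needed. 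Indeed, \eqref{6683} lets us append arbitrary variables on the right of words of length $\ge2$, which is consistent with $S_{58}$ having $2$ as a left zero.

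\textbf{Case 1.} $\ell(\bq)=1$, say $\bq=x$. There is $\bu_i$ with $h(\bu_i)=x$, and $\ell(\bu_i)\ge 2$ by nontriviality. Using \eqref{6683} repeatedly, $\bu\succeq\bu_i=x s(\bu_i)\succeq x h(s(\bu_i))$. We then need to reach $x$ from $xy$. Here the plan must check what $S_{(4,668)}$ actually forces: possibly $\ell(\bu_i)\ge2$ alone does not suffice, and one needs $x^k\in\bu$. In that case we use $\bu\succeq x^k\approx x^2\succeq x$ by \eqref{6681} and \eqref{6682}. So the first task is to verify which of these holds in $S_{(4,668)}$, refining the necessary condition from $L_2$ and $S_{58}$ by evaluating at the element $1$ or $4$ if required.

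\textbf{Case 2.} $\ell(\bq)\ge2$. Take $\bu_j$ with $h(\bu_j)=h(\bq)$ and $\ell(\bu_j)\ge2$. Using \eqref{6683} $\ell(\bq)$ times we get $\bu\succeq\bu_j\succeq \bu_j s(\bq)$. By \eqref{6684} and \eqref{6681} this equals $h(\bq)s(\bu_j)s(\bq)$, which we would like to shrink to $\bq$. Deleting the extra letters $s(\bu_j)$ is again \eqref{6683}, applied after reordering by \eqref{6684}: $h(\bq)s(\bq)s(\bu_j)\preceq h(\bq)s(\bq)=\bq$, once the initial duplicate of $h(\bq)$ is absorbed by \eqref{6681}. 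The main obstacle is the direction of \eqref{6683}. It states $xyz\preceq xy$, so it removes trailing letters from the bigger side, which is what is needed here. The delicate point is the case $\ell(\bq)=2$ with $\bu_j$ of length exactly $2$, and whether $h(\bq)^2$ behaves correctly; we would resolve this by combining \eqref{6682} with \eqref{6681}.
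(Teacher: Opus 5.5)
There is a genuine gap, and it sits exactly at the point you flagged as the ``main obstacle''. Inequality \eqref{6683}, $xyz\preceq xy$, says that deleting trailing letters moves a word \emph{up}, not down. In Case~2 you obtain $\bu\succeq h(\bq)s(\bq)s(\bu_j)$ and $h(\bq)s(\bq)s(\bu_j)\preceq\bq$. These two facts share their smaller side, so they do not combine to give $\bu\succeq\bq$. The same reversal occurs in Case~1: \eqref{6683} gives $x\,s(\bu_i)\preceq x\,h(s(\bu_i))$, not $\succeq$.

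This cannot be patched within your framework, because the conditions you extract from $L_2$ and $S_{58}$ are too weak. For example, $xy\preceq xz$ and $x\preceq xy$ both satisfy them (and hold in $L_2$ and $S_{58}$). Yet both fail in $S_{(4,668)}$ at $x=y=1$, $z=4$ (resp.\ $x=1$, $y=4$), because $4$ is simultaneously the additive minimum and a multiplicative zero. Your structural guess is also incorrect. The block $\{3,4\}$ is not a congruence, since $1\cdot 3=1$ but $1\cdot 4=4$. And the quotient by $\{1,2,3\}$ is $D_2$, not $L_2$.

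The missing idea is that $S_{(4,668)}\cong S_{58}^0$, via $1\mapsto 3$, $2\mapsto 1$, $3\mapsto 2$, $4\mapsto 0$. By Lemma~\ref{lem001}, $\bq\preceq\bu$ holds in $S_{(4,668)}$ if and only if $D_\bq(\bu)\neq\emptyset$ and $\bq\preceq D_\bq(\bu)$ holds in $S_{58}$. So Lemma~\ref{lem5801} should be applied to $D_\bq(\bu)$ rather than to $\bu$, and this supplies exactly the content condition you dismissed.

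If $\ell(\bq)=1$, some word of $D_\bq(\bu)$ has length at least $2$, hence equals $\bq^k$ with $k\geq 2$. Then $\bu\succeq\bq^k\approx\bq^2\succeq\bq$ by \eqref{6681} and \eqref{6682}, which settles your open Case~1.

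If $\ell(\bq)\geq 2$, there is $\bu_i\in\bu$ with $\ell(\bu_i)\geq 2$, $h(\bu_i)=h(\bq)$ and $c(\bu_i)\subseteq c(\bq)$. Then $\bu\succeq\bu_i\succeq\bu_i\bq$ uses \eqref{6683} in its correct direction. The surplus letters of $\bu_i$ are absorbed not by \eqref{6683} but by \eqref{6681}, \eqref{6682}, \eqref{6684}. These give $\bu_i\bq\succeq\bq$ precisely because the heads agree and $c(\bu_i)\subseteq c(\bq)$.
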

\begin{proof}
It is easy to verify that $S_{(4,668)}$ satisfies the identities \eqref{6681}--\eqref{6684}.
It now suffices to prove that every inequality of $S_{(4,668)}$ is derivable from \eqref{6681}--\eqref{6684}.
Let $\bq \preceq \bu$ be such a nontrivial inequality,
where $\bu = \bu_1 + \bu_2 + \cdots + \bu_n$ with $\bu_i, \bq \in X^+$ for $1 \leq i \leq n$.

Observe that $S_{58}^0$ is isomorphic to $S_{(4,668)}$.
Then $S_{58}^0$ satisfies $\bq \preceq \bu$, and so $S_{58}$ satisfies $\bq \preceq D_\bq(\bu)$.
By Lemma~\ref{lem5801}, $L_{\geq2}(D_\bq(\bu))\neq\emptyset$ and $H_{\bq}(D_\bq(\bu))\neq\emptyset$;
moreover, if $\ell(\bq)\geq 2$, then $L_{\geq2}(D_\bq(\bu))\cap H_q(D_\bq(\bu))\neq\emptyset$.

\textbf{Case 1.} $\ell(\bq) = 1$. Then there exists $\bu_i\in D_\bq(\bu)$ such that $\ell(\bu_i)\geq 2$.
So we have
\[
\bu \succeq \bu_i \stackrel{\eqref{6681}}\approx \bq^2 \stackrel{\eqref{6682}}\succeq \bq.
\]
The second step relies on $\ell(\bq) = 1$, $\bu_i\in D_\bq(\bu)$, and $\ell(\bu_i)\geq 2$.

\textbf{Case 2.} $\ell(\bq) \geq 2$.
Then $L_{\geq 2}(D_\bq(\bu))\cap H_{\bq}(D_\bq(\bu))\neq\emptyset$,
and so there exists $\bu_i\in L_{\geq 2}(D_\bq(\bu))$ such that $h(\bu_i)=h(\bq)$.
Consequently,
\[
\bu \succeq \bu_i \stackrel{\eqref{6683}} \succeq \bu_i\bq \stackrel{\eqref{6681}, \eqref{6682}, \eqref{6684}}\succeq \bq.
\]
The third step follows from $h(\bu_i)=h(\bq)$ and $\bu_i\in L_{\geq 2}(D_\bq(\bu))$.
This proves $\bu \succeq \bq$.
\end{proof}

\begin{cor}
The ai-semiring $S_{(4, 631)}$ is finitely based.
\end{cor}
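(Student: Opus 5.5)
The natural route is to reduce $S_{(4,631)}$ to the algebra $S_{(4,668)}$ just treated in Proposition~\ref{pro66801}, following the pattern used repeatedly above for pairs such as $S_{(4,657)}$ and $S_{(4,669)}$. I would first read the multiplication table of $S_{(4,631)}$ from Table~\ref{tb1} and check whether it is the dual of that of $S_{(4,668)}$, i.e. whether $a\cdot_{631} b = b\cdot_{668} a$ for all $a,b$, possibly after the identity relabelling, since both algebras share the chain additive reduct $1>2>3>4$. If so, the corollary follows at once: algebras with the same additive reduct and dual multiplications have the same finite basis property, and $S_{(4,668)}$ is finitely based by Proposition~\ref{pro66801}.

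If the tables are not literally dual, I would use the variety-level argument instead. Observe that $S_{(4,668)}\cong S_{58}^0$, with $4$ the adjoined zero. The candidate for $S_{(4,631)}$ is then $S_{(4,631)}\cong (S_{58}^{\mathrm{d}})^0$, where $S_{58}^{\mathrm{d}}$ is $S_{58}$ with dual multiplication. The element $4$ should again be both the additive minimum and a multiplicative zero, and the subalgebra $\{1,2,3\}$ should be the dual of $S_{58}$. The only verification needed is a direct check against the Cayley table that row $4$ and column $4$ are constantly $4$, and that the restriction to $\{1,2,3\}$ is the transposed table of $S_{58}$. Then $S_{(4,631)}$ is the dual of $S_{(4,668)}$, and the first paragraph applies.

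As a fallback, should $S_{(4,631)}$ instead turn out equationally equivalent to $S_{(4,668)}$ itself, I would follow the proof of Corollary for $S_{(4,565)}$. First I would verify that $S_{(4,631)}$ satisfies \eqref{6681}--\eqref{6684}, which gives $\mathsf{V}(S_{(4,631)})\subseteq\mathsf{V}(S_{(4,668)})$. Then I would exhibit copies of $S_{58}$ and of the element structure generating $S_{58}^0$ inside $S_{(4,631)}$ to get the reverse inclusion.

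The main obstacle is purely computational: identifying correctly, from Table~\ref{tb1}, which relationship (duality or equational equivalence) holds. I expect duality, given the pairing pattern of indices used throughout the paper.
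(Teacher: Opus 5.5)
Your first paragraph is exactly the paper's proof. The multiplication table of $S_{(4,631)}$ is literally the transpose of that of $S_{(4,668)}$, with no relabelling needed; for example, $3\cdot_{631}1=1=1\cdot_{668}3$ and $1\cdot_{631}3=3=3\cdot_{668}1$. So the two algebras have dual multiplications, the result follows from Proposition~\ref{pro66801}, and your fallback routes are unnecessary.
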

\begin{proof}
It is readily seen that $S_{(4, 631)}$ and $S_{(4, 668)}$ have dual multiplications,
and hence they have the same finite basis property.
By Proposition~\ref{pro66801}, $S_{(4, 668)}$ is finitely based.
Therefore, $S_{(4, 631)}$ is also finitely based.
\end{proof}

\section{Equational bases for 4-element ai-semirings related to $S_{59}$}
In this section, we provide equational bases for some 4-element ai-semirings that relate to $S_{59}$,
whose Cayley tables are given in Table~\ref{tb5901}.

\begin{table}[ht]
\caption{The Cayley tables of $S_{59}$} \label{tb5901}
\begin{tabular}{c|ccc}
$+$      &$1$ &$2$ &$3$\\
\hline
$1$      &$1$ &$1$ &$3$\\
$2$      &$1$ &$2$ &$3$\\
$3$      &$3$ &$3$ &$3$\\
\end{tabular}\qquad
\begin{tabular}{c|ccc}
$\cdot$      &$1$ &$2$ &$3$\\
\hline
$1$      &$3$ &$3$ &$3$\\
$2$      &$3$ &$1$ &$3$\\
$3$      &$3$ &$3$ &$3$\\
\end{tabular}
\end{table}

The following result, which is due to Yue et al. \cite[Lemma 5.1]{yrzs},
provides some information about the identities of $S_{59}$.

\begin{lem}\label{lem5901}
Let $\bq \preceq \bu$ be an inequality,
where $\bu = \bu_1 + \bu_2 + \cdots + \bu_n$ with $\bu_i, \bq\in X^+$ for $1\leq i \leq n$.
Suppose that $\bq \preceq \bu$ is satisfied by $S_{59}$.
Then $\bu$ and $\bq$ satisfy one of the following conditions:
\begin{itemize}
\item[$(1)$] $\ell(\bu_i) \geq 3$ for some $\bu_i \in \bu$;
\item[$(2)$] $\ell(\bu_i)\leq 2$ for all $\bu_i \in \bu$. Then $\ell(\bq)\leq 2$.
Moreover, if $\ell(\bq)=1$, then $c(\bq)\subseteq c(\bu)$; if $\ell(\bq)=2$, then $c(\bq)\subseteq c(L_2(\bu))$.
\end{itemize}
\end{lem}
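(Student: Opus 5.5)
This lemma is cited from \cite[Lemma 5.1]{yrzs}, but I would reprove it directly from the multiplication table of $S_{59}$. The key observation is the following. Every product of two elements of $S_{59}$ equals $3$, except $2\cdot 2=1$. Every product of three or more elements equals $3$, since $1\cdot x=x\cdot 1=3$ and $3$ is absorbing. In the additive order, $3$ is the top element (because $3+a=3$), while $1$ lies above $2$.

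\textbf{Case (1).} Suppose some $\bu_i$ has length at least $3$. Then $\varphi(\bu_i)=3$ for every homomorphism $\varphi$, so $\varphi(\bu)=3$, and the inequality holds with no further information to extract. If this case does not apply, then every $\bu_i$ has length at most $2$, and the remaining task is to extract the constraints in (2).

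\textbf{Case (2), length of $\bq$.} Suppose $\ell(\bq)\geq 3$. Take $\varphi$ sending every variable to $2$. Then $\varphi(\bq)=3$. The words of length $1$ in $\bu$ go to $2$ and the words of length $2$ go to $1$, so $\varphi(\bu)\in\{1,2\}$, and hence $3\not\leq\varphi(\bu)$. So $\ell(\bq)\leq 2$.

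\textbf{Case (2), $\ell(\bq)=1$.} Write $\bq=x$. If $x\notin c(\bu)$, send $x\mapsto 3$ and every other variable to $2$. Then $\varphi(\bu)\leq 1<3=\varphi(\bq)$, a contradiction. So $c(\bq)\subseteq c(\bu)$.

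\textbf{Case (2), $\ell(\bq)=2$.} Here the claim is that every variable of $\bq$ occurs in some word of length $2$ in $\bu$. Suppose instead that some $x\in c(\bq)$ does not lie in $c(L_2(\bu))$. Define $\varphi$ by $x\mapsto 3$ and all other variables $\mapsto 2$.
\begin{itemize}
\item Every word of $L_2(\bu)$ avoids $x$, so it maps to $1$.
\item A word of length $1$ in $\bu$ maps to $2$, unless it equals $x$ itself, in which case it maps to $3$.
\end{itemize}
This exception is the one obstacle: a single letter $x$ in $\bu$ would push $\varphi(\bu)$ up to $3$, so this assignment does not work when $x\in\bu$. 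To handle it, I would instead send $x\mapsto 1$. Then $\bu$ maps into $\{1,2\}$, while $\bq$ (of length $2$ and containing $x$) maps to $3$, since any product involving $1$ is $3$. This gives the contradiction $3\not\leq\varphi(\bu)$. The same assignment $x\mapsto 1$ in fact works uniformly, whether or not $x\in\bu$. Hence $c(\bq)\subseteq c(L_2(\bu))$.

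This covers all the conditions listed in (2) and completes the proof.
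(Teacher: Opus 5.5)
Your argument is correct. The paper does not prove this lemma; it imports it from \cite[Lemma 5.1]{yrzs}. Your proposal therefore gives a self-contained verification, using the same kind of explicit substitution into the algebra that the paper uses elsewhere (for instance in the proofs for $S_{(4,485)}$ and $S_{(4,492)}$).

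The facts you isolate about $S_{59}$ are right. The additive order is $2<1<3$. Multiplicatively, $2\cdot 2=1$ and every other product equals $3$; in particular every product of three or more elements is $3$. Each of your three assignments gives $\varphi(\bq)=3$ while $\varphi(\bu)\in\{1,2\}$, which is exactly what is needed:
\begin{itemize}
\item every variable $\mapsto 2$;
\item $x\mapsto 3$, all other variables $\mapsto 2$;
\item $x\mapsto 1$, all other variables $\mapsto 2$.
\end{itemize}

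You can drop the detour through $x\mapsto 3$ in the case $\ell(\bq)=2$, since $x\mapsto 1$ works directly. In the case $\ell(\bq)=1$, however, the choice $x\mapsto 3$ really is needed. There $x\mapsto 1$ would give $\varphi(\bq)=1\leq\varphi(\bu)$ whenever $\bu$ contains a word of length $2$, so no contradiction would arise.
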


\begin{pro}\label{pro332}
The ai-semiring variety $\mathsf{V}(S_{(4, 548)})$ is defined by the identities
\begin{align}
&x \preceq xy; \label{5481}\\
&xy \approx yx; \label{5482}\\
&x^{2}yz \approx xyz; \label{5483}\\
&xz \preceq xy+zt; \label{5484}\\
&xyz+t \approx xyzt. \label{5485}
\end{align}
\end{pro}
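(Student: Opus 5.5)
We verify \eqref{5481}--\eqref{5485} in $S_{(4,548)}$ directly, and then show that every nontrivial inequality $\bq\preceq\bu$ of $S_{(4,548)}$, with $\bu=\bu_1+\cdots+\bu_n$ and $\bu_i,\bq\in X^+$, is derivable from them. The semantic information comes from two places. First, $M_2$ should embed into $S_{(4,548)}$ (or be a quotient of it, as in the earlier sections; a two-element subalgebra with the table of $M_2$ will be located), which gives $c(\bq)\subseteq c(\bu)$. Second, $S_{59}$ should be a subalgebra or quotient, most likely via a subdirect decomposition into $M_2$ and $S_{59}$ with blocks $\{1,2,3\}$ and $\{3,4\}$. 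This lets us apply Lemma~\ref{lem5901}.

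The first syntactic step is a normal form lemma for long words. Using \eqref{5482} and \eqref{5483}, every word $\bw$ with $\ell(\bw)\geq 3$ is equivalent to any other word $\bw'$ with $\ell(\bw')\geq 3$ and $c(\bw')=c(\bw)$. To see this, commute the variables freely with \eqref{5482}. Then delete repeated occurrences with \eqref{5483}, read as $x^2\bp\approx x\bp$ for $\ell(\bp)\geq 2$, as long as the length stays at least $3$. Finally, pad back up to length $3$ with \eqref{5483} from right to left. I expect this to be the main technical obstacle. Care is needed when $|c(\bw)|\leq 2$, for example to see that $x^3\approx x^4$ and $x^2y\approx xy^2$. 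Here I would first try to obtain these from \eqref{5483} combined with commutativity, for instance $x^2y\cdot$ rewritten via $xyx$ and $yxx$. If that fails, I would use \eqref{5485} with $t$ a repeated variable to absorb lengths.

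\textbf{Case 1.} Some $\bu_i$ has $\ell(\bu_i)\geq 3$. Repeated use of \eqref{5485} shows that $\bu\approx \bu_i\bu_1\cdots\bu_n=:\bw$, a single word with $\ell(\bw)\geq 3$ and $c(\bw)=c(\bu)\supseteq c(\bq)$. By \eqref{5481}, $\bq\preceq \bq\bw$. Moreover $\ell(\bq\bw)\geq 3$ and $c(\bq\bw)=c(\bw)$, so the normal form lemma gives $\bq\bw\approx\bw\approx\bu$, and hence $\bq\preceq\bu$.

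\textbf{Case 2.} Every $\ell(\bu_i)\leq 2$. By Lemma~\ref{lem5901}, $\ell(\bq)\leq 2$.
\begin{itemize}
\item[(a)] If $\bq=x$, then $x\in c(\bu_j)$ for some $j$. So $\bu_j$ is either $x$ itself or, up to \eqref{5482}, equal to $xy$, and then $\bu\succeq xy\succeq x$ by \eqref{5481}.
\item[(b)] If $\bq=xy$, then $x,y\in c(L_2(\bu))$. Using \eqref{5482}, we get words $xz$ and $yt$ in $\bu$, and \eqref{5484} yields $\bu\succeq xz+yt\succeq xy$. The case $x=y$ is covered by the substitution $z\mapsto x$ in \eqref{5484}, which gives $x^2\preceq xa+xb$.
\end{itemize}
This covers all cases and completes the plan.
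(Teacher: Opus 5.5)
Your proposal is correct and follows the paper's proof essentially step for step. It uses the same reduction to $M_2$ and $S_{59}$ via Lemma~\ref{lem5901}, collapses $\bu$ to one word with \eqref{5485} when some $\ell(\bu_i)\geq 3$, and then applies \eqref{5481}, and \eqref{5484} in the short case. Your normal-form lemma only makes explicit the step the paper writes as $\bu_i\bu_1\cdots\bu_n\approx\bq\bq'$, and it needs no help from \eqref{5485}: \eqref{5483} with $y,z\mapsto x$ gives $x^4\approx x^3$, and \eqref{5483} with \eqref{5482} gives $x^2y\approx x^2y^2\approx xy^2$. The one slip is your guessed decomposition. $\{3,4\}$ is not a congruence class, since it would force $3\cdot 4=2$ to be related to $4\cdot 4=3$. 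The paper instead uses the blocks $\{2,3,4\}$ and $\{1,2\}$, whose quotients are $M_2$ and $S_{59}$. Because $M_2\cong\{1,2\}$ and $S_{59}\cong\{2,3,4\}$ are also subalgebras of $S_{(4,548)}$, nothing else in your argument changes.
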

\begin{proof}
It is easy to check that $S_{(4, 548)}$ satisfies the identities \eqref{5481}--\eqref{5485}.
It remains to show that every inequality of $S_{(4, 548)}$ is derivable from \eqref{5481}--\eqref{5485}.
Let $\bq \preceq \bu$ be such a nontrivial inequality,
where $\bu=\bu_1+\bu_2+\cdots+\bu_n$ with $\bu_i, \bq \in X^+$ for $1 \leq i \leq n$.
Observe that $S_{(4, 548)}$ is isomorphic to a subdirect product of $M_{2}$ and $S_{59}$
via the congruences defined by the nontrivial blocks $\{2, 3, 4\}$ and $\{1, 2\}$.
Then both $M_2$ and $S_{59}$ satisfy $\bq \preceq \bu$, and so $c(\bq) \subseteq c(\bu)$.
By Lemma~\ref{lem5901}, we consider the following two cases:

\textbf{Case 1.} $\ell(\bu_i) \geq 3$ for some $\bu_i \in \bu$. Then
\[
\bu\approx\bu_i+\bu_1+\bu_2+\cdots+\bu_n \stackrel{\eqref{5485}}\approx \bu_i\bu_1\bu_2\cdots\bu_n\stackrel{\eqref{5482},\eqref{5483}}\approx \bq\bq'\stackrel{\eqref{5481}}\succeq \bq.
\]
The third step uses $c(\bq) \subseteq c(\bu)$.

\textbf{Case 2.} $\ell(\bu_i)\leq 2$ for all $\bu_i \in \bu$. Then $\ell(\bq)\leq 2$.

If $\ell(\bq)=1$, then there exists $\bu_j\in L_{2}(\bu)$ such that $c(\bq) \subseteq c(\bu_j)$. Now we have
\[
\bu\succeq\bu_j\stackrel{\eqref{5482}}\approx\bq\bu_j'\stackrel{\eqref{5481}}\succeq \bq.
\]
The second step relies on $c(\bq) \subseteq c(\bu_j)$, $\ell(\bu_j)=2$, and $\ell(\bq)=1$.
This derives the inequality $\bu \succeq \bq$.

If $\ell(\bq)= 2$, then $c(\bq)\subseteq c(L_2(\bu))$.
We may write $\bq=xy$. By \eqref{5482}, $xx_1, yy_1\in L_2(\bu)$ for some $x_1, y_1\in X$. We have
\[
\bu\stackrel{\eqref{5482}}\succeq xx_1+yy_1\stackrel{\eqref{5484}}\succeq xy = \bq.
\]
This derives the inequality $\bu \succeq \bq$.
\end{proof}

\begin{remark}
We note that $\mathsf{V}(S_{(4, 548)})=\mathsf{V}(S_{59}, M_2)$,
since $S_{(4, 548)}$ is isomorphic to a subdirect product of $S_{59}$ and $M_{2}$.
\end{remark}

\begin{pro}\label{pro333}
The ai-semiring variety $\mathsf{V}(S_{(4, 693)})$ is defined by the identities
\begin{align}
&x \preceq xy; \label{6931}\\
&xyz \approx xzy; \label{6932}\\
&x+yx \approx xy+yx; \label{6933}\\
&xz+xt \preceq xy+zt; \label{6934}\\
&x_1x_2x_3+y_1 \approx x_1x_4x_5+y_1y_2. \label{6935}
\end{align}
\end{pro}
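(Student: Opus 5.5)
Following the pattern of Proposition~\ref{pro332}, I will first check that $S_{(4,693)}$ satisfies \eqref{6931}--\eqref{6935}, and then show that every nontrivial inequality $\bq\preceq\bu$ of $S_{(4,693)}$, with $\bu=\bu_1+\cdots+\bu_n$, is derivable from them. The structural input I expect is that $S_{(4,693)}$ is a subdirect product of $L_2$ and $S_{59}$, via congruences with nontrivial blocks analogous to those used for $S_{(4,548)}$ (presumably $\{2,3,4\}$ and $\{1,2\}$). If so, both $L_2$ and $S_{59}$ satisfy $\bq\preceq\bu$. Then $H_\bq(\bu)\neq\emptyset$, say $h(\bu_j)=h(\bq)$, and Lemma~\ref{lem5901} gives the two cases.

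Before the case split I record two consequences of the basis.
\begin{itemize}
\item By \eqref{6932}, every word of length at least $3$ is determined up to permutation of its non-head letters.
\item Identity \eqref{6935} makes all words of length at least $3$ equal modulo adding any one further word. It also shows that a sum containing a word of length $\geq 3$ absorbs everything.
\end{itemize}
The intended use is that $\bw+\bv\approx \bw'+\bv'$ for any words $\bw,\bw'$ of length $\geq 3$ and any $\bv,\bv'$. So, roughly, a sum containing a long word behaves like a ``top'' element.

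\textbf{Case 1.} Some $\ell(\bu_i)\geq 3$. By \eqref{6935}, $\bu\approx \bu_i+\bu_j$ can be rewritten as $\bu_i+\bq$-independent material, i.e. $\bu\approx x_1x_2x_3+y_1y_2$ with free choices. I will choose the representation $h(\bq)\cdots$ long enough and containing $\bq$ as a prefix, up to \eqref{6932}. Then \eqref{6931} yields $\bu\succeq\bq$ directly, since $\bq$ is a prefix of a word in $\bu$. This is the analogue of Case 1 in Proposition~\ref{pro332}. One subtlety is that $\ell(\bq)$ could be $1$ or $2$; \eqref{6931} handles that uniformly.

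\textbf{Case 2.} All $\ell(\bu_i)\leq 2$, so $\ell(\bq)\leq 2$.
\begin{itemize}
\item If $\ell(\bq)=1$, then $\bq=x$ and $h(\bu_j)=x$. Since the inequality is nontrivial, $\bu_j=xy$, and \eqref{6931} gives $\bu\succeq\bq$.
\item If $\ell(\bq)=2$, write $\bq=xy$. Then $c(\bq)\subseteq c(L_2(\bu))$, and some word of $\bu$ has head $x$.
\end{itemize}
I split further on the words of length $2$ involving $x$ and $y$. Suppose some $xz\in\bu$ and some word of length $2$ containing $y$; if that word is $yt$ or $ty$, \eqref{6934} applied as $xz+yt\succeq xy$ finishes. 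If $y$ occurs only as $ty$, I first use \eqref{6933} to convert to a form with $y$ at the head, since $t+yt$-type terms let us swap the summand. Remaining configurations are handled similarly:
\begin{itemize}
\item $x$ occurs at the head only in the single-letter word $x$, while $x$ also occurs in some length-2 word $zx$ (from $c(\bq)\subseteq c(L_2(\bu))$). Here \eqref{6933} gives $x+zx\approx xz+zx$, producing a length-2 word with head $x$.
\item $x=y$, i.e. $\bq=x^2$: this is treated by the same argument.
\end{itemize}

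The main obstacle will be this last subcase analysis for $\ell(\bq)=2$: pairing the $L_2$-condition on heads with the $S_{59}$-condition on contents so that \eqref{6933} and \eqref{6934} always apply. I also still need to confirm the exact subdirect decomposition.
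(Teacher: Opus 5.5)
Your overall route is the paper's: the subdirect decomposition into $L_2$ and $S_{59}$ (the blocks $\{2,3,4\}$ and $\{1,2\}$ are correct) and the case split from Lemma~\ref{lem5901}. Case~2 also matches the paper: \eqref{6931} for $\ell(\bq)=1$, and \eqref{6934} together with $x+zx\approx xz+zx$ from \eqref{6933} for $\ell(\bq)=2$.

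\textbf{The problem: your reading of \eqref{6935}.} It does not make all long words interchangeable, and a sum containing a long word is not a top element. $L_2$ is a homomorphic image of $S_{(4,693)}$, so every identity of $S_{(4,693)}$ must have the same set of heads of summands on both sides. Hence $xyz\succeq t$ and $xyz+t\approx uvw+s$ both fail; evaluate them in $L_2$. In \eqref{6935} the head $x_1$ of the long word and the head $y_1$ of the extra summand are both fixed. What it actually gives is this: in the presence of a word of length $\geq 3$, that word may be replaced by any long word with the same head, and any other summand by any word with the same head. If in Case~1 you use the ``free choice'' to make the long word start with $h(\bq)$, the step fails whenever $h(\bu_i)\neq h(\bq)$.

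\textbf{The repair.} Keep $\bu_i$ unchanged, substitute $y_1\mapsto\bu_j$ (the summand with $h(\bu_j)=h(\bq)$), and put the freedom in $y_2\mapsto s(\bq)$. Then, by \eqref{6935}, \eqref{6932} and \eqref{6931},
\[
\bu\succeq\bu_i+\bu_j\approx\bu_i+\bu_j\,s(\bq)\succeq h(\bq)s(\bu_j)s(\bq)\approx\bq\,s(\bu_j)\succeq\bq .
\]
If $\ell(\bu_j)=1$, the word $\bu_j s(\bq)$ is already $\bq$. If $\ell(\bq)=1$, just apply \eqref{6931} to $\bu_j$. This is exactly the paper's Case~1.

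\textbf{Two smaller corrections.}
\begin{enumerate}
\item Your $\bu\approx\bu_i+\bu_j$ should read $\bu\succeq\bu_i+\bu_j$, since dropping summands only gives $\succeq$.
\item In Case~2, the detour through \eqref{6933} for a summand $ty$ is unnecessary, and in general unavailable: it would need $y$ itself to be a summand. \eqref{6934} already yields both $xz+yt\succeq xy$ and $xz+ty\succeq xy$.
\end{enumerate}
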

\begin{proof}
It is routine to check that $S_{(4, 693)}$ satisfies identities \eqref{6931}--\eqref{6935}.
It suffices to show that every inequality of $S_{(4, 693)}$ is derivable from \eqref{6931}--\eqref{6935}.
Let $\bq \preceq \bu$ be such a nontrivial inequality,
where $\bu=\bu_1+\bu_2+\cdots+\bu_n$ with $\bu_i, \bq \in X^+$ for $1 \leq i \leq n$.
One can easily verify that $S_{(4, 693)}$ is isomorphic to a subdirect product of $L_2$ and $S_{59}$
via the congruences defined by the nontrivial blocks $\{2, 3, 4\}$ and $\{1, 2\}$.
Hence both $L_2$ and $S_{59}$ satisfy $\bq \preceq \bu$,
so $h(\bu_i)=h(\bq)$ for some $\bu_i \in \bu$.
By Lemma~\ref{lem5901}, we consider the following two cases:

\textbf{Case 1.} $\ell(\bu_j) \geq 3$ for some $\bu_j \in \bu$. Then
\[
\bu \succeq \bu_j+\bu_i = \bu_j+h(\bq)s(\bu_i)
\stackrel{\eqref{6935}}{\succeq} h(\bq)s(\bu_i)s(\bq)
\stackrel{\eqref{6932}}{\succeq} \bq\bq'
\stackrel{\eqref{6931}}{\succeq} \bq.
\]
The second step uses $h(\bu_i)=h(\bq)$.
This proves the inequality $\bu \succeq \bq$.

\textbf{Case 2.} $\ell(\bu_j)\leq 2$ for all $\bu_j \in \bu$. Then $\ell(\bq)\leq 2$.
If $\ell(\bq)=1$, then $\ell(\bu_i)=2$, so
\[
\bu \succeq \bu_i = \bq\bu_i' \stackrel{\eqref{6931}}{\succeq} \bq.
\]

Now suppose that $\ell(\bq)=2$. Then $c(\bq)\subseteq c(L_2(\bu))$.
Write $\bq=xy$.
Then there exist $\bu_p,\bu_q \in L_2(\bu)$ such that $x \in c(\bu_p)$ and $y \in c(\bu_q)$.
If $h(\bu_p)=x$, then $\bu_p=xx_1$ for some $x_1\in X$, and so
\[
\bu \succeq \bu_p+\bu_q = xx_1+\bu_q \stackrel{\eqref{6934}}{\succeq} xy = \bq.
\]
If $\ell(\bu_i)=2$ and $\bu_i=xx_2$ for some $x_2\in X$, then
\[
\bu \succeq \bu_i+\bu_q = xx_2+\bu_q \stackrel{\eqref{6934}}{\succeq} xy = \bq.
\]
In the remainder we assume that $t(\bu_p)=x$ and $\ell(\bu_i)=1$.
Then $\bu_p=x_3x$ for some $x_3 \in X$ and $\bu_i=x$.
Thus
\[
\bu \succeq \bu_i+\bu_p+\bu_q = x+x_3x+\bu_q
\stackrel{\eqref{6933}}{\succeq} xx_3+\bu_q
\stackrel{\eqref{6934}}{\succeq} xy=\bq.
\]
This proves the inequality $\bu \succeq \bq$.
\end{proof}

\begin{remark}
We note that $\mathsf{V}(S_{(4, 693)})=\mathsf{V}(S_{59}, L_2)$,
since $S_{(4, 693)}$ is isomorphic to a subdirect product of $L_{2}$ and $S_{59}$.
\end{remark}

\begin{cor}
The ai-semiring $S_{(4, 603)}$ is finitely based.
\end{cor}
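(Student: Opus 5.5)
The corollary concerns $S_{(4,603)}$, and I would reduce it to Proposition~\ref{pro333} or Proposition~\ref{pro332}, following the pattern used for the earlier corollaries in this section. The first thing to check is whether $S_{(4,603)}$ has the dual multiplication of $S_{(4,693)}$. If it does, the corollary follows at once from Proposition~\ref{pro333} and the elementary fact that dual multiplications preserve the finite basis property.

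If $S_{(4,603)}$ is not dual to $S_{(4,693)}$, I would instead show that $\mathsf{V}(S_{(4,603)})$ equals $\mathsf{V}(S_{59}, L_2)$, $\mathsf{V}(S_{59}, M_2)$, or the dual of one of these. This is done by sandwiching.

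\textbf{Upper bound.} I would verify directly from the Cayley table that $S_{(4,603)}$ satisfies the identities \eqref{6931}--\eqref{6935}, or their duals, or \eqref{5481}--\eqref{5485}. Since these form an equational basis of $S_{(4,693)}$ (resp.\ $S_{(4,548)}$), this gives $\mathsf{V}(S_{(4,603)}) \subseteq \mathsf{V}(S_{(4,693)})$. Alternatively, I would exhibit $S_{(4,603)}$ as a subdirect product of $S_{59}$ and a $2$-element algebra, via two congruences with nontrivial blocks such as $\{1,2\}$ and $\{2,3,4\}$, which yields this inclusion directly.

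\textbf{Lower bound.} I would locate $S_{59}$ as a subalgebra of $S_{(4,603)}$, most likely on $\{2,3,4\}$ or $\{1,2,4\}$. I would also find $R_2$ (or $L_2$, $M_2$) as a $2$-element subalgebra. Then the remark following Proposition~\ref{pro333}, namely $\mathsf{V}(S_{(4,693)})=\mathsf{V}(S_{59},L_2)$ or its dual, gives the reverse inclusion.

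Combining the two inclusions makes the varieties equal, so $S_{(4,603)}$ is finitely based. In the dual case I would also invoke the dual multiplication fact.

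The main obstacle is the table computation: identifying which of the three scenarios actually holds for $S_{(4,603)}$ (duality, satisfying a known basis, or a subdirect decomposition) and checking the five-variable identity \eqref{6935} in $S_{(4,603)}$. I would try duality with $S_{(4,693)}$ first, since it is the cheapest check.
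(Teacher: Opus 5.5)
Your first check is exactly the paper's argument, and it succeeds: transposing the multiplication table of $S_{(4,693)}$ gives precisely the table of $S_{(4,603)}$, so the corollary follows immediately from Proposition~\ref{pro333} and the fact that dual multiplications preserve the finite basis property. The fallback sandwiching routes you describe are therefore never needed.
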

\begin{proof}
Since $S_{(4, 603)}$ and $S_{(4, 693)}$ have dual multiplications, it follows from
Proposition {\ref{pro333}} that $S_{(4, 603)}$ is finitely based.
\end{proof}

\begin{pro}\label{pro334}
The ai-semiring variety $\mathsf{V}(S_{(4, 705)})$ is defined by the identities
\begin{align}
&x \preceq x^{2}; \label{7051}\\
&xy \approx yx; \label{7052}\\
&x^{4} \approx x^{3}; \label{7053}\\
&xz \preceq x^{2}+yz; \label{7054}\\
&x^{2} \preceq x+xy; \label{7055}\\
&xz \preceq x+xy+zt; \label{7056}\\
&x_1x_2x_3+y_1 \approx x_1x_4x_5+y_1y_2. \label{7057}
\end{align}
\end{pro}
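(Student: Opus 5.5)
The plan follows the template of Propositions~\ref{pro332} and~\ref{pro333}. First I check that $S_{(4,705)}$ satisfies \eqref{7051}--\eqref{7057}. Then I identify a decomposition of $S_{(4,705)}$ that supplies the necessary conditions. By analogy with $S_{(4,548)}$ and $S_{(4,693)}$, I expect $S_{(4,705)}$ to be a subdirect product of $S_{59}$ and $D_2$, via congruences with nontrivial blocks $\{1,2\}$ and $\{2,3,4\}$ (or similar). If so, for a nontrivial $\bq\preceq\bu$ holding in $S_{(4,705)}$, we get $D_\bq(\bu)\neq\emptyset$ by Lemma~\ref{nlemma1}(4), and the dichotomy of Lemma~\ref{lem5901}. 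Identity \eqref{7052} lets us treat all words as commutative, and \eqref{7053} bounds exponents.

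\textbf{Case 1:} $\ell(\bu_j)\geq 3$ for some $\bu_j\in\bu$. Pick $\bu_i\in D_\bq(\bu)$. Apply \eqref{7057} to $\bu_j+\bu_i$ to obtain $\bu\succeq \bu_i\bw$ for any word $\bw$ of length at least $2$. This is legitimate: by \eqref{7057}, a word of length $\geq 3$ plus $y_1$ equals $x_1x_4x_5+y_1y_2$, and commutativity lets us choose freely. Choosing $\bw$ appropriately gives $\bu\succeq \bu_i\bq^2$ or $\bu\succeq \bq^3\bu_i$. Since $c(\bu_i)\subseteq c(\bq)$, by \eqref{7052} and \eqref{7053} I expect to rewrite this as $\bq^3$ up to exponents, or more carefully as $\bq\cdot\bq'$ with $\ell\geq3$. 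Then I need a step from a longer word back to $\bq$ itself.

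This is the delicate point, because \eqref{7051} only gives $x\preceq x^2$, not $x\preceq xy$. I would instead use \eqref{7057} in the reverse direction. Since $\bu$ contains a word of length $\ge 3$, the sum $\bu+\bq$ collapses to something derivable from $\bu$ alone, provided $c$-conditions hold. Concretely, I would show that $\bu_j+\bu_i\approx \bu_j+\bu_i+\bq$ via \eqref{7057}, taking $y_1=\bu_i$ and $y_2$ so that $\bu_iy_2\approx \bq$ up to \eqref{7053}. For $\ell(\bq)=1$, I would handle this separately using $\bu_i=\bq^k$ together with \eqref{7051} and \eqref{7053}.

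\textbf{Case 2:} all $\ell(\bu_j)\le 2$, so $\ell(\bq)\le2$, and $c(\bq)\subseteq c(L_2(\bu))$ when $\ell(\bq)=2$. For $\bq=x$: some $\bu_i\in D_\bq(\bu)$ is $x$ or $x^2$; the latter gives $x$ by \eqref{7051}. For $\bq=xz$ with words $xx_1,zz_1\in L_2(\bu)$ and $\bu_i\in D_\bq(\bu)$, there are three subcases according to $\bu_i\in\{x,z,x^2,z^2,xz\}$. If $\bu_i=x^2$, use \eqref{7054} with $zz_1$. If $\bu_i=x$, use \eqref{7056} with $xx_1+zz_1$. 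If $\bu_i=xz$, the conclusion is trivial. For $\bq=x^2$, use $\bu_i\in\{x,x^2\}$ and $xx_1$ with \eqref{7055}.

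I expect the main obstacle to be Case 1. It is not yet clear to me exactly how to extract $\bq$ itself, rather than a longer word, using only \eqref{7057} together with $D_\bq(\bu)$. A further difficulty is the bookkeeping needed to confirm which small algebras embed, so that the necessary conditions are exactly those used.
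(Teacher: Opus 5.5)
Your overall plan matches the paper's: necessary conditions from $D_2$ and $S_{59}$, the two cases of Lemma~\ref{lem5901}, and a Case~2 that coincides with the paper's. Your guessed subdirect decomposition via the blocks $\{1,2\}$ and $\{2,3,4\}$ is in fact correct. It yields the same conditions the paper gets from the subalgebras $\{1,2\}\cong D_2$ and $\{2,3,4\}\cong S_{59}$.

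\textbf{The gap is Case~1, which you leave open.} The missing observation is that $x\preceq x^{3}$ already follows from \eqref{7051} and \eqref{7053}. Substituting $x^{2}$ for $x$ in \eqref{7051} gives $x^{2}\preceq x^{4}$, so $x\preceq x^{2}\preceq x^{4}\approx x^{3}$. So take $\bw=\bq^{3}$. Since $c(\bu_i)\subseteq c(\bq)$ and every variable of $\bq^{3}$ already occurs at least three times, \eqref{7052} and \eqref{7053} give $\bu_i\bq^{3}\approx\bq^{3}$. Hence
\[
\bu\succeq \bu_j+\bu_i\succeq \bu_i\bq^{3}\approx \bq^{3}\succeq \bq .
\]
This is essentially the paper's Case~1, and it needs no separate treatment of $\ell(\bq)=1$.

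Your fallback, choosing $y_2$ with $\bu_iy_2\approx\bq$, cannot work in general. Take $\bq=xy$ and $\bu=x^{2}+z^{3}$. The inequality $\bq\preceq\bu$ holds in $S_{(4,705)}$ (in $D_2$ because $xy\le x$, and in $S_{59}$ because $z^{3}$ is the top). Here $D_\bq(\bu)=\{x^{2}\}$, but no word $x^{2}y_2$ is equivalent to $xy$: in $S_{59}$ every word of length at least $3$ takes the top value, while $xy$ does not.

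\textbf{A caution for your verification step.} Identity \eqref{7057} as printed is \emph{not} satisfied by $S_{(4,705)}$. With $x_1=x_2=x_3=1$ and $x_4=y_1=2$, the left side is $1+2=1$ and the right side is $2+2=2$. This failure already occurs inside the subalgebra $\{1,2\}\cong D_2$; the identity was presumably copied from \eqref{6935}, where $L_2$ plays the role of $D_2$.

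What does hold is the inequality $y_1y_2\preceq x_1x_2x_3+y_1$. It is valid in $D_2$ because $y_1y_2\le y_1$, and in $S_{59}$ because words of length at least $3$ evaluate to the top. Your step $\bu_j+\bu_i\succeq\bu_i\bw$, and the paper's Case~1, use only this inequality. With \eqref{7057} read in this form, the argument above together with your Case~2 is complete.
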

\begin{proof}
It is easy to check that $S_{(4, 705)}$ satisfies the identities \eqref{7051}--\eqref{7057}.
It remains to show that every inequality of $S_{(4, 705)}$ is derivable from \eqref{7051}--\eqref{7057}.
Let $\bq \preceq \bu$ be such a nontrivial inequality,
where $\bu=\bu_1+\bu_2+\cdots+\bu_n$ with $\bu_i, \bq \in X^+$ for $1 \leq i \leq n$.
By \eqref{7052}, we may further assume that $\bq, \bu_i\in X^+_c$ for $1 \leq i \leq n$.
Since $D_2$ is isomorphic to the subalgebra $\{1, 2\}$ of $S_{(4, 705)}$,
it follows that $D_2$ satisfies $\bq \preceq \bu$,
and so there exists $\bu_i \in \bu$ such that $c(\bu_i)\subseteq c(\bq)$ .
Since $S_{59}$ is isomorphic to the subalgebra $\{2, 3, 4\}$ of $S_{(4, 705)}$, we have that $S_{59}$ satisfies $\bq \preceq \bu$.
By Lemma \ref{lem5901}, we consider the following two cases.

\textbf{Case 1.} $\ell(\bu_j) \geq 3$ for some $\bu_j \in \bu$. Then
\begin{align*}
\bu
&\succeq \bu_j+\bu_i\\
&\succeq \bu_j+\bu_i\bq^3&&(\text{by}~\eqref{7057})\\
&\succeq \bu_j+\bq^3&&(\text{by}~\eqref{7052}, \eqref{7053})\\
&\succeq \bq.&&(\text{by}~ \eqref{7051})
\end{align*}
The second step follows from $\ell(\bu_j) \geq 3$ for some $\bu_j \in \bu$,
while the third step uses $c(\bu_i)\subseteq c(\bq)$.

\textbf{Case 2.} $\ell(\bu_j)\leq 2$ for all $\bu_j \in \bu$. Then $\ell(\bq)\leq 2$.
If $\ell(\bq)=1$, then $c(\bu_i)=c(\bq)$, and so $\bu_i=\bq^k$ for some $k\geq 2$.
Hence
\[
\bu\succeq \bu_i=\bq^k \stackrel{\eqref{7051}}\succeq \bq.
\]
Now suppose that $\ell(\bq)= 2$. Then $c(\bq)\subseteq c(L_2(\bu))$.
If $\bq=x^2$ for some $x\in X$, then $\bu_i=x$, and $xx_1 \in L_2(\bu)$ for some $x_1 \in X$. Thus
\[
\bu\succeq x+xx_1 \stackrel{\eqref{7055}}\succeq x^2=\bq.
\]
In the remainder we assume that $\bq=xy$, where $x$ and $y$ are distinct.
Then $xx_1$ and $yy_1\in L_2(\bu)$ for some $x_1, y_1\in X$.
If $\ell(\bu_i)=1$, then $\bu_i=x$ or $y$, and so
\[
\bu\succeq\bu_i+xx_1+yy_1\stackrel{\eqref{7056}, \eqref{7052}}\succeq xy=\bq.
\]
If $\ell(\bu_i)=2$, then $\bu_i=x^2$ or $y^2$, and so
\[
\bu\succeq\bu_i+xx_1+yy_1\stackrel{\eqref{7054}, \eqref{7052}}\succeq xy=\bq.
\]
This derives the identity $\bu \succeq \bq$.
\end{proof}

\begin{table}[ht]
\caption{The Cayley tables of $S_{19}$} \label{tb1901}
\begin{tabular}{c|ccc}
$+$      &$1$ &$2$ &$3$\\
\hline
$1$      &$1$ &$1$ &$3$\\
$2$      &$1$ &$2$ &$3$\\
$3$      &$3$ &$3$ &$3$\\
\end{tabular}\qquad
\begin{tabular}{c|ccc}
$\cdot$      &$1$ &$2$ &$3$\\
\hline
$1$      &$1$ &$1$ &$1$\\
$2$      &$1$ &$1$ &$1$\\
$3$      &$1$ &$1$ &$3$\\
\end{tabular}
\end{table}

\begin{remark}
We note that $\mathsf{V}(S_{(4, 705)})=\mathsf{V}(S_{59}, D_2)$.
Indeed, $S_{(4, 705)}$ is isomorphic to a subdirect product of $S_{59}$ and $S_{19}$
via the congruences defined by the nontrivial blocks $\{1, 2\}$ and $\{2, 3\}$;
the Cayley tables of $S_{19}$ are given in Table~\ref{tb1901}.
Hence $\mathsf V(S_{(4, 705)}) = \mathsf V(S_{19}, S_{59})$.
Moreover, $D_2$ and $T_2$ are isomorphic to the subalgebras $\{1, 3\}$ and $\{1, 2\}$ of $S_{19}$.
$S_{19}$ satisfies the identities
\[
x \preceq x^2, \quad xy\approx yx, \quad x^2y\approx xy, \quad xt \preceq x+yz,
\]
which form an equational basis of $\mathsf V(D_2, T_2)$ (see \cite{ryy}).
It follows that $\mathsf V(S_{19})=\mathsf V(D_2, T_2)$, and so
$\mathsf V(S_{(4, 705)}) = \mathsf V(S_{59}, D_2, T_2)$.
Since $T_2$ can be embedded into $S_{59}$, we therefore obtain
\[
\mathsf{V}(S_{(4, 705)})=\mathsf{V}(S_{59}, D_2).
\]
\end{remark}

\begin{pro}\label{pro335}
The ai-semiring variety $\mathsf{V}(S_{(4, 813)})$ is defined by the identities
\begin{align}
&xy \preceq xyz; \label{8131}\\
&xy \approx x^{2}+y^{2}; \label{8132}\\
&x_1x_2x_3 \approx y_1y_2y_3. \label{8133}
\end{align}
\end{pro}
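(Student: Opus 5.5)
The plan is the same two-step scheme as in Propositions~\ref{pro332}--\ref{pro334}. First verify directly from the Cayley tables that $S_{(4,813)}$ satisfies \eqref{8131}--\eqref{8133}. Then show that every nontrivial inequality $\bq\preceq\bu$ of $S_{(4,813)}$, with $\bu=\bu_1+\cdots+\bu_n$, is derivable from them.

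Before the case analysis I will collect consequences of these identities. By \eqref{8133}, all words of length at least $3$ are equal; call this common value $\bw$. By \eqref{8131} and \eqref{8133}, every word of length $2$ satisfies $xy\preceq \bw$. By \eqref{8132}, every word of length $2$ equals the sum of the squares of its letters.

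Next I need necessary conditions on $\bq\preceq\bu$, which I will read off from small algebras inside $S_{(4,813)}$. I expect $S_{59}$ to appear as a subalgebra or quotient of $S_{(4,813)}$, presumably again on $\{2,3,4\}$ or via a congruence with nontrivial block $\{1,2\}$. I also expect $N_2$ to embed as a two-element subalgebra; this is consistent with \eqref{8131}--\eqref{8133} because $N_2$ satisfies all of them. Checking the tables will confirm the exact embeddings. From $N_2$ and Lemma~\ref{nlemma1}(5) we get $\ell(\bq)\geq 2$. This is essential: no identity in the proposed basis has a single variable as a summand on a larger side, so inequalities $x\preceq\bu$ must be excluded semantically. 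From $S_{59}$, Lemma~\ref{lem5901} applies.

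\textbf{Case 1.} Some $\bu_j$ has $\ell(\bu_j)\geq 3$. Then $\bu\succeq\bu_j\approx\bw$ by \eqref{8133}. Since $\ell(\bq)\geq2$, either $\bq\approx\bw$ directly by \eqref{8133}, or $\bq$ has length $2$ and $\bq\preceq \bq z\approx \bw$ by \eqref{8131} and \eqref{8133}. Either way $\bu\succeq\bq$.

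\textbf{Case 2.} All $\bu_j$ have length at most $2$. Then Lemma~\ref{lem5901}(2) gives $\ell(\bq)=2$ and $c(\bq)\subseteq c(L_2(\bu))$. Write $\bq=xy$, with $x=y$ allowed. Choose $\bu_p,\bu_r\in L_2(\bu)$ containing $x$ and $y$ respectively. By \eqref{8132}, $\bu_p$ is the sum of the squares of its letters, so $\bu_p\succeq x^2$, and likewise $\bu_r\succeq y^2$. Hence $\bu\succeq x^2+y^2\approx xy=\bq$ by \eqref{8132}, which covers the case $x=y$ as well.

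The only real obstacle is the semantic step: identifying exactly which of $N_2$ and $S_{59}$ sit inside $S_{(4,813)}$ (as subalgebras or quotients). This is what justifies $\ell(\bq)\geq 2$ and the use of Lemma~\ref{lem5901}. If $N_2$ does not embed, the fallback is to show directly from the tables that $x\preceq\bu$ fails whenever $\bu\neq x$, by assigning $x$ to a suitable element of $\{1,2,3,4\}$. Once these conditions are in place, the syntactic derivations above are short.
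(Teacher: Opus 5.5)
Your proposal is correct and is essentially the paper's own proof. The paper also realizes $S_{(4,813)}$ as a subdirect product of $N_2$ and $S_{59}$ (congruences with nontrivial blocks $\{2,3,4\}$ and $\{1,2\}$) to get $\ell(\bq)\geq 2$ and to split into the two cases of Lemma~\ref{lem5901}. It then closes Case 1 with \eqref{8133} and \eqref{8131}, and Case 2 with $\bu\succeq x^2+y^2\approx xy$ via \eqref{8132}, exactly as you do.

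The semantic step you flagged also checks out. $\{1,2\}\cong N_2$ and $\{2,3,4\}\cong S_{59}$ are subalgebras of $S_{(4,813)}$, so your fallback argument is not needed.
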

\begin{proof}
It is routine to check that $S_{(4, 813)}$ satisfies the identities \eqref{8131}--\eqref{8133}.
It suffices to show that every inequality of $S_{(4, 813)}$ is derivable from \eqref{8131}--\eqref{8133}.
Let $\bq \preceq \bu$ be such a nontrivial inequality,
where $\bu=\bu_1+\bu_2+\cdots+\bu_n$ with $\bu_i, \bq \in X^+$ for $1 \leq i \leq n$.
It is easy to see that
$S_{(4, 813)}$ is isomorphic to a subdirect product of $N_{2}$ and $S_{59}$
via the congruences defined by the nontrivial blocks $\{2, 3, 4\}$ and $\{1, 2\}$.
Hence both $N_{2}$ and $S_{59}$ satisfy $\bq \preceq \bu$,
and so $\ell(\bq)\geq 2$.
By Lemma \ref{lem5901}, we consider the following two cases.

\textbf{Case 1.} $\ell(\bu_i) \geq 3$ for some $\bu_i \in \bu$, then
\[
\bu\succeq\bu_i\stackrel{\eqref{8133}}\approx \bq\bp\stackrel{\eqref{8131}}\succeq \bq .
\]

\textbf{Case 2.} $\ell(\bu_i)\leq 2$ for all $\bu_i \in \bu$.
Then $\ell(\bq)= 2$, and so $c(\bq)\subseteq c(L_2(\bu))$.
We may write that $\bq=xy$.
Then there exist $\bu_k, \bu_\ell \in L_2(\bu)$ such that $x\in c(\bu_k)$ and $y\in c(\bu_\ell)$.
Now we have
\[
\bu\succeq \bu_k + \bu_\ell \stackrel{\eqref{8132}}\succeq x^2+y^2 \stackrel{\eqref{8132}}\approx xy = \bq.
\]
This derives the inequality $\bu \succeq \bq$.
\end{proof}

\begin{remark}
We note that $\mathsf{V}(S_{(4, 813)})=\mathsf{V}(S_{59}, N_2)$,
since $S_{(4, 813)}$ is isomorphic to a subdirect product of $S_{59}$ and $N_{2}$.
\end{remark}

\begin{pro}\label{pro62701}
The ai-semiring variety $\mathsf{V}(S_{(4, 627)})$ is defined by the identities
\begin{align}
&x \preceq x^2; \label{62701}\\
&xy \approx yx; \label{62702}\\
&xy \preceq x^2+y^2; \label{62703}\\
&x_1x_2x_3x_4 \preceq x_1x_2x_3. \label{62704}
\end{align}
\end{pro}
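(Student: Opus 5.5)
We follow the pattern of the preceding propositions in this section. First we check directly that $S_{(4,627)}$ satisfies \eqref{62701}--\eqref{62704}. Note that \eqref{62704}, together with \eqref{62702}, lets us drop or reorder letters in words of length at least $4$ down to length $3$, provided the remaining length-$3$ word keeps an appropriate subset of the letters. Then let $\bq\preceq\bu$ be a nontrivial inequality of $S_{(4,627)}$ with $\bu=\bu_1+\cdots+\bu_n$. By \eqref{62702} we may treat all words as commutative.

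Next we identify the structural constraints. Since $S_{(4,627)}$ sits among the algebras related to $S_{59}$, we expect it to be a subdirect product of $S_{59}$ with a $2$-element algebra; the natural candidate is $D_2$, possibly via $S_{59}^0$ or an embedding of $D_2$ into a subalgebra. We therefore check which of $D_2$, $M_2$ and $S_{59}$ embed in or are quotients of $S_{(4,627)}$. We anticipate that $S_{59}$ is a quotient or subalgebra, and that $D_2$ is a subalgebra, which gives some $\bu_i$ with $c(\bu_i)\subseteq c(\bq)$. Lemma~\ref{lem5901} then splits the argument into two cases.

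\textbf{Case 1:} some $\bu_j$ has $\ell(\bu_j)\ge 3$. The aim is to derive $\bq$ from $\bu_j+\bu_i$. Using \eqref{62703} and \eqref{62701}, we first obtain from $\bu_i$ the word $\bq$ squared or cubed, or at least a word with content $c(\bq)$. We then use \eqref{62704} to pass from long words to any length-$3$ subword. The hard part is producing $\bq$ itself when $\ell(\bq)\ge 3$ and $c(\bu_i)\subsetneq c(\bq)$. Here we would use that \eqref{62704} says a word of length at least $4$ lies below its length-$3$ prefix. Combined with commutativity, this makes all words of length at least $3$ whose content lies in a given set comparable downward to shorter ones. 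Precisely, \eqref{62704} shows that $\bw\preceq \bw'$ whenever $\bw'$ is a length-$3$ subword of $\bw$. So what we actually need is a sum that lies above $\bq$, obtained by deriving $\bq\bp\preceq\bu$ from $\bu_j$ by multiplying. Since upward multiplication is not directly available from these identities, the realistic route is different: we expect that Lemma~\ref{lem5901} combined with the $D_2$ condition forces $\bq$ to have length at most $3$ or to be equivalent to a short word. We would verify this from the Cayley table, which we expect makes all products of length at least $3$ equal to a single absorbing-type value.

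\textbf{Case 2:} all $\ell(\bu_j)\le 2$. Then $\ell(\bq)\le 2$. If $\ell(\bq)=1$, the $D_2$ condition gives $\bu_i=\bq^k$, and \eqref{62701} together with \eqref{62704}-type reduction yields $\bq$. If $\bq=xy$, we have $c(\bq)\subseteq c(L_2(\bu))$. We expect to need a length-$2$ word with content inside $\{x,y\}$, namely $x^2$, $y^2$ or $xy$, and then to conclude via \eqref{62703}. I expect the main obstacle to be justifying this from the satisfaction conditions: $c(\bq)\subseteq c(L_2(\bu))$ alone gives only words like $xz$ and $yt$, which \eqref{62703} cannot handle. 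So the key step is to extract the sharper necessary condition from the $D_2$ component, namely that some $\bu_i\in D_\bq(\bu)$ exists, and to combine it with the case analysis exactly as in Proposition~\ref{pro334}.
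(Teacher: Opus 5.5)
Your proposal has a genuine gap. The necessary conditions you extract are Lemma~\ref{lem5901} applied to $\bq\preceq\bu$ as a whole, plus $D_\bq(\bu)\neq\emptyset$ from $D_2$. These do not characterize $S_{(4,627)}$, and the configurations you cannot handle in both cases are ones where the inequality is actually false.

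For example, $xy\preceq x+z^3$ and $xy\preceq x^2+xz+yt$ both hold in $S_{59}$ and in $D_2$, but both fail in $S_{(4,627)}$:
\begin{itemize}
\item for the first, the assignment $x,y\mapsto 3$, $z\mapsto 4$ sends $\bq$ to $2$ and $\bu$ to $3$;
\item for the second, $x\mapsto 3$, $y\mapsto 2$, $z,t\mapsto 4$ sends $\bq$ to $1$ and $\bu$ to $2$.
\end{itemize}
Both are violations in the chain $4<3<2<1$. Consequently $S_{(4,627)}$ is not a subdirect product of $S_{59}$ and $D_2$. A long word lying outside $D_\bq(\bu)$ is useless in your Case~1, and no case analysis modelled on Proposition~\ref{pro334} can close your Case~2. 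Your fallback expectation also fails: since $4$ is a multiplicative zero, products of length at least $3$ take both values $1$ and $4$.

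The missing idea is that $4$ is simultaneously the additive minimum and a multiplicative zero, and $\{1,2,3\}\cong S_{59}$. Hence $S_{(4,627)}\cong S_{59}^0$. By Lemma~\ref{lem001}, $\bq\preceq\bu$ holds iff $D_\bq(\bu)\neq\emptyset$ and $S_{59}$ satisfies $\bq\preceq D_\bq(\bu)$. So Lemma~\ref{lem5901} should be applied to $D_\bq(\bu)$ rather than to $\bu$, and then the proof is short.

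\textbf{Long word in $D_\bq(\bu)$.} If some $\bu_i\in D_\bq(\bu)$ has $\ell(\bu_i)\geq 3$, then $\bu\succeq\bu_i\succeq\bu_i\bq\succeq\bq$.
\begin{itemize}
\item The middle step is \eqref{62704}, with $x_1x_2x_3$ replaced by a factorization of $\bu_i$ and $x_4$ by $\bq$. Contrary to your remark, this is exactly the ``upward multiplication'' you need.
\item The last step uses \eqref{62701} and \eqref{62702}. From $x\preceq x^2$ you may raise the multiplicity of any letter already present, and $c(\bu_i)\subseteq c(\bq)$. So $c(\bu_i)\subsetneq c(\bq)$ is no obstacle.
\end{itemize}

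\textbf{All words of $D_\bq(\bu)$ short.} Otherwise $\ell(\bq)\leq 2$.
\begin{itemize}
\item If $\bq=x$, then $x^r\in\bu$ for some $r\geq 2$, and \eqref{62701} finishes.
\item If $\bq=x^2$, the inequality is trivial.
\item If $\bq=xy$ with $x\neq y$, then $c(\bq)\subseteq c(L_2(D_\bq(\bu)))$. This forces either $xy\in\bu$ or both $x^2,y^2\in\bu$, and \eqref{62703} finishes.
\end{itemize}
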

\begin{proof}
It is easy to verify that $S_{(4, 627)}$ satisfies the identities \eqref{62701}--\eqref{62704}.
It remains to show that every inequality of $S_{(4, 627)}$ is derivable from \eqref{62701}--\eqref{62704}.
Let $\bq \preceq \bu$ be such a nontrivial inequality,
where $\bu=\bu_1+\bu_2+\cdots+\bu_n$ with $\bu_i, \bq\in X^+$ for $1 \leq i \leq n$.
Observe that $S_{59}^0$ is isomorphic to $S_{(4,627)}$.
Then $S_{59}^0$ satisfies $\bq\preceq\bu$, and so $S_{59}$ satisfies $\bq\preceq D_\bq(\bu)$.
By Lemma~\ref{lem5901}, we consider the following two cases.

\textbf{Case 1.} $\ell(\bu_i) \geq 3$ for some $\bu_i \in D_\bq(\bu)$. Then
\[
\bu\succeq D_\bq(\bu) \succeq \bu_i \stackrel{\eqref{62704}} \succeq \bu_i \bq \stackrel{\eqref{62701}, \eqref{62702}} \succeq \bq.
\]

\textbf{Case 2.} $\ell(\bu_i) \leq 2$ for every $\bu_i \in D_\bq(\bu)$. Then $\ell(\bq) \leq 2$.
If $\ell(\bq)=1$, then $c(\bq)\subseteq c(D_\bq(\bu))$, and so $c(\bq)=(D_\bq(\bu))$.
Take a word $\bu_k$ in $D_\bq(\bu)$. Then $\bu_k=\bq^r$ for some $r\geq 2$, and so
\[
\bu\succeq D_\bq(\bu) \succeq \bu_k =\bq^r \stackrel{\eqref{62701}} \succeq \bq.
\]

Now suppose that $\ell(\bq) = 2$.
Then $c(\bq)\subseteq c(L_2(D_\bq(\bu)))$, and so $c(\bq)= c(L_2(D_\bq(\bu)))$.
Write $\bq=xy$. Since $\bq\preceq \bu$ is nontrivial, it follows that $x\neq y$.
This implies that there exist $\bu_i,\bu_j\in L_2(D_\bq(\bu))$ such that $\bu_i=x^2$ and $\bu_j=y^2$.
Now we have
\[
D_\bq(\bu) \succeq \bu_i+\bu_j =x^2+ y^2 \stackrel{\eqref{62703}} \succeq xy=\bq.
\]
This proves the inequality $\bu \succeq \bq$.
\end{proof}

\section{Equational bases for 4-element ai-semirings related to $S_{60}$}
In this section, we provide equational bases for some 4-element ai-semirings that relate to $S_{60}$,
whose Cayley tables are given in Table~\ref{tb6001}.
\begin{table}[ht]
\caption{The Cayley tables of $S_{60}$} \label{tb6001}
\begin{tabular}{c|ccc}
$+$      &$1$ &$2$ &$3$\\
\hline
$1$      &$1$ &$1$ &$3$\\
$2$      &$1$ &$2$ &$3$\\
$3$      &$3$ &$3$ &$3$\\
\end{tabular}\qquad
\begin{tabular}{c|ccc}
$\cdot$      &$1$ &$2$ &$3$\\
\hline
$1$      &$3$ &$3$ &$3$\\
$2$      &$3$ &$2$ &$3$\\
$3$      &$3$ &$3$ &$3$\\
\end{tabular}
\end{table}

\begin{lem}\label{lem6001}
Let $\bq\preceq \bu$ be a nontrivial inequality such that
$\bu=\bu_1+\bu_2+\cdots+\bu_n$ with $\bu_i, \bq \in X^+$ for $1\leq i \leq n$.
Suppose that $\bq\preceq \bu$ is satisfied by $S_{60}$.
Then $\bu$ and $\bq$ satisfy the following conditions:
\begin{itemize}
\item[$(1)$] $L_{\geq 2}(\bu) \neq \emptyset$;

\item[$(2)$] If $\ell(\bq)=1$, then $c(\bq)\subseteq c(\bu)$;

\item[$(3)$] If $\ell(\bq)\geq 2$, then $c(\bq)\subseteq c(L_{\geq2}(\bu))$.
\end{itemize}
\end{lem}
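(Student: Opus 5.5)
The plan is to prove each condition by contraposition: assuming it fails, we exhibit a homomorphism $\varphi\colon P_f(X^+)\to S_{60}$ with $\varphi(\bq)\not\leq\varphi(\bu)$. First we record the structure of $S_{60}$. Addition is the chain $2<1<3$, so $3$ is the top and $2$ the bottom. In the multiplication, every product equals $3$ except $2\cdot 2=2$, so $3$ is a multiplicative zero. Hence for a word $\bw$ and an assignment $\varphi$ we have:
\begin{itemize}
\item if $\ell(\bw)=1$, then $\varphi(\bw)$ is the value of its variable;
\item if $\ell(\bw)\geq 2$, then $\varphi(\bw)=2$ exactly when every variable of $\bw$ is sent to $2$, and $\varphi(\bw)=3$ otherwise.
\end{itemize}
A sum takes the maximum in the order $2<1<3$.

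For (1), suppose $L_{\geq 2}(\bu)=\emptyset$, so every $\bu_i$ is a single variable. Since the inequality is nontrivial, $\bq\notin\bu$. If $\ell(\bq)=1$, then $\bq$ is a variable not occurring in $\bu$. Send $\bq$ to $1$ and all other variables to $2$; then $\varphi(\bq)=1>2=\varphi(\bu)$. If $\ell(\bq)\geq 2$, send every variable of $\bu$ to $1$. Then $\varphi(\bu)=1$, while $\varphi(\bq)=3$ because $\bq$ has length at least $2$ and some variable is sent to a value other than $2$. In both cases the inequality fails.

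For (2), let $\bq=x$ with $x\notin c(\bu)$. Send $x$ to $1$ and every other variable to $2$. Then every $\bu_i$ evaluates to $2$, so $\varphi(\bu)=2<1=\varphi(\bq)$.

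For (3), let $\ell(\bq)\geq 2$ and choose $x\in c(\bq)\setminus c(L_{\geq 2}(\bu))$. The obvious choice is $\varphi(x)=1$ and $\varphi(y)=2$ for all $y\neq x$. Then $\varphi(\bq)=3$. Each word of length at least $2$ in $\bu$ avoids $x$, so it evaluates to $2$. Each length-one word evaluates to $2$, or to $1$ if it equals $x$. Thus $\varphi(\bu)\leq 1<3$, and the inequality fails.

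The only delicate point is the case split in (1) according to $\ell(\bq)$, together with checking that nontriviality gives $\bq\notin\bu$. Everything else is a direct evaluation.
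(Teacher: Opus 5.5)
Your argument is correct in substance. The paper states this lemma without proof, and unlike the neighbouring lemmas for the other $3$-element semirings it gives no citation either, so there is no argument to compare yours against. Your direct evaluation in $S_{60}$ is the natural proof. It rests on the observation that a word of length at least $2$ evaluates to $3$ unless every letter is sent to $2$. This is the same kind of separating-substitution argument the paper uses inside its later propositions.

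One step needs repair. In (1), in the case $\ell(\bq)\geq 2$, you only define $\varphi$ on $c(\bu)$. Suppose $c(\bq)\cap c(\bu)=\emptyset$ and the remaining variables are sent to $2$, as in your other cases. Then $\varphi(\bq)=2\leq 1=\varphi(\bu)$ and nothing is refuted. So your claim that some variable of $\bq$ is sent to a value other than $2$ is unjustified as written.

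There are two easy fixes. You can send every variable to $1$, which gives $\varphi(\bu)=1$ and $\varphi(\bq)=3$. Alternatively, this case is already covered by your argument for (3). When $L_{\geq 2}(\bu)=\emptyset$ we have $c(L_{\geq 2}(\bu))=\emptyset$, so any $x\in c(\bq)$ works there. With either fix the proof is complete.
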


\begin{pro}\label{pro62901}
The ai-semiring variety $\mathsf{V}(S_{(4, 629)})$ is defined by the identities
\begin{align}
&x^2y\approx xy;\label{6291}\\
&x \preceq x^2;\label{6292}\\
&xy \approx yx;\label{6293}\\
&xyzt\preceq xy+zt.\label{6294}
\end{align}
\end{pro}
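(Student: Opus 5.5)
The plan follows the pattern of the earlier propositions: check the identities in $S_{(4,629)}$, identify a decomposition, and derive every inequality $\bq\preceq\bu$ of $S_{(4,629)}$ from \eqref{6291}--\eqref{6294}.

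First I would check by hand that $S_{(4,629)}$ satisfies \eqref{6291}--\eqref{6294}. Next I would identify $S_{(4,629)}$ with $S_{60}^0$, in analogy with $S_{(4,627)}\cong S_{59}^0$ and $S_{(4,633)}\cong S_{57}^0$. If this holds, Lemma~\ref{lem001} gives $D_\bq(\bu)\neq\emptyset$ and that $S_{60}$ satisfies $\bq\preceq D_\bq(\bu)$. Lemma~\ref{lem6001} applied to $\bq\preceq D_\bq(\bu)$ then says three things. First, some $\bu_k\in D_\bq(\bu)$ has $\ell(\bu_k)\geq 2$. Second, if $\ell(\bq)=1$, every word of $D_\bq(\bu)$ is a power of $\bq$. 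Third, if $\ell(\bq)\geq 2$, then $c(\bq)\subseteq c(L_{\geq 2}(D_\bq(\bu)))$. By \eqref{6293} I may work with commutative words, and by \eqref{6291} with \eqref{6293} each variable can be taken with multiplicity $1$ or $2$. In fact $\bq$ may be replaced by $\bq^2$ or $x_1^2\cdots x_m^2$ as needed, provided $\ell(\bq)\geq 2$.

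\textbf{Case 1.} $\ell(\bq)=1$. Then $\bu\succeq\bu_k=\bq^r$ with $r\geq 2$, and \eqref{6291}, \eqref{6293} give $\bq^r\approx\bq^2$. Finally \eqref{6292} gives $\bq^2\succeq\bq$.

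\textbf{Case 2.} $\ell(\bq)\geq 2$. Let $c(\bq)=\{x_1,\dots,x_m\}$ and let $\bw_1,\dots,\bw_r$ be the words of $L_{\geq 2}(D_\bq(\bu))$. Their contents lie in $c(\bq)$ and together cover it. If $r\geq 2$, I would apply \eqref{6294} repeatedly: for words of length at least $2$, $\bw_1+\bw_2\succeq\bw_1\bw_2$, by splitting each $\bw_i$ as a product of two nonempty words. The product again has length at least $2$, so I can continue. This gives $\bu\succeq\bw_1\cdots\bw_r$, which by \eqref{6291}, \eqref{6293} equals $x_1^2\cdots x_m^2$, up to a harmless reduction to squares. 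If $r=1$, then $\bw_1$ alone contains $c(\bq)$, and again \eqref{6291}, \eqref{6293} reduce it to $x_1^2\cdots x_m^2$ once every variable is brought to multiplicity $2$. Then $\bq\approx$? Here I must get from $x_1^2\cdots x_m^2$ down to $\bq$, where some variables of $\bq$ may have multiplicity $1$.

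\textbf{Main obstacle.} The crux is whether $x_1^2\cdots x_m^2\succeq x_1\cdots x_m$ is derivable. From \eqref{6291} and \eqref{6293}, $xy\approx x^2y\approx x^2y^2$ for distinct or equal letters when the word has length at least $2$. So every word of length at least $2$ is equivalent to the product of the squares of its content, and the step is an equality, not an inequality. I would verify this carefully: $x_1x_2\cdots x_m\approx x_1^2x_2\cdots x_m\approx\cdots\approx x_1^2\cdots x_m^2$ by repeatedly commuting a single-occurrence letter to the front and applying \eqref{6291}. Thus Case~2 closes by equality. The remaining care points are two. One is the use of \eqref{6294} when a word has length exactly $2$; there $x^2$ must be split as $x\cdot x$, which is fine. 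The other is the confirmation that $S_{(4,629)}\cong S_{60}^0$, which I would try first by matching the element $4$ with the adjoined zero.
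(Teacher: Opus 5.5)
Your proposal is correct and is essentially the paper's proof. Both use $S_{(4,629)}\cong S_{60}^0$ (via $4\mapsto 0$, $1\mapsto 3$, $2\mapsto 1$, $3\mapsto 2$) together with Lemmas~\ref{lem001} and~\ref{lem6001}; both settle $\ell(\bq)=1$ by $\bu\succeq\bq^k\approx\bq^2\succeq\bq$; and for $\ell(\bq)\geq 2$ both multiply the words of $L_{\geq 2}(D_\bq(\bu))$ together using \eqref{6294}. The ``obstacle'' you flag disappears exactly as you observe: by \eqref{6291} and \eqref{6293}, any two words of length at least $2$ with the same content are equivalent, so the final step $\bu_1\cdots\bu_r\approx\bq$ is an equality, just as in the paper.
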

\begin{proof}
It is readily verified that $S_{(4,629)}$ satisfies the identities \eqref{6291}--\eqref{6294}.
It suffices to prove that every inequality of $S_{(4,629)}$ is derivable from \eqref{6291}--\eqref{6294}.
Let $\bq \preceq \bu$ be such a nontrivial inequality,
where $\bu = \bu_1 + \bu_2 + \cdots + \bu_n$ with $\bu_i,\bq \in X^+$ for $1 \leq i \leq n$.
Since $S_{60}^0$ is isomorphic to $S_{(4,629)}$,
it follows that $S_{60}^0$ satisfies $\bq \preceq \bu$, and so $S_{60}$ satisfies $\bq \preceq D_\bq(\bu)$.

If $\ell(\bq) = 1$, then for any $\bu_i \in D_\bq(\bu)$, $c(\bu_i) \subseteq c(\bq)$.
Thus $\bu_i = \bq^k$ for some $k \geq 2$, and so
\[
\bu \succeq \bu_i = \bq^k \stackrel{\eqref{6292}} \succeq \bq.
\]

If $\ell(\bq) \geq 2$, then by Lemma~\ref{lem6001}, $L_{\geq 2}(D_\bq(\bu))) \neq \emptyset$ and
$c(\bq) \subseteq c(L_{\geq 2}(D_\bq(\bu)))$, and so $c(\bq) = c(L_{\geq 2}(D_\bq(\bu)))$.
We may assume that $L_{\geq 2}(D_\bq(\bu))=\bu_1+\bu_2+\cdots+\bu_r$. Then

\begin{align*}
&\bu\succeq \bu_1+\bu_2+\cdots+\bu_r
\stackrel{\eqref{6294}}\succeq  \bu_1\bu_2\cdots\bu_r \stackrel{\eqref{6291}, \eqref{6293}}\approx \bq.
\end{align*}
This proves the inequality $\bu \succeq \bq$.
\end{proof}

\begin{table}[ht]
\caption{The Cayley tables of $S_{31}$} \label{tb3101}
\begin{tabular}{c|ccc}
$+$      &$1$ &$2$ &$3$\\
\hline
$1$      &$1$ &$1$ &$3$\\
$2$      &$1$ &$2$ &$3$\\
$3$      &$3$ &$3$ &$3$\\
\end{tabular}\qquad
\begin{tabular}{c|ccc}
$\cdot$      &$1$ &$2$ &$3$\\
\hline
$1$      &$1$ &$1$ &$3$\\
$2$      &$2$ &$2$ &$3$\\
$3$      &$3$ &$3$ &$3$\\
\end{tabular}
\end{table}

\begin{pro}
The ai-semirings $S_{(4, 500)}$, $S_{(4, 502)}$, $S_{(4, 604)}$, and $S_{(4, 694)}$ are all finitely based.
\end{pro}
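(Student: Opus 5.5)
The plan is to follow the pattern of the preceding propositions, as in the treatment of $S_{(4,536)}$ and $S_{(4,607)}$ in the section on $S_{57}$. I would reduce each of the four algebras to a variety already known to be finitely based, using subdirect decompositions and duality. The table of $S_{31}$ placed just before the statement suggests the following. First, I would check that $S_{(4,502)}$ (or $S_{(4,500)}$) is isomorphic to a subdirect product of $S_{31}$ and $S_{60}$, via two congruences with nontrivial blocks $\{1,2\}$ and $\{3,4\}$, exactly as for $S_{(4,538)}$ and $S_{(4,544)}$. This gives $\mathsf{V}(S_{(4,502)})=\mathsf{V}(S_{31},S_{60})$.

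Next, I would invoke the fact already used in Remark~\ref{remark26080801}: $S_{31}$ contains $L_2$ and $M_2$ as subalgebras, and it satisfies $x\preceq xy$, $xyz\approx xzy$, $xy\preceq x+y$. By \cite{ryy}, these identities form a basis of $\mathsf{V}(L_2,M_2)$, so $\mathsf{V}(S_{31})=\mathsf{V}(L_2,M_2)$. Since $M_2$ embeds into $S_{60}$ (the subalgebra $\{2,3\}$ of $S_{60}$ has the multiplication of $M_2$), we get $\mathsf{V}(S_{(4,502)})=\mathsf{V}(S_{60},L_2)$.

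In parallel, I would check that $S_{(4,694)}$ is a subdirect product of $S_{60}$ and $L_2$, via congruences with nontrivial blocks $\{1,2\}$ and $\{2,3,4\}$, as for $S_{(4,696)}$ and $S_{(4,697)}$. Then $\mathsf{V}(S_{(4,694)})=\mathsf{V}(S_{60},L_2)=\mathsf{V}(S_{(4,502)})$.

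It remains to show that $\mathsf{V}(S_{60},L_2)$ is finitely based. I would first look for this in \cite{yrzs}, which treats 4-element algebras of Type~IV built from $S_{60}$. The expected form is a statement like ``$\mathsf{V}(S_{60},L_2)=\mathsf{V}(S_{(4,k)})$ is finitely based'' (cf.\ \cite[Remark~4.6]{yrzs}, used above for $S_{58}$). If no such citation is available, I would instead give a direct basis in the style of Proposition~\ref{pro66901}. The candidate identities are $x^2y\approx xy$, $x\preceq x^2$, $xyz\approx xzy$, $xyz\preceq xy$, an inequality of the form $xyzt\preceq x s+yz\cdots$, and a head-transfer law like \eqref{6693}. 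The completeness argument would combine Lemma~\ref{lem6001} (giving $c(\bq)\subseteq c(L_{\ge2}(\bu))$ when $\ell(\bq)\ge2$) with Lemma~\ref{nlemma1}(1) (giving a summand $\bu_i$ with $h(\bu_i)=h(\bq)$). The derivation would chain as in Proposition~\ref{pro62901}: start from $h(\bq)s(\bu_i)$, multiply in the length-$\ge2$ words, and reorder with $xyz\approx xzy$. This step, either locating the correct citation or verifying the direct basis, is the main obstacle; the case $\ell(\bq)=1$ is routine.

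Finally, I would check that $S_{(4,500)}$ and $S_{(4,604)}$ have multiplications dual to those of $S_{(4,502)}$ and $S_{(4,694)}$ respectively. If the pairing comes out differently, I would swap roles accordingly. Since dual algebras share the finite basis property, all four algebras are then finitely based.
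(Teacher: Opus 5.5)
Your reduction is the paper's argument step for step. $S_{(4,502)}$ is a subdirect product of $S_{31}$ and $S_{60}$ (blocks $\{1,2\}$ and $\{3,4\}$), and $\mathsf{V}(S_{31})=\mathsf{V}(L_2,M_2)$ by \cite{ryy}. Since $M_2$ embeds in $S_{60}$, this gives $\mathsf{V}(S_{(4,502)})=\mathsf{V}(S_{60},L_2)$. Next, $S_{(4,694)}$ is a subdirect product of $S_{60}$ and $L_2$ (blocks $\{1,2\}$ and $\{2,3,4\}$). Finally, $S_{(4,500)}$ and $S_{(4,604)}$ are the duals of $S_{(4,502)}$ and $S_{(4,694)}$; I checked these pairings and they are correct.

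The step you left open is settled in the paper by citation. \cite[Remark~6.3]{yrzs} gives $\mathsf{V}(S_{60},L_2)=\mathsf{V}(S_{(4,459)})$, and \cite[Proposition~6.2]{yrzs} shows that $S_{(4,459)}$ is finitely based. (Also, $S_{(4,459)}$ lies in the range $388$--$480$, which the paper counts among the already-settled types, not Type~IV; this does not affect the argument.)

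Do not rely on your fallback direct basis. The laws $xyz\preceq xy$ and $xyz\preceq x+yz$ fail in $S_{60}$. There $2<1<3$, and every product other than $2\cdot 2=2$ equals the top $3$. Taking $x=y=2$, $z=1$, resp.\ $x=1$, $y=z=2$, makes the left side $3$ while the right side is $2$, resp.\ $1$. So these $S_{55}$-style laws cannot belong to a basis of $\mathsf{V}(S_{60},L_2)$.
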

\begin{proof}
It is easy to check that $S_{(4, 502)}$ is isomorphic to a subdirect product of $S_{31}$ and $S_{60}$
via the congruences defined by the nontrivial blocks $\{1, 2\}$ and $\{3, 4\}$;
the Cayley tables of $S_{31}$ are given in Table~\ref{tb3101}.
Thus $\mathsf{V}(S_{(4, 502)}) = \mathsf{V}(S_{31}, S_{60})$.
Moreover, $L_2$ and $M_2$ are isomorphic to the subalgebras $\{1, 2\}$ and $\{1, 3\}$ of $S_{31}$, respectively.
$S_{31}$ satisfies the identities
\[
x\preceq xy, \quad xyz \approx xzy, \quad xy \preceq x+y,
\]
which form an equational basis of $\mathsf{V}(L_2, M_2)$ (see \cite{ryy}).
Thus $\mathsf{V}(S_{31})=\mathsf{V}(L_2, M_2)$, and so $\mathsf{V}(S_{(4, 502)})=\mathsf{V}(S_{60}, L_2, M_2)$.
Since $M_2$ embeds into $S_{60}$ (as the subalgebra $\{2, 3\}$), we obtain
\[
\mathsf{V}(S_{(4, 502)})=\mathsf{V}(S_{60}, L_2).
\]
By \cite[Remark 6.3]{yrzs}, $\mathsf{V}(S_{60}, L_2)=\mathsf{V}(S_{(4, 459)})$,
so $\mathsf{V}(S_{(4, 502)})=\mathsf{V}(S_{(4, 459)})$.
Consequently, $S_{(4, 502)}$ and $S_{(4, 459)}$ have the same finite basis property.
By \cite[Proposition 6.2]{yrzs}, $S_{(4, 459)}$ is finitely based.
Therefore, $S_{(4, 502)}$ is finitely based.
Since $S_{(4,500)}$ and $S_{(4,502)}$ have dual multiplications,
$S_{(4,500)}$ is also finitely based.

It is easy to see that $S_{(4, 694)}$ is isomorphic to a subdirect product of $S_{60}$ and $L_2$
via the congruences defined by the nontrivial blocks $\{1, 2\}$ and $\{2, 3, 4\}$.
Thus $\mathsf{V}(S_{(4, 694)})=\mathsf{V}(S_{60}, L_2)$,
and so $\mathsf{V}(S_{(4, 694)})=\mathsf{V}(S_{(4,502)})$.
Consequently, $S_{(4, 694)}$ is finitely based.
Finally, $S_{(4, 604)}$ and $S_{(4, 694)}$ have dual multiplications.
Therefore, $S_{(4, 604)}$ is finitely based.
\end{proof}

\begin{table}[ht]
\caption{The Cayley tables of $S_{43}$} \label{tb4301}
\begin{tabular}{c|ccc}
$+$      &$1$ &$2$ &$3$\\
\hline
$1$      &$1$ &$1$ &$3$\\
$2$      &$1$ &$2$ &$3$\\
$3$      &$3$ &$3$ &$3$\\
\end{tabular}\qquad
\begin{tabular}{c|ccc}
$\cdot$      &$1$ &$2$ &$3$\\
\hline
$1$      &$1$ &$2$ &$3$\\
$2$      &$2$ &$2$ &$3$\\
$3$      &$3$ &$3$ &$3$\\
\end{tabular}
\end{table}

\begin{pro}
The ai-semirings $S_{(4, 503)}$ and $S_{(4, 706)}$ are both finitely based.
\end{pro}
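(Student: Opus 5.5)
The plan follows the template used throughout this section, with $S_{43}$ playing the role that $S_{31}$ played in the preceding proposition. The Cayley tables of $S_{43}$ were placed immediately before the statement, which strongly suggests this route.

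First I will check directly from the Cayley tables in Table~\ref{tb1} that $S_{(4,503)}$ is isomorphic to a subdirect product of $S_{43}$ and $S_{60}$. The two congruences I expect are those with nontrivial blocks $\{1,2\}$ and $\{3,4\}$, exactly as for $S_{(4,502)}$. This gives
\[
\mathsf{V}(S_{(4,503)})=\mathsf{V}(S_{43},S_{60}).
\]
Next I use the fact, already invoked in the remark following Proposition~\ref{pro313}, that $D_2$ and $M_2$ embed into $S_{43}$. In Table~\ref{tb4301} I expect $\{1,3\}$ to be a copy of $D_2$ and $\{1,2\}$ a copy of $M_2$. Moreover, $S_{43}$ satisfies the equational basis of $\mathsf{V}(D_2,M_2)$ from \cite[Table 3]{sr}, so $\mathsf{V}(S_{43})=\mathsf{V}(D_2,M_2)$. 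Since $M_2$ embeds into $S_{60}$ as the subalgebra $\{2,3\}$, it follows that
\[
\mathsf{V}(S_{(4,503)})=\mathsf{V}(S_{60},D_2).
\]

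For $S_{(4,706)}$, I expect it to be a subdirect product of $S_{60}$ and $D_2$ via the congruences with nontrivial blocks $\{1,2\}$ and $\{2,3,4\}$. This parallels the treatment of $S_{(4,694)}$ and $S_{(4,709)}$, and gives $\mathsf{V}(S_{(4,706)})=\mathsf{V}(S_{60},D_2)=\mathsf{V}(S_{(4,503)})$. If instead $S_{(4,706)}$ turns out to be the dual of $S_{(4,503)}$, or of an algebra equivalent to it, I will conclude via dual multiplications.

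It then remains to show that $\mathsf{V}(S_{60},D_2)$ is finitely based, and this is the main obstacle. I would first look in \cite{yrzs} for an algebra of Type IV with $\mathsf{V}(S_{60},D_2)=\mathsf{V}(S_{(4,k)})$, analogous to Remark 6.3 there for $L_2$, and quote its finite basis. Failing that, I would prove it directly, modelled on Propositions~\ref{pro62901} and~\ref{pro334}. Take a nontrivial inequality $\bq\preceq\bu$ of $\mathsf{V}(S_{60},D_2)$. Satisfaction in $D_2$ gives some $\bu_i\in D_\bq(\bu)$, and Lemma~\ref{lem6001} gives $L_{\ge2}(\bu)\ne\emptyset$ together with the content conditions. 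Since $S_{60}$ satisfies $xy\approx yx$, $x^2y\approx xy$ and $x\preceq x^2$, I would look for a candidate basis containing these identities and an absorption law of the shape $xyzt\preceq xy+zt$. The derivation would start by forming the product of the long words of $\bu$, which by Lemma~\ref{lem6001}(3) contains $c(\bq)$, and then use $\bu_i$ to cut away the surplus variables. The delicate case is $\ell(\bq)=1$, or $\ell(\bq)\ge2$ with $\bu_i$ short. Here I expect to need an extra identity in the spirit of \eqref{7055}--\eqref{7056}, roughly $xz\preceq x+xy+zt$, to finish the derivation.
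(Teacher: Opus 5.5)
Your reduction is the paper's. It uses the same subdirect decompositions, $S_{(4,503)}$ into $S_{43}$ and $S_{60}$ via the blocks $\{1,2\}$ and $\{3,4\}$, and $S_{(4,706)}$ into $S_{60}$ and $D_2$ via the blocks $\{1,2\}$ and $\{2,3,4\}$, collapsing both varieties to $\mathsf{V}(S_{60},D_2)$. The paper then finishes exactly as your first option proposes: it quotes \cite[Remark 6.6]{yrzs} for $\mathsf{V}(S_{60},D_2)=\mathsf{V}(S_{(4,467)})$ and \cite[Proposition 6.5]{yrzs} for its finite basis, so the speculative direct basis is not needed. Note that $S_{(4,467)}$ lies in the already-settled range $388$--$480$, not in Type IV. One small correction: in $S_{43}$ it is $\{1,2\}$ that is a copy of $D_2$ and $\{1,3\}$ that is a copy of $M_2$, which does not affect the argument.
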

\begin{proof}
It is routine to check that $S_{(4, 503)}$ is isomorphic to a subdirect product of $S_{43}$ and $S_{60}$
via the congruences defined by the nontrivial blocks $\{1, 2\}$ and $\{3, 4\}$;
the Cayley tables of $S_{43}$ are given in Table~\ref{tb4301}.
Thus $\mathsf{V}(S_{(4, 503)}) = \mathsf{V}(S_{43}, S_{60})$.
Moreover, $D_2$ and $M_2$ are isomorphic to the subalgebras $\{1, 2\}$ and $\{1, 3\}$ of $S_{43}$, respectively.
$S_{43}$ satisfies the identities
\[
x^2\approx x,\quad xy\approx yx, \quad xy\preceq x+y, \quad xz \preceq x+yz,
\]
which form an equational basis of $\mathsf{V}(D_2, M_2)$ (see \cite{ryy}).
Thus $\mathsf{V}(S_{43})=\mathsf{V}(D_2, M_2)$, so $\mathsf{V}(S_{(4, 503)})=\mathsf{V}(S_{60}, D_2, M_2)$.
Since $M_2$ embeds into $S_{60}$, we obtain
\[
\mathsf{V}(S_{(4, 503)})=\mathsf{V}(S_{60}, D_2).
\]
By \cite[Remark 6.6]{yrzs}, $\mathsf{V}(S_{60}, D_2)=\mathsf{V}(S_{(4, 467)})$,
so $\mathsf{V}(S_{(4, 503)})=\mathsf{V}(S_{(4, 467)})$.
Consequently, $S_{(4, 503)}$ and $S_{(4, 467)}$ have the same finite basis property.
By \cite[Proposition 6.5]{yrzs}, $S_{(4, 467)}$ is finitely based.
Therefore, $S_{(4, 503)}$ is finitely based.

Now, $S_{(4, 706)}$ is isomorphic to a subdirect product of $S_{60}$ and $D_2$
via the congruences defined by the nontrivial blocks $\{1, 2\}$ and $\{2, 3, 4\}$.
Thus $\mathsf{V}(S_{(4, 706)})=\mathsf{V}(S_{60}, D_2)$, so $\mathsf{V}(S_{(4, 706)})=\mathsf{V}(S_{(4,503)})$.
Therefore, $S_{(4, 706)}$ is also finitely based.
\end{proof}

\begin{table}[ht]
\caption{The Cayley tables of $S_{52}$} \label{tb5201}
\begin{tabular}{c|ccc}
$+$      &$1$ &$2$ &$3$\\
\hline
$1$      &$1$ &$1$ &$3$\\
$2$      &$1$ &$2$ &$3$\\
$3$      &$3$ &$3$ &$3$\\
\end{tabular}\qquad
\begin{tabular}{c|ccc}
$\cdot$      &$1$ &$2$ &$3$\\
\hline
$1$      &$2$ &$2$ &$3$\\
$2$      &$2$ &$2$ &$3$\\
$3$      &$3$ &$3$ &$3$\\
\end{tabular}
\end{table}

\begin{pro}
The ai-semirings $S_{(4, 505)}$ and $S_{(4, 814)}$ are both finitely based.
\end{pro}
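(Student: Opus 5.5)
The plan follows the pattern of the preceding propositions in this section. I will reduce both algebras to varieties generated by $S_{60}$ together with a $2$-element algebra, and then identify those with varieties already known to be finitely based. Table~\ref{tb5201} (the Cayley tables of $S_{52}$) is displayed just before the statement, which strongly suggests the following route.

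\textbf{Step 1: $S_{(4,505)}$.} I would check that $S_{(4,505)}$ is isomorphic to a subdirect product of $S_{52}$ and $S_{60}$, via the congruences with nontrivial blocks $\{1,2\}$ and $\{3,4\}$. This gives $\mathsf{V}(S_{(4,505)})=\mathsf{V}(S_{52},S_{60})$.

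As recalled in Remark~\ref{remark26080101}, $M_2$ and $N_2$ are isomorphic to the subalgebras $\{2,3\}$ and $\{1,2\}$ of $S_{52}$. Moreover, $S_{52}$ satisfies $xy\approx yx$, $xy\preceq xyz$ and $xy\preceq x+y$, which by \cite{ryy} form an equational basis of $\mathsf{V}(M_2,N_2)$. Hence $\mathsf{V}(S_{52})=\mathsf{V}(M_2,N_2)$.

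Since $M_2$ embeds into $S_{60}$ as the subalgebra $\{2,3\}$, we conclude $\mathsf{V}(S_{(4,505)})=\mathsf{V}(S_{60},N_2)$.

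\textbf{Step 2: $S_{(4,814)}$.} I would verify directly from its Cayley tables that $S_{(4,814)}$ is isomorphic to a subdirect product of $S_{60}$ and $N_2$, via the congruences with nontrivial blocks $\{1,2\}$ and $\{2,3,4\}$. Then $\mathsf{V}(S_{(4,814)})=\mathsf{V}(S_{60},N_2)=\mathsf{V}(S_{(4,505)})$, so the two algebras have the same finite basis property.

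\textbf{Step 3: finite basis for $\mathsf{V}(S_{60},N_2)$.} This is the main obstacle. By analogy with \cite[Remarks 6.3, 6.6]{yrzs}, I expect \cite{yrzs} to contain a $4$-element algebra of Type~II (some $S_{(4,k)}$ with $388\le k\le 480$) whose variety equals $\mathsf{V}(S_{60},N_2)$ and which is proved finitely based there. So I would first look for the corresponding remark and proposition in \cite{yrzs} and cite them.

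If no such result is available, I would instead write down a finite basis for $\mathsf{V}(S_{60},N_2)$ directly, guided by Lemma~\ref{lem6001} and Lemma~\ref{nlemma1}(5). The natural candidate is modelled on Proposition~\ref{pro62901} together with the basis of $\mathsf{V}(M_2,N_2)$. It would include:
\begin{itemize}
\item commutativity and $x^2y\approx xy$;
\item $xy\preceq x+y$, with length restrictions so that it does not hold for length-one words in $N_2$;
\item $xyzt\preceq xy+zt$ and $xy\preceq xyz$.
\end{itemize}

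For a nontrivial inequality $\bq\preceq\bu$ holding in both algebras, we know $\ell(\bq)\ge2$, $L_{\ge2}(\bu)\neq\emptyset$ and $c(\bq)\subseteq c(L_{\ge2}(\bu))$. I would derive $\bq$ by multiplying the long words of $\bu$ together, normalizing with commutativity and $x^2y\approx xy$, and then shortening. The delicate part will be the case $\ell(\bq)=2$ with a repeated variable. Which specific cases of this kind need separate treatment can only be settled once the Cayley tables of $S_{60}$ and $N_2$ are checked against the candidate identities, and that check is where I expect most of the work to lie.
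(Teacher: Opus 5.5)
Your proposal is correct and follows the paper's proof exactly. It uses the same two subdirect decompositions: $S_{(4,505)}$ into $S_{52}$ and $S_{60}$ via the blocks $\{1,2\}$ and $\{3,4\}$, and $S_{(4,814)}$ into $S_{60}$ and $N_2$ via $\{1,2\}$ and $\{2,3,4\}$. It then reduces both to $\mathsf{V}(S_{60},N_2)$ in the same way. The result you anticipated in Step~3 is \cite[Remark~6.9, Proposition~6.8]{yrzs}: it gives $\mathsf{V}(S_{60},N_2)=\mathsf{V}(S_{(4,479)})$ with $S_{(4,479)}$ finitely based, so your fallback basis is not needed.
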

\begin{proof}
It is easy to verify that $S_{(4, 505)}$ is isomorphic to a subdirect product of $S_{52}$ and $S_{60}$
via the congruences defined by the nontrivial blocks $\{1, 2\}$ and $\{3, 4\}$;
the Cayley tables of $S_{52}$ are given in Table~\ref{tb5201}.
Thus $\mathsf{V}(S_{(4, 505)}) = \mathsf{V}(S_{52}, S_{60})$.
Moreover, $M_2$ and $N_2$ are isomorphic to the subalgebras $\{2, 3\}$ and $\{1, 2\}$ of $S_{52}$.
$S_{52}$ satisfies the identities
\[
xy\approx yx, \quad xy \preceq xyz, \quad xy\preceq x+y,
\]
which form an equational basis of $\mathsf{V}(M_2, N_2)$ (see \cite{ryy}).
Thus $\mathsf{V}(S_{52})=\mathsf{V}(M_2, N_2)$, so $\mathsf{V}(S_{(4, 505)})=\mathsf{V}(S_{60}, M_2, N_2)$.
Since $M_2$ embeds into $S_{60}$, we obtain
\[
\mathsf{V}(S_{(4, 505)})=\mathsf{V}(S_{60}, N_2).
\]
By \cite[Remark 6.9]{yrzs}, $\mathsf{V}(S_{60}, N_2)=\mathsf{V}(S_{(4, 479)})$,
so $\mathsf{V}(S_{(4, 505)})=\mathsf{V}(S_{(4, 479)})$.
Consequently, $S_{(4, 505)}$ and $S_{(4, 479)}$ have the same finite basis property.
By \cite[Proposition 6.8]{yrzs}, $S_{(4, 479)}$ is finitely based.
Therefore, $S_{(4, 505)}$ is finitely based.

Finally, $S_{(4, 814)}$ is isomorphic to a subdirect product of $S_{60}$ and $N_2$
via the congruences defined by the nontrivial blocks $\{1, 2\}$ and $\{2, 3, 4\}$.
Thus $\mathsf{V}(S_{(4, 814)})=\mathsf{V}(S_{60}, N_2)=\mathsf{V}(S_{(4, 505)})$.
Therefore, $S_{(4, 814)}$ is also finitely based.
\end{proof}

\section{Equational bases for varieties generated by two $3$-element ai-semirings}
In this section, we present equational bases for some varieties generated by two $3$-element ai-semirings.
We begin by providing solutions of the equational problem for three $3$-element ai-semirings.

\begin{lem}\label{pro422}
Let $\bq\preceq \bu$ be an inequality,
where $\bu = \bu_1 + \bu_2 + \cdots + \bu_n $ with $\bu_i, \bq\in X^+$ for $1 \leq i \leq n$.
Then $\bq \preceq \bu$ is satisfied by $M_2^{0}$ if and only if $D_\bq(\bu)\neq\emptyset$, and $c(\bq)=c(D_\bq(\bu))$.
\end{lem}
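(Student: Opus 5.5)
Combining Lemma~\ref{lem001} with the description of $M_2$ in Lemma~\ref{nlemma1}(3) should give the statement almost immediately. By construction, $M_2^0$ is obtained from $M_2$ by adjoining a new element $0$ that is both the additive minimum and a multiplicative zero. So Lemma~\ref{lem001} applies with $S=M_2$: the inequality $\bq\preceq\bu$ holds in $M_2^0$ if and only if $D_\bq(\bu)\neq\emptyset$ and $\bq\preceq D_\bq(\bu)$ holds in $M_2$.

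Next I translate the second condition using Lemma~\ref{nlemma1}(3). It says that $\bq\preceq D_\bq(\bu)$ holds in $M_2$ if and only if $c(\bq)\subseteq c(D_\bq(\bu))$. Lemma~\ref{nlemma1} is stated for nontrivial inequalities. In the trivial case, where $\bq$ is a word of $D_\bq(\bu)$, the inequality holds and the containment of contents is also automatic, so the equivalence remains valid.

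Finally, by the definition of $D_\bq(\bu)$, every word $\bu_i$ in it satisfies $c(\bu_i)\subseteq c(\bq)$. Hence $c(D_\bq(\bu))\subseteq c(\bq)$ always holds. Together with the previous step, the condition $c(\bq)\subseteq c(D_\bq(\bu))$ is therefore equivalent to the equality $c(\bq)=c(D_\bq(\bu))$, which is exactly the criterion in the statement.

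There is no real obstacle. The only points needing a moment's care are checking that $M_2^0$ is literally the construction $S^0$ to which Lemma~\ref{lem001} applies, and the trivial-inequality case noted above.
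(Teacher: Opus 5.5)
Your proposal is correct and follows exactly the paper's route: the paper's entire proof is the remark that the lemma follows immediately from Lemma~\ref{lem001} (applied with $S=M_2$) and Lemma~\ref{nlemma1}(3). You have simply made explicit the small points the paper leaves implicit, namely that $c(D_\bq(\bu))\subseteq c(\bq)$ always holds, so the containment criterion becomes the stated equality, and that the trivial case causes no difficulty.
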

\begin{proof}
This follows immediately from Lemmas~\ref{lem001} and~\ref{nlemma1}.
\end{proof}

\begin{lem}\label{pro423}
Let $\bq\preceq \bu$ be an ai-semiring inequality,
where $\bu = \bu_1 + \bu_2 + \cdots + \bu_n $ with $\bu_i, \bq\in X^+$ for $1\leq i \leq n$.
Then $\bq \preceq \bu$ is satisfied by $L_2^{0}$ if and only if $D_\bq(\bu)\neq\emptyset$, and $h(\bq)=h(\bu_i)$ for some $\bu_i\in D_\bq(\bu)$.
\end{lem}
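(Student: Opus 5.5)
This follows the same pattern as Lemma~\ref{pro422}: combine Lemma~\ref{lem001} with part (1) of Lemma~\ref{nlemma1}. I will apply Lemma~\ref{lem001} with $S=L_2$. It says that $\bq\preceq\bu$ holds in $L_2^0$ if and only if two conditions hold. First, $D_\bq(\bu)\neq\emptyset$. Second, the inequality $\bq\preceq D_\bq(\bu)$ holds in $L_2$. Here $D_\bq(\bu)$ is regarded as the term formed by the sum of its words. It is a legitimate term because it is nonempty by the first condition.

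Next I will translate the second condition using Lemma~\ref{nlemma1}(1). That statement concerns nontrivial inequalities, so I split into two cases according to whether $\bq\preceq D_\bq(\bu)$ is trivial.

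\textbf{Nontrivial case.} Lemma~\ref{nlemma1}(1) says it holds in $L_2$ if and only if $H_\bq(D_\bq(\bu))\neq\emptyset$. By definition this means there is some $\bu_i\in D_\bq(\bu)$ with $h(\bu_i)=h(\bq)$.

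\textbf{Trivial case.} Here $\bq\in D_\bq(\bu)$, so the inequality holds in $L_2$ automatically. The stated condition is also satisfied, by taking $\bu_i=\bq$.

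I also note that the part (1) criterion remains valid for trivial inequalities, since $h$ is a homomorphism to the left-zero band. Alternatively, one can verify that direction directly: $L_2$ evaluates a word at the value of its head, and the sum is the maximum. So the inequality holds if and only if for every assignment, the value of $h(\bq)$ is at most the maximum of the values of $h(\bu_i)$ over $\bu_i\in D_\bq(\bu)$. This is equivalent to the head of $\bq$ occurring among those heads.

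Putting the two conditions together gives exactly the claimed characterization. There is no real obstacle. The only point needing care is the triviality caveat in Lemma~\ref{nlemma1}, which the case split above handles.
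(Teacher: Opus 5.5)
Your proposal is correct and is essentially the paper's own argument: the paper proves this in one line by combining Lemma~\ref{lem001} (with $S=L_2$) and Lemma~\ref{nlemma1}(1). Your case split for the trivial inequality $\bq\in D_\bq(\bu)$ spells out a detail the paper leaves implicit.
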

\begin{proof}
This follows immediately from Lemmas~\ref{lem001} and~\ref{nlemma1}.
\end{proof}

\begin{lem}\label{cor424}
Let $\bq\preceq \bu$ be an ai-semiring inequality,
where $\bu = \bu_1 + \bu_2 + \cdots + \bu_n $ with $\bu_i, \bq\in X^+$ for $1\leq i \leq n$.
Then $\bq \preceq \bu$ is satisfied by $R_2^{0}$ if and only if $D_\bq(\bu)\neq\emptyset$, and $t(\bq)=t(\bu_i)$ for some $\bu_i\in D_\bq(\bu)$.
\end{lem}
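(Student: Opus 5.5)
The plan is to combine Lemma~\ref{lem001} with part (2) of Lemma~\ref{nlemma1}, in the same way the two preceding lemmas were obtained for $M_2^0$ and $L_2^0$. First apply Lemma~\ref{lem001} with $S=R_2$. It says that $R_2^0$ satisfies $\bq\preceq\bu$ if and only if two conditions hold: $D_\bq(\bu)\neq\emptyset$, and $R_2$ satisfies $\bq\preceq D_\bq(\bu)$.

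Next, translate the second condition using Lemma~\ref{nlemma1}(2) applied to the term $D_\bq(\bu)$. It gives that $R_2$ satisfies $\bq\preceq D_\bq(\bu)$ exactly when $T_\bq(D_\bq(\bu))\neq\emptyset$. That is precisely the condition that $t(\bu_i)=t(\bq)$ for some $\bu_i\in D_\bq(\bu)$. Putting the two steps together yields the stated equivalence.

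The one point needing care is that Lemma~\ref{nlemma1} is formulated for nontrivial inequalities, whereas $\bq\preceq D_\bq(\bu)$ may be trivial. I would handle this directly. If $\bq\in D_\bq(\bu)$, the inequality holds in $R_2$, and the witness $\bu_i=\bq$ has $t(\bu_i)=t(\bq)$, so both sides of the equivalence hold. Otherwise the inequality is nontrivial and part (2) applies as stated. Alternatively, one can check directly that the criterion of part (2) also characterizes satisfaction of trivial inequalities: since $R_2$ is a right-zero band, every word evaluates to the value of its tail, so $\varphi(\bq)\le\varphi(D_\bq(\bu))$ for all $\varphi$ exactly when $t(\bq)$ is the tail of some summand.

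Beyond this, no real obstacle is expected; the argument is a two-line combination of the cited lemmas.
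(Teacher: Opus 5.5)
Your proposal is correct and follows the paper's proof exactly: the paper also obtains this in one line by combining Lemma~\ref{lem001} (with $S=R_2$) and part~(2) of Lemma~\ref{nlemma1}. Your extra treatment of the case where $\bq\preceq D_\bq(\bu)$ is trivial is sound and supplies a detail the paper leaves implicit.
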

\begin{proof}
This follows immediately from Lemmas~\ref{lem001} and~\ref{nlemma1}.
\end{proof}

\begin{pro}\label{pro425}
The ai-semiring variety $\mathsf{V}(S_{(4, 677)})$ is defined by the identities
\begin{align}
&x^{3} \approx x^{2}; \label{6771}\\
&xy \approx yx; \label{6772}\\
&xy \preceq x+y; \label{6773}\\
&xy \preceq x+y^{2}; \label{6774}\\
&xy \preceq y+xy^{2}; \label{6775}\\
&x^{2}y+xy^{2} \approx xy. \label{6776}
\end{align}
\end{pro}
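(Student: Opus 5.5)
We will argue as in Propositions~\ref{pro74701} and~\ref{pro76901}. First we check directly that $S_{(4,677)}$ satisfies \eqref{6771}--\eqref{6776}. Next we locate two small generators of $\mathsf{V}(S_{(4,677)})$. Because the basis contains $x^3\approx x^2$ and $x^2y+xy^2\approx xy$, the same identities that govern $S_{44}$ in Section~\ref{section-s44}, we expect $S_{(4,677)}$ to be a subdirect product of $M_2^0$ and $S_{44}$. Concretely, we would look for two congruences of $S_{(4,677)}$ whose nontrivial blocks are of the form $\{3,4\}$ and $\{1,2,3\}$, or obtained by the analogous splitting, and check that the two quotients are $M_2^0$ and $S_{44}$. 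If the quotient turns out to be $S_{46}$ or $S_{47}$ instead, we would use Lemma~\ref{lem4601} or Lemma~\ref{lem4701} in place of Lemma~\ref{lem4401}; the plan below is written for $S_{44}$.

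Let $\bq\preceq\bu$ be a nontrivial inequality of $S_{(4,677)}$. Lemma~\ref{pro422} gives $D_\bq(\bu)\neq\emptyset$ and $c(\bq)=c(D_\bq(\bu))$. Lemma~\ref{lem4401} gives $\ell(\bq)\ge 2$, and for each $x\in M_1(\bq)$ some $\bu_i\in D_\bq(\bu)$ with $m(x,\bu_i)\le 1$. Since $\bu\succeq D_\bq(\bu)$, it suffices to derive $D_\bq(\bu)\succeq\bq$.

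We normalise $\bq$ exactly as in Case~2 of Proposition~\ref{pro74701}. By \eqref{6772}, \eqref{6771} and repeated use of \eqref{6776}, $\bq$ is a sum of words of the form $x_1^2\cdots x_k^2$ or $x_1^2\cdots x_k^2y$, where $y\in M_1(\bq)$ and $c(\bq)=\{x_1,\dots,x_k,y\}$. So only these two shapes remain.

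\textbf{Case 1.} $\bq=x_1^2\cdots x_k^2$. Putting $y=x$ in \eqref{6773} gives $x^2\preceq x$, so each $\bu_i$ lies above $\bu_i^2$. Then \eqref{6773} combines summands into products: $D_\bq(\bu)\succeq \prod_{\bu_i\in D_\bq(\bu)}\bu_i^2$. By \eqref{6772}, \eqref{6771} and $c(D_\bq(\bu))=c(\bq)$, this product equals $x_1^2\cdots x_k^2=\bq$.

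\textbf{Case 2.} $\bq=x_1^2\cdots x_k^2y$. Choose $\bu_\ell\in D_\bq(\bu)$ with $m(y,\bu_\ell)\le 1$. If $m(y,\bu_\ell)=0$, the aim is to derive $\bq$ from $\bu_\ell$ together with enough squares covering all of $c(\bq)$. We will use \eqref{6774} to multiply $\bu_\ell$ by squared material ($xy\preceq x+y^2$) without squaring $\bu_\ell$ itself. If $m(y,\bu_\ell)=1$, write $\bu_\ell=y\bv$ (by \eqref{6772}), where $\bv$ is a possibly empty word in $x_1,\dots,x_k$. We then attach $x_1^2\cdots x_k^2$ to it while keeping $y$ to the first power.

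\textbf{Main obstacle.} The other summands of $D_\bq(\bu)$ may contain $y$, and combining them through \eqref{6773} would raise $y$ to the square. We plan to avoid this by never multiplying those other summands into $\bu_\ell$. Instead we extract from them only squares $x_j^2$ of variables $x_j\ne y$, via $x^2\preceq x$ and \eqref{6771}. Each such square is then attached to $\bu_\ell$ using \eqref{6774}, or using \eqref{6775} ($xy\preceq y+xy^2$) when $x_j$ already occurs in $\bu_\ell$ and must be pushed to multiplicity two. The delicate point is the bookkeeping: every $x_j$ must end with exponent $2$ and $y$ with exponent $1$, and this is the step we expect to need the most care, mirroring the two subcases ($m(y,\bu_\ell)=0$ or $1$) in Proposition~\ref{pro74701}. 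Once both cases are done, every inequality of $S_{(4,677)}$ follows from \eqref{6771}--\eqref{6776}.
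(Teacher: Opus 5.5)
Your framework matches the paper's, up to one slip. The paper decomposes $S_{(4,677)}$ as a subdirect product of $S_{44}$ and $M_2^0$; the congruence blocks are $\{1,2\}$ and $\{3,4\}$, and $\{1,2,3\}$ is not a congruence block. The data you draw from Lemmas~\ref{pro422} and~\ref{lem4401}, the normalisation via \eqref{6776}, and your Case~1 are all as in the paper.

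The gap is Case~2. You never carry it out, and the mechanism you propose cannot work. You cannot ``extract only squares $x_j^2$ of variables $x_j\neq y$'' from summands that contain $y$. By Lemma~\ref{pro422}, every inequality $\bw\preceq\bu$ of $S_{(4,677)}$ satisfies $c(\bw)=c(D_\bw(\bu))$, so such extractions are not even valid, let alone derivable. Concretely, take $\bq=x_1^2y$ and $\bu=y+x_1y$. Then $\bq\preceq\bu$ holds in $S_{(4,677)}$, but $x_1^2\preceq y+x_1y$ fails (put $x_1=1$, $y=4$). So the factor $x_1^2$ you would attach to $\bu_\ell=y$ via \eqref{6774} is simply unavailable. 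Likewise, using \eqref{6775} to raise a variable to multiplicity two requires a right-hand side $\bp^2\bu_\ell^2$, which already contains $y^2$, the very thing you set out to avoid.

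The missing idea is that squaring $y$ is harmless. Your Case~1 argument works verbatim for any $\bq$: since $c(D_\bq(\bu))=c(\bq)$, identities \eqref{6773}, \eqref{6771} and \eqref{6772} give $D_\bq(\bu)\succeq\bq^2$. The identities \eqref{6774} and \eqref{6775} then act as square roots, restoring $y$ to exponent one using only the single summand $\bu_\ell$:
\begin{itemize}
\item If $m(y,\bu_\ell)=0$, substitute $x\mapsto\bu_\ell$, $y\mapsto\bq$ in \eqref{6774} to get $\bu_\ell+\bq^2\succeq\bu_\ell\bq\approx\bq$.
\item If $m(y,\bu_\ell)=1$, let $\bp=x_1\cdots x_k$ and write $\bq^2\approx\bp^2\bu_\ell^2$. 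Substitute $x\mapsto\bp^2$, $y\mapsto\bu_\ell$ in \eqref{6775} to get $\bu_\ell+\bp^2\bu_\ell^2\succeq\bp^2\bu_\ell\approx\bq$.
\end{itemize}
No variable-by-variable bookkeeping is needed; this is exactly the paper's proof.
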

\begin{proof}
It is routine to check that $S_{(4, 677)}$ satisfies the identities \eqref{6771}--\eqref{6776}.
It remains to show that every inequality of $S_{(4, 677)}$ is derivable from \eqref{6771}--\eqref{6776}.
Let $\bq \preceq \bu$ be such a nontrivial inequality,
where $\bu=\bu_1+\bu_2+\cdots+\bu_n$ with $\bu_i, \bq \in X^+$ for $1 \leq i \leq n$.
Observe that $S_{(4,677)}$ is isomorphic to a subdirect product of $S_{44}$ and $M_2^0$
via the congruences determined by the nontrivial blocks $\{1,2\}$ and $\{3,4\}$;
hence both $S_{44}$ and $M_2^0$ satisfy $\bq\preceq\bu$.
By Lemmas~\ref{pro422} and~\ref{lem4401},
we have that $\ell(\mathbf{q}) \geq 2$, $c(\bq)=c(D_\bq(\bu))$.
Moreover, if $M_1(\mathbf{q}) \neq \emptyset$, then for every $x \in M_1(\mathbf{q})$,
there exists $\mathbf{u}_i \in D_{\mathbf{q}}(\mathbf{u})$ such that $m(x, \mathbf{u}_i) \leq 1$.
We may assume that $D_\bq(\bu)=\bu_1+\bu_2+\cdots+\bu_k$. Then
\[
\bu\succeq \bu_1+\bu_2+\cdots+\bu_k\stackrel{\eqref{6773}}\succeq \bu_1^{2}\bu_2^{2}\cdots \bu_k^{2}\stackrel{\eqref{6771},\eqref{6772}}\approx \bq^2.
\]
The last step relies on $c(\bq)=c(D_\bq(\bu))$.
It remains to show that $\bq^2 \succeq \bq$ is derivable from \eqref{6771}--\eqref{6776}.

\textbf{Case 1.} $M_1(\bq)$ is empty. Then $m(x, \bq)\geq 2$ for all $x \in c(\bq)$.
By the identities \eqref{6771} and \eqref{6772}, one derives
\[
\bq^2 \approx \bq.
\]

\textbf{Case 2.} $M_1(\bq)$ is nonempty.
By \eqref{6776} we may assume that $|M_1(\bq)|=1$; that is, $M_1(\bq)=\{y\}$ for some $y \in X$.
Then there exists $\bu_i \in D_\bq(\bu)$ such that $m(y, \bu_i)\leq 1$.

If $m(y, \bu_i) = 0$, then
\[
\bu \succeq \bu_i+\bq^{2} \stackrel{\eqref{6774}}\succeq \bu_i \bq \stackrel{\eqref{6772},\eqref{6771}}\approx \bq.
\]
This derives $\bu \succeq \bq$.

If $m(y, \bu_i) = 1$, then
\[
\bu\succeq \bu_i+\bq^{2}\stackrel{\eqref{6772},\eqref{6771}}\approx \bu_i+\bp^{2}\bu_i^{2}\stackrel{\eqref{6775}}\succeq \bp^{2}\bu_i
\stackrel{\eqref{6772},\eqref{6771}}\approx \bq,
\]
where $y\notin c(\bp)$.
This derives $\bu \succeq \bq$.
\end{proof}


\begin{pro}\label{pro426}
The ai-semiring variety $\mathsf{V}(S_{(4, 688)})$ is defined by the identities
\begin{align}
&x^{2}y \approx xy; \label{6881}\\
&xyz \approx yxz; \label{6882}\\
&xy \preceq x+y; \label{6883}\\
&xy \preceq x+y^{2}; \label{6884}\\
&x^{2}y^{2} \approx y^{2}x^{2}; \label{6885}\\
&xy \preceq y+xy^{2}. \label{6886}
\end{align}
\end{pro}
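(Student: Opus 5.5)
The proof will follow the pattern of Proposition~\ref{pro425}, with $S_{46}$ replacing $S_{44}$ and $M_2^0$ kept as the second factor. First I will check by hand that $S_{(4,688)}$ satisfies \eqref{6881}--\eqref{6886}. Next I will exhibit $S_{(4,688)}$ as a subdirect product of $S_{46}$ and $M_2^0$, expecting the congruences with nontrivial blocks $\{1,2\}$ and $\{3,4\}$ as in the $S_{(4,677)}$ case. Then for a nontrivial inequality $\bq\preceq\bu$ of $S_{(4,688)}$, Lemmas~\ref{lem4601} and~\ref{pro422} give three facts: $\ell(\bq)\geq 2$; $D_\bq(\bu)\neq\emptyset$ with $c(\bq)=c(D_\bq(\bu))$; and, if $m(t(\bq),\bq)=1$, some $\bu_i\in D_\bq(\bu)$ with $t(\bq)\notin c(p(\bu_i))$.

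\textbf{Normal forms.} Identities \eqref{6881}, \eqref{6882} and \eqref{6885} give a normal form. Any word $\bw$ equals $\prod_{x\in c(p(\bw))}x^2\cdot t(\bw)$, where the squares commute and the order inside the prefix is immaterial. If $t(\bw)\in c(p(\bw))$, this is $\prod_{x\in c(\bw)} x^2$. In particular, writing $D_\bq(\bu)=\bu_1+\cdots+\bu_k$, applying \eqref{6883} repeatedly gives
\[
\bu\succeq \bu_1^2\bu_2^2\cdots\bu_k^2 .
\]
Using \eqref{6881}, \eqref{6882} and \eqref{6885} (so that $x^3\approx x^2$ and squares of words reduce to products of squares of variables), the right side is $\approx \prod_{x\in c(\bq)}x^2\approx \bq^2$, since $c(\bq)=c(D_\bq(\bu))$. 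So it remains to derive $\bq$ from $\bu_i+\bq^2$ for a suitable $\bu_i\in D_\bq(\bu)$.

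\textbf{Case analysis.} If $m(t(\bq),\bq)\geq 2$, the normal form already gives $\bq\approx\bq^2$, and we are done. Otherwise $m(t(\bq),\bq)=1$; put $y=t(\bq)$ and $\bp=p(\bq)$, so $y\notin c(\bp)$ and $\bq\approx\bp^2 y$. Take $\bu_i$ with $y\notin c(p(\bu_i))$.
\begin{itemize}
\item If $y\notin c(\bu_i)$: by \eqref{6884}, $\bu_i+\bq^2\succeq \bu_i\bq$. Since $c(\bu_i)\subseteq c(\bp)$ and the tail of $\bu_i\bq$ is $y$, the normal form gives $\bu_i\bq\approx\bp^2y=\bq$.
\item If $t(\bu_i)=y$: then $\bq^2\approx \bp^2\bu_i^2$ in normal form. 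Identity \eqref{6886}, instantiated with $x\mapsto\bp^2$ and $y\mapsto \bu_i$, gives $\bu_i+\bp^2\bu_i^2\succeq \bp^2\bu_i$. Finally $\bp^2\bu_i\approx\bp^2 y=\bq$, because $c(p(\bu_i))\subseteq c(\bp)$.
\end{itemize}

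\textbf{Main obstacle.} The delicate point is the bookkeeping with the non-commutative identities \eqref{6882} and \eqref{6885}. I have to make sure every rearrangement above really is derivable, in particular that $\bu_i^2$ and $\bp^2\bu_i^2$ reduce to the stated normal form, where the tail of a square is absorbed into the squared prefix. I also have to confirm that the subdirect decomposition through $S_{46}$ and $M_2^0$ is the correct one; if it is not, I would first try the dual pair, or $S_{46}$ together with the $D_2$-type condition $c(\bq)=c(D_\bq(\bu))$.
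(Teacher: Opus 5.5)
Your proposal is correct and follows the paper's proof essentially step for step. It uses the same subdirect decomposition of $S_{(4,688)}$ into $S_{46}$ and $M_2^0$ via the blocks $\{1,2\}$ and $\{3,4\}$, which does check out, so your worry on that point is unfounded. It also has the same derivation of $\bu\succeq\bq^2$ from \eqref{6883} using $c(\bq)=c(D_\bq(\bu))$, and the same final split on $m(t(\bq),\bq)$ and on whether $t(\bq)$ occurs in $\bu_i$, handled by \eqref{6884} and \eqref{6886} respectively. Your explicit normal form $\bw\approx\prod_{x\in c(p(\bw))}x^2\cdot t(\bw)$ only makes precise the rearrangements the paper performs silently with \eqref{6881}, \eqref{6882} and \eqref{6885}.
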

\begin{proof}
It is easy to verify that $S_{(4, 688)}$ satisfies the identities \eqref{6881}--\eqref{6886}.
It suffices to show that every inequality of $S_{(4, 688)}$ is derivable from \eqref{6881}--\eqref{6886}.
Let $\bq \preceq \bu$ be such a nontrivial inequality,
where $\bu=\bu_1+\bu_2+\cdots+\bu_n$ with $\bu_i, \bq \in X^+$ for $1 \leq i \leq n$.

Observe that $S_{(4,688)}$ is isomorphic to a subdirect product of $S_{46}$ and $M_2^0$
via the congruences determined by the nontrivial blocks $\{1,2\}$ and $\{3,4\}$;
hence both $S_{46}$ and $M_2^0$ satisfy $\bq \preceq \bu$.
By Lemmas~\ref{lem4601} and~\ref{pro422}, we have $\ell(\bq)\geq 2$, $c(D_\bq(\bu))=c(\bq)$,
and if $m(t(\bq),\bq)=1$, then there exists $\bu_i\in D_\bq(\bu)$ such that $t(\bq)\notin c(p(\bu_i))$.
We may write $D_\bq(\bu)=\bu_1+\bu_2+\cdots+\bu_k$. Then
\[
\bu\succeq \bu_1+\bu_2+\cdots+\bu_k\stackrel{(\ref{6883})}\succeq \bu_1^{2}\bu_2^{2}\cdots \bu_k^{2}\stackrel{\eqref{6881},\eqref{6882},\eqref{6885}}\approx \bq^{2}.
\]
The third step relies on $c(D_\bq(\bu))=c(\bq)$.
This derives the inequality
\begin{equation}\label{26082901}
\bu \succeq \bq^{2}.
\end{equation}

\textbf{Case 1.} $m(t(\bq),\bq)\geq2$. By the identities \eqref{6881} and \eqref{6882}, one derives
\[
\bq^{2}\approx \bq.
\]
Combining this identity with \eqref{26082901}, one derives the inequality $\bu \succeq \bq$.

\textbf{Case 2.} $m(t(\bq),\bq)=1$.
Then $t(\bq) \notin c(p(\bq))$ and
there exists $\bu_i\in D_\bq(\bu)$ such that $t(\bq)\notin c(p(\bu_i))$;
in particular, $c(\bu_i) \subseteq c(\bq)$.
Thus $m(t(\bq), \bu_i)\leq 1$.

If $m(t(\bq), \bu_i)=0$, then $t(\bq) \notin c(\bu_i)$, and so $c(\bu_i) \subseteq c(p(\bq))$.
Now we have
\[
\bu \succeq \bu_i+\bu \stackrel{(\ref{26082901})}\succeq \bu_i+\bq^{2}
\stackrel{(\ref{6884})}\succeq \bu_i\bq \stackrel{\eqref{6881}, \eqref{6882}}\approx \bq.
\]
The last step uses $c(\bu_i) \subseteq c(p(\bq))$.
This derives $\bu \succeq \bq$.

If $m(t(\bq),\bu_i)=1$, then $t(\bq)=t(\bu_i)$, and so
\[
\begin{aligned}
\bu
&\succeq \bu_i+\bu
 \stackrel{(\ref{26082901})}{\succeq} \bu_i+\bq^{2}
 = p(\bu_i)t(\bq)+\bq^{2} \\
&\stackrel{\eqref{6881}, \eqref{6882}}\approx p(\bu_i)t(\bq)+p(\bq)^2(p(\bu_i)t(\bq))^2
 \stackrel{(\ref{6886})}\succeq p(\bq)^2p(\bu_i)t(\bq)
 \stackrel{\eqref{6881}, \eqref{6882}}\approx \bq.
\end{aligned}
\]
This derives $\bu \succeq \bq$.
\end{proof}


\begin{cor}
The ai-semiring $S_{(4, 678)}$ is finitely based.
\end{cor}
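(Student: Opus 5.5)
The plan is to reduce $S_{(4,678)}$ to $S_{(4,688)}$, whose finite basis is given in Proposition~\ref{pro426}, using one of the two patterns already used for the other corollaries. There are two candidate routes.

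The first route is duality. If $S_{(4,678)}$ and $S_{(4,688)}$ have dual multiplications, then they have the same finite basis property. In that case the corollary follows at once from Proposition~\ref{pro426}. I will check this first by comparing the Cayley tables in Table~\ref{tb1}.

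If the tables are not dual, I will use the second route, an equational-equivalence argument like the one for $S_{(4,591)}$ and $S_{(4,565)}$. This has two steps.

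\textbf{Step 1 (upper bound).} Verify directly that $S_{(4,678)}$ satisfies the identities \eqref{6881}--\eqref{6886}. By Proposition~\ref{pro426}, this gives $\mathsf{V}(S_{(4,678)})\subseteq \mathsf{V}(S_{(4,688)})$.

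\textbf{Step 2 (lower bound).} In the proof of Proposition~\ref{pro426}, $S_{(4,688)}$ is a subdirect product of $S_{46}$ and $M_2^0$, so $\mathsf{V}(S_{(4,688)})=\mathsf{V}(S_{46}, M_2^0)$. It is therefore enough to find $S_{46}$ and $M_2^0$ inside $\mathsf{V}(S_{(4,678)})$, either as subalgebras or as quotients.

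For $S_{46}$, the natural candidate is the subalgebra $\{2,3,4\}$, where $4$ plays the role of $3$ in $S_{46}$. Alternatively $S_{46}$ may appear as a quotient by the congruence with nontrivial block $\{1,2\}$.

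For $M_2^0$, I would look for a $3$-element subalgebra such as $\{1,2,4\}$ or $\{1,3,4\}$ in which one element is an absorbing bottom. Failing that, I would look for a quotient by a congruence with a single nontrivial block; this copy of $M_2^0$ could well be a quotient rather than a subalgebra. Equivalently, I would aim to exhibit $S_{(4,678)}$ itself as a subdirect product of $S_{46}$ and $M_2^0$ via two congruences, for example with nontrivial blocks $\{1,2\}$ and $\{3,4\}$, exactly as was done for $S_{(4,688)}$.

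\textbf{Conclusion and main obstacle.} Once both steps are done, $\mathsf{V}(S_{(4,678)})=\mathsf{V}(S_{(4,688)})$, and finite basedness follows from Proposition~\ref{pro426}. The main obstacle is Step 2: identifying the correct subalgebras or congruences from the Cayley table of $S_{(4,678)}$ and confirming that they are really isomorphic to $S_{46}$ and $M_2^0$. Step 1 is a finite routine check.

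If neither route works, I would try dualizing $S_{(4,678)}$ and comparing it with $S_{(4,677)}$ (Proposition~\ref{pro425}), since that variety is $\mathsf{V}(S_{44}, M_2^0)$. This seems less likely given the numbering, which suggests $S_{(4,678)}$ is paired with $S_{(4,688)}$.
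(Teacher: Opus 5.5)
Your first route is exactly the paper's proof: the multiplication table of $S_{(4,678)}$ is the transpose of that of $S_{(4,688)}$, with the same additive chain, so the corollary follows immediately from Proposition~\ref{pro426}. The fallback is never needed, and its Step~1 would fail anyway, since $S_{(4,678)}$ violates \eqref{6881} at $x=3$, $y=1$ (there $x^2y=4\cdot 1=4$ while $xy=3$).
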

\begin{proof}
It is easy to see that $S_{(4, 678)}$ and $S_{(4, 688)}$ have dual multiplications,
and so they have the same finite basis property.
By Proposition~\ref{pro426}, $S_{(4, 688)}$ is finitely based.
Thus $S_{(4, 678)}$ is also finitely based.
\end{proof}

\begin{pro}\label{pro427}
The ai-semiring variety $\mathsf{V}(S_{(4, 756)})$ is defined by the identities
\begin{align}
&xy \preceq x; \label{7561}\\
&x^{3} \approx x^{2}; \label{7562}\\
&xyz \approx xzy; \label{7563}\\
&yx \preceq x+y^{2}; \label{7564}\\
&xy \preceq x+x^{2}y^{2}; \label{7565}\\
&xy \preceq y+x^{2}y^{2}; \label{7566}\\
&x^{2}y+xy^{2} \approx xy. \label{7567}
\end{align}
\end{pro}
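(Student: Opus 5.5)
We follow the template of Propositions~\ref{pro425} and~\ref{pro426}. First we check directly from the Cayley tables that $S_{(4,756)}$ satisfies \eqref{7561}--\eqref{7567}. By analogy with $S_{(4,677)}$ and $S_{(4,688)}$, we expect $S_{(4,756)}$ to be a subdirect product of $S_{44}$ and $L_2^0$ via congruences with nontrivial blocks $\{1,2\}$ and $\{3,4\}$; this is to be confirmed on the tables. Then, for a nontrivial inequality $\bq\preceq\bu$ of $S_{(4,756)}$ with $\bu=\bu_1+\cdots+\bu_n$, we collect the following from Lemmas~\ref{lem4401} and~\ref{pro423}:
\begin{itemize}
\item[(a)] $\ell(\bq)\ge 2$ and $D_\bq(\bu)\neq\emptyset$;
\item[(b)] some $\bu_j\in D_\bq(\bu)$ has $h(\bu_j)=h(\bq)$;
\item[(c)] for every $x\in M_1(\bq)$ there is $\bu_i\in D_\bq(\bu)$ with $m(x,\bu_i)\le 1$.
\end{itemize}

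Next we normalize $\bq$ exactly as in Proposition~\ref{pro76901}. Identity \eqref{7567} together with \eqref{7563} and \eqref{7562} splits a word into a sum of words in which at most one variable is linear. Using \eqref{7563}, every letter after the head commutes, so $\bq$ reduces to one of three forms, with $x_1,\dots,x_s$ distinct:
\[
x_1^2\cdots x_s^2,\qquad x_1x_2^2\cdots x_s^2,\qquad x_1^2\cdots x_{s-1}^2x_s .
\]
It therefore suffices to derive $\bu\succeq\bq$ for each of these forms.

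\textbf{Case $\bq=x_1^2\cdots x_s^2$.} By \eqref{7561} we have $\bu\succeq\bu_j\succeq\bu_j\bq$. Since $h(\bu_j)=h(\bq)$ and $c(\bu_j)\subseteq c(\bq)$, \eqref{7563} and \eqref{7562} turn $\bu_j\bq$ into $\bq$. The same argument handles the case where the linear variable $x_s$ does not occur in $\bu_j$, or occurs in $\bu_j$ only once and not as its head.

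\textbf{Case $\bq=x_1^2\cdots x_{s-1}^2x_s$, general situation.} By (c) we pick $\bu_i\in D_\bq(\bu)$ with $m(x_s,\bu_i)\le1$; here the head of $\bu_i$ may differ from $h(\bq)$. We plan to use \eqref{7564}, in the form $yx\preceq x+y^2$, with $x:=\bu_i$ and $y^2$ obtained from $\bu_j\bq\succeq\dots$. Using \eqref{7561} and \eqref{7562}, we bring $\bu_j$ up to $\bu_j\bq$, which should collapse to a square $\bp^2$ with head $h(\bq)$. Then \eqref{7564} gives $\bp\bu_i\preceq\bu_i+\bp^2$, and the product $\bp\bu_i$ normalizes to $\bq$ when we choose $\bp=x_1\cdots x_{s-1}$ or $\bp=\bq$, according as $m(x_s,\bu_i)=1$ or $0$.

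\textbf{Case $\bq=x_1x_2^2\cdots x_s^2$ (head linear).} This is the main obstacle, mirroring Subcase~3.2 of Proposition~\ref{pro76901}. By (c) there is $\bu_k\in D_\bq(\bu)$ with $m(x_1,\bu_k)\le1$.
\begin{itemize}
\item[$\bullet$] If $h(\bu_k)=x_1$, then \eqref{7561} gives $\bu_k\succeq\bu_k s(\bq)\approx\bq$.
\item[$\bullet$] Otherwise we must combine $\bu_j$, which has head $x_1$ but possibly contains $x_1$ many times, with $\bu_k$. Here \eqref{7565} and \eqref{7566} play the role of \eqref{76906}. Writing $\bu_j\succeq x_1^2\bw^2$ and $\bu_k\succeq\dots$, we apply $xy\preceq x+x^2y^2$ or $xy\preceq y+x^2y^2$ with $x=x_1$-headed pieces, in order to produce a word with head $x_1$ occurring once.
\end{itemize}
I would first try to derive the auxiliary inequality $x\bw\preceq\bw+x^2\bw^2$ for words $\bw$ with $x\notin c(\bw)$, and then right-multiply using \eqref{7561}. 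Checking that the multiplicity of $x_1$ really drops to one in the resulting word is the delicate point.
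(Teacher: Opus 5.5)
Your normal forms and conditions (a)--(c) agree with the paper, and the case $\bq=x_1^2\cdots x_s^2$ is fine. The head-linear case, however, is not merely delicate: it cannot be closed, because \eqref{7561}--\eqref{7567} do not define $\mathsf{V}(S_{(4,756)})$.

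Take $\bq=xy^2$ and $\bu=x^2y+yx$. This is exactly your bad configuration: the only word with head $x$ is $x^2y$, which contains $x$ twice, and the only word containing $x$ at most once is $yx$, whose head is $y$. The inequality holds in $S_{(4,756)}$. As you guessed, $S_{(4,756)}$ is a subdirect product of $S_{44}$ and $L_2^0$. In $L_2^0$ the words $xy^2$ and $x^2y$ coincide, since they have the same head and content. In the commutative $S_{44}$ one has $xy^2\preceq xy=yx$ by \eqref{7561}. Alternatively, check the $16$ assignments directly.

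Now take $S_{(4,777)}$ from the paper's own list, whose multiplication rows are $(1,3,3,4)$, $(2,4,4,4)$, $(3,4,4,4)$, $(4,4,4,4)$. A product there is $4$ if some factor is $4$ or two factors lie in $\{2,3\}$. It is $1$ if all factors are $1$. Otherwise it equals the unique factor $b\in\{2,3\}$ if $b$ comes first, and $3$ if it does not. This gives \eqref{7562} and \eqref{7563}. Row $i$ has all entries $\le i$, which gives \eqref{7561} and hence \eqref{7565}. The identities \eqref{7564}, \eqref{7566}, \eqref{7567} are quick checks, since every element other than $1$ squares to $4$. So $S_{(4,777)}$ satisfies \eqref{7561}--\eqref{7567}. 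Yet for $x=2$, $y=1$ it gives $xy^2=2$ and $x^2y+yx=4+3=3$, and $2\le 3$ fails because $3<2$.

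Hence no auxiliary inequality derived from the given list can bring the multiplicity of $x_1$ down to one. Your $x\bw\preceq\bw+x^2\bw^2$ is just \eqref{7566}. With $\bw=\bu_kx_2^2\cdots x_s^2$ it settles the subcase $m(x_1,\bu_k)=0$. It does not settle $\bu_k\approx zx_1\mathbf{v}$ with $z=h(\bu_k)\neq x_1$ and $x_1\notin c(\mathbf{v})$.

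The basis must be enlarged, e.g.\ by $xy^2\preceq x^2y^2+y^2x$, which holds in $S_{44}$ and in $L_2^0$. Substituting $x\mapsto x_1$, $y\mapsto z$ and multiplying on the right by $x_2^2\cdots x_s^2$ handles exactly that subcase. This works because $\bu\succeq\bu_j\bq^2\approx\bq^2$ and $\bu\succeq\bu_kx_2^2\cdots x_s^2\approx z^2x_1x_2^2\cdots x_s^2$. The paper's own Subcase~2.2 has the same hole. It applies \eqref{7565} to the word $\bu_i$ with $h(\bu_i)=h(\bq)$ and tacitly assumes that $h(\bq)$ occurs in $\bu_i$ only once.

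Separately, your tail-linear case needs repair when $m(x_s,\bu_i)=1$. Using \eqref{7564} with $\bp=x_1\cdots x_{s-1}$ requires $\bu\succeq x_1^2\cdots x_{s-1}^2$, which need not even be valid. For $\bq=x^2y$ and $\bu=x^2y^2+y$, the inequality $\bq\preceq\bu$ holds in $S_{(4,756)}$, but $x^2\preceq\bu$ fails at $x\mapsto 1$, $y\mapsto 3$. Use \eqref{7566} with $x\mapsto x_1^2\cdots x_{s-1}^2$ and $y\mapsto\bu_i$ instead, noting that $(x_1^2\cdots x_{s-1}^2)^2\bu_i^2\approx\bq^2\preceq\bu$. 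Similarly, in your first case, if $x_s$ occurs once in $\bu_j$ you must multiply $\bu_j$ on the right by $x_1^2\cdots x_{s-1}^2$, not by $\bq$.
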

\begin{proof}
It is easy to check that $S_{(4, 756)}$ satisfies the identities \eqref{7561}--\eqref{7567}.
In the remainder it is enough to prove that every ai-semiring identity of $S_{(4, 756)}$
can be derived by \eqref{7561}--\eqref{7567} and the identities defining $\mathbf{AI}$.
Let $\bq \preceq \bu$ be such an inequality,
where $\bu=\bu_1+\bu_2+\cdots+\bu_n$ and $\bu_i, \bq \in X^+$, $1 \leq i \leq n$.

It is a routine matter to verify that $S_{(4, 756)}$ is isomorphic to
a subdirect product of $S_{44}$ and $S_{36}$ via the congruences defined by the nontrivial blocks $\{1, 2\}$ and $\{3, 4\}$.
So both $S_{44}$ and $S_{36}$ satisfy $\bq \preceq \bu$.
By Lemmas~\ref{pro423} and~\ref{lem4401},
we have $D_\bq(\bu)\neq\emptyset$ and $H_\bq(\bu)\neq\emptyset$;
that is, there exists $\bu_i\in D_\bq(\bu)$ such that $h(\bu_i)=h(\bq)$ and $c(\bu_i)\subseteq c(\bq)$.
 Moreover, it suffices to consider one of the following cases.

If $\ell(\bq)=1$, then $\bu\succeq \bq$ is trivial. Thus if $\ell(\bq)\geq 2,D_\bq(\bu)\neq\emptyset$, we have

\textbf{Case 1.} $M_1(\bq)$ is empty. For all of $x\in \bq, m(x,\bq)\geq 2$. We have

\[
\bu\succeq \bu_i\stackrel{\eqref{7561}}\succeq \bu_i\bq\stackrel{\eqref{7562}}\approx \bq.
\]
This derives $\bu \succeq \bq$.

\textbf{Case 2.} $M_1(\bq)$ is nonempty.
Assume that
\[
c(\bq)=\{x_1, \ldots, x_m, y_1, \ldots, y_\ell\},
\]
where $m(x_i, \bq )\geq 2$, $m(y_j, \bq )=1$, $0 \leq i \leq m$, $1 \leq j \leq \ell$. We have

\[
\bu\succeq \bu_i\stackrel{\eqref{7561}}\succeq \bu_i\bq\stackrel{\eqref{7562},\eqref{7563}}\approx \bq^{2}.
\]

\textbf{Subcase 2.1.} $m(h(\bq), \bq) \geq 2$. Then
\begin{align*}
\bq
&\approx h(\bq)^{2}x_1^{2}x_2^{2}\ldots x_m^{2}y_1y_2\ldots y_\ell &&(\text{by}~ \eqref{7562}, \eqref{7563})\\
&\approx{\sum\limits_{1 \leq  j \leq \ell} h(\bq)^2x_1^2x_2^2\cdots x_m^2y_1^2y_2^2\cdots y_{j-1}^2y_{j+1}^2\cdots y_\ell^2y_j}.
&&(\text{by}~\eqref{7563}, \eqref{7567} )
\end{align*}

Let $\bq_j' = h(\bq)^{2}x_1^{2} x_2^{2} \cdots x_m^{2} y_\ell^{2} \cdots y_{j-1}^{2} y_{j+1}^{2} \cdots y_\ell^{2}$, then $\bq_j = \bq_j' y_j$, and $\bq_j^{2} \stackrel{\eqref{7562},\eqref{7563}}\approx \bq^{2}, h(\bq_j)=h(\bq)$. Since $\bq\approx \sum\limits_{1 \leq  j \leq \ell} \bq_j$, then it suffices to derive $\bq_j$.  It follows from Lemma \ref{lem4401} that for any $y_j$, there exists $\bu_j \in D_\bq(\bu)$ such that $m(y_j, \bu_j) \leq 1$, then

If $m(y_j, \bu_j) = 0$, then
\[
\bu\succeq \bu_j+\bq_j^{2}\stackrel{\eqref{7564}}\succeq \bq_j\bu_j\stackrel{\eqref{7562},\eqref{7563}}\approx \bq_j.
\]
This derives $\bu \succeq \bq$.

If $m(y_j, \bu_j) = 1$, we have
\[
\bu\succeq \bu_j+\bq_j^{2}\approx \bu_j+\bq_j^{'2}\bu_j^{2}\stackrel{\eqref{7566}}\succeq \bq_j.
\]

 This derives $\bu \succeq \bq$.

\textbf{Subcase 2.2.} $m(h(\bq), \bq)=1$. Then
\begin{align*}
\bq
&\approx h(\bq)x_1^{2}x_2^{2}\ldots x_m^{2}y_1y_2\ldots y_\ell\\
&\approx h(\bq)x_1^{2}x_2^{2}\ldots x_m^{2}y_1^{2}y_2^{2}\ldots y_\ell^{2}+\sum\limits_{1 \leq  j \leq \ell}  h(\bq)^{2}x_1^{2}x_2^{2}\cdots x_m^{2}y_1^{2}y_2^{2}\cdots y_{j-1}^{2}y_{j+1}^{2}\cdots y_\ell^{2}y_j.\\
\end{align*}

Let $\bq_H'=x_1^{2}x_2^{2}\cdots x_m^{2}y_1^{2}y_2^{2}\cdots y_\ell^{2}$,
$\bq_H = h(\bq)\bq_H'$,
then $h(\bq_H')\neq h(\bq_H)=h(\bq)$. Thus

\begin{align*}
\bu
&\succeq \bu_i+\bq_H^{2}\\
&\approx h(\bq)\bu_i'+h(\bq)^{2}\bu_i'^{2}\bq_H''^{2}\\
&\succeq h(\bq)\bu_i'\bq_H''&&(\text{by}~\eqref{7565})\\
&\approx \bq_H.
\end{align*}
where $\bu_i=h(\bq)\bu_i', \bq_H^{2}=h(\bq)^{2}\bu_i'^{2}\bq_H''^{2}$.
This derives the identity $\bu \succeq \bq$.
\end{proof}

%

\begin{cor}
The ai-semiring $S_{(4, 681)}$ is finitely based.
\end{cor}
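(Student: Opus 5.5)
The plan is to relate $S_{(4,681)}$ to $S_{(4,756)}$, either through dual multiplication or through equational equivalence, as the paper does with the corollaries that follow each proposition. The first thing I will check is the multiplication table of $S_{(4,681)}$ in Table~\ref{tb1}. I expect it to be the dual (transposed) multiplication of $S_{(4,756)}$. If so, the proof is one line: algebras with the same additive reduct and dual multiplications have the same finite basis property, and Proposition~\ref{pro427} shows that $S_{(4,756)}$ is finitely based.

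If $S_{(4,681)}$ is not the dual of $S_{(4,756)}$, I will instead show $\mathsf{V}(S_{(4,681)})=\mathsf{V}(S_{(4,756)})$.

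\textbf{Step 1.} Verify directly on the Cayley tables that $S_{(4,681)}$ satisfies the identities \eqref{7561}--\eqref{7567}. By Proposition~\ref{pro427}, this gives $\mathsf{V}(S_{(4,681)})\subseteq \mathsf{V}(S_{(4,756)})$.

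\textbf{Step 2.} For the reverse inclusion, use the subdirect decomposition from the proof of Proposition~\ref{pro427}, which gives $\mathsf{V}(S_{(4,756)})=\mathsf{V}(S_{44},S_{36})$, where $S_{36}\cong L_2^0$. It then suffices to find copies of $S_{44}$ and $L_2^0$ in $S_{(4,681)}$, either as subalgebras (for example $\{2,3,4\}$ for $S_{44}$ and a three-element subset containing the bottom $4$ for $L_2^0$) or as quotients by congruences with blocks such as $\{1,2\}$ or $\{3,4\}$. Alternatively, I could exhibit $S_{(4,681)}$ itself as a subdirect product of $S_{44}$ and $S_{36}$.

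\textbf{Step 3.} If only a dual copy appears, for instance if $S_{(4,681)}$ decomposes as a subdirect product of the duals of $S_{44}$ and $S_{36}$, then pass to duals: the dual of $S_{(4,681)}$ generates $\mathsf{V}(S_{(4,756)})$, and finite basedness transfers across duality.

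The main obstacle is the purely computational matching of tables. I have to find the right subsets or congruence blocks realizing $S_{44}$ and $L_2^0$, and check that the seven identities hold, in particular \eqref{7567}, which is the one most likely to fail. I would try the duality route first, since it is the pattern used for almost every corollary in this section.
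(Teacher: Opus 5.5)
Your proposal is correct and matches the paper's proof. The rows of the multiplication table of $S_{(4,681)}$, namely $(1,2,3,4)$, $(1,2,3,4)$, $(3,3,4,4)$, $(4,4,4,4)$, are exactly the columns of the table of $S_{(4,756)}$, so your first (duality) route applies and, combined with Proposition~\ref{pro427}, gives the result in one line; the fallback Steps 1--3 are not needed.
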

\begin{proof}
Since $S_{(4, 681)}$ and $S_{(4, 756)}$ have dual multiplications, it follows from
Proposition {\ref{pro427}} that $S_{(4, 681)}$ is finitely based.
\end{proof}

\begin{pro}\label{pro428}
$\mathsf{V}(S_{(4, 780)})$ is the ai-semiring variety defined by the identities
\begin{align}
&xy \preceq x; \label{7801}\\
&x^{2}y \approx xy; \label{7802}\\
&xyzt \approx xzyt; \label{7803}\\
&xy \preceq y+x^{2}y^{2}; \label{7804}\\
&y+xyz^{2} \approx y+xyz. \label{7805}
\end{align}
\end{pro}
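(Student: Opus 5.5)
The approach is the same as for the other subdirect-product propositions above. I would first identify the factors of $S_{(4,780)}$, then read off the necessary conditions on a nontrivial inequality $\bq\preceq\bu$ from the lemmas already available, and finally derive $\bu\succeq\bq$ from \eqref{7801}--\eqref{7805} by cases on $m(t(\bq),\bq)$.

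\textbf{Setup.} By analogy with $S_{(4,756)}$, whose factors are $S_{44}$ and $S_{36}\cong L_2^0$, I expect $S_{(4,780)}$ to be a subdirect product of $S_{46}$ and $L_2^0$ via the blocks $\{1,2\}$ and $\{3,4\}$. This is consistent with the identities: \eqref{7804} mirrors \eqref{6886} and \eqref{7801} is an $L_2^0$-type law. Checking that $S_{(4,780)}$ satisfies \eqref{7801}--\eqref{7805} is routine. Lemma~\ref{lem4601} and Lemma~\ref{pro423} would then give:
\begin{itemize}
\item $\ell(\bq)\ge 2$;
\item there is $\bu_i\in D_\bq(\bu)$ with $h(\bu_i)=h(\bq)$;
\item if $m(t(\bq),\bq)=1$, there is $\bu_k\in D_\bq(\bu)$ with $t(\bq)\notin c(p(\bu_k))$.
\end{itemize}

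\textbf{Normal form.} Before the main cases I would prove an auxiliary normal-form claim modulo \eqref{7802}, \eqref{7803} and, where needed, \eqref{7805}. Two words of length at least $2$ should be equivalent whenever they have the same head, the same tail, the same content of the prefix, and agree on whether the tail letter also occurs in the prefix. This is essentially what the $S_{46}$ description predicts. With \eqref{7803} the interior letters commute, and \eqref{7802} removes repeats at the head. Removing a repeated interior letter, or squaring the tail when the tail already occurs earlier, is where I expect to need \eqref{7805} (with $y$ a single letter absorbed back). This claim is the main obstacle. I would try to derive $xyyt\approx xyt$, and its variants, directly from \eqref{7802}--\eqref{7803}, for instance by moving a copy of $y$ to the front and using \eqref{7802}. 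Where that fails, I would instead work only with sums containing a short summand, so that \eqref{7805} applies.

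\textbf{Case $m(t(\bq),\bq)\ge 2$.} I would write
\[
\bu\succeq\bu_i\succeq\bu_i\bq,
\]
where the second step is \eqref{7801}. Since $h(\bu_i)=h(\bq)$, $c(\bu_i)\subseteq c(\bq)$ and $t(\bq)$ already occurs in $p(\bq)$, the normal-form claim gives $\bu_i\bq\approx\bq$.

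\textbf{Case $m(t(\bq),\bq)=1$.} Here $\bu_k$ contains $t(\bq)$ at most once, and only as its tail. There are two subcases.
\begin{itemize}
\item If $t(\bq)=t(\bu_k)$, put $\bw=\bu_ip(\bq)$. By \eqref{7801} and the normal form, $\bu_i\succeq\bw^2\bu_k^2$. Then \eqref{7804} with $x=\bw$ and $y=\bu_k$ gives
\[
\bu\succeq\bu_k+\bw^2\bu_k^2\succeq\bw\bu_k\approx\bq.
\]
\item If $t(\bq)\notin c(\bu_k)$, I would again obtain $\bw^2\bu_k^2$ from $\bu_i$ and combine it with $\bu_k$ via \eqref{7804}, producing $\bu_i p(\bq)\bu_k$, a word avoiding the tail letter. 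Right-multiplying by $t(\bq)$ through \eqref{7801} and normalising then yields $\bq$.
\end{itemize}
In both subcases, the normal-form claim is needed to ensure that occurrences of $t(\bq)$ inside $\bu_i$ do not spoil the final equivalence. If they do, the plan is to apply \eqref{7805}, with $y$ a single letter, to cancel them.
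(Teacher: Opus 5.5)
Your setup (the decomposition into $S_{46}$ and $L_2^0$, which gives $\ell(\bq)\ge 2$, $\bu_i$ and $\bu_k$) and your case $m(t(\bq),\bq)\ge 2$ match the paper. The normal-form claim is not actually an obstacle. Since \eqref{7802} applies inside a context, $\mathbf{a}x^2\mathbf{b}\approx\mathbf{a}x\mathbf{b}$ for nonempty $\mathbf{b}$, and together with \eqref{7803} this identifies words of length at least $2$ with equal head, tail and prefix content; \eqref{7805} is not needed there.

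The gap is in the case $m(t(\bq),\bq)=1$ when $t(\bq)\in c(\bu_i)$; otherwise simply $\bu\succeq\bu_i\succeq\bu_i\bq\approx\bq$. Prefix content is an invariant of the normal form, and $S_{(4,780)}$ detects it. Sending $t(\bq)\mapsto 3$ and every other letter to $1$ gives $\bq\mapsto 3$, while every word with $t(\bq)$ in its prefix goes to the additive minimum $4$. Hence in subcase (a) your last step fails, because $\bw\bu_k=\bu_ip(\bq)\bu_k$ has $t(\bq)$ in its prefix. This is easily repaired by taking $\bw=p(\bq)$. Then $\bu_i\succeq\bu_ip(\bq)^2\bu_k^2\approx p(\bq)^2\bu_k^2$, since $t(\bq)=t(\bu_k)$ already lies in the prefix of $\bu_k^2$, and \eqref{7804} gives $p(\bq)\bu_k\approx\bq$, as in the paper.

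In subcase (b) your route cannot work at all. Whatever you obtain from $\bu_i$ by \eqref{7801} and normalising keeps $t(\bq)$ in its prefix. Since $t(\bq)\notin c(\bu_k)$, the $\bw$ in $\bw^2\bu_k^2$ must contain $t(\bq)$. So $\bu_ip(\bq)\bu_k$ does \emph{not} avoid the tail letter, and $\bw\bu_k t(\bq)\not\succeq\bq$ by the assignment above. A concrete instance is $\bq=xzwy$, $\bu=xy^2+zw$, which holds in $S_{(4,780)}$.

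Your fallback, \eqref{7805} with $y$ a single letter, is not available, because $\bu$ need not (and here does not) have a one-letter summand. The missing idea, which is exactly how the paper uses \eqref{7805}, is to take $y:=\bu_k$, $x:=p(\bq)$, $z:=t(\bq)$:
\[
\bu\succeq\bu_k+\bu_i\bq^2\approx\bu_k+p(\bq)\bu_k\,t(\bq)^2\approx\bu_k+p(\bq)\bu_k\,t(\bq)\succeq p(\bq)\bu_k\,t(\bq)\approx\bq.
\]
The first $\approx$ holds because both words have head $h(\bq)$, tail $t(\bq)$ and prefix content $c(\bq)$. The second is \eqref{7805}. The last uses $c(\bu_k)\subseteq c(p(\bq))$.
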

\begin{proof}
It is easy to check that $S_{(4, 780)}$ satisfies the identities \eqref{7801}--\eqref{7805}.
In the remainder it is enough to prove that every ai-semiring identity of $S_{(4, 780)}$
can be derived by \eqref{7801}--\eqref{7805}.
Let $\bq \preceq \bu$ be such an inequality,
where $\bu=\bu_1+\bu_2+\cdots+\bu_n$ and $\bu_i, \bq \in X^+$, $1 \leq i \leq n$.

It is a routine matter to verify that $S_{(4, 780)}$ is isomorphic to
a subdirect product of $S_{46}$ and $S_{36}$ via the congruences defined by the nontrivial blocks $\{1, 2\}$ and $\{3, 4\}$.
So both $S_{46}$ and $S_{36}$ satisfy $\bq \preceq \bu$.
By Lemmas~\ref{pro423} and~\ref{lem4601},
we have $\ell(\bq)\geq 2$, $H_\bq(D_\bq(\bu))\neq \emptyset$ (so there exists $\bu_i\in D_\bq(\bu)$ with $h(\bu_i)=h(\bq)$),
and if $m(t(\bq),\bq)=1$, then there exists $\bu_j\in D_\bq(\bu)$ such that $t(\bq)\notin c(p(\bu_j))$.

\textbf{Case 1.} $m(t(\bq), \bq)\geq 2$. We have

\[
\bu\succeq \bu_i\stackrel{\eqref{7801}}\succeq \bu_i\bq\stackrel{\eqref{7802},\eqref{7803}}\approx \bq.
\]
This derives the identity $\bu \succeq \bq$.

\textbf{Case 2.} $m(t(\bq), \bq)=1$. Then there exists $\bu_j \in D_\bq(\bu)$ such that $t(\bq)\notin c(p(\bu_j))$.

If $m(t(\bq),\bu_j)=0$, this implies
\[
\bu\succeq\bu_i+\bu_j\stackrel{\eqref{7801})}\succeq\bu_i\bq^2+\bu_j\stackrel{\eqref{7802},\eqref{7803}}\approx \bq^{2}+\bu_j\stackrel{\eqref{7805}} \approx(p(\bq))^{2}t(\bq)+\bu_j\stackrel{\eqref{7802}}\succeq \bq.
\]
This derives the identity $\bu \succeq \bq$.

If $m(t(\bq),\bu_j)=1$, then $t(\bu_j)=t(\bq)$. We have
\[
\bu\succeq\bu_i+\bu_j\stackrel{\eqref{7801}}\succeq\bu_j+\bu_i\bq^{2}\stackrel{\eqref{7802},\eqref{7803}}\approx \bu_j+\bq^{'2}(\bu_j)^{2}\stackrel{\eqref{7804}}\succeq\bq'\bu_j\stackrel{\eqref{7803}}\approx \bq.
\]
where $\bu_i\bq^{2}=\bq^{'2}(\bu_j)^{2}$.
This derives the identity $\bu \succeq \bq$.
\end{proof}

%
%

\begin{cor}
The ai-semiring $S_{(4, 682)}$ is finitely based.
\end{cor}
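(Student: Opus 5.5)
The plan follows the pattern used throughout this section for the dual algebras. We will check, directly from the Cayley tables in Table~\ref{tb1}, that the multiplication of $S_{(4,682)}$ is the dual of the multiplication of $S_{(4,780)}$. Both algebras have the same additive reduct, namely the chain $1>2>3>4$. Concretely, this means $a\cdot_{682} b = b\cdot_{780} a$ for all $a,b\in\{1,2,3,4\}$, possibly after fixing the identity map as the bijection since the additive orders already agree.

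This pairing is natural given the surrounding results. $S_{(4,780)}$ is a subdirect product of $S_{46}$ and $S_{36}$, and the dual of $S_{46}$ is the algebra appearing alongside it in the earlier dual pairs ($S_{(4,792)}$/$S_{(4,750)}$, $S_{(4,799)}$/$S_{(4,751)}$). So we expect $S_{(4,682)}$ to be the corresponding dual subdirect product, in the same way that $S_{(4,681)}$ is paired with $S_{(4,756)}$.

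Once duality is confirmed, we invoke the elementary fact from the introduction: two ai-semirings with the same additive reduct and dual multiplications have the same finite basis property. An explicit basis for $\mathsf{V}(S_{(4,682)})$ is obtained by reversing every word in \eqref{7801}--\eqref{7805}. Combined with Proposition~\ref{pro428}, which gives a finite basis for $S_{(4,780)}$, this shows $S_{(4,682)}$ is finitely based.

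The only real obstacle is the table check. If the multiplication of $S_{(4,682)}$ is dual to that of $S_{(4,780)}$ only up to a nontrivial relabeling, that relabeling must be an automorphism of the additive chain. On a chain the only order automorphism is the identity, so the relabeling must be the identity and the comparison reduces to transposing the table. If that comparison failed, the fallback would be to exhibit $S_{(4,682)}$ as a subdirect product of the duals of $S_{46}$ and $S_{36}$ and to argue via equality of generated varieties instead.
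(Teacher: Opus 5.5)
Your proposal is correct and is exactly the paper's argument. Transposing the multiplication table of $S_{(4,780)}$ (with the identity relabeling) gives precisely the table of $S_{(4,682)}$, so the two algebras have dual multiplications over the same additive chain, and the finite basis from Proposition~\ref{pro428} transfers by reversing words; the heuristic remarks about $S_{46}$ and the subdirect-product fallback are not needed.
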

\begin{proof}
Since $S_{(4, 682)}$ and $S_{(4, 780)}$ have dual multiplications, it follows from
Proposition {\ref{pro428}} that $S_{(4, 682)}$ is finitely based.
\end{proof}

\begin{pro}\label{pro429}
$\mathsf{V}(S_{(4, 690)})$ is the ai-semiring variety defined by the identities
\begin{align}
&yx \preceq x; \label{6901}\\
&x^{2}y \approx xy; \label{6902}\\
&xyz \approx yxz; \label{6903}\\
&xy \preceq x+y^{2}; \label{6904}\\
&xy \preceq y+x^{2}y^{2}. \label{6905}
\end{align}
\end{pro}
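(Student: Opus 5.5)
The proof will follow the pattern of Propositions~\ref{pro426} and~\ref{pro428}. I will first check directly from the Cayley tables that $S_{(4,690)}$ satisfies \eqref{6901}--\eqref{6905}. The real work is then to show that every nontrivial inequality $\bq\preceq\bu$ of $S_{(4,690)}$, with $\bu=\bu_1+\cdots+\bu_n$, is derivable from these identities.

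\textbf{Step 1: necessary conditions.} I expect $S_{(4,690)}$ to be a subdirect product of $S_{46}$ and $R_2^0$ (the dual situation to $S_{(4,780)}$, where $L_2^0=S_{36}$ appeared), via the congruences with nontrivial blocks $\{1,2\}$ and $\{3,4\}$. By Lemmas~\ref{cor424} and~\ref{lem4601} this would give:
\begin{itemize}
\item[(a)] $\ell(\bq)\geq 2$;
\item[(b)] there is $\bu_i\in D_\bq(\bu)$ with $t(\bu_i)=t(\bq)$;
\item[(c)] if $m(t(\bq),\bq)=1$, there is $\bu_j\in D_\bq(\bu)$ with $t(\bq)\notin c(p(\bu_j))$.
\end{itemize}
Note that \eqref{6901} says left multiplication lowers elements. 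So from $\bu_i$ with $t(\bu_i)=t(\bq)$ we get $\bu\succeq\bu_i\succeq \bq^2\bu_i$. Using \eqref{6902} and \eqref{6903} (letters before the last one commute and are idempotent), this is equivalent to $\bq^2 t(\bq)$, that is, $p(\bq)^2 t(\bq)$ up to the multiplicity of $t(\bq)$. This uses $c(\bu_i)\subseteq c(\bq)$.

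\textbf{Step 2: the case $m(t(\bq),\bq)\geq 2$.} Here \eqref{6902} and \eqref{6903} give $\bq\approx p(\bq)^2t(\bq)$ with $t(\bq)\in c(p(\bq))$. So the derivation $\bu\succeq\bu_i\succeq \bq\bu_i\approx\bq$ closes directly, because the whole of $c(\bu_i)$ is absorbed into the already present squared letters.

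\textbf{Step 3: the case $m(t(\bq),\bq)=1$.} Take $\bu_j$ as in~(c).
\begin{itemize}
\item If $t(\bq)\notin c(\bu_j)$, then $c(\bu_j)\subseteq c(p(\bq))$. From $\bu\succeq\bu_i+\bu_j$ and $\bu_i\succeq p(\bq)^2 t(\bq)$ (by \eqref{6901}, since $t(\bu_i)=t(\bq)$), identity \eqref{6904} with $x=\bu_j$ and $y=p(\bq)t(\bq)$ yields $\bu_j\,p(\bq)t(\bq)$. By \eqref{6902} and \eqref{6903} this equals $\bq$.
\item If $t(\bq)\in c(\bu_j)$, then $t(\bu_j)=t(\bq)$ and $t(\bq)$ occurs only once in $\bu_j$, so $\bu_j=p(\bu_j)t(\bq)$ with $c(p(\bu_j))\subseteq c(p(\bq))$. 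Then $\bu\succeq\bu_j\succeq p(\bq)\bu_j$ by \eqref{6901}, and this collapses to $\bq$. Thus \eqref{6905} may not even be needed here; it is probably what handles the situation where $\bu_i$ must be combined, as in Proposition~\ref{pro426}.
\end{itemize}

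\textbf{Main obstacle.} The hard part is Step 1: confirming the subdirect decomposition and matching the exact shape of the identities to the roles of \eqref{6904} and \eqref{6905}. In particular, in the subcase $t(\bq)\notin c(\bu_j)$ I must check that \eqref{6904} is applied to words in the correct order. If \eqref{6904} does not fit, I would instead square: $\bu\succeq\bu_j+\bq^2$, then apply \eqref{6905} in the form $\bu_j\preceq\ldots$, mimicking the last displays of Propositions~\ref{pro426} and~\ref{pro428}.
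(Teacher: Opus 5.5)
Your proof is essentially the paper's. It uses the same decomposition into $S_{46}$ and $R_2^0$; your blocks $\{1,2\}$ and $\{3,4\}$ are the correct ones, whereas the paper's ``$\{2,3\}$'' cannot be a congruence block, since $2\cdot 1=1$ but $3\cdot 1=4$. It also uses the same consequences of Lemmas~\ref{cor424} and~\ref{lem4601}, the same opening step $\bu\succeq\bq^2\bu_i\approx\bq^2$, the same Case~1, and the same use of \eqref{6904} when $t(\bq)\notin c(\bu_j)$.

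\textbf{A wrong claim in that subcase.} The claim $\bu_i\succeq p(\bq)^2t(\bq)$ does not follow from \eqref{6901}, and it is false in general. When $m(t(\bq),\bq)=1$, the word $p(\bq)^2t(\bq)$ is $\approx\bq$ itself. For a counterexample, take $\bq=xt$, $\bu_i=t^2$, $x\mapsto 1$, $t\mapsto 3$: then $t^2\mapsto 4$, the least element, while $x^2t\mapsto 3$.

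What \eqref{6901} actually gives is $\bu_i\succeq\bq^2\bu_i\approx\bq^2$, in which $t(\bq)$ also sits in the prefix. This is exactly the $y^2$ demanded by \eqref{6904} with $x=\bu_j$, $y=\bq$. So the step should read $\bu\succeq\bu_j+\bq^2\succeq\bu_j\bq\approx\bq$, with the last step using $c(\bu_j)\subseteq c(p(\bq))$.

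\textbf{The one real difference.} In the subcase $t(\bu_j)=t(\bq)$, the paper rewrites $\bu_j+\bq^2\approx\bu_j+p(\bq)^2\bu_j^2$ and applies \eqref{6905}. Your $\bu_j\succeq p(\bq)\bu_j\approx\bq$ via \eqref{6901} is correct and shorter. In fact \eqref{6905} is redundant in the basis altogether, since $xy\preceq y\preceq y+x^2y^2$ already follows from \eqref{6901}.
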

\begin{proof}
It is easy to check that $S_{(4, 690)}$ satisfies the identities \eqref{6901}--\eqref{6905}.
In the remainder it is enough to prove that every ai-semiring identity of $S_{(4, 690)}$
can be derived by \eqref{6901}--\eqref{6905} and the identities defining $\mathbf{AI}$.
Let $\bq\preceq \bu$ be such an inequality,
where $\bu=\bu_1+\bu_2+\cdots+\bu_n$ and $\bu_i, \bq \in X^+$, $1 \leq i \leq n$.

Observe that $S_{(4,690)}$ is isomorphic to a subdirect product of $S_{46}$ and $S_{39}$ via the congruences determined by the nontrivial blocks $\{1,2\}$ and $\{2,3\}$; hence both $S_{46}$ and $S_{39}$ satisfy $\bq\preceq\bu$.
By Lemmas~\ref{cor424} and~\ref{lem4601}, we have $\ell(\bq)\geq 2$, $T_\bq(D_\bq(\bu))\neq\emptyset$
(so there exists $\bu_i\in D_\bq(\bu)$ with $t(\bu_i)=t(\bq)$),
and if $m(t(\bq),\bq)=1$, then there exists $\bu_j\in D_\bq(\bu)$ such that $t(\bq)\notin c(p(\bu_j))$.
 Now we have
\[
\bu\succeq \bu_i\stackrel{\eqref{6901}}\succeq \bq^{2}\bu_i\stackrel{\eqref{6902},\eqref{6903}}\approx \bq^{2}.
\]

We need to consider the following cases:

\textbf{Case 1.} $m(t(\bq), \bq)\geq 2$. We have
\[
\bq^{2}\stackrel{(\ref{6902})}\approx\bq.
\]
This derives $\bu \succeq \bq$.

\textbf{Case 2.} $m(t(\bq), \bq)=1$. Then there exists $\bu_j \in D_\bq(\bu)$ such that $t(\bq)\notin c(p(\bu_j))$.

If $m(t(\bq),\bu_j)=0$, this implies
\[
\bu\succeq\bu_j+\bq^{2}\stackrel{\eqref{6904}}\succeq\bu_j\bq\stackrel{\eqref{6902}),\eqref{6903}}\approx \bq.
\]
This derives $\bu \succeq \bq$.

If $m(t(\bq),\bu_j)=1$, then $t(\bu_j)=t(\bq)$. We have
\[
\bu\succeq\bu_j+\bq^{2}\stackrel{\eqref{6902},\eqref{6903}}\approx\bu_j+\bq^{'2}\bu_j^{2}\stackrel{\eqref{6905}}\succeq \bq'\bu_j\stackrel{\eqref{6903}}\approx \bq.
\]
where $\bq^{2}=\bq^{'2}\bu_j^{2}$ .
This derives $\bu \succeq \bq$.
\end{proof}


\begin{cor}
The ai-semiring $S_{(4, 757)}$ is finitely based.
\end{cor}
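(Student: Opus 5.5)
The natural route is the duality argument used for the earlier corollaries in this section (those for $S_{(4,678)}$, $S_{(4,681)}$ and $S_{(4,682)}$). I would check from the Cayley tables in Table~\ref{tb1} that $S_{(4,757)}$ and $S_{(4,690)}$ have the same additive reduct, the chain of Figure~\ref{figure01}, and dual multiplications. Concretely, the multiplication table of $S_{(4,757)}$ should be the transpose of that of $S_{(4,690)}$, so that $a\cdot_{757}b=b\cdot_{690}a$ for all $a,b\in\{1,2,3,4\}$.

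Assuming this, the elementary fact from the introduction applies: ai-semirings with the same additive reduct and dual multiplications have the same finite basis property. The reason is that reversing every word in every identity gives a bijection between the equational theories. Proposition~\ref{pro429} shows that $S_{(4,690)}$ is finitely based, with basis \eqref{6901}--\eqref{6905}. Hence $S_{(4,757)}$ is finitely based. Explicitly, a basis is obtained by reversing the words in \eqref{6901}--\eqref{6905}: $xy\preceq x$, $yx^2\approx yx$, $zyx\approx zxy$, $yx\preceq x+y^2$, and $yx\preceq y+y^2x^2$.

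The only real obstacle is verifying the duality itself, a finite table check. If that check fails and $S_{(4,757)}$ is instead dual to a different algebra, I would fall back on the pattern of Proposition~\ref{pro429}. That means exhibiting $S_{(4,757)}$ as a subdirect product of $S_{46}$ (or its dual) and a $3$-element algebra such as $S_{39}$ or $S_{36}$ via congruences with blocks like $\{1,2\}$ and $\{3,4\}$. I would then either identify the generated variety with one already handled, or rerun the case analysis on $m(t(\bq),\bq)$ or $m(h(\bq),\bq)$ with the reversed identities.
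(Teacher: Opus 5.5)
Your proposal is correct and follows the paper's proof exactly: the paper likewise observes that $S_{(4,757)}$ and $S_{(4,690)}$ have dual multiplications (the table of $S_{(4,757)}$ is indeed the transpose of that of $S_{(4,690)}$) and concludes from Proposition~\ref{pro429}. Your explicit basis, obtained by reversing the words in \eqref{6901}--\eqref{6905}, is also correct, and the fallback plan is not needed.
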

\begin{proof}
Notice that $S_{(4, 757)}$ and $S_{(4, 690)}$ have dual multiplications,
therefore, $S_{(4, 757)}$ is also finitely based.
\end{proof}

\begin{pro}\label{pro4210}
$\mathsf{V}(S_{(4, 671)})$ is the ai-semiring variety defined by the identities
\begin{align}
&x^{3} \approx x^{2}; \label{6711}\\
&xy \approx yx; \label{6712}\\
&xyz \preceq xy; \label{6713}\\
&xy \preceq y+xy^{2}. \label{6714}
\end{align}
\end{pro}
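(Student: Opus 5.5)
The plan follows the pattern of the earlier propositions. First I check directly that $S_{(4,671)}$ satisfies \eqref{6711}--\eqref{6714}. Then I identify $S_{(4,671)}$, up to isomorphism, as a subdirect product of $S_{44}$ and a small commutative ai-semiring, most likely $D_2$ or a $3$-element algebra generating $\mathsf{V}(D_2,M_2)$, via two congruences on the chain $\{1,2,3,4\}$. The search will be guided by the patterns seen for $S_{(4,677)}$ and $S_{(4,756)}$, and I would confirm it by computing the Cayley table. From this decomposition and Lemma~\ref{lem4401}, a nontrivial inequality $\bq\preceq\bu$ of $S_{(4,671)}$ satisfies:
\begin{itemize}
\item[(a)] $\ell(\bq)\geq 2$ and $D_\bq(\bu)\neq\emptyset$;
\item[(b)] for every $y\in M_1(\bq)$ there is $\bu_j\in D_\bq(\bu)$ with $m(y,\bu_j)\leq 1$.
\end{itemize}
If the second factor contributes more (for instance $L_{\geq2}(D_\bq(\bu))\neq\emptyset$, or a condition on $c(\bq)$), I would record that as well.

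Next comes the normal form for $\bq$. By \eqref{6711} and \eqref{6712}, I write $\bq\approx x_1^2\cdots x_s^2y_1\cdots y_t$, where $M_1(\bq)=\{y_1,\dots,y_t\}$. The first tool I want is that $\bq$ lies below every word obtained from it by lengthening: $\bw\bp\preceq\bw$ whenever $\ell(\bw)\geq 2$. This follows from \eqref{6713} applied repeatedly, using \eqref{6712}. Hence, for any $\bu_i\in D_\bq(\bu)$ of length at least $2$, I get $\bu\succeq\bu_i\succeq\bu_i\bq$. Since $c(\bu_i)\subseteq c(\bq)$, the word $\bu_i\bq$ collapses by \eqref{6711} and \eqref{6712} to $x_1^2\cdots x_s^2\,\bp$, where each $y_k$ occurs in $\bp$ with multiplicity $1+m(y_k,\bu_i)$, capped at $2$. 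When $M_1(\bq)=\emptyset$, this already gives $\bu\succeq\bq$.

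When $M_1(\bq)\neq\emptyset$, I handle each $y\in M_1(\bq)$ through (b). Choose $\bu_j$ with $m(y,\bu_j)\leq1$. If $m(y,\bu_j)=0$, I multiply $\bu_j$ by the part of $\bq$ and keep $y$ at multiplicity one. If $m(y,\bu_j)=1$, I write the relevant word as $\bp\,y$ with $y\notin c(\bp)$ and apply \eqref{6714} in the form $\bp y\preceq y+\bp y^2$ (or with the roles of the variables suitably substituted). The aim is to pass from the version of $\bq$ in which $y$ is squared down to $\bq$ itself, using a summand that supplies $y$ with multiplicity one.

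The main obstacle is that the basis contains no splitting identity like $xy\approx x^2y+xy^2$, which is what allowed the earlier proofs to reduce to $|M_1(\bq)|=1$. So I must lower all the $y_k$ to multiplicity one simultaneously, or derive such a splitting from \eqref{6713}--\eqref{6714}. I would first try to derive $xy\preceq x^2y+xy^2$ and its converse, from \eqref{6713} with substitutions and \eqref{6714} with $y\mapsto y$ and $x\mapsto x^2$. If that fails, I would proceed by induction on $|M_1(\bq)|$, applying \eqref{6714} once per variable of $M_1(\bq)$ using the corresponding $\bu_j$. A secondary difficulty is the case where every word in $D_\bq(\bu)$ has length $1$. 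Here I expect the second subdirect factor to rule the case out, or \eqref{6714} with $x:=\bu_j$ to supply a word of length at least $2$.
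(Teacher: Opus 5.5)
You have found exactly the two weak points, but neither can be repaired from \eqref{6711}--\eqref{6714}, because the statement is false as written. A small correction first: the second subdirect factor is not $D_2$. The block $\{3,4\}$ gives $S_{55}\cong T_2^0$, which contributes $L_{\geq 2}(D_\bq(\bu))\neq\emptyset$ via Lemma~\ref{lem5501}.

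Now consider $xy\preceq x^2y+xy^2$. It holds in $S_{(4,671)}$: rows $1$ and $2$ of its table coincide and $1^2=2^2=1$, so $x^2y=xy$ whenever $x\in\{1,2\}$, symmetrically for $y$, and $\{3,4\}\cdot\{3,4\}=\{4\}$. It is, however, not derivable from \eqref{6711}--\eqref{6714}. Take the chain $0<c<a<d<b$ with $+$ the join and commutative multiplication $a^2=0$, $ab=c$, $b^2=bd=d^2=d$, and all other products $0$. This is the quotient of the free commutative semigroup on $a,b$ that collapses the ideal generated by $a^2,ab^2$ to $0$ and identifies all $b^k$ with $k\geq 2$, so it is associative. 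Every translation is monotone, so it is an ai-semiring. Moreover $xy\leq x$ for all $x,y$, which gives \eqref{6713} and \eqref{6714}, and \eqref{6711} is checked by inspection. Yet $ab=c\nleqslant 0=a^2b+ab^2$. So neither deriving a splitting inequality nor an induction on $|M_1(\bq)|$ can succeed. The paper's Case~2 hides the same problem: it asserts a single $\bu_j\in D_\bq(\bu)$ with $m(y,\bu_j)\leq 1$ for all $y\in M_1(\bq)$ at once. Lemma~\ref{lem4401} does not provide this, as $\bq=xy$, $\bu=x^2y+xy^2$ shows.

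Your secondary difficulty is equally fatal. $S_{55}$ only guarantees some word of length at least $2$ in $D_\bq(\bu)$, not that the witness in your condition (b) is long. The inequality $x^2y\preceq x+xy^2$ holds in $S_{(4,671)}$, where the only witness for $y$ is the letter $x$. It fails, though, on $\{0,p,q,f\}$ with $0<p,q<f$, $p+q=f$, commutative multiplication $p^2=pf=f^2=f$, $pq=qf=q$, $q^2=0$, and $0$ absorbing.
\begin{itemize}
\item This is an ai-semiring: associativity is routine, each translation is monotone, and $x(p+q)=xp+xq$ for all $x$.
\item It satisfies \eqref{6711}--\eqref{6714}: products lie in $\{0,q,f\}$ with $f$ the top and $qw\leq q$, giving \eqref{6713}; and $xp=xf=xp^2$ for every $x$ while $xy\leq y$ for $y\neq p$, giving \eqref{6714}.
\item Yet $p^2q=q\nleqslant p=p+pq^2$.
\end{itemize}
The paper's Case~2 also applies \eqref{6713} to $\bu_j$ without knowing that $\ell(\bu_j)\geq 2$.

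Adding the two missing inequalities repairs the statement. These are $xy\approx x^2y+xy^2$ and $x^2y\preceq x+xy^2$, both valid in $S_{(4,671)}$, and with them your outline closes as follows.
\begin{itemize}
\item Split $\bq$ into summands $\bq_k=\bp_ky_k$, where $\bp_k$ is a product of squares and $c(\bp_k)=c(\bq)\setminus\{y_k\}$.
\item Take $\mathbf{v}\in D_\bq(\bu)$ with $m(y_k,\mathbf{v})\leq 1$ and $\bu_i\in L_{\geq 2}(D_\bq(\bu))$.
\item If $\ell(\mathbf{v})\geq 2$, then \eqref{6713} gives $\bu\succeq\mathbf{v}\bq_k\approx\bq_k$ when $y_k\notin c(\mathbf{v})$, and $\bu\succeq\mathbf{v}\bp_k\approx\bq_k$ when $m(y_k,\mathbf{v})=1$.
\item Otherwise $\bu\succeq\mathbf{v}+\bu_i\bq_k^2\approx\mathbf{v}+\bp_ky_k^2$. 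If $\mathbf{v}=y_k$, \eqref{6714} yields $\bq_k$. If $\mathbf{v}=x$ with $\bp_k\approx x^2\bp'$, then $x^2y\preceq x+xy^2$ with $y\mapsto x\bp'y_k$ yields $\bq_k$.
\end{itemize}
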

\begin{proof}
It is easy to check that $S_{(4, 671)}$ satisfies the identities \eqref{6711}--\eqref{6714}.
In the remainder it is enough to prove that every ai-semiring identity of $S_{(4, 671)}$
can be derived by \eqref{6711}--\eqref{6714}.
Let $\bq \preceq \bu$ be such an inequality,
where $\bu=\bu_1+\bu_2+\cdots+\bu_n$ and $\bu_i, \bq \in X^+$, $1 \leq i \leq n$.

Observe that $S_{(4,671)}$ is isomorphic to a subdirect product of $S_{44}$ and $S_{55}$ via the congruences determined by the nontrivial blocks $\{1,2\}$ and $\{3,4\}$; hence both $S_{44}$ and $S_{55}$ satisfy $\bq\preceq\bu$.
By Lemmas~\ref{lem5501} and~\ref{lem4401}, we have $L_{\geq2}(D_\bq(\bu))\neq\emptyset$,
and it suffices to consider one of the following cases.

If $\ell(\bq)=1$, then $\bu\succeq \bq$ is trivial.
If $\ell(\bq)\geq 2,$ there exists $\bu_i \in D_\bq(\bu)$, and $\ell(\bu_i)\geq 2$.

\textbf{Case 1.} $M_1(\bq)$ is empty. For all of $x\in \bq, m(x,\bq)\geq 2$. We have

\[
\bu\succeq \bu_i\stackrel{\eqref{6713}}\succeq \bu_i\bq\stackrel{\eqref{6711},\eqref{6712}}\approx \bq.
\]
This derives the identity $\bu \succeq \bq$.

\textbf{Case 2.} $M_1(\bq)$ is nonempty.
Then by Lemma \ref{lem4401} there exists $\bu_j \in D_\bq(\bu)$ such that $m(y, \bu_j)\leq 1$ for any $y \in M_1(\bq)$.
Now we have
\[
\bu\succeq \bu_j\stackrel{\eqref{6713}}\succeq \bu_j\bq.
\]

If $m(y, \bu_j) = 0$ for any $y \in M_1(\bq)$, then
\[
\bu\succeq \bu_j\bq\stackrel{\eqref{6711},\eqref{6712}}\approx \bq.
\]
This derives the identity $\bu \succeq \bq$.

If $m(y, \bu_j) = 1$ for some $y \in M_1(\bq)$, we have
\[
\bu\succeq \bu_j+\bu_j\bq\stackrel{\eqref{6712}}\approx \bu_j+\bq'\bu_j^{2}\stackrel{\eqref{6714}}\succeq\bq'\bu_j\stackrel{\eqref{6712}}\approx \bq.
\]
where $\bu_j\bq=\bq'\bu_j^{2}$.
This derives the identity $\bu \succeq \bq$.
\end{proof}


\begin{pro}\label{pro4211}
$\mathsf{V}(S_{(4, 684)})$ is the ai-semiring variety defined by the identities
\begin{align}
&xyz \preceq xy; \label{6841}\\
&x^{2}y \approx xy; \label{6842}\\
&xyz \approx yxz; \label{6843}\\
&zxy \preceq xy; \label{6844}\\
&xy \preceq x+x^{2}y^{2}; \label{6845}\\
&xyz \preceq x+yz; \label{6846}\\
&xy \preceq y+x^{2}y^{2}. \label{6847}
\end{align}
\end{pro}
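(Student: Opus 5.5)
Following the pattern of Propositions~\ref{pro428} and~\ref{pro429}, I first check that $S_{(4,684)}$ satisfies \eqref{6841}--\eqref{6847}. This is a finite verification on the Cayley table. For the converse, I realize $S_{(4,684)}$ as a subdirect product of $S_{46}$ and a second $3$-element algebra whose equational theory is known, presumably via congruences with nontrivial blocks $\{1,2\}$ and $\{3,4\}$. Given the identities $xyz\preceq x+yz$ and $xy\preceq x+x^2y^2$, the natural guess for the second factor is $S_{55}\cong T_2^0$, as in Proposition~\ref{pro4210}. Let $\bq\preceq\bu$ be a nontrivial inequality of $S_{(4,684)}$. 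Lemma~\ref{lem4601} gives $\ell(\bq)\geq 2$, and if $m(t(\bq),\bq)=1$ there is $\bu_j\in D_\bq(\bu)$ with $t(\bq)\notin c(p(\bu_j))$. Lemma~\ref{lem5501} gives $\bu_i\in D_\bq(\bu)$ with $\ell(\bu_i)\geq 2$.

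The first key step is to derive $\bu\succeq \bq^2$. Starting from $\bu_i$ with $\ell(\bu_i)\ge 2$ and $c(\bu_i)\subseteq c(\bq)$, I use \eqref{6846} in the form $\bu_i+\bu_i\succeq \bu_i$ to absorb extra factors. More directly, \eqref{6841} and \eqref{6844} let me multiply $\bu_i$ on the right or left by arbitrary words, since $xyz\preceq xy$ and $zxy\preceq xy$ say a product of length $\ge 2$ dominates any extension of itself. This gives $\bu\succeq\bu_i\succeq \bu_i\bq^2$. By \eqref{6842}, \eqref{6843} and $c(\bu_i)\subseteq c(\bq)$, the word $\bu_i\bq^2$ collapses to $\bq^2$, so $\bu\succeq\bq^2$. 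Here I need \eqref{6843} together with \eqref{6842} to reorder and absorb the letters of $\bu_i$ into the square of $p(\bq)$. The tail letter of $\bq^2$ is $t(\bq)$, and it is preceded by a copy of $t(\bq)$, which is consistent with $c(\bu_i)\subseteq c(\bq)$.

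Next I split cases as in Proposition~\ref{pro429}. In \textbf{Case 1}, where $m(t(\bq),\bq)\ge 2$, identities \eqref{6842} and \eqref{6843} give $\bq^2\approx\bq$, and I am done. In \textbf{Case 2}, where $m(t(\bq),\bq)=1$, I take $\bu_j$ as above and split on $m(t(\bq),\bu_j)$.

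If $m(t(\bq),\bu_j)=0$, then $c(\bu_j)\subseteq c(p(\bq))$. The task is to derive $\bu_j+\bq^2\succeq \bu_j\bq$, whose right side is $\approx\bq$ by \eqref{6842} and \eqref{6843}. Since there is no identity $xy\preceq x+y^2$ here, I would instead use \eqref{6846}. Writing $\bq^2\approx p(\bq)^2\,\bw$ with $\bw$ a suitable length-$2$ word via \eqref{6843}, I get $\bu_j+\bq^2\succeq \bu_j\,\bq^2$ when $\ell(\bu_j)\ge 1$. The case $\ell(\bu_j)=1$ needs care, and if necessary I apply \eqref{6846} with $x=\bu_j$ and $yz$ a factorization of $\bq^2$. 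I then reduce using \eqref{6842} and \eqref{6843}.

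If $m(t(\bq),\bu_j)=1$, then $t(\bu_j)=t(\bq)$. I rewrite $\bq^2\approx \bq'^2\bu_j^2$ using \eqref{6842} and \eqref{6843}. Then \eqref{6847} gives $\bu_j+\bq'^2\bu_j^2\succeq \bq'\bu_j\approx\bq$.

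The main obstacle is the first sub-case of Case 2, together with pinning down the correct second subdirect factor. Without the inequality $xy\preceq x+y^2$, the derivation must go through \eqref{6846} (or \eqref{6845} when $h(\bu_j)=h(\bq)$). I would first try $\bu_j+\bq^2\succeq \bu_j\bq^2$ via \eqref{6846}, splitting $\bq^2$ as $yz$. If the head of $\bq^2$ causes trouble, I fall back on \eqref{6845} after moving a shared head letter to the front with \eqref{6843}.
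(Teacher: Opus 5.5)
Your architecture matches the paper's: the subdirect decomposition into $S_{46}$ and $S_{55}$ via the blocks $\{1,2\}$ and $\{3,4\}$, Lemmas~\ref{lem4601} and~\ref{lem5501}, and the split on $m(t(\bq),\bq)$ and then on $m(t(\bq),\bu_j)$. Three of your four pieces are correct:
\begin{itemize}
\item the step $\bu\succeq\bu_i\succeq\bu_i\bq^2\approx\bq^2$ via \eqref{6841};
\item Case~1;
\item the subcase $t(\bu_j)=t(\bq)$, via \eqref{6847} with $x=p(\bq)$ and $y=\bu_j$.
\end{itemize}

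There is a genuine gap in the subcase $m(t(\bq),\bq)=1$, $m(t(\bq),\bu_j)=0$, which you leave unresolved, and your primary route there cannot work. Modulo \eqref{6842} and \eqref{6843}, a word $\bw$ with $\ell(\bw)\geq 2$ is determined exactly by the pair $(c(p(\bw)),t(\bw))$, and both identities preserve this pair. Applying \eqref{6846} to $\bu_j+\bq^2$ produces $\bu_j\bq^2$, whose pair is $(c(\bq),t(\bq))$. So $\bu_j\bq^2\approx\bq^2$ and you are back where you started. Since $t(\bq)\notin c(p(\bq))$, no further use of \eqref{6842} and \eqref{6843} can turn this word into $\bq$. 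Your fallback to \eqref{6845} ``when $h(\bu_j)=h(\bq)$'' or ``after moving a shared head letter to the front'' misreads that identity: no head condition is involved.

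The missing step is \eqref{6845} with $x\mapsto\bu_j$ and $y\mapsto\bq$:
\[
\bu\succeq\bu_j+\bq^2\approx\bu_j+\bu_j^2\bq^2\succeq\bu_j\bq\approx\bq.
\]
Here $\bu_j^2\bq^2\approx\bq^2$ because $c(\bu_j)\subseteq c(\bq)$, and $\bu_j\bq\approx\bq$ because $c(\bu_j)\subseteq c(p(\bq))$. This also covers $\ell(\bu_j)=1$, and it mirrors your use of \eqref{6847} in the other subcase.

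Your remark that ``there is no identity $xy\preceq x+y^2$ here'' is also misleading. That inequality is derivable: $y^2\succeq x^2y^2$ by \eqref{6844}, so $x+y^2\succeq x+x^2y^2\succeq xy$ by \eqref{6845}. Hence the argument of Proposition~\ref{pro426} transfers verbatim.

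With this repair your proof is complete, and it is more uniform than the paper's. The paper never passes through $\bu\succeq\bq^2$. Instead it splits further on $m(t(\bq),\bu_i)\in\{0,1,\geq 2\}$ and combines $\bu_i$ with $\bu_j$ via \eqref{6846} and \eqref{6844}. It reaches the same final move, \eqref{6845} applied to $\bu_j$ and a word $\approx\bq^2$, only in one of those branches.
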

\begin{proof}
It is easy to check that $S_{(4, 684)}$ satisfies the identities \eqref{6841}--\eqref{6847}.
In the remainder it is enough to prove that every ai-semiring identity of $S_{(4, 684)}$
can be derived by \eqref{6841}--\eqref{6847} and the identities defining $\mathbf{AI}$.
Let $\bq \preceq \bu$ be such an inequality,
where $\bu=\bu_1+\bu_2+\cdots+\bu_n$ and $\bu_i, \bq \in X^+$, $1 \leq i \leq n$.

Observe that $S_{(4,684)}$ is isomorphic to a subdirect product of $S_{46}$ and $S_{55}$ via the congruences determined by the nontrivial blocks $\{1,2\}$ and $\{3,4\}$; hence both $S_{46}$ and $S_{55}$ satisfy $\bq\preceq\bu$.
By Lemmas~\ref{lem5501} and~\ref{lem4601}, we have $\ell(\bq)\geq 2$, $L_{\geq2}(D_\bq(\bu))\neq\emptyset$,
and if $m(t(\bq),\bq)=1$, then there exists $\bu_j\in D_\bq(\bu)$ such that $t(\bq)\notin c(p(\bu_j))$.

\textbf{Case 1.} $m(t(\bq), \bq)\geq 2$. We have

\[
\bu\succeq \bu_i\stackrel{\eqref{6841}}\succeq \bu_i\bq\stackrel{\eqref{6842},\eqref{6843}}\approx \bq.
\]
This derives $\bu \succeq \bq$.

\textbf{Case 2.} $m(t(\bq), \bq)=1$. Then Lemma \ref{lem4601} there exists $\bu_j \in D_\bq(\bu)$ such that $t(\bq)\notin c(p(\bu_j))$.

\textbf{Subcase 2.1.} $m(t(\bq),\bu_j)=0$. This implies
\[
\bu\succeq\bu_i+\bu_j\stackrel{\eqref{6846}}\succeq\bu_i\bu_j\stackrel{\eqref{6841}}\succeq \bu_i\bu_j\bq.
\]

If $m(t(\bq),\bu_i)=0$, then $m(t(\bq),\bu_i\bu_j)=0$, by \eqref{6842}--\eqref{6843} we deduce the identity $\bu_i\bu_j\bq \approx \bq$.
This derives $\bu \succeq \bq$.

If $m(t(\bq),\bu_i)=1$, then $m(t(\bq),\bu_i\bu_j)=1$, we have
\[
\bu\succeq\bu_j+\bu_i\bu_j\bq\stackrel{\eqref{6842},\eqref{6843}}\approx \bu_j+(p(\bq))^{2}(t(\bq))^{2}\stackrel{\eqref{6845}}\succeq \bq.
\]
This derives $\bu \succeq \bq$.

If $m(t(\bq),\bu_i)\geq 2$, then $m(t(\bq),\bu_i\bu_j)\geq 2$, we have
\[
\bu\succeq\bu_i+\bu_i\bu_j\bq\stackrel{\eqref{6842},\eqref{6843}}\approx \bu_i+\bq^2\stackrel{\eqref{6845}}\succeq \bq.
\]
This derives $\bu \succeq \bq$.

\textbf{Subcase 2.2.} $m(t(\bq),\bu_j)=1$. Then $t(\bu_j)=t(\bq)$.

When $m(t(\bq),\bu_i)=0$. Then $m(t(\bq),\bu_i\bu_j)=1$, and $t(\bq)=t(\bu_i\bu_j)$. We have
\[
\bu\succeq\bu_i+\bu_j\stackrel{\eqref{6846}}\succeq\bu_i\bu_j\stackrel{\eqref{6844}}\succeq p(\bq)\bu_i\bu_j\stackrel{\eqref{6842},\eqref{6843}}\approx \bq.
\]
This derives $\bu \succeq \bq$.

When $m(t(\bq),\bu_i)=1$. Then $m(t(\bq),\bu_i\bu_j)=2$, we have

\[
\bu\succeq\bu_j+\bu_i\bu_j\bq\stackrel{\eqref{6842},\eqref{6843}}\approx \bu_j+\bq^{'2}(\bu_j))^{2}\stackrel{\eqref{6847}}\succeq \bq'\bu_j\stackrel{\eqref{6842},\eqref{6843}}\approx \bq.
\]
where $\bu_i\bu_j\bq=\bq^{'2}(\bu_j))^{2}$.
This derives $\bu \succeq \bq$.

When $m(t(\bq),\bu_i)\geq 2$. Then $m(t(\bq),\bu_i\bu_j)\geq 3$, by (\ref{6842}), (\ref{6843}) and (\ref{6845}) we deduce the identity $\bu_i\bu_j\bq \approx \bq$.
This derives $\bu \succeq \bq$.
\end{proof}

%

\begin{cor}
The ai-semiring $S_{(4, 672)}$ is finitely based.
\end{cor}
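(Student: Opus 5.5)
The plan follows the pattern used for every other corollary of this type in the paper. I will verify directly from the Cayley tables in Table~\ref{tb1} that $S_{(4,672)}$ and $S_{(4,684)}$ have dual multiplications. Concretely, the multiplication table of $S_{(4,672)}$ should be the transpose of that of $S_{(4,684)}$, meaning $a\cdot_{672} b = b\cdot_{684} a$ for all $a,b\in\{1,2,3,4\}$.

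Once this is checked, the rest is formal. The two algebras share the additive reduct, namely the chain $4<3<2<1$, so by the elementary fact recalled in the introduction they have the same finite basis property. Explicitly, $\mathsf{V}(S_{(4,672)})$ is defined by the duals of the identities \eqref{6841}--\eqref{6847}, obtained by reversing every word. By Proposition~\ref{pro4211}, $S_{(4,684)}$ is finitely based, and therefore so is $S_{(4,672)}$.

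The only real step is the table check. If the transpose does not match exactly, I would instead look for a bijection of $\{1,2,3,4\}$ that preserves the chain order. Since an order-preserving bijection of a chain is the identity, that check reduces to the same literal comparison of the tables.

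A fallback is available if, unexpectedly, $S_{(4,672)}$ is not dual to $S_{(4,684)}$. In that case I would exhibit $S_{(4,672)}$ as a subdirect product of the duals of $S_{46}$ and $S_{55}$, via the congruences with nontrivial blocks $\{1,2\}$ and $\{3,4\}$. This would again give equality of varieties with the dual of $\mathsf{V}(S_{(4,684)})$, and the conclusion would follow in the same way.
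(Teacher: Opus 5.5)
Your proposal is correct and is exactly the paper's argument: the multiplication table of $S_{(4,672)}$ is indeed the transpose of that of $S_{(4,684)}$, so the result follows from Proposition~\ref{pro4211} by duality. The transpose check succeeds, so your fallback via subdirect products is never needed.
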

\begin{proof}
Since $S_{(4, 672)}$ and $S_{(4, 684)}$ have dual multiplications, it follows from
Proposition {\ref{pro4211}} that $S_{(4, 672)}$ is finitely based.
\end{proof}

\begin{pro}\label{pro4212}
$\mathsf{V}(S_{(4, 801)})$ is the ai-semiring variety defined by the identities
\begin{align}
&yx \preceq x; \label{8011}\\
&xy \preceq x; \label{8012}\\
&x^{3} \approx x^{2}; \label{8013}\\
&xyz \approx yxz; \label{8014}\\
&xy \preceq x+x^{2}y^{2}; \label{8015}\\
&xy \approx x^{2}y+xy^{2}. \label{8016}
\end{align}
\end{pro}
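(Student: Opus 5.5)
We follow the pattern of Propositions~\ref{pro427} and~\ref{pro4211}. First we check directly on the Cayley table that $S_{(4,801)}$ satisfies \eqref{8011}--\eqref{8016}. Identifying $y$ with $x$ in \eqref{8016} recovers \eqref{8013}, and \eqref{8014} together with \eqref{8013} lets us rewrite any word up to permuting all but its last letter. Next we exhibit $S_{(4,801)}$ as a subdirect product of $S_{44}$ and a suitable $3$-element ai-semiring whose inequalities are governed by the $D$-condition. We expect the congruences to have nontrivial blocks $\{1,2\}$ and $\{3,4\}$, with the second factor $D_2^0$ or $S_{36}$-like. Then, for a nontrivial inequality $\bq\preceq\bu$ of $S_{(4,801)}$, Lemma~\ref{lem4401} gives $\ell(\bq)\geq 2$ and $D_\bq(\bu)\neq\emptyset$. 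Moreover, for each $x\in M_1(\bq)$ it gives some $\bu_i\in D_\bq(\bu)$ with $m(x,\bu_i)\leq 1$.

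The basic derivation step is as follows. For any $\bu_i\in D_\bq(\bu)$ we have
\[
\bu\succeq \bu_i\stackrel{\eqref{8012}}{\succeq}\bu_i\bq \stackrel{\eqref{8013},\eqref{8014}}{\approx}\bq^2 ,
\]
using $c(\bu_i)\subseteq c(\bq)$. We could also use \eqref{8011} to put $\bq$ in front. If $M_1(\bq)=\emptyset$, then $\bq^2\approx\bq$ by \eqref{8013} and \eqref{8014}, and we are done.

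If $M_1(\bq)\neq\emptyset$, we apply \eqref{8016} with \eqref{8014}, exactly as in Case~2 of Proposition~\ref{pro74701}. This splits $\bq$ into a sum of words $\bq_j=\bq_j'y_j$ in which only $y_j$ has multiplicity one and it sits at the tail. Each $\bq_j$ satisfies $\bq_j^2\approx\bq^2$. So it suffices to derive $\bu\succeq\bq_j$. Take $\bu_j\in D_\bq(\bu)$ with $m(y_j,\bu_j)\leq 1$.

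If $m(y_j,\bu_j)=0$, then $\bu\succeq\bu_j\succeq\bq_j'\bu_j$ using \eqref{8011} or \eqref{8012}. This word has the same content as $\bq_j'$, with all multiplicities at least $2$ after padding. We then still need to append $y_j$ once. For this we would use $\bu\succeq \bu_j+\bq^2$ and try \eqref{8015} in the form $\bq_j'\bu_j\cdot y\preceq \bq_j'\bu_j+(\bq_j'\bu_j)^2y^2$.

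If $m(y_j,\bu_j)=1$, we rearrange with \eqref{8014} so that $\bu_j\approx \bp\,y_j$ (permuting the prefix, or moving $y_j$ to the end unless it is the head). Then
\[
\bu\succeq \bu_j+\bq^2\approx \bu_j+\bq_j'^{2}\bu_j^{2}\stackrel{\eqref{8015}}{\succeq}\bq_j'\bu_j\approx\bq_j .
\]

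The main obstacle will be the bookkeeping of the position of the single-occurrence letter. Since \eqref{8014} only permutes non-final letters, we must handle separately the situation where $y_j$ is the tail of $\bu_j$ versus an interior letter. We must also check that \eqref{8015}, which places the multiplicity-one factor on the left, can be matched to it via \eqref{8014}. We would first attempt to move $y_j$ out of the tail position by using \eqref{8011} to append an extra letter of $\bq_j'$. If that fails, we fall back on a subcase split mirroring Subcases~2.1--2.2 of Proposition~\ref{pro427}.
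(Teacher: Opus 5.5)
There is a genuine gap, located where you anticipated trouble, and it is not mere bookkeeping. Your guessed decomposition cannot be right: $\{1,2\}$ is not a congruence block, because $1\cdot1=1$ while $2\cdot1=3$. The relevant congruences have nontrivial blocks $\{2,3\}$ and $\{3,4\}$, with quotients $S_{44}$ and $S_{46}$, so $\mathsf{V}(S_{(4,801)})=\mathsf{V}(S_{44},S_{46})$. The factor $S_{46}$ supplies the condition of Lemma~\ref{lem4601}: if $m(t(\bq),\bq)=1$, then some $\bu_i\in D_\bq(\bu)$ satisfies $t(\bq)\notin c(p(\bu_i))$. You never obtain this condition.

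What you do collect (Lemma~\ref{lem4401} together with $D_\bq(\bu)\neq\emptyset$) is insufficient. For example, $x^2y\preceq yx$ holds in $S_{44}$ and in $D_2^0$, but fails in $S_{(4,801)}$. Under $x\mapsto1$, $y\mapsto2$ the left side is $2$ and the right side is $2\cdot1=3$, and $3<2$ in the additive order. This is exactly your case $m(y_j,\bu_j)=1$. The step ``$\bu_j\approx\bp\,y_j$, moving $y_j$ to the end'' is unavailable there: \eqref{8014} permutes only non-final letters and never changes the last letter. Here it would require $yx\approx xy$, which is false.

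Likewise, the splitting step does not produce summands with the singleton at the tail; that reduction in Proposition~\ref{pro74701} relied on commutativity. With \eqref{8013}, \eqref{8014} and \eqref{8016} you get two kinds of summand:
\begin{itemize}
\item summands $\bq_j\approx y_j\bw_j^2\,t(\bq)^2$, whose singleton $y_j$ lies in the prefix;
\item when $m(t(\bq),\bq)=1$, one extra summand $\bw^2t(\bq)$, whose singleton is the tail.
\end{itemize}

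The first kind is easy. Take $\bu_k\in D_\bq(\bu)$ with $m(y_j,\bu_k)\leq1$. If $m(y_j,\bu_k)=0$, then \eqref{8012} gives $\bu_k\succeq\bu_k\bq_j\approx\bq_j$. If $m(y_j,\bu_k)=1$, then \eqref{8012} gives $\bu_k\succeq\bu_k\bw_j^2t(\bq)^2\approx\bq_j$.

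The second kind needs the $S_{46}$ condition. With $\bu_i$ as above, there are two possibilities:
\begin{itemize}
\item $t(\bq)\notin c(\bu_i)$, and then $\bu_i\succeq\bu_i\bw^2t(\bq)\approx\bw^2t(\bq)$ by \eqref{8012};
\item $t(\bu_i)=t(\bq)$ occurring once, and then $\bu_i\succeq\bw^2\bu_i\approx\bw^2t(\bq)$ by \eqref{8011}.
\end{itemize}
This is the paper's argument.

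Two smaller points. First, $\bu_i\bq\approx\bq^2$ fails when a multiplicity-one letter of $\bq$ is absent from $\bu_i$; for instance $x\cdot xy=x^2y\not\approx x^2y^2$. Use $\bu_i\succeq\bu_i\bq^2\approx\bq^2$ instead. Second, \eqref{8015} is a consequence of \eqref{8012}, since $xy\preceq x\preceq x+x^2y^2$, so routing through it gains nothing.
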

\begin{proof}
It is easy to check that $S_{(4, 801)}$ satisfies the identities \eqref{8011}--\eqref{8016}.
In the remainder it is enough to prove that every ai-semiring identity of $S_{(4, 801)}$
can be derived by \eqref{8011}--\eqref{8016} and the identities defining $\mathbf{AI}$.
Let $\bq \preceq \bu$ be such an inequality,
where $\bu=\bu_1+\bu_2+\cdots+\bu_n$ and $\bu_i, \bq \in X^+$, $1 \leq i \leq n$.

Observe that $S_{(4,801)}$ is isomorphic to a subdirect product of $S_{44}$ and $S_{56}$ via the congruences determined by the nontrivial blocks $\{2,3\}$ and $\{3,4\}$; hence both $S_{44}$ and $S_{56}$ satisfy $\bq\preceq\bu$.
By Lemmas~\ref{lem4601} and~\ref{lem4401}, we have $\ell(\bq)\geq 2$,
and if $m(t(\bq),\bq)=1$, then there exists $\bu_i\in D_\bq(\bu)$ such that $t(\bq)\notin c(p(\bu_i))$;
moreover, it suffices to consider one of the following cases.

\textbf{Case 1.}$M_1(\bq)$ is empty. For all of $x\in c(\bq), m(x,\bq)\geq 2$. There exists $\bu_j \in D_\bq(\bu)$. We have

\[
\bu\succeq \bu_j\stackrel{\eqref{8012}}\succeq \bu_j\bq\stackrel{\eqref{8013},\eqref{8014}}\approx \bq.
\]
This derives $\bu \succeq \bq$.

\textbf{Case 2.}$M_1(\bq)$ is nonempty.

\textbf{Subcase 2.1.} $m(t(\bq), \bq)\geq 2$. Assume that
\[
c(\bq)=\{x_1, \ldots, x_m, y_1, \ldots, y_\ell\},
\]
where $m(x_i, \bq )\geq 2$, $m(y_j, \bq )=1$, $0 \leq i \leq m$, $1 \leq j \leq \ell$.
Then
\begin{align*}
\bq
&\approx x_1^{2}x_2^{2}\ldots x_m^{2}y_1y_2\ldots y_\ell t^{2}(\bq)\\
&\approx x_1^{2}x_2^{2}\ldots x_m^{2}y_1^{2}y_2^{2}\ldots y_\ell^{2}t^{2}(\bq)\\
&\quad + \sum\limits_{1 \leq  j \leq \ell} x_1^{2}x_2^{2}\ldots x_m^{2}
   y_1^{2}\ldots y_{i-1}^{2}y_{i+1}^{2}\ldots y_\ell^{2}
   y_j t^{2}(\bq). \quad (\text{by}~\eqref{8016})
\end{align*}

Let $\bq_j' = x_1^{2}x_2^{2}\ldots x_m^{2}y_1^{2}\ldots y_{i-1}^{2}y_{i+1}^{2}\ldots y_\ell^{2},\bq_j=\bq_j'y_jt^{2}(\bq)$, then it suffices to derive $\bq_j$.
It follows from Lemma \ref{lem4401} that
for any $y \in M_{1}(\bq)$, there exists $\bu_k \in D_\bq(\bu)$ such that $m(y, \bu_k)\leq 1$.

If $m(y, \bu_k) = 0$ for all $y \in M_{1}(\bq)$. Then
\[
\bu\succeq \bu_k\stackrel{\eqref{8012}}\succeq \bu_k\bq_j\stackrel{\eqref{8013},\eqref{8014}}\approx \bq_j.
\]
This derives $\bu \succeq \bq$.

If $m(y, \bu_k) = 1$ for some $y \in M_{1}(\bq)$. We have
\[
\bu\succeq \bu_k\stackrel{\eqref{8012}}\succeq \bu_k\bq_j't^{2}(\bq)\stackrel{\eqref{8013},\eqref{8014}}\approx \bq_j.
\]
This derives $\bu \succeq \bq$.

\textbf{Subcase 2.2.}$m(t(\bq), \bq)= 1$. Then by Lemma \ref{lem4601} there exists $\bu_i \in D_\bq(\bu)$ such that
$t(\bq)\notin c(p(\bu_i))$. Then
\begin{align*}
\bq
&\approx x_1^{2}x_2^{2}\ldots x_m^{2}y_1y_2\ldots y_\ell t(\bq)\\
&\approx x_1^{2}x_2^{2}\ldots x_m^{2}y_1^{2}y_2^{2}\ldots y_\ell^{2}t(\bq)\\
&\quad + \sum\limits_{1 \leq  j \leq \ell}  x_1^{2}x_2^{2}\ldots x_m^{2}
   y_1^{2}\ldots y_{j-1}^{2}y_{j+1}^{2}\ldots y_\ell^{2}
   y_j t^{2}(\bq). \quad (\text{by}~\eqref{8016})
\end{align*}

Let $\bq_H = x_1^{2}x_2^{2}\ldots x_m^{2}y_1^{2} y_2^{2}\ldots y_\ell^{2}t(\bq)$, now $\bq= \bq_H + \bq'$, $\bq'$ can be converted to Subcase 2.1, it suffices to consider $\bq_H$.

When $m(t(\bq), \bu_i) = 0$. Then
\[
\bu\succeq \bu_i\stackrel{\eqref{8012}}\succeq\bu_i\bq_H\stackrel{\eqref{8013},\eqref{8014}}\approx \bq_H.
\]
This derives $\bu \succeq \bq$.

When $m(t(\bq), \bu_i) = 1$. We implies $t(\bu_i)=t(\bq)$. We have
\[
\bu\succeq \bu_i\stackrel{\eqref{8011}}\succeq \bq^{'}_H\bu_i\stackrel{\eqref{8013},\eqref{8014}}\approx \bq_H,
\]
where $ \bq_H=\bq^{'}_H t(\bq)$.
This derives $\bu \succeq \bq$.
\end{proof}


\begin{cor}
The ai-semiring $S_{(4, 777)}$ is finitely based.
\end{cor}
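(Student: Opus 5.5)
The plan is to use the duality principle, as in the other corollaries of this kind. I would first check directly from the Cayley tables in Table~\ref{tb1} that $S_{(4,777)}$ and $S_{(4,801)}$ have the same additive reduct, namely the chain $1>2>3>4$, and that their multiplications are dual, i.e. $a\cdot_{777} b = b\cdot_{801} a$ for all $a,b\in\{1,2,3,4\}$. Since every algebra in this family carries the same chain addition, only the multiplication tables need to be compared. Concretely, the table of $S_{(4,777)}$ should be the transpose of that of $S_{(4,801)}$.

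Granting this, the elementary fact from the introduction gives that $S_{(4,777)}$ and $S_{(4,801)}$ have the same finite basis property. The reason is that an identity $\bu\approx\bv$ holds in one of them exactly when the identity obtained by reversing every word holds in the other. So reversing each word of \eqref{8011}--\eqref{8016} yields a finite basis for $\mathsf{V}(S_{(4,777)})$. Proposition~\ref{pro4212} shows that $S_{(4,801)}$ is finitely based, hence $S_{(4,777)}$ is finitely based as well.

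The only real step is the verification that the two multiplications are dual. This is a finite check of sixteen entries, so I do not expect any genuine obstacle.

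If the tables turned out not to be dual, I would fall back on the structural approach used elsewhere in the paper. I would try to exhibit $S_{(4,777)}$ as a subdirect product of the duals of $S_{44}$ and $S_{56}$, via congruences with nontrivial blocks $\{2,3\}$ and $\{3,4\}$. I would then conclude through equational equivalence with the dual of $S_{(4,801)}$.
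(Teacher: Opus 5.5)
Your proposal is correct and is exactly the paper's argument. The multiplication table of $S_{(4,777)}$ is the transpose of that of $S_{(4,801)}$: each column of $S_{(4,801)}$, namely $(1,3,3,4)$, $(2,4,4,4)$, $(3,4,4,4)$, $(4,4,4,4)$, is the corresponding row of $S_{(4,777)}$. Finite basedness therefore transfers from Proposition~\ref{pro4212} by reversing words, and the fallback via subdirect products is never needed.
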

\begin{proof}
Since $S_{(4, 777)}$ and $S_{(4, 801)}$ have dual multiplications, it follows from
Proposition {\ref{pro4212}} that $S_{(4, 777)}$ is finitely based.
\end{proof}

\begin{pro}\label{pro77401}
$\mathsf{V}(S_{44}, S_{45})$ is the ai-semiring variety defined by the identities
\begin{align}
& x^3\approx x^2; \label{77402}\\
& xyz\approx xzy; \label{77401}\\
& x^2y^2 \approx y^2x^2; \label{77405}\\
& x_1xx_2 \preceq x; \label{77403}\\
& xy \approx x^2y+xy^2, \label{77404}
\end{align}
where $x_1$ and $x_2$ may be empty in \eqref{77403}.
\end{pro}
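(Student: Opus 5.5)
We first check directly on the Cayley tables that $S_{44}$ and $S_{45}$ both satisfy \eqref{77402}--\eqref{77404}; this is a finite verification. The real work is completeness. We take a nontrivial inequality $\bq\preceq\bu$ with $\bu=\bu_1+\cdots+\bu_n$ that holds in both $S_{44}$ and $S_{45}$, and we derive it from \eqref{77402}--\eqref{77404}.

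From $S_{44}$, Lemma~\ref{lem4401} gives three facts: $\ell(\bq)\ge 2$; $D_\bq(\bu)\neq\emptyset$; and for each $x\in M_1(\bq)$ there is some $\bu_i\in D_\bq(\bu)$ with $m(x,\bu_i)\le 1$. We need a comparable description for $S_{45}$, which is not stated earlier in the paper. So the first step is a lemma of the same kind as Lemma~\ref{lem4401}, obtained by evaluating words in $S_{45}$. Because $S_{45}$ satisfies $xyz\approx xzy$ but is presumably not commutative, we expect this lemma to add a condition on the head. Our guess is: if $m(h(\bq),\bq)=1$, then some $\bu_i\in D_\bq(\bu)$ either omits $h(\bq)$ or has $h(\bq)$ occurring exactly once, as its head.

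Next we normalise $\bq$. By \eqref{77401}, \eqref{77402} and \eqref{77405}, $\bq$ is equal to $h(\bq)\,x_1^{a_1}\cdots x_s^{a_s}$ with every $a_i\in\{1,2\}$, and the order of the squared letters can be ignored. Applying \eqref{77404} in the same way as in the proofs of Propositions~\ref{pro74701} and~\ref{pro427}, $\bq$ becomes a sum of words in each of which at most one non-head letter has multiplicity one. Hence it suffices to treat three shapes: $h\,x_1^2\cdots x_s^2$; $h\,x_1^2\cdots x_{s-1}^2 y$ with $m(y,\bq)=1$; and the corresponding words with $m(h(\bq),\bq)=1$, where $h$ is either squared or single.

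The derivation itself rests on \eqref{77403}, which, with $x_1$ and $x_2$ allowed to be empty, gives $\bu\succeq\bu_i\succeq \bw\,\bu_i\,\bw'$ for any words $\bw,\bw'$. We pick $\bu_i\in D_\bq(\bu)$, chosen according to the conditions from the two lemmas, and multiply: on the left by $h(\bq)$, and on the right by the squares of all letters of $\bq$ except the critical letter $y$, or by $\bq$ itself when the critical letter is absent. Since $c(\bu_i)\subseteq c(\bq)$ and $m(y,\bu_i)\le 1$, rewriting with \eqref{77401} and \eqref{77402} turns this product into exactly the normalised summand. For example, if $y$ occurs once in $\bu_i$, then $y$ also occurs once in $h(\bq)\bu_i\bq_1$, where $\bq_1$ is $\bq$ with $y$ removed.

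We expect the main obstacle to be the case $m(h(\bq),\bq)=1$, and there the exact form of the $S_{45}$ lemma matters. Multiplying on the left by $h(\bq)$ goes wrong if $\bu_i$ contains another occurrence of $h(\bq)$, since $h(\bq)$ would then appear twice. To handle this we would take $\bu_i$ with head $h(\bq)$ and no other occurrence of $h(\bq)$, or with no occurrence of $h(\bq)$ at all, and multiply only on the right. If the correct $S_{45}$ lemma turns out to be weaker, the fallback is to use \eqref{77404} once more to split off the head.
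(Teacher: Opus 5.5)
Your overall route is the paper's. You use the conditions from Lemma~\ref{lem4401} and a head condition for $S_{45}$, normalise with \eqref{77401} and \eqref{77402}, split with \eqref{77404}, and then apply \eqref{77403} once to a witness word.

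The condition you guess for $S_{45}$ is exactly right and needs no new evaluation. $S_{45}$ and $S_{46}$ have dual multiplications, so the condition is the left--right dual of Lemma~\ref{lem4601}. That is how the paper obtains it.

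\textbf{The gap.} You split off only non-head letters of multiplicity one, so you are left with words $h\,x_1^2\cdots x_{s-1}^2y$ in which both $h=h(\bq)$ and $y$ occur once; $\bq=xy$ is the simplest such word. To handle these with a single multiplication, you need one $\bu_i\in D_\bq(\bu)$ that satisfies both $m(y,\bu_i)\le 1$ and $h\notin c(s(\bu_i))$. The two lemmas supply these witnesses separately, and they can be different words.

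For example, take $\bq=xy$ and $\bu=x^2y+xy^2$; the inequality holds because it follows from \eqref{77404}.
\begin{itemize}
\item The word $xy^2$ witnesses the head condition, and $x^2y$ witnesses the condition on $y$.
\item Every product $\bw\,\bu_i\,\bw'$ with $\bu_i\in\{x^2y,xy^2\}$ contains $x$ or $y$ at least twice.
\item The identities \eqref{77401}, \eqref{77402}, \eqref{77405} never lower a multiplicity of at least $2$ to $1$, so no such product can be rewritten to $xy$.
\end{itemize}

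\textbf{The repair.} Apply \eqref{77404} to the head as well: $hy\bw\approx h^2y\bw+hy^2\bw$. Then every summand has at most one letter of multiplicity one, counting the head. The paper does exactly this, reducing to the two forms $xy_1^2\cdots y_k^2$ and $y_1^2\cdots y_k^2x$; Proposition~\ref{pro427} does the same in its Subcase 2.2. After this split, each summand needs only one witness:
\begin{itemize}
\item a single head is handled by the $S_{45}$ condition;
\item a single non-head letter is handled by Lemma~\ref{lem4401}.
\end{itemize}
Your ``fallback'' is precisely this step. You need it unconditionally, not only if the $S_{45}$ lemma turns out to be weaker.

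A smaller slip: when $h(\bq)\notin c(\bu_i)$ you cannot multiply only on the right. You still need $h(\bq)$ on the left, or the product $\bq\bu_i$ as in the paper.
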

\begin{proof}
A direct verification shows that both $S_{44}$ and $S_{45}$ satisfy identities \eqref{77402}--\eqref{77404}.
It remains to prove that every identity holding in both $S_{44}$ and $S_{45}$ can be derived from \eqref{77402}--\eqref{77404}.
Let $\bq\preceq \bu$ be a nontrivial inequality, where
$\bu=\bu_1+\cdots+\bu_n$ and $\bu_i,\bq\in X^+$ for $1\leq i\leq n$.
Since the multiplications of $S_{45}$ and $S_{46}$ are dual, it follows from Lemmas~\ref{lem4401} and~\ref{lem4601} that
$\ell(\bq)\geq 2$ and $c(\bu_i)\subseteq c(\bq)$ for some $\bu_i\in\bu$.
Let $c(\bq)=\{x_1,x_2,\ldots,x_m\}$ and $h(\bq)=x_1$.

\textbf{Case 1.}
$M_1(\bq)$ is empty. Then $m(x, \bq)\geq 2$ for all $x\in c(\bq)$.
From \eqref{77402} and \eqref{77401}, we obtain $\bq\approx x_1^2x_2^2\cdots x_m^2$.
Consequently,
\[
\bu \succeq \bu_i\stackrel{\eqref{77403}}\succeq \bq\bu_i \stackrel{\eqref{77402}, \eqref{77401}}\approx \bq.
\]
This derives the inequality $\bu \succeq\bq$.

\textbf{Case 2.}
$M_1(\bq)$ is nonempty.
From \eqref{77402} and \eqref{77401}, we have either
\[
\bq\approx x_1\cdots x_k\, x_{k+1}^2\cdots x_m^2
\quad\text{or}\quad
\bq\approx x_1^2\cdots x_k^2\, x_{k+1}\cdots x_m.
\]
Furthermore, identity \eqref{77404} yields
\[
\bq
\approx x_1x_2^2\cdots x_m^2
+\sum_{2\leq i\leq k} x_1^2\cdots x_{i-1}^2\, x_{i+1}^2\cdots x_m^2\, x_i
\]
in the first case, and
\[
\bq
\approx \sum_{k+1\leq i\leq m} x_1^2\cdots x_{i-1}^2\, x_{i+1}^2\cdots x_m^2\, x_i
\]
in the second. Thus it suffices to consider the two forms
\[
\bq=xy_1^2\cdots y_k^2
\quad\text{or}\quad
\bq=y_1^2\cdots y_k^2x.
\]

\textbf{Subcase 2.1.}  $\bq=xy_1^2\cdots y_k^2$. Lemma~\ref{lem4601} tells us that
there exists $\bu_j \in D_\bq(\bu)$ such that
$h(\bq)\notin c(s(\bu_j))$, and so $\bu_j=h(\bu_j)s(\bu_j)$.
If $h(\bu_j)=x$, then
\[
\bu \succeq \bu_j=xs(\bu_j) \stackrel{\eqref{77403}}\succeq xs(\bu_j)s(\bq) \stackrel{\eqref{77402}, \eqref{77401}}\approx \bq.
\]
If $h(\bu_j)\neq x$, then
\[
\bu \succeq \bu_j\stackrel{\eqref{77403}}\succeq \bq\bu_j\stackrel{\eqref{77402}, \eqref{77401}}\approx \bq.
\]
This derives the inequality $\bu \succeq\bq$.

\textbf{Subcase 2.2.} $\bq=y_1^2\cdots y_k^2x$.
By Lemma~\ref{lem4401}, there exists $\bu_k\in D_\bq(\bu)$ such that $m(x,\bu_k)\leq 1$.
If $m(x, \bu_k)=0$, then
\[
\bu \succeq \bu_k \stackrel{\eqref{77403}}\succeq \bu_k\bq \stackrel{\eqref{77402}, \eqref{77401}, \eqref{77405}}\approx \bq.
\]
If $m(x, \bu_k)=1$, then
\[
\bu \succeq \bu_k \stackrel{\eqref{77403}}\succeq y_1^2\cdots y_k^2\bu_k \stackrel{\eqref{77402}, \eqref{77401}}\approx \bq.
\]
This derives the inequality $\bu \succeq\bq$.
\end{proof}

\begin{remark}
The ai-semiring $S_{(4, 774)}$ is finitely based.
\end{remark}
\begin{proof}
It is easy to see that both $S_{44}$ and $S_{45}$ can  be embedded into $S_{(4, 774)}$.
Also, $S_{(4, 774)}$ satisfies the identities \eqref{77402}--\eqref{77404},
and so $\mathsf{V}(S_{44}, S_{45})=\mathsf{V}(S_{(4, 774)})$.
By Proposition $\ref{pro77401}$ we immediately deduce that $S_{(4, 774)}$ is finitely based.
\end{proof}

\begin{cor}
The ai-semiring $S_{(4, 786)}$ is finitely based.
\end{cor}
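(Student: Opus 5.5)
The plan is to reduce $S_{(4,786)}$ to $S_{(4,774)}$, following the pattern of the preceding corollaries. There are two natural routes, and we would try them in the following order.

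\textbf{First route: duality.} We first check whether $S_{(4,786)}$ and $S_{(4,774)}$ have dual multiplications. If they do, the elementary fact from the introduction gives the result at once: algebras with the same additive reduct and dual multiplications share the finite basis property, and $S_{(4,774)}$ is finitely based by the preceding remark.

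\textbf{Second route: the same variety.} If the two multiplications are not dual, we instead prove $\mathsf{V}(S_{(4,786)})=\mathsf{V}(S_{44},S_{45})$, in two steps.
\begin{itemize}
\item[$(1)$] Verify directly from the Cayley table that $S_{(4,786)}$ satisfies \eqref{77402}--\eqref{77404}. Proposition~\ref{pro77401} then gives $\mathsf{V}(S_{(4,786)})\subseteq\mathsf{V}(S_{44},S_{45})$.
\item[$(2)$] Find copies of $S_{44}$ and $S_{45}$ in $\mathsf{V}(S_{(4,786)})$. The first thing to try is 3-element subalgebras, such as $\{2,3,4\}$ or $\{1,2,4\}$ with the inherited chain order. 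Failing that, we look for a representation of $S_{(4,786)}$ as a subdirect product of $S_{44}$ and $S_{45}$, via two congruences with nontrivial blocks of the form $\{1,2\}$ and $\{3,4\}$ or similar, as in Proposition~\ref{pro425}. Either way this yields the reverse inclusion.
\end{itemize}
Once the two inclusions hold, Proposition~\ref{pro77401} shows that $S_{(4,786)}$ is finitely based.

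\textbf{A third fallback.} If $S_{(4,786)}$ turns out instead to be dual to an algebra equationally equivalent to $S_{(4,774)}$, we combine the two arguments above.

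\textbf{Expected obstacle.} The main difficulty is purely computational: identifying the correct embeddings or congruence blocks from the multiplication table, and checking identity \eqref{77403} with its optional empty variables.
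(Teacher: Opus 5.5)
Your first route is exactly the paper's proof, and the check succeeds: transposing the multiplication table of $S_{(4,774)}$ gives the multiplication table of $S_{(4,786)}$, so the corollary follows from the remark that $S_{(4,774)}$ is finitely based together with the duality fact from the introduction. The fallbacks are unnecessary, and the second one would in fact fail at step $(1)$, because $S_{(4,786)}$ violates $xyz\approx xzy$: taking $x=y=1$ and $z=3$ gives $xyz=3$ but $xzy=4$.
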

\begin{proof}
Notice that $S_{(4, 786)}$ and $S_{(4, 774)}$ have dual multiplications,
therefore, $S_{(4, 786)}$ is also finitely based.
\end{proof}

\begin{pro}
The ai-semiring $S_{(4, 636)}$ is finitely based.
\end{pro}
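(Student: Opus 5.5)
I would handle $S_{(4,636)}$ the way the other isolated cases in this section and Section~\ref{section-known} are handled: first find a subdirect decomposition, then either reduce to a variety whose finite basis property is already known, or write down a finite basis directly. Without the Cayley table I cannot fix the decomposition in advance, so the plan has to branch.

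\textbf{Step 1: find a subdirect decomposition.} For an algebra in this range, $S_{(4,636)}$ should have two congruences intersecting trivially. The natural candidates are the chain-compatible blocks $\{1,2\}$, $\{2,3\}$, $\{3,4\}$, $\{1,2,3\}$ and $\{2,3,4\}$. With these I would express $S_{(4,636)}$ as a subdirect product of a $3$-element ai-semiring and one of $L_2,R_2,M_2,D_2,N_2,T_2$, or of two $3$-element ai-semirings. A second possibility is that $S_{(4,636)}\cong S^0$ for some $3$-element $S$, detected by checking that $4$ is a multiplicative zero. By the index range, I expect it to be the adjunction $S^0$ of one of $S_{44}$, $S_{46}$, $S_{55}$ or $S_{57}$. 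It sits next to $S_{(4,633)}\cong S_{57}^0$, $S_{(4,627)}\cong S_{59}^0$ and $S_{(4,629)}\cong S_{60}^0$, so it might equally be $S_{56}^0$ or the adjunction of some other $3$-element semiring.

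\textbf{Step 2a: reduce to a known variety.} If the decomposition gives $\mathsf V(S_{(4,636)})=\mathsf V(A,B)$, I would first try to identify $\mathsf V(A,B)$ with a variety already shown finitely based. The candidates are those of Propositions~\ref{pro74701}--\ref{pro77401}, the results of \cite{yrzs,rlyc}, or a variety covered by the known criteria (Propositions~\ref{pro411}--\ref{pro414}). To do this, I would check that $S_{(4,636)}$ satisfies the proposed basis and that the generators embed into $S_{(4,636)}$, as in the corollaries following Propositions~\ref{pro74701} and~\ref{pro85001}.

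\textbf{Step 2b: the direct route for $S^0$.} If instead $S_{(4,636)}\cong S^0$, I would use Lemma~\ref{lem001}: $\bq\preceq\bu$ holds exactly when $D_\bq(\bu)\neq\emptyset$ and $S$ satisfies $\bq\preceq D_\bq(\bu)$. I would then take the description of the inequalities of $S$ (Lemma~\ref{lem4401}, \ref{lem4601}, \ref{lem5501}, \ref{lem5701} or its analogue) and write down a short basis. It would contain $x\preceq x^2$, a permutation or collapse identity such as $x^2y\approx xy$ with $xyz\approx yxz$ or commutativity, and one or two ``merging'' inequalities of the shape $\bw\preceq \bp+\bp'$. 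As in Propositions~\ref{pro63301} and~\ref{pro62901}, these merging inequalities must only combine words inside $D_\bq(\bu)$, because $0$ forbids invoking words outside $c(\bq)$.

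\textbf{Step 3: derivability.} Given $\bq\preceq\bu$ holding in $S_{(4,636)}$, restrict to $D_\bq(\bu)=\bu_1+\cdots+\bu_k$. Merge these into the single word $\bu_1\cdots\bu_k$, which is equivalent to $\bq^2$ or $\bq\bp$ under the permutation identities because $c(D_\bq(\bu))=c(\bq)$. Then split into cases according to $m(t(\bq),\bq)$, or according to $M_1(\bq)$, exactly as in Propositions~\ref{pro426} and~\ref{pro63301}.

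\textbf{Main obstacle.} The hard part is the case where $\bq$ contains a variable of multiplicity one: $t(\bq)$ occurring once, or $M_1(\bq)\neq\emptyset$. There one must recover $\bq$ from $\bq^2$ using a special word $\bu_j$ with $m(t(\bq),\bu_j)\le 1$. The first thing I would try is an identity of the form $xy\preceq y+x^2y^2$ or $xy\preceq x+y^2$, as in \eqref{6884} and \eqref{6886}.
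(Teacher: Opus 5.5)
Your proposal has a genuine gap. It never commits to a decomposition, so every step is conditional on a case you have not identified, and the candidates you name are all wrong.

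\textbf{The missing identification.} The table settles it directly. Row and column $4$ are constantly $4$, so $4$ is a multiplicative zero and $S_{(4,636)}\cong S^0$ with $S=\{1,2,3\}$. In $S$, the element $1$ is absorbing and $\{2,3\}\cong T_2$ (all products equal $2$). So $S$ is not $S_{44}$, $S_{46}$, $S_{55}$, $S_{56}$ or $S_{57}$; it is a subdirect product of $T_2$ and $M_2$. The missing step is exactly the paper's argument. The congruences with nontrivial blocks $\{1,2\}$ and $\{2,3\}$ have quotients $T_2^0$ and $M_2^0$ and intersect trivially, so $\mathsf{V}(S_{(4,636)})=\mathsf{V}(T_2^0,M_2^0)$. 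By \cite[Proposition 7.4]{yrzs}, this variety equals $\mathsf{V}(S_{(4,431)})$ and is finitely based. That is the whole proof. Your Step 2a gestures at \cite{yrzs} but never reaches this specific result.

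\textbf{The anticipated obstacle does not arise, and Step 3 fails as modelled.} $S_{(4,636)}$ is commutative and satisfies $x^2y\approx xy$. The value of any word of length at least $2$ depends only on its content. So multiplicity-one variables play no role beyond the split $\ell(\bq)=1$ versus $\ell(\bq)\geq 2$. By Lemmas~\ref{lem001}, \ref{pro422} and \ref{nlemma1}(6), a nontrivial $\bq\preceq\bu$ holds in $S_{(4,636)}$ if and only if three conditions hold: $D_\bq(\bu)\neq\emptyset$, $c(D_\bq(\bu))=c(\bq)$, and $L_{\geq 2}(D_\bq(\bu))\neq\emptyset$.

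The merging step you model on Propositions~\ref{pro425} and~\ref{pro426} relies on $xy\preceq x+y$ (that is, \eqref{6883}). This inequality fails here: with $x=y=3$ you get $xy=2$, whereas $x+y=3<2$. A direct basis would therefore have to merge the words of $D_\bq(\bu)$ through a word of length at least $2$, for instance via $xyz\preceq xy+z$. Inequalities such as \eqref{6884} and \eqref{6886} do hold, but they address a difficulty this algebra does not have.
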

\begin{proof}
It is a routine matter to verify that $S_{(4, 636)}$ is isomorphic to a subdirect product of $T_2^0$ and $M_2^0$
via the congruences defined by the nontrivial blocks $\{1, 2\}$ and $\{2, 3\}$. So
\[
\mathsf V(S_{(4, 636)}) = \mathsf V(T_2^0, M_2^0)
\]
By \cite[Proposition 7.4]{yrzs}, we have that $S_{(4, 431)}$ is finitely based, and
\[
\mathsf V(S_{(4, 431)}) = \mathsf V(T_2^0, M_2^0).
\]
Therefore,
\[
\mathsf V(S_{(4, 636)}) = \mathsf V(T_2^0, M_2^0)=\mathsf V(S_{(4, 431)}),
\]
and so $S_{(4, 636)}$ is also finitely based.
\end{proof}

\begin{pro}
The ai-semiring $S_{(4,517)}$ is  finitely based.
\end{pro}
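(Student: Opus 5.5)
The plan is to follow the pattern used for the neighbouring algebras $S_{(4,636)}$, $S_{(4,683)}$, $S_{(4,728)}$, $S_{(4,502)}$. We reduce $\mathsf{V}(S_{(4,517)})$ to a variety already known to be finitely based, rather than writing down a basis from scratch.

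\textbf{Step 1: a subdirect decomposition.} From the multiplication table of $S_{(4,517)}$ (Table~\ref{tb1}), we look for two congruences with trivial intersection. The natural candidates are those used throughout: nontrivial blocks $\{1,2\}$ and $\{3,4\}$, or $\{1,2\}$ and $\{2,3,4\}$, or $\{1,2,3\}$ and $\{3,4\}$. For each candidate we check that the blocks are compatible with $\cdot$ (compatibility with $+$ is automatic for intervals of the chain). We then identify the quotients among $L_2,R_2,M_2,D_2,N_2,T_2$ and the relevant $3$-element algebras $S_{44},S_{46},S_{47},S_{53},S_{55},S_{57},S_{58},S_{59},S_{60}$ and their $0$-adjunctions. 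This gives
\[
\mathsf{V}(S_{(4,517)})=\mathsf{V}(A,B)
\]
for explicit $A,B$.

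\textbf{Step 2: simplification.} If one factor is a $3$-element algebra such as $S_{31},S_{38},S_{43},S_{52},S_{19}$, we replace it by the pair of $2$-element algebras it generates, using the bases from \cite{ryy}. We then absorb any $2$-element algebra that already embeds into the other factor. After this, $\mathsf{V}(S_{(4,517)})$ should coincide with a variety of the form $\mathsf{V}(S_k,X_2)$ with $X_2$ a $2$-element ai-semiring, or with $\mathsf{V}(S_k,S_{k'})$.

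\textbf{Step 3: matching with a known finitely based variety.} We look up the simplified variety among the results already established. These are the propositions of the present paper, for instance $\mathsf{V}(S_{58},N_2)$, $\mathsf{V}(S_{58},D_2)$, $\mathsf{V}(S_{58},M_2)$, $\mathsf{V}(S_{55},N_2)$ and $\mathsf{V}(S_{57},R_2)$, together with \cite[Sections 3--7]{yrzs} and \cite{rlyc}. Since $517$ lies in the block $505$--$516$ handled via $S_{58}$ and $S_{60}$, we expect $S_{(4,517)}$ to be the dual of an algebra such as $S_{(4,653)}$ or $S_{(4,565)}$, or to decompose with $S_{58}$ or its dual. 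If $S_{(4,517)}$ has the dual multiplication of an algebra already treated, we simply quote that result.

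\textbf{Fallback and main obstacle.} The main obstacle is Step 1: locating congruences that actually separate points and recognizing the quotients correctly. If no useful decomposition exists, or the resulting pair matches no known variety, we fall back to the direct method of Sections~\ref{section-s44}ff. In that route we write a short candidate basis consisting of a permutation identity, a power identity such as $x^2y\approx xy$ or $x^3\approx x^2$, and a few inequalities of the form $\mathbf{w}\preceq x+\dots$. We then take a nontrivial $\bq\preceq\bu$, extract the necessary conditions from the $2$- and $3$-element quotients via Lemmas~\ref{nlemma1}, \ref{lem001} and \ref{lem5801}, and derive $\bu\succeq\bq$ case by case on $\ell(\bq)$ and the head or tail conditions. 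The delicate point there is the length-two case for $\bq$.
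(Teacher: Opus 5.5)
Your proposal is a search procedure rather than a proof. It never commits to a decomposition or to a known finitely based variety, and the concrete guesses it makes are wrong for this algebra.

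In $S_{(4,517)}$, rows $1,2,3$ of the multiplication table are constantly $1$ and row $4$ is $(1,2,2,4)$. Consequently $\{3,4\}$ and $\{2,3,4\}$ are not congruence blocks: $3\cdot 4=1$ and $2\cdot 4=1$, while $4\cdot 4=4$. So all three candidate pairs you list fail. No $S_{58}$ enters the picture, and $S_{(4,517)}$ is not dual to $S_{(4,653)}$ or $S_{(4,565)}$.

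Congruence classes on a chain are intervals, so the only congruences other than the trivial and full ones are those with nontrivial blocks $\{1,2\}$, $\{2,3\}$ or $\{1,2,3\}$. The only pair with trivial intersection is $\{1,2\}$ and $\{2,3\}$. Their quotients are $S_{60}$ (every product is the top element except that the bottom squares to itself) and $S_{57}$ (the bottom element is a left identity and all other products are the top). Hence $\mathsf{V}(S_{(4,517)})=\mathsf{V}(S_{60},S_{57})$.

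At this point your Steps 2 and 3 give nothing. Neither factor reduces to $2$-element algebras, and the varieties you propose to match against do not include $\mathsf{V}(S_{60},S_{57})$. The missing ingredient is \cite[Proposition 6.11]{yrzs}, which shows $\mathsf{V}(S_{60},S_{57})=\mathsf{V}(S_{(4,390)})$ and that this variety is finitely based. This decomposition plus that citation is exactly the paper's argument.

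Your duality fallback could also be made to work, but with the right partner. Transposing the table shows that $S_{(4,517)}$ is dual to $S_{(4,487)}$, which the paper treats via $\mathsf{V}(S_{54},S_{60})=\mathsf{V}(S_{(4,369)})$ and \cite[Corollary 6.12]{yrzs}. The direct-basis fallback is only described, not carried out, so it cannot replace the missing identification.
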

\begin{proof}
It is easy to check that $S_{(4, 517)}$ is isomorphic to a subdirect product of $S_{60}$ and $S_{57}$ via the congruences defined by the nontrivial blocks $\{1, 2\}$ and $\{2, 3\}$.
This implies that
\[
\mathsf{V}(S_{(4, 517)}) = \mathsf{V}(S_{60}, S_{57}).
\]
By \cite[Proposition 6.11]{yrzs} it follows that $\mathsf{V}(S_{(4, 390)})$ is equal to $\mathsf{V}(S_{60}, S_{57})$ and is finitely based.
Thus $\mathsf{V}(S_{(4, 517)})=\mathsf{V}(S_{(4, 390)})$ and so $S_{(4, 517)}$ is also finitely based.
\end{proof}

\begin{pro}
The ai-semiring $S_{(4,518)}$ is  finitely based.
\end{pro}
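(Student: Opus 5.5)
Following the pattern of the immediately preceding result on $S_{(4,517)}$, I would first decompose $S_{(4,518)}$ as a subdirect product of two $3$-element ai-semirings whose additive reducts are chains. Concretely, I would look at the congruences on $S_{(4,518)}$ generated by merging two adjacent elements of the chain $1>2>3>4$, namely those with nontrivial blocks $\{1,2\}$, $\{2,3\}$, $\{3,4\}$, and check which are compatible with the multiplication table of $S_{(4,518)}$ given in Table~\ref{tb1}. I expect, by analogy with $S_{(4,517)}$ (its neighbour in the enumeration), that two such congruences, presumably with blocks $\{1,2\}$ and $\{2,3\}$ or $\{1,2\}$ and $\{3,4\}$, intersect trivially. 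The quotients would then be identified with $3$-element ai-semirings among $S_{53}$, $S_{55}$, $S_{57}$, $S_{58}$, $S_{59}$, $S_{60}$ (or their duals), giving $\mathsf{V}(S_{(4,518)})=\mathsf{V}(S_i,S_j)$.

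Next, I would reduce further where possible. If one factor is a $3$-element algebra generating the same variety as a pair of $2$-element algebras, I would use the bases from \cite{ryy} or \cite{sr} as in Remarks~\ref{remark26080810} and~\ref{remark26080920}, and absorb any $2$-element factor already embedded in the other factor. The aim is to land on a variety already shown finitely based. Candidates are a variety of the form $\mathsf{V}(S_{57},X_2)$ or $\mathsf{V}(S_{60},X_2)$ treated above or in \cite{yrzs}, or a pair $\mathsf{V}(S_{60},S_{57})$-type variety realised by some $S_{(4,k)}$ with $k\le 480$ in \cite{yrzs,rlyc}. The latter would give finite basedness directly, exactly as for $S_{(4,517)}$ via $S_{(4,390)}$.

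If no such identification exists, which is the main obstacle, I would write down an explicit finite basis for $\mathsf{V}(S_{(4,518)})$ in the style of Proposition~\ref{pro312}. Using the subdirect decomposition, any nontrivial inequality $\bq\preceq\bu$ holding in $S_{(4,518)}$ inherits the necessary conditions from the lemmas for the factors, for instance Lemma~\ref{lem5701} ($L_{\geq2}(\bu)\ne\emptyset$, $c(p(\bq))\subseteq c(p(\bu))$, $t(\bq)\in c(\bu)$) together with Lemma~\ref{lem6001} or Lemma~\ref{nlemma1}. I would then split into cases on $\ell(\bq)$ and on whether $t(\bq)\in c(p(\bu))$. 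In each case I would multiply summands of $\bu$ together using an inequality like $yx\preceq x+yz$ to obtain a product of squares times $\bq$, and cancel the excess with $x^2y\approx xy$ and a permutation identity.

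The delicate part is choosing the permutation and absorption identities so that every case closes, especially when $m(t(\bq),\bq)=1$ and $t(\bq)$ appears only as a tail in $\bu$. For that case I would first try an identity of the shape $yz\preceq y+xyz$ from Proposition~\ref{pro312}.
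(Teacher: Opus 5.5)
Your strategy (a subdirect decomposition into two $3$-element factors, then identification with a variety already known to be finitely based) is the paper's. But the proposal never does the computation the proof actually consists of, and the one pair of factors it names is wrong.

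From Table~\ref{tb1}:
\begin{itemize}
\item Rows $2$ and $3$ of $S_{(4,518)}$ coincide, and so do columns $2$ and $3$, so $\{2,3\}$ is a congruence block.
\item Rows $1,2$ differ only in $1\cdot 4=1$ versus $2\cdot 4=2$, and columns $1,2$ differ only in $4\cdot 1=1$ versus $4\cdot 2=2$, so $\{1,2\}$ is a congruence block as well.
\item $\{3,4\}$ is not a congruence block, because $3\cdot 2=1$ but $4\cdot 2=2$.
\end{itemize}
The congruences with blocks $\{1,2\}$ and $\{2,3\}$ meet trivially. In the quotient by $\{1,2\}$, the only product below the top is $4\cdot 4=4$, which gives $S_{60}$. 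In the quotient by $\{2,3\}$, the products below the top are $\{2,3\}\cdot 4=4\cdot\{2,3\}=\{2,3\}$ and $4\cdot 4=4$, which gives $S_{53}$, not $S_{57}$. The entries $2\cdot 4=3\cdot 4=2$ are exactly where $S_{(4,518)}$ differs from $S_{(4,517)}$.

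Hence $\mathsf{V}(S_{(4,518)})=\mathsf{V}(S_{60},S_{53})$. By \cite[Proposition 6.13]{yrzs} this equals $\mathsf{V}(S_{(4,398)})$ and is finitely based. That is the whole proof, and no explicit basis is needed.

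The pair you single out, $\mathsf{V}(S_{60},S_{57})$, is the variety of $S_{(4,517)}$. Your fallback is also aimed at the wrong factor. An inequality of $S_{(4,518)}$ inherits its necessary conditions from Lemmas~\ref{lem5301} and~\ref{lem6001}, not from Lemma~\ref{lem5701}. In fact the condition $c(p(\bq))\subseteq c(p(\bu))$ you planned to use fails here: $S_{(4,518)}$ is commutative, so $xy\preceq yx$ holds in it even though $c(p(xy))\not\subseteq c(p(yx))$. A case analysis built on that condition, with $S_{57}$-style identities such as $yx\preceq x+yz$, would not be justified for this algebra.
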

\begin{proof}
It is easy to check that $S_{(4, 518)}$ is isomorphic to a subdirect product of $S_{60}$ and $S_{53}$
via the congruences defined by the nontrivial blocks $\{1, 2\}$ and $\{2, 3\}$.
This implies that
\[
\mathsf{V}(S_{(4, 518)}) = \mathsf{V}(S_{60}, S_{53}).
\]
By \cite[Proposition 6.13]{yrzs} it follows that $\mathsf{V}(S_{(4, 398)})$ is equal to $\mathsf{V}(S_{60}, S_{53})$ and is finitely based.
Thus $\mathsf{V}(S_{(4, 518)})=\mathsf{V}(S_{(4, 398)})$ and so $S_{(4, 518)}$ is also finitely based.
\end{proof}

\begin{pro}
The ai-semirings $S_{(4,522)}$, $S_{(4,519)}$, $S_{(4,522)}$ and $S_{(4, 496)}$ are finitely based.
\end{pro}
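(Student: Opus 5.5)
The plan is to follow the pattern of the immediately preceding propositions (on $S_{(4,517)}$ and $S_{(4,518)}$). I would first show that one of the algebras, say $S_{(4,522)}$, is isomorphic to a subdirect product of two $3$-element ai-semirings from the list related to $S_{60}$. Concretely, I expect $S_{(4,522)}$ to be a subdirect product of $S_{60}$ and another $3$-element algebra $S_j$ via two congruences with nontrivial blocks $\{1,2\}$ and $\{2,3\}$. The natural candidate is $S_{56}$ or $S_{59}$, which together with $S_{57}$ and $S_{53}$ form the family whose joins with $S_{60}$ were treated in \cite[Section 6]{yrzs}. This yields $\mathsf{V}(S_{(4,522)})=\mathsf{V}(S_{60},S_j)$. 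The check is routine: verify that the two equivalences are congruences, that their intersection is trivial, and that the quotients are isomorphic to $S_{60}$ and $S_j$.

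Next I would identify $\mathsf{V}(S_{60},S_j)$ with the variety generated by some $4$-element algebra of Type III or IV already shown to be finitely based in \cite{yrzs} (as was done with $S_{(4,390)}$ and $S_{(4,398)}$), citing the corresponding proposition there. If no such reference exists, I would instead write down an explicit finite basis for $\mathsf{V}(S_{60},S_j)$. I would then run the usual argument: take a nontrivial inequality $\bq\preceq\bu$, extract necessary conditions from Lemma~\ref{lem6001} together with the analogous lemma for $S_j$, and derive $\bu\succeq\bq$ by cases on $\ell(\bq)$ and on the structure of $L_{\geq2}(\bu)$.

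For the remaining algebras I would use duality and equational equivalence. I expect $S_{(4,519)}$ to be the dual of $S_{(4,522)}$, and $S_{(4,496)}$ to either have dual multiplication to one of them or to be a subdirect product of the same pair of $3$-element algebras, possibly with the roles of the blocks changed to $\{1,2\}$ and $\{3,4\}$. The statement lists $S_{(4,522)}$ twice, so presumably one occurrence is a typo for another index such as $S_{(4,521)}$, which I would handle the same way.

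The main obstacle is locating the correct decomposition and the matching known variety. If $\mathsf{V}(S_{60},S_j)$ has not been treated in \cite{yrzs}, the proof requires constructing and verifying a new finite basis, and that derivation is the hard step.
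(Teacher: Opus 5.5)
Your template (split one algebra as a subdirect product of two $3$-element algebras, cite \cite{yrzs}, finish by duality) is the paper's. However, the concrete identification, which is the entire content of this proposition, is wrong, so the argument does not go through as proposed.

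The quotients of $S_{(4,522)}$ by the congruences with nontrivial blocks $\{1,2\}$ and $\{2,3\}$ are forced by the table. Modulo $\{1,2\}$, the class $\{1,2\}$ is a zero, $4$ is a left but not a right identity, and $3\cdot x\in\{1,2\}$ for every $x$; this is $S_{57}$. Modulo $\{2,3\}$, the element $4$ is a two-sided identity, $1$ is a zero, and $\{2,3\}\cdot\{2,3\}=\{1\}$; this is $S_{53}$. (The block $\{3,4\}$ is not a congruence, as $3\cdot 2=1$ but $4\cdot 2=2$.) Hence $\mathsf{V}(S_{(4,522)})=\mathsf{V}(S_{57},S_{53})$. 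The result to cite is \cite[Proposition 3.8]{yrzs}, which shows this variety equals $\mathsf{V}(S_{(4,401)})$ and is finitely based.

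Your predicted route through some $\mathsf{V}(S_{60},S_j)$ cannot be repaired. The inequality $zx\preceq z^2+x$ holds in $S_{(4,522)}$: if $z\neq 4$ then $z^2=1$ is the top, and if $z=4$ then $zx=x$. It fails in $S_{60}$ at $z=2$, $x=1$, where $zx=3$ while $z^2+x=1$. So $S_{60}\notin\mathsf{V}(S_{(4,522)})$, and neither the results of \cite[Section 6]{yrzs} nor a new basis for $\mathsf{V}(S_{60},S_j)$ is relevant.

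Your guess that the repeated $S_{(4,522)}$ should be $S_{(4,521)}$ is correct, but you cannot handle it \emph{the same way}, because $S_{(4,521)}$ is subdirectly irreducible. Congruence classes must be intervals of the chain. One checks that the only nontrivial proper congruences have nontrivial blocks $\{1,2\}$ and $\{1,2,3\}$, so the former is the monolith. For instance, $\{2,3\}$ fails since $2\cdot 4=1$ but $3\cdot 4=3$, and $\{3,4\}$ fails since $3\cdot 2=1$ but $4\cdot 2=2$.

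The missing idea is to argue from below instead. The subsets $\{1,3,4\}\cong S_{53}$ and $\{1,2,4\}\cong S_{57}$ are subalgebras of $S_{(4,521)}$, and $S_{(4,521)}$ satisfies the finite basis of $\mathsf{V}(S_{57},S_{53})$ from \cite[Proposition 3.8]{yrzs}. Hence $\mathsf{V}(S_{(4,521)})=\mathsf{V}(S_{53},S_{57})$.

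The duality steps then work as you expected. $S_{(4,519)}$ is the dual of $S_{(4,522)}$, and $S_{(4,496)}$ is the dual of $S_{(4,521)}$. Being subdirectly irreducible too, $S_{(4,496)}$ is not a subdirect product via blocks $\{1,2\}$ and $\{3,4\}$.
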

\begin{proof}
It is easy to check that $S_{(4, 522)}$ is isomorphic to a subdirect product of $S_{57}$ and $S_{53}$ via the congruences defined by the nontrivial blocks $\{1, 2\}$ and $\{2, 3\}$.
This implies that
\[
\mathsf{V}(S_{(4, 522)}) = \mathsf{V}(S_{57}, S_{53}).
\]
By \cite[Proposition 3.8]{yrzs} it follows that $\mathsf{V}(S_{(4, 401)})$ is equal to $\mathsf{V}(S_{57}, S_{53})$ and is finitely based.
Thus $\mathsf{V}(S_{(4, 522)})=\mathsf{V}(S_{(4, 401)})$ and so $S_{(4, 522)}$ is also finitely based.

Notice that $S_{(4, 519)}$ and $S_{(4, 522)}$ have dual multiplications.
Therefore, $S_{(4,519)}$ is also finitely based.

It is easy to see that both $S_{53}$ and $S_{57}$ can  be embedded into $S_{(4, 521)}$.
Also, $S_{(4, 521)}$ satisfies the equational basis of $\mathsf{V}(S_{57}, S_{53})$(see \cite[Proposition 3.8]{yrzs}),
and so $\mathsf{V}(S_{53}, S_{57})=\mathsf{V}(S_{(4, 521)})$.
By Proposition \cite[Proposition 3.8]{yrzs}, we immediately deduce that $S_{(4, 521)}$ is finitely based.

Notice that $S_{(4, 496)}$ and $S_{(4, 521)}$ have dual multiplications,
therefore, $S_{(4, 496)}$ is also finitely based.
\end{proof}

\begin{pro}
The ai-semirings $S_{(4,724)}$ and $S_{(4,648)}$ are  finitely based.
\end{pro}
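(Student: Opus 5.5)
Following the pattern of the immediately preceding propositions, I would show that $S_{(4,724)}$ is equationally equivalent to a $4$-element ai-semiring whose finite basis property has already been settled. The other option is to show it generates the same variety as a pair of small ai-semirings for which a finite basis is known, either from \cite{yrzs, rlyc} or from an earlier section of this paper. Then I would transfer the conclusion to $S_{(4,648)}$ via duality.

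\textbf{Step 1: decompose $S_{(4,724)}$.} Working from its multiplication table in Table~\ref{tb1}, I would search for two congruences $\rho_1,\rho_2$ on the chain $1>2>3>4$ with $\rho_1\cap\rho_2$ trivial. Good candidates for the nontrivial blocks are pairs such as $\{1,2\}$ and $\{2,3\}$, or $\{1,2\}$ and $\{3,4\}$, as used repeatedly above. These would exhibit $S_{(4,724)}$ as a subdirect product of two $3$-element (or $3$- and $2$-element) ai-semirings $A,B$, giving $\mathsf V(S_{(4,724)})=\mathsf V(A,B)$.

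\textbf{Step 2: reduce the generators.} If one factor is itself equationally equivalent to a variety generated by $2$-element algebras, I would absorb the redundant $2$-element algebras into the other factor using the bases of \cite{ryy} or \cite{sr}. The argument would be exactly as in Remarks~\ref{remark26080101} and~\ref{remark26080910}, reducing to something like $\mathsf V(S_{k},S_{k'})$ or $\mathsf V(S_k,Y_2)$.

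\textbf{Step 3: identify a finitely based variety.} I would match the reduced variety with one already treated. Candidates include $\mathsf V(S_{60},S_{57})=\mathsf V(S_{(4,390)})$, $\mathsf V(S_{60},S_{53})$, $\mathsf V(S_{57},S_{53})$, the varieties $\mathsf V(S_{58},\cdot)$ from \cite{yrzs}, and the varieties in the present section such as $\mathsf V(S_{44},S_{45})$. If no ready match exists, I would fall back on the method of Propositions~\ref{pro427}--\ref{pro4212}. That means writing down a short list of identities satisfied by $S_{(4,724)}$, namely permutation identities, a power reduction like $x^3\approx x^2$ or $x^2y\approx xy$, and inequalities of the form $xy\preceq x+\cdots$. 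I would then derive an arbitrary inequality $\bq\preceq\bu$ by combining the necessary conditions from the relevant lemmas (Lemmas~\ref{lem4401}, \ref{lem4601}, \ref{lem5701}, \ref{lem5801}, \ref{pro422}--\ref{cor424}) for the two factors.

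\textbf{Step 4: transfer to $S_{(4,648)}$.} I expect $S_{(4,648)}$ to have multiplication dual to that of $S_{(4,724)}$; if not, it should be a subdirect product of the same factors, or of their duals. In either case it has the same finite basis property.

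The main obstacle is Step 1 together with Step 3: locating the right congruences and recognising the resulting pair among known finitely based varieties. If a direct basis must be produced, the delicate part will be the case analysis when $m(t(\bq),\bq)=1$ or $M_1(\bq)\neq\emptyset$. There, as in Proposition~\ref{pro426}, one first derives $\bu\succeq\bq^2$ and then removes the extra occurrence using a mixed inequality like $xy\preceq y+x^2y^2$.
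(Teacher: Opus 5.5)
Your outline has the same shape as the paper's proof: a subdirect decomposition, identification with a known finitely based variety, and duality. But it never carries out the two steps that constitute the proof, so as written it does not establish the statement.

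First, the decomposition. In $S_{(4,724)}$ the element $4$ is both the additive minimum and a multiplicative zero. Of your candidate pairs only $\{1,2\},\{2,3\}$ works; $\{3,4\}$ is not a congruence, since $3\cdot 1=2$ while $4\cdot 1=4$. Modulo $\{1,2\}$, every product of the classes $\{1,2\}$ and $3$ is the top class $\{1,2\}$, so the quotient is $T_2^0$. Modulo $\{2,3\}$, the classes $1$ and $\{2,3\}$ satisfy $xy=x$, so the quotient is $L_2^0$. The two congruences meet trivially, hence $\mathsf V(S_{(4,724)})=\mathsf V(T_2^0,L_2^0)$.

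Second, the missing ingredient is the known result \cite[Proposition 7.5]{yrzs}: $\mathsf V(T_2^0,L_2^0)=\mathsf V(S_{(4,445)})$, and this variety is finitely based. It is not among your Step~3 candidates. Each of those contains $S_{44}$, $S_{53}$, $S_{57}$ or $S_{58}$, and none of these lies in $\mathsf V(T_2^0,L_2^0)$. Indeed, the inequalities $x\preceq x^2$, $xy\preceq x+y^2$ and $xyz\preceq xy^2$ hold in $T_2^0$ and $L_2^0$. However:
the first fails in $S_{44}$ at $x=1$;
the second fails in $S_{58}$ at $x=1$, $y=2$;
the third fails in $M_2$, which embeds into $S_{53}$ and $S_{57}$.

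Your fallback of writing a basis directly is left entirely unspecified. If you pursued it, the relevant equational descriptions would be Lemma~\ref{pro423} for $L_2^0$ and Lemma~\ref{lem5501} for $T_2^0\cong S_{55}$. The lemmas for $S_{44}$, $S_{46}$, $S_{57}$, $S_{58}$ in your list play no role.

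Step~4 is fine once actually checked rather than expected: transposing the multiplication table of $S_{(4,724)}$ gives exactly that of $S_{(4,648)}$.
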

\begin{proof}
It is easy to check that $S_{(4, 724)}$ is isomorphic to a subdirect product of $T_2^0$ and $L_2^0$ via the congruences defined by the nontrivial blocks $\{1, 2\}$ and $\{2, 3\}$.
This implies that
\[
\mathsf{V}(S_{(4, 724)}) = \mathsf{V}(T_2^0, L_2^0).
\]
By \cite[Proposition 7.5]{yrzs} it follows that $\mathsf{V}(S_{(4, 445)})$ is equal to $\mathsf{V}(T_2^0, L_2^0)$ and is finitely based.
Thus $\mathsf{V}(S_{(4, 724)})=\mathsf{V}(S_{(4, 445)})$ and so $S_{(4, 724)}$ is also finitely based.

Notice that $S_{(4, 648)}$ and $S_{(4, 724)}$ have dual multiplications.
Therefore, $S_{(4,648)}$ is also finitely based.
\end{proof}

\begin{pro}
The ai-semiring $S_{(4, 487)}$ is finitely based.
\end{pro}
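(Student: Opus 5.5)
We do not have the Cayley table of $S_{(4,487)}$ in front of us (Table~\ref{tb1} is not reproduced here), so the plan follows the template the paper uses for every algebra in this range. First we look for a pair of congruences on $S_{(4,487)}$ with trivial intersection whose quotients are $2$- or $3$-element ai-semirings. The natural candidates are the blocks $\{1,2\}$ and $\{2,3\}$, or $\{1,2\}$ and $\{3,4\}$, or $\{1,2,3\}$ and $\{3,4\}$. Checking such a candidate only requires verifying that each partition is compatible with the multiplication table; compatibility with addition is automatic for convex blocks of the chain. This would exhibit $S_{(4,487)}$ as a subdirect product $S_a \times S_b$, and hence give $\mathsf{V}(S_{(4,487)})=\mathsf{V}(S_a,S_b)$.

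Second, we simplify $\mathsf{V}(S_a,S_b)$ in the same way as in Remarks~\ref{remark26080101}--\ref{remark26080920}. If one factor is a $3$-element algebra such as $S_{31}$, $S_{38}$, $S_{43}$, $S_{52}$ or $S_{19}$, we replace it by the two $2$-element algebras it generates, using the bases from \cite{ryy}. We then drop any $2$-element algebra that embeds into the other factor. The indices near $487$ (compare $S_{(4,488)}$, $S_{(4,490)}$, $S_{(4,496)}$, $S_{(4,500)}$--$S_{(4,505)}$, which are tied to $S_{54}$, $S_{56}$, $S_{60}$ and $S_{57}$) suggest that $S_{(4,487)}$ belongs to this family. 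The most likely outcome is $\mathsf{V}(S_{(4,487)})=\mathsf{V}(S_{c},K)$, where $S_c$ is one of $S_{54},S_{56},S_{59},S_{60}$ and $K\in\{L_2,R_2,D_2,N_2\}$.

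Third, we match this variety with one already handled. That means either a $4$-element algebra of type II--IV from \cite{yrzs, rlyc}, for which the identical generating pair has been shown to be finitely based, or one of the propositions proved earlier in this paper. In the latter case we check that $S_{(4,487)}$ satisfies the listed basis and that both generators embed into it, exactly as in the proof of Corollary~\ref{coro26071210}. If the matched algebra is only a dual, we pass through dual multiplications, which preserves the finite basis property.

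The main obstacle is the case in which no such reduction lands on a variety already known to be finitely based. We would then have to write down a finite basis directly. Following the pattern of Propositions~\ref{pro548}-style arguments (for example Proposition~\ref{pro332}), we would list the identities of $S_{(4,487)}$ of small length. We would then take an arbitrary nontrivial inequality $\bq\preceq\bu$ and use the solution of the equational problem for the two factors, for instance Lemma~\ref{lem5901} or Lemma~\ref{lem6001} together with Lemma~\ref{nlemma1}, to extract the required words of $\bu$. The final step is to derive $\bu\succeq\bq$ by the usual pattern: pick a suitable $\bu_i$, multiply by $\bq$, and normalize.
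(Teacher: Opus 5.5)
Your strategy (a subdirect decomposition into $3$-element factors, then identification with a variety already known to be finitely based) is the one the paper uses. As written, though, the proposal never carries out the two steps that make up the proof, so it does not yet establish the statement.

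The multiplication table of $S_{(4,487)}$ is in Table~\ref{tb1}; its rows are $(1,1,1,1)$, $(1,1,1,2)$, $(1,1,1,2)$, $(1,1,1,4)$. Your first candidate pair of blocks, $\{1,2\}$ and $\{2,3\}$, does give congruences. For $\{2,3\}$, rows $2$ and $3$ coincide and columns $2$ and $3$ are both constantly $1$. For $\{1,2\}$, columns $1$ and $2$ are constantly $1$ and row $2$ takes values in $\{1,2\}$. The two partitions intersect trivially.

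The quotient by $\{1,2\}$ is $S_{60}$: every product is the top class except $4\cdot 4=4$. The quotient by $\{2,3\}$ is $S_{54}$, the dual of $S_{57}$: the only non-top products are $\{2,3\}\cdot 4=\{2,3\}$ and $4\cdot 4=4$. Hence $\mathsf{V}(S_{(4,487)})=\mathsf{V}(S_{54},S_{60})$.

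This also shows that your predicted outcome, $\mathsf{V}(S_c,K)$ with $K$ a $2$-element algebra, is wrong here. Both factors are genuinely $3$-element, so your second step (replacing a factor by $2$-element algebras and discarding embedded ones) does nothing. The matching in your third step must therefore be against a variety generated by two $3$-element algebras.

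The missing ingredient is the specific known result: by \cite[Corollary 6.12]{yrzs}, $\mathsf{V}(S_{(4,369)})=\mathsf{V}(S_{54},S_{60})$ and $S_{(4,369)}$ is finitely based, which finishes the proof. Without naming this decomposition and this citation, your argument is only a search procedure whose success is not guaranteed. Your fallback of writing down a basis directly is likewise only sketched, not carried out.
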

\begin{proof}
It is a routine matter to verify that $S_{(4, 487)}$ is isomorphic to a subdirect product of $S_{54}$ and $S_{60}$
via the congruences defined by the nontrivial blocks $\{1, 2\}$ and $\{2, 3\}$. So
\[
\mathsf V(S_{(4, 487)}) = \mathsf V(S_{54}, S_{60})
\]
By \cite[Corollary 6.12]{yrzs}, we have that $S_{(4, 369)}$ is finitely based, and
\[
\mathsf V(S_{(4, 369)}) = \mathsf V(S_{54}, S_{60}).
\]
Therefore,
\[
\mathsf V(S_{(4, 487)}) = \mathsf V(S_{54}, S_{60})=\mathsf V(S_{(4, 369)}),
\]
and so $S_{(4, 487)}$ is also finitely based.
\end{proof}

\begin{pro}
The ai-semiring $S_{(4, 487)}$ is finitely based.
\end{pro}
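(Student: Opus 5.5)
The statement repeats the preceding proposition, so the same route applies. I would show that $S_{(4,487)}$ is equationally equivalent to an algebra whose finite basis property is already known.

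\textbf{Step 1: decompose.} I would check from the Cayley table of $S_{(4,487)}$ that the equivalences with nontrivial blocks $\{1,2\}$ and $\{2,3\}$ are congruences. This means verifying compatibility with $+$ and $\cdot$, which is a finite routine check. Their intersection is the identity relation. The two quotients are $3$-element ai-semirings isomorphic to $S_{54}$ and $S_{60}$. This gives a subdirect decomposition and hence
\[
\mathsf V(S_{(4,487)})=\mathsf V(S_{54},S_{60}).
\]

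\textbf{Step 2: invoke prior work.} By \cite[Corollary 6.12]{yrzs}, $\mathsf V(S_{(4,369)})=\mathsf V(S_{54},S_{60})$ and $S_{(4,369)}$ is finitely based. Therefore $\mathsf V(S_{(4,487)})=\mathsf V(S_{(4,369)})$. Equationally equivalent algebras share the finite basis property, so $S_{(4,487)}$ is finitely based.

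\textbf{Main obstacle.} The only real work is the congruence check in Step 1 and the identification of the two quotients. Since $1$ and $2$ are adjacent in the additive chain, as are $2$ and $3$, both blocks are automatically compatible with addition. The point to check is multiplication: for each block, products of block-equivalent elements must remain block-equivalent.

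If this decomposition fails for the table, I would fall back on the pattern used for $S_{(4,521)}$. That is, I would embed $S_{54}$ and $S_{60}$ as subalgebras of $S_{(4,487)}$ and verify directly that $S_{(4,487)}$ satisfies the equational basis of $\mathsf V(S_{54},S_{60})$ from \cite{yrzs}.
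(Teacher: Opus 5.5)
Your proposal is correct and is essentially the paper's proof. The equivalences with blocks $\{1,2\}$ and $\{2,3\}$ are congruences meeting in the identity relation, with quotients isomorphic to $S_{60}$ and $S_{54}$ respectively, so the conclusion follows from \cite[Corollary 6.12]{yrzs} exactly as you describe, and the fallback is not needed.
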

\begin{proof}
It is a routine matter to verify that $S_{(4, 487)}$ is isomorphic to a subdirect product of $S_{54}$ and $S_{60}$
via the congruences defined by the nontrivial blocks $\{1, 2\}$ and $\{2, 3\}$. So
\[
\mathsf V(S_{(4, 487)}) = \mathsf V(S_{54}, S_{60})
\]
By \cite[Corollary 6.12]{yrzs}, we have that $S_{(4, 369)}$ is finitely based, and
\[
\mathsf V(S_{(4, 369)}) = \mathsf V(S_{54}, S_{60}).
\]
Therefore,
\[
\mathsf V(S_{(4, 487)}) = \mathsf V(S_{54}, S_{60})=\mathsf V(S_{(4, 369)}),
\]
and so $S_{(4, 487)}$ is also finitely based.
\end{proof}

Let $\bp$ be a word in $X^+$ and $Y$ a subset of $X$.
Then $i(\bp)$ denotes the the word that is obtained from $\bp$ by
retaining the first occurrence of each variable, and
$h_Y(\bp)$ denotes the first letter in the word
that is obtained from $\bp$ by deleting all variables in $Y$.
The following result, which is due to \cite[Lemma 2.4]{gpz}, provides a solution of the equational problem for $S_{41}$.

\begin{lem}\label{lem4101}
Let $\bq\preceq\bu$ be a nontrivial inequality such that
$\bu=\bu_1+\bu_2+\cdots+\bu_n$ and $\bu_i, \bq \in X^+$ for all $1\leq i \leq n$.
Then $\bq\preceq\bu$ holds in $S_{41}$ if and only if $D_\bq(\bu)\neq\emptyset$, and
for any $Y \subseteq c(\bq)$, there exists $\bu_j\in \bu$ such that $h_Y(\bu_j)=h_Y(\bq)$.
\end{lem}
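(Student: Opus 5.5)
The plan is to read off the Cayley tables of $S_{41}$ from \cite{zrc} (or \cite{gpz}) and then argue directly with evaluations $\varphi\colon P_f(X^+)\to S_{41}$. I expect $S_{41}$ to have three elements with a chain additive order: an element $e$ that is a multiplicative identity, and two left zeros $a,b$ (so $ax=a$ and $bx=b$ for all $x$), where $b$ is the additive minimum.

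If so, the value of a word $\bw$ under $\varphi$ is computed as follows. Let $Y=\{x\in c(\bw)\mid \varphi(x)=e\}$. Then $\varphi(\bw)=e$ if $c(\bw)\subseteq Y$. Otherwise $\varphi(\bw)=\varphi(h_Y(\bw))$, the image of the first letter of $\bw$ not sent to $e$. Everything below rests on this description, and checking it against the actual table is the first step. If the table differs, say $e$ is the maximum rather than the middle of the chain, or $a$ and $b$ are ordered differently, I will adjust which element plays which role and keep the same shape of argument.

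For necessity I use two families of substitutions, assuming $S_{41}$ satisfies $\bq\preceq\bu$.
\begin{itemize}
\item Send every variable of $c(\bq)$ to $e$ and every other variable to $b$. Then $\varphi(\bq)=e$. Any $\bu_i$ with $c(\bu_i)\not\subseteq c(\bq)$ evaluates to $b$, the bottom. So some $\bu_i$ must evaluate to something $\geq e$, which forces $c(\bu_i)\subseteq c(\bq)$, that is, $D_\bq(\bu)\neq\emptyset$.
\item Fix $Y\subseteq c(\bq)$. If $Y=c(\bq)$, then $h_Y(\bq)$ is empty, and the condition is the statement that some $\bu_j$ also has $h_Y(\bu_j)$ empty, i.e.\ $c(\bu_j)\subseteq Y$; this was just shown. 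Otherwise send $Y$ to $e$, send $h_Y(\bq)$ to $a$, and send all remaining variables to $b$. Then $\varphi(\bq)=a$. A summand $\bu_j$ evaluates to $a$ exactly when $h_Y(\bu_j)=h_Y(\bq)$; otherwise it evaluates to $e$ or $b$. For this to give a contradiction I need $a$ to lie above $e$ and $b$ in the order; this is where the precise order matters.
\end{itemize}

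For sufficiency, take any $\varphi$ and put $Y=\{x\in c(\bq)\mid\varphi(x)=e\}$.
\begin{itemize}
\item If $Y=c(\bq)$, then $\varphi(\bq)=e$. The summand in $D_\bq(\bu)$ evaluates to $e$ or to something built only from variables of $\bq$, so $\varphi(\bu)\geq e$.
\item Otherwise $\varphi(\bq)=\varphi(h_Y(\bq))$. The hypothesis gives a $\bu_j$ with $h_Y(\bu_j)=h_Y(\bq)$.
\end{itemize}
The subtle point is that $\bu_j$ may contain variables outside $c(\bq)$, and some of those may be sent to $e$. Such variables are skipped when reading $\bu_j$ under $\varphi$ but do not lie in $Y$. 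I plan to handle this by enlarging the set considered: let $Y'$ be the set of all variables sent to $e$, and note that $h_{Y'}$ and $h_Y$ agree on $\bq$. The catch is that the hypothesis is only stated for $Y\subseteq c(\bq)$. So I must show that an extra letter preceding $h_Y(\bq)$ in $\bu_j$ either does no harm (it is sent to $e$) or can be avoided by choosing a different witness. Combining the witness $\bu_j$ with the $D_\bq(\bu)$ summand should cover the remaining situations, using that $\varphi(\bu)$ is the maximum over summands in a chain.

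I expect this last reconciliation, between the hypothesis quantified over $Y\subseteq c(\bq)$ and substitutions that send outside variables to $e$, to be the main obstacle. My first attempt will be a case split on whether $\varphi(h_Y(\bq))$ equals $a$ or $b$, together with the order relations among $e,a,b$.
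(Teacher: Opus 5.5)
Your guess for $S_{41}$ is correct. It is the chain $b<e<a$, where $e$ is a multiplicative identity and $a,b$ are left zeros. You can see it in the paper's tables as the subalgebra $\{1,2,4\}$ of $S_{(4,635)}$ or of $S_{(4,686)}$. So your evaluation rule for words holds, and both families of substitutions prove necessity exactly as you wrote them, since $a$ is indeed the top. The paper gives no proof of this lemma; it quotes it from Lemma~2.4 of \cite{gpz}. Your direct evaluation argument is the natural self-contained proof. However, you leave sufficiency unfinished, and the step you call the main obstacle is not an obstacle.

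Let $Y'=\varphi^{-1}(e)$ and $Y=Y'\cap c(\bq)$. Suppose $Y\neq c(\bq)$ and put $x=h_Y(\bq)=h_{Y'}(\bq)$; then $x\notin Y'$, so $\varphi(x)\in\{a,b\}$.

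If $\varphi(x)=b$, then $\varphi(\bq)=b$ is the bottom and there is nothing to prove.

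If $\varphi(x)=a$, take the witness $\bu_j$ with $h_Y(\bu_j)=x$. By the definition of $h_Y$, every letter of $\bu_j$ before the first occurrence of $x$ lies in $Y\subseteq c(\bq)$. So no variable outside $c(\bq)$ can precede $x$ in $\bu_j$, and the "extra letter" you worry about never occurs. The preceding letters lie in $Y\subseteq Y'$ and are sent to $e$, while $x\notin Y'$. Hence $h_{Y'}(\bu_j)=x$ and $\varphi(\bu_j)=a=\varphi(\bq)$. Variables outside $c(\bq)$ that are sent to $e$ can only occur after $x$, where the left zero $a$ absorbs them. You need no alternative witness and no combination with the $D_\bq(\bu)$ summand.

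In the case $Y=c(\bq)$, any $\bu_i\in D_\bq(\bu)$ has all its letters in $c(\bq)\subseteq Y'$. So $\varphi(\bu_i)=e$ exactly, not merely "something built from variables of $\bq$", which gives $\varphi(\bu)\geq e=\varphi(\bq)$.

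With these two observations your proof is complete.
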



\begin{pro}\label{pro63501}
$\mathsf{V}(S_{41}, T_2^0)$ is the ai-semiring variety defined by the identities
\begin{align}
& xyx \approx xy; \label{63501}\\
& x^2y \approx xy; \label{63502}\\
& xy^2\approx xy; \label{63503}\\
& x \preceq x^2; \label{63504}\\
& xy \preceq xz+yx; \label{63505}\\
& xy \preceq x^2+xyz; \label{63506}\\
& xyz \preceq xy+yzt; \label{63508}\\
& yx+xy \preceq x^2+yz, \label{63507}
\end{align}
where $z$ may be empty in \eqref{63506} or \eqref{63507}.
\end{pro}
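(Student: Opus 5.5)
The plan is to first turn the semantic side into a syntactic criterion and then derive $\bq\preceq\bu$ by building $\bq$ up one letter at a time. By Lemma~\ref{lem001} and Lemma~\ref{nlemma1}(6), an inequality $\bq\preceq\bu$ holds in $T_2^0$ iff $D_\bq(\bu)\neq\emptyset$ and $L_{\geq 2}(D_\bq(\bu))\neq\emptyset$. Combining this with Lemma~\ref{lem4101}, $\bq\preceq\bu$ holds in $\mathsf{V}(S_{41},T_2^0)$ iff the following three conditions hold:
\begin{itemize}
\item[(a)] there is $\bu_i\in D_\bq(\bu)$ with $\ell(\bu_i)\geq 2$;
\item[(b)] $D_\bq(\bu)\neq\emptyset$;
\item[(c)] for every $Y\subseteq c(\bq)$ there is $\bu_j\in\bu$ with $h_Y(\bu_j)=h_Y(\bq)$.
\end{itemize}
Soundness of \eqref{63501}--\eqref{63507} is a direct check on $S_{41}$ and $T_2^0$. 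For $T_2^0$ one uses that both sides of each identity have a word of length $\geq2$ with content inside the content of the right side; for $S_{41}$ one checks condition (c).

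Next I normalize words. From \eqref{63501}--\eqref{63503}, any word $\bp$ with $|c(\bp)|\geq 2$ satisfies $\bp\approx i(\bp)$: substitute subwords into $x,y$ in \eqref{63501} to delete later occurrences, and use \eqref{63502}, \eqref{63503} for adjacent repeats. A one-variable word of length $\geq2$ reduces to $x^2$. So I may assume $\bq\in\{x,\ x^2,\ x_1x_2\cdots x_m\}$ with the $x_i$ distinct and $m\geq2$, and each $\bu_i$ of the same shape. If $\bq=x$, condition (a) forces some $\bu_i=x^2$, and \eqref{63504} gives $\bu\succeq\bq$. If $\bq=x^2$, condition (c) with $Y=\emptyset$ gives a word $x\bw\in\bu$, and (a) gives $x^2$ or $x\in\bu$, suitably combined with \eqref{63506} (with $z$ empty) and \eqref{63504}. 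The substantial case is $\bq=x_1\cdots x_m$.

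For that case I prove by induction on $k$ that $\bu\succeq x_1\cdots x_k\,\bw_k$ for some (possibly empty) word $\bw_k$. The base case is condition (c) with $Y=\emptyset$, which yields $\bu_j$ with head $x_1$. For the step, apply condition (c) with $Y=\{x_1,\dots,x_{k}\}$ to get $\bu_j$ whose first letter outside $Y$ is $x_{k+1}$. Then use \eqref{63508} (shape $xyz\preceq xy+yzt$) and \eqref{63505} (shape $xy\preceq xz+yx$) to splice the current prefix with $\bu_j$. The letters of $\bu_j$ preceding $x_{k+1}$ lie in $Y$, and \eqref{63501} absorbs them into the prefix, giving $x_1\cdots x_kx_{k+1}\cdots$. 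At $k=m$ the word $x_1\cdots x_m\bw_m$ may still have letters outside $c(\bq)$. To cut them off I use the word $\bu_i$ from condition (a), which has length $\geq 2$ and content in $c(\bq)$. Via \eqref{63506} and \eqref{63507} (each with $z$ possibly empty) and the normal forms, this lets me pass from $x_1\cdots x_m\bw_m+\bu_i$ to $x_1\cdots x_m$.

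I expect the splicing step to be the main obstacle. The difficulty is controlling the letters of $\bu_j$ that precede $x_{k+1}$, and the case where $\bu_j$ has length $1$ or where $x_{k+1}$ is its head. My first attempt will treat $k=1$ directly with \eqref{63505} and \eqref{63507}, and then reduce the general step to it by substituting the prefix $x_1\cdots x_k$ for the variable $x$. If that fails, I would strengthen the induction to carry the word from condition (a) throughout, so that \eqref{63506} can be applied at each stage.
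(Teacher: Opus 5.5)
Your framework is the right one: the criterion (a)--(c), the normal forms, and building $\bq=x_1\cdots x_m$ letter by letter. However, the main-line induction breaks at the splice, and for a structural reason: splicing only the current word $x_1\cdots x_k\bw_k$ with the witness $\bu_j$ cannot work in general. Take $\bq=x_1x_2x_3$ and $\bu=x_1+x_2+x_3^2$. Conditions (a)--(c) hold, so $\bq\preceq\bu$ holds in $\mathsf{V}(S_{41},T_2^0)$. But at $k=1$ your prefix is forced to be $x_1$, the only word with head $x_1$. The only witness for $Y=\{x_1\}$ is $x_2$. And no inequality $x_1x_2\bw\preceq x_1+x_2$ holds in $T_2^0$, since $D_{x_1x_2\bw}(x_1+x_2)$ contains no word of length $\geq 2$. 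So the word from (a) has to enter the induction itself, not just the final trimming; your fallback is not optional. A smaller slip: for $\bq=x^2$, condition (a) gives $x^k\in\bu$ with $k\geq 2$, hence $\bu\succeq x^k\approx x^2$ by \eqref{63502}. It never yields the single letter $x$.

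The fallback does work, but its engine is the half $yx\preceq x^2+yz$ of \eqref{63507} ($z$ possibly empty), not \eqref{63506}. Since $\mathbf{x}^2\approx\mathbf{x}$ for every word of length at least $2$ by \eqref{63501}--\eqref{63503}, that half gives the rule: \emph{if $\bu\succeq\mathbf{x}$ with $\ell(\mathbf{x})\geq 2$ and $\bu\succeq\mathbf{y}\mathbf{z}$, then $\bu\succeq\mathbf{y}\mathbf{x}$}.

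Fix $\bp\in D_\bq(\bu)$ with $\ell(\bp)\geq 2$ and prove $\bu\succeq x_1\cdots x_k\bp$ by induction on $k$.
\begin{itemize}
\item Base case: apply the rule with $(\mathbf{x},\mathbf{y},\mathbf{z})=(\bp,x_1,s(\bu_j))$.
\item Step: put $\mathbf{a}=x_1\cdots x_k$ and $\bu_j=\mathbf{v}x_{k+1}\mathbf{v}'$ with $c(\mathbf{v})\subseteq c(\mathbf{a})$. First apply the rule with $(\mathbf{a}\bp,\bu_j,\text{empty})$. Then apply it with $(\bu_j\mathbf{a}\bp,\mathbf{a},\bp)$; after deleting repeated letters this gives $\bu\succeq\mathbf{a}x_{k+1}\mathbf{v}''$ for some word $\mathbf{v}''$. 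Finally apply it with $(\mathbf{a}\bp,\mathbf{a}x_{k+1},\mathbf{v}'')$, which gives $\bu\succeq\mathbf{a}x_{k+1}\mathbf{a}\bp\approx\mathbf{a}x_{k+1}\bp$.
\item This treats your problem cases $\ell(\bu_j)=1$ and $h(\bu_j)=x_{k+1}$ uniformly. At $k=m$ it yields $x_1\cdots x_m\bp\approx\bq$, because $c(\bp)\subseteq c(\bq)$.
\end{itemize}
In fact \eqref{63505}, \eqref{63506} and \eqref{63508} all follow from \eqref{63501}--\eqref{63503} and \eqref{63507}, so the splicing roles you assigned to them are beside the point.

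For comparison, the paper's own proof inducts on exact prefixes $\bu\succeq x_1\cdots x_j$. In its base case it takes the word from (a) to be one of $x_1^2,x_2^2,x_1x_2,x_2x_1$, which is justified only for $m=2$. For $\bq=x_1x_2x_3$ and $\bu=x_1+x_2x_3+x_3^2$, the intermediate claim $x_1x_2\preceq\bu$ already fails in $T_2^0$. So your instinct to allow a tail is correct. The carried-$\bp$ induction above is essentially the argument the paper gives for $S_{(4,628)}$ in Proposition~\ref{pro62801}, where \eqref{62803} is the same rule.
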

\begin{proof}
It is easy to check that both $S_{41}$ and $T_2^0$ satisfy the identities \eqref{63501}--\eqref{63507}.
In the remainder it is enough to prove that every identity that holds in both $S_{41}$ and $T_2^0$
can be derived by \eqref{63501}--\eqref{63507}.
Let $\bq\preceq \bu$ be such a nontrivial inequality, where
$\bu=\bu_1+\bu_2+\cdots+\bu_n$ and $\bu_i, \bq \in X^+$, $1 \leq i \leq n$.
It follows from Lemmas \ref{lem4101} and \ref{lem001} that
for any $Y\subseteq c(\bq)$, there exists $\bu_\ell\in\bu$ such that $h_Y(\bu_\ell)=h_Y(\bq)$,
$D_{\bq}(\bu)\neq\emptyset$, and $\ell(\bu_i)\geq 2$ for some $\bu_i\in D_{\bq}(\bu)$.

If $|c(\bq)|=1$, then $\bu_i=x^k$ for some $k\geq2$, and so
\[
\bu \succeq x^k \stackrel{\eqref{63502}}\approx x^2\stackrel{\eqref{63502}, \eqref{63504}}\approx x^2+\bq.
\]

Now let $|c(\bq)|\geq 2$.
From \eqref{63501}--\eqref{63503} we derive the identity $\bq \approx i(\bq)$,
so $i(\bq) \preceq \bu$ is satisfied by both $S_{41}$ and $T_2^0$.
Let $i(\bq)=x_1x_2\cdots x_m$, $m\geq 2$.
By Lemma \ref{lem4101} it follows that for any $2\leq j\leq m$ and $Y_j=\{x_1, x_2, \ldots ,x_{j-1}\}$,
there exists $\bu_j \in \bu$
such that $\bu_j=\bu_{j_1}x_j\bu_{j_2}$ for some $\bu_{j_1}, \bu_{j_2} \in X^*$, where $c(\bu_{j_1})\subseteq Y_j$.

We shall show by induction on $j$ that $x_1x_2\cdots x_j\preceq \bu$
is derivable from \eqref{63501}--\eqref{63507}, $1 \leq j\leq m$.
For $j=2$, by \eqref{63501}--\eqref{63503} we have $\bu_i\approx x_1^2$, $x_2^2$, $x_1x_2$, or $x_2x_1$.
If $\bu_i\approx x_1^2$, then
\[
\bu \succeq x_1^2+\bu_2 =x_1^2+\bu_{2_1}x_2\bu_{2_2}\stackrel{\eqref{63502}, \eqref{63506}, \eqref{63507}}\succeq x_1x_2.
\]
If $\bu_i\approx x_2^2$, then
\[
\bu\succeq x_2^2+\bu_1= x_2^2+x_1\bu_{1_1}\stackrel{\eqref{63507}}\succeq x_1x_2.
\]
If $\bu_i\approx x_2x_1$, then
\[
\bu \succeq x_2x_1+\bu_1=x_2x_1+x_1\bu_{1_1}\stackrel{\eqref{63505}}\succeq x_1x_2.
\]
This implies the inequality $\bu \succeq x_1x_2$.

Let $2\leq j\leq m$.
Suppose that $\bu\succeq x_1x_2\cdots x_{j-1}$ is derivable from \eqref{63501}--\eqref{63507}.
By the identities \eqref{63501}--\eqref{63503}, we have that $x_1\dots x_{j-1}\approx x_1\dots x_{j-1}\bu_{j_1}$,
 where $\bu_j=\bu_{j_1}x_j\bu_{j_2}$, $c(\bu_{j_1})\subseteq\{x_1,\dots,x_{j-1}\}$.
Furthermore, we can deduce
\begin{align*}
\bu
&\succeq x_1\dots x_{j-1}+\bu_j \\
&\approx x_1\dots x_{j-1}\bu_{j_1}+\bu_{j_1}x_j\bu_{j_2}\\
&\succeq x_1\dots x_{j-1}\bu_{j_1}x_j &&(\text{by}~\eqref{63508})\\
&\approx x_1x_2\dots x_j. &&(\text{by}~\eqref{63501},\eqref{63502})
\end{align*}
Take $j=m$. We obtain the inequality $\bu \succeq x_1x_2\cdots x_m=i(\bq)$.
Since $i(\bq)=\bq$, the inequality $\bu\succeq\bq$ is derivable.
\end{proof}

\begin{remark}
The ai-semiring $S_{(4, 635)}$ is finitely based.
\end{remark}
\begin{proof}
It is easy to see that both $S_{41}$ and $T_2^0$ can  be embedded into $S_{(4, 635)}$.
Also, $S_{(4, 635)}$ satisfies the identities \eqref{63501}--\eqref{63507},
and so $\mathsf{V}(S_{41}, T_2^0)=\mathsf{V}(S_{(4, 635)})$.
By Proposition $\ref{pro63501}$ we immediately deduce that $S_{(4, 635)}$ is finitely based.
\end{proof}

\begin{cor}
The ai-semiring $S_{(4, 567)}$ is finitely based.
\end{cor}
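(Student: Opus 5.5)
The statement to prove is that $S_{(4,567)}$ is finitely based. I would follow the pattern used throughout this section. The claim is that $\mathsf{V}(S_{(4,567)})=\mathsf{V}(S_{(4,635)})$, or else that $S_{(4,567)}$ has the dual multiplication of an algebra already handled.

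First I would check duality. If the multiplication table of $S_{(4,567)}$ is the transpose of that of $S_{(4,635)}$, the result is immediate: the preceding remark shows $S_{(4,635)}$ is finitely based, and algebras with the same additive reduct and dual multiplications share the finite basis property.

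If $S_{(4,567)}$ is not dual to $S_{(4,635)}$, I would prove the equality of varieties in two inclusions.

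\textbf{Upper bound.} I would verify directly on the Cayley tables that $S_{(4,567)}$ satisfies \eqref{63501}--\eqref{63507}. These are finitely many checks of identities in at most four variables over a $4$-element algebra. By Proposition~\ref{pro63501}, this gives $\mathsf{V}(S_{(4,567)})\subseteq\mathsf{V}(S_{41},T_2^0)$.

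\textbf{Lower bound.} I would exhibit copies of $S_{41}$ and $T_2^0$ inside $\mathsf{V}(S_{(4,567)})$, either as subalgebras or as quotients. A natural guess is a $3$-element subalgebra isomorphic to $S_{41}$ and one isomorphic to $T_2^0\cong S_{55}$, by analogy with $S_{(4,635)}$. Failing subalgebras, I would look for a subdirect decomposition via two congruences, for example with blocks $\{1,2\}$ and $\{3,4\}$. Either route gives $\mathsf{V}(S_{41},T_2^0)\subseteq\mathsf{V}(S_{(4,567)})$, hence equality, and the corollary follows from Proposition~\ref{pro63501}.

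The main obstacle is the lower bound: locating an embedding or decomposition that actually produces $S_{41}$ and $T_2^0$. The identity verification is routine. If neither embeddings nor a subdirect decomposition is available, I would fall back on duality with $S_{(4,635)}$. In that case, the first thing to check is whether the dual of $S_{(4,567)}$ satisfies \eqref{63501}--\eqref{63507} and contains $S_{41}$ and $T_2^0$.
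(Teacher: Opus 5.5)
Your first branch is exactly the paper's proof: transposing the multiplication table of $S_{(4,635)}$ gives precisely the table of $S_{(4,567)}$, so the corollary follows at once from the finite basis property of $S_{(4,635)}$ and the duality principle. You should state this check outright rather than leave it conditional, because your fallback would not work as written: $S_{(4,567)}$ does not satisfy \eqref{63501} (for $x=1$, $y=4$ one gets $xy=4$ but $xyx=1$), so its upper-bound step fails.
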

\begin{proof}
Notice that $S_{(4, 567)}$ and $S_{(4, 635)}$ have dual multiplications,
therefore, $S_{(4, 567)}$ is also finitely based.
\end{proof}


\begin{pro}\label{pro68601}
$\mathsf{V}(S_{41}, S_{46})$ is the ai-semiring variety defined by the identities
\begin{align}
& x^2y \approx xy; \label{68602}\\
&(xy)^2 \approx x^2y^2; \label{68610}\\
& xyx \approx x^2y^2; \label{68603}\\
& x^2y^2 \approx x^2y^2x^2; \label{68609}\\
& x^2\preceq x; \label{68601}\\
& x^2y^2 \preceq x^2+xyz; \label{68604}\\
& x^2y^2 \preceq x^2+yz; \label{68605}\\
& x^2y^2 \preceq y^2+xz; \label{68606}\\
& x^2y^2 \preceq y^2x^2+xz; \label{68607}\\
& xyz^2 \approx xy+yzt; \label{68611}\\
& x^2y^2z \approx x^2y^2z^2+y^2; \label{68612}\\
& x^2y^2z \approx x^2y^2z^2+y^2z, \label{68608}
\end{align}
where $z$ may be empty in \eqref{68604}--\eqref{68607},
$y$ may be empty in \eqref{68608}, $t$ may be empty in \eqref{68611}.
\end{pro}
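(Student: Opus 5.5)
The plan follows the pattern of Proposition~\ref{pro63501}: soundness, then reduction to a normal form, then derivation of $\bq$ from $\bu$ using the two membership criteria. First I will check directly that $S_{41}$ and $S_{46}$ both satisfy \eqref{68602}--\eqref{68608}. This is a finite computation on the Cayley tables. Then let $\bq\preceq\bu$ be a nontrivial inequality holding in both algebras, with $\bu=\bu_1+\cdots+\bu_n$.

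\textbf{Consequences of the two criteria.} From Lemma~\ref{lem4101} we get $D_\bq(\bu)\neq\emptyset$, and for every $Y\subseteq c(\bq)$ some $\bu_j$ satisfies $h_Y(\bu_j)=h_Y(\bq)$. From Lemma~\ref{lem4601} we get $\ell(\bq)\ge 2$. Moreover, if $m(t(\bq),\bq)=1$, then some $\bu_k\in D_\bq(\bu)$ has $t(\bq)\notin c(p(\bu_k))$.

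\textbf{Normal form.} Using \eqref{68602}, \eqref{68610}, \eqref{68603} and \eqref{68609}, every word should reduce to one of two shapes. If the tail occurs at least twice, the word becomes $x_1^2x_2^2\cdots x_m^2$, where $x_1\cdots x_m=i(\bq)$. If the tail $y$ occurs only once, the word becomes $x_1^2\cdots x_m^2y$. The key observation is that repeated letters collapse to squares and squares can be reordered only up to first occurrence. Identities \eqref{68612} and \eqref{68608} should let me split a word ending in a single letter into a sum of a ``squared'' word and a shorter word. This parallels the splitting by \eqref{76904} in earlier sections.

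\textbf{Deriving $\bq$.} I will mimic the induction of Proposition~\ref{pro63501} on the prefix of $i(\bq)$. For each $j$, take $Y_j=\{x_1,\dots,x_{j-1}\}$ and a word $\bu_j=\bu_{j_1}x_j\bu_{j_2}$ with $c(\bu_{j_1})\subseteq Y_j$. Then use \eqref{68611} (the analogue of \eqref{63508}) to extend $x_1^2\cdots x_{j-1}^2\preceq\bu$ to $x_1^2\cdots x_j^2\preceq\bu$. The base case $j=2$ splits according to the word $\bu_i\in D_\bq(\bu)$, which normalizes to $x_1^2$, $x_2^2$, $x_1^2x_2^2$ or $x_2^2x_1^2$. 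These cases are handled by \eqref{68604}--\eqref{68607}, in the same way \eqref{63505}--\eqref{63507} are used there. This gives $\bu\succeq x_1^2\cdots x_m^2$, which finishes the case $m(t(\bq),\bq)\ge2$. If $t(\bq)=y$ occurs once, I first check whether $y\in c(\bu_k)$.
\begin{itemize}
\item If $y\notin c(\bu_k)$, apply the induction to the prefix letters, then attach $y$ via the $\bu_\ell$ supplied by $Y=c(p(\bq))$ and \eqref{68612}.
\item If $y=t(\bu_k)$, use $\bu_k$ directly together with \eqref{68608} and \eqref{68601} to replace $y^2$ by $y$.
\end{itemize}

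The main obstacle will be this single-tail case. I must keep the last letter unsquared while still absorbing the squared prefix coming from the inductive step. I will try to show that $x_1^2\cdots x_m^2+\bu_k\succeq x_1^2\cdots x_m^2y$ follows from \eqref{68608}, with $y$ taken empty in the appropriate instance, and handle the residual words by \eqref{68601}.
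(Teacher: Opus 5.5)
Your outline follows the paper's argument almost verbatim, but the core induction has a genuine gap. The paper's write-up has the same gap: at the base case it asserts that some $\bu_i$ has $c(\bu_i)\subseteq\{x_1,x_2\}$. So you cannot close it by following the paper.

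\textbf{The invariant is false for $j<m$.} The statement $\bu\succeq x_1^2\cdots x_j^2$ is in general not a valid consequence of $\bq\preceq\bu$. By Lemma~\ref{lem4101}, an inequality $\bw\preceq\bu$ can hold in $S_{41}$ only if $D_{\bw}(\bu)\neq\emptyset$. Knowing $D_\bq(\bu)\neq\emptyset$ gives you no word whose content lies in $\{x_1,\dots,x_j\}$. For instance, let $\bq=x^2y^2z^2$ and $\bu=z+xw+yw$.
\begin{itemize}
\item In $S_{41}$: $D_\bq(\bu)=\{z\}$, and the words $xw$, $yw$, $z$ realize $h_Y(\bq)$ for every $Y\subseteq\{x,y,z\}$. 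So $\bq\preceq\bu$ holds.
\item In $S_{46}$: $x^2y^2z^2$ takes the bottom value $2$ unless $x,y,z$ are all sent to the left identity $3$. In that case $z$ already takes the value $3$, so $\bq\preceq\bu$ holds.
\item However, $x^2y^2\preceq\bu$ fails in $S_{41}$, since $D_{x^2y^2}(\bu)=\emptyset$.
\end{itemize}
Hence for $m\geq 3$ your base case, a word of $D_\bq(\bu)$ normalizing to $x_1^2$, $x_2^2$, $x_1^2x_2^2$ or $x_2^2x_1^2$, need not exist. Worse, the intermediate inequalities you aim to derive are false.

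\textbf{A repair.} Carry a word $\bp\in D_\bq(\bu)$ and build from the right; this needs only \eqref{68606}, with $z$ possibly empty.
\begin{itemize}
\item Lemma~\ref{lem4101} with $Y_j=\{x_1,\dots,x_{j-1}\}$ gives $\bu_j=\bu_j'x_j\bu_j''\in\bu$ with $c(\bu_j')\subseteq Y_j$; here $\bu_1'$ is empty.
\item Suppose $\bu\succeq\bw^2$. Take the instance $x\mapsto\bu_j'x_j$, $y\mapsto\bw$, $z\mapsto\bu_j''$ of \eqref{68606}, together with \eqref{68610}. This gives $\bu\succeq\bw^2+\bu_j\succeq(\bu_j'x_j\bw)^2$.
\item Start from $\bu\succeq\bp\succeq\bp^2$, using \eqref{68601}, and run $j=m,m-1,\dots,1$. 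You obtain $\bu\succeq(x_1\bu_2'x_2\cdots\bu_m'x_m\bp)^2$.
\item This word collapses to $x_1^2\cdots x_m^2$ by \eqref{68610}, \eqref{68609} and \eqref{68602}, because every letter of each $\bu_j'$ and of $\bp$ already occurs squared to its left.
\end{itemize}

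\textbf{The tail case.} The repair also removes what you call the main obstacle. If $t(\bq)$ occurs once, it is the last letter of $i(\bq)$, so you already have the tail squared. You then finish as the paper does:
\begin{itemize}
\item if $\bu_k=p(\bu_k)t(\bq)$, use \eqref{68608} with $y\mapsto p(\bu_k)$;
\item if $t(\bq)\notin c(\bu_k)$, use $\bu_k^2\preceq\bu_k$ and \eqref{68612} with $y\mapsto\bu_k$.
\end{itemize}
By contrast, your proposed step of deriving $x_1^2\cdots x_m^2+\bu_k\succeq x_1^2\cdots x_m^2y$ directly from \eqref{68608} does not work as stated. The identity \eqref{68608} needs the summand in which $y$ is already squared. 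Likewise, \eqref{68612} never attaches a letter; it only removes a square.

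\textbf{Soundness check.} Read \eqref{68611}, \eqref{68612} and \eqref{68608} as inequalities, left side $\preceq$ right side. Written with $\approx$ they fail in $S_{41}$, again by the content condition of Lemma~\ref{lem4101}.
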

\begin{proof}
It is easy to check that both $S_{41}$ and $S_{46}$ satisfy the identities \eqref{68602}--\eqref{68608}.
In the remainder it is enough to prove that every identity that holds in both $S_{41}$ and $S_{46}$
can be derived by \eqref{68602}--\eqref{68608}.
Let $\bq\preceq \bu$ be such a nontrivial inequality, where
$\bu=\bu_1+\bu_2+\cdots+\bu_n$ and $\bu_i, \bq \in X^+$, $1 \leq i \leq n$.
It follows from Lemmas \ref{lem4101} and \ref{lem4601} that
for any $Y\subseteq c(\bq)$, there exists $\bu_j\in\bu$ such that $h_Y(\bu_j)=h_Y(\bq)$,
$D_{\bq}(\bu)\neq\emptyset$, and $\ell(\bq)\geq 2$.

If $|c(\bq)|=1$, then $\bq=x^k$ for some $k\geq2$.
Since $D_{\bq}(\bu)\neq\emptyset$, we have that there exists $\bu_i\in\bu$ such that $\bu_i=x^\ell$ for some $\ell\geq1$, and so
\[
\bu\succeq\bu_i =x^\ell\stackrel{\eqref{68601}, \eqref{68602}}\approx x^\ell+\bq.
\]

Now let $|c(\bq)|\geq 2$, and let $i(\bq)=x_1x_2\cdots x_m$, $m\geq 2$.
By the identities \eqref{68602}, \eqref{68603} and \eqref{68609} we deduce
$\bq \approx x_1^2x_2^2\dots x_m^2$ or $x_1^2x_2^2\dots x_{m-1}^2x_m$.

\textbf{Case 1.} $\bq \approx x_1^2x_2^2\dots x_m^2$.
By Lemma \ref{lem4101} it follows that for any $2\leq j\leq m$ and $Y_j=\{x_1, x_2, \ldots ,x_{j-1}\}$,
there exists $\bu_j \in \bu$
such that $\bu_j=\bu_{j_1}x_j\bu_{j_2}$ for some $\bu_{j_1}, \bu_{j_2} \in X^*$, where $c(\bu_{j_1})\subseteq Y_j$.
We shall show by induction on $j$ that $\bu \succeq x_1^2x_2^2\cdots x_j^2$
is derivable from \eqref{68602}--\eqref{68608}, $1 \leq j\leq m$.
If $j=2$, then there exists $\bu_i\in \bu$ such that $c(\bu_i)\subseteq \{x_1, x_2\}$.
Combined with the inequalities \eqref{68602}--\eqref{68601}, $\bu_i\approx x_1^2$ or $x_2^2$ or $x_2^2x_1^2$ or $x_1^2x_2^2$.
If $\bu_i\approx x_1^2$, then
\[
\bu \succeq x_1^2+\bu_2 =x_1^2+\bu_{2_1}x_2\bu_{2_2}\stackrel{\eqref{68604}, \eqref{68605}}\succeq x_1^2x_2^2.
\]
If $\bu_i\approx x_2^2$, then
\[
\bu \succeq x_2^2+\bu_1=x_2^2+x_1\bu_{1_1}\stackrel{\eqref{68606}}\succeq x_1^2x_2^2.
\]
If $\bu_i\approx x_2^2x_1^2$, then
\[
\bu \succeq x_2^2x_1^2+\bu_1=x_2^2x_1^2+x_1\bu_{1_1}\stackrel{\eqref{68607}}\succeq x_1^2x_2^2.
\]
This implies the identity $\bu \succeq x_1^2x_2^2$.

Let $2\leq j\leq m$.
Suppose that $\bu\succeq x_1^2x_2^2\cdots x_{j-1}^2$ is derivable from \eqref{68602}--\eqref{68608}.
By the inequalities \eqref{68609} and \eqref{68610},
we have that $x_1^2x_2^2\cdots x_{j-1}^2\approx x_1^2x_2^2\cdots x_{j-1}^2(\bu_j')^2$,
where $\bu_j=\bu_j'x_j\bu_j''$, $c(\bu_j')\subseteq\{x_1,\dots,x_{j-1}\}$.
Furthermore, we can deduce
\begin{align*}
\bu
&\succeq x_1^2x_2^2\cdots x_{j-1}^2+\bu_j \\
& = x_1^2x_2^2\cdots x_{j-1}^2(\bu_j')^2+\bu_j'x_j\bu_j''\\
&\approx x_1^2x_2^2\cdots x_{j-1}^2(\bu_j')^2+(\bu_j')^2x_j\bu_j'' &&(\text{by}~\eqref{68602}, \eqref{68610})\\
&\succeq x_1^2x_2^2\cdots x_{j-1}^2(\bu_j')^2x_j^2 &&(\text{by}~\eqref{68611})\\
&\approx x_1^2x_2^2\cdots x_{j-1}^2x_j^2. &&(\text{by}~\eqref{68609},\eqref{68610})
\end{align*}
Take $j=m$. We obtain the inequality $\bu\succeq x_1^2x_2^2\cdots x_m^2$.

\textbf{Case 2.} $\bq \approx x_1^2x_2^2\dots x_{m-1}^2x_m$.
From Case $1$ we can deduce the inequality $\bu\succeq x_1^2x_2^2\cdots x_m^2$.
By Lemma \ref{lem4601} it follows that if $ m(t(\bq), \bq)= 1$,
then there exists $\bu_k \in D_\bq(\bu)$ such that $t(\bq)\notin c(p(\bu_k))$.
This implies that $\bu_k=p(\bu_k)x_m$ or $m(x_m, \bu_k)=0$.
If $\bu_k=p(\bu_k)x_m$, then
\begin{align*}
\bu
&\succeq x_1^2\cdots x_m^2+p(\bu_k)x_m \\
&\approx x_1^2\cdots x_{m-1}^2p(\bu_k)^2x_m^2+p(\bu_k)^2x_m &&(\text{by}~\eqref{68609}, \eqref{68602}, \eqref{68610})\\
&\approx (x_1\cdots x_{m-1})^2p(\bu_k)^2x_m^2+p(\bu_k)^2x_m &&(\text{by}~\eqref{68610})\\
&\succeq (x_1\cdots x_{m-1})^2p(\bu_k)^2x_m &&(\text{by}~\eqref{68608})\\
&\approx x_1^2\dots x_{m-1}^2x_m. &&(\text{by}~\eqref{68609},\eqref{68610})
\end{align*}
If $m(x_m, \bu_k)=0$, then $c(\bu_k)\subseteq\{x_1, x_2,\dots, x_{m-1}\}$, and so
\[
\bu \succeq \bu_k \stackrel{\eqref{68601}}\succeq p(\bu_k)^2t(\bu_k)^2\stackrel{\eqref{68610}}\approx x_{i_1}^2\cdots x_{i_k}^2,
\]
where $\{x_{i_1},\dots,x_{i_k}\}\subseteq\{x_1,\dots,x_{m-1}\}$.
Furthermore, we have
\begin{align*}
\bu
&\succeq x_1^2\cdots x_m^2+\bu_k^2 \\
&\approx x_1^2\cdots x_{m-1}^2x_{i_1}^2\cdots x_{i_k}^2x_m^2+\bu_k^2 &&(\text{by}~\eqref{68609})\\
&\approx (x_1\cdots x_{m-1})^2\bu_k^2x_m^2+\bu_k^2 &&(\text{by}~\eqref{68610})\\
&\succeq (x_1\cdots x_{m-1})^2\bu_k^2x_m &&(\text{by}~\eqref{68612})\\
&\approx x_1^2\dots x_{m-1}^2x_m. &&(\text{by}~\eqref{68609},\eqref{68610})
\end{align*}
Therefore, we can obtain the inequality $\bu \succeq x_1^2\dots x_{m-1}^2x_m$, and so
$\bu \succeq \bq$ can be derived.
\end{proof}

\begin{remark}
The ai-semiring $S_{(4, 686)}$ is finitely based.
\end{remark}
\begin{proof}
It is easy to see that both $S_{41}$ and $S_{46}$ can  be embedded into $S_{(4, 686)}$.
Also, $S_{(4, 686)}$ satisfies the identities \eqref{68602}--\eqref{68608},
and so $\mathsf{V}(S_{41}, S_{46})=\mathsf{V}(S_{(4, 686)})$.
By Proposition $\ref{pro68601}$ we immediately deduce that $S_{(4, 686)}$ is finitely based.
\end{proof}

\begin{cor}
The ai-semiring $S_{(4, 589)}$ is finitely based.
\end{cor}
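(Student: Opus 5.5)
The plan mirrors the pattern used throughout the paper for corollaries that follow a ``Remark'' establishing finite basedness of some $S_{(4,k)}$. Immediately before this corollary, the remark shows that $\mathsf{V}(S_{(4,686)})=\mathsf{V}(S_{41},S_{46})$ and that this variety is finitely based by Proposition~\ref{pro68601}. I therefore expect $S_{(4,589)}$ to be tied to $S_{(4,686)}$ by duality of multiplication. This is exactly how $S_{(4,567)}$ was handled after $S_{(4,635)}$, and $S_{(4,786)}$ after $S_{(4,774)}$.

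The first and main step is to compare the multiplication table of $S_{(4,589)}$ from Table~\ref{tb1} with that of $S_{(4,686)}$. Both algebras have the same additive reduct, the chain $1>2>3>4$. It suffices to check that $a\cdot_{589} b = b\cdot_{686} a$ for all $a,b\in\{1,2,3,4\}$, i.e. that one table is the transpose of the other. This is a finite verification of sixteen entries.

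Given that check, the elementary fact recalled in the introduction applies: two ai-semirings with the same additive reduct and dual multiplications have the same finite basis property. Indeed, reversing every word gives a bijection between the identities of the two algebras, and it carries a finite basis to a finite basis. Since $S_{(4,686)}$ is finitely based, so is $S_{(4,589)}$.

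The only potential obstacle is if the tables are not literally transposes under the given labelling. In that case I would look for an isomorphism of $S_{(4,589)}$ onto the dual of $S_{(4,686)}$. Since both additive reducts are the same chain, such an isomorphism must fix the additive order, and the only order automorphism of a chain is the identity. So the transpose check is decisive, and failing it would point to a different argument. In that case I would instead verify directly that $S_{41}$ and $S_{46}$, or their duals, embed into $S_{(4,589)}$, and that $S_{(4,589)}$ satisfies the (dual) identities \eqref{68602}--\eqref{68608}. That would give $\mathsf{V}(S_{(4,589)})=\mathsf{V}(S_{41},S_{46})$ or its dual, and hence the conclusion by Proposition~\ref{pro68601}.
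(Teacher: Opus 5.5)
Your proposal is correct and matches the paper's proof: the multiplication table of $S_{(4,589)}$, with rows $(1,1,4,4)$, $(1,2,4,4)$, $(1,3,4,4)$, $(1,4,4,4)$, is exactly the transpose of that of $S_{(4,686)}$. Hence the two algebras have dual multiplications, and the conclusion follows from the finite basedness of $S_{(4,686)}$ established just before, so your fallback argument is never needed.
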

\begin{proof}
Notice that $S_{(4, 589)}$ and $S_{(4, 686)}$ have dual multiplications,
therefore, $S_{(4, 589)}$ is also finitely based.
\end{proof}

\section{Equational basis for subdirectly irreducible $4$-element ai-semirings}\label{section-si}
In this section we provide equational basis for some 4-element ai-semirings that are subdirectly irreducible.\\

\begin{pro}\label{pro48501}
$\mathsf{V}(S_{(4, 485)})$ is the ai-semiring variety defined by the identities
\begin{align}
&y \preceq x^3; \label{48501}\\
&x \preceq xy; \label{48502}\\
&y \preceq xy; \label{48503}\\
&x_1x_2x_3 \approx y_1y_2y_3; \label{48504}\\
&xt+yt \preceq xy+zt.\label{48505}
\end{align}
\end{pro}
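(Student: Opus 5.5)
The plan is the same two-step scheme used throughout the paper. First I check by direct computation that $S_{(4,485)}$ satisfies \eqref{48501}--\eqref{48505}. Then I take a nontrivial inequality $\bq\preceq\bu$ of $S_{(4,485)}$, with $\bu=\bu_1+\cdots+\bu_n$, read off necessary conditions on $\bq$ and $\bu$ by evaluating in $S_{(4,485)}$, and derive $\bu\succeq\bq$ from \eqref{48501}--\eqref{48505}.

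The first observation is that \eqref{48504} identifies all words of length at least $3$ with a single word, say $\bw=x^3$. By \eqref{48501} every word lies below $\bw$: indeed $\bq\preceq x^3\approx\bw$. Two cases follow.

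\textbf{Case A: $\ell(\bu_i)\geq 3$ for some $\bu_i$.} Then $\bu\succeq\bu_i\approx\bw\succeq\bq$, and we are done immediately.

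\textbf{Case B: $\ell(\bu_i)\leq 2$ for all $i$.} I will exhibit an assignment into $S_{(4,485)}$ showing that every product of length $\geq 3$ takes the top value while the words of $\bu$ can be kept strictly lower. I expect this to resemble Lemma~\ref{lem5901} for $S_{59}$, and it forces $\ell(\bq)\leq 2$.
\begin{itemize}
\item If $\ell(\bq)=1$, evaluation forces $\bq=x\in c(\bu)$. Then $x$ occurs in some $\bu_j$, which is $x$, $xa$ or $ax$. Since the inequality is nontrivial, $\bu_j=xa$ or $ax$, and \eqref{48502} or \eqref{48503} gives $\bu\succeq\bu_j\succeq x$.
\item If $\ell(\bq)=2$, write $\bq=xy$.
\end{itemize}

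The crux is the case $\ell(\bq)=2$: finding the exact combinatorial condition that $S_{(4,485)}$ imposes, and matching it with \eqref{48505}. My guess, guided by the shape of \eqref{48505}, is the following condition:
\begin{itemize}
\item $x\in c(L_2(\bu))$, and
\item $y=t(\bv)$ for some $\bv\in L_2(\bu)$.
\end{itemize}
I would verify necessity by choosing assignments in which length-$2$ words take an intermediate value that depends only on their tail.

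Granting this, sufficiency goes as follows. Let $\bv=cy\in L_2(\bu)$, and let $ab\in L_2(\bu)$ with $x\in\{a,b\}$. Substitute into \eqref{48505}, whose right-hand side $xy+zt$ becomes $ab+cy$; its left-hand side then yields both $ay$ and $by$. Hence $\bu\succeq ab+cy\succeq xy=\bq$ in either position of $x$. If the necessity computation reveals a different condition, for instance one involving heads, I would adjust which of the two summands produced by \eqref{48505} is used. If needed I would also combine it with \eqref{48502} and \eqref{48503} to move letters out of length-$1$ words.
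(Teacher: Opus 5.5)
Your proposal is correct and is essentially the paper's proof. The paper uses the same split according to whether some $\bu_i$ has length at least $3$ (settled by \eqref{48501} and \eqref{48504}) and the same appeal to \eqref{48502} or \eqref{48503} when $\ell(\bq)=1$. For $\bq=xy$ it uses the same condition and the same substitution of $ab$ (with $x\in\{a,b\}$) and $cy$ into \eqref{48505}, obtaining the tail condition from exactly your kind of assignment ($y\mapsto 3$, every other variable $\mapsto 4$).

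One small adjustment: the condition $x\in c(L_2(\bu))$ is not detected by a tail-only assignment. The paper gets it from Lemma~\ref{lem5901}, using $S_{59}\cong\{1,2,4\}$; alternatively, take $x\mapsto 2$ and every other variable $\mapsto 4$ directly.
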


\begin{proof}
It is easily verified that $S_{(4, 485)}$ satisfies the identities
\eqref{48501}--\eqref{48505}.
In the remainder we need only show that every ai-semiring identity of $S_{(4, 485)}$
can be derived by \eqref{48501}--\eqref{48505} and the identities defining $\mathbf{AI}$.
Let $\bq \preceq \bu$ be such a nontrivial identity,
where $\bu=\bu_1+\bu_2+\cdots+\bu_n$ and $\bu_i, \bq\in X^+$, $1 \leq i \leq n$.
It is easy to see that $S_{59}$ is isomorphic to $\{1, 2, 4\}$ and so $S_{59}$ satisfies $\bq \preceq \bu$.
By Lemma \ref{lem5901} we consider the following two cases.

\textbf{Case 1.} $\ell(\bu_i)\geq 3$ for some $\bu_i \in \bu$. Then
\[
\bu \succeq \bu_i \stackrel{\eqref{48501}, \eqref{48504}}\succeq \bq.
\]

\textbf{Case 2.} $\ell(\bu_i) \leq 2$ for all $\bu_i \in \bu$.
Then $\ell(\bq)\leq 2$.

\textbf{Subcase 2.1.} $\ell(\bq)=1$. Then there exists $\bu_j \in \bu$ such that
$c(\bq) \subseteq c(\bu_j)$. Now, from identities \eqref{48502}, \eqref{48503}, we have
$\bu \succeq \bu_j \succeq \bq.$

\textbf{Subcase 2.2.} $\ell(\bq)=2$ and so $c(\bq) \subseteq c(L_2(\bu))$.
Let $\bq = xy$ for some $x, y \in X$.
Then we claim that there exists $\bu_\ell \in L_2(\mathbf{u})$ with $t(\bu_\ell) = y$.
Otherwise, suppose that for every $\bu_\ell \in L_2(\mathbf{u})$, $t(\bu_\ell) \neq y$.
Consider the semiring substitution $\varphi: P_f(X^+) \to S_{(4,485)}$.
When $a = y$, set $\varphi(a) = 3$; when $a \neq y$, set $\varphi(a) = 4$.
Then $\varphi(\bq) = 1$ and $\varphi(\mathbf{u}) = 2$, a contradiction.
Therefore, $\bu_\ell = y' y$ for some $y' \in X$,
 and there exists $\bu_k \in \mathbf{u}$ such that $\bu_k = xx'$ or $x' x$ for some $x' \in X$.
 Consequently,
\[
\bu \succeq \bu_k + \bu_\ell \stackrel{\eqref{48505}}\succeq xy \approx \bq.
\]
This implies the identity $\bu \succeq \bq$ as required.
\end{proof}

\begin{cor}
The ai-semiring $S_{(4, 491)}$ is finitely based.
\end{cor}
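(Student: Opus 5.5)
The corollary will follow the pattern used throughout the paper for corollaries placed after a proposition. I will reduce $S_{(4,491)}$ to $S_{(4,485)}$, whose equational basis \eqref{48501}--\eqref{48505} was just established in Proposition~\ref{pro48501}. There are two candidate routes, and I would check them in this order.

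\emph{First route: dual multiplications.} Compare the multiplication table of $S_{(4,491)}$ from Table~\ref{tb1} with the transpose of that of $S_{(4,485)}$. Both algebras have the same chain additive reduct, so if the tables are transposes of each other, the elementary fact recalled in the introduction gives the conclusion at once.

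\emph{Second route: equational equivalence.} Suppose instead that the multiplication of $S_{(4,485)}$ looks left–right symmetric. The identities \eqref{48501}--\eqref{48505} are in fact invariant under reversing words, except that \eqref{48505} becomes $tx+ty\preceq yx+tz$. In that case I would prove $\mathsf{V}(S_{(4,491)})=\mathsf{V}(S_{(4,485)})$ directly, in two inclusions.
\begin{itemize}
\item[(a)] Check that $S_{(4,491)}$ satisfies \eqref{48501}--\eqref{48505}. Then $\mathsf{V}(S_{(4,491)})\subseteq\mathsf{V}(S_{(4,485)})$ by Proposition~\ref{pro48501}. Most of this check is routine: \eqref{48504} holds because all products of length $3$ collapse to a single element, and \eqref{48501}--\eqref{48503} follow from the order of the chain.
\item[(b)] Show that $S_{(4,485)}$ lies in $\mathsf{V}(S_{(4,491)})$, for instance by exhibiting it as a subalgebra, a quotient, or a subdirect factor in a product of copies of $S_{(4,491)}$.
\end{itemize}

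\emph{Obstacle and fallback for (b).} Step (b) is the main obstacle, since both algebras appear to be subdirectly irreducible. An embedding would therefore have to be an isomorphism, and that is excluded because they are distinct isomorphism types. If (b) fails, I would instead repeat the proof of Proposition~\ref{pro48501} verbatim for $S_{(4,491)}$:
\begin{itemize}
\item[(i)] Use the copy of $S_{59}$ inside $S_{(4,491)}$ together with Lemma~\ref{lem5901} to split into the cases $\ell(\bu_i)\geq 3$ for some $i$, and $\ell(\bu_i)\leq 2$ for all $i$.
\item[(ii)] In the length-two subcase, use a separating substitution into $S_{(4,491)}$, analogous to $\varphi$ in that proof, to force the existence of some $\bu_\ell\in L_2(\bu)$ with the appropriate head or tail. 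Then conclude with the reversed form of \eqref{48505}.
\end{itemize}
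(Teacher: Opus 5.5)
Your first route is exactly the paper's proof, and the check does succeed. In $S_{(4,485)}$ the only products different from $1$ are $3\cdot 4=4\cdot 4=2$, while in $S_{(4,491)}$ they are $4\cdot 3=4\cdot 4=2$, so the tables are transposes and Proposition~\ref{pro48501} with the duality remark finishes the argument. The second route is never needed, and its step (a) would in fact fail, since $S_{(4,491)}$ violates \eqref{48505}, e.g.\ at $x=z=4$, $y=t=3$.
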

\begin{proof}
Since $S_{(4, 491)}$ and $S_{(4, 485)}$ have dual multiplications, it follows from
Proposition {\ref{pro48501}} that $S_{(4, 491)}$ is finitely based.
\end{proof}

\begin{pro}\label{pro49201}
$\mathsf{V}(S_{(4, 492)})$ is the ai-semiring variety defined by the identities
\begin{align}
&xy \approx yx; \label{49201}\\
&x \preceq xy; \label{49202}\\
&y \preceq x^3; \label{49203}\\
&x_1x_2x_3 \approx y_1y_2y_3. \label{49204}
\end{align}
\end{pro}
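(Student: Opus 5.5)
The plan follows the pattern of Proposition~\ref{pro48501}. First, I would verify directly on the Cayley tables that $S_{(4,492)}$ satisfies \eqref{49201}--\eqref{49204}. Then I would take a nontrivial inequality $\bq\preceq\bu$ of $S_{(4,492)}$, with $\bu=\bu_1+\cdots+\bu_n$ and $\bu_i,\bq\in X^+$, and show that it is derivable.

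Two consequences of \eqref{49204} set up the reduction. By \eqref{49204}, all words of length at least $3$ are equal to a single word, say $x^3$. By \eqref{49203}, that word lies above every variable, and hence above every term. So any inequality whose right side contains a word of length at least $3$ is immediate.

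The first step is to locate $S_{59}$ inside $S_{(4,492)}$, presumably as a subalgebra such as $\{1,2,4\}$, mirroring the previous proposition. Then $S_{59}$ satisfies $\bq\preceq\bu$, and Lemma~\ref{lem5901} splits the argument into two cases.

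\textbf{Case 1.} Some $\bu_i$ has $\ell(\bu_i)\ge 3$. Then
\[
\bu\succeq\bu_i\approx x^3\succeq\bq
\]
by \eqref{49204} and \eqref{49203}. If $\ell(\bq)\ge 3$ one applies \eqref{49204} directly; otherwise one uses that every variable lies below $x^3$, and then $\bq\preceq \bq'x$ with $\ell(\bq'x)\geq 3$ by \eqref{49202}.

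\textbf{Case 2.} Every $\bu_i$ has length at most $2$. Then $\ell(\bq)\le 2$. If $\ell(\bq)=1$, Lemma~\ref{lem5901} gives $\bu_j$ with $c(\bq)\subseteq c(\bu_j)$. Using \eqref{49201} to write $\bu_j=\bq\bu_j'$, \eqref{49202} then gives $\bu\succeq\bq$.

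If $\ell(\bq)=2$, write $\bq=xy$. Lemma~\ref{lem5901} gives $c(\bq)\subseteq c(L_2(\bu))$, so there are length-two words $xx'$ and $yy'$ in $\bu$ (up to commutativity). Here is the main obstacle: the listed identities contain no mixing law like \eqref{48505} or \eqref{5484} that combines $xx'+yy'$ into $xy$. So one must extract more from $S_{(4,492)}$ itself than from $S_{59}$. I would look for a substitution into $S_{(4,492)}$ in the spirit of the one used in Proposition~\ref{pro48501}: send $x$ and $y$ to some element and all other variables to another. The aim is to show that $\bq\preceq\bu$ nontrivial with $\ell(\bq)=2$ forces a word $\bu_k$ with $c(\bq)\subseteq c(\bu_k)$. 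That would rule out separated occurrences, so that in fact $\bu_k=xy$ (by \eqref{49201}, making the inequality trivial) or $\bu_k=x^2$ when $x=y$.

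If such a substitution argument fails, that would signal that an additional identity such as $xz+yt\preceq xy+\cdots$ is derivable from \eqref{49201}--\eqref{49204} via \eqref{49202}. For instance, $x\preceq xx'$ and $y\preceq yy'$, together with a product bound, might give it. I would check this derivation before finalizing.
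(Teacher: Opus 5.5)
Your Case 1 and the subcases $\ell(\bq)=1$ and $\bq=x^2$ are fine and agree with the paper. The subcase $\bq=xy$ with $x\neq y$, where all words of $\bu$ have length at most $2$, cannot be closed by either route you propose.

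\textbf{The claim you aim for is false.} You want such a nontrivial inequality to force a word of $\bu$ containing both $x$ and $y$. But $xy\preceq x^2+y^2$ holds in $S_{(4,492)}$. Indeed, $\varphi(x^2)+\varphi(y^2)$ is the additive maximum $1$ unless $\varphi(x)=\varphi(y)=4$, and in that case both sides equal $2$. The substitution $x,y\mapsto 3$, other variables $\mapsto 4$, is exactly what the paper's proof uses to assert this claim. Since $3\cdot 3=1$, it only forces $\bu$ to contain one of $x^2$, $y^2$, $xy$.

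\textbf{Your fallback fails too.} The inequality $xy\preceq x^2+y^2$ is not a consequence of \eqref{49201}--\eqref{49204}. Let $A=\{a,b,e,f\}$ with additive order $a,b<e<f$ (so $a+b=e$). Give it the commutative multiplication $a^2=b^2=e$, with all other products equal to $f$.
Every product of two elements lies in $\{e,f\}$, and $e,f$ multiply everything to $f$. Hence $A$ is associative and all products of length $3$ equal the maximum $f$, so \eqref{49201}, \eqref{49203} and \eqref{49204} hold. Identity \eqref{49202} holds because $a,b<e<f$. For $u\in\{a,b\}$ the map $v\mapsto uv$ sends $u$ to $e$ and every other element to $f$; it preserves joins since $v+w=u$ forces $v=w=u$. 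So $A$ is a model of \eqref{49201}--\eqref{49204}, yet $ab=f\not\leq e=a^2+b^2$.
The mixing law $xy\preceq xz+yt$ you float is not even valid in $S_{(4,492)}$: take $x,y\mapsto 3$ and $z,t\mapsto 4$.

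So the proposition as stated is false, and the paper's argument fails at the same claim. What is true is that $\mathsf{V}(S_{(4,492)})$ is defined by \eqref{49201}--\eqref{49204} together with $xy\preceq x^2+yz$. This identity holds in $S_{(4,492)}$:
if $\varphi(x)\neq 4$ then $\varphi(x^2)=1$; if $\varphi(x)=4$ then either $\varphi(y)\in\{1,2\}$ and $\varphi(yz)=1$, or $\varphi(xy)=2=\varphi(x^2)$.

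With this identity added, your outline goes through in the remaining case:
\begin{itemize}
\item the substitution above shows that $\bu$ contains one of $x^2$, $y^2$, $xy$;
\item the substitution $x\mapsto 2$ (resp.\ $y\mapsto 2$), other variables $\mapsto 4$, shows that $\bu$ contains a length-two word containing $x$ (resp.\ $y$);
\item then $\bu\succeq x^2+yz\succeq xy$, or symmetrically $\bu\succeq y^2+xz\succeq yx\approx xy$.
\end{itemize}
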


\begin{proof}
It is easy to check that $S_{(4, 492)}$ satisfies the identities \eqref{49201}--\eqref{49204}.
In the remainder we need only prove that every ai-semiring identity of $S_{(4, 492)}$
is derivable from \eqref{49201}--\eqref{49204} and the identities defining $\mathbf{AI}$.
Let $\bq \preceq \bu$ be such a nontrivial identity,
where $\bu=\bu_1+\bu_2+\cdots+\bu_n$. Since the identity \eqref{49201} is satisfied by $S_{(4, 492)}$,
we may assume that $\bu_i, \bq\in X_c^+$, $1 \leq i \leq n$.
It is easy to see that $S_{59}$ is isomorphic to $\{1, 2, 4\}$ and so $S_{59}$ satisfies $\bq \preceq \bu$.
By Lemma \ref{lem5901} we consider the following two cases.

\textbf{Case 1.} $\ell(\bu_i) \geq 3$ for some $\bu_i \in \bu$. Then
\[
\bu \succeq \bu_i \stackrel{\eqref{49204}}\approx
x^3 \stackrel{\eqref{49203}}\succeq \bq.
\]

\textbf{Case 2.} $\ell(\bu_i)\leq 2$ for all $\bu_i \in \bu$. Then $\ell(\bq)\leq 2$.
If $\ell(\bq) = 1$, then there exists $\bu_i \in \bu$ such that $c(\bq) \subseteq c(\bu_i)$. Thus, we obtain
\[
\bu \succeq \bu_i \stackrel{\eqref{49201}, \eqref{49202}} \succeq \bq.
\]
If $\ell(\bq) = 2$, then $c(\bq) \subseteq c(L_2(\bu))$. Let $\bq = xy$.
If $x=y$, then $\bq=x^2$. There exists $\bu_\ell \in L_2(\bu)$ such that $\bu_\ell=x^2$. If not, let $\varphi: P_f(X^+) \to S_{(4,492)}$ be a substitution such that
$\varphi(a)=3$ for every $a=x,y$, and $\varphi(a)=4$ otherwise. It is easy to see that $\varphi(\bu)=2$ and so $\varphi(\bq)=1$, a contradiction.
Then we have that this case is trivial.
If $x\neq y$, then we claim that there exists $\bu_\ell \in L_2(\bu)$ such that $x, y \in c(\bu_\ell)$. Otherwise, assume that for any $\bu_\ell \in L_2(\bu)$, $x$ and $y$ cannot appear simultaneously in $c(\bu_\ell)$. If not, using the substitution $\varphi$ defined above, we obtain $\varphi(\bu)=2$ and so $\varphi(\bq)=1$, a contradiction. Then we have
\[
\bu \succeq \bu_\ell \stackrel{\eqref{49201}} \succeq \bq.
\]
This implies the identity $\bu \succeq \bq$ as required.
\end{proof}

\begin{pro}\label{pro49301}
$\mathsf{V}(S_{(4, 493)})$ is the ai-semiring variety defined by the identities
\begin{align}
&xy \approx yx; \label{49301}\\
&x \preceq xy; \label{49302}\\
&x_1 \preceq x_2x_3x_4x_5; \label{49303}\\
&xy \preceq x^2+yz; \label{49304}\\
&x_1^2 \preceq x_1x_2x_3; \label{49309}\\
&x_1^3 \preceq x_1x_2x_3; \label{49307}\\
&x_1^2x_2 \preceq x_1x_2x_3; \label{49306}\\
&x_1y_1 \preceq x_1x_2x_3+y_1y_2. \label{49305}\\
&x_1x_2^2 \preceq x_1^2+x_2^2+y_1y_2y_3; \label{49308}
\end{align}
\end{pro}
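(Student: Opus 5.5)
The plan follows the template of Propositions~\ref{pro48501} and~\ref{pro49201}. First I verify that $S_{(4,493)}$ satisfies \eqref{49301}--\eqref{49308}, which is a finite table check. Then I take a nontrivial inequality $\bq\preceq\bu$ of $S_{(4,493)}$ with $\bu=\bu_1+\cdots+\bu_n$. By \eqref{49301} I may treat all words as commutative, i.e.\ as multisets of variables.

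The first structural input will be an embedding of $S_{59}$, presumably as the subalgebra $\{1,2,4\}$ as in the two preceding propositions. With it, Lemma~\ref{lem5901} splits the argument into two cases: some $\bu_i$ has length $\geq 3$, or all words in $\bu$ have length $\leq 2$. A two-element subalgebra such as $M_2$ or $D_2$, if present, should also give $c(\bq)\subseteq c(\bu)$; I will check this from the table.

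\textbf{Case 1}: some $\ell(\bu_i)\geq 3$. If $\ell(\bu_i)\geq 4$, then \eqref{49303} gives $\bu_i\succeq$ every single variable. I expect to need \eqref{49302} to regrow $\bq$ from its variables, but that needs $c(\bq)\subseteq c(\bu)$-type information; otherwise I derive the letters of $\bq$ directly. Since the identities are not of the ``$x_1x_2x_3\approx y_1y_2y_3$'' type here, words of length $3$ are not all equal. So I must determine, by substitutions into $S_{(4,493)}$ (assign $3$ to chosen variables and $4$ to the rest, as in the proofs above), what $\bq$ can be when the longest word of $\bu$ has length exactly $3$. The identities \eqref{49309}, \eqref{49307}, \eqref{49306} and \eqref{49305} indicate the answer: a length-$3$ word $x_1x_2x_3$ lies above $x_1^2$, $x_1^3$ and $x_1^2x_2$, that is, above words built from its letters with small multiplicities. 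Together with a length-$2$ word $y_1y_2$ it lies above mixed products $x_1y_1$. I will show, by a substitution argument, that any $\bq$ below $\bu$ is of one of these forms, or is obtained from them via \eqref{49302} and \eqref{49304}.

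\textbf{Case 2}: all $\ell(\bu_i)\leq 2$, so $\ell(\bq)\leq 2$. For $\ell(\bq)=1$, Lemma~\ref{lem5901} gives some $\bu_j$ containing $\bq$, and \eqref{49302} finishes. For $\bq=xy$ with $x\neq y$, $c(\bq)\subseteq c(L_2(\bu))$. A substitution argument should show that either some length-$2$ word contains both $x$ and $y$, or $x^2\in\bu$ (or $y^2\in\bu$) together with a word $yz$ (respectively $xz$); then \eqref{49304} applies. For $\bq=x^2$ I expect to show that $x^2\in\bu$ is forced, so that this case is trivial. The identity \eqref{49308} handles the remaining mixed situation, where $\bq=x_1x_2^2$-like terms arise from squares plus a length-$3$ word. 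I will place it in Case 1, when $\ell(\bq)=3$ and $\bu$ contains $x_1^2$ and $x_2^2$.

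The main obstacle is Case 1 with maximal length exactly $3$. There I need a precise description of which $\bq$ of length $\leq 3$ lie below $\bu$ in $S_{(4,493)}$, and I will obtain it by a careful choice of substitutions. Each resulting configuration must then be matched to one of \eqref{49305}--\eqref{49308}. I will first classify by $\ell(\bq)\in\{1,2,3\}$ and by the multiplicities in $\bq$, and for each class find a falsifying assignment unless the corresponding pattern appears in $\bu$.
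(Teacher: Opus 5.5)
Your skeleton (dispose of long words, use substitutions to read off which patterns must occur in $\bu$, then match them with \eqref{49304}--\eqref{49308}) is the paper's. However, the tools you name for the substitution step are wrong or too weak, and the decisive case is left undone.

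First, $\{1,2,4\}$ is not a subalgebra of $S_{(4,493)}$, since $4\cdot 4=3$. $S_{59}$ is instead the quotient by the congruence with block $\{1,2\}$, so Lemma~\ref{lem5901} is still available. But it says nothing once $\bu$ contains a word of length $3$, i.e.\ throughout your Case~1. Second, $S_{(4,493)}$ contains no $M_2$ or $D_2$, because its only idempotent is the top element $1$. Neither algebra lies in $\mathsf{V}(S_{(4,493)})$, as both violate \eqref{49303}. In fact $c(\bq)\subseteq c(\bu)$ is false when $\bu$ has a word of length $\geq 4$ (e.g.\ $y\preceq x_1x_2x_3x_4$). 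In that case no regrowing is needed: substituting $\bq$ for $x_1$ in \eqref{49303} gives $\bq\preceq\bu_i$ directly.

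The real gap is the case where the longest word of $\bu$ has length exactly $3$. Read the elements $4,3,2,1$ as weights $1,2,3,\geq 4$; then $S_{(4,493)}$ is max together with addition truncated at $4$. Substitutions taking only the values $3,4$ (weights $2,1$) cannot see what is needed here:
\begin{itemize}
\item $xy\preceq xt_1t_2+y$ survives all $\{3,4\}$-valued substitutions and even holds in $S_{59}$, yet it fails under $y\mapsto 2$, everything else $\mapsto 4$;
\item $y\preceq x_1x_2x_3$ survives all $\{2,3,4\}$-valued substitutions and fails only for $y\mapsto 1$.
\end{itemize}
You need the value $1$ to get $c(\bq)\subseteq c(\bu)$ (and all letters $\mapsto 4$ to get $\ell(\bq)\leq 3$). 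You need the value $2$ to force every letter of $\bq$ into a word of length $\geq 2$, which is what \eqref{49305} and \eqref{49304} require.

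The missing idea is the resulting criterion when all words of $\bu$ have length $\leq 3$:
\begin{itemize}
\item for $\ell(\bq)=3$: $\bq\preceq\bu$ iff $L_3(\bu)\neq\emptyset$ and each $v\in c(\bq)$ either occurs in a word of $L_3(\bu)$ or has $v^2\in\bu$ (test with $v\mapsto 3$, rest $\mapsto 4$);
\item for $\bq=xy$ with $x\neq y$: iff $x,y\in c(L_{\geq 2}(\bu))$, and either some word of $L_3(\bu)$ contains $x$ or $y$, or one of $xy,x^2,y^2$ lies in $\bu$;
\item for $\bq=x^2$: iff $x^2\in\bu$ or $x\in c(L_3(\bu))$.
\end{itemize}
Given this criterion, the derivations are short:
\begin{itemize}
\item \eqref{49305} with $y_1\mapsto v't_3$ gives $vv't_3\preceq vt_1t_2+v't_3t_4$;
\item \eqref{49304} with $y\mapsto v't$ gives $vv't\preceq v^2+v'tt'$;
\item \eqref{49308} gives $v'v^2\preceq v^2+v'^2+t_1t_2t_3$;
\item \eqref{49306}, \eqref{49307}, \eqref{49309} then adjust multiplicities.
\end{itemize}
Without this criterion, your Case~1 remains a promise rather than a proof.
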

\begin{proof}
It is easily verified that $S_{(4, 493)}$ satisfies the identities
\eqref{49301}--\eqref{49308}.
In the remainder we need only show that every ai-semiring identity of $S_{(4, 493)}$
can be derived by \eqref{49301}--\eqref{49308}.
Let $\bq \preceq \bu$ be such a nontrivial identity,
where $\bu=\bu_1+\bu_2+\cdots+\bu_n$ and $\bu_i, \bq\in X^+$, $1 \leq i \leq n$.

\textbf{Case 1.} $\ell(\bu_i) \geq 4$ for some $\bu_i \in \bu$. Then
\[
\bu \succeq \bu_i \stackrel{\eqref{49303}} \succeq \bq.
\]

\textbf{Case 2.} $\ell(\bu_i) \leq 3$ for every $\bu_i \in \bu$. We first show that $c(\bq)\subseteq c(\bu)$.
If this is not the case, let $\varphi: P_f(X^+) \to S_{(4,493)}$ be a substitution such that
$\varphi(x)=4$ for every $x\in c(\bu)$, and $\varphi(x)=1$ otherwise. It is easy to see that $\varphi(\bu)=2$ and so $\varphi(\bq)=1$. Thus, we have $c(\bq)\subseteq c(\bu)$. Next, we prove that $\ell(\bq) \leq 3$.

It is easy to see that $\varphi(\bu)=2$ and so $\varphi(\bq)=2$.
Suppose by way of contradiction that $\ell(\bq)\geq 4$, since $S_{(4, 493)}$ satisfies the identity $x_1x_2x_3x_4\approx y_1y_2y_3y_4$ and $1$ is the multiplicative zero element of $S_{(4, 493)}$, then we have $\varphi(\bq)=1$.
This is a contradiction. So $\ell(\bq)\leq 3$.

\textbf{Subcase 2.1.} $\ell(\bq)=1$. We have $c(\bq)\subseteq c(\bu)$. Then there exists $\bu_i \in \bu$ such that $c(\bq) \subseteq c(\bu_i)$. Now we have
\[
\bu \succeq \bu_i \stackrel{\eqref{49301}, \eqref{49302}} \succeq \bq.
\]

\textbf{Subcase 2.2.} $\ell(\bq)=2$. Let $\bq=xy$.

\textbf{Subcase 2.2.1.} $c(\bq)\subseteq c(\bu_i)$ for some $\bu_i \in \bu$.

We first analyze the case where $x=y$, which implies that $\bq=x^2$.
Either $\ell(\bu_i)=3$, or $\bu_i=x^2$.
In the following, we will use a proof by contradiction.
Let $\psi: P_f(X^+) \to S_{(4,493)}$ be a substitution such that
$\psi(a)=3$ for every $a = x$, $\psi(a)=4$ for every $a \neq x$.
Then we have $\psi(\bu)=2$ and so $\psi(\bq)=1$. This is a contradiction. So
we can divide this into the following two cases.

$\ell(\bu_i)=3$. We have
\[
\bu \succeq \bu_i \stackrel{\eqref{49301}, \eqref{49309}} \succeq \bq.
\]

When $\bu_i=x^2$, this case is trivial.

We now turn to the analysis of the case $x \neq y$, which implies that $\bq=xy$. The remaining steps are similar to those in Subcase 2.1.

\textbf{Subcase 2.2.2.} There exists $\bu_i \in L_3(\bu)$, we have $x \in c(\bu_i)$ or $y \in c(\bu_i)$. Let $x \in c(\bu_i)$. If there exists $\bu_j \in \bu$ such that $y \in \bu_j$, then $\ell(\bu_j)\geq 2$. If not, let $\sigma: P_f(X^+) \to S_{(4,493)}$ be a substitution such that
$\sigma(a)=2$ for every $a=y$, and $\sigma(a)=4$ otherwise. It is easy to see that $\sigma(\bu)=2$ and so $\sigma(\bq)=1$, a contradiction.
Then
\[
\bu \succeq \bu_i + \bu_j \stackrel{\eqref{49301}} \approx x\bu_i' + y\bu_j' \stackrel{\eqref{49305}} \succeq \bq.
\]
Similarly, the same reasoning applies to $y \in c(\bu_i)$.

For all $\bu_i \in L_3(\bu)$, we have $x, y \notin c(\bu_i)$.
In this case, there exist $\bu_j,\bu_k \notin L_3(\bu)$ such that $\bu_j=x^2$ or $\bu_k=y^2$. If not, we introduce $\xi: P_f(X^+) \to S_{(4,493)}$ be a substitution such that
$\xi(a)=3$ for every $a=x, y$, and $\xi(a)=4$ otherwise. It is easy to see that $\xi(\bu)=2$ and so $\xi(\bq)=1$, a contradiction.
Let $\bu_j=x^2$. If there exists $\bu_\ell \in \bu$ such that $y \in \bu_\ell$, then $\ell(\bu_\ell) = 2$. If not, using the substitution $\sigma$ defined above, we obtain $\sigma(\bu)=2$ and so $\sigma(\bq)=1$, a contradiction.
Then
\[
\bu \succeq \bu_j + \bu_\ell \stackrel{\eqref{49301}} \approx x^2 + y\bu_\ell' \stackrel{\eqref{49304}} \succeq \bq.
\]
Similarly, the same reasoning applies to $\bu_k=y^2$.

\textbf{Subcase 2.3.} $\ell(\bq)=3$. Let $\bq=x_1x_2x_3$.
It follows that either $c(\bq) \subseteq c(L_3(\bu))$, or there exists an element $x_1 \in c(\bq)$ but $x_1 \notin c(L_3(\bu))$ such that $\bu_i=x_1^2$ for some $\bu_i \in L_2(\bu)$.
In the following, we will use a proof by contradiction. Let $\zeta: P_f(X^+) \to S_{(4,493)}$ be a substitution such that
$\zeta(a)=3$ for every $a = x_1$, $\zeta(a)=4$ for every $a \neq x_1$. We can obtain $\zeta(\bu)=2$ and so $\zeta(\bq)=1$. This is a contradiction. So we can divide this into the following two cases.

\textbf{Subcase 2.3.1.} $c(\bq)\subseteq c(L_3(\bu))$.

There exist $\bu_i \in L_3(\bu)$ and $x_1, x_2, x_3 \in c(\bu_i)$. If $x_1, x_2, x_3$ are all distinct, then we have
\[
\bu \succeq \bu_i \stackrel{\eqref{49301}} \succeq \bq.
\]
If exactly two of $x_1, x_2, x_3$ are equal, then we have
\[
\bu \succeq \bu_i \stackrel{\eqref{49301}, \eqref{49306}} \succeq \bq.
\]
If $x_1, x_2, x_3$ are all equal, then we have
\[
\bu \succeq \bu_i \stackrel{\eqref{49301}, \eqref{49307}} \succeq \bq.
\]

There exist $\bu_i, \bu_j \in L_3(\bu)$, $x_1, x_2 \in c(\bu_i)$, and $x_3 \in c(\bu_j)$, then
\[
\bu \succeq \bu_i + \bu_j \stackrel{\eqref{49301}} \approx x_1 x_2 \bu_i' + x_3 \bu_j' \stackrel{\eqref{49301}, \eqref{49305}} \succeq \bq.
\]

There exist $\bu_i, \bu_j, \bu_k \in L_3(\bu)$, $x_1 \in c(\bu_i)$, $x_2 \in c(\bu_j)$, and $x_3 \in c(\bu_k)$, then
\[
\bu \succeq \bu_i + \bu_j +\bu_k \stackrel{\eqref{49301}} \approx x_1 \bu'_i + x_2 \bu'_j + x_3 \bu'_k \stackrel{\eqref{49305}} \succeq x_1 x_2 x' + x_3 \bu'_k \stackrel{\eqref{49301}, \eqref{49305}} \succeq \bq.
\]

\textbf{Subcase 2.3.2.} There exists an element $x_1 \in c(\bq)$ but $x_1 \notin c(L_3(\bu))$ such that $ \bu_i = x_1^2$ for some $\bu_i \in L_2(\bu)$.

\textbf{Subcase 2.3.2.1} Suppose there exists $\bu_j \in \bu$, $x_2,x_3 \in c(\bu_j)$. Then there are two possibilities: either $x_2=x_3$ and $\bu_j=x_2^2$, or $\ell(\bu_j)=3$. If not, let $\delta: P_f(X^+) \to S_{(4,493)}$ be a substitution such that
$\delta(a)=3$ for every $a=x_2$, and $\delta(a)=4$ otherwise. It is easy to see that $\delta(\bu)=2$ and so $\delta(\bq)=1$, a contradiction.
If $x_2=x_3$ and $\bu_j=x_2^2$, for every $\bu_k \in L_3(\bu)$, then we have
\[
\bu \succeq \bu_i + \bu_j +\bu_k \stackrel{\eqref{49301}} \approx x_1^2 + x_2^2 + \bu_k \stackrel{\eqref{49308}} \succeq \bq.
\]
If $\ell(\bu_j)=3$, then we have
\[
\bu \succeq \bu_i + \bu_j \stackrel{\eqref{49301}} \approx x_1^2 + \bu_j \stackrel{\eqref{49304},\eqref{49305}, \eqref{49301}} \succeq \bq.
\]

\textbf{Subcase 2.3.2.2} Suppose there exists $\bu_j \in L_3(\bu)$, we have $x_2 \in c(\bu_j)$ or $x_3 \in c(\bu_j)$.
Let $x_2 \in c(\bu_j)$. If there exists $\bu_k \in \bu$ such that $x_3 \in \bu_k$, then either $\ell(\bu_k)=3$ or $\bu_k=x_3^2$. If not, let $\gamma: P_f(X^+) \to S_{(4,493)}$ be a substitution such that
$\gamma(a)=3$ for every $a=x_3$, and $\gamma(a)=4$ otherwise. It is easy to see that $\gamma(\bu)=2$ and so $\gamma(\bq)=1$, a contradiction.
If $\ell(\bu_k)=3$, then we have
\[
\bu \succeq \bu_i + \bu_j + \bu_k \stackrel{\eqref{49301}} \approx x_1^2 + x_2\bu_j' + x_3\bu_k' \stackrel{\eqref{49304},\eqref{49305},\eqref{49301}} \succeq \bq.
\]
If $\bu_k=x_3^2$, then we have
\[
\bu \succeq \bu_i + \bu_j + \bu_k \stackrel{\eqref{49301}} \approx x_1^2 + x_2\bu_j' + x_3^2 \stackrel{\eqref{49304},\eqref{49301}} \succeq \bq.
\]
Similarly, the same reasoning applies to $x_3 \in c(\bu_j)$.

For all $\bu_j \in L_3(\bu)$, we have $x_2, x_3 \notin c(\bu_j)$.
In this case, there exist $\bu_k,\bu_\ell \notin L_3(\bu)$ such that $\bu_k=x_2^2$ or $\bu_\ell=x_3^2$. If not, we introduce $\mu: P_f(X^+) \to S_{(4,493)}$ be a substitution such that
$\mu(a)=3$ for every $a=x_2, x_3$, and $\mu(a)=4$ otherwise. It is easy to see that $\mu(\bu)=2$ and so $\mu(\bq)=1$, a contradiction.
Let $\bu_k=x_2^2$. If there exists $\bu_t \in \bu$ such that $x_3 \in \bu_t$, for every $\bu_s \in L_3(u)$, then $\bu_t=x_3^2$. If not, using the substitution $\gamma$ defined above, we obtain $\gamma(\bu)=2$ and so $\gamma(\bq)=1$, a contradiction.
Then
\[
\bu \succeq \bu_i + \bu_k + \bu_t + \bu_s \stackrel{\eqref{49301}} \approx x_1^2 + x_2^2 + x_3^2 + \bu_s \stackrel{\eqref{49304},\eqref{49301}} \succeq \bq.
\]
Similarly, the same reasoning applies to $\bu_\ell=x_3^2$.
This implies the inequality $\bu \succeq \bq$.
\end{proof}

\begin{pro}\label{pro49401}
$\mathsf{V}(S_{(4, 494)})$ is the ai-semiring variety defined by the identities
\begin{align}
&xyz \approx yxz; \label{49401}\\
&x \preceq xy; \label{49402}\\
&y \preceq xy; \label{49403}\\
&x^2 \preceq xy; \label{49406}\\
&xyz \approx xy+yz+xz; \label{49404}\\
&xt+xz \preceq xy+zt. \label{49405}
\end{align}
\end{pro}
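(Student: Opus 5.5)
The plan follows the same pattern as the preceding propositions. First one checks directly that $S_{(4,494)}$ satisfies \eqref{49401}--\eqref{49405}; this is a finite computation. For the converse, take a nontrivial inequality $\bq\preceq\bu$ of $S_{(4,494)}$ with $\bu=\bu_1+\cdots+\bu_n$ and $\bu_i,\bq\in X^+$. Identity \eqref{49404} rewrites every word of length at least $3$ as a sum of words of length $2$. Hence we may assume $\ell(\bq)\le 2$ and $\ell(\bu_i)\le 2$ for all $i$, since a word $\bq$ of length $\ge3$ becomes a sum and it suffices to derive each length-$2$ summand.

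Next I would extract necessary conditions from small subalgebras or quotients of $S_{(4,494)}$. I expect $M_2$, and probably an $S_{53}$-like $3$-element subalgebra, to be present; given \eqref{49404} and \eqref{49405}, the algebra resembles $S_{(4,546)}$ and $S_{(4,540)}$. If no suitable subalgebra is available, I would argue directly by substitutions $\varphi$ into $\{1,2,3,4\}$, as in Proposition~\ref{pro48501}: send selected variables to a generator and the rest to $4$ (or $2$), and obtain a contradiction $\varphi(\bq)\not\le\varphi(\bu)$. The conditions I aim for are:
\begin{itemize}
\item[(i)] $c(\bq)\subseteq c(\bu)$;
\item[(ii)] if $\bq=xy$, then some $\bu_k$ of length $2$ has head $x$, or is $x^2$ or $xx'$, and some word of $\bu$ contains $y$;
\item[(iii)] the analogous condition for $\bq=x^2$.
\end{itemize}

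With these conditions the derivations are short. If $\ell(\bq)=1$, pick $\bu_j$ containing $\bq$ and use \eqref{49402} or \eqref{49403} (together with \eqref{49401} to move the letter) to get $\bu\succeq\bu_j\succeq\bq$. If $\bq=xy$, there are two cases. When $\bu$ contains $xz$ and a word $yt$ or $ty$, use \eqref{49405} with $z$ and $t$ specialised: from $xz+yt$, \eqref{49405} gives $xt+xy$-type terms, and with \eqref{49402}/\eqref{49403} we reach $xy$, in analogy with Proposition~\ref{pro314}. When the only available witness is $x^2$, use \eqref{49406} to get $x^2\preceq xy$ in the reverse direction where needed, combined with \eqref{49405}. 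If $\bq=x^2$, use \eqref{49406} together with a word $xz$ in $\bu$.

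The main obstacle is the second step: pinning down exactly which words must occur in $\bu$ when $\bq=xy$. In particular, I need to know whether a word with $x$ in the tail position ($zx$) suffices, which \eqref{49401} can only partially address on length-$2$ words, since it permutes only the first two letters of length-$\ge3$ words. I would handle this by first computing the Cayley table to find the element playing the role of the ``head'' detector, and then designing the substitution that rules out every configuration not covered by \eqref{49405} and \eqref{49406}.
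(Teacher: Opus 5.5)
Your framework is the one the paper uses: reduce to words of length at most $2$ by \eqref{49404}, then merge two length-$2$ words with \eqref{49405}. But the step you postpone is where the content lies, and the condition you aim for there is too weak.

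In (ii) you only require that \emph{some} word of $\bu$ contain $y$, yet your derivation then uses a length-$2$ word $yt$ or $ty$. After the reduction, $y$ may occur in $\bu$ only as the one-letter summand $y$, e.g.\ $\bu=xx'+y$. In that case no derivation exists, because the inequality is false in $S_{(4,494)}$. Take $\varphi(x)=\varphi(x')=4$ and $\varphi(y)=2$. Then $\varphi(\bu)=4+2=2$, while $\varphi(xy)=4\cdot 2=1\not\leq 2$.

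So you must prove the stronger necessary condition $t(\bq)\in c(L_{\geq 2}(\bu))$ whenever $\ell(\bq)\geq 2$. The paper obtains it from exactly this substitution ($t(\bq)\mapsto 2$, all other variables $\mapsto 4$). This amounts to using the subalgebra $\{1,2,4\}\cong S_{60}$ and Lemma~\ref{lem6001}.

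The head condition $x\in c(p(\bu))$ is also genuinely needed: a word $zx$ does not suffice. Setting $\varphi(x)=3$ and every other variable $\mapsto 4$ refutes $xy\preceq zx+yt$. This condition comes from $\{1,3,4\}\cong S_{57}$ via Lemma~\ref{lem5701}, which is what the paper invokes. There is no $S_{53}$-type subalgebra here.

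Once both conditions are established, \eqref{49405} closes every case uniformly. The substitutions $(x,y,z,t)\mapsto(x,x',y,y')$ and $(x,y,z,t)\mapsto(x,x',y',y)$ give $xy\preceq xx'+yy'$ and $xy\preceq xx'+y'y$. These cover $x'=x$, so the ``only witness is $x^2$'' case needs no separate treatment.

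Applying \eqref{49406} ``in the reverse direction'' is not a legitimate step, and it is not needed. The inequality $x^2\preceq xy$ only lets you pass from $xy$ down to $x^2$. Its only use is the case $\bq=x^2$, starting from a word $xx'$ in $\bu$.
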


\begin{proof}
It is easily verified that $S_{(4, 494)}$ satisfies the identities
\eqref{49401}--\eqref{49405}.
In the remainder we need only show that every ai-semiring identity of $S_{(4, 494)}$
can be derived by \eqref{49401}--\eqref{49405} and the identities defining $\mathbf{AI}$.
Let $\bq \preceq \bu$ be such a nontrivial identity,
where $\bu=\bu_1+\bu_2+\cdots+\bu_n$ and $\bu_i, \bq\in X^+$, $1 \leq i \leq n$.
It is easy to see that $S_{57}$ is isomorphic to $\{1, 3, 4\}$ and so $S_{57}$ satisfies $\bq \preceq \bu$.
By Lemma \ref{lem5701} it follows that
$\ell(\bu_k)\geq 2$ for some $\bu_k \in \bu$,
$c(p(\bq))\subseteq c(p(\bu))$ and $t(\bq)\in c(\bu)$.

When $\ell(\bq) = 1$, there exists $\bu_i \in \bu$ such that $c(\bq) \subseteq c(u_i)$. We obtain
\[
\bu \succeq \bu_i \stackrel{\eqref{49401}, \eqref{49402}, \eqref{49403}} \succeq \bq.
\]

When $\ell(\bq) \geq 2$, we have $\bq \stackrel{\eqref{49404}}\approx \sum\limits_{1 \leq i < j \leq n} x_i x_j$.

\textbf{Case 1.} For any $\bu_i \in \bu$, there exists $u_j \in \bu$ such that $m(t(\bu_i), P(\bu_j)) \geq 1$. $c(\bq) \subseteq c(P(L_{\geq 2}(\bu))$.
Suppose there exists $\bu_k \in L_{\geq 2}(u)$ such that $x_i, x_j \in c(P(\bu_k))$. Then if $x_i = x_j$, we have
\[
\bu \succeq \bu_k \stackrel{\eqref{49401}} \approx x_i\bu_k' \stackrel{\eqref{49406}} \succeq x_i^2.
\]
If $x_i \neq x_j$, we have
\[
\bu \succeq \bu_k \stackrel{\eqref{49401}} \approx x_ix_j\bu_k' \stackrel{\eqref{49402}} \succeq x_ix_j.
\]
If there exist $\bu_k \in L_{\geq 2}(\bu)$ and $\bu_t \in L_{\geq 2}(\bu)$ such that $x_i \in c(P(\bu_k))$, $x_j \in c(P(\bu_t))$, then
\[
\bu \succeq \bu_k +\bu_t \stackrel{\eqref{49401}} \approx x_i\bu_k' + x_j\bu_t' \stackrel{\eqref{49405}} \succeq x_ix_j.
\]

\textbf{Case 2.}  There exist $\bu_i \in \bu$ and for any $\bu_j \in \bu$, $m(t(\bu_i), P(\bu_j)) = 0$. If $c(\bq) \subseteq c(P(\bu))$, then this can be reduced to Case 1. If there exists $\bu_i \in \bu$ with $t(\bq) = t(\bu_i)$, $c(P(\bq)) \subseteq c(P(\bu))$. When $\ell(\bu_i) \geq 2$, there exists $\bu_k \in \bu$ with $x_i \in c(P(\bu_k))$, thus we obtain
\[
\bu \succeq \bu_k +\bu_i \stackrel{\eqref{49401}} \approx x_i\bu_k' + \bu_it(\bq) \stackrel{\eqref{49405}} \succeq x_it(\bq).
\]
When $\ell(\bu_i) = 1$, we claim that there exists $\bu_k \in \bu$ such that $t(\bq) \in c(P(\bu_k))$.  Otherwise, when $\ell(\bu_i) = 1$, for every $\bu_k \in \bu$, we have $t(\bq) \notin c(P(\bu_k))$. If $t(\bq) = t(u_k)$, the proof is similar to the case where $\ell(\bu_i) \geq 2$.
If $t(\bq)\neq t(u_k)$, then we introduce $\varphi: P_f(X^+) \to S_{(4,494)}$ be a substitution such that $\varphi(x)=2$ for every $x = t(\bq)$, and $\varphi(x)=4$ otherwise. It is easy to see that $\varphi(\bu)=2$ and so $\varphi(\bq)=1$, a contradiction.
Moreover, there exists $\bu_t \in u$ with $x_i \in c(P(\bu_t))$, thus we obtain
\[
\bu \succeq \bu_t +\bu_k \stackrel{\eqref{49401}} \approx x_i\bu_t' + t(\bq)\bu_k' \stackrel{\eqref{49405}} \succeq x_it(\bq).
\]
This implies the identity $\bu \succeq \bq$.
\end{proof}

\begin{cor}
The ai-semiring $S_{(4, 486)}$ is finitely based.
\end{cor}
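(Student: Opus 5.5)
The plan is to follow the pattern used throughout the paper for corollaries of this kind: show that $S_{(4,486)}$ has the dual multiplication of $S_{(4,494)}$, and then invoke Proposition~\ref{pro49401}. Since $S_{(4,494)}$ is finitely based, with the explicit basis \eqref{49401}--\eqref{49405}, and dual multiplication preserves the finite basis property (the elementary fact recalled in the introduction), $S_{(4,486)}$ will then be finitely based. Its basis is obtained by reversing every word in \eqref{49401}--\eqref{49405}.

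The first step is to compare the multiplication table of $S_{(4,486)}$ (Table~\ref{tb1}) with the transpose of the table of $S_{(4,494)}$. Both algebras have the same chain additive reduct on $\{1,2,3,4\}$, so duality is just the equality $a\cdot_{486} b = b\cdot_{494} a$ for all $a,b$.

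I expect this first step to be the main obstacle. The numbering of the $S_{(4,k)}$ does not guarantee that the two tables are literally transposes of each other on the same labelling. If the transpose does not match directly, I would look for an automorphism of the chain $1>2>3>4$ that carries one table onto the other. Since the only order automorphism of a chain is the identity, there is in fact no freedom, and the transposed table must coincide exactly.

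If the tables are not transposes, I would switch approaches and look for a structural identification instead. Mirroring Proposition~\ref{pro49401}, I would check whether $S_{57}$, or its dual $S_{56}$, embeds in $S_{(4,486)}$. In that case I would verify that $S_{(4,486)}$ satisfies the reversed identities of \eqref{49401}--\eqref{49405} and generates the same variety as the dual of $S_{(4,494)}$.
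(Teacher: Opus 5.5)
Your approach is the paper's: $S_{(4,486)}$ and $S_{(4,494)}$ have dual multiplications, so Proposition~\ref{pro49401} gives the result. The check you flag as the main obstacle is immediate. The only products different from $1$ are $4\cdot 3=3$ and $4\cdot 4=4$ in $S_{(4,494)}$, versus $3\cdot 4=3$ and $4\cdot 4=4$ in $S_{(4,486)}$. So the two tables are literal transposes, and your fallback is not needed.
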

\begin{proof}
Since $S_{(4, 486)}$ and $S_{(4, 494)}$ have dual multiplications, it follows from
Proposition {\ref{pro49401}} that $S_{(4, 486)}$ is finitely based.
\end{proof}

\begin{pro}\label{pro53001}
$\mathsf{V}(S_{(4, 530)})$ is the ai-semiring variety defined by the identities
\begin{align}
&xyz \approx yxz; \label{53001}\\
&x \preceq xy; \label{53002}\\
&x^2 \preceq xy; \label{53006}\\
&yx \preceq x+yz; \label{53003}\\
&xyz \approx xy+yz+xz; \label{53004}\\
&xt+xz \preceq xy+zt. \label{53005}
\end{align}
\end{pro}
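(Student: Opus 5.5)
The plan follows the template of Proposition~\ref{pro49401}, since the basis \eqref{53001}--\eqref{53005} differs from that of $S_{(4,494)}$ only in replacing $y\preceq xy$ by $yx\preceq x+yz$.

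\textbf{Soundness.} First we check by direct computation on the Cayley tables that $S_{(4,530)}$ satisfies \eqref{53001}--\eqref{53005}.

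\textbf{Setup.} Next let $\bq\preceq\bu$ be a nontrivial inequality of $S_{(4,530)}$ with $\bu=\bu_1+\cdots+\bu_n$. We identify which small semirings embed into $S_{(4,530)}$ in order to extract necessary conditions.
\begin{itemize}
\item We expect a copy of $S_{57}$ on a three-element subset, as for $S_{(4,494)}$. Lemma~\ref{lem5701} then gives $L_{\geq2}(\bu)\neq\emptyset$, $c(p(\bq))\subseteq c(p(\bu))$ and $t(\bq)\in c(\bu)$.
\item The replacement of $y\preceq xy$ by \eqref{53003} suggests that $S_{(4,530)}$ also contains a copy of $L_2$. This gives some $\bu_j$ with $h(\bu_j)=h(\bq)$.
\end{itemize}

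\textbf{Reduction to short words.} By \eqref{53004}, every word of length at least $3$ is a sum of words of length $2$, namely $x_1\cdots x_m\approx\sum_{i<j}x_ix_j$, and \eqref{53001} lets us permute the prefix. So it suffices to derive $\bu\succeq x_ix_j$ for each such pair and to treat $\ell(\bq)=1$ separately.

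\textbf{Case $\ell(\bq)=1$.} Here $\bq=x$. Using $\bu_j$ with $h(\bu_j)=x$, \eqref{53002} gives $\bu\succeq\bu_j=x\,s(\bu_j)\succeq x$.

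\textbf{Case $\ell(\bq)\geq 2$.} We follow Cases 1 and 2 of Proposition~\ref{pro49401}.
\begin{itemize}
\item When $x_i,x_j$ both lie in $c(p(\bu_k))$ for a single $\bu_k$, we rearrange $\bu_k$ with \eqref{53001} to the form $x_ix_j\bu_k'$. Then \eqref{53002} gives $x_ix_j$, or \eqref{53006} gives $x_i^2$ when $x_i=x_j$.
\item When $x_i\in c(p(\bu_k))$ and $x_j\in c(p(\bu_t))$ for different words, \eqref{53005} applied to $x_i\bu_k'+x_j\bu_t'$ yields $x_ix_j$.
\item The delicate pair is $x_it(\bq)$ where $t(\bq)$ occurs only as a tail, say $t(\bq)=t(\bu_s)$. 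In Proposition~\ref{pro49401} this relied on $y\preceq xy$, which is no longer available. Instead, \eqref{53003} in the form $yx\preceq x+yz$ lets us move $t(\bq)$ into a head position once we have a word whose head lies in $c(p(\bq))$; combining with \eqref{53005} should then produce $x_it(\bq)$. If $\ell(\bu_s)=1$, we argue as in the last part of Proposition~\ref{pro49401}: either $t(\bq)$ also occurs in some prefix, or a substitution sending $t(\bq)$ to a suitable element and all other variables to $4$ gives a contradiction.
\end{itemize}

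\textbf{Main obstacle.} The hardest step is this tail case. There we must identify exactly which substitutions into $S_{(4,530)}$ certify the needed combinatorial condition, using the Cayley table, and check that \eqref{53003} and \eqref{53005} suffice without the $R_2$-type law.
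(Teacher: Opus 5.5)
Your outline has two genuine gaps, both exactly where $S_{(4,530)}$ differs from $S_{(4,494)}$.

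\textbf{The $L_2$ claim and the case $\ell(\bq)=1$.} $L_2$ does not lie in $\mathsf{V}(S_{(4,530)})$. Indeed, $S_{(4,530)}$ satisfies \eqref{53004}, but in $L_2$ the assignment $x\mapsto 0$, $y\mapsto 1$ gives $xyz=0$ while $xy+yz+xz=1$. So there is no head condition. For instance, $x\preceq yxz$ holds in $S_{(4,530)}$ (it follows from \eqref{53001} and \eqref{53002}), yet no summand on the right has head $x$.

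Your case $\ell(\bq)=1$ therefore has no foundation, and Lemma~\ref{lem5701} cannot replace it. Indeed, $x\preceq yx$ holds in $S_{57}$ but fails in $S_{(4,530)}$ at $y\mapsto 4$, $x\mapsto 2$. What you actually need is $x\in c(p(\bu_i))$ for some $i$. If this fails, use the substitution $x\mapsto 2$ and all other variables $\mapsto 4$. It sends every summand of $\bu$ to $3$ or $4$, since $4\cdot 2=3$ and $4\cdot4=4$ (a summand equal to $x$ is excluded by nontriviality). But $\bq\mapsto 2$, a contradiction. Then \eqref{53001} gives $\bu_i\approx x\bu_i'$, and \eqref{53002} gives $\bu\succeq x$.

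\textbf{The tail case with $\ell(\bu_s)=1$.} Your proposed dichotomy is false. The inequality $xy\preceq y+xz$ is an instance of \eqref{53003}, so it holds in $S_{(4,530)}$. Yet $y$ occurs in no prefix on the right, and no substitution can produce a contradiction. Importing the last step of Proposition~\ref{pro49401} is precisely what breaks here: that inequality fails in $S_{(4,494)}$.

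The pair $x_it(\bq)$, with $t(\bq)\notin c(p(\bu))$ and $t(\bq)=t(\bu_s)$, is handled directly, without any ``moving $t(\bq)$ to the head''. First choose $\bu_k\approx x_i\bu_k'$ with $\bu_k'$ nonempty, using $x_i\in c(p(\bq))\subseteq c(p(\bu))$ and \eqref{53001}.
\begin{itemize}
\item If $\ell(\bu_s)\geq 2$, substitute $x\mapsto x_i$, $y\mapsto \bu_k'$, $z\mapsto p(\bu_s)$, $t\mapsto t(\bq)$ in \eqref{53005}. This gives $x_it(\bq)\preceq x_i\bu_k'+\bu_s$, with no use of \eqref{53003}.
\item If $\bu_s=t(\bq)$, substitute $y\mapsto x_i$, $x\mapsto t(\bq)$, $z\mapsto \bu_k'$ in \eqref{53003}. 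This gives $x_it(\bq)\preceq t(\bq)+x_i\bu_k'$.
\end{itemize}
This is how the paper argues. With these two repairs, the rest of your plan goes through: the reduction to pairs via \eqref{53004}, and the prefix pairs via \eqref{53002}, \eqref{53006}, \eqref{53005}.
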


\begin{proof}
It is easily verified that $S_{(4, 530)}$ satisfies the identities
\eqref{53001}--\eqref{53005}.
In the remainder we need only show that every ai-semiring identity of $S_{(4, 530)}$
can be derived by \eqref{53001}--\eqref{53005} and the identities defining $\mathbf{AI}$.
Let $\bq \preceq \bu$ be such a nontrivial identity,
where $\bu=\bu_1+\bu_2+\cdots+\bu_n$ and $\bu_i, \bq\in X^+$, $1 \leq i \leq n$.
It is easy to see that $S_{57}$ is isomorphic to $\{1, 2, 4\}$ and so $S_{57}$ satisfies $\bq \preceq \bu$.
By Lemma \ref{lem5701} it follows that
$\ell(\bu_k)\geq 2$ for some $\bu_k \in \bu$,
$c(p(\bq))\subseteq c(p(\bu))$ and $t(\bq)\in c(\bu)$.

\textbf{Case 1.} For any $\bu_i \in \bu$, there exists $\bu_j \in \bu$ such that $m(t(\bu_i), P(\bu_j)) \geq 1$. Then $c(\bq) \subseteq c(P(L_{\geq 2}(\bu)))$.

$\ell(\bq) = 1$. Then there exists $\bu_i \in L_{\geq 2}(\bu)$ such that $c(\bq) \subseteq c(P(\bu_i))$, thus we have
\[
\bu \succeq \bu_i \stackrel{\eqref{53001}, \eqref{53002}} \succeq \bq.
\]

$\ell(\bq) \geq 2$. Then $\bq \stackrel{\eqref{53004}}\approx \sum\limits_{1 \leq i < j \leq n} x_i x_j$.
Suppose there exists $\bu_k \in L_{\geq 2}(\bu)$ with $x_i, x_j \in c(P(\bu_k))$. Then if $x_i=x_j$ and $\bu_k=x_i\bu_k'$, we have
\[
\bu \succeq \bu_k \approx x_i\bu_k' \stackrel{\eqref{53006}} \succeq x_i^2.
\]
If $x_i \neq x_j$, we have
\[
\bu \succeq \bu_k \stackrel{\eqref{53004}} \succeq x_ix_j.
\]

If there exist $\bu_k, \bu_t \in L_{\geq 2}(\bu)$ such that $x_i \in c(P(\bu_k))$, $x_j \in c(P(\bu_t))$, then we obtain
\[
\bu \succeq \bu_k +\bu_t \stackrel{\eqref{53001}} \approx x_i\bu_k' + x_j\bu_t' \stackrel{\eqref{53005}} \succeq x_ix_j.
\]

\textbf{Case 2.}  There exists $\bu_i \in \bu$ and for any $\bu_j \in \bu$, $m(t(\bu_i), P(\bu_j)) = 0$.

$\ell(\bq) \geq 2$. If $c(\bq) \subseteq c(P(\bu))$, then this can be reduced to Case 1. If there exists $\bu_i \in \bu$ with $t(\bq) = t(\bu_i)$, $c(P(\bq)) \subseteq c(P(\bu))$. When $\ell(\bu_i) \geq 2$, there exists $\bu_k \in \bu$ with $x_i \in c(P(\bu_k))$, thus we have
\[
\bu \succeq \bu_i +\bu_k \stackrel{\eqref{53001}} \approx \bu_it(\bq) + x_i\bu_k' \stackrel{\eqref{53005}} \succeq x_it(\bq).
\]
When $\ell(u_i) = 1$, $u_i = t(q)$. Thus we obtain
\[
\bu \succeq \bu_i +\bu_k \stackrel{\eqref{53001}} \approx t(\bq) + x_i\bu_k' \stackrel{\eqref{53003}} \succeq x_it(\bq).
\]

$\ell(\bq) = 1$. We claim that there exists $\bu_i \in \bu$ such that $t(\bq) \in c(P(\bu_i))$. If not, we introduce $\varphi: P_f(X^+) \to S_{(4,530)}$ be a substitution such that
$\varphi(x)=2$ for every $x = t(\bq)$, and $\varphi(x)=4$ otherwise. It is easy to see that $\varphi(\bu)=3$ and so $\varphi(\bq)=2$, a contradiction.
Thus we obtain
\[
\bu \succeq \bu_i \stackrel{\eqref{53001}, \eqref{53002}} \succeq t(\bq) \approx \bq.
\]

This implies the identity $\bu \succeq \bq$.
\end{proof}

\begin{cor}
The ai-semiring $S_{(4, 489)}$ is finitely based.
\end{cor}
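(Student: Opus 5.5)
The plan is to use the same duality argument the paper applies to every other corollary of this form, with Proposition~\ref{pro53001} supplying the finite basis. The ordering of $S_{(4,489)}$ is the same chain $1>2>3>4$ used for all algebras $S_{(4,k)}$ with $481\leq k\leq 866$. So the only thing to check is that the multiplication of $S_{(4,489)}$ is the dual (transpose) of the multiplication of $S_{(4,530)}$. In other words, we verify from Table~\ref{tb1} that $a\cdot_{489} b = b\cdot_{530} a$ for all $a,b\in\{1,2,3,4\}$. This is a direct comparison of the two Cayley tables, sixteen entries in all.

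Granting this, recall the elementary fact from the introduction: two ai-semirings with the same additive reduct and dual multiplications have the same finite basis property. The reason is that the identities of the dual algebra are exactly the reversed identities of the original, where every word in every term is read backwards. Reversing a finite basis therefore gives a finite basis.

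Concretely, an equational basis of $\mathsf{V}(S_{(4,489)})$ is obtained from \eqref{53001}--\eqref{53005} by reversing words:
\[
zyx \approx zxy,\quad x \preceq yx,\quad x^2 \preceq yx,\quad xy \preceq x+zy,\quad zyx \approx yx+zy+zx,\quad tx+zx \preceq yx+tz.
\]
Since Proposition~\ref{pro53001} shows that $S_{(4,530)}$ is finitely based, $S_{(4,489)}$ is finitely based as well.

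The only possible obstacle is the table check itself. If the transpose relation were to fail for these indices, the fallback would be to look for a subdirect decomposition of $S_{(4,489)}$ into the duals of $S_{57}$ and a $2$-element algebra, mirroring the structure of $S_{(4,530)}$. I do not expect to need this, because the corollary is placed immediately after Proposition~\ref{pro53001} in exactly the pattern the paper uses for its dual pairs.
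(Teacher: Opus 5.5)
Your proposal is correct and matches the paper's proof, which simply notes that $S_{(4,489)}$ and $S_{(4,530)}$ have dual multiplications and invokes Proposition~\ref{pro53001}. The table check does hold: each row $a$ of the $S_{(4,489)}$ table equals column $a$ of the $S_{(4,530)}$ table. Your explicitly reversed basis is also correct, though the paper does not need to write it out.
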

\begin{proof}
Since $S_{(4, 489)}$ and $S_{(4, 530)}$ have dual multiplications, it follows from
Proposition {\ref{pro53001}} that $S_{(4, 489)}$ is finitely based.
\end{proof}

\begin{pro}\label{pro49501}
$\mathsf{V}(S_{(4, 495)})$ is the ai-semiring variety defined by the identities
\begin{align}
&xy \approx yx; \label{49501}\\
&x \preceq xy; \label{49502}\\
&xt \preceq xy+zt^2; \label{49504}\\
&xyz \approx xy+yz+xz. \label{49503}
\end{align}
\end{pro}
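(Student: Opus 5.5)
We follow the pattern of Propositions~\ref{pro49401} and~\ref{pro53001}. First we check that $S_{(4,495)}$ satisfies \eqref{49501}--\eqref{49503}; this is a routine finite verification. The real work is to show that every nontrivial inequality $\bq\preceq\bu$ of $S_{(4,495)}$, with $\bu=\bu_1+\cdots+\bu_n$, is derivable. By \eqref{49501} we may treat words as commutative. By \eqref{49503} every word of length at least $3$ equals the sum of its length-$2$ subwords (in the sense of pairs of occurrences). So we may assume $\ell(\bu_i)\le 2$ for all $i$. We may also reduce to $\ell(\bq)\le 2$, since a longer $\bq$ is a sum of words $x_ix_j$, each of which must be derived separately.

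Next we extract necessary conditions from subalgebras of $S_{(4,495)}$, as in the earlier propositions. Presumably $S_{57}$ (or, since the basis is commutative, a commutative relative such as $S_{60}$ or $S_{59}$) embeds into $S_{(4,495)}$, and $M_2$ embeds as well. Lemma~\ref{lem5701} or Lemma~\ref{lem6001} then gives $c(\bq)\subseteq c(\bu)$ and $L_{\ge 2}(\bu)\neq\emptyset$. Where the lemmas are not sharp enough, we use an explicit substitution into $\{1,2,3,4\}$ that separates $\bq$ from $\bu$, as was done in Propositions~\ref{pro48501} and~\ref{pro49301}. The condition we expect to obtain is this: if $\bq=xt$ with $\ell(\bq)=2$, then $x$ occurs in some word of $\bu$ of length $2$, and $t$ occurs in some word of $\bu$ of length $2$. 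Alternatively, $t$ occurs only as a square $t^2$; this is the configuration that \eqref{49504} is tailored to.

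The derivations then split by the length of $\bq$.
\begin{itemize}
\item If $\ell(\bq)=1$, pick $\bu_i$ containing $\bq$. Then $\bu\succeq\bu_i\succeq\bq$ by \eqref{49501} and \eqref{49502}.
\item If $\bq=xt$ and some single $\bu_k$ equals $xt$, the inequality is immediate.
\item Otherwise take $\bu_k=xy$ and $\bu_\ell$ containing $t$. If $\bu_\ell=t^2$, apply \eqref{49504} directly to $xy+t^2$. If $\bu_\ell=tz$ with $z\neq t$, first obtain $tz\succeq t$ by \eqref{49502}. Then we need $xy+t\succeq xt$; we try to get it by multiplying and using \eqref{49503}, or by deriving $t\preceq t^2$-type consequences.
\end{itemize}

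The main obstacle is this last subcase, together with pinning down the exact necessary condition. We must determine precisely which configurations $xy+tz$ force $xt$ in $S_{(4,495)}$. We must also show that \eqref{49504}, with $y$ or $z$ possibly empty, covers every forced configuration. We would first compute the multiplication table to see exactly which elements are idempotent or nilpotent. Then we would try the substitution sending $t\mapsto$ a non-idempotent element and everything else $\mapsto 4$, to prove that $t$ must appear in a length-$2$ word of $\bu$.
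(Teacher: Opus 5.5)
Your reductions (commutativity via \eqref{49501}, splitting words of length $\ge 3$ into pairs via \eqref{49503}, and the case $\ell(\bq)=1$) agree with the paper. However, there is a genuine gap exactly where you locate the obstacle, and it cannot be closed the way you propose.

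In your last bullet you try to derive $xt\preceq xy+t$ from $xy+tz$ with $z\neq t$. That inequality is false in $S_{(4,495)}$. Under $x,t\mapsto 3$ and $y,z\mapsto 4$, the left side is $3\cdot 3=1$, the additive top. Both $xy+t$ and $xy+tz$ evaluate to $3$. So no manipulation with \eqref{49502} and \eqref{49503} can produce it.

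The missing idea is a second necessary condition that you never isolate. Take $\bq=xt$ with all words of $\bu$ of length at most $2$. The substitution $x,t\mapsto 3$, all other variables $\mapsto 4$, sends $\bq\mapsto 1$. Hence some word of $\bu$ must lie in $\{x^2,t^2,xt\}$. For $\bq=x^2$, the substitution $x\mapsto 3$ forces $x^2\in\bu$.

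This is exactly what Lemma~\ref{lem5301} provides, because $\{1,3,4\}$ is a copy of $S_{53}$ inside $S_{(4,495)}$; it is the subalgebra the paper's proof uses. Your guesses do not supply it:
\begin{itemize}
\item $S_{57}$ is not commutative, so it cannot embed.
\item $S_{59}$ does not embed.
\item $S_{60}\cong\{1,2,4\}$, together with Lemma~\ref{lem6001}, yields only your first condition: $x$ and $t$ each occur in a word of length at least $2$. This is equivalent to the substitution $x\mapsto 2$.
\end{itemize}

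With both conditions the argument closes as in the paper:
\begin{itemize}
\item If $xt\in\bu$, there is nothing to do.
\item If $t^2\in\bu$, take a word $xy\in\bu$, which exists by the first condition. Apply \eqref{49504} with $z:=t$, using $t^3\approx t^2$, an instance of \eqref{49503}.
\item If $x^2\in\bu$, take a word $tz\in\bu$ and apply \eqref{49504} with the roles of $x$ and $t$ interchanged via \eqref{49501}.
\item The case $\bq=x^2$ is then trivial.
\end{itemize}
In particular, in your subcase $\bu_\ell=tz$, the words $xy$ and $tz$ are never the right pair to combine. You must instead use the square, or $xt$, guaranteed by the missing condition.
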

\begin{proof}
It is easy to verify that $S_{(4, 495)}$ satisfies the identities \eqref{49501}--\eqref{49504}.
In the remainder it is enough to show that every ai-semiring identity of $S_{(4, 495)}$
is derivable from \eqref{49501}--\eqref{49504} and the identities defining $\mathbf{AI}$.
Let $\bq \preceq \bu$ be such a nontrivial identity,
where $\bu=\bu_1+\bu_2+\cdots+\bu_n$ and $\bu_i, \bq\in X^+$, $1 \leq i \leq n$.
It is easy to see that $S_{53}$ is isomorphic to $\{1, 3, 4\}$ and so $S_{53}$ satisfies $\bq \preceq \bu$.
By Lemma \ref{lem5301} we need to consider the following two cases.

\textbf{Case 1.} $\ell(\bq)=1$. By Lemma \ref{lem5301} we obtain that $c(\bq)\subseteq c(\bu)$ and
so $c(\bq)\subseteq c(\bu_i)$ for some $\bu_i \in \bu$.
This implies that $\ell(\bu_i)\geq 2$.
By the identity $\eqref{49501}$ we deduce $\bu_i\approx \bq\bp$ for some nonempty word $\bp$ and so
\[
\bu \succeq \bu_i \approx
\bq\bp \stackrel{\eqref{49502}} \succeq \bq.
\]
This implies $\bu \succeq \bq$.

\textbf{Case 2.} $\ell(\bq)\geq 2$. Let $\bq=x_1x_2\cdots x_n$.
Then \eqref{49503} implies the identity
\[
\bq\approx \sum\limits_{1\leq i\textless j\leq n}x_ix_j.
\]
By Lemma \ref{lem5301} we need to consider the following subcases.

\textbf{Subcase 2.1.} $x_i=y_j$. Then $m(x_i, \bu_k)\geq 2$ for some $\bu_k\in \bu$. So we have
\[
\bu
\succeq \bu_k \stackrel{\eqref{49501}}\approx x_i^2\bu_k'
\stackrel{\eqref{49502}} \succeq  x_i^2.
\]

\textbf{Subcase 2.2.} $x_i \neq y_j$.
Then there exists $\bu_k\in \bu$ such that
either $x_i, y_j\in c(\bu_k)$ or $m(x_i, \bu_k)\geq 2$ or $m(y_j, \bu_k)\geq 2$.
If $x_i, y_j \in c(\bu_k)$, then
\[
\bu
\succeq \bu_k\stackrel{\eqref{49501}}\approx x_iy_j\bu_k'
\stackrel{\eqref{49502}}\approx x_iy_j.
\]

If there exists $u_k \in u$ such that $m(x_i, u_k) \geq 2$, then we assert that there exists $u_j \in u$ with $x_j \in c(u_j)$ and $\ell(u_j) \geq 2$. Otherwise,
suppose that for every $u_j \in u$, either $x_j \notin c(u_j)$ or $\ell(u_j) = 1$. Consider the semiring substitution $\varphi: P_f(X^+) \to S_{(4,495)}$,
where $\varphi(x) = 2$ when $x = x_j$, and $\varphi(x) = 4$ when $x \neq x_j$. Then $\varphi(q) = 1$ and $\varphi(u) = 2$, a contradiction. Thus we obtain
\[
\bu
\succeq \bu_k +\bu_j \stackrel{\eqref{49501}}\approx x_i^2 \bu_k'+x_j \bu_j'
\stackrel{\eqref{49501}, \eqref{49504}}\approx x_iy_j.
\]

If $m(y_j, \bu_k)\geq 2$, then the remaining steps are similar to the preceding case.
This implies the identity $\bu \succeq \bq$.
\end{proof}

\begin{pro}\label{pro53201}
$\mathsf{V}(S_{(4, 532)})$ is the ai-semiring variety defined by the inequalities
\begin{align}
&xyz     \approx xy+xz+yz;\label{5321}\\
&xy      \approx yx;\label{5322}\\
&xz  \preceq x+yz^2;\label{5323}\\
&xt \preceq xy+zt^2;\label{5324}\\
&x^3     \approx x^2;\label{5325}\\
&x    \preceq x^2.\label{5326}
\end{align}
\end{pro}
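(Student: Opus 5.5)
Soundness of \eqref{5321}--\eqref{5326} in $S_{(4,532)}$ is a finite table check, so the work is completeness. Let $\bq\preceq\bu$ be a nontrivial inequality of $S_{(4,532)}$ with $\bu=\bu_1+\cdots+\bu_n$. I follow the pattern of Proposition~\ref{pro49501}, which has a similar basis. By \eqref{5321} and \eqref{5322}, every word of length at least $3$ is equivalent to the sum of its length-$2$ subwords, taken up to commutativity. Using \eqref{5325} and \eqref{5326}, we may therefore replace $\bq$ and each $\bu_i$ by sums of words of length at most $2$. So it suffices to derive $\bu\succeq\bq$ for $\bq=x$, $\bq=x^2$ or $\bq=xy$ with $x\neq y$. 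I will use that $S_{53}$ (or an isomorphic copy) should embed as a three-element subalgebra of $S_{(4,532)}$. Then Lemma~\ref{lem5301} gives $c(\bq)\subseteq c(\bu)$, and it guarantees that every length-$2$ subword $\bw$ of $\bq$ has some $\bw'\in S_2(\bu)$ with $c(\bw')\subseteq c(\bw)$.

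\textbf{Case $\bq=x$.} We have $x\in c(\bu_i)$ for some $i$. If $\bu_i=x$ the inequality is trivial. If $\bu_i$ contains $x^2$, use \eqref{5321}, \eqref{5325} and \eqref{5326} to get $\bu_i\succeq x^2\succeq x$. If $\bu_i=xz$ with $z\neq x$, I need an auxiliary argument. I expect $S_{(4,532)}$ not to satisfy $x\preceq xz$. In that case a substitution into the four-element table, sending $x$ to one element and all other variables to another (as in Propositions~\ref{pro49301}--\ref{pro49501}), should force $\bu$ to contain either $x$ itself or a word containing $x^2$. With that we are back in the previous subcases.

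\textbf{Case $\bq=x^2$.} Lemma~\ref{lem5301} yields a word of $\bu$ containing $x^2$ as a factor. By \eqref{5321}, this word is $\succeq x^2$.

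\textbf{Case $\bq=xy$, $x\neq y$.} Lemma~\ref{lem5301} gives some $\bu_k$ containing $xy$, $yx$, $x^2$ or $y^2$.
\begin{itemize}
\item If $\bu_k$ contains $xy$ or $yx$, apply \eqref{5321} and \eqref{5322} to get $\bu_k\succeq xy$.
\item If $\bu_k$ contains $y^2$, then \eqref{5321} gives $\bu\succeq zy^2$ or $y^2$ for a suitable $z$, with $y^2=y\,y^2$ up to \eqref{5325}. It remains to produce a word starting with $x$. Since $x\in c(\bu)$, some $\bu_j$ contains $x$; write it as $xw$ by \eqref{5322}, or as $x$ alone. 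If $\bu_j=xw$, apply \eqref{5324} with $t=y$ to $xw+zy^2$ to obtain $xy$. If $\bu_j=x$, apply \eqref{5323} to $x+zy^2$ to get $xy$.
\item If $\bu_k$ contains $x^2$, the argument is symmetric, using \eqref{5322}.
\end{itemize}

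The main obstacle is the bookkeeping with length-one summands and the cases where the only witness is a bare square. Here the information extracted from $S_{53}$ alone may be too weak. I expect to need ad hoc substitutions into $S_{(4,532)}$ to exclude configurations such as $\bu=x+yz$ when $\bq=xy$. The first thing to settle is exactly which such configurations fail in $S_{(4,532)}$, by choosing $\varphi$ that sends the relevant variable to the element $2$ or $3$ and everything else to $4$, as in the earlier propositions.
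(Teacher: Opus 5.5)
Your proposal is correct and follows essentially the same route as the paper. Both arguments reduce via \eqref{5321} to $\bq\in\{x,x^2,xy\}$, handle the pair case with Lemma~\ref{lem5301} (through the copy $\{1,3,4\}\cong S_{53}$) together with \eqref{5323} and \eqref{5324}, and dispose of $\bq=x$ by a substitution.

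For that substitution, send $x\mapsto 2$ and every other variable $\mapsto 4$. Then $\varphi(\bq)=2$, while summands $xz$ with $z\neq x$ evaluate to $3$ and summands without $x$ evaluate to $4$. Sending $x$ to $3$ would not work, since $3\cdot 4=3$.

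Your worry about needing extra substitutions for $\bq=xy$ is unfounded. Lemma~\ref{lem5301} together with $c(\bq)\subseteq c(\bu)$ already covers every configuration there; for instance, $\bu=x+yz$ is excluded directly by the $S_2$ condition.
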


\begin{proof}
It is easy to verify that $S_{(4,532)}$ satisfies the inequalities \eqref{5321}--\eqref{5326}.
Now we only need to prove that every ineuqality satisfied by $S_{(4,532)}$ can be deduced from \eqref{5321}--\eqref{5326}.
Suppose that $\bq \preceq \bu$ is an arbitrary semiring inequality satisfied by $S_{(4,532)}$,
where $\bu = \bu_1+\bu_2+\cdots+\bu_n$, $\bu_i, \bq \in X^+$, $1 \leq i \leq n$.
Since $S_{53}$, $S_{61}$, $M_2$ and $T_2$ can be embedded into $S_{(4,532)}$, $S_{53}$, $S_{61}$, $M_2$ and $T_2$ satisfy the inequality $\bq \preceq \bu$.

\textbf{Case 1.}~If $\ell(\bq)\geq 2$, then $\bq\stackrel{\eqref{5321}}\approx\sum\limits_{1\leq i< j\leq n}x_ix_j$. If $x_i=x_j$, then there exists $\bu_k\in \bu$ such that $m(x_i,\bu_k)\geq 2$. Hence
\[
\bu\succeq \bu_k\stackrel{\eqref{5322}}\approx x_i^2\bu_k'\stackrel{\eqref{5321}}\succeq x_i^2\approx x_ix_j\approx \bq.
\]
If $x_i\neq x_j$, then there exists $\bu_k\in \bu$ such that $x_i,x_j\in c(\bu_k)$. Thus
\[
\bu\succeq \bu_k\stackrel{\eqref{5322}}\approx x_ix_j\bu_k'\stackrel{\eqref{5321}}\succeq x_ix_j\approx \bq.
\]
If there exists $\bu_k\in u$ such that $m(x_i,\bu_k)\geq 2$, then there exists $\bu_j\in \bu$ such that $x_j\in c(\bu_j)$. Consequently,
\[
\bu\succeq \bu_k+\bu_j\stackrel{\eqref{5322}}\approx x_i^2\bu_k'+x_ju_j'\stackrel{\eqref{5323},\eqref{5324}}\succeq x_ix_j \approx \bq.
\]

\textbf{Case 2.}~If $\ell(\bq)=1$, then there exists $\bu_i\in \bu$ such that $c(\bq) \subseteq c(\bu_i)$.
If $\bu_i=\bq^k.$~Then
\[
\bu\succeq \bu_i\approx \bq^k\stackrel{\eqref{5325}}\approx \bq^2\stackrel{\eqref{5326}}\succeq \bq.
\]
If $\bu_i=\bq'\bq^k for~k>1.$~Then
\[
\bu\succeq \bu_i\stackrel{\eqref{5321}}\succeq \bq^k \stackrel{\eqref{5325},\eqref{5326}}\approx \bq.
\]
If $\bu_i=\bq'\bq.$~Consider the semiring substitution $\varphi:P_f(X^+)\to S_{(4,532)}$, where $\varphi(x)=2$ if $x=\bq$ and $\varphi(x)=4$ if $x\neq \bq$. Then $\varphi(\bq)=2$ and $\varphi(\bu)=3$.

This derives the inequality $\bq\preceq \bu$.
\end{proof}

\begin{lem}\label{lem85401}
Let $\bq\preceq \bu$ be a semiring inequality, where $\bu = \bu_1 + \bu_2 + \cdots + \bu_n$, and $\bu_i, \bq \in X^+$ for $1 \leq i \leq n$. If $S_{(4,854)}$ satisfies $\bq\preceq \bu$, then

\begin{itemize}
  \item[(1)] $\ell(\bq) = 1$;
  \item[(2)] $\ell(\bq) = 2$. Let $\bq = xy$. Either there exists $\bu_i \in \bu$ such that $\bu_i \leq xy$ or $\bu_i \leq yx$, or there exist $\bu_i, \bu_j \in \bu$ such that $\bu_i = x^m$, $\bu_j = y^n$, where $m + n \leq 5$;
  \item[(3)] $\ell(\bq) = 3$. Let $\bq = xyz$. Then $D_q(\bu) \neq \emptyset$. Either there exists $\bu_i \in \bu$ such that $\ell(\bu_i) \leq 3$, or there exists $\bu_i \in \bu$ with $\ell(\bu_i) \geq 4$, and $\bu_j \in D_q(\bu)$ with $\ell(\bu_j) \leq 3$;
  \item[(4)] $\ell(\bq) \geq 4$.
\end{itemize}
\end{lem}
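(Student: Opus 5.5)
The statement is a case description of the inequalities of $S_{(4,854)}$. The first step is to read off the structure of $S_{(4,854)}$ from Table~\ref{tb1}: which element is the multiplicative zero, and how long products stay away from it. Item~(4) suggests that every product of length at least $4$ collapses to a fixed value, and item~(2) suggests that powers $x^m$ of a single generator keep changing up to exponent about $5$. I will therefore argue by contraposition. For each of the three constraints on $\bu$ in items~(2) and~(3), I will assume it fails and exhibit a semiring homomorphism $\varphi\colon P_f(X^+)\to S_{(4,854)}$ with $\varphi(\bq)\not\le\varphi(\bu)$. This is the same substitution technique used in the proofs of Propositions~\ref{pro48501}--\ref{pro53201}.

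Items~(1) and~(4) give no constraint beyond the case split itself, so nothing needs proving there. For item~(3), with $\bq=xyz$, I first derive $D_\bq(\bu)\ne\emptyset$. One route is to find a copy of $D_2$ inside $S_{(4,854)}$ and apply Lemma~\ref{nlemma1}(4). The other is to assign each variable outside $c(\bq)$ the additive minimum $4$, assuming $4$ is absorbing for products of length at least $1$ with it, and assign the variables of $\bq$ a suitable value. Next, suppose every $\bu_i$ has length at least $4$, or suppose no $\bu_j\in D_\bq(\bu)$ has length at most $3$. In either case I choose the assignment so that all words of length at least $4$, together with all words containing a foreign letter, evaluate below $\varphi(xyz)$. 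That contradicts $\bq\preceq\bu$.

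For item~(2), with $\bq=xy$, I read ``$\bu_i\le xy$'' as saying that $\bu_i$ is a word on $\{x,y\}$ of suitable shape. I split according to which words on $\{x,y\}$ occur in $\bu$. If no mixed word occurs, then only pure powers $x^m$ and $y^n$ can push $\varphi(\bu)$ up. So I assign $x$ and $y$ a generator $a$ whose powers $a,a^2,\dots$ are distinct up to length $4$ or $5$, and send all other variables to the minimum. This forces $\varphi(x^m)+\varphi(y^n)\ge\varphi(xy)$ only when $m+n\le5$.

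The main obstacle is this threshold $m+n\le5$. To get it I need the exact power sequence of the relevant element, and I must check that no assignment giving $x$ and $y$ different values yields a weaker condition. I would tabulate $a^k$ for each $a$ first, and only then fix the substitutions.
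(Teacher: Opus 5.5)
Your method is the paper's: contraposition via explicit substitutions into $S_{(4,854)}$. Items (1), (3) and (4) go through. However, the step meant to produce the threshold $m+n\le 5$ in item (2) fails, because it rests on a wrong picture of $S_{(4,854)}$.

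In $S_{(4,854)}$ the element $k\in\{1,2,3\}$ acts like a weight $k$. Multiplication adds weights and truncates at $4$, the multiplicative zero and additive minimum, and addition picks the smaller weight. So the powers of $1$ are $1,2,3,4,4,\dots$: no element has powers distinct up to exponent $4$ or $5$, and the bound $5$ is not a power-sequence length.

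Your substitution gives $x$ and $y$ the same value $1$ and sends other variables to $4$. It yields $\varphi(xy)=2$ and $\varphi(x^m)+\varphi(y^n)=\varphi(x^{\min(m,n)})$, which is $\ge\varphi(xy)$ exactly when $\min(m,n)\le 2$. For instance, $\bu=x^2+y^6$ survives this test although $m+n=8$.

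The threshold comes from the assignments you proposed to rule out, those giving $x$ and $y$ \emph{different} values. Take $\varphi(x)=2$, $\varphi(y)=1$, and send all other variables to $4$. Then $\varphi(xy)=3$, while $\varphi(x^m)=4$ for $m\ge 2$, $\varphi(y^n)=4$ for $n\ge 4$, and every word of length at least $3$ containing $x$ evaluates to $4$. Hence if none of $x,y,xy,yx$ occurs in $\bu$, some $y^n$ with $n\le 3$ must occur. Symmetrically, $\varphi(x)=1$, $\varphi(y)=2$ (the paper's substitution in Subcase~2.2) forces some $x^m$ with $m\le 3$. Your equal assignment (the paper's Subcase~2.1) forces $x^2$ or $y^2$. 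For the least exponents $M,N\ge 2$ of pure powers in $\bu$, this gives $\min(M,N)\le 2$ and $M,N\le 3$, i.e.\ $M+N\le 5$.

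Two smaller points. First, your case split ``if no mixed word occurs'' is too narrow. $\bu$ may contain mixed words of length at least $3$, such as $x^2y$, while avoiding $x,y,xy,yx$. You must check that these evaluate strictly below $\varphi(xy)$ under each of the three substitutions. They do: their weight is at least $3$ when $\varphi(x)=\varphi(y)=1$, and at least $4$ in the mixed cases.

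Second, drop the $D_2$ route in item (3). The only idempotent of $S_{(4,854)}$ is $4$, so $D_2$ does not embed; indeed $S_{(4,854)}$ satisfies $x^4\preceq y$, which fails in $D_2$. Your other route, sending $c(\bq)$ to $1$ and everything else to $4$, is the paper's. It even gives the sharper conclusion that some $\bu_j\in D_\bq(\bu)$ has $\ell(\bu_j)\le 3$.
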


\begin{proof}

\textbf{Case 1.}
$\ell(\bq) = 1$. Since $ N_2 $ can be embedded into $ S_{(4,854)}$, then $\bq\preceq \bu$ is trivial.

\textbf{Case 2.}
$\ell(\bq) = 2 $. Let $\bq = xy $.

\textbf{Subcase 2.1.} Suppose that $\bu_i \not\preceq xy, or ~\bu_i \not\preceq yx$ for every $\bu_i \in \bu$.
Consider the semiring substitution $\varphi: P_f(X^+) \to S_{(4,854)} $ defined by $ \varphi(a) = 1 $ for $a \in c(\bq) $
and $ \varphi(a) = 4 $ for $ a \notin c(\bq) $.
Then $ \varphi(\bq) = 2 $, but $ \varphi(\bu) = 4 $.

\textbf{Subcase 2.2.} If $ \bu_i = x^m $ and $ \bu_j = y^n $ with $ m + n \geq 6 $, then let $ \bu_i = x^4 $ and $ \bu_j = y^2 $.
Consider the substitution $ \varphi: P_f(X^+) \to S_{(4,854)} $ defined by $ \varphi(a) = 1 $ for $a = x $,
$ \varphi(a) = 2 $ for $a = y $, and $ \varphi(a) = 4 $ for $ a \neq x, y $.
Then $ \varphi(\bq) = 3 $, while $ \varphi(\bu) = 4 $.

\textbf{Case 3.}
 $ \ell(\bq) = 3$, let $ \bq = xyz $.
 Suppose that $ D_q(\bu) = \emptyset $.
 Let $ \varphi $ be a semiring substitution defined by $ \varphi(a) = 1 $ for $a \in c(\bq) $ and $ \varphi(a) = 4 $ for $ a \notin c(\bq) $.
 Then $ \varphi(\bq) = 3 $ and $ \varphi(\bu) = 4 $.
 Thus $ D_q(\bu) \neq \emptyset$.
 Suppose that there exists $ \bu_i \in \bu $ with $ \ell(\bu_i) \geq 4 $, and $ \ell(\bu_j) \geq 4 $ for every $ \bu_j \in D_q(\bu) $.
 Then $\varphi(\bq) = 3 $ and $ \varphi(\bu) = 4 $.

\textbf{Case4.} $ \ell(\bq) \geq 4$, it is trivial.

 This derives the inequality $\bq\preceq \bu$.
\end{proof}

\begin{pro}\label{pro85402}
$\mathsf{V}(S_{(4, 854)})$ is the ai-semiring variety defined by the inequalities
\begin{align}
&xy                \preceq x;\label{8541}\\
&xy               \approx yx;\label{8542}\\
&xy        \preceq x^2+y^2;\label{8543}\\
&xy         \preceq x^2+y^3;\label{8544}\\
&xyz           \preceq x^2+t;\label{8545}\\
&xyz          \preceq x^2y+t;\label{8546}\\
&xyz \preceq x_1x_2x_3x_4+x^2;\label{8547}\\
&xyz\preceq x_1x_2x_3x_4+x^2y;\label{8548}\\
&x_1x_2x_3x_4    \preceq x.\label{8549}
\end{align}
\end{pro}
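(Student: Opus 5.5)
We follow the same pattern as the earlier propositions in this section: first check that $S_{(4,854)}$ satisfies \eqref{8541}--\eqref{8549}, and then show that every nontrivial inequality $\bq\preceq\bu$ of $S_{(4,854)}$ is derivable from them. Here $\bu=\bu_1+\cdots+\bu_n$ and $\bq,\bu_i\in X^+$. By \eqref{8542} we may take all words commutative, so a word is determined by its content and multiplicities. We then split according to $\ell(\bq)$, using Lemma~\ref{lem85401} to extract the combinatorial information that $\bq\preceq\bu$ carries.

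\textbf{Preliminary reductions.} From \eqref{8541} and \eqref{8542} we get that $\bw\preceq\bw'$ whenever $\bw$ is a product containing $\bw'$ as a factor; in particular $\bw\bw''\preceq\bw$. By \eqref{8549}, $x_1x_2x_3x_4\preceq x$ for all variables, so every word of length at least $4$ lies below every single variable. Hence $\bu$ absorbs any long word once it contains one summand.

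\textbf{Case $\ell(\bq)\geq 4$.} Write $\bq=x_1x_2x_3x_4\bq'$. Then $\bu\succeq\bu_1\succeq h(\bu_1)\succeq x_1x_2x_3x_4$ by \eqref{8541} and \eqref{8549}, and $x_1x_2x_3x_4\succeq\bq$ by \eqref{8541}.

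\textbf{Case $\ell(\bq)=1$.} This case is vacuous for nontrivial inequalities by part~(1) of Lemma~\ref{lem85401}, since $N_2$ embeds.

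\textbf{Case $\ell(\bq)=2$, $\bq=xy$.} By Lemma~\ref{lem85401}(2), either some $\bu_i$ is a product with $xy$ dividing it in the sense of \eqref{8541}, or $\bu$ contains $x^m$ and $y^n$ with $m+n\leq5$. In the first situation $\bu\succeq\bu_i\succeq xy$ via \eqref{8541}. Here one must be careful: $\bu_i$ could be $x$ or $y$ itself, and then $\bu_i\succeq xy$ again by \eqref{8541}. In the second situation we have $(m,n)\in\{(1,\cdot),(2,2),(2,3),(3,2)\}$ up to symmetry. The case $m=1$ is absorbed by the first situation. The other cases are exactly \eqref{8543} and \eqref{8544}, combined with \eqref{8542}.

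\textbf{Case $\ell(\bq)=3$, $\bq=xyz$.} This is the main obstacle, because we must reconstruct which configurations of $\bu$ force $xyz$. We will use Lemma~\ref{lem85401}(3), sharpened by substitutions into $S_{(4,854)}$ as in its proof.
\begin{itemize}
\item If some $\bu_i$ is a subword-divisor of $xyz$ (length at most $3$ with content inside $c(\bq)$ and compatible multiplicities), we use \eqref{8541} directly.
\item Otherwise the short summand in $D_\bq(\bu)$ has the form $x^2$ or $x^2y$ (e.g.\ $\bq=xyz$ with distinct variables and $\bu\ni x^2$). Then \eqref{8545} and \eqref{8546}, with $t$ any other summand, give $\bq$.
\item If the only other summands are long, we use \eqref{8547} and \eqref{8548} with $x_1x_2x_3x_4$ a long summand, together with \eqref{8549}.
\end{itemize}
The delicate part is to check, by choosing substitutions sending $c(\bq)$ to $1$ or $2$ and other variables to $4$, that no other configuration of $D_\bq(\bu)$ can satisfy $\bq\preceq\bu$ in $S_{(4,854)}$. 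This needs case splits on coincidences among $x,y,z$, such as $\bq=x^3$ or $x^2y$. I would first tabulate $\varphi(\bq)$ for $\varphi$ with values in $\{1,2\}$ on $c(\bq)$, and then match each surviving case with one of \eqref{8545}--\eqref{8548}.
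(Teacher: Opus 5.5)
Your route is the paper's own: split on $\ell(\bq)$, use Lemma~\ref{lem85401} to constrain $\bu$, and close each case with \eqref{8541}--\eqref{8549}. However, two steps fail as written.

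In the case $\ell(\bq)\geq 4$, the link $\bu_1\succeq h(\bu_1)$ is backwards. By \eqref{8541}, $\bu_1=h(\bu_1)s(\bu_1)\preceq h(\bu_1)$; this is exactly your own preliminary remark that a word lies below its factors. So the chain never reaches $x_1x_2x_3x_4$. Instead, substitute $x\mapsto\bu_1$ directly in \eqref{8549}, which gives $\bq\preceq x_1x_2x_3x_4\preceq\bu_1\preceq\bu$.

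The same direction slip appears in your length-$2$ case. The first alternative of Lemma~\ref{lem85401}(2) has to mean $\bu_i\in\{x,y,xy,yx\}$, i.e.\ $\bu_i$ divides $xy$. If instead $xy$ divides $\bu_i$, you only get $\bu_i\preceq xy$, the wrong way; already $xy\preceq xyz$ fails in $S_{(4,854)}$.

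The more substantial gap is in the case $\ell(\bq)=3$: your list of possible short summands (divisors of $\bq$, then $x^2$ or $x^2y$) is incomplete. In $S_{(4,854)}$ the multiplication is $a\cdot b=\min(a+b,4)$ and the addition is $\min$. So a word of length $3$ evaluates to something other than the bottom element $4$ only when all its variables are sent to $1$. Consequently, for $\ell(\bq)=3$, the inequality $\bq\preceq\bu$ holds in $S_{(4,854)}$ if and only if some $\bu_j\in D_\bq(\bu)$ has $\ell(\bu_j)\leq 3$. Assigning the value $2$ on $c(\bq)$ gives no information, so your planned table is unnecessary.

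This criterion allows $\bu_j=u^3$ with $u\in c(\bq)$ and $m(u,\bq)\leq 2$. For instance, $xyz\preceq x^3$ and $x^2y\preceq y^3$ both hold, so the check you announce---that no other configuration of $D_\bq(\bu)$ can occur---would fail. These inequalities are still derivable, but only by using \eqref{8546} twice. Write $\bq=uab$ via \eqref{8542}. The instance $x\mapsto u$, $y\mapsto a$, $z\mapsto b$ of \eqref{8546} gives $uab\preceq u^2a$. The instance $x,y\mapsto u$, $z\mapsto a$ then gives $u^2a\preceq u^3$.

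With this case added, every admissible $\bu_j$ is handled:
\begin{itemize}
\item divisors of $\bq$ by \eqref{8541};
\item $u^2$ by \eqref{8545};
\item $u^2v$ with $u\neq v$ by \eqref{8546};
\item $u^3$ as above.
\end{itemize}
Your further split according to whether $\bu$ contains a long summand, using \eqref{8547} and \eqref{8548}, is then superfluous. The variable $t$ in \eqref{8545} and \eqref{8546} may simply be specialized to $\bu_j$ itself.
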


\begin{proof}
It is readily verified that $S_{(4,854)}$ satisfies the inequalities \eqref{8541}--\eqref{8549}.
Therefore, it suffices to prove that every inequality satisfied by $S_{(4,854)}$ can be derived from \eqref{8541}--\eqref{8549}.
Suppose that $\bq \preceq \bu$ is an arbitrary semiring inequality satisfied by $S_{(4,854)}$,
where $\bu = \bu_1 + \bu_2 + \cdots + \bu_n$, $\bu_i, \bq \in X^+$ for $1 \leq i \leq n$.

\textbf{Case1.} $\ell(\bq) = 1$, $\bq \preceq \bu$ is trivial.

\textbf{Case2.} $\ell(\bq) = 2$, let $\bq = xy$. If there exists $\bu_i \in \bu$ such that $\bu_i \leq xy$ or $\bu_i \leq yx$, then
\[
\bu \succeq \bu_i \stackrel{\eqref{8541},\eqref{8542}}\succeq \bq.
\]
If there exist $\bu_i, \bu_j \in \bu$ with $\bu_i = x^m$, $\bu_j = y^n$ and $m + n \leq 5$, then
\[
\bu \succeq \bu_i + \bu_j \stackrel{\eqref{8543},\eqref{8544}}\succeq xy.
\]

\textbf{Case3.} $\ell(\bq) = 3$, let $\bq = xyz$ such that $D_q(\bu) \neq \emptyset$. Then $\ell(\bu_i) \leq 3$ for every $\bu_i \in \bu$, and there exists $\bu_j \in D_q(\bu)$ with $\ell(\bu_j) \leq 3$, then
\[
\bu \succeq \bu_j \stackrel{\eqref{8545},\eqref{8546}}\succeq \bq.
\]
If there exists $\bu_i \in \bu$ with $\ell(\bu_i) \geq 4$, and there exists $\bu_j \in D_q(\bu)$ such that $\ell(\bu_j) \leq 3$, then
\[
\bu \succeq \bu_i + \bu_j \stackrel{\eqref{8547},\eqref{8548}}\succeq \bq.
\]

\textbf{Case4.} $\ell(\bq) \geq 4$, $\bu \stackrel{\eqref{8549}}\succeq \bq$.

This derives the inequality $\bq\preceq \bu$.
\end{proof}

\begin{pro}\label{pro86201}
$\mathsf{V}(S_{(4, 862)})$ is the ai-semiring variety defined by the inequalities
\begin{align}
&x_1x_2x_3  \preceq x;\label{8621}\\
&xy       \preceq x;\label{8622}\\
&xy      \approx yx;\label{8623}\\
&xy \preceq x^2+y^2.\label{8624}
\end{align}

\end{pro}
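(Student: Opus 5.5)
The plan follows the pattern of Proposition~\ref{pro85402}. First check directly from the Cayley tables of $S_{(4,862)}$ that \eqref{8621}--\eqref{8624} hold. Next, use substitutions into $S_{(4,862)}$ to extract combinatorial necessary conditions on a nontrivial inequality $\bq\preceq\bu$ satisfied by $S_{(4,862)}$, with $\bu=\bu_1+\cdots+\bu_n$. Finally, derive $\bu\succeq\bq$ from \eqref{8621}--\eqref{8624} in each case. By \eqref{8623} we may assume throughout that $\bq$ and all $\bu_i$ are commutative words, and we split according to $\ell(\bq)$.

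Two consequences of the basis are used repeatedly. Substituting $x\mapsto \bw$ and $y\mapsto\bp$ in \eqref{8622} gives $\bw\bp\preceq\bw$, so a word lies below each of its factors. Combining this with \eqref{8621}, every word of length at least $3$ lies below every variable, and hence below every word. With these, the derivations go as follows.

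\begin{itemize}
\item[$(1)$] If $\ell(\bq)\geq 3$, then $\bu\succeq\bu_1\succeq h(\bu_1)\succeq \bq$, using \eqref{8622} and \eqref{8621}. No condition on $\bu$ is needed.
\item[$(2)$] If $\ell(\bq)=1$, I expect satisfaction to force $\bq\in\bu$, so the inequality is trivial. This is shown with the substitution sending $\bq$ to the top-most element of the chain reachable by a single variable and all other variables to a multiplicatively absorbing low element. I will read off the exact values from the table.
\item[$(3)$] If $\ell(\bq)=2$, write $\bq=xy$ (possibly $x=y$). The intended necessary condition is one of the following.
\begin{itemize}
\item[(a)] Some $\bu_i$ has length at most $2$ or, more generally, $\bu_i$ divides $xy$ up to commutation. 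Then $\bu\succeq\bu_i\succeq xy$ by \eqref{8622} and \eqref{8623}.
\item[(b)] $\bu$ contains words $\bu_i\approx x^2\bu_i'$ and $\bu_j\approx y^2\bu_j'$ with both $\bu_i'$ and $\bu_j'$ possibly empty. Then $\bu\succeq x^2+y^2\succeq xy$ by \eqref{8622} and \eqref{8624}.
\end{itemize}
To show that one of (a) or (b) must hold, I will argue by contradiction with substitutions as in Lemma~\ref{lem85401}. Send $x,y$ (or only one of them) to an element whose square is not the minimum, and send every other variable to an element that makes all products of length $\geq 3$ collapse. Then $\varphi(\bq)>\varphi(\bu)$.
\end{itemize}

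The main obstacle is pinning down exactly which length-two configurations are forced by the table. In particular, I need to decide whether a single letter $x$ in $\bu$ suffices, which is handled by \eqref{8622} as $xy\preceq x$. I also need to decide whether a word $x^k$ with $k\geq3$ can play the role of $x^2$ in (b). This should follow from $x^k\preceq x^2$, a consequence of \eqref{8622}, but it must be checked that no words of length $\geq 3$ are needed on the right-hand side beyond what \eqref{8621} already provides. I will settle this by computing $\varphi$ on the few relevant words in the $4$-element table before writing the case split.
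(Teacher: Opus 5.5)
Your strategy is the paper's: split on $\ell(\bq)$, use substitutions into $S_{(4,862)}$ to force a condition on $\bu$, then derive. Parts $(2)$ and $(3)$(a) are fine, provided (a) is read as $\bu_i\in\{x,y,xy,yx\}$ up to \eqref{8623}; length at most $2$ alone is not enough, e.g.\ $\bu=z$.

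The genuine problem is that you apply \eqref{8622} backwards. The inequality $xy\preceq x$ says a product lies \emph{below} its factors. So a word $\bw\in\bu$ gives $\bu\succeq\bw\bp$, but never $\bu\succeq$ a factor of $\bw$.
\begin{itemize}
\item[$(1)$] The step $\bu_1\succeq h(\bu_1)$ is wrong but harmless. As you observed yourself, \eqref{8621} with $x\mapsto\bu_1$ gives $\bq\preceq\bu_1$ directly.
\item[$(3)$(b)] Here the error is fatal. From $x^2\bu_i',\,y^2\bu_j'\in\bu$ with $\bu_i'$ or $\bu_j'$ nonempty you cannot obtain $\bu\succeq x^2+y^2$. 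The inequality you would be deriving is false in $S_{(4,862)}$: every product of length $\geq 3$ equals $4$. For instance, $xy\preceq x^3+y^2$ and $xy\preceq x^2z+y^2z$ both fail under $x\mapsto 1$, $y\mapsto 2$, where the left side is $3$ and the right side is $4$.
\end{itemize}
For the same reason, $x^k$ with $k\geq 3$ can never stand in for $x^2$. Your heuristic ``$x^k\preceq x^2$'' points the wrong way.

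What you need is the sharp necessary condition, and your substitution recipe does not produce it. Take $\bq=xy$ with $x\neq y$ and send every other variable to $4$. Then only the words $x,y,x^2,y^2,xy,yx$ of $\bu$ can take a value other than $4$.

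Now use $x\mapsto 2$, $y\mapsto 1$. This gives $\varphi(\bq)=2\cdot 1=3$ but $\varphi(x^2)=4$. The symmetric substitution $x\mapsto 1$, $y\mapsto 2$ works the same way. Hence satisfaction forces one of the following:
\begin{itemize}
\item[(i)] $\bu$ contains one of $x,y,xy,yx$ up to \eqref{8623};
\item[(ii)] $\bu$ contains both $x^2$ and $y^2$ themselves.
\end{itemize}
For $\bq=x^2$, the substitution $x\mapsto 1$ shows that $\bu$ must contain $x$ or $x^2$.

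Your suggestion to send $x,y$ to an element whose square is not the minimum, i.e.\ to $1$, cannot detect a missing $y^2$. Under that substitution $x^2$ alone already evaluates to $3=\varphi(\bq)$. The decisive substitution sends one variable to $2$, whose square is the minimum; this is exactly what the paper does.

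With (b) restricted to $x^2,y^2\in\bu$, the derivation $\bu\succeq x^2+y^2\succeq xy$ uses only \eqref{8624}. Your argument then coincides with the paper's.
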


\begin{proof}
It is easy to verify that $S_{(4,862)}$ satisfies the inequalities \eqref{8621}--\eqref{8624}.
Therefore, it suffices to prove that every inequality satisfied by $S_{(4,862)}$ can be derived from \eqref{8621}--\eqref{8624}.
Suppose that $\bq \preceq \bu$ is an arbitrary semiring inequality satisfied by $S_{(4,862)}$,
where $\bu = \bu_1 + \bu_2 + \cdots + \bu_n$, $\bu_i, \bq \in X^+$ for $1 \leq i \leq n$.
Since $S_{48}$, $S_{47}$ and $N_2$ can be embedded into $S_{(4,862)}$, inequality $\bq \preceq \bu$ is also satisfied by $S_{48}$, $S_{47}$ and $N_2$.

\textbf{Case1.} If $\ell(\bq) = 1$, then $\bq \preceq \bu$ is trivial.

\textbf{Case2.} If $\ell(\bq) = 2$, there exists $\bu_i \in \bu$ such that $c(\bu_i) \subseteq c(\bq)$ and $\ell(\bu_i) \leq 2$. Let $\bq = xy$.
If $\ell(\bu_i) = 1$, then $\bu_i = x$ or $\bu_i = y$. Then
  \[
  \bu \succeq \bu_i \stackrel{\eqref{8622},\eqref{8623}}{\succeq} \bq.
  \]
If $\ell(\bu_i) = 2$, then $\bu_i = xy$ or $\bu_i = yx$. Consequently,
  \[
  \bu \succeq \bu_i \stackrel{\eqref{8623}}{\approx} \bq.
  \]
If $\bu_i = x^2$, there exists $\bu_j \in \bu$ such that $\bu_j = y^2$. Otherwise, suppose that $\bu_j \neq y^2$ for every $\bu_j \in \bu$. Consider the semiring substitution $\varphi: P_f(X^+) \to S_{(4,862)}$ defined by $\varphi(a) = 2$ if $a = x$, $\varphi(a) = 1$ if $a = y$, and $\varphi(a) = 4$ if $a \neq x, y$. Then $\varphi(\bq) = 3$ but $\varphi(\bu) = 4$. Hence
  \[
  \bu \succeq \bu_i + \bu_j \stackrel{\eqref{8624}}\succeq xy \approx \bq.
  \]

\textbf{Case3.} If $\ell(\bq) \geq 3$, then $\bu \stackrel{\eqref{8621}}\succeq \bq$.

This derives the inequality $\bq\preceq \bu$.
\end{proof}

\begin{pro}\label{pro86301}
$\mathsf{V}(S_{(4, 863)})$ is the ai-semiring variety defined by the inequalities
\begin{align}
&x_1x_2x_3\preceq x;\label{8631}\\
&xy       \preceq x;\label{8632}\\
&yx \preceq x;\label{8633}\\
&xy  \preceq y^2.\label{8634}
\end{align}

\end{pro}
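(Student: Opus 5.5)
The plan follows the template of Proposition~\ref{pro86201}. First I check directly on the Cayley tables that $S_{(4,863)}$ satisfies \eqref{8631}--\eqref{8634}. Then I take a nontrivial inequality $\bq\preceq\bu$ of $S_{(4,863)}$, with $\bu=\bu_1+\cdots+\bu_n$ and $\bu_i,\bq\in X^+$, and show it is derivable.

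Two consequences of the basis will be used throughout. Substituting an arbitrary word for $x$ in \eqref{8631} gives $\bq\preceq\bu_1$ whenever $\ell(\bq)\geq 3$. Renaming $x\leftrightarrow y$ in \eqref{8634} gives $yx\preceq x^2$, so that
\[
xy\preceq x^2 \quad\text{and}\quad xy\preceq y^2 .
\]
Together with \eqref{8632}, \eqref{8633}, this says that $xy$ lies below each of $x$, $y$, $x^2$, $y^2$ (and trivially below $xy$).

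The proof then splits by $\ell(\bq)$.

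\textbf{Case $\ell(\bq)\geq 3$.} Here $\bu\succeq\bu_1\succeq\bq$ by \eqref{8631}, as above.

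\textbf{Case $\ell(\bq)=1$, say $\bq=x$.} I will show that $S_{(4,863)}$ forces $x\in\bu$, which makes the inequality trivial. The argument is a separating substitution: send $x$ to an element $a$ with $a\not\leq a^2$, and send all other variables to $4$. I expect $4$ to be a multiplicative zero, since \eqref{8631} with its products forces all words of length $\geq3$ to evaluate to the bottom. Every word in $\bu$ other than $x$ then evaluates to $a^2$ or to $4$, while $\bq$ evaluates to $a$. If no such $a$ exists, then $x\preceq x^2$ holds in $S_{(4,863)}$, and I would add the corresponding derivation instead.

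\textbf{Case $\ell(\bq)=2$, say $\bq=xy$ (possibly $x=y$).} The claim to establish is that $\bu$ contains one of the words $x$, $y$, $x^2$, $y^2$, $xy$. In each of these situations, $\bu\succeq\bq$ follows in one step from \eqref{8632}, \eqref{8633}, \eqref{8634} or its renamed form.

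To prove the claim, suppose $\bu$ contains none of these five words. Apply the substitution $\varphi$ with $\varphi(x)=a$, $\varphi(y)=b$, and $\varphi(z)=4$ for all other variables. Every word of $\bu$ then evaluates to either $4$ (if it involves another variable, or has length $\geq 3$) or to $ba$. So it suffices to find $a,b\in\{1,2,3\}$ with
\[
ab\not\leq ba .
\]
For the case $x=y$, a separating element with $a^2\neq 4$ suffices, since then $\bu$ evaluates to $4$ while $\bq$ does not.

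I expect the main obstacle to be this case $\ell(\bq)=2$. It requires reading off from Table~\ref{tb1} a pair $a,b$ witnessing that $S_{(4,863)}$ is not commutative in the required direction, while still respecting the relations $ab\leq a$, $ab\leq b$, $ab\leq b^2$. If no single substitution works, I would combine two substitutions, one to exclude the word $yx$ and one to handle the words of length $1$.
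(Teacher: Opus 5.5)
Your cases $\ell(\bq)\ge 3$ (via \eqref{8631}) and $\ell(\bq)=1$ are fine. The latter is actually vacuous, since $\{3,4\}$ is a copy of $N_2$, which is how the paper dismisses it.

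The error is in the case $\ell(\bq)=2$. The consequence $xy\preceq x^2$ you list does not follow from renaming: swapping $x$ and $y$ in \eqref{8634} gives $yx\preceq x^2$, which is the same law again. Moreover, $xy\preceq x^2$ is false in $S_{(4,863)}$. Table~\ref{tb1} gives $2\cdot 1=3$ but $2\cdot 2=4$, and $3\not\le 4$. So the assertion that $xy$ lies below $x^2$ is wrong. As a result, the step ``$x^2\in\bu$ gives $\bu\succeq xy$ in one step'' fails, and the five-word conclusion is too weak to finish the proof.

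The repair is to let your separating substitution do more work. The only pair with $ab\not\le ba$ is $(a,b)=(2,1)$, where $ab=3$ and $ba=4$. Under $x\mapsto 2$, $y\mapsto 1$, and all other variables $\mapsto 4$, the word $x^2$ is also sent to $2\cdot 2=4$, as is $yx$. Now suppose $\bu$ contains none of $x$, $y$, $y^2$; note $xy\notin\bu$ by nontriviality. Then $\bu$ evaluates to $4$ while $\bq$ evaluates to $3$. Hence $\bu$ contains $x$, $y$ or $y^2$, and \eqref{8632}, \eqref{8633}, \eqref{8634} respectively give $\bu\succeq xy$.

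This is exactly the substitution the paper uses. With this correction, and with your treatment of $x=y$ via $x^2\preceq x$, which is fine, your argument coincides with the paper's.
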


\begin{proof}
It is easy to verify that $S_{(4,863)}$ satisfies the inequalities \eqref{8631}--\eqref{8634}.
Therefore, it remains only to prove that every inequality satisfied by $S_{(4,863)}$ can be deduced from \eqref{8631}--\eqref{8634}.
Suppose that $\bq \preceq \bu$ is an arbitrary semiring inequality satisfied by $S_{(4,863)}$,
where $\bu=\bu_1+\bu_2+\cdots+\bu_n$, $\bu_i,\bq\in X^+$ for $1\leq i\leq n$.
Since $S_{48}$, $S_{47}$, and $N_2$ can be embedded into $S_{(4,863)}$, $S_{48}$, $S_{47}$, and $N_2$ satisfy $\bq \preceq \bu$.

\textbf{Case1.} If $\ell(\bq)=1$, then it is trivial.

\textbf{Case2.} If $\ell(\bq)=2$, then there exists $\bu_i \in \bu$ such that $c(\bu_i)\subseteq c(\bq)$ and $\ell(\bu_i)\leq 2$.
If $\ell(\bu_i)=1$, then
\[
\bu\succeq \bu_i \stackrel{\eqref{8632},\eqref{8633}}\succeq xy \approx \bq.
\]
If $\ell(\bu_i)=2$, there exists $\bu_j\in L_2(u)$ and $\bu_j=y^2$. Otherwise, suppose that $\bu_j\neq y^2$ for every $\bu_j\in L_2(\bu)$. Consider the semiring substitution $\varphi:P_f(X^+)\to S_{(4,863)}$ defined by $\varphi(a)=2$ for $a=x$, $\varphi(a)=1$ for $a=y$, and $\varphi(a)=4$ for otherwise. Then $\varphi(\bq)=3$ and $\varphi(\bu)=4$.  Consequently,
\[
\bu\succeq \bu_j\stackrel{\eqref{8634}}\succeq \bq.
\]
This derives the inequality $\bq\preceq \bu$.
\end{proof}

\begin{cor}
The ai-semiring $S_{(4, 864)}$ is finitely based.
\end{cor}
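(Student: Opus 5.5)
The plan is to reduce $S_{(4,864)}$ to an algebra already handled, following the pattern used throughout the paper for the corollaries. The statement sits directly after Propositions~\ref{pro86201} and~\ref{pro86301}, which give finite bases for $S_{(4,862)}$ and $S_{(4,863)}$. So the most likely route is that $S_{(4,864)}$ has the dual multiplication of one of these two algebras. Alternatively, it may be equationally equivalent to one of them. First, I would compare the multiplication table of $S_{(4,864)}$ in Table~\ref{tb1} with the transposes of the tables of $S_{(4,862)}$ and $S_{(4,863)}$.

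\textbf{The dual case.} Since $S_{(4,862)}$ satisfies $xy\approx yx$, its multiplication is self-dual. Hence the only candidate for a genuinely dual partner is $S_{(4,863)}$, whose basis \eqref{8631}--\eqref{8634} is not symmetric: $xy\preceq y^2$ has the dual $yx\preceq y^2$. So I would check that the multiplication of $S_{(4,864)}$ is the transpose of that of $S_{(4,863)}$. If so, the two algebras share the additive reduct, and by the elementary fact recalled in the introduction they have the same finite basis property. Proposition~\ref{pro86301} then finishes the argument, and dualizing \eqref{8631}--\eqref{8634} gives an explicit basis.

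\textbf{Fallback: equational equivalence.} If the tables are not transposes, I would instead show $\mathsf V(S_{(4,864)})=\mathsf V(S_{(4,862)})$ or $\mathsf V(S_{(4,863)})$. The inclusion in one direction comes from verifying that $S_{(4,864)}$ satisfies the corresponding basis. For the reverse inclusion, I would exhibit copies of $S_{48}$, $S_{47}$ and $N_2$ as subalgebras or quotients of $S_{(4,864)}$. The earlier proofs suggest that these generate $\mathsf V(S_{(4,862)})$ (resp.\ $\mathsf V(S_{(4,863)})$), since the derivations there used only the conditions coming from $S_{48}$, $S_{47}$, $N_2$ and the relevant substitutions.

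The main obstacle is this reverse inclusion in the fallback. The substitution arguments in those propositions were made inside $S_{(4,862)}$ and $S_{(4,863)}$ themselves. They would have to be re-run in $S_{(4,864)}$, or replaced by an explicit subdirect decomposition of $S_{(4,864)}$ into factors already known to generate the same variety.
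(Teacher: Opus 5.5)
Your primary route is exactly the paper's proof. The multiplication table of $S_{(4,864)}$ (first row $3,3,4,4$, all other entries $4$) is the transpose of that of $S_{(4,863)}$ (first column $3,3,4,4$, all other entries $4$), so they have dual multiplications on the same additive chain and Proposition~\ref{pro86301} gives the result; the equational-equivalence fallback is never needed.
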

\begin{proof}
Notice that $S_{(4, 864)}$ and $S_{(4, 863)}$ have dual multiplications,
therefore, $S_{(4, 864)}$ is also finitely based.
\end{proof}

\begin{pro}\label{pro80701}
$\mathsf{V}(S_{(4, 807)})$ is the ai-semiring variety defined by the inequalities
\begin{align}
&xyz\approx yxz;\label{8071}\\
&x^2y\approx xy;\label{8072}\\
&xy \preceq x;\label{8073}\\
&zxy \preceq xy.\label{8074}
\end{align}
\end{pro}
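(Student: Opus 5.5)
The proof will follow the pattern of Propositions~\ref{pro79201}, \ref{pro79901} and \ref{pro428}. First we check directly on the Cayley table that $S_{(4,807)}$ satisfies \eqref{8071}--\eqref{8074}. Then we show that every nontrivial inequality $\bq\preceq\bu$ of $S_{(4,807)}$, with $\bu=\bu_1+\cdots+\bu_n$, can be derived from them.

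The first step is to identify small factors of $S_{(4,807)}$, as in the other sections. Given identities \eqref{8073} and \eqref{8074}, we expect $S_{(4,807)}$ to be a subdirect product of $S_{46}$ and $S_{36}$ via congruences with nontrivial blocks $\{1,2\}$ and $\{3,4\}$, or else to contain $S_{46}$ as a subalgebra together with $L_2^0$ (or $L_2$). From Lemmas~\ref{lem4601} and~\ref{pro423} we would then obtain:
\begin{itemize}
\item[(a)] $\ell(\bq)\ge 2$;
\item[(b)] there is $\bu_i\in D_\bq(\bu)$ with $h(\bu_i)=h(\bq)$;
\item[(c)] if $m(t(\bq),\bq)=1$, there is $\bu_j\in D_\bq(\bu)$ with $t(\bq)\notin c(p(\bu_j))$.
\end{itemize}
The identities \eqref{8071} and \eqref{8072} give a normal form: a word $\bw$ of length at least $2$ is determined by $h(\bw)$, $t(\bw)$, $c(\bw)$, and whether $t(\bw)$ occurs in $p(\bw)$.

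\textbf{Case 1: $m(t(\bq),\bq)\ge 2$.} We use $\bu\succeq\bu_i\stackrel{\eqref{8073}}\succeq\bu_i\bq$. Because $h(\bu_i)=h(\bq)$, $c(\bu_i)\subseteq c(\bq)$ and $t(\bq)$ occurs in $p(\bq)$, the normal form gives $\bu_i\bq\approx\bq$.

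\textbf{Case 2: $m(t(\bq),\bq)=1$.} Take $\bu_j$ from (c). If $t(\bq)\notin c(\bu_j)$, we argue as in Case 1: $\bu\succeq\bu_i\succeq\bu_i\bq$, and the normal form still yields $\bq$, provided $t(\bq)$ does not occur in $\bu_i$ either. If it does, we use $\bu_j$ instead, as explained below. If $t(\bu_j)=t(\bq)$, we use \eqref{8074} to prepend: $\bu\succeq\bu_j\succeq h(\bq)p(\bq)\bu_j$. Since $t(\bq)$ occurs in $\bu_j$ only at the end, the normal form gives $\bq$, using \eqref{8072} to collapse the repeated head.

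The main obstacle is the mixed situation where the only $\bu_i$ with the correct head contains $t(\bq)$ in its prefix, while $\bu_j$ has the wrong head. Here I will first try \eqref{8074} on $\bu_j$: it gives $\bu_j\succeq \bw\bu_j$ for an arbitrary prefix $\bw$, so we can place $h(\bq)$ in front. If $\ell(\bu_j)=1$, however, \eqref{8074} does not apply. In that case I will check, via an explicit substitution into $S_{(4,807)}$ as in Proposition~\ref{pro49401}, that this configuration cannot satisfy $\bq\preceq\bu$. If that check fails, the set \eqref{8071}--\eqref{8074} would have to be reexamined.
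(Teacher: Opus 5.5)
There is a genuine gap, and it starts with the structural guess. $S_{(4,807)}$ is subdirectly irreducible, since every nontrivial congruence identifies $2$ and $3$. So it is not a subdirect product of $S_{46}$ and $S_{36}$. Moreover $L_2\notin\mathsf{V}(S_{(4,807)})$, because $L_2$ violates \eqref{8071}. Hence neither $L_2$ nor $L_2^0$ is available, and your property (b) is false: $xyz\preceq yxz$ holds in $S_{(4,807)}$, yet no summand on the right has head $x$.

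What you may legitimately use is (a) (from $N_2\cong\{3,4\}$), $D_\bq(\bu)\neq\emptyset$ (from $D_2\cong\{1,4\}$), and (c) (from $S_{46}\cong\{1,2,4\}$). Heads also play no role: by \eqref{8071} and \eqref{8072}, a word of length at least $2$ is determined by $c(p(\bw))$ and $t(\bw)$ alone. Consequently your Case~1 works with any $\bu_i\in D_\bq(\bu)$. In Case~2, any $\bu_j\in D_\bq(\bu)$ with $t(\bq)\notin c(\bu_j)$ gives $\bu\succeq\bu_j\succeq\bu_j\bq\approx\bq$ by \eqref{8073}. So the ``mixed situation'' you call the main obstacle does not exist.

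The actual difficulty is that the witness from (c) may be the one-letter word $t(\bq)$. Lemma~\ref{lem4601} cannot exclude this: $S_{46}$ satisfies $xy\preceq y$, whereas in $S_{(4,807)}$ the assignment $x\mapsto 1$, $y\mapsto 3$ sends $xy$ to $2$, which lies strictly above $3$. You need a substitution into $S_{(4,807)}$ itself: $t(\bq)\mapsto 3$, every letter of $c(p(\bq))\mapsto 1$, and all other letters $\mapsto 4$. Then $\varphi(\bq)=2$. Requiring $\varphi(\bu)$ to lie above $2$ forces some $\bu_j\in D_\bq(\bu)$ with $t(\bq)\notin c(p(\bu_j))$ and $\bu_j\neq t(\bq)$. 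There are two cases:
\begin{itemize}
\item[(1)] $t(\bq)\notin c(\bu_j)$, and \eqref{8073} finishes as above;
\item[(2)] $\bu_j=p(\bu_j)t(\bq)$ with $p(\bu_j)$ nonempty, and \eqref{8074} with $z\mapsto p(\bq)$ gives $\bu_j\succeq p(\bq)\bu_j\approx\bq$.
\end{itemize}
This is essentially the paper's argument.

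Your plan instead aims the substitution at a wrong-head single letter and asserts that this configuration is impossible, which is false. For example, $xyt\preceq y+xtx$ holds in $S_{(4,807)}$. Here the only summand with head $x$ has $t$ in its prefix, and the only (c)-witness is the letter $y$. Yet the inequality is derived by $y\succeq y\,xyt\approx xyt$, using \eqref{8073}, \eqref{8071} and \eqref{8072}. So the check you propose would fail, and as written the proposal ends in a hedge rather than a proof.
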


\begin{proof}
It is readily verified that $S_{(4,807)}$ satisfies the inequalities \eqref{8071}--\eqref{8074}.
Hence, it suffices to prove that every inequality satisfied by $S_{(4,807)}$ can be derived from \eqref{8071}--\eqref{8074}.
Suppose that $\bq \preceq \bu$ is an arbitrary semiring inequality satisfied by $S_{(4,807)}$,
where $\bu = \bu_1 + \bu_2 + \cdots + \bu_n$ with $\bu_i,\bq \in X^+$ for $1 \leq i \leq n$.
Since $S_{48}$, $S_{47}$, $N_2$, and $D_2$ can be embedded into $S_{(4,807)}$, $S_{48}$, $S_{47}$, $N_2$, and $D_2$ satisfy $\bq \preceq \bu$.

\textbf{Case1.} If $\ell(\bq) = 1$, then the inequality $\bu \succeq \bq$ is trivial.

\textbf{Case2.} If $\ell(\bq) \geq 2$, then $D_q(\bu) \neq \emptyset$.

\textbf{Subcase 2.1.} If $m(t(\bq),\bq) \geq 2$, then $\bq \stackrel{\eqref{8071},\eqref{8072}}\approx x_1^2 x_2^2 \cdots x_n^2 t^2(\bq)$. Since there exists $\bu_i \in D_q(\bu)$ such that $c(\bu_i) \subseteq c(\bq)$. Then,
\[
\bu \succeq \bu_i \stackrel{\eqref{8073}}\succeq \bu_i \bq \stackrel{\eqref{8071},\eqref{8072}}\succeq \bq.
\]

\textbf{Subcase 2.2.} If $m(t(\bq),\bq) = 1$, then $\bq \stackrel{\eqref{8071},\eqref{8072}}\approx x_1^2 x_2^2 \cdots x_n^2 t^2(\bq)$.
If there exists $\bu_i \in D_q(\bu)$ such that $m(t(\bq),\bu_i) = 0$, then $\bu_i \bq \approx \bq$.
This reduces to Case 1.
If there exists $\bu_i \in D_q(\bu)$ such that $m(t(\bq),\bu_i) = 1$ and $t(\bu_i) = t(\bq)$,
we claim that $\ell(\bu_i) \geq 2$.
Otherwise, suppose that either $\bu_i = t(\bq)$ or $\bu_i = \bu_i' t(\bq) \bu_i''$ for any $\bu_i \in D_q(\bu)$, where $\bu_i''$ is non-empty.
Consider the semiring substitution $\varphi: P_f(X^+) \to S_{(4,807)}$, defined by $\varphi(x) = 3$ if $x = t(\bq)$,
$\varphi(x) = 1$ if $x = x_i$, and $\varphi(x) = 4$ otherwise.
Then $\varphi(\bq) = 2$ but $\varphi(\bu) = 3$. Therefore,
\[
\bu \succeq \bu_i \stackrel{\eqref{8074}} \succeq \bq' \bu_i \stackrel{\eqref{8072},\eqref{8071}} \succeq \bq.
\]

This derives the inequality $\bq\preceq \bu$.
\end{proof}

\begin{cor}
The ai-semiring $S_{(4, 735)}$ is finitely based.
\end{cor}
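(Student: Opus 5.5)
The plan is the same duality argument the paper uses for every other ``dual'' corollary. I will first check, directly from the multiplication tables in Table~\ref{tb1}, that $S_{(4,735)}$ and $S_{(4,807)}$ have dual multiplications. Concretely, I will confirm that the operation $a\ast b := ba$ computed from the table of $S_{(4,807)}$ reproduces the table of $S_{(4,735)}$ on the carrier $\{1,2,3,4\}$.

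If the tables do not match entrywise, I will look for a relabelling of the carrier that fixes the chain $1>2>3>4$. Since an order automorphism of a chain is the identity, no relabelling is actually possible. So the comparison must be an exact entrywise match against the transposed table of $S_{(4,807)}$, and this verification is the only genuine obstacle. If it failed, I would fall back to looking for a subdirect decomposition of $S_{(4,735)}$ into the duals of the factors used for $S_{(4,807)}$, namely the duals of $S_{48}$, $S_{47}$, $N_2$ and $D_2$.

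Once duality is confirmed, I will use the elementary fact recalled in the introduction: two ai-semirings with the same additive reduct and dual multiplications have the same finite basis property. The reason is that reversing every word in every identity turns an equational basis of one algebra into an equational basis of the other.

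By Proposition~\ref{pro80701}, $\mathsf{V}(S_{(4,807)})$ is defined by the finitely many identities \eqref{8071}--\eqref{8074}. Reversing these identities gives a finite basis for $S_{(4,735)}$:
\[
zyx\approx zxy,\quad yx^2\approx yx,\quad yx\preceq x,\quad yxz\preceq yx.
\]
Hence $S_{(4,735)}$ is finitely based.
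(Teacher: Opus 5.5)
Your proposal is correct and is exactly the paper's argument. The transpose of the multiplication table of $S_{(4,807)}$ coincides entrywise with that of $S_{(4,735)}$, so the two algebras have dual multiplications, Proposition~\ref{pro80701} transfers, and your reversed identities $zyx\approx zxy$, $yx^2\approx yx$, $yx\preceq x$, $yxz\preceq yx$ form a finite basis for $S_{(4,735)}$; the fallback plan is never needed.
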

\begin{proof}
Notice that $S_{(4, 735)}$ and $S_{(4, 807)}$ have dual multiplications,
therefore, $S_{(4, 735)}$ is also finitely based.
\end{proof}

\qquad

\newpage
\begin{pro}\label{pro72101}
$\mathsf{V}(S_{(4, 721)})$ is the ai-semiring variety defined by the identities
\begin{align}
x^2y &\approx xy; \label{72102}\\
xyz &\approx yxz; \label{72101}\\
yx &\preceq x; \label{72103}\\
xz &\preceq x+yz; \label{72104}\\
zy &\preceq xy+yz; \label{72105}\\
x_1x_2x_3 &\preceq x_1x_2x_3^2+x_4x_1. \label{72106}
\end{align}
\end{pro}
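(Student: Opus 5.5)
The plan follows the template used throughout the paper. First I will check directly that $S_{(4,721)}$ satisfies \eqref{72102}--\eqref{72106}. It then remains to show that every nontrivial inequality $\bq\preceq\bu$ of $S_{(4,721)}$, with $\bu=\bu_1+\cdots+\bu_n$, is derivable from them. To extract the combinatorial information, I will look for a decomposition of $S_{(4,721)}$ into known pieces. The identities \eqref{72102}, \eqref{72101}, \eqref{72103} and \eqref{72104} are exactly the basis of $\mathsf{V}(S_{(4,793)})=\mathsf{V}(R_2,S_{46})$ from Proposition~\ref{pro79301}. So I expect $S_{(4,721)}$ to contain $S_{46}$ (or $S_{46}$ as a quotient) and $R_2$ or $R_2^0$, together with a further piece forcing \eqref{72105}--\eqref{72106}. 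A natural candidate is $R_2^0$ or $D_2$, presumably $S_{46}$ and $R_2^0$ through congruences with blocks $\{1,2\}$ and $\{3,4\}$ or similar. From Lemma~\ref{lem4601} and Lemma~\ref{cor424} (or Lemma~\ref{nlemma1} for $D_2$, $R_2$) I will then extract the following facts:
\begin{itemize}
\item $\ell(\bq)\ge 2$;
\item there is $\bu_i\in D_\bq(\bu)$, possibly with $t(\bu_i)=t(\bq)$;
\item some $\bu_j\in\bu$ has $t(\bu_j)=t(\bq)$;
\item if $m(t(\bq),\bq)=1$, there is $\bu_k\in D_\bq(\bu)$ with $t(\bq)\notin c(p(\bu_k))$.
\end{itemize}

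Next I will normalize $\bq$. By \eqref{72102} and \eqref{72101}, $\bq\approx p(\bq)t(\bq)$, where the prefix is a product of squares of its content in any order. So only $c(p(\bq))$, $t(\bq)$ and whether $t(\bq)\in c(p(\bq))$ matter.

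The argument then splits on $m(t(\bq),\bq)$, exactly as in Proposition~\ref{pro79301}. If $m(t(\bq),\bq)\ge 2$, I will write $\bu\succeq\bu_i+\bu_j=\bu_i+p(\bu_j)t(\bq)$. Applying \eqref{72104} gives $\bu_i t(\bq)$, and \eqref{72103} then prepends $\bq$, which collapses to $\bq$ via $c(\bu_i)\subseteq c(\bq)$. If $m(t(\bq),\bq)=1$, I will use $\bu_k$. When $t(\bq)\notin c(\bu_k)$, the same chain with $\bu_k$ works. When $t(\bu_k)=t(\bq)$ occurs only once, I will prepend $p(\bq)$ by \eqref{72103}. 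The possible gap is that $\bu_i\in D_\bq(\bu)$ need not contain all of $c(\bq)$. This is where \eqref{72105} and \eqref{72106} enter. \eqref{72105} will move a variable occurring as a tail in some $\bu_\ell$ into a head-side position, and \eqref{72106} will square the tail of a word whose head is supplied by another summand. Together they should recover a word whose prefix content is all of $c(p(\bq))$.

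The main obstacle is identifying precisely which extra condition, beyond those of $\mathsf{V}(R_2,S_{46})$, the fourth element imposes, and then checking that \eqref{72105} and \eqref{72106} suffice to realize it. I will first pin down the subdirect decomposition. If it is $S_{46}\times R_2^0$, the condition is $t(\bq)=t(\bu_j)$ for some $\bu_j\in D_\bq(\bu)$, which is stronger than what $R_2$ gives. If such a $\bu_j$ exists in $D_\bq(\bu)$, the derivation above is carried out inside $D_\bq(\bu)$, and \eqref{72105} and \eqref{72106} are used to absorb the case where $t(\bq)$ also lies in the prefix of $\bu_j$. Should this fail, I will instead produce separating substitutions into $S_{(4,721)}$, as done for $S_{(4,485)}$, to force the missing structural condition.
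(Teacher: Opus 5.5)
There is a genuine gap, and it lies in the structural step on which your $m(t(\bq),\bq)=1$ case rests. Neither $S_{46}$ nor $R_2^0$ lies in $\mathsf V(S_{(4,721)})$.
\begin{itemize}
\item In $S_{46}$, put $x=y=1$, $z=3$ in \eqref{72105}. Then $zy=1$ while $xy+yz=2<1$, so \eqref{72105} fails.
\item In $R_2^0$, \eqref{72104} fails by Lemma~\ref{cor424}, since $D_{xz}(x+yz)=\{x\}$ and $t(x)\neq z$.
\end{itemize}
Since $S_{(4,721)}$ satisfies the basis of Proposition~\ref{pro79301}, its variety is a \emph{proper} subvariety of $\mathsf V(R_2,S_{46})$. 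The extra identities \eqref{72105} and \eqref{72106} shrink the variety; they do not signal an extra generator. In fact $S_{(4,721)}$ is subdirectly irreducible. Identifying $1,2$ forces $1\cdot3=3\equiv4=2\cdot3$, and identifying $2,3$ forces $2=2\cdot2\equiv2\cdot3=4$. So every nontrivial congruence contains $(3,4)$, and there is no decomposition to find.

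Consequently Lemmas~\ref{lem4601} and~\ref{cor424} are unavailable, and your fourth bullet is false for identities of $S_{(4,721)}$. For instance, \eqref{72105} has $m(y,zy)=1$, yet the only word of $D_{zy}(xy+yz)$ is $yz$, which has $y$ in its prefix. Your first three bullets are true, but only because $\{3,4\}\cong N_2$, $\{2,4\}\cong R_2$ and $\{1,2\}\cong D_2$ are subalgebras. With them your $m(t(\bq),\bq)\ge2$ argument is fine, once the case $\bu_j=t(\bq)$ is handled directly by \eqref{72103}. The obstacle you flag, that $\bu_i$ may miss part of $c(\bq)$, is not one, since \eqref{72103} prepends $p(\bq)$ freely. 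Also, the roles you assign to \eqref{72105} and \eqref{72106} are reversed: both serve to \emph{remove} an occurrence of $t(\bq)$ from a prefix.

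What is missing is the treatment of words of $D_\bq(\bu)$ that contain $t(\bq)$ in their prefix. The paper proceeds as follows. Fix $\bu_j$ with $t(\bu_j)=t(\bq)$ and $\ell(\bu_j)\ge2$, and take $\bu_i\in D_\bq(\bu)$.
\begin{itemize}
\item If $\bu_i$ avoids $t(\bq)$, then \eqref{72104} gives $\bu_it(\bq)$.
\item If $\bu_i$ contains $t(\bq)$ only as its tail, \eqref{72103} suffices.
\item If $\bu_i$ contains $t(\bq)$ but does not end with it, then \eqref{72102} and \eqref{72101} give $\bu_i\approx t(\bq)\bu_i'$ with $t(\bq)\notin c(\bu_i')$. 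Applying \eqref{72105} to $t(\bq)\bu_i'+p(\bu_j)t(\bq)$ yields $\bu_i't(\bq)$.
\item In the remaining case every word of $D_\bq(\bu)$ is $\approx\bu_i't(\bq)^2$.
\end{itemize}
In that remaining case there are two possibilities:
\begin{itemize}
\item Some $\bu_k\notin D_\bq(\bu)$ ends in $x\in c(\bq)\setminus\{t(\bq)\}$. Then \eqref{72103} followed by \eqref{72106} gives $x\bu_i't(\bq)^2+p(\bu_k)x\succeq x\bu_i't(\bq)$.
\item No such $\bu_k$ exists. Send $t(\bq)\mapsto3$, $c(\bq)\setminus\{t(\bq)\}\mapsto1$ and all other variables $\mapsto4$. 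This maps $\bu$ to $4$, and would map $\bq$ to $3$ if $m(t(\bq),\bq)=1$. Hence $m(t(\bq),\bq)\ge2$, and then $\bu_i\succeq p(\bq)\bu_i\approx\bq$.
\end{itemize}
Your closing fallback to ``separating substitutions'' points in this direction. Without this case split and this specific substitution, however, the case $m(t(\bq),\bq)=1$ remains unproved.
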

\begin{proof}
It is straightforward to verify that $S_{(4, 721)}$ satisfies identities \eqref{72102}--\eqref{72106}.
It remains to show that every ai-semiring identity of $S_{(4, 721)}$
can be derived from \eqref{72102}--\eqref{72106}.
Let $\bq\preceq \bu$ be a nontrivial inequality, where
$\bu=\bu_1+\bu_2+\cdots+\bu_n$ and $\bu_i, \bq \in X^+$, $1 \leq i \leq n$.
Observe that $N_2$ is isomorphic to $\{3, 4\}$, hence $N_2$ satisfies $\bq\preceq \bu$.
Thus $\ell(\bq)\geq 2$.
Since $R_2$ is isomorphic to $\{2, 4\}$,
we have that $R_2$ satisfies $\bq\preceq \bu$, and so
there exists $\bu_j \in \bu$ such that $t(\bu_j)=t(\bq)$.
Also, as $D_2$ is isomorphic to $\{1, 2\}$,
we obtain that $D_2$ satisfies $\bq\preceq \bu$, hence
there exists $\bu_i \in \bu$ such that $c(\bu_i)\subseteq c(\bq)$.

If $\ell(\bu_j)=1$, then $\bu_j=t(\bq)$, and so
\[
\bu \succeq \bu_j=t(\bq)\stackrel{\eqref{72103}} \succeq p(\bq)t(\bq)=\bq.
\]
We now consider the case where $\ell(\bu_j)\geq 2$ for every $\bu_j$ with $t(\bu_j)=t(\bq)$.

\textbf{Case 1.} There exists $\bu_i\in D_\bq(\bu)$ such that $m(t(\bq), \bu_i)\leq1$.
If $m(t(\bq), \bu_i)=0$, then
\[
\bu \succeq \bu_i+\bu_j=\bu_i+p(\bu_j)t(\bq)\stackrel{\eqref{72104}} \succeq \bu_it(\bq) \stackrel{\eqref{72102}, \eqref{72101}, \eqref{72103}}\approx p(\bq)t(\bq) = \bq.
\]
If $m(t(\bq), \bu_i)=1$ and $t(\bq)=t(\bu_i)$, then
\[
\bu \succeq \bu_i =p(\bu_i)t(\bq) \stackrel{\eqref{72102}, \eqref{72101}, \eqref{72103}}\succeq p(\bq)t(\bq) = \bq.
\]
If $m(t(\bq), \bu_i)=1$ and $t(\bq)\neq t(\bu_i)$, then identity \eqref{72101} implies that $\bu_i\approx t(\bq)\bu_i'$, and so
\[
\bu \succeq \bu_i+\bu_j=t(\bq)\bu_i'+p(\bu_j)t(\bq) \stackrel{\eqref{72105}}\succeq \bu_i't(\bq) \stackrel{\eqref{72102}, \eqref{72101}, \eqref{72103}}\succeq p(\bq)t(\bq) = \bq.
\]
This yields the inequality $\bu\succeq \bq$.

\textbf{Case 2.} $m(t(\bq), \bu_i)\geq2$ whenever $\bu_i\in D_\bq(\bu)$.
If $t(\bq)\neq t(\bu_i)$, then
\[
\bu_i \stackrel{\eqref{72101}}\approx t(\bq)^2\bu_i'\stackrel{\eqref{72102}}\approx t(\bq)\bu_i'.
\]
The remaining steps are similar to Case 1.
If $t(\bq)=t(\bu_i)$, then $\bu_i\approx \bu_i't(\bq)^2$.

\textbf{Case 2.1.} There exists $\bu_k\in \bu\setminus D_\bq(\bu)$ such that
$\bu_k=p(\bu_k)x$ for some $x\in c(\bq)\setminus \{t(\bq)\}$.Then
\[
\bu \succeq \bu_i+\bu_k\approx\bu_i't(\bq)^2+p(\bu_k)x \stackrel{\eqref{72103}}\succeq x\bu_i't(\bq)^2+p(\bu_k)x
\stackrel{\eqref{72106}}\succeq x\bu_i't(\bq) \stackrel{\eqref{72102}, \eqref{72101}, \eqref{72103}}\succeq \bq.
\]

\textbf{Case 2.2.} $\bu_k \neq p(\bu_k)x$ for all $x\in c(\bq)\setminus \{t(\bq)\}$ if $\bu_k\in \bu\setminus D_\bq(\bu)$.
In this case, we shall show that $m(t(\bq), \bq)\geq2$.
Suppose that this is not true.
Consider the semiring homomorphism $\varphi: P_f(X^+) \to S_{(4, 721)}$ defined by
$\varphi(t(\bq))=3$, $\varphi(y)=1$ if $y\in c(\bq)\setminus \{t(\bq)\}$, and $\varphi(y)=4$ otherwise.
It follows that $\varphi(\bu)=4$ and $\varphi(\bq)=3$, a contradiction.
Hence $m(t(\bq), \bq)\geq2$.
Therefore,
\[
\bu \succeq \bu_i\approx \bu_i't(\bq)^2\stackrel{\eqref{72102}, \eqref{72101}, \eqref{72103}}\succeq p(\bq)t(\bq)=\bq.
\]
This implies the inequality $\bu \succeq\bq$.
\end{proof}

\begin{cor}
The ai-semiring $S_{(4, 765)}$ is finitely based.
\end{cor}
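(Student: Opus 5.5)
The plan is the same duality argument used for the earlier corollaries of this kind, for example those following Propositions~\ref{pro79301} and~\ref{pro80701}. The statement immediately follows Proposition~\ref{pro72101}, which gives a finite equational basis \eqref{72102}--\eqref{72106} for $\mathsf{V}(S_{(4,721)})$. So the natural route is to show that $S_{(4,765)}$ and $S_{(4,721)}$ have dual multiplications.

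First, I will compare the Cayley tables in Table~\ref{tb1}. Both algebras share the additive chain $4<3<2<1$, so only the multiplications need checking. I will verify that the multiplication table of $S_{(4,765)}$ is the transpose of that of $S_{(4,721)}$, that is, $a\cdot_{765}b=b\cdot_{721}a$ for all $a,b\in\{1,2,3,4\}$. If the transpose appears only after a relabelling, I will check that the relabelling preserves the chain order; since the additive order is a chain, the only order-automorphism is the identity, so the tables must match on the nose.

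Once duality is confirmed, I will invoke the elementary fact from the introduction: ai-semirings with the same additive reduct and dual multiplications have the same finite basis property. Concretely, reversing every word in \eqref{72102}--\eqref{72106} yields a finite basis for $\mathsf{V}(S_{(4,765)})$. Proposition~\ref{pro72101} then gives that $S_{(4,765)}$ is finitely based.

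The only real obstacle is confirming the duality from the tables. If, against expectation, $S_{(4,765)}$ is not the dual of $S_{(4,721)}$, I would fall back on the paper's other standard technique. I would look for a subdirect decomposition of $S_{(4,765)}$ into $R_2$, $N_2$, $D_2$-type factors (or their duals $L_2$, $N_2$, $D_2$), mirroring the subalgebras $\{3,4\},\{2,4\},\{1,2\}$ used in Proposition~\ref{pro72101}, and then show $\mathsf{V}(S_{(4,765)})$ coincides with the variety generated by the dual of $S_{(4,721)}$.
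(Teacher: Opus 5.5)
Your proposal is correct and is exactly the paper's argument. The transpose of the $S_{(4,721)}$ multiplication table is literally the $S_{(4,765)}$ table, with rows $1\,2\,2\,2$, $2\,2\,2\,2$, $3\,4\,4\,4$, $4\,4\,4\,4$, so reversing the words in \eqref{72102}--\eqref{72106} gives a finite basis, and Proposition~\ref{pro72101} finishes the proof. The fallback is never needed, and it would not work as sketched: $S_{(4,765)}$ fails $(xy)^2\approx xy$ (take $x=3$, $y=1$), so it is not a subdirect product of $2$-element ai-semirings.
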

\begin{proof}
Notice that $S_{(4, 765)}$ and $S_{(4, 721)}$ have dual multiplications,
therefore, $S_{(4, 765)}$ is also finitely based.
\end{proof}

\begin{pro}\label{pro72701}
$\mathsf{V}(S_{(4, 727)})$ is the ai-semiring variety defined by the identities
\begin{align}
x^3&\approx x^2; \label{72701}\\
xy &\approx xyz; \label{72702}\\
x^2 &\preceq x; \label{72703}\\
x^2 &\preceq xy; \label{72704}\\
x_1x_3 &\preceq x_1x_2+x_3x_4, \label{72705}
\end{align}
where $x_2$ and $x_4$ may be empty in \eqref{72705}.
\end{pro}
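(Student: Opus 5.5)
The identities \eqref{72701}--\eqref{72705} make every word equal to a word of length at most $2$, so I would first reduce both sides to such short words. Next I would extract, by suitable substitutions into $S_{(4,727)}$, necessary conditions on an inequality $\bq\preceq\bu$ that holds in $S_{(4,727)}$. Finally, for each possible shape of $\bq$, I would derive $\bu\succeq\bq$ explicitly from those conditions.

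\textbf{Reduction.} By \eqref{72702}, every word $\bw$ with $\ell(\bw)\geq 2$ satisfies $\bw\approx h(\bw)\,a$, where $a$ is the second letter of $\bw$. Identifying $y$ and $z$ with $x$ in \eqref{72702} gives $x^2\approx x^3$, which is \eqref{72701}. Hence we may assume that $\bq$ and every $\bu_i$ lie in $\{x,\ x^2,\ xy \mid x\neq y\}$. Then $\bq$ has one of three forms:
\begin{itemize}
\item[(a)] $\bq=x$;
\item[(b)] $\bq=x^2$;
\item[(c)] $\bq=xy$ with $x\neq y$.
\end{itemize}

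\textbf{Necessary conditions.} This is the step I expect to be the main obstacle. I would prove a lemma, in the style of Lemma~\ref{lem85401}, stating that if $S_{(4,727)}$ satisfies a nontrivial $\bq\preceq\bu$ of the reduced form above, then:
\begin{itemize}
\item[(i)] $\ell(\bq)\geq 2$; that is, a single variable never lies below a sum not containing it;
\item[(ii)] if $\bq=x^2$, then $x=h(\bu_i)$ for some $\bu_i\in\bu$;
\item[(iii)] if $\bq=xy$ with $x\neq y$, then there exist $\bu_i,\bu_j\in\bu$ with $h(\bu_i)=x$ and $h(\bu_j)=y$.
\end{itemize}
Each item would be proved by contradiction. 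One assigns to the offending variable ($x$, or $y$) an element $a\in\{1,2,3\}$ that is not multiplicatively absorbed, and sends all other variables to $4$ or to another suitable element. One then checks that $\varphi(\bq)$ exceeds $\varphi(\bu)$ in the chain $4<3<2<1$. To choose $a$ I would first read off from the Cayley table which two-element subalgebras occur. The inequality $x^2\preceq x$ suggests a copy of $D_2$ or $L_2$. A copy of $L_2$ would immediately give the head conditions in (ii) and (iii) through Lemma~\ref{nlemma1}. Failing that, I would use an ad hoc assignment exactly as in the proofs of Propositions~\ref{pro86201} and~\ref{pro86301}. The delicate point is case (iii) when the word headed by $y$ has length $1$ or length $2$; both must be compatible with \eqref{72705}, which allows $x_4$ to be empty.

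\textbf{Derivations.} With the lemma in hand, each case is short.
\begin{itemize}
\item In case (b), take $\bu_i$ with $h(\bu_i)=x$. If $\bu_i=x$, then \eqref{72703} gives $\bu\succeq x\succeq x^2$. If $\bu_i=xz$, then \eqref{72704} gives $\bu\succeq xz\succeq x^2$.
\item In case (c), take $\bu_i\in\{x,\,xz\}$ and $\bu_j\in\{y,\,yt\}$. Then \eqref{72705}, with $x_2$ and $x_4$ empty as needed, gives $\bu\succeq \bu_i+\bu_j\succeq xy=\bq$.
\end{itemize}
Case (a) is excluded by (i), since a nontrivial inequality with $\ell(\bq)=1$ cannot hold. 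If it turns out that a length-one $\bq$ can in fact be forced, I would add the corresponding identity to the basis before proceeding.
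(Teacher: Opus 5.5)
Your route is the paper's. You reduce every word to length at most $2$ via \eqref{72702} and get $\ell(\bq)\geq 2$ from the subalgebra $\{3,4\}\cong N_2$, so your hedge about a length-one $\bq$ is unnecessary. You get a summand headed by $h(\bq)$ from $\{1,2\}\cong L_2$, and you finish with \eqref{72703}, \eqref{72704}, \eqref{72705} exactly as the paper does.

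The one step you leave as a plan is the only non-routine one: the $y$-half of (iii). Your pointer for it is wrong. $L_2$ does not give ``the head conditions in (ii) and (iii)'', because Lemma~\ref{nlemma1}(1) only yields a summand with head $h(\bq)=x$. More generally, no two-element subalgebra of $S_{(4,727)}$ detects a summand headed by $y$; these subalgebras are all copies of $L_2$ or $N_2$, and both satisfy $xy\preceq x$, which fails in $S_{(4,727)}$. The naive assignment $\varphi(y)=1$ with all other variables sent to $4$ gives $\varphi(xy)=4$ and no contradiction either.

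The witness that works uses the element $3$: put $\varphi(x)=3$, $\varphi(y)=1$, and $\varphi(z)=4$ for every other variable $z$. Then $\varphi(\bq)=3\cdot 1=2$. Now suppose $\bu$ has no summand headed by $y$; the summand $xy$ is already excluded by nontriviality. Each remaining reduced summand of $\bu$ evaluates as follows:
\begin{itemize}
\item $x$ evaluates to $3$;
\item $x^2$, and $xz$ with $z\notin\{x,y\}$, evaluate to $4$;
\item any word headed by another variable evaluates to $4$.
\end{itemize}
Hence $\varphi(\bu)\in\{3,4\}$, which lies strictly below $\varphi(\bq)$ in the chain $4<3<2<1$, a contradiction.

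With this filled in, plus the routine check that $S_{(4,727)}$ satisfies \eqref{72701}--\eqref{72705}, your argument is complete. That check is needed because your reduction step tacitly uses it when you apply the necessary conditions to the reduced inequality.
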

\begin{proof}
It is easy to verify that $S_{(4, 727)}$ satisfies the identities \eqref{72701}--\eqref{72705}.
In the remainder we need only show that every ai-semiring identity of $S_{(4, 727)}$
can be derived by \eqref{72701}--\eqref{72705}.
Let $\bq\preceq \bu$ be such a nontrivial inequality, where
$\bu=\bu_1+\bu_2+\cdots+\bu_n$ and $\bu_i, \bq \in X^+$, $1 \leq i \leq n$.
It is easy to see that $N_2$ is isomorphic to $\{3, 4\}$ and so $N_2$ satisfies $\bq\preceq \bu$.
This implies that $\ell(\bq)\geq 2$.
Since $L_2$ is isomorphic to $\{1, 2\}$,
it follows that $L_2$ satisfies $\bq\preceq \bu$, and so
there exists $\bu_i \in \bu$ such that $h(\bu_i)=h(\bq)$.

If $|c(\bq)|=1$, then
\[
\bu \succeq \bu_i = h(\bq)s(\bu_i) \stackrel{\eqref{72703}, \eqref{72704}}\succeq h(\bq)^2\stackrel{\eqref{72701}}\approx \bq.
\]
Now we consider the case that $|c(\bq)|\geq2$.
Write \(\bq=x_1x_2\cdots x_m\), with \(m\geq2\).

\textbf{Case 1.} $x_1=x_2$. Then
\[
\bu \succeq \bu_i = h(\bq)s(\bu_i) \stackrel{\eqref{72703}, \eqref{72704}}\succeq h(\bq)^2\stackrel{\eqref{72702}}\approx h(\bq)^2\bq_1= \bq.
\]
This derives the inequality $\bu\succeq \bq$.

\textbf{Case 2.} $x_1\neq x_2$.
We shall show that there exists $\bu_j\in\bu$ such that $\bu_j=x_1x_2\bu_j'$ or $x_2\bu_j'$, where $\bu_j'\in X^*$.
Suppose that this is not true.
Consider the semiring homomorphism $\varphi: P_f(X^+) \to S_{(4, 727)}$ defined by
$\varphi(x_1)=3$, $\varphi(x_2)=1$, and $\varphi(x)=4$ otherwise.
It follows that $\varphi(\bu)=4$ and $\varphi(\bq)=2$, a contradiction.
Hence such a $\bu_j$ exists.

If $\bu_j=x_1x_2\bu_j'$, then
\[
\bu \succeq \bu_j=x_1x_2\bu_j' \stackrel{\eqref{72702}} \approx x_1x_2\cdots x_m=\bq.
\]
If $\bu_j=x_2\bu_j'$, then
\[
\bu \succeq \bu_i+\bu_j=h(\bq)s(\bu_i)+x_2\bu_j' \stackrel{\eqref{72705}}\succeq h(\bq)x_2 \stackrel{\eqref{72702}}\approx x_1x_2\cdots x_m=\bq.
\]
This proves the inequality $\bu \succeq\bq$.
\end{proof}

\begin{cor}
The ai-semiring $S_{(4, 616)}$ is finitely based.
\end{cor}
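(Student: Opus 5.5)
The plan is to use the pattern the paper applies to every such corollary: $S_{(4,616)}$ and $S_{(4,727)}$ should have dual multiplications. Since the two algebras share the chain additive reduct, duality transfers the finite basis property. Proposition~\ref{pro72701} says $S_{(4,727)}$ is finitely based, with basis \eqref{72701}--\eqref{72705}, so $S_{(4,616)}$ would be finitely based as well. Explicitly, its basis is obtained by reversing every word in \eqref{72701}--\eqref{72705}.

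The only thing to verify is the duality itself. Both algebras live on $\{1,2,3,4\}$ with the chain order $4<3<2<1$, so one checks from Table~\ref{tb1} that the multiplication table of $S_{(4,616)}$ is the transpose of that of $S_{(4,727)}$. If the tables are transposes only up to the unique order-automorphism, which is the identity on a chain, nothing further is needed.

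The main obstacle is the case where the tables turn out not to be transposes. The fallback is then to show $\mathsf V(S_{(4,616)})=\mathsf V(S_{(4,727)})$ or its dual, which takes two checks. First, $S_{(4,616)}$ satisfies the reversed identities \eqref{72701}--\eqref{72705}. Second, the generators of $\mathsf V(S_{(4,727)})$ embed into $S_{(4,616)}$ after reversal. These generators are $N_2$, $L_2$ and whatever $3$-element factor appears in a subdirect decomposition. Their reversed forms are $N_2$, $R_2$ and the dual factor, and each should be realized as a subalgebra or quotient of $S_{(4,616)}$, as is done in Remark~\ref{remark26080820}.
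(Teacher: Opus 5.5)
Your proposal is correct and is the paper's own argument: transposing the multiplication table of $S_{(4,727)}$ (columns $1,2,2,4$ and $1,2,4,4$ three times) gives exactly the table of $S_{(4,616)}$. So the two algebras have dual multiplications, and finite basedness transfers from Proposition~\ref{pro72701}. The fallback paragraph is therefore never needed.
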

\begin{proof}
Notice that $S_{(4, 616)}$ and $S_{(4, 727)}$ have dual multiplications,
therefore, $S_{(4, 616)}$ is also finitely based.
\end{proof}

\begin{pro}\label{pro52501}
$\mathsf{V}(S_{(4, 525)})$ is the ai-semiring variety defined by the identities
\begin{align}
& xy \approx yx; \label{52502}\\
& x \preceq xy; \label{52503}\\
& xy \preceq x^2+y^2y_1; \label{52504}\\
& xyy_1 \preceq x^3+yy_1; \label{52507}\\
& x^2y \preceq x^2+y^3; \label{52500}\\
& x^2y \preceq x^2+xy^2; \label{52506}\\
& x^2y \preceq x^3+y_1yy_2; \label{52505}\\
& xyz \preceq x^2y+xz+yz; \label{52508}\\
& xyzt \approx xyz+xyt+xzt+yzt, \label{52501}
\end{align}
where $y_1$ may be empty in \eqref{52504}, \eqref{52507} and \eqref{52505},
$y_2$ may be empty in \eqref{52505}.
\end{pro}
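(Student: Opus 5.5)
We follow the same two-step scheme used for the other bases in this section. First, we verify directly from the Cayley table of $S_{(4,525)}$ that \eqref{52502}--\eqref{52501} hold. This is a finite check, and \eqref{52501} is the only one that needs care. We then show that every nontrivial inequality $\bq\preceq\bu$ of $S_{(4,525)}$ is derivable. By \eqref{52502} we may treat all words as commutative, so only contents and multiplicities matter. By \eqref{52501}, every word of length at least $4$ is equivalent to the sum of its subwords of length $3$. Hence we may assume $\ell(\bq)\leq 3$ and $\ell(\bu_i)\leq 3$ for all $i$, and it suffices to derive $\bu\succeq\bq$ for $\bq$ of length $1$, $2$ or $3$.

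Next we extract necessary conditions from $S_{(4,525)}$. We will look for small subalgebras and quotients, most likely $S_{53}$ (as in Proposition~\ref{pro49501}), together with $M_2$ and $N_2$. Each condition that these do not yield we will prove by an ad hoc substitution into $\{1,2,3,4\}$: variables of a chosen set go to $2$ or $3$ and all others go to $4$, giving $\varphi(\bq)\neq\varphi(\bu)+\varphi(\bq)$, exactly as in Propositions~\ref{pro49301} and~\ref{pro53201}. The expected conditions are the following.
\begin{itemize}
\item[(a)] $c(\bq)\subseteq c(\bu)$, and if $\ell(\bq)=1$ then $\bq$ occurs in some $\bu_i$ of length at least $2$.
\item[(b)] If $\bq=xy$ with $x\neq y$, then either some $\bu_i$ contains both $x$ and $y$, or $x^2$ (or $y^2$) divides some $\bu_i$ and $y$ occurs in a word of suitable length.
\item[(c)] If $\bq=x^2$, then either $x^2$ divides a word of $\bu$, or $x^2$ is itself in $\bu$ together with some $x$-containing word.
\item[(d)] If $\ell(\bq)=3$, then each of the length-$2$ subwords $xy$, $xz$, $yz$ satisfies (b) or (c), plus a condition forcing either a length-$3$ witness or a square $x^2y$ occurring in $\bu$.
\end{itemize}

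The derivations then go case by case. For $\ell(\bq)=1$, apply \eqref{52503}. For $\bq=xy$, if some $\bu_i=xy\bu_i'$ use \eqref{52503}. Otherwise combine a square $x^2$ or $x^2y_1$ with a word containing $y$, using \eqref{52504}, \eqref{52507}, \eqref{52500} or \eqref{52506}; the optional-variable conventions are what let one identity cover words of length $1$, $2$ and $3$. For $\bq=x^2y$ or $\bq=x^2$, use \eqref{52505}, \eqref{52506} and \eqref{52500}. For $\bq=xyz$ with distinct variables, first derive $\bu\succeq xz+yz$ and $\bu\succeq x^2y$ (or $xy\bu'$) from the length-$2$ cases, then close with \eqref{52508}. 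When a single word already contains $x,y,z$, \eqref{52503} suffices.

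The main obstacle is the length-$3$ case with repeated variables ($x^2y$, $x^3$), together with pinning down exactly which configurations of short words the algebra forces. This is where the asymmetric identities \eqref{52507} ($x^3$ versus $x^2$) and \eqref{52505} enter. I would first compute $S_{(4,525)}$ on products of elements $2$ and $3$ to see how $x^2$, $x^3$ and $x^2y$ are distinguished. Then I would choose the separating substitutions accordingly, and check that every surviving configuration matches the right-hand side of one listed identity, possibly after weakening via \eqref{52503}.
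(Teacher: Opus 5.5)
Your framework matches the paper's. You use \eqref{52502} and \eqref{52501} to reduce to $\ell(\bq)\le 3$ and $\ell(\bu_i)\le 3$, get necessary conditions from $S_{53}$ (Lemma~\ref{lem5301}) plus separating substitutions, and close each case with the listed identities. The gap is that the necessary conditions, which carry the real content of the proof, are left as ``expected''. In places they are also wrong, so derivations built on them would fail.

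A minor point first: $N_2$ is not usable. It fails \eqref{52503}, so it is not in $\mathsf{V}(S_{(4,525)})$, and indeed $\ell(\bq)=1$ does occur.

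More seriously, (b) is misstated. Take $\bq=xy$ with $x\neq y$. The substitution $x,y\mapsto 3$, others $\mapsto 4$, forces a word divisible by $xy$, $x^2$ or $y^2$. Suppose some word is divisible by $x^2$ but none by $xy$ or $y^2$. Then $x\mapsto 3$, $y\mapsto 2$, others $\mapsto 4$ gives $\varphi(\bq)=1$ and $\varphi(\bu)=2$ unless $x^3\in\bu$. So the extra requirement is that the $x$-word be $x^3$; it has nothing to do with the length of the $y$-word. For instance, $xy\preceq x^2z+ywt$ fails in $S_{(4,525)}$. With $x^3\in\bu$, you get $\bu\succeq x^3+yy_1\succeq xyy_1\succeq xy$ by \eqref{52507} and \eqref{52503}. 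The case where some word contains $y^2$ is covered by \eqref{52504}.

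For $\bq=xyz$, the inequality $\bu\succeq x^2y$ cannot come from the length-$2$ cases. A word $xy\bu'$ also does not feed \eqref{52508}: $xyz\preceq xyw+xz+yz$ fails. What you need is the substitution $x,y,z\mapsto 3$, others $\mapsto 4$. It forces a length-$3$ word with content in $\{x,y,z\}$, and each possible witness closes as follows:
\begin{itemize}
\item $x^2y$ up to permutation: use \eqref{52508} together with the already derived $\bu\succeq xy+xz+yz$;
\item $x^3$: use \eqref{52507} together with $\bu\succeq yz$;
\item $xyz$: trivial.
\end{itemize}

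The same substitution on $\{x,y\}$ settles $\bq=x^2y$. It forces one of $x^3$, $x^2y$, $xy^2$, $y^3$ into $\bu$:
\begin{itemize}
\item $x^3$: apply \eqref{52505} with a word containing $y$;
\item $xy^2$: apply \eqref{52506} together with $\bu\succeq x^2$;
\item $y^3$: apply \eqref{52500} together with $\bu\succeq x^2$;
\item $x^2y$: trivial.
\end{itemize}
Finally, $\bq=x^3$ forces $x^3\in\bu$.

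All these conditions are easy to read off if you relabel $4,3,2,1$ as $0,1,2,3$. Then $S_{(4,525)}$ is $\{0,1,2,3\}$ with addition $\max$ and multiplication $a\cdot b=\min(a+b,3)$. It then suffices to test weights in $\{0,1,2\}$ on $c(\bq)$ with weight $0$ elsewhere.
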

\begin{proof}
It is easy to check that both $S_{(4, 525)}$ satisfies the identities \eqref{52502}--\eqref{52501}.
In the remainder it is enough to prove that every identity that holds in $S_{(4, 525)}$
can be derived by \eqref{52502}--\eqref{52501}.
Let $\bq\preceq \bu$ be such an inequality, where
$\bu=\bu_1+\bu_2+\cdots+\bu_n$ and $\bu_i, \bq \in X^+$, $1 \leq i \leq n$.
Since $S_{53}$ and $S_{59}$ can be embedded in $S_{(4, 525)}$,
we have that both $S_{53}$ and $S_{59}$ satisfy $\bq\preceq \bu$.
By Lemma \ref{lem5301},
$L_{\geq 2}(\bu)\neq \emptyset$, $c(\bq)\subseteq c(\bu)$,
and for every $\bw\in S_2(\bq)$ there exists $\bw'\in S_2(\bu)$ such that $c(\bw')\subseteq c(\bw)$.
Moreover, the identity \eqref{52501} tells us that $\ell(\bq)\leq3$ and $\ell(\bu_i)\leq3$ for all $\bu_i\in \bu$.

\textbf{Case 1.} $\ell(\bu_i)\leq2$ for all $\bu_i\in\bu$.
By Lemma~\ref{lem5901}, we have $\ell(\bq)\leq 2$, and if $\ell(\bq)=2$, then $c(\bq)\subseteq c(L_2(\bu))$.
If $\ell(\bq)=1$, then $c(\bq)\subseteq c(\bu_j)$ for some $\bu_j\in L_2(\bu)$, and thus
\[
\bu \succeq \bu_j \stackrel{\eqref{52502}}\approx \bq\bu_j' \stackrel{\eqref{52503}}\succeq \bq.
\]
Now suppose that $\ell(\bq)=2$. Then $\bq=x^2$ or $xy$.
If $\bq=x^2$, Lemma~\ref{lem5301} yields that $\bq \preceq \bu$ is trivial.
Let $\bq=xy$. Lemma~\ref{lem5301} gives some $\bu_k\in \bu$ such that $\bu_k=x^2$, $y^2$, or $xy$.
If $\bu_k=xy$, then $\bu\succeq xy=\bq$ is immediate.
If $\bu_k=y^2$, the argument is symmetric to the case $\bu_k=x^2$; hence we may assume $\bu_k=x^2$.
In this case, we claim that there exists $\bu_\ell\in \bu$ with $\bu_\ell=y^2$.
Suppose that this is not true.
Consider the semiring homomorphism $\varphi: P_f(X^+) \to S_{(4, 525)}$ defined by
$\varphi(x)=3$, $\varphi(y)=2$, $\varphi(z)=4$ otherwise.
Then $\varphi(\bq)=1$ and $\varphi(\bu)=2$, a contradiction.
Hence such a $\bu_\ell$ exists. Consequently,
\[
\bu \succeq \bu_k+\bu_\ell=x^2+y^2 \stackrel{\eqref{52504}}\succeq xy=\bq.
\]
This derives the inequality $\bu \succeq\bq$.

\textbf{Case 2.} $\ell(\bu_i)=3$ for some $\bu_i\in \bu$.
If $\ell(\bq)=1$, then the remaining steps are analogous to Case 1.
Next, we consider $\ell(\bq)=2$. Then $\bq=x^2$ or $xy$.
If $\bq=x^2$, then the remaining steps are similar to Case 1.

Now let $\bq=xy$. By Lemma~\ref{lem5301}, there is $\bw\in S_2(\bu_k)$ such that $c(\bw)\subseteq\{x, y\}$ for some $\bu_k\in \bu$,
and so $\bw=x^2$ or $y^2$ or $xy$.
If $\bw=xy$, then $\bu_k=xy\bu_k'$ for some $\bu_k'\in X^*$, and so
\[
\bu \succeq \bu_k=xy\bu_k'  \stackrel{\eqref{52503}}\succeq xy=\bq.
\]
If $\bw=y^2$, the argument is symmetric to the case $\bw=x^2$; thus it suffices to consider $\bw=x^2$.
Assume that $\bw=x^2$. Then there is $\bu_s\in\bu$ such that $\bu_s=x^3$, $y^2\bu_s'$, or $xy\bu_s'$ for some $\bu_s'\in X^*$.
Suppose otherwise. Consider the semiring homomorphism $\varphi: P_f(X^+) \to S_{(4, 525)}$ defined by
$\varphi(x)=3$, $\varphi(y)=2$, and $\varphi(z)=4$ otherwise.
Then $\varphi(\bq)=1$ and $\varphi(\bu)=2$, a contradiction.
Hence such a $\bu_s$ exists.
If $\bu_s=x^3$, then there exists $\bu_r\in\bu$ such that $\bu_r=y\bu_r'$ for some $\bu_r'\in X^*$, and so
\[
\bu \succeq \bu_s+\bu_r=x^3+y\bu_r'\stackrel{\eqref{52507}} \succeq xy\bu_r'\stackrel{\eqref{52503}} \succeq xy=\bq
\]
If $\bu_s=y^2\bu_s'$ or $xy\bu_s'$, then
\[
\bu \succeq \bu_k+\bu_s \stackrel{\eqref{52503}} \succeq x^2+y^2\bu_s'\stackrel{\eqref{52504}} \succeq xy=\bq
\]
or
\[
\bu \succeq \bu_s=xy\bu_s'  \stackrel{\eqref{52503}}\succeq xy=\bq.
\]
This derives the inequality $\bu \succeq\bq$.

Finally, let $\ell(\bq)=3$. Then $\bq=x^3$ or $x^2y$ or $xyz$.
If $\bq=x^3$, then there is $\bu_r\in\bu$ such that $\bu_r=x^3$.
Suppose that this is not true.
Consider the semiring homomorphism $\varphi: P_f(X^+) \to S_{(4, 525)}$ defined by
$\varphi(x)=3$, $\varphi(y)=4$ otherwise.
Then $\varphi(\bq)=1$ and $\varphi(\bu)=2$, a contradiction.
Hence there is $\bu_r\in\bu$ such that $\bu_r=x^3$, and so $\bq\preceq \bu$ is trivial.

If $\bq=x^2y$, then Lemma~\ref{lem5301} tells us that there is $\bu_k\in \bu$ such that $\bu_k=x^2\bu_k'$ for some $\bu_k'\in X^*$,
and so
\[
\bu \succeq \bu_k=x^2\bu_k'  \stackrel{\eqref{52503}}\succeq x^2.
\]
Furthermore, we shall show that there is $\bu_t\in\bu$ such that $\bu_t=x^3$ or $xy^2$ or $x^2y$ or $y^3$.
Suppose that this is not true.
Consider the semiring homomorphism $\varphi: P_f(X^+) \to S_{(4, 525)}$ defined by
$\varphi(x)=\varphi(y)=3$, $\varphi(z)=4$ otherwise.
Then $\varphi(\bq)=1$ and $\varphi(\bu)=2$, a contradiction.
Hence such a $\bu_t$ exists.
If $\bu_t=x^3$, then there exists $\bu_r\in\bu$ such that $\bu_r=y\bu_r'$ for some $\bu_r'\in X^*$, and so
\[
\bu \succeq \bu_t+\bu_r=x^3+y\bu_r'\stackrel{\eqref{52505}} \succeq x^2y=\bq.
\]
If $\bu_t=xy^2$, then
\[
\bu \succeq x^2+\bu_t=x^2+xy^2\stackrel{\eqref{52506}} \succeq x^2y=\bq.
\]
If $\bu_t=y^3$, then
\[
\bu \succeq x^2+\bu_t=x^2+y^3\stackrel{\eqref{52500}} \succeq x^2y=\bq.
\]

Now let $\bq=xyz$. The identity \eqref{52502} implies that $S_2(\bq)=\{xy, yz, xz\}$.
Using the same reasoning as for $\bq=xy$ in Case 2, we derive the inequalities
$\bu\succeq xy$, $\bu\succeq yz$, and $\bu\succeq xz$.
Moreover, we shall show that there is $\bu_m\in\bu$ such that $\bu_m=x^3$ or $y^3$ or $z^3$ or $xyz$ or $x^2y$ or $x^2z$ or $y^2z$
or $y^2x$ or $z^2x$ or $z^2y$.
Suppose that this is not true.
Consider the semiring homomorphism $\varphi: P_f(X^+) \to S_{(4, 525)}$ defined by
$\varphi(x)=\varphi(y)=\varphi(z)=3$, $\varphi(t)=4$ otherwise.
Then $\varphi(\bq)=1$ and $\varphi(\bu)=2$, a contradiction.
Hence such a $\bu_m$ exists.
If $\bu_m=x^3$, then
\[
\bu \succeq \bu_m+yz=x^3+yz \stackrel{\eqref{52507}}\succeq xyz=\bq.
\]
The same reasoning applies to the cases $\bu_m = y^3$ and $\bu_m = z^3$.
If $\bu_m=x^2y$, then
\[
\bu \succeq \bu_m+yz+xz=x^2y+xz+yz \stackrel{\eqref{52508}}\succeq xyz=\bq.
\]
The remaining cases ($\bu_m = x^2z, y^2x, y^2z, z^2x, z^2y$) are analogous.

Hence, we obtain the inequality $\bu \succeq \bq$, which completes the proof.
\end{proof}

\begin{pro}\label{pro52401}
$\mathsf{V}(S_{(4, 524)})$ is the ai-semiring variety defined by the identities
\begin{align}
& xyz \approx yxz; \label{52403}\\
& x^3 \approx x^2; \label{52411}\\
& y \preceq xy; \label{52402}\\
& y^2 \preceq x_1y^2x_2; \label{52404}\\
& xyz \preceq x^2+yz; \label{52408}\\
& x^2y \preceq x^2+zy; \label{52405}\\
& xy^2 \preceq x_1xx_2+y^2; \label{52406}\\
& xy \preceq x_1yxx_2+x_3y; \label{52407}\\
& xyz \preceq xyt+yz+xz; \label{52409}\\
& xyz \preceq yx+yz+xz; \label{52410}\\
& xyzt \approx xyt+xzt+yzt, \label{52401}
\end{align}
where $x_1$ and $x_2$ may be empty in \eqref{52404}, \eqref{52406} and \eqref{52407},
$z$ may be empty in \eqref{52405},
$t$ may be empty in \eqref{52409},
$x_3$ may be empty in \eqref{52407}.
\end{pro}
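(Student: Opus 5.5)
The proof will follow the scheme used for $S_{(4,525)}$ in Proposition~\ref{pro52501}. First we check directly on the Cayley table that $S_{(4,524)}$ satisfies \eqref{52403}--\eqref{52401}. It then remains to show that every nontrivial inequality $\bq\preceq\bu$ of $S_{(4,524)}$, with $\bu=\bu_1+\cdots+\bu_n$, is derivable. We find suitable small subalgebras and quotients of $S_{(4,524)}$. We expect $S_{53}$ (or its left–right dual) and $R_2$ to embed, since the axioms \eqref{52402} and \eqref{52403} are the duals of the $L_2$-type laws in $S_{(4,543)}/S_{(4,544)}$. From them we extract the necessary conditions $t(\bu_j)=t(\bq)$ for some $j$, $c(\bq)\subseteq c(\bu)$, $L_{\geq2}(\bu)\neq\emptyset$, and the $S_2$-covering condition of Lemma~\ref{lem5301}. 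Using \eqref{52401} repeatedly, we reduce to $\ell(\bq)\leq 3$ and $\ell(\bu_i)\leq 3$ for every $i$. Using \eqref{52403} and \eqref{52411}, we put each word into a normal form in which only the tail is positionally fixed and exponents are at most $2$.

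The main argument is a case split on $\ell(\bq)$. For $\ell(\bq)=1$ we pick $\bu_j$ with $t(\bu_j)=\bq$ and apply \eqref{52402}. For $\ell(\bq)=2$ we write $\bq=xy$ or $\bq=y^2$, and take $\bw\in S_2(\bu)$ with $c(\bw)\subseteq c(\bq)$. The subcases parallel Subcases 2.1--2.4 of Proposition~\ref{pro54401}, but left–right dualized.
\begin{itemize}
\item If $\bw=y^2$, we use \eqref{52404} together with \eqref{52406} or \eqref{52405}.
\item If $\bw=x^2$, we combine $\bu_j$ with tail $y$ and apply \eqref{52405}.
\item If $\bw=yx$ occurs inside some $\bu_k$, we apply \eqref{52407} together with $\bu_j=\cdots y$.
\item If $\bw=xy$, the conclusion follows directly from \eqref{52402}.
\end{itemize}
For $\ell(\bq)=3$, say $\bq=xyz$ in normal form (tail $z$), we first derive $\bu\succeq xz$ and $\bu\succeq yz$ as in the length-$2$ case. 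We then derive either $\bu\succeq xy$ or $\bu\succeq yx$ via the covering condition, or directly $\bu\succeq x^2$. We conclude with \eqref{52409}, \eqref{52410} or \eqref{52408}, using \eqref{52403} to swap the first two letters.

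The main obstacle is certifying that these necessary conditions are really forced. The three-element quotients give only part of the information, especially in the degenerate subcases where $\bq=x^2y$, $\bq=xy^2$ or $\bq=x^3$, or where the only witness has the wrong tail. For those we will argue as in Propositions~\ref{pro52501} and~\ref{pro72101}. We assume the required word is absent from $\bu$ and exhibit an explicit substitution $\varphi\colon P_f(X^+)\to S_{(4,524)}$, sending the variables of $\bq$ to generators of the non-nilpotent part and all other variables to the absorbing bottom element $4$, for which $\varphi(\bq)\not\leq\varphi(\bu)$. We will try the substitutions with $\varphi(x)=3$, $\varphi(y)=2$ first. Each such contradiction should yield exactly the hypothesis of one of the axioms \eqref{52404}--\eqref{52410}, which closes every case.
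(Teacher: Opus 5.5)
Your skeleton is the paper's: the $S_{53}$ conditions of Lemma~\ref{lem5301}, reduction to length at most $3$ by \eqref{52401}, a case split on $\ell(\bq)$, and closing length $3$ with \eqref{52408}--\eqref{52410}. However, one of the necessary conditions you build on is false.

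$R_2$ is not in $\mathsf{V}(S_{(4,524)})$: the instance $y^2\preceq y^2x$ of \eqref{52404} fails in $R_2$ (put $y\mapsto 1$, $x\mapsto 0$). Hence $T_{\bq}(\bu)\neq\emptyset$ is not forced. For instance, $S_{(4,524)}$ satisfies $y\preceq y^2x$ and $x^2y\preceq x^2+y^2z$, yet no word on the right ends in $y$. Your step for $\ell(\bq)=1$ breaks on the first of these. The $\bw=x^2$ and $\bw=yx$ bullets, and the case $\bq=x^2y$, all presuppose a word ending in $y$ that need not exist. Your fallback of producing a separating substitution ``when the required word is absent'' cannot work, because these inequalities hold. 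Note also that $4$ is not absorbing: it is the bottom and a left identity, $1$ is the multiplicative zero, and $2\cdot 4=3$. This last fact is exactly what the relevant substitution exploits.

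The missing ingredient is the disjunctive condition the paper proves: some $\bu_\ell\in\bu$ has $t(\bu_\ell)=t(\bq)$ or $m(t(\bq),\bu_\ell)\geq 2$. To see this, take $\varphi(t(\bq))=2$ and send all other variables to $4$. Then $\varphi(\bq)\in\{1,2\}$, whereas every word with neither property evaluates to $3$ or $4$.

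The second alternative needs its own treatment. By \eqref{52403} and \eqref{52404} you get $\bu\succeq t(\bq)^2$. You then finish with one of:
\begin{itemize}
\item $y\preceq y^2$, an instance of \eqref{52402};
\item \eqref{52405} with $z=y$;
\item \eqref{52407} with $x_3=y$;
\item \eqref{52406}.
\end{itemize}

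There is a smaller slip of the same kind. ``$\bw=xy$ follows directly from \eqref{52402}'' is valid only when $xy$ is a suffix, since $xy\preceq xyw$ fails ($y\mapsto 2$, $x,w\mapsto 4$). A non-final factor $xy$ must be rewritten as $yx\cdots$ by \eqref{52403} and combined with a $y$-witness through \eqref{52406} or \eqref{52407}. In length $3$ such a factor only gives $\bu\succeq xyt$, not $\bu\succeq xy$; this is why \eqref{52409} carries the variable $t$.
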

\begin{proof}
It is easy to check that both $S_{(4, 524)}$ satisfies the identities \eqref{52403}--\eqref{52401}.
In the remainder it is enough to prove that every identity that holds in $S_{(4, 524)}$
can be derived by \eqref{52403}--\eqref{52401}.
Let $\bq\preceq \bu$ be such a nontrivial inequality, where
$\bu=\bu_1+\bu_2+\cdots+\bu_n$ and $\bu_i, \bq \in X^+$, $1 \leq i \leq n$.
Since $S_{53}$ is isomorphic to the quotient algebra $S_{(4, 524)}/\rho$,
where $\{2, 3\}$ is the nontrivial block of $\rho$,
we have that $S_{53}$ satisfies $\bq\preceq \bu$.
By Lemma \ref{lem5301},
$L_{\geq 2}(\bu)\neq \emptyset$, $c(\bq)\subseteq c(\bu)$,
and for every $\bw\in S_2(\bq)$ there exists $\bw'\in S_2(\bu)$ such that $c(\bw')\subseteq c(\bw)$.

The identity \eqref{52401} tells us that $\ell(\bq)\leq3$ and $\ell(\bu_i)\leq3$ for all $\bu_i\in \bu$.
We claim that there exists some $\bu_\ell\in \bu$ such that either $t(\bq)=t(\bu_\ell)$ or $m(t(\bq), \bu_\ell)\geq 2$.
Suppose otherwise.
Consider the semiring homomorphism $\varphi: P_f(X^+) \to S_{(4, 524)}$ defined by
$\varphi(t(\bq))=2$, $\varphi(t)=4$ otherwise.
Then $\varphi(\bq)=2$ or $1$, and $\varphi(\bu)=3$, a contradiction.
Hence such a $\bu_\ell$ does exist.

\textbf{Case 1.} $\ell(\bq)=1$, so $\bq=t(\bq)$.
If $t(\bq)=t(\bu_\ell)$, then we immediately obtain
\[
\bu \succeq \bu_\ell = p(\bu_\ell)\bq \stackrel{\eqref{52402}}\succeq \bq.
\]
Otherwise, we have $m(t(\bq), \bu_\ell)\geq 2$, and hence
\[
\bu \succeq \bu_\ell \stackrel{\eqref{52403}}\approx \bu_\ell'\bq^2\bu_\ell'' \stackrel{\eqref{52404}}\succeq \bq^2 \stackrel{\eqref{52402}}\succeq \bq.
\]
This proves $\bu\succeq\bq$ in this case.

\textbf{Case 2.} $\ell(\bq)=2$. Then $\bq=x^2$ or $\bq=xy$.

If $\bq=x^2$, Lemma~\ref{lem5301} ensures that there exists $\bu_k\in\bu$ with $\bu_k=\bu_k'x^2\bu_k''$ for some $\bu_k',\bu_k''\in X^*$. Hence
\[
\bu \succeq \bu_k = \bu_k'x^2\bu_k'' \stackrel{\eqref{52404}}\succeq x^2=\bq.
\]

Now assume $\bq=xy$. By Lemma~\ref{lem5301}, there is $\bw\in S_2(\bu)$ such that $c(\bw)\subseteq\{x,y\}$; thus $\bw\in\{x^2,y^2,xy,yx\}$.

\textbf{Subcase 2.1.} $\bw=x^2$. Then $\bu_k=p_1x^2p_2$ for some $\bu_k\in\bu$ and $p_1,p_2\in X^*$; by \eqref{52404}, $\bu_k\succeq x^2$.
Since $c(\bq)\subseteq c(\bu)$, there is $\bu_\ell\in\bu$ such that $y\in c(\bu_\ell)$.
If $\bu_\ell=p(\bu_\ell)y$, then
\[
\bu \succeq x^2+\bu_\ell=x^2+p(\bu_\ell)y \stackrel{\eqref{52405}, \eqref{52402}}\succeq xy=\bq.
\]
Otherwise, $\bu_\ell=\bu_\ell' y^2\bu_\ell''$, and thus
\[
\bu \succeq x^2+\bu_\ell=x^2+\bu_\ell' y^2\bu_\ell'' \stackrel{\eqref{52404}}\succeq x^2+y^2 \stackrel{\eqref{52405}, \eqref{52402}}\succeq xy=\bq.
\]

\textbf{Subcase 2.2.} $\bw=y^2$. Then $\bu_k=p_1y^2p_2$ for some $\bu_k\in\bu$ and $p_1,p_2\in X^*$;
hence $\bu_k\succeq y^2$ by \eqref{52404}.
As $c(\bq)\subseteq c(\bu)$,
we can choose $\bu_r\in\bu$ such that $x\in c(\bu_r)$;
write $\bu_r=p_3xp_4$. Then
\[
\bu \succeq y^2+\bu_r = y^2+p_3xp_4 \stackrel{\eqref{52406}, \eqref{52402}}\succeq xy=\bq.
\]

\textbf{Subcase 2.3.} $\bw=xy$. Then $\bu_k=xy\bu_k'$ or $\bu_k=\bu_k'xy$ for some $\bu_k'\in X^*$.
If $\bu_k=\bu_k'xy$, then by \eqref{52402},
\[
\bu \succeq \bu_k \succeq xy=\bq.
\]
If $\bu_k=xy\bu_k'$, we choose $\bu_\ell\in\bu$ such that $\bu_\ell=\bu_\ell' y^2\bu_\ell''$ (which exists by the same argument as above).
Then
\[
\bu \succeq \bu_\ell+\bu_k=\bu_\ell'y^2\bu_\ell''+xy\bu_k' \stackrel{\eqref{52404}}\succeq y^2+xy\bu_k' \stackrel{\eqref{52406}, \eqref{52402}}\succeq xy=\bq.
\]
If instead no such $\bu_\ell$ exists, then we must have $\bu_\ell=\bu_\ell' y$ for some $\bu_\ell'\in X^*$, and hence
\[
\bu \succeq \bu_\ell+\bu_k=\bu_\ell'y+xy\bu_k' \stackrel{\eqref{52407}}\succeq xy=\bq.
\]

\textbf{Subcase 2.4.} $\bw=yx$. Then $\bu_k=\bu_k'yx\bu_k''$ for some $\bu_k',\bu_k''\in X^*$.
If there is $\bu_\ell\in\bu$ such that $\bu_\ell=p(\bu_\ell)y$, then
\[
\bu \succeq \bu_k+\bu_\ell=\bu_k'yx\bu_k''+p(\bu_\ell)y \stackrel{\eqref{52407}}\succeq xy=\bq.
\]
Otherwise, we may take $\bu_\ell=\bu_\ell' y^2\bu_\ell''$, and then
\[
\bu \succeq \bu_\ell+\bu_k=\bu_\ell' y^2\bu_\ell''+\bu_k'yx\bu_k''
\stackrel{\eqref{52404}}\succeq y^2+\bu_k'yx\bu_k''
\stackrel{\eqref{52407}}\succeq xy=\bq.
\]

Thus, in every subcase, the inequality $\bu\succeq\bq$ is derived.

\textbf{Case 3.} $\ell(\bq)=3$. Then $\bq=x^3$ or $x^2y$ or $xy^2$ or $xyz$.

First, suppose that $\bq=x^3$.
By Lemma~\ref{lem5301}, there exists $\bu_k\in\bu$ such that $\bu_k=\bu_k'x^2\bu_k''$ for some $\bu_k',\bu_k''\in X^*$.
Hence, using \eqref{52404},
\[
\bu \succeq \bu_k = \bu_k'x^2\bu_k'' \succeq x^2.
\]
Since identity \eqref{52411} gives $x^2\approx x^3=\bq$, we obtain $\bu\succeq\bq$.

Next, assume that $\bq=x^2y$.
Since $c(\bq)\subseteq c(\bu)$, it follows that there is $\bu_r\in\bu$ such that $y\in c(\bu_r)$, and so $\bu_r=p_3yp_4$.
Hence we have
\[
\bu\succeq x^2+\bu_r=x^2+p_3y \stackrel{\eqref{52405}}\succeq x^2y=\bq
\]
or
\[
\bu\succeq x^2+\bu_r=x^2+p_3y^2p_4 \stackrel{\eqref{52404}}\succeq x^2+y^2 \stackrel{\eqref{52405}}\succeq x^2y=\bq.
\]

Now suppose that $\bq=xy^2$.
Lemma \ref{lem5301} tells us that there is $\bu_k\in \bu$ such that $\bu_k=\bu_k'y^2\bu_k''$, and so
\[
\bu \succeq \bu_k =\bu_k'y^2\bu_k'' \stackrel{\eqref{52404}}\succeq y^2.
\]
Since $c(\bq)\subseteq c(\bu)$, it follows that there is $\bu_r\in\bu$ such that $x\in c(\bu_r)$, and so $\bu_r=p_3xp_4$.
Hence we have
\[
\bu\succeq y^2+\bu_r=y^2+p_3xp_4 \stackrel{\eqref{52406}}\succeq xy^2=\bq.
\]

Finally, consider $\bq=xyz$. The identity \eqref{52403} implies that $S_2(\bq)=\{xy, yz, xz\}$.
Using a method similar to Case 2, we can obtain $\bu \succeq xyy_1$ or $\bu \succeq yx$, $\bu \succeq xz$, and $\bu \succeq yz$.
Furthermore, we have
\[
\bu \succeq xyy_1+xz+yz \stackrel{\eqref{52409}}\succeq xyz
\]
or
\[
\bu \succeq xz+yz+yx \stackrel{\eqref{52410}}\succeq xyz.
\]
This completes the proof.
\end{proof}

\begin{cor}
The ai-semiring $S_{(4, 531)}$ is finitely based.
\end{cor}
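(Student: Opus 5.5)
The natural plan is to reduce $S_{(4,531)}$ to the algebra treated immediately before, Proposition~\ref{pro52401}, via the duality principle from the introduction. Algebras with the same additive reduct and dual multiplications have the same finite basis property. The paper has repeatedly used this pattern: for example, $S_{(4,543)}$ and $S_{(4,544)}$, and $S_{(4,509)}$ and $S_{(4,536)}$. So I will check from Table~\ref{tb1} whether the multiplication table of $S_{(4,531)}$ is the transpose of that of $S_{(4,524)}$.

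Since both algebras share the chain additive reduct with carrier $\{1,2,3,4\}$, this is a direct comparison of the sixteen entries, using the identity map on the carrier. If they agree, then $S_{(4,531)}$ satisfies exactly the mirror images $\mathbf{u}\approx\mathbf{v}$ $\mapsto$ $\overleftarrow{\mathbf{u}}\approx\overleftarrow{\mathbf{v}}$ of the identities of $S_{(4,524)}$. The reversed identities \eqref{52403}--\eqref{52401} then form a finite basis, and the corollary follows at once from Proposition~\ref{pro52401}.

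If the tables are not literally transposes under the identity map, I would look for a relabelling that preserves the chain order; the only order automorphism of a chain is the identity, so this collapses to the first check. Failing duality, I would fall back on the other recurring strategy: show that $\mathsf{V}(S_{(4,531)})=\mathsf{V}(S_{(4,524)})$ or $\mathsf{V}(S_{(4,525)})$. One inclusion comes from verifying that $S_{(4,531)}$ satisfies the corresponding basis. The other comes from exhibiting generators (such as $S_{53}$, $S_{59}$ or $L_2$) as subalgebras or quotients of $S_{(4,531)}$, using a subdirect decomposition through two congruences with blocks such as $\{2,3\}$ and $\{3,4\}$.

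The main obstacle is only the bookkeeping of identifying the correct partner algebra from the table. Given the placement of this corollary directly after Proposition~\ref{pro52401}, I expect the duality with $S_{(4,524)}$ to be the intended route.
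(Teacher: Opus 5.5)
Your proposal is correct and is exactly the paper's argument: transposing the multiplication table of $S_{(4,524)}$ gives precisely the table of $S_{(4,531)}$ (rows $1111$, $1112$, $1113$, $1334$). Since both algebras have the same chain additive reduct, they have dual multiplications, and the corollary follows from Proposition~\ref{pro52401}; your fallback strategies are not needed.
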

\begin{proof}
Notice that $S_{(4, 531)}$ and $S_{(4, 524)}$ have dual multiplications,
therefore, $S_{(4, 531)}$ is also finitely based.
\end{proof}

\begin{pro}\label{pro52701}
$\mathsf{V}(S_{(4, 527)})$ is the ai-semiring variety defined by the identities
\begin{align}
& xy \approx yx; \label{52701}\\
& x \preceq xy; \label{52702}\\
& xy \preceq x^2+y^2; \label{52704}\\
& xyz \approx xy+yz+xz. \label{52703}
\end{align}
\end{pro}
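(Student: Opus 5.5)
We follow the same template as Propositions~\ref{pro54601} and~\ref{pro49501}. First we check directly from the Cayley table of $S_{(4,527)}$ that it satisfies \eqref{52701}--\eqref{52703}. It then remains to derive from these identities every nontrivial inequality $\bq\preceq\bu$ of $S_{(4,527)}$, where $\bu=\bu_1+\cdots+\bu_n$.

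Because of \eqref{52701} we may treat all words as commutative. By \eqref{52703}, every word of length at least $3$ is equivalent to the sum of its length-$2$ subwords (taken as multisets). So we may reduce $\bq$ to the case $\ell(\bq)\le 2$. On the right-hand side, each $\bu_i$ with $\ell(\bu_i)\ge 3$ may be replaced by the sum of words $x_ax_b$ of its pairs. We expect $S_{53}$ to embed into $S_{(4,527)}$, as it does in $S_{(4,525)}$ and $S_{(4,495)}$. Then Lemma~\ref{lem5301} gives three facts: $L_{\ge2}(\bu)\ne\emptyset$; $c(\bq)\subseteq c(\bu)$; and for each $\bw\in S_2(\bq)$ some $\bw'\in S_2(\bu)$ has $c(\bw')\subseteq c(\bw)$.

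\textbf{Case $\ell(\bq)=1$.} We have $\bq=x\in c(\bu_i)$ for some $i$. Nontriviality forces $\ell(\bu_i)\ge2$, so $\bu_i\approx x\bp$ and \eqref{52702} gives $\bu\succeq\bq$. If $\bu_i$ is a power $x^k$ with $k\ge 2$, then \eqref{52702} still applies with $\bp=x^{k-1}$.

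\textbf{Case $\bq=x^2$.} Lemma~\ref{lem5301} yields some $\bu_i$ with $m(x,\bu_i)\ge2$. Then $\bu_i\approx x^2\bp$, and either $\bu_i=x^2$ (trivial) or \eqref{52703} together with \eqref{52702} gives $\bu_i\succeq x^2$.

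\textbf{Case $\bq=xy$ with $x\ne y$.} Lemma~\ref{lem5301} yields $\bu_k$ containing $x^2$, $y^2$, or both $x$ and $y$. In the last situation, $\bu_k\approx xy\bp$ and \eqref{52702} finishes. Otherwise, say $m(x,\bu_k)\ge 2$, so $\bu\succeq x^2$ as in the previous case.

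This last subcase is where the main obstacle lies. The identities \eqref{52701}--\eqref{52703} contain no analogue of \eqref{5463} or \eqref{49504} that would let $x^2+y\bp$ yield $xy$. We therefore need to show, by a separating substitution into $S_{(4,527)}$, that $y^2$ must also be obtainable from $\bu$. The plan is to send $x$ and $y$ to the element $a$ with $a^2$ lying strictly below $ab$ in the chain, and all other variables to the additive minimum (as in the proof of Proposition~\ref{pro49301}). This should give $\varphi(\bq)>\varphi(\bu)$ unless some $\bu_\ell$ has $m(y,\bu_\ell)\ge2$. Once that is established, $\bu\succeq x^2+y^2\succeq xy$ follows by \eqref{52704}.

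Finally, the derivations obtained for the pairs $x_ax_b$ of a longer $\bq$ are summed and recombined via \eqref{52703}. Pinning down the correct elements of $S_{(4,527)}$ for this substitution is the step we expect to require the Cayley table and the most care.
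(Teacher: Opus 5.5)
Your skeleton matches the paper's. $S_{53}$ does embed, as the subalgebra $\{1,2,4\}$. Lemma~\ref{lem5301}, together with $xy\approx yx$ and $xyz\approx xy+yz+xz$, reduces everything to $\bq=x$, $\bq=x^2$ or $\bq=xy$, and your treatment of those cases is fine up to the crux. The one non-routine step, however, is not carried out. That step is: when some word of $\bu$ contains $x$ twice but no word contains both $x$ and $y$, some word must contain $y$ twice. Moreover, the substitution you sketch fails as written. If $x$ and $y$ are sent to the same element $c$, then $\varphi(\bq)=c^2=\varphi(x^2)$. For the word $\bu_k$ with $m(x,\bu_k)\ge 2$ we have $\bu_k\approx x^2\bp$ ($\bp$ possibly empty), and $S_{(4,527)}$ satisfies $x\preceq xy$, so $\varphi(\bu_k)\ge\varphi(x^2)=\varphi(\bq)$. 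No assignment identifying $x$ and $y$ can separate $\bq$ from $\bu$, so copying the pattern of Proposition~\ref{pro49301} does not work here.

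The missing ingredient is the asymmetric assignment $\varphi(x)=3$, $\varphi(y)=2$, and $\varphi(z)=4$ for every other variable $z$. Your phrase about $a^2$ lying below $ab$ seems to aim at this, since $3\cdot 3=3$ lies strictly below $3\cdot 2=1$, but it must be checked against the table. In $S_{(4,527)}$, $4$ is a multiplicative identity, $1$ is a multiplicative zero and the additive top, $3\cdot 3=3$, and $2\cdot 2=2\cdot 3=1$. Hence $\varphi(xy)=1$. For any word $\bw$, $\varphi(\bw)=1$ exactly when $m(y,\bw)\ge 2$ or $x,y\in c(\bw)$; otherwise $\varphi(\bw)\in\{2,3,4\}$. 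Since words containing both $x$ and $y$ were already handled, $\bq\preceq\bu$ forces some $\bu_\ell$ with $m(y,\bu_\ell)\ge 2$. This gives $\bu\succeq x^2+y^2\succeq xy$ via $xy\preceq x^2+y^2$. With this supplied, your argument coincides with the paper's proof; the paper also invokes $S_{60}$ and Lemma~\ref{lem6001}, which are not actually needed.
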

\begin{proof}
It is easy to check that $S_{(4, 527)}$ satisfies the identities \eqref{52701}--\eqref{52703}.
In the remainder it is enough to prove that every identity that holds in $S_{(4, 527)}$
can be derived by \eqref{52701}--\eqref{52703}.
Let $\bq\preceq \bu$ be such a nontrivial inequality, where
$\bu=\bu_1+\bu_2+\cdots+\bu_n$ and $\bu_i, \bq \in X_c^+$, $1 \leq i \leq n$.
Since $S_{53}$ and $S_{60}$ can be embedded in $S_{(4, 527)}$,
we have both $S_{53}$ and $S_{60}$ satisfy $\bq\preceq \bu$.
By Lemmas \ref{lem5301} and \ref{lem6001},
$L_{\geq 2}(\bu)\neq \emptyset$, $c(\bq)\subseteq c(\bu)$,
and for any $\bw\in S_2(\bq)$, there exists $\bw'\in S_2(\bu)$ such that $c(\bw')\subseteq c(\bw)$.

\textbf{Case 1.} $\ell(\bq)=1$.
Then $c(\bq)\subseteq c(\bu_j)$ for some $\bu_j\in L_{\ge2}(\bu)$, and hence
\[
\bu \succeq \bu_j \stackrel{\eqref{52701}}\approx \bq\bu_j' \stackrel{\eqref{52702}}\succeq \bq,
\]
which yields the desired inequality $\bu\succeq\bq$.

\textbf{Case 2.} $\ell(\bq)\geq 2$.
Let $\bq=x_1x_2\cdots x_n$, where $n\geq 2$. Then
by the identity \eqref{52703} we deduce the identity
\[
\bq \approx\sum\limits_{1\leq i\textless j\leq n}x_ix_j.
\]
So we only need to consider the case that $\bq=xy$.
Similarly, we can obtain $\bu=L_1(\bu)+L_2(\bu)$.

\textbf{Subcase 2.1.} $x=y$. Then $\bq=x^2$.
By Lemma~\ref{lem5301}, there exists $\bu_j\in L_2(\bu)$ such that $c(\bu_j)\subseteq\{x\}$.
Since $\ell(\bu_j)=2$, this forces $\bu_j=x^2=\bq$.
Hence $\bu\succeq\bq$ is immediate.

\textbf{Subcase 2.2.} $x\neq y$.
By Lemma~\ref{lem5301}, there exists $\bu_k\in L_2(\bu)$ such that $c(\bu_k)\subseteq\{x,y\}$.
Thus $\bu_k$ is one of $x^2$, $y^2$, or $xy$.
If $\bu_k=xy$, then $\bu\succeq xy=\bq$ is immediate.
If $\bu_k=y^2$, the argument is symmetric to the case $\bu_k=x^2$; hence we may assume $\bu_k=x^2$.
In this case, we claim that there exists $\bu_\ell\in \bu$ such that $\bu_\ell=y^2$.
Suppose otherwise.
Consider the semiring homomorphism $\varphi: P_f(X^+) \to S_{(4, 527)}$ defined by
$\varphi(x)=3$, $\varphi(y)=2$, $\varphi(z)=4$ otherwise.
It is easy to see that $\varphi(\bu)=2$ and $\varphi(\bq)=1$, a contradiction.
Therefore such a $\bu_\ell$ exists. Consequently,
\[
\bu \succeq x^2+y^2 \stackrel{\eqref{52704}}\succeq xy=\bq.
\]
This proves the desired inequality $\bu\succeq\bq$.
\end{proof}

\begin{pro}\label{pro50101}
$\mathsf{V}(S_{(4, 501)})$ is the ai-semiring variety defined by the identities
\begin{align}
& xyz \approx xzy; \label{50103}\\
& y \preceq xyz; \label{50101}\\
& yt \preceq xy+zt; \label{50104}\\
& xyz \approx xy+yz+xz, \label{50102}
\end{align}
where $x$ and $z$ may be empty in \eqref{50102}.
\end{pro}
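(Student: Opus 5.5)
Following the pattern of the preceding propositions, I first check directly on the Cayley tables that $S_{(4,501)}$ satisfies \eqref{50103}--\eqref{50102}. The real work is the converse: every inequality $\bq\preceq\bu$ holding in $S_{(4,501)}$, with $\bu=\bu_1+\cdots+\bu_n$ and $\bu_i,\bq\in X^+$, must be derivable.

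The first step is to reduce lengths. Identity \eqref{50102} rewrites every word of length at least $3$ as a sum of words of length at most $2$. I therefore replace $\bq$ by $\sum_{i<j}x_ix_j$ and each $\bu_i$ by its length-$2$ subwords, and use \eqref{50103} to reorder letters after the first. This leaves only the cases $\ell(\bq)\le 2$ and $\ell(\bu_i)\le 2$.

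Next I extract necessary conditions from small algebras embedded in $S_{(4,501)}$. By analogy with $S_{(4,530)}$ and $S_{(4,494)}$, I expect $S_{57}$, or its dual, to sit inside as $\{1,2,4\}$ or $\{1,3,4\}$. Lemma~\ref{lem5701} then gives $c(p(\bq))\subseteq c(p(\bu))$, $t(\bq)\in c(\bu)$, and $L_{\ge2}(\bu)\neq\emptyset$. Where those lemmas are not sharp enough, I use explicit substitutions into $\{1,2,3,4\}$: for instance, sending the critical variable to $2$ or $3$ and everything else to $4$, so that $\varphi(\bu)$ and $\varphi(\bq)$ differ. This forces the witnesses I need in $\bu$.

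The case analysis then runs as follows.
\begin{itemize}
\item If $\ell(\bq)=1$, say $\bq=y$, I find $\bu_k$ containing $y$ and derive $\bu\succeq\bu_k\succeq y$ from \eqref{50101}, using \eqref{50103} and \eqref{50102} to place $y$ in the middle. Here $x$ and $z$ may be empty, which covers $\bu_k=xy$, $yz$ or $y^2$.
\item If $\bq=xy$ and some $\bu_k$ already has prefix $x$ and contains $y$, I apply \eqref{50102} and \eqref{50103} to $\bu_k$ directly.
\item Otherwise I pick $\bu_k$ with $x$ in its prefix, rewritten as $x\bu_k'$, and $\bu_\ell$ containing $y$, rewritten as $\bu_\ell' y$. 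Identity \eqref{50104} then gives $\bu_\ell'y+x\bu_k'$-type combinations $\succeq xy$ after renaming, that is $yt\preceq xy+zt$ read with $t$ the shared letter.
\item The case $\bq=x^2$ needs a witness of the form $x\cdot$ together with one ending in $x$.
\end{itemize}

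The main obstacle is pinning down exactly which length-$2$ configurations of $\bu$ are forced when $\bq=xy$, especially when $y$ occurs only as a tail or as a single letter in $\bu$. I would settle this first by computing the multiplication table and testing substitutions into $\{2,3,4\}$. If \eqref{50104} alone does not cover a configuration, I would combine \eqref{50101} with $z$ empty to move $y$ into tail position before applying \eqref{50104}.
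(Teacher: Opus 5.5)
Your outline (reduce to length at most $2$ with \eqref{50102}, extract necessary conditions, close with \eqref{50104}) matches the paper. However, your left--right orientation is reversed, and the central derivation step is false.

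The subalgebra $\{1,2,4\}$ of $S_{(4,501)}$ is $S_{54}$, the \emph{dual} of $S_{57}$. (The subalgebra $\{1,2,3\}$ is $S_{60}$, and $\{1,3,4\}$ is neither.) What you actually get is therefore the dual of Lemma~\ref{lem5701}: $c(s(\bq))\subseteq c(s(\bu))$ and $h(\bq)\in c(\bu)$. Your conditions $c(p(\bq))\subseteq c(p(\bu))$ and $t(\bq)\in c(\bu)$ are not necessary for $\bq\preceq\bu$ to hold in $S_{(4,501)}$. Indeed, the basis identity \eqref{50104} violates the first, since $y\notin\{x,z\}$.

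Consequently your key combination is not valid in $S_{(4,501)}$. Take $xy\preceq xw+vy$, i.e.\ your $x\bu_k'+\bu_\ell'y$ with $\bu_k'=w$ and $\bu_\ell'=v$. Under $x\mapsto 2$, $y\mapsto 3$, $w,v\mapsto 4$, the left side is $2\cdot 3=1$ while the right side is $2+3=2$. So it cannot be obtained from \eqref{50104}, or from any identity of $S_{(4,501)}$. In $yt\preceq xy+zt$, both letters of the left-hand word occur on the right only as \emph{tails}, not as heads.

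The missing idea is the correct forcing argument. Take $\bq=xy$ with all $\ell(\bu_i)\le 2$ and $xy\notin\bu$.
\begin{itemize}
\item If $yx\in\bu$, then $y\in c(s(\bu))$ gives some $wy\in\bu$, and $yx+wy\succeq xy$ is an instance of \eqref{50104}.
\item If no word of $\bu$ contains both $x$ and $y$, use the substitution $x\mapsto 2$, $y\mapsto 3$, everything else $\mapsto 4$, which sends $\bq$ to $1$. It forces some word $vx\in\bu$. Together with $wy\in\bu$, you get $vx+wy\succeq xy$, again by \eqref{50104}.
\end{itemize}

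Your $\bq=x^2$ case is also misstated. A word ending in $x$ suffices: $x^2\preceq wx$ is \eqref{50104} with $y,t\mapsto x$ and $x,z\mapsto w$. A word beginning with $x$ need not exist; take $\bu=wx$.

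Finally, your fallback of using $y\preceq xy$ to move $y$ into tail position runs the wrong way. From a word $xy$ in $\bu$ it yields $\bu\succeq y$; it never yields $\bu\succeq xy$ from a single letter $y$. The configuration where $y$ occurs in $\bu$ only as a one-letter word is instead ruled out by $y\in c(s(\bu))$.
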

\begin{proof}
It is easy to check that $S_{(4, 501)}$ satisfies the identities \eqref{50103}--\eqref{50102}.
In the remainder it is enough to prove that every identity that holds in $S_{(4, 501)}$
can be derived by \eqref{50103}--\eqref{50102}.
Let $\bq\preceq \bu$ be such a nontrivial inequality, where
$\bu=\bu_1+\bu_2+\cdots+\bu_n$ and $\bu_i, \bq \in X^+$, $1 \leq i \leq n$.
Since $S_{54}$ and $S_{60}$ can be embedded in $S_{(4, 501)}$, they both satisfy $\bq\preceq\bu$.
Note that the multiplications of $S_{54}$ and $S_{57}$ are dual to each other.
By Lemmas~\ref{lem5701} and~\ref{lem6001}, we have that
$L_{\geq2}(\bu)\neq\emptyset$, $c(s(\bq))\subseteq c(s(\bu))$, $h(\bq)\in c(\bu)$,
and, if $\ell(\bq)\geq2$, then $c(\bq)\subseteq c(L_{\geq2}(\bu))$.

\textbf{Case 1.} $\ell(\bq)=1$.
Then $c(\bq)\subseteq c(\bu_j)$ for some $\bu_j\in L_{\geq2}(\bu)$, and hence
\[
\bu \succeq \bu_j = \bu_j'\bq\bu_j'' \stackrel{\eqref{50101}}\succeq \bq,
\]
which yields the desired inequality $\bu\succeq\bq$.

\textbf{Case 2.} $\ell(\bq)\geq 2$.
Write $\bq=x_1x_2\cdots x_n$ with $n\geq2$.
From identity \eqref{50102}, we derive
\[
\bq \approx \sum_{1\leq i<j\leq n} x_ix_j.
\]
Thus it suffices to consider the case $\bq=xy$.
Similarly, we can obtain $\bu=L_1(\bu)+L_2(\bu)$.

\textbf{Subcase 2.1.} There exists $\bu_k\in L_2(\bu)$ such that $x,y\in c(\bu_k)$.
Then $\bu_k=xy$ or $\bu_k=yx$.
If $\bu_k=yx$, then by the dual of Lemma~\ref{lem5701}, we have $c(s(\bq))\subseteq c(s(\bu))$.
Consequently, there exists $\bu_\ell\in L_2(\bu)$ such that $\bu_\ell=s(\bu_\ell)y$, and hence
\[
\bu \succeq \bu_k+\bu_\ell = yx+s(\bu_\ell)y \stackrel{\eqref{50104}}\succeq xy=\bq.
\]
Thus $\bu\succeq\bq$ in this subcase.

\textbf{Subcase 2.2.} For every $\bu_k\in L_2(\bu)$, either $x\notin c(\bu_k)$ or $y\notin c(\bu_k)$.
We claim that there exists $\bu_\ell\in L_2(\bu)$ with $t(\bu_\ell)=x$.
Suppose otherwise. Define a semiring homomorphism $\varphi:P_f(X^+)\to S_{(4,501)}$ by
$\varphi(x)=2$, $\varphi(y)=3$, and $\varphi(z)=4$ for all other $z$.
Then $\varphi(\bu)=2$ while $\varphi(\bq)=1$, a contradiction.
Hence such $\bu_\ell$ exists; write $\bu_\ell=h(\bu_\ell)x$.
Since $c(s(\bq))\subseteq c(s(\bu))$, there also exists $\bu_r\in L_2(\bu)$ such that $\bu_r=h(\bu_r)y$.
Consequently,
\[
\bu \succeq \bu_\ell+\bu_r = h(\bu_\ell)x + h(\bu_r)y \stackrel{\eqref{50104}}\succeq xy=\bq.
\]
Thus the inequality $\bu\succeq\bq$ is derived.
\end{proof}

\begin{cor}
The ai-semiring $S_{(4, 526)}$ is finitely based.
\end{cor}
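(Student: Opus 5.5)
The plan is to use the duality principle that the paper relies on throughout. The Cayley tables of $S_{(4,526)}$ and $S_{(4,501)}$ share the same additive reduct, namely the chain of Figure~\ref{figure01}. So the first step is to check from Table~\ref{tb1} that the multiplication of $S_{(4,526)}$ is the dual of that of $S_{(4,501)}$, i.e. $a\cdot_{526}b=b\cdot_{501}a$ for all $a,b\in\{1,2,3,4\}$. This is a direct comparison of two $4\times 4$ tables.

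Once the duality is verified, we invoke the elementary fact from the introduction: two ai-semirings with the same additive reduct and dual multiplications have the same finite basis property. Concretely, reversing every word in every identity turns an equational basis of one algebra into an equational basis of the other. Proposition~\ref{pro50101} gives a finite basis for $\mathsf{V}(S_{(4,501)})$, so $S_{(4,526)}$ is finitely based. If one wants the basis explicitly, it is obtained by reversing \eqref{50103}--\eqref{50102}:
\[
zyx\approx yzx,\quad y\preceq zyx,\quad ty\preceq yx+tz,\quad zyx\approx yx+zy+zx .
\]

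The only potential obstacle is the table check itself. If the multiplications turned out not to be exactly dual, the fallback is the paper's other standard route. We would verify that $S_{(4,526)}$ satisfies the reversed identities above, and then embed the duals of $S_{54}$ and $S_{60}$ (namely $S_{57}$ and $S_{60}$'s dual) as subalgebras. Together these show that $\mathsf{V}(S_{(4,526)})$ coincides with the variety defined by the reversed basis.
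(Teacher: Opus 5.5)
Your proposal is correct and matches the paper's proof: the multiplication table of $S_{(4,526)}$ is exactly the transpose of that of $S_{(4,501)}$, so the duality principle together with Proposition~\ref{pro50101} gives the result. Your reversed basis is also correct, and the fallback route is never needed because the table check succeeds.
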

\begin{proof}
Notice that $S_{(4, 526)}$ and $S_{(4, 501)}$ have dual multiplications,
therefore, $S_{(4, 526)}$ is also finitely based.
\end{proof}

\begin{pro}\label{pro53701}
$\mathsf{V}(S_{(4, 537)})$ is the ai-semiring variety defined by the identities
\begin{align}
& y \preceq xyz; \label{53702}\\
& xy \preceq x^2+y_1yy_2; \label{53704}\\
& xyz \approx xy+yz+xz, \label{53701}
\end{align}
where $x$ and $z$ may be empty in \eqref{53702},
$y_1$ and $y_2$ may be empty in \eqref{53704}.
\end{pro}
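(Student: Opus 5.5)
The proof will follow the same template as Propositions~\ref{pro54601} and~\ref{pro52701}. First, one checks directly on the Cayley table of $S_{(4,537)}$ that \eqref{53702}--\eqref{53701} hold, including the degenerate versions in which $x$, $z$, $y_1$ or $y_2$ is empty. It then remains to derive an arbitrary nontrivial inequality $\bq\preceq\bu$ of $S_{(4,537)}$, with $\bu=\bu_1+\cdots+\bu_n$, from these identities. To obtain necessary conditions, I will locate suitable subalgebras or quotients of $S_{(4,537)}$. I expect a copy of $S_{53}$, presumably on three of the four elements with $4$ absorbing as in $S_{(4,546)}$ and $S_{(4,527)}$. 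Lemma~\ref{lem5301} then gives $L_{\ge 2}(\bu)\neq\emptyset$, $c(\bq)\subseteq c(\bu)$, and for every $\bw\in S_2(\bq)$ some $\bw'\in S_2(\bu)$ with $c(\bw')\subseteq c(\bw)$. Where a finer condition is needed, I will use an explicit separating substitution into $S_{(4,537)}$, as in Proposition~\ref{pro52701}.

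The reduction step is to use \eqref{53701} repeatedly. It rewrites every word of length at least $3$ as the sum of its length-$2$ "scattered" subwords $x_ix_j$ with $i<j$, where order is preserved since commutativity is not available. So $\bq\approx\sum_{i<j}x_ix_j$, and it suffices to treat $\bq$ of length $1$ or $2$. The same rewriting makes $\bu$ a sum of words of length at most $2$.

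\textbf{Case $\ell(\bq)=1$.} Here $\bq=y$, and $c(\bq)\subseteq c(\bu)$ gives some $\bu_j$ containing $y$. If $\ell(\bu_j)\geq 2$, then \eqref{53702}, with $x$ and $z$ possibly empty, gives $\bu_j\succeq y$. If the only such $\bu_j$ is $y$ itself, the inequality is trivial.

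\textbf{Case $\bq=xy$.} The $S_2$-condition yields a word $\bw'\in\{x^2,y^2,xy,yx\}$ occurring in $\bu$. If $\bw'=xy$, the word is already a summand after the reduction. If $\bw'=x^2$, we take $\bu_\ell\ni y$ and apply \eqref{53704}, $xy\preceq x^2+y_1yy_2$, to get $\bu\succeq x^2+\bu_\ell\succeq xy$. If $\bw'=yx$ or $\bw'=y^2$, there is no identity in the list that produces $xy$ directly. So I must show via a substitution that these configurations force one of the good situations, namely $xy$ appearing in $\bu$, or $x^2$ appearing together with an occurrence of $y$. The substitution I would try sends $x$ and $y$ to suitable non-absorbing elements and all other variables to the absorbing element, so that $\varphi(\bq)$ lies strictly above $\varphi(\bu)$ unless a good configuration exists.

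The main obstacle is this asymmetric subcase, in which $\bu$ contains only $yx$ or $y^2$. Getting it right requires a careful reading of the multiplication table of $S_{(4,537)}$ to find the separating substitution. If no such substitution exists, I would need to check that \eqref{53704}, with $y_1$ and $y_2$ nonempty, combined with \eqref{53701}, already covers the configuration.
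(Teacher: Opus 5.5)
Your outline follows the paper's proof: reduce with \eqref{53701}, apply Lemma~\ref{lem5301} via the copy of $S_{53}$ on $\{1,2,4\}$, and handle $\ell(\bq)=1$ by \eqref{53702} and the $x^2$ case by \eqref{53704}. However, you leave open the only non-routine step, namely excluding the configuration where the reduced $\bu$ contains $yx$ or $y^2$ but neither $xy$ nor $x^2$. Your hint for that step points the wrong way.

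In $S_{(4,537)}$ the element $4$ is not absorbing. It is the two-sided multiplicative identity and the additive bottom, while $1$ is the multiplicative zero and additive top. Also $2\cdot 2=2\cdot 3=1$, $3\cdot 2=2$ and $3\cdot 3=3$. If you send the remaining variables to the absorbing element $1$, every summand mentioning another variable evaluates to $1$, and nothing is separated.

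The substitution that works, and the one the paper uses, is $\varphi(x)=2$, $\varphi(y)=3$, $\varphi(z)=4$ for all other $z$. Then $\varphi(xy)=1$. A reduced summand (length at most $2$) evaluates to $1$ only if it is $x^2$ or $xy$: for instance $\varphi(yx)=2$ and $\varphi(y^2)=3$, and summands with at most one occurrence of $x$ or $y$ give $2$, $3$ or $4$. So validity of $xy\preceq\bu$ forces $x^2$ or $xy$ to be a summand of the reduced $\bu$. Together with $y\in c(\bu)$, this is exactly your ``good'' situation, and the configurations with only $yx$ or $y^2$ are excluded semantically.

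Your fallback cannot work either. In the bad configuration the inequality is simply false in $S_{(4,537)}$: for example $xy\preceq yx+y^2+x$ fails under the $\varphi$ above, since $\varphi(xy)=1$ while $\varphi(yx+y^2+x)=2$, and $1\not\le 2$. So no combination of \eqref{53704} (with nonempty $y_1,y_2$) and \eqref{53701} can derive it. Without the explicit separating substitution, and the multiplication facts that make it work, the completeness half of the argument is not established.
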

\begin{proof}
It is easy to check that $S_{(4, 537)}$ satisfies the identities \eqref{53702}--\eqref{53701}.
In the remainder it is enough to prove that every identity that holds in $S_{(4, 537)}$
can be derived by \eqref{53702}--\eqref{53701}.
Let $\bq\preceq \bu$ be such a nontrivial inequality, where
$\bu=\bu_1+\bu_2+\cdots+\bu_n$ and $\bu_i, \bq \in X^+$, $1 \leq i \leq n$.
Since $S_{53}$ and $S_{57}$ can be embedded in $S_{(4, 537)}$,
we have both $S_{53}$ and $S_{57}$ satisfy $\bq\preceq \bu$.
By Lemmas \ref{lem5301} and \ref{lem5701},
$L_{\geq 2}(\bu)\neq \emptyset$, $c(p(\bq))\subseteq c(p(\bu))$, $t(\bq)\in c(\bu)$,
and for any $\bw\in S_2(\bq)$, there exists $\bw'\in S_2(\bu)$ such that $c(\bw')\subseteq c(\bw)$.

\textbf{Case 1.} $\ell(\bq)=1$.
Then $c(\bq)\subseteq c(\bu_j)$ for some $\bu_j\in L_{\geq2}(\bu)$, and hence
\[
\bu \succeq \bu_j = \bu_j'\bq\bu_j'' \stackrel{\eqref{53702}}\succeq \bq,
\]
which yields the desired inequality $\bu\succeq\bq$.

\textbf{Case 2.} $\ell(\bq)\geq 2$.
Let $\bq=x_1x_2\cdots x_n$, where $n\geq 2$.
From identity \eqref{53701} we derive
\[
\bq \approx\sum\limits_{1\leq i\textless j\leq n}x_ix_j.
\]
So we only need to consider the case that $\bq=xy$.
Similarly, we can obtain $\bu=L_1(\bu)+L_2(\bu)$.

\textbf{Subcase 2.1.} $x=y$. Then $\bq=x^2$.
By Lemma~\ref{lem5301}, there exists $\bu_i\in L_2(\bu)$ such that $c(\bu_i)\subseteq\{x\}$.
Since $\ell(\bu_i)=2$, this forces $\bu_i=x^2=\bq$.
Hence $\bu\succeq\bq$ is immediate.

\textbf{Subcase 2.2.} $x\neq y$.
By Lemma~\ref{lem5301}, there exists $\bu_k\in L_2(\bu)$ such that $c(\bu_k)\subseteq\{x,y\}$.
Thus $\bu_k$ is one of $x^2$, $y^2$, $xy$ or $yx$.
If $\bu_k=xy$, then $\bu\succeq xy=\bq$ is immediate.
If $\bu_k=x^2$, then, since $t(\bq)\in c(\bu)$, there exists $\bu_r\in\bu$ such that $y\in c(\bu_r)$.
Consequently,
\[
\bu \succeq x^2+\bu_r= x^2+\bu_r'y\bu_r'' \stackrel{\eqref{53704}}\succeq xy=\bq.
\]
If $\bu_k=y^2$ or $yx$,  we claim that there exists $\bu_\ell\in\bu$ with $\bu_\ell=x^2$.
Suppose otherwise.
Consider the semiring homomorphism $\varphi: P_f(X^+) \to S_{(4, 537)}$ defined by
$\varphi(x)=2$, $\varphi(y)=3$, $\varphi(z)=4$ otherwise.
It is easy to see that $\varphi(\bu)=2$ and $\varphi(\bq)=1$, a contradiction.
Hence such a $\bu_\ell$ exists.
The remaining argument is analogous to the case $\bu_k=x^2$ above, and yields $\bu\succeq xy=\bq$.
\end{proof}

\begin{cor}
The ai-semiring $S_{(4, 528)}$ is finitely based.
\end{cor}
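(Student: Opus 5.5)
The statement is that $S_{(4,528)}$ is finitely based. I would deduce it from the proposition immediately preceding it, which gives a finite equational basis for $\mathsf{V}(S_{(4,537)})$. The paper has repeatedly used the elementary fact that two ai-semirings with the same additive reduct and dual multiplications have the same finite basis property. All algebras $S_{(4,k)}$ with $481\le k\le 866$ share the chain additive reduct on $\{1,2,3,4\}$. So the proof reduces to checking, from Table~\ref{tb1}, that the multiplication table of $S_{(4,528)}$ is the transpose of that of $S_{(4,537)}$.

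\textbf{Step 1.} Compare the two Cayley tables in Table~\ref{tb1} and confirm that $a\cdot_{528} b = b\cdot_{537} a$ for all $a,b\in\{1,2,3,4\}$, using the identity map rather than a relabelling. A relabelling would also suffice, but it would have to preserve the chain order $4<3<2<1$, and the only order automorphism of a chain is the identity. So the identity map is the right candidate. This is the step I expect could fail. If the tables are not literally transposes, then $S_{(4,528)}$ and $S_{(4,537)}$ are not dual, and I would need the fallback below.

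\textbf{Fallback.} I would instead show $\mathsf{V}(S_{(4,528)})=\mathsf{V}(S_{(4,537)})$, or equality with the dual variety, as the paper does elsewhere. First, exhibit copies of $S_{53}$ and $S_{57}$ (or their duals $S_{53}$ and $S_{54}$) embedded in $S_{(4,528)}$. Second, verify that $S_{(4,528)}$ satisfies the dualized identities \eqref{53702}--\eqref{53701}, namely
\[
y\preceq zyx,\qquad yx\preceq x^2+y_2yy_1,\qquad zyx\approx yx+zy+zx.
\]
The proof of Proposition~\ref{pro53701} only used the facts that $S_{53}$ and $S_{57}$ satisfy every identity of the algebra, together with a substitution argument. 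So a two-sided inclusion of varieties would give $\mathsf{V}(S_{(4,528)})=\mathsf{V}(S_{53},S_{57})$ up to duality, and hence the equality of varieties.

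\textbf{Step 2.} Once duality (or equational equivalence) is established, Proposition~\ref{pro53701} shows that $S_{(4,537)}$ is finitely based, and therefore $S_{(4,528)}$ is finitely based. An explicit basis is obtained by reversing every word in \eqref{53702}--\eqref{53701}.
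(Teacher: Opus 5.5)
Your argument is the paper's own. In the Cayley tables, row $i$ of the multiplication table of $S_{(4,528)}$ is exactly column $i$ of that of $S_{(4,537)}$ (e.g.\ $2\cdot 3=2$ in $S_{(4,528)}$ and $3\cdot 2=2$ in $S_{(4,537)}$), so Step~1 succeeds with the identity map, the corollary follows from Proposition~\ref{pro53701} by duality, and your fallback is never needed.
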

\begin{proof}
Notice that $S_{(4, 528)}$ and $S_{(4, 537)}$ have dual multiplications,
therefore, $S_{(4, 528)}$ is also finitely based.
\end{proof}

\begin{pro}\label{pro62801}
$\mathsf{V}(S_{(4, 628)})$ is the ai-semiring variety defined by the identities
\begin{align}
& xyx \approx xy; \label{62804}\\
& x^2y \approx xy;  \label{62802}\\
& xy^2 \approx xy; \label{62805}\\
& xy \approx (xy)^2; \label{62808}\\
& x \preceq x^2; \label{62801}\\
& yx \preceq x^2+yz; \label{62803}\\
& yx \preceq x+yxz; \label{62806}\\
& x+xy \approx x^2+xy. \label{62807}
\end{align}
\end{pro}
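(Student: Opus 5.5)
The plan is to follow the template of Proposition~\ref{pro63501}. First I check directly from the Cayley table that $S_{(4,628)}$ satisfies \eqref{62804}--\eqref{62807}. Then I look for a decomposition of $S_{(4,628)}$ into known small algebras, so that the equational problem for $S_{(4,628)}$ is controlled by Lemmas~\ref{nlemma1}, \ref{lem001}, \ref{lem4101}, \ref{pro422} and~\ref{pro423}. The identities \eqref{62804}, \eqref{62802}, \eqref{62805} are exactly those used for $S_{41}$ in Proposition~\ref{pro63501}. This suggests that $S_{41}$ (or its dual) occurs as a subalgebra or quotient of $S_{(4,628)}$. The identity $x\preceq x^2$ together with the $0$-adjoined flavour of \eqref{62803} suggests a second factor of the form $T_2^0$, $L_2^0$ or $M_2^0$. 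I would search for two congruences with trivial intersection to realise $S_{(4,628)}$ as a subdirect product of such factors, or else exhibit them as subalgebras. I would then read off the necessary conditions on a nontrivial inequality $\bq\preceq\bu$ that holds in $S_{(4,628)}$:
\begin{itemize}
\item $D_\bq(\bu)\neq\emptyset$;
\item for every $Y\subseteq c(\bq)$ there is $\bu_\ell\in\bu$ with $h_Y(\bu_\ell)=h_Y(\bq)$;
\item a length or $0$-condition inside $D_\bq(\bu)$, for instance some $\bu_i\in D_\bq(\bu)$ with $\ell(\bu_i)\ge 2$, or with $h(\bu_i)=h(\bq)$.
\end{itemize}
Wherever a condition is not supplied by a known lemma, I would prove it directly by a substitution into $\{1,2,3,4\}$, as in Propositions~\ref{pro72101} and~\ref{pro52501}.

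For the derivation, note that \eqref{62804}, \eqref{62802}, \eqref{62805} give $\bq\approx i(\bq)$, and also $\bu_j\approx i(\bu_j)$ for every $j$. So I may assume every word is linear, that is, each variable occurs at most once. Write $i(\bq)=x_1x_2\cdots x_m$.

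The case $m=1$ is handled as follows. Some $\bu_i\in D_\bq(\bu)$ equals $x^k$. If $k\ge2$, then \eqref{62802} and \eqref{62801} give $\bu\succeq x^2\succeq x$. If only $\bu_i=x$ is available, the inequality is trivial.

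For $m\ge2$ I argue by induction on $j$ that $\bu\succeq x_1\cdots x_j$.
\begin{itemize}
\item For the base step $j=2$, use the word $\bu_i\in D_\bq(\bu)$, which reduces to $x_1$, $x_2$, $x_1^2$, $x_2^2$, $x_1x_2$ or $x_2x_1$. Combine it with the word $\bu_\ell$ supplied by Lemma~\ref{lem4101} for $Y=\{x_1\}$, which has the form $\bu_\ell=\bp x_2\bp'$ with $c(\bp)\subseteq\{x_1\}$.
\item For example, $x_2^2+x_1\bp''$ yields $x_1x_2$ via \eqref{62803}, and $x_2+x_2x_1\bp''$ yields $x_2x_1\cdots$ via \eqref{62806}. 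The identity \eqref{62807} converts a lone $x$ sitting beside $xy$ into $x^2$, so that \eqref{62803} becomes applicable.
\item For the inductive step, I would need an identity playing the role of \eqref{63508} there. I expect it to come from combining \eqref{62806} with \eqref{62808}, using $x_1\cdots x_{j-1}\approx x_1\cdots x_{j-1}\bp$ whenever $c(\bp)\subseteq\{x_1,\dots,x_{j-1}\}$.
\end{itemize}

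The main obstacle is the inductive step. The listed basis contains no explicit analogue of \eqref{63508}, so I must derive one from \eqref{62803}, \eqref{62806}, \eqref{62807} and \eqref{62808}: a rule that passes from $\bw$ and $\bp x_j\bp'$ to $\bw x_j$, where $c(\bp)\subseteq c(\bw)$. I would first try the chain
\[
\bw+\bp x_j\bp' \succeq \bw\bp x_j\bp' \approx \bw x_j\bp' \succeq \bw x_j,
\]
where the last step uses \eqref{62806} and the fact that $\bu$ already dominates a suitable prefix. The first step needs a product rule. I would first try to extract it from \eqref{62806} after substituting products for variables. If this fails, I would use the precise necessary condition forced by $S_{(4,628)}$ (obtained by a substitution argument) to show that the troublesome configuration cannot occur.
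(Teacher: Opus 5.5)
The gap is in the induction itself, not merely in finding an identity for the inductive step. The intermediate claim $\bu\succeq x_1\cdots x_j$ for $j<m$ is false in general, so nothing can derive it. Take $\bq=x_1x_2x_3$ and $\bu=x_3^2+x_1x_2x_4$:
\begin{itemize}
\item $\bq\preceq\bu$ holds in $S_{(4,628)}$, since it is the instance $x\mapsto x_3$, $y\mapsto x_1x_2$, $z\mapsto x_4$ of \eqref{62803};
\item $x_1x_2\preceq\bu$ fails at $x_1=x_2=3$, $x_3=x_4=4$: the left side is $3$, the right side is $4$, and $4<3$;
\item $D_\bq(\bu)=\{x_3^2\}$ is not among the words your base step lists.
\end{itemize}
Your chain $\bw+\bp x_j\bp'\succeq\bw\bp x_j\bp'\approx\bw x_j\bp'\succeq\bw x_j$ also breaks at both ends. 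The product rule is unsound when $\bw$ is a single letter: $xy\preceq x+y$ fails at $x=2$, $y=3$. Suffix deletion is unsound too: $xy\preceq xyz$ fails at $x=y=3$, $z=4$. Identity \eqref{62806} removes $\bp'$ only when some suffix of $\bw x_j$ is already below $\bu$.

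The missing idea, which the paper uses, is to carry an absorbing word through the induction.
\begin{itemize}
\item Pick $\bp\in D_\bq(\bu)\cap L_{\geq2}(\bu)$. Since $\bp\approx\bp^2$ by \eqref{62808}, identity \eqref{62803} becomes the grafting rule $\bw\bp\preceq\bp+\bw\bz$ with $\bz$ nonempty. This is the product rule you were looking for.
\item Prove $\bu\succeq x_1\cdots x_j\bp$ by induction. The base combines $\bp$ with some $\bu_k\in L_{\geq2}(\bu)\cap H_\bq(\bu)$, supplied by $\{1,2,4\}\cong S_{58}$.
\item This, not a head condition inside $D_\bq(\bu)$, is the correct form of your third bullet. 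For example, $x_1x_2\preceq x_2^2+x_1x_3$ holds although the only word of $D_\bq(\bu)$ has head $x_2$.
\item For the step, the substitution $Y_j\mapsto 3$, $x_j\mapsto 2$, all else $\mapsto 4$ yields some $\bu_{j_1}x_j\bu_{j_2}\in L_{\geq2}(\bu)$ with $c(\bu_{j_1})\subseteq Y_j$. Two applications of \eqref{62803} then finish the step.
\item The trailing $\bp$ absorbs exactly what your chain tries to delete. At the end, $x_1\cdots x_m\bp\approx i(\bq)\approx\bq$ because $c(\bp)\subseteq c(\bq)$.
\end{itemize}

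When $D_\bq(\bu)\subseteq L_1(\bu)$ no such $\bp$ exists, and this case genuinely occurs ($xy\preceq x+xyz$ holds). A substitute for $\bp$ is then built from a letter of $\bu$ via \eqref{62807} or \eqref{62806}. The same device is needed in your case $m=1$. There $\bq=x^2$ with $D_\bq(\bu)=\{x\}$ is not trivial, e.g.\ $x^2\preceq x+xy$. Moreover $x^2\approx x$ fails, so such words cannot be linearised.

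Finally, $S_{(4,628)}$ is subdirectly irreducible: its only proper nontrivial congruence has block $\{1,2\}$. So the subdirect-product search will fail, and you must work directly with $\{3,4\}\cong D_2$, $\{1,2,4\}\cong S_{58}$ and the quotient $S_{41}$.
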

\begin{proof}
It is easy to check that $S_{(4, 628)}$ satisfies the identities \eqref{62804}--\eqref{62807}.
In the remainder it is enough to show that every ai-semiring identity of $S_{(4, 628)}$
is derivable from \eqref{62804}--\eqref{62807}.
Let $\bq\preceq \bu$ be such a nontrivial inequality, where
$\bu=\bu_1+\bu_2+\cdots+\bu_n$ and $\bu_i, \bq \in X^+$, $1 \leq i \leq n$.
Since $D_2$ is isomorphic to $\{3, 4\}$, it follows that $D_2$ satisfies $\bq\preceq \bu$
and so $D_{\bq}(\bu)$ is nonempty. So there exists $\bu_{i_1} \in \bu$ such that $c(\bu_{i_1})\subseteq c(\bq)$.
It is easy to see that $S_{58}$ is isomorphic to $\{1, 2, 4\}$
and so $S_{58}$ satisfies $\bq\preceq \bu$.
If $\ell(\bq)\geq 2$, then by Lemma \ref{lem5801} $L_{\geq 2}(\bu)\cap H_{\bq}(\bu)$ is nonempty
and so $\ell(\bu_k)\geq 2$ and $h(\bu_k)=h(\bq)$ for some $\bu_k \in \bu$.

Let $|c(\bq)|=1$.
If $\ell(\bq)=1$, then $\bu_{i_1}=\bq^\ell$ for some $\ell\geq 2$. So we have
\[
\bu \succeq \bu_{i_1} =\bq^\ell \stackrel{\eqref{62801}}\succeq \bq.
\]
Now assume that $\ell(\bq)\geq 2$. Then $\bq=x^r$ for some $x \in X$ and some $r \geq 2$.
This implies that $\bu_{i_1}=x^s$ for some $s \geq 1$ and $\bu_k=xs(\bu_k)$, where $s(\bu_k)$ is nonempty.
If $s=1$, then $\bu_{i_1}=x$ and so
\[
\bu\succeq \bu_{i_1}+\bu_k=x+xs(\bu_k) \stackrel{\eqref{62807}}\approx x^2+xs(\bu_k)\stackrel{\eqref{62802}}
\approx x^r+xs(\bu_k)=\bq+xs(\bu_k).
\]
If $s\geq 2$, then
\[
\bu \succeq \bu_{i_1}\approx x^s \stackrel{\eqref{62802}}\approx x^r=\bq.
\]
This implies the inequality $\bq\preceq \bu$.

Now let $|c(\bq)|\geq 2$.
By \eqref{62804}--\eqref{62805} we deduce the identity $\bq \approx i(\bq)$
and so $i(\bq) \preceq \bu$ is satisfied by $S_{(4, 628)}$.
Let $i(\bq)=x_1x_2\cdots x_m$, $m\geq 2$.
One can check that $S_{41}$ is isomorphic to the quotient algebra $S_{(4, 628)}/\rho$,
where $\{1, 2\}$ is the nontrivial block of $\rho$. So $S_{41}$ satisfies $i(\bq) \preceq \bu$.
By Lemma \ref{lem4101} it follows that for any $2\leq j\leq m$ and $Y_j=\{x_1, x_2, \ldots ,x_{j-1}\}$,
there exists $\bu_j \in \bu$
such that $\bu_j=\bu_{j_1}x_j\bu_{j_2}$ for some $\bu_{j_1}, \bu_{j_2} \in X^*$, where $c(\bu_{j_1})\subseteq Y_j$.

\textbf{Case 1.}
$D_\bq(\bu)\cap L_{\geq 2}(\bu)$ is nonempty.
Choose a word $\bp$ in $D_\bq(\bu)\cap L_{\geq 2}(\bu)$.
We shall show by induction on $j$ that $x_1x_2\cdots x_j\bp \preceq \bu$
is derivable from \eqref{62804}--\eqref{62807}, $1 \leq j\leq m$.
If $j=1$, then
\[
\bu \succeq \bp+\bu_k \stackrel{\eqref{62808}}\approx \bp^2+x_1s(\bu_k) \stackrel{\eqref{62803}}\succeq x_1\bp.
\]
This implies $\bu \succeq x_1\bp$.
Let $2\leq j\leq m$.
Suppose that $\bu\succeq x_1x_2\cdots x_{j-1}\bp$ is derivable from \eqref{62804}--\eqref{62807}.
If $h(\bp)=x_j$, then
\[
\bu\approx \bu+x_1x_2\cdots x_{j-1}\bp \stackrel{\eqref{62802}}\approx x_1x_2\cdots x_{j-1}x_j\bp.
\]
Now assume that $h(\bp)\neq x_j$.
We shall show that there exists $\bu_j\in L_{\geq2}(\bu)$ such that $h_{Y_j}(\bq)=h_{Y_j}(\bu_j)$.
If this is not true,
then we consider the semiring homomorphism $\varphi: P_f(X^+) \to S_{(4, 628)}$ defined by
$\varphi(x)=3$ if $x\in Y_j$, $\varphi(x)=2$ if $x=x_j$, and $\varphi(x)=4$ otherwise.
It is easy to see that $\varphi(\bu)=2$ and $\varphi(i(\bq))=1$, a contradiction.
Hence there exists $\bu_j\in L_{\geq2}(\bu)$ such that $h_{Y_j}(\bq)=h_{Y_j}(\bu_j)$.
Then $\bu_{j}=\bu_{j_1}x_j\bu_{j_2}$,
where $c(\bu_{j_1})\subseteq Y_j$,
$\bu_{j_1}$ and $\bu_{j_2}$ can not be empty words simultaneously.
Now we have
\begin{align*}
\bu
&\succeq \bu_j+x_1x_2\cdots x_{j-1}\bp  \\
&\approx \bu_j^2+x_1x_2\cdots x_{j-1}\bp  &&(\text{by}~\eqref{62808})\\
&= (\bu_{j_1}x_j\bu_{j_2})^2+x_1x_2\cdots x_{j-1}\bp \\
&\succeq x_1x_2\cdots x_{j-1}\bu_{j_1}x_j\bu_{j_2} &&(\text{by}~\eqref{62803})\\
&\approx x_1x_2\cdots x_{j-1}x_j\bu_{j_2}.
&&(\text{by}~\eqref{62804}, \eqref{62802})
\end{align*}
This implies the inequality
\[
\bu\succeq x_1x_2\cdots x_{j-1}x_j\bu_{j_2}.
\]
Furthermore, we can deduce
\begin{align*}
\bu
&\succeq x_1x_2\cdots x_{j-1}\bp+x_1x_2\cdots x_j\bu_{j_2}\\
&\approx (x_1x_2\cdots x_{j-1}\bp)^2+x_1x_2\cdots x_j\bu_{j_2} &&(\text{by}~\eqref{62808})\\
&\succeq x_1x_2\cdots x_j\bp.
&&(\text{by}~\eqref{62803}, \eqref{62804}, \eqref{62802})
\end{align*}
So we can obtain $\bu \succeq x_1x_2\cdots x_{j-1}x_j\bp$.

Take $j=m$. We obtain the inequality $\bu\succeq x_1x_2\cdots x_m\bp$. So we deduce
\[
\bu\succeq x_1x_2\cdots x_m\bp\approx i(\bq)\bp
\stackrel{\eqref{62804}, \eqref{62802}, \eqref{62805}}\approx i(\bq)\approx \bq.
\]
This derives $\bu\succeq \bq$.

\textbf{Case 2.} $D_\bq(\bu)\cap L_{\geq 2}(\bu)$ is empty. Then $D_\bq(\bu)\subseteq L_{1}(\bu)$.
If $\bu_i=x_1$ for some $\bu_i \in D_\bq(\bu)$, then
\[
\bu\succeq \bu_i+\bu_k =x_1+x_1s(\bu_k) \stackrel{\eqref{62807}}\approx x_1^2+x_1s(\bu_k).
\]
The remaining steps are similar to Case 1.
If $\bu_i\neq x_1$ for all $\bu_i \in D_\bq(\bu)$,
then there exists $2\leq i\leq n$ such that $x_i \in D_\bq(\bu)$, but $x_1, \ldots, x_{i-1} \notin D_\bq(\bu)$.
We shall prove that there exists $\bu_s \in \bu$ such that either $x_jx_i$ is a prefix of $\bu_s$ for some $1\leq j\leq i-1$
or $\bu_s=x_is(\bu_s)$ and $ \ell(\bu_s)\geq 2$.
Consider the semiring homomorphism $\varphi: P_f(X^+) \to S_{(4, 628)}$ defined by
$\varphi(x_i)=2$, $\varphi(x)=3$ if $x\in Y_i$, and $\varphi(x)=4$ otherwise.
It is easy to see that $\varphi(\bu)=2$ and $\varphi(\bq)=1$, a contradiction.
If there exists $\bu_s \in \bu$ such that $x_jx_i$ is a prefix of $\bu_s$ for some $1\leq j\leq i-1$,
then $\bu_s=x_jx_i\bu_s'$ for some word $\bu_s'$ and so
\[
\bu\succeq x_i+\bu_s=x_i+x_jx_i\bu_s' \stackrel{\eqref{62806}}\succeq x_jx_i.
\]
The remaining steps are similar to Case 1.
If there exists $\bu_s \in \bu$ such that $\bu_s=x_is(\bu_s)$ and $ \ell(\bu_s)\geq 2$,
then we have
\[
\bu\succeq x_i+\bu_s=x_i+x_is(\bu_s) \stackrel{\eqref{62803}}\approx {x_i}^2+x_is(\bu_s).
\]
The remaining steps are similar to Case 1.
\end{proof}

\begin{cor}
The ai-semiring $S_{(4, 504)}$ is finitely based.
\end{cor}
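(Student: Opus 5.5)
The intended argument is by duality with Proposition~\ref{pro62801}. Throughout the paper, whenever two algebras share an additive reduct and have dual multiplications, they have the same finite basis property. All algebras $S_{(4,k)}$ with $481\leq k\leq 866$ share the chain additive reduct of Figure~\ref{figure01}. So it suffices to check from Table~\ref{tb1} that the multiplication table of $S_{(4,504)}$ is the transpose of that of $S_{(4,628)}$; in other words, $a\cdot_{504} b = b\cdot_{628} a$ for all $a,b\in\{1,2,3,4\}$.

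Concretely, I would read off the two Cayley tables and verify the transpose relation entrywise. There is a possibility that the dual of $S_{(4,628)}$ is only isomorphic to $S_{(4,504)}$, since the indexing is up to isomorphism. In that case the isomorphism must be an order-automorphism of the 4-element chain, and the only one is the identity. So a literal transpose comparison settles the matter.

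Granting this, the reversal map $\bw\mapsto\overleftarrow{\bw}$ on words extends to an automorphism of $P_f(X^+)$ that interchanges the identities of the two algebras. Applying it to \eqref{62804}--\eqref{62807} gives an explicit finite basis for $S_{(4,504)}$: $xyx\approx yx$, $yx^2\approx yx$, $y^2x\approx yx$, and so on. Proposition~\ref{pro62801} then yields that $S_{(4,504)}$ is finitely based.

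The only possible obstacle is the table check itself. If the transposition failed, I would fall back on redoing the proof of Proposition~\ref{pro62801} with heads and tails interchanged. That argument would use the dual of Lemma~\ref{lem4101}, stated with last occurrences, together with the dual of Lemma~\ref{lem5801}, stated with tails.
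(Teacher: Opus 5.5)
Your proposal is correct and is the paper's own argument: the paper also obtains the corollary from Proposition~\ref{pro62801} by noting that $S_{(4,504)}$ and $S_{(4,628)}$ have dual multiplications. Your table check does succeed literally: the columns of the multiplication table of $S_{(4,628)}$ are $(1,1,1,4)$, $(1,1,1,4)$, $(1,1,3,4)$ and $(1,1,4,4)$, and these are exactly the rows of the table of $S_{(4,504)}$, so neither the isomorphism caveat nor the fallback argument is needed.
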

\begin{proof}
Notice that $S_{(4, 504)}$ and $S_{(4, 628)}$ have dual multiplications,
therefore, $S_{(4, 504)}$ is also finitely based.
\end{proof}

\begin{pro}\label{pro65401}
$\mathsf{V}(S_{(4, 654)})$ is the ai-semiring variety defined by the identities
\begin{align}
& x \preceq xy; \label{65401}\\
& xyz \approx xy+xz; \label{65402}\\
& xy \preceq x^2+xx_1+yy_1; \label{65403}\\
& xy \preceq x^2+xx_1+y_1y, \label{65404}
\end{align}
where $y_1$ may be empty in \eqref{65403}  and \eqref{65404}.
\end{pro}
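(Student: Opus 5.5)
The plan is the one used for the other subdirectly irreducible algebras in this section. First I would check directly on the Cayley tables of $S_{(4,654)}$ that it satisfies \eqref{65401}--\eqref{65404}. The converse then reduces to showing that every nontrivial inequality $\bq\preceq\bu$ of $S_{(4,654)}$, with $\bu=\bu_1+\cdots+\bu_n$, follows from these identities. The key structural tool is \eqref{65402}. Iterating it gives, for any word $\bw=x\,y_1y_2\cdots y_k$ with $k\geq 1$,
\[
\bw\approx \sum_{1\leq i\leq k} x y_i .
\]
So both $\bq$ and every $\bu_i$ may be replaced by sums of words of length at most $2$, each keeping the head of the original word. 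After this normalisation, $\bq$ is a single letter $x$ or a word $xy$ (possibly $y=x$), and $\bu=L_1(\bu)+L_2(\bu)$.

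Next I would extract the necessary conditions on $\bu$. I would locate copies of $L_2$ (and, if present, $D_2$ or $N_2$) as subalgebras or quotients of $S_{(4,654)}$ and apply Lemma~\ref{nlemma1}; in particular this should give some $\bu_i$ with $h(\bu_i)=h(\bq)$. The remaining conditions come from explicit substitutions $\varphi\colon P_f(X^+)\to S_{(4,654)}$, in the style of Propositions~\ref{pro52501} and~\ref{pro62801}. One sends $x$, or $x$ and $y$, to suitable nonzero elements and every other variable to the additive minimum $4$. If the required configuration is absent from $\bu$, this yields $\varphi(\bq)\not\leq\varphi(\bu)$, a contradiction.

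The case analysis then runs as follows.
\begin{itemize}
\item If $\bq=x$, some $\bu_i$ has head $x$, so $\bu\succeq \bu_i=x s(\bu_i)\succeq x$ by \eqref{65401}.
\item If $\bq=xy$ and some $\bu_i$ has prefix $xy$, then \eqref{65401} gives $\bu\succeq\bq$ directly.
\item Otherwise, I expect the substitution argument to force three summands: $x^2$ (or a word beginning $x^2$, reducible to $x^2$ by \eqref{65402}), some word $xx_1$ with head $x$, and a word containing $y$, either as head $yy_1$ or as tail $y_1y$ (or the single letter $y$). Then \eqref{65403} or \eqref{65404}, with $y_1$ possibly empty, yields $\bu\succeq xy$.
\end{itemize}

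The main obstacle is this last step. I must pin down exactly which configurations of $\bu$ are forced when $\bq=xy$ and $\bu$ contains no $xy$-prefixed word. Concretely, I need to show that $x^2$ must appear, and that $y$ must occur in some summand in a position that \eqref{65403} or \eqref{65404} can exploit.

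For the first point I would try the substitution $\varphi(x)=a$, $\varphi(y)=b$, with $a,b$ chosen so that $ab$ lies strictly above all products $ac$ ($c\neq b$) and above $b$ itself. This should show that $x^2$ is the only other available witness. For the second point I would argue that $y\notin c(\bu)$ is already excluded by an $M_2$-type substitution; note that \eqref{65404} with $y_1$ empty covers a bare summand $y$.

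The case $y=x$ needs separate care, since then $\bq=x^2$ and the witness may simply be $x^2$ itself.
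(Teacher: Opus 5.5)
Your outline is the paper's. You normalize with \eqref{65402} so that $\bq$ is $x$ or $xy$ and every summand of $\bu$ has length at most $2$. You get $H_\bq(\bu)\neq\emptyset$ from the head condition (the paper uses $S_{58}\cong\{1,2,3\}$; $L_2\cong\{1,3\}$ works too) and $c(\bq)\subseteq c(\bu)$ from $M_2\cong\{3,4\}$. You settle $\bq=x$ by \eqref{65401}. When $xy\notin\bu$, you show $x^2\in\bu$ and finish with \eqref{65403} or \eqref{65404}. Two small corrections: you never need a separate summand $xx_1$, since you can take $x_1=x$; and $D_2$, $N_2$ are not in $\mathsf{V}(S_{(4,654)})$ at all, because both violate $x\preceq xy$.

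The gap is the one step you flag yourself, and the recipe you propose for it cannot work. In $S_{(4,654)}$ the product $ac$ depends only on $a$ and on whether $c=4$: $ac=a$ for $a\in\{1,3\}$, while $2c$ and $4c$ equal $2$ and $4$ if $c=4$, and $1$ and $3$ otherwise. So no $a,b$ make $ab$ strictly larger than every $ac$ with $c\neq b$. Even if such a choice existed, it would also rule out $x^2$ as a witness. That contradicts the fact that $xy\preceq x^2+y$ holds in $S_{(4,654)}$; it is \eqref{65403} with $x_1=x$ and $y_1$ empty.

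What you need is a substitution, sending all other variables to $4$, under which the value $\varphi(xy)$ is reached only by the normalized summands $x^2$ and $xy$. The paper takes $\varphi(x)=2$, $\varphi(y)=3$, and $\varphi(z)=4$ otherwise. Then $\varphi(xy)=1$, and the summands evaluate as follows:
\begin{itemize}
\item $\varphi(x)=\varphi(xz)=2$ for $z\neq x,y$;
\item every summand with head $y$ evaluates to $3$;
\item every summand whose head is another variable evaluates to $3$ or $4$;
\item only $x^2$ and $xy$ evaluate to $1$.
\end{itemize}
Hence if $xy\notin\bu$, then $x^2\in\bu$. Together with a summand $y$, $yy_1$ or $y_1y$ (which exists since $y\in c(\bu)$), this is exactly the input that \eqref{65403} and \eqref{65404} need.
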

\begin{proof}
It is easy to check that $S_{(4, 654)}$ satisfies the identities \eqref{65401}--\eqref{65404}.
In the remainder it is enough to prove that every identity that holds in $S_{(4, 654)}$
can be derived by \eqref{65401}--\eqref{65404}.
Let $\bq\preceq \bu$ be such a nontrivial inequality, where
$\bu=\bu_1+\bu_2+\cdots+\bu_n$ and $\bu_i, \bq \in X^+$, $1 \leq i \leq n$.
It is easy to check that $M_2$ is isomorphic to $\{3, 4\}$
and so $M_2$ satisfies $\bq\preceq \bu$.
This implies that $c(\bq)\subseteq c(\bu)$.
Since $S_{58}$ is isomorphic to $\{1, 2, 3\}$, it follows that
$S_{58}$ satisfies $\bq\preceq \bu$.
By Lemmas \ref{lem5801},
$L_{\geq 2}(\bu) \neq \emptyset$, $H_{\bq}(\bu) \neq \emptyset$.

\textbf{Case 1.} $\ell(\bq)=1$.
It follows that $h(\bq)=h(\bu_j)$ for some $\bu_j \in L_{\geq 2}(\bu)$, and so
\[
\bu \succeq \bu_j =h(\bq)s(\bu_j)=\bq s(\bu_j)\stackrel{\eqref{65401}}\succeq \bq.
\]
This derives the inequality $\bu \succeq\bq$.

\textbf{Case 2.} $\ell(\bq)\geq 2$.
Lemma \ref{lem5801} tells us that there exists $\bu_j\in L_{\geq 2}(\bu)$ such that $h(\bu_j)=h(\bq)$.
Let $\bq=x_1x_2\cdots x_n$, where $n\geq 2$. Then
by the identity \eqref{65402} we deduce the identity
\[
\bq \approx\sum\limits_{i=2}^nx_1x_i.
\]
So we only need to consider the case that $\bq=xy$.
Similarly, we can obtain $\bu=L_1(\bu)+L_2(\bu)$.

\textbf{Subcase 2.1.} $x=y$.
We claim that there exists $\bu_k\in\bu$ such that $\bu_k=x^2$.
Suppose otherwise. Define a semiring homomorphism $\varphi:P_f(X^+)\to S_{(4,654)}$ by
$\varphi(x)=2$ and $\varphi(z)=4$ for all $z\neq x$.
Then $\varphi(\bu)=2$, $\varphi(\bq)=1$, a contradiction.
Hence such $\bu_k$ exists; consequently $\bu\succeq x^2=\bq$.

\textbf{Subcase 2.2.} $x\neq y$.
Since $\bq\preceq \bu$ be such a nontrivial inequality,
we shall show that there exists $\bu_\ell\in\bu$ such that $\bu_\ell=x^2$.
Suppose otherwise.
Consider the semiring homomorphism $\varphi: P_f(X^+) \to S_{(4, 654)}$ defined by
$\varphi(x)=2$, $\varphi(y)=3$, $\varphi(z)=4$ otherwise.
Then $\varphi(\bu)=2$ and $\varphi(\bq)=1$, a contradiction.
Thus such $\bu_\ell$ exists.
Since $c(\bq)\subseteq c(\bu)$, we have that there is $\bu_r\in\bu$ such that $y\in \bu_r$.
Now we have
\[
\bu \succeq x^2+\bu_r= x^2+y\bu_r' \stackrel{\eqref{65403}}\succeq xy=\bq
\]
or
\[
\bu \succeq x^2+\bu_r= x^2+\bu_r'y \stackrel{\eqref{65404}}\succeq xy=\bq.
\]
In either case, the inequality $\bu\succeq xy=\bq$ is derived.
This completes the proof.
\end{proof}

\begin{cor}
The ai-semiring $S_{(4, 529)}$ is finitely based.
\end{cor}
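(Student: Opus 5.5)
The plan is to use the duality trick that the paper applies repeatedly in this kind of corollary. I will check directly from the Cayley tables in Table~\ref{tb1} that $S_{(4,529)}$ and $S_{(4,654)}$ have the same additive reduct, the chain with $1$ on top and $4$ at the bottom, and that their multiplications are dual, that is, $a\cdot_{529} b = b\cdot_{654} a$ for all $a,b\in\{1,2,3,4\}$. If the identity map does not realize this, I will look for a permutation of $\{1,2,3,4\}$ that preserves the chain order. Since the order is a chain, the only order-preserving bijection is the identity, so the duality must hold on the nose.

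Once the duality is established, the elementary fact recorded in the introduction applies: two ai-semirings with the same additive reduct and dual multiplications have the same finite basis property. Concretely, $S_{(4,529)}$ satisfies an identity $\bu\approx\bv$ if and only if $S_{(4,654)}$ satisfies the identity obtained by reversing every word. So the set of reversed identities \eqref{65401}--\eqref{65404} is a finite equational basis for $\mathsf{V}(S_{(4,529)})$. Those reversed identities are
\[
x\preceq yx,\quad zyx\approx yx+zx,\quad yx\preceq x^2+x_1x+y_1y,\quad yx\preceq x^2+x_1x+yy_1.
\]

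By Proposition~\ref{pro65401}, $S_{(4,654)}$ is finitely based, so $S_{(4,529)}$ is finitely based as well. The only real step is the table check. If the duality failed, the fallback would be the other route the paper uses: exhibit $S_{(4,529)}$ as a subdirect product that is equationally equivalent to a known algebra. I expect the duality to hold, since the corollary is placed immediately after Proposition~\ref{pro65401}.
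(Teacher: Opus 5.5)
Your proposal is correct and is exactly the paper's argument: the paper observes that $S_{(4,529)}$ and $S_{(4,654)}$ have dual multiplications and invokes Proposition~\ref{pro65401}. The table check you defer does go through on the nose, since the transpose of the multiplication table of $S_{(4,654)}$ is precisely that of $S_{(4,529)}$, and your word-reversed versions of \eqref{65401}--\eqref{65404} are a correct explicit basis.
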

\begin{proof}
Notice that $S_{(4, 529)}$ and $S_{(4, 654)}$ have dual multiplications,
therefore, $S_{(4, 529)}$ is also finitely based.
\end{proof}

\begin{pro}\label{pro66701}
$\mathsf{V}(S_{(4, 667)})$ is the ai-semiring variety defined by the identities
\begin{align}
& x^3 \approx x^2; \label{66702}\\
& xyz \approx xyt; \label{66704}\\
& xy \preceq x^2; \label{66703}\\
& x^2 \preceq x^2y, \label{66701}
\end{align}
where $y$ may be empty in \eqref{66703}.
\end{pro}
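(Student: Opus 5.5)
The plan follows the pattern of the earlier propositions. First I would check directly on the Cayley tables that $S_{(4,667)}$ satisfies \eqref{66702}--\eqref{66701}. Then, for a nontrivial inequality $\bq\preceq\bu$ of $S_{(4,667)}$ with $\bu=\bu_1+\cdots+\bu_n$, I would derive it from these identities.

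The first step is to put every word into a normal form. By \eqref{66704} a word of length at least $3$ depends only on its first two letters, and by \eqref{66702} we get $x^3\approx x^2$. Substituting $x^2$ for $x$ in \eqref{66703} gives $x^2y\preceq x^4\approx x^2$, and together with \eqref{66701} this yields
\[
x^2y\approx x^2.
\]
Hence every word is equivalent to one of
\[
x,\qquad x^2,\qquad xy\ (x\neq y),\qquad xyx\ (x\neq y),
\]
where $xyx$ stands for the common value of $xyz$ over all $z$. So we may assume that $\bq$ and every $\bu_i$ have one of these four shapes. The derivable comparisons between normal forms are
\[
x\preceq x^2\ \text{(\eqref{66703} with $y$ empty)},\qquad xy\preceq x^2,\qquad xyz\preceq (xy)^2\approx xyx .
\]
These suggest that, on words with head $x$, the element $x^2$ sits above $x$, $xy$ and $xyx$.

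The second step is to find necessary conditions on $\bu$, one for each shape of $\bq$, using explicit substitutions $\varphi\colon P_f(X^+)\to S_{(4,667)}$. Following the earlier propositions, I would send the variables of $\bq$ to suitable elements and all other variables to $4$, the additive minimum. I would also use the $2$-element subalgebras (presumably copies of $L_2$ and $N_2$ sit inside) to get head conditions $H_\bq(\bu)\neq\emptyset$. The conditions I expect to obtain are these.
\begin{itemize}
\item For $\bq=x$: some $\bu_i$ equals $x^2$ or has prefix $x^2$.
\item For $\bq=x^2$: again some $\bu_i\approx x^2$.
\item For $\bq=xy$: some $\bu_i$ has prefix $xy$, or some $\bu_i$ is $x^2$.
\item For $\bq=xyx$: some $\bu_i$ has prefix $xy$, together with a condition separating $xy$ from $xyx$.
\end{itemize}

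The third step is to derive $\bu\succeq\bq$ in each case, using the normal forms and the chain of comparisons above. For instance, for $\bq=x$ we have $\bu\succeq\bu_i\approx x^2\succeq x$ by \eqref{66703} with $y$ empty. For $\bq=xy$ with $\bu_i=x^2$ we use $x^2\succeq xy$, which is \eqref{66703}.

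I expect the main obstacle to be the relationship between $xy$ and $xyz\approx xyx$. The identities only give $xyz\preceq xyx$ trivially; they do not obviously give either $xy\preceq xyz$ or its converse. So the heart of the proof is to read off from the multiplication table exactly when $\bq=xy$ follows from words of length at least $3$ with prefix $xy$, and when $\bq=xyx$ follows from $xy$. I would test the substitution that sends $x$ and $y$ to the elements whose product is not idempotent. If that substitution separates $xy$ from $xyx$ in one direction, the corresponding case is simply impossible, and the nontriviality of $\bq\preceq\bu$ finishes it. If it does not separate them, the comparison must already follow from \eqref{66703} and \eqref{66701} applied with suitable substitutions such as $x\mapsto xy$, and I would try those substitutions first.
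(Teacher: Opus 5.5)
\textbf{Genuine gap: the step you defer cannot be carried out, and in fact the statement is false.} You identified the right crux, but your fallback, that a comparison not separated by $S_{(4,667)}$ ``must already follow from \eqref{66703} and \eqref{66701}'', fails exactly there.

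In $S_{(4,667)}$ the elements $1,3,4$ are left zeros, and every product $ab$ lies in $\{1,3,4\}$. Hence $abc=ab$, so $S_{(4,667)}$ satisfies $xyz\approx xy$ and separates $xy$ from $xyx$ in neither direction. One direction is derivable, more easily than you suggest: substituting $x\mapsto xy$ in $x\preceq x^2$ and applying \eqref{66704} gives $xy\preceq xyxy\approx xyz$.

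The converse $xyz\preceq xy$ is \emph{not} a consequence of \eqref{66702}--\eqref{66701}. Take the ai-semiring on $\{a,b,c,\top\}$ whose additive reduct has three pairwise incomparable elements $a,b,c$ below a top $\top$, so $a+b=a+c=b+c=\top$. Let $ab=c$ and every other product equal $\top$. Every product of two distinct sums and every product of length $3$ is $\top$, so associativity and distributivity are immediate. Every square and every product of length at least $3$ equals the top $\top$, so \eqref{66702}--\eqref{66701} hold trivially. Yet $ab=c\neq\top=aba$. Thus $x_1x_2x_3\preceq x_1x_2$ holds in $S_{(4,667)}$ but is not derivable. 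The listed identities define a strictly larger variety, and no choice of substitutions will complete your third step.

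The paper's own proof has the same hole. In Subcase 2.2 it writes $x_1x_2\bu_\ell'\approx x_1x_2\cdots x_n$ ``by \eqref{66704}'', which is unjustified when $\bu_\ell=x_1x_2$ and $n\geq 3$. The repair is to replace \eqref{66704} by $xy\approx xyz$ (or add $xyz\preceq xy$). Then every word is equivalent to $x$ or to $xy$ with $y$ possibly equal to $x$, and your substitution plan goes through:
\begin{itemize}
\item For $\bq=x$, sending $x\mapsto 2$ and all other variables to $4$ forces $x^2\in\bu$ in normal form, and $x^2\succeq x$.
\item For $\bq=xy$ with $x\neq y$, sending $x\mapsto 2$, $y\mapsto 3$ and the rest to $4$ forces $x^2$ or $xy$ into $\bu$. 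By nontriviality it must be $x^2$, and $x^2\succeq xy$ by \eqref{66703}.
\item The same substitution as for $\bq=x$ shows that a nontrivial $x^2\preceq\bu$ never holds.
\end{itemize}
Two minor points. The condition you anticipated for $\bq=xyx$ coincides with the one for $xy$. Also, $N_2$ does not embed in $S_{(4,667)}$, since $x\preceq x^2$ fails in $N_2$; the two-element subalgebras are copies of $L_2$ and $T_2$.
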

\begin{proof}
It is easy to check that $S_{(4, 667)}$ satisfies the identities \eqref{66702}--\eqref{66701}.
In the remainder it is enough to prove that every identity that holds in $S_{(4, 667)}$
can be derived by \eqref{66702}--\eqref{66701}.
Let $\bq\preceq \bu$ be such a nontrivial inequality, where
$\bu=\bu_1+\bu_2+\cdots+\bu_n$ and $\bu_i, \bq \in X^+$, $1 \leq i \leq n$.
Since $S_{58}$ is isomorphic to $\{1, 2, 3\}$, it follows that
$S_{58}$ satisfies $\bq\preceq \bu$.
By Lemmas \ref{lem5801},
$L_{\geq 2}(\bu) \neq \emptyset$, $H_{\bq}(\bu) \neq \emptyset$.

\textbf{Case 1.} $\ell(\bq)=1$.
We claim that there exists $\bu_k\in\bu$ such that $\bu_k=\bq^2\bu_k'$ for some $\bu_k'\in X^*$.
Suppose otherwise. Define a semiring homomorphism $\varphi:P_f(X^+)\to S_{(4,667)}$ by
$\varphi(\bq)=2$ and $\varphi(z)=4$ for all $z\neq \bq$.
Then $\varphi(\bu)=3$ and $\varphi(\bq)=2$, a contradiction.
Hence such $\bu_k$ exists. Consequently,
\[
\bu \succeq \bu_k = \bq^2\bu_k' \stackrel{\eqref{66701}}\succeq \bq^2 \stackrel{\eqref{66703}}\succeq \bq.
\]
This derives the inequality $\bu \succeq\bq$.

\textbf{Case 2.} $\ell(\bq)\geq 2$.
Lemma \ref{lem5801} tells us that there exists $\bu_j\in L_{\geq 2}(\bu)$ such that $h(\bu_j)=h(\bq)$.
Let $\bq=x_1x_2\cdots x_n$, where $n\geq 2$.

\textbf{Subcase 2.1.} $x_1=x_2$. Then $\bq=x_1^2\bq_1$.
We shall show that there exists $\bu_k\in\bu$ such that $\bu_k=x_1^2\bu_k'$.
Suppose that this is not true.
Consider the semiring homomorphism $\varphi: P_f(X^+) \to S_{(4, 667)}$ defined by
$\varphi(x_1)=2$, $\varphi(z)=4$ otherwise.
It follows that $\varphi(\bu)=3$ or $2$, and $\varphi(\bq)=1$, a contradiction.
Hence there exists $\bu_k\in\bu$ such that $\bu_k=x_1^2\bu_k'$.
Then
\[
\bu \succeq \bu_k=x_1^2\bu_k'\stackrel{\eqref{66704}}\approx x_1^2\bq_1=\bq.
\]

\textbf{Subcase 2.2.} $x_1\neq x_2$.
We claim that there exists $\bu_\ell\in\bu$ such that either $\bu_\ell=x_1x_2\bu_\ell'$ or $\bu_\ell=x_1^2\bu_\ell''$ for some $\bu_\ell',\bu_\ell''\in X^*$.
Suppose not. Define $\varphi:P_f(X^+)\to S_{(4,667)}$ by
$\varphi(x_1)=2$, $\varphi(x_2)=3$, and $\varphi(z)=4$ for all other $z$.
Then $\varphi(\bu)\in\{2,3\}$ while $\varphi(\bq)=1$, a contradiction.
Hence such $\bu_\ell$ exists.
If $\bu_\ell=x_1x_2\bu_\ell'$, then
\[
\bu \succeq \bu_\ell = x_1x_2\bu_\ell' \stackrel{\eqref{66704}}\approx x_1x_2\cdots x_n = \bq.
\]
If $\bu_\ell=x_1^2\bu_\ell''$, then
\[
\bu \succeq \bu_\ell = x_1^2\bu_\ell'' \stackrel{\eqref{66701}}\succeq x_1^2 \stackrel{\eqref{66703}}\succeq x_1x_2\cdots x_n = \bq.
\]
This derives the inequality $\bu \succeq\bq$.
\end{proof}

\begin{cor}
The ai-semiring $S_{(4, 533)}$ is finitely based.
\end{cor}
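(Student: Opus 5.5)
The plan is to follow the pattern used for every other corollary of this shape in the paper: show that $S_{(4,533)}$ has the dual multiplication of an algebra already shown to be finitely based. The statement sits directly after Proposition~\ref{pro66701}, and the dual-pair corollaries in the paper are always placed next to the proposition they rely on. So the natural candidate is $S_{(4,667)}$. I will read off the multiplication tables of $S_{(4,533)}$ and $S_{(4,667)}$ from Table~\ref{tb1} and check that each is the transpose of the other, that is, $a\cdot_{533} b = b\cdot_{667} a$ for all $a,b\in\{1,2,3,4\}$.

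All algebras in this family share the same additive reduct, the chain $1>2>3>4$. Hence transposed multiplication tables mean the two algebras have dual multiplications in the sense of the elementary fact recalled in the introduction. That fact says an ai-semiring with the dual multiplication of a finitely based ai-semiring is itself finitely based. The underlying reason is that reversing every word in an equational basis of $S_{(4,667)}$ gives a basis of $S_{(4,533)}$.

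So the key steps are:
\begin{enumerate}
\item verify the transposition of the two Cayley tables;
\item invoke Proposition~\ref{pro66701}, which says $S_{(4,667)}$ is finitely based;
\item conclude by the duality fact.
\end{enumerate}
If one wanted an explicit basis, it would be the dual of the identities in Proposition~\ref{pro66701}, for instance $x^3\approx x^2$, $zyx\approx tyx$, $yx\preceq x^2$, and $x^2\preceq yx^2$.

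The only real obstacle is the table check in step~1. If the tables turned out not to be transposes, I would look instead for a subdirect decomposition of $S_{(4,533)}$. In that case I would try the congruences with blocks $\{1,2,3\}$ and $\{3,4\}$, or with blocks $\{1,2\}$ and $\{2,3,4\}$, both used repeatedly in earlier sections. The aim would be to identify $\mathsf{V}(S_{(4,533)})$ with the variety of an already treated algebra. The placement of the statement makes the duality route by far the most likely.
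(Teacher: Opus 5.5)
\textbf{Review of the proposal.} Your reduction is exactly the paper's. The multiplication table of $S_{(4,533)}$ is indeed the transpose of that of $S_{(4,667)}$, and the paper's proof is just this observation plus Proposition~\ref{pro66701}.

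\textbf{The gap: Proposition~\ref{pro66701} is false as stated.} Your step~2 inherits a real defect from this proposition, and as a consequence the explicit dual basis you list is not a basis. In $S_{(4,667)}$ every product $xy$ lies in $\{1,3,4\}$, and each of these elements is a left zero. Hence $S_{(4,667)}$ satisfies $xyz\approx xy$, which is strictly stronger than $xyz\approx xyt$ and does not follow from the listed identities.

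For a counter-model, let $K=\{a,b,p,T\}$ with $e+f=T$ whenever $e\neq f$, so $T$ is the top and $a,b,p$ are pairwise incomparable. Put $ab=ba=p$ and let every other product equal $T$. One checks that $K$ is a commutative ai-semiring. Every square and every product of length $3$ in $K$ equals $T$, so $x^3\approx x^2$, $xyz\approx xyt$, $xy\preceq x^2$, $x\preceq x^2$ and $x^2\preceq x^2y$ all hold in $K$. Yet $(ab)a=T\not\leq p=ab$, so $xyz\preceq xy$ fails in $K$ although it holds in $S_{(4,667)}$.

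The faulty step is in Subcase~2.2 of that proof. There $x_1x_2\bu_\ell'\approx x_1x_2\cdots x_n$ is asserted via $xyz\approx xyt$ even when $\bu_\ell'$ is empty and $n\geq 3$. Since $K$ is commutative, it also satisfies your list $x^3\approx x^2$, $zyx\approx tyx$, $yx\preceq x^2$, $x^2\preceq yx^2$. But it fails $zyx\preceq yx$, which holds in $S_{(4,533)}$.

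\textbf{The repair.} The corollary is still true. $S_{(4,667)}$ is defined by $xyz\approx xy$, $xy\preceq x^2$ and $x\preceq x^2$; here is the completeness argument.
\begin{enumerate}
\item The identity $xyz\approx xy$ shortens $\bq$ and every word of $\bu$ to length at most $2$. Assume the shortened $\bq$ is not among the shortened words of $\bu$.
\item Take the substitution $x\mapsto 2$, with all other variables sent to $4$. It rules out $\bq=x^2$, and it forces $x^2\in\bu$ when $\bq=x$.
\item Take the substitution $x\mapsto 2$, $y\mapsto 3$, with all other variables sent to $4$. It forces $x^2\in\bu$ when $\bq=xy$ with $x\neq y$.
\item Then $x\preceq x^2$ or $xy\preceq x^2$ finishes the derivation.
\end{enumerate}
Dually, $S_{(4,533)}$ is defined by $zyx\approx yx$, $yx\preceq x^2$ and $x\preceq x^2$. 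With Proposition~\ref{pro66701} corrected in this way, your three steps go through unchanged.
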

\begin{proof}
Notice that $S_{(4, 533)}$ and $S_{(4, 667)}$ have dual multiplications,
therefore, $S_{(4, 533)}$ is also finitely based.
\end{proof}

\begin{pro}\label{pro63201}
$\mathsf{V}(S_{(4, 632)})$ is the ai-semiring variety defined by the identities
\begin{align}
& x^2y \approx xy; \label{63202}\\
&(xy)^2 \approx x^2y^2; \label{63205}\\
& x^2y^2 \approx x^2y^2x^2; \label{63206}\\
& x\preceq x^2; \label{63209}\\
& xy\preceq xyx; \label{63201}\\
& xy \preceq xy^2; \label{63203}\\
& xz \preceq xy+z; \label{63204}\\
& yx \preceq x+yxz; \label{63207}\\
& x+xy \approx x^2+xy. \label{63208}
\end{align}
\end{pro}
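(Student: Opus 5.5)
Following the pattern of the earlier propositions, we first check by direct computation that $S_{(4,632)}$ satisfies \eqref{63202}--\eqref{63208}. Then we take an arbitrary nontrivial inequality $\bq\preceq\bu$ of $S_{(4,632)}$, with $\bu=\bu_1+\cdots+\bu_n$, and derive $\bu\succeq\bq$ from these identities. The model is the proof of Proposition~\ref{pro68601} (the basis of $\mathsf{V}(S_{41},S_{46})$) combined with the $T_2^0$-type reasoning in Proposition~\ref{pro63501}.

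\textbf{Necessary conditions.} We first identify small algebras inside $S_{(4,632)}$, as subalgebras or quotients, to obtain necessary conditions on $\bq\preceq\bu$. We expect $D_2$ to embed, giving $D_\bq(\bu)\neq\emptyset$. We expect $S_{41}$ to arise as a quotient (as it did for $S_{(4,628)}$), so that Lemma~\ref{lem4101} gives, for each $Y\subseteq c(\bq)$, some $\bu_j$ with $h_Y(\bu_j)=h_Y(\bq)$. We also expect $T_2^0$ or $S_{58}$ to occur, forcing some $\bu_i\in D_\bq(\bu)$ with $\ell(\bu_i)\ge 2$ when needed. In particular, by Lemma~\ref{lem001} the $T_2^0$ part would give $L_{\ge2}(D_\bq(\bu))\neq\emptyset$.

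\textbf{Normal form.} Using \eqref{63202}, \eqref{63205} and \eqref{63206}, every word with $|c(\bq)|\ge2$ reduces to a word determined by $i(\bq)=x_1\cdots x_m$ together with squaring data, exactly as in Proposition~\ref{pro68601}. The case $|c(\bq)|=1$ is handled directly by \eqref{63209}, \eqref{63208} and \eqref{63202}, mimicking Proposition~\ref{pro62801}.

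\textbf{Induction on prefixes.} For $|c(\bq)|\ge2$ we first produce a word $\bp\in D_\bq(\bu)$ of length at least $2$, or reduce to that situation using \eqref{63208} as in Case~2 of Proposition~\ref{pro62801}. We then prove by induction on $j$ that $\bu\succeq x_1\cdots x_j\bp$ is derivable. The inductive step takes the word $\bu_j=\bu_{j_1}x_j\bu_{j_2}$ supplied by Lemma~\ref{lem4101} with $c(\bu_{j_1})\subseteq\{x_1,\dots,x_{j-1}\}$. Identity \eqref{63204} ($xz\preceq xy+z$) lets us attach $x_1\cdots x_{j-1}$ to the front of $\bu_j$. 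We then absorb $\bu_{j_1}$ using \eqref{63202} and \eqref{63205}. Identities \eqref{63201} and \eqref{63203} re-insert $\bp$ and the squared letters. Taking $j=m$ gives $\bu\succeq i(\bq)\bp$, and from this we recover $\bq$ via \eqref{63201}, \eqref{63203} and \eqref{63206}.

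\textbf{Main obstacle.} The hard part will be the tail letter when $m(t(\bq),\bq)=1$, or more generally when $\bq$ is not fully "squared". Here the word $\bp$ may contain $t(\bq)$ in the wrong position, so $i(\bq)\bp$ need not reduce to $\bq$. We plan to handle this by a substitution argument into $S_{(4,632)}$, choosing values in $\{1,2,3,4\}$ in the style of the proofs of Propositions~\ref{pro72101} and~\ref{pro62801}, to force the existence of a $\bu_k\in D_\bq(\bu)$ in which $t(\bq)$ occurs at most once and only as its tail. In that subcase we finish with \eqref{63207} ($yx\preceq x+yxz$), or with \eqref{63204} when $t(\bq)$ is absent. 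We expect finding the correct separating substitution to be the crux of the proof.
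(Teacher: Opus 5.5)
There is a genuine gap, and your diagnosis of it is off. In $S_{(4,632)}$ the element $3$ is a left identity, $4$ is a left zero, and $1a=2a=1$ for all $a$. So the value of a word is decided by its first letter not sent to $3$, and by whether that letter is the last one. Hence $i(\bq)\bp\approx x_1^2\cdots x_m^2$ for every nonempty $\bp$ with $c(\bp)\subseteq c(\bq)$. This word always lies above $\bq$ (by \eqref{63201}, \eqref{63203}, as you note), so passing from $i(\bq)\bp$ to $\bq$ is never the difficulty. The difficulty is that $\bu\succeq i(\bq)\bp$ is often false when $m(t(\bq),\bq)=1$.

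For instance, $xyz\preceq z+xw_1+yw_2$ holds (two applications of \eqref{63204}). But $x,y\mapsto 3$, $z\mapsto 2$, $w_1,w_2\mapsto 4$ sends $x^2y^2z^2$ to $1$ and the right side to $2$. The same happens for the basis instance $xy\preceq y+xz$. In these examples $D_\bq(\bu)$ is the single letter $t(\bq)$, and no word of length at least $2$ in $\bu$ begins with $t(\bq)$ or with $x_jt(\bq)$. So neither a $\bp\in D_\bq(\bu)$ with $\ell(\bp)\geq 2$ nor the reduction of Case~2 in Proposition~\ref{pro62801} via \eqref{63208} is available. Indeed $T_2^0\notin\mathsf{V}(S_{(4,632)})$, since it fails \eqref{63204}.

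Your planned repair of the tail also rests on a false statement. For $xy\preceq xyx$, which is \eqref{63201}, one has $D_\bq(\bu)=\{xyx\}$, and $t(\bq)=y$ occurs in it but not as its tail. What you are trying to force is the $S_{46}$-condition of Lemma~\ref{lem4601}, and $S_{46}\notin\mathsf{V}(S_{(4,632)})$.

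The missing idea is a necessary condition read off from $S_{(4,632)}$ itself, together with an induction that carries tails instead of a fixed $\bp$. Send $Y_j=\{x_1,\dots,x_{j-1}\}$ to $3$, $x_j$ to $2$, and all other variables to $4$. This shows that for each $j<m$ some $\bu_{i_j}=\bu_{i_j}'x_j\bu_{i_j}''\in\bu$ has $c(\bu_{i_j}')\subseteq Y_j$ and $\bu_{i_j}''\neq\varepsilon$. Lemma~\ref{lem4101} gives no such control of $\bu_{i_j}''$, and without it \eqref{63204} cannot be iterated, because $xz\preceq x+z$ fails.

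The paper chains these witnesses with \eqref{63204} to get $\bu\succeq x_1\cdots x_j\bu_{i_j}''$ for each $j$. It ends with $\bu\succeq x_1\cdots x_m\bu_{i_m}''$ for a witness with $h_{Y_m}(\bu_{i_m})=x_m$. If $\bu_{i_m}''\neq\varepsilon$, then \eqref{63207} applied with any $\bu_i\in D_\bq(\bu)$ gives $\bu\succeq\bq\bu_i\succeq\bq$. If every such witness ends in $x_m$, the same substitution with $j=m$ forces $t(\bq)=x_m$ and $m(t(\bq),\bq)=1$, so $\bu\succeq i(\bq)\approx\bq$. No auxiliary $\bp$ appears, and in the subcase where every $Y_m$-witness ends in $t(\bq)$ none could.
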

\begin{proof}
It is easy to check that $S_{(4, 632)}$ satisfies the identities \eqref{63202}--\eqref{63208}.
In the remainder it is enough to prove that every identity that holds in $S_{(4, 632)}$
can be derived by \eqref{63202}--\eqref{63208}.
Let $\bq\preceq \bu$ be such a nontrivial inequality, where
$\bu=\bu_1+\bu_2+\cdots+\bu_n$ and $\bu_i, \bq \in X^+$, $1 \leq i \leq n$.
Since $S_{41}$, $S_{57}$, and $S_{58}$ embed into $S_{(4,632)}$,
it follows from Lemmas~\ref{lem4101}, \ref{lem5701}, and \ref{lem5801} that
$L_{\geq2}(\bu)\neq\emptyset$, $H_{\bq}(\bu)\neq\emptyset$, and $D_{\bq}(\bu)\neq\emptyset$;
moreover, for every $Y\subseteq c(\bq)$, there exists $\bu_j\in\bu$ such that $h_Y(\bu_j)=h_Y(\bq)$.
If $\ell(\bq)\geq2$, then $L_{\geq2}(\bu)\cap H_{\bq}(\bu)\neq\emptyset$.
Since $D_{\bq}(\bu)\neq\emptyset$, we can choose $\bu_i\in\bu$ such that $c(\bu_i)\subseteq c(\bq)$.

Let $|c(\bq)|=1$.
If $\ell(\bq)=1$, then $\bu_{i}=\bq^\ell$ for some $\ell\geq 2$. So we have
\[
\bu\succeq\bu_{i}=\bq^\ell \stackrel{\eqref{63202}}\approx \bq^2\stackrel{ \eqref{63209}}\succeq \bq.
\]
Now assume that $\ell(\bq)\geq 2$. Then $\bq=x^r$ for some $x \in X$ and some $r \geq 2$.
This implies that $\bu_{i}=x^s$ for some $s \geq 1$ and $\bu_k=xs(\bu_k)$, where $s(\bu_k)$ is nonempty.
If $s=1$, then $\bu_{i}=x$ and so
\[
\bu\succeq x+xs(\bu_k)\stackrel{\eqref{63208}}\approx x^2+xs(\bu_k)\stackrel{\eqref{63202}}\approx x^r+xs(\bu_k)=\bq+xs(\bu_k).
\]
If $s\geq 2$, then
\[
\bu \succeq\bu_{i_1}=x^s\stackrel{\eqref{63202}}\approx x^r=\bq.
\]
This derives the inequality $\bu\succeq\bq$.

Now let $|c(\bq)|\geq 2$, and let $i(\bq)=x_1x_2\cdots x_m$, $m\geq 2$.
By the identities \eqref{63201}, \eqref{63202} and \eqref{63209} we deduce
$i(\bq) \preceq\bq$.
Since $\bq\preceq\bu$ holds in $S_{(4, 632)}$, we have $i(\bq)\preceq\bu$ holds in $S_{(4, 632)}$.
This implies that $i(\bq)\preceq\bu$ holds in  $S_{41}$, $S_{57}$ and $S_{58}$.

We shall show that for any $2\leq j\leq m-1$, there exists $\bu_{i_j}\in\bu$ such that
$\bu_{i_j}=\bu_{i_j}'x_j\bu_{i_j}''$, where $c(\bu_{i_j}')\subseteq Y_j$ and $\bu_{i_j}''\neq\varepsilon$.
Indeed, suppose that this is not true.
Then there exists $2\leq j\leq m-1$ such that $\bu_{i_j}=\bu_{i_j}'x_j$ for some $\bu_{i_j}' \in X^*$,
where $c(\bu_{i_j}')\subseteq Y_j$,
if $\bu_{i_j} \in \bu$ and $h_{Y_j}(\bu_{i_j})=h_{Y_j}(\bq)$.
Choose a minimum $j$.
Consider the semiring homomorphism $\varphi: P_f(X^+) \to S_{(4, 632)}$ defined by
$\varphi(x_j)=2$, $\varphi(x)=3$ if $x\in Y_{j}$, and $\varphi(x)=4$ otherwise.
It is easy to see that $\varphi(\bu)=2$ and $\varphi(i(\bq))=1$, a contradiction.
Combining Lemma~\ref{lem5801}, for any $1\leq j\leq m-1$, there exists $\bu_{i_j}\in\bu$ such that
$\bu_{i_j}=\bu_{i_j}'x_j\bu_{i_j}''$, where $c(\bu_{i_j}')\subseteq Y_j$ and $\bu_{i_j}''\neq\varepsilon$.
Now we have
\[
\bu\succeq x_1\bu_{i_1}''+\bu_{i_2}'x_2\bu_{i_2}''\stackrel{\eqref{63204}}\succeq x_1\bu_{i_2}'x_2\bu_{i_2}''\stackrel{\eqref{63202}}\approx x_1x_2\bu_{i_2}''.
\]
This implies the inequality
\[
\bu\succeq x_1x_2\bu_{i_2}''.
\]
Repeat this process and one can obtain the inequality
\[
\bu\succeq x_1x_2\cdots x_m\bu_{i_m}''.
\]

\textbf{Case 1.} $\bu_{i_m}''$ is nonempty. Then
\[
\bu \succeq \bu_i+x_1x_2\cdots x_m\bu_{i_m}''\stackrel{\eqref{63202}-\eqref{63206}}\approx \bu_i+\bq^2\bu_i^2\bu_{i_m}''
\stackrel{\eqref{63202}}\approx \bu_i+\bq\bu_i\bu_{i_m}''\stackrel{\eqref{63207}}\succeq\bq\bu_i\stackrel{\eqref{63201}, \eqref{63203}}\succeq\bq.
\]

\textbf{Case 2.} $\bu_{i_m}''$ is empty. Then one can obtain the inequality
\[
\bu\succeq x_1x_2\cdots x_m=i(\bq).
\]
In this case, we shall show that for any $t(\bq)=x_m$ and $m(t(\bq), \bq)=1$.
Suppose that this is not true.
Consider the semiring homomorphism $\varphi: P_f(X^+) \to S_{(4, 632)}$ defined by
$\varphi(x_m)=2$, $\varphi(x)=3$ if $x\in Y_m$, and $\varphi(x)=4$ otherwise.
It follows that $\varphi(\bu)=2$ and $\varphi(i(\bq))=1$, a contradiction.
Hence the identities \eqref{63202}, \eqref{63205} and \eqref{63206} imply that $i(\bq)\approx\bq$.
Therefore, we can derive the inequality $\bu \succeq \bq$.
\end{proof}

\begin{cor}
The ai-semiring $S_{(4, 511)}$ is finitely based.
\end{cor}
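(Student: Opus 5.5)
The corollary asserts that $S_{(4,511)}$ is finitely based. I would prove it by the duality device used throughout the paper, with Proposition~\ref{pro63201} (the finite basis \eqref{63202}--\eqref{63208} of $\mathsf{V}(S_{(4,632)})$) as the source.

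The one step with content is to check, directly from the Cayley tables of Table~\ref{tb1}, that $S_{(4,511)}$ and $S_{(4,632)}$ have dual multiplications. Concretely, I would verify that the multiplication table of $S_{(4,511)}$ is the transpose of that of $S_{(4,632)}$, i.e.\ $a\cdot_{511} b = b\cdot_{632} a$ for all $a,b\in\{1,2,3,4\}$. Both algebras share the chain additive reduct, with $1$ at the top and $4$ at the bottom, so only the multiplication needs comparing. I expect this to hold because $511$ and $632$ occupy mirrored positions in the enumeration, just like the other pairs the paper treats by duality (for example $504$/$628$, $529$/$654$, $533$/$667$). If the literal transpose does not match, I would instead look for an isomorphism of $S_{(4,511)}$ with the dual of $S_{(4,632)}$. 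Such an isomorphism must preserve the chain order, so it is the identity and the literal transpose check already decides the matter; the only remaining possibility would be that the intended partner is a different algebra.

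Once the duality is confirmed, I would invoke the elementary fact recalled in the introduction: two ai-semirings with the same additive reduct and dual multiplications have the same finite basis property. Explicitly, reversing every word in \eqref{63202}--\eqref{63208} gives a finite equational basis for $\mathsf{V}(S_{(4,511)})$. Proposition~\ref{pro63201} then shows that $S_{(4,511)}$ is finitely based.
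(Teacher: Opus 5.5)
Your proposal is correct and coincides with the paper's proof, which observes that $S_{(4,511)}$ and $S_{(4,632)}$ have dual multiplications and then invokes Proposition~\ref{pro63201}. The transpose check you flag does go through: the columns of the multiplication table of $S_{(4,511)}$ are $(1,1,1,1)$, $(1,1,1,1)$, $(1,2,3,4)$, $(4,4,4,4)$, and these are exactly the rows of the table of $S_{(4,632)}$.
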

\begin{proof}
Notice that $S_{(4, 511)}$ and $S_{(4, 632)}$ have dual multiplications,
therefore, $S_{(4, 511)}$ is also finitely based.
\end{proof}



\begin{pro}\label{pro77101}
$\mathsf{V}(S_{(4, 771)})$ is the ai-semiring variety defined by the identities
\begin{align}
& x^3\approx x^4; \label{77103}\\
& xy\approx yx; \label{77101}\\
& x_1xx_2 \preceq x; \label{77102}\\
& xy \preceq x^2+y^2; \label{77105}\\
& xy \preceq x^2+xy^2; \label{77106}\\
& xy \preceq x^2+xy^3; \label{77107}\\
& xyz^3 \preceq x^2z^3+xy^3z^3; \label{77110}\\
& xy^2y_1^3 \preceq xy^3y_1^3+x^3y^2y_1^3; \label{77108}\\
& x^2y^2y_1^3 \preceq x^2y^3y_1^3+x^3y^2y_1^3; \label{77100}\\
& xy^2z^2z_1^3 \preceq xy^2z^3z_1^3+x^3y^3z^2z_1^3; \label{77109}\\
& xyz \approx x^2yz+xy^2z+xyz^2, \label{77104}
\end{align}
where $x_1$ and $x_2$ may be empty in \eqref{77102},
$y_1$ may be empty in \eqref{77108} and \eqref{77100}, $z_1$ may be empty in \eqref{77109}.
\end{pro}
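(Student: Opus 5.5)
The plan follows the pattern of Propositions~\ref{pro77401} and~\ref{pro425}. First I check that $S_{(4,771)}$ satisfies \eqref{77103}--\eqref{77104}; this is a finite verification on the Cayley table. Next I fix a nontrivial inequality $\bq\preceq\bu$ of $S_{(4,771)}$. By \eqref{77101} all words can be taken in commutative form $x_1^{k_1}\cdots x_m^{k_m}$, and by \eqref{77103} each exponent can be truncated to $1,2,3$. Identity \eqref{77104}, applied repeatedly, rewrites $\bq$ as a sum of words in which at most one variable has exponent $1$. So it suffices to treat $\bq$ of the form $y\,\bp$ or $\bp$, where every variable of $\bp$ has exponent $2$ or $3$.

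Second, I extract necessary conditions from small subalgebras and quotients of $S_{(4,771)}$. I expect copies of $S_{44}$ and of $N_2$ or $D_2$ to embed, and possibly $S_{45}$, since the basis contains \eqref{77102} and resembles that of $\mathsf{V}(S_{44},S_{45})$. Lemma~\ref{lem4401} would then give $\ell(\bq)\ge2$, $D_\bq(\bu)\ne\emptyset$, and, for each $x\in M_1(\bq)$, a word $\bu_i\in D_\bq(\bu)$ with $m(x,\bu_i)\le1$. Conditions on exponents $2$ versus $3$ are not detectable by these $3$-element algebras. For those I will build explicit substitutions $\varphi$ into $S_{(4,771)}$, sending chosen variables to the generator of the cyclic part with $a^2\ne a^3=a^4$ and the rest to the identity or zero. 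Each such $\varphi$ shows that if $\bq$ has a variable of exponent $\ge3$, or a pair/triple pattern of exponents, then $\bu$ must contain a word in $D_\bq(\bu)$ realizing a matching exponent pattern. This mirrors the substitution arguments in Propositions~\ref{pro52501} and~\ref{pro49301}.

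Third comes the derivation. Using \eqref{77102}, any $\bu_i\in D_\bq(\bu)$ satisfies $\bu\succeq\bu_i\succeq\bu_i\bq$, and $\bu_i\bq\approx\bq$ whenever every exponent of $\bq$ already dominates in the stable range. This handles the case where all exponents of $\bq$ are $3$. When $\bq$ has exponents $1$ or $2$, I combine two or three words of $\bu$, chosen according to the patterns from the second step, and lift them to $\bq$ by the specific inequalities:
\begin{itemize}
\item \eqref{77105}--\eqref{77107} for length-two patterns $xy$;
\item \eqref{77110} and \eqref{77108}--\eqref{77109} for patterns carrying a tail $z^3$ or $y_1^3$, which absorbs the exponent-$3$ variables.
\end{itemize}
The cases will be organized by $|M_1(\bq)|\in\{0,1\}$ and by the number of exponent-$2$ variables, reducing via \eqref{77104} to at most two or three distinguished variables, matching the shapes of \eqref{77108}--\eqref{77109}.

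The main obstacle is the second step. I must determine the exact combinatorial criterion describing when $S_{(4,771)}$ satisfies $\bq\preceq\bu$ in terms of exponent patterns, and then check that every pattern the criterion allows is covered by one of the listed inequalities. The paper gives no lemma for this, so I would first compute the multiplication table and classify the possible values $\varphi(x)$ to state that criterion precisely, before doing the case analysis.
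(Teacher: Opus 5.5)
Your overall route is the paper's: witnesses from Lemma~\ref{lem4401} and from explicit substitutions into $S_{(4,771)}$, then $\bu\succeq\bu_i\succeq\bu_i\bq$ by \eqref{77102}, and gluing with \eqref{77107}, \eqref{77110}, \eqref{77100}, \eqref{77108}, \eqref{77109}. However, your first step is false, and it removes a case that cannot be avoided.

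Applying \eqref{77104} to three exponent-$1$ variables yields summands $x^2yz\bp$, $xy^2z\bp$, $xyz^2\bp$. Each has only one fewer exponent-$1$ variable, so you stop at $|M_1(\bq)|\le 2$, not $\le 1$. Nothing else does better. Suppose $x,y\in M_1(\bq)$ and $\bq\approx\sum_j\bw_j$ holds in $S_{(4,771)}$. The substitution $x,y\mapsto 2$, everything else $\mapsto 1$, gives $\varphi(\bq)=3$, which forces some $\bw_j$ with $m(x,\bw_j)+m(y,\bw_j)=2$. The substitutions sending only $y$ (resp.\ only $x$) to $2$ give $\bq$ the value $2$, so they exclude the exponent pairs $(2,0)$ and $(0,2)$. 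Hence this $\bw_j$ again has both $x$ and $y$ with exponent $1$.

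So $\bq=x_1x_2x_3^2\cdots x_m^2y_1^3\cdots y_n^3$, including $\bq=xy$, must be handled, and your case list $|M_1(\bq)|\in\{0,1\}$ misses it. This case needs a condition your plan never produces: some $\bu_p\in D_\bq(\bu)$ with $m(x_1,\bu_p)+m(x_2,\bu_p)\le 2$. It comes from $x_1,x_2\mapsto 2$, $c(\bq)\setminus\{x_1,x_2\}\mapsto 1$, all other variables $\mapsto 4$. Put $\bp=x_3\cdots x_my_1\cdots y_n$.
If both multiplicities are $\le 1$, then $\bu\succeq\bu_p\succeq x_1x_2\bp^3$ by \eqref{77102} and \eqref{77103}.
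If, say, $m(x_1,\bu_p)=2$ and $m(x_2,\bu_p)=0$, you also need $\bu_q\in D_\bq(\bu)$ with $m(x_1,\bu_q)\le 1$ from Lemma~\ref{lem4401}. Then $x_1^2\bp^3+x_1x_2^3\bp^3\succeq x_1x_2\bp^3$ is an instance of \eqref{77110}, or of \eqref{77107} if $\bp$ is empty.

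Your second step also targets the wrong patterns. Identify $S_{(4,771)}$ with $\{1,a,a^2,0\}$, where $a=2$, $a^2=3$, $0=4$. A nontrivial $\bq\preceq\bu$ holds iff all of the following hold:
\begin{itemize}
\item $D_\bq(\bu)\neq\emptyset$;
\item each $x\in M_1(\bq)$ has a witness $\bu_i\in D_\bq(\bu)$ with $m(x,\bu_i)\le 1$;
\item each $x\in M_2(\bq)$ has one with $m(x,\bu_i)\le 2$;
\item each pair $x\neq y$ in $M_1(\bq)$ has one with $m(x,\bu_i)+m(y,\bu_i)\le 2$.
\end{itemize}
Any substitution sending a variable of exponent $\ge 3$, or three variables of $c(\bq)$, to something other than $1$ sends $\bq$ to $4$. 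So exponent-$3$ variables and triples impose nothing, contrary to what you expect.

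Moreover, \eqref{77104} does not touch exponent-$2$ variables. You therefore cannot reduce to ``two or three distinguished variables'' or combine only ``two or three words''. You need one witness per $x\in M_2(\bq)$ and an induction that absorbs them one at a time. Use \eqref{77100} when $M_1(\bq)=\emptyset$, and \eqref{77108} followed by repeated \eqref{77109} otherwise. In each step, substitute products of already treated variables into these identities, e.g.\ $x:=x_1x_2$ or $y:=x_2x_3$.

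Finally, $S_{45}$ cannot embed, because $S_{(4,771)}$ is commutative. The subalgebras that matter are $S_{44}\cong\{1,3,4\}$ and $S_{47}\cong\{2,3,4\}$.
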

\begin{proof}
It is easy to check that $S_{(4, 771)}$ satisfies the identities \eqref{77103}--\eqref{77104}.
In the remainder it is enough to prove that every identity that holds in $S_{(4, 771)}$
can be derived by \eqref{77103}--\eqref{77104}.
Let $\bq\preceq \bu$ be such a nontrivial inequality, where
$\bu=\bu_1+\bu_2+\cdots+\bu_n$ and $\bu_i, \bq \in X^+$, $1 \leq i \leq n$.
Since $S_{44}$ and $S_{47}$ can be embedded in $S_{(4, 771)}$,
we have that both $S_{44}$ and $S_{47}$ satisfy $\bq\preceq \bu$.
By Lemma~\ref{lem4701},  it follows that
either $\ell(\bq)\geq 3$ or $\ell(\bq)=2$, $L_{\leq2}(\bu) \cap D_\bq(\bu)$ is nonempty.

\textbf{Case 1.} $\ell(\bq)=2$. Then $\bq=xy$.
Lemma~\ref{lem4701} tells us that there is $\bu_i\in D_\bq(\bu)$ such that $\ell(\bu_i)\leq2$.
If $\ell(\bu_i)=1$, then
\[
\bu \succeq \bu_i \stackrel{\eqref{77102}}\succeq \bu_i\bq_1 \stackrel{\eqref{77101}}\approx \bq.
\]
Now assume $\ell(\bu_i)=2$. Then $\bu_i\in\{x^2,y^2,xy\}$.
If $\bu_i=xy$, then $\bu\succeq xy=\bq$ is immediate.
If $\bu_i=y^2$, the argument is symmetric to the case $\bu_i=x^2$; hence we may assume $\bu_i=x^2$.

By Lemma~\ref{lem4401}, for every $x\in M_1(\bq)$, there exists $\bu_k\in D_\bq(\bu)$ such that $m(x,\bu_k)\leq 1$.
Together with identity \eqref{77103}, this implies that $\bu_k$ is one of
\[
\{y, y^2, y^3, xy^2, xy^3\}.
\]
For each of these five possibilities, using identities \eqref{77101} and \eqref{77102} (and the fact that $\bu\succeq\bu_k$),
one can derive $\bu\succeq xy^3$ in a straightforward manner.
Hence it suffices to consider the case $\bu_k=xy^3$.
In this case,
\[
\bu \succeq \bu_i+\bu_k = x^2+xy^3 \stackrel{\eqref{77107}}\succeq xy=\bq.
\]
Thus the inequality $\bu\succeq\bq$ is established.

\textbf{Case 2.} $\ell(\bq)\geq3$. By Lemma~\ref{lem4401}, we consider the following cases.

\textbf{Subcase 2.1.}
$M_1(\bq)$ is empty. Then $m(x, \bq)\geq 2$ for all $x\in c(\bq)$.
By identities \eqref{77101} and \eqref{77103}, we may assume that
\[
\bq \approx x_1^2\cdots x_m^2\, y_1^3\cdots y_n^3,
\]
where $m,n\geq 0$ and $c(\bq)=\{x_1,\ldots,x_m,y_1,\ldots,y_n\}$.

We claim that for each $x_\ell\in M_2(\bq)$, there is $\bu_{i_\ell}\in D_\bq(\bu)$ such that $m(x_\ell,\bu_{i_\ell})\leq 2$.
Suppose otherwise. Define a semiring homomorphism $\varphi:P_f(X^+)\to S_{(4,771)}$ by
$\varphi(x_\ell)=2$, $\varphi(x)=1$ if $x\in c(\bq)\backslash\{x_\ell\}$, and $\varphi(x)=4$ otherwise.
It is easy to see that $\varphi(\bq)=3$ and $\varphi(\bu)=4$, a contradiction.
Hence such a $\bu_{i_\ell}$ exists.
Consequently, for each $1\leq \ell\leq m$,
\[
\bu \succeq \bu_{i_\ell} \stackrel{\eqref{77103}, \eqref{77101}, \eqref{77102}}\succeq x_1^3\cdots x_{\ell-1}^3x_\ell^2x_{\ell+1}^3\cdots x_m^3y_1^3\cdots y_n^3.
\]
Furthermore, we have
\begin{align*}
\bu
&\succeq \bu_{i_1}+\bu_{i_2}+\dots +\bu_{i_m} \\
&\approx x_1^2(x_2\cdots x_my_1\cdots y_n)^3+x_2^2(x_1x_3\cdots x_my_1\cdots y_n)^3+\dots+\bu_{i_m} &&(\text{by}~\eqref{77101})\\
&\succeq x_1^2x_2^2(x_3\cdots x_my_1\cdots y_n)^3+\dots+\bu_{i_m} &&(\text{by}~\eqref{77100})\\
&\succeq  \dots \\
&\succeq x_1^2x_2^2\cdots x_m^2y_1^3\cdots y_n^3. &&(\text{by}~\eqref{77100})
\end{align*}
This derives the inequality $\bu \succeq\bq$.

\textbf{Subcase 2.2.}
$M_1(\bq)$ is nonempty. Lemma~\ref{lem4401} tells us that for any $x \in M_{1}(\bq)$,
there exists $\bu_k \in D_\bq(\bu)$ such that $m(x, \bu_k)\leq 1$.
Let
$c(\bq)=\{x_1, \ldots, x_k, \ldots, x_m, $ $y_1, \ldots, y_n\}$,
where $m(x_s, \bq )= 1$, $m(x_i, \bq )= 2$, $m(y_j, \bq )\geq3$, $1 \leq s \leq k$, $k+1 \leq i \leq m$, $1 \leq j \leq n$.
Then
\begin{align*}
\bq
&\approx x_1\cdots x_kx_{k+1}^2\cdots x_m^2y_1^3\cdots y_n^3 &&(\text{by}~ \eqref{77101}, \eqref{77103})\\
&\approx\sum\limits_{1\leq i<j\leq k} x_ix_jx_1^2\cdots x_{i-1}^2x_{i+1}^2\cdots x_{j-1}^2x_{j+1}^2\cdots x_m^2y_1^3\cdots y_n^3.
&&(\text{by}~\eqref{77101}, \eqref{77104})
\end{align*}
So we only need to consider $\bq=x_1x_2^2\cdots x_m^2y_1^3\cdots y_n^3$ or $x_1x_2x_3^2\cdots x_m^2y_1^3\cdots y_n^3$,
where $m\geq1$, $n\geq0$.

If $\bq=x_1x_2^2\cdots x_m^2y_1^3\cdots y_n^3$, then
we shall show that for every $x\in M_2(\bq)$, there exists $\bu_\ell \in D_\bq(\bu)$ such that $ m(x, \bu_\ell)\leq 2$.
Suppose that this is not true.
Let us consider the semiring homomorphism $\varphi: P_f(X^+) \to S_{(4, 771)}$ defined by
$\varphi(x)=2$, $\varphi(y)=1$ if $y\in c(\bq)\backslash\{x\}$, and $\varphi(y)=4$ otherwise.
It is easy to see that $\varphi(\bq)=3$ and $\varphi(\bu)=4$, a contradiction.
So for every $x\in M_2(\bq)$, there exists $\bu_\ell \in D_\bq(\bu)$ such that $ m(x, \bu_\ell)\leq 2$.
Now we have
\begin{align*}
\bu
&\succeq \bu_k+\bu_\ell \\
&\succeq x_1(x_2\cdots x_my_1\cdots y_n)^3+x_2^2(x_1x_3\cdots x_my_1\cdots y_n)^3 &&(\text{by}~\eqref{77103}, \eqref{77101}, \eqref{77102})\\
&\approx x_1x_2^3(x_3\cdots x_my_1\cdots y_n)^3+x_2^2x_1^3(x_3\cdots x_my_1\cdots y_n)^3 &&(\text{by}~\eqref{77101})\\
&\succeq x_1x_2^2(x_3\cdots x_my_1\cdots y_n)^3. &&(\text{by}~\eqref{77108})
\end{align*}
This proves the inequality
\[
\bu\succeq x_1x_2^2(x_3\cdots x_my_1\cdots y_n)^3.
\]
Furthermore, we have
\begin{align*}
\bu
&\succeq x_1x_2^2(x_3\cdots x_my_1\cdots y_n)^3+\bu_\ell \\
&\succeq x_1x_2^2x_3^3(x_4\cdots x_my_1\cdots y_n)^3+x_3^2(x_4\cdots x_my_1\cdots y_n)^3x_1^3x_2^3 &&(\text{by}~\eqref{77103}, \eqref{77101}, \eqref{77102})\\
&\approx x_1x_2^2x_3^2(x_4\cdots x_my_1\cdots y_n)^3. &&(\text{by}~\eqref{77109})
\end{align*}
Repeat this process and one can obtain the inequality
\[
\bu\succeq x_1x_2^2\cdots x_m^2y_1^3\cdots y_n^3.
\]
This derives the inequality $\bu \succeq\bq$.

If $\bq=x_1x_2x_3^2\cdots x_m^2y_1^3\cdots y_n^3$, then we shall show that there exists $\bu_p\in D_\bq(\bu)$
such that $m(x_1, \bu_p)+m(x_2, \bu_p)\leq 2$.
Suppose that this is not true.
Let us consider the semiring homomorphism $\varphi: P_f(X^+) \to S_{(4, 771)}$ defined by
$\varphi(x_1)=\varphi(x_2)=2$, $\varphi(z)=1$ if $z\in c(\bq)\backslash\{x_1, x_2\}$, and $\varphi(z)=4$ otherwise.
It is easy to see that $\varphi(\bq)=3$ and $\varphi(\bu)=4$, a contradiction.
Hence such a $\bu_p$ exists.
Consequently, one of the following holds:
\[
m(x_1,\bu_p)\leq 1,\ m(x_2,\bu_p)\leq 1;
\]
\[
m(x_1,\bu_p)=2,\ m(x_2,\bu_p)=0;
\]
or
\[
m(x_1,\bu_p)=0,\ m(x_2,\bu_p)=2.
\]
If $m(x_1,\bu_p)\leq1$, $m(x_2,\bu_p)\leq1$, then
\[
\bu\succeq \bu_p \stackrel{\eqref{77103}, \eqref{77101}, \eqref{77102}}\succeq x_1x_2x_3^3\cdots x_m^3y_1^3\cdots y_n^3.
\]
If $m(x_1,\bu_p)=2$ and $m(x_2,\bu_p)=0$, then by Lemma~\ref{lem4401},
there exists $\bu_q\in D_\bq(\bu)$ such that $m(x_1,\bu_q)\leq1$.
Thus
\begin{align*}
\bu
&\succeq \bu_p+\bu_q \\
&\succeq x_1^2(x_3\cdots x_my_1\cdots y_n)^3+x_1x_2^3(x_3\cdots x_my_1\cdots y_n)^3 &&(\text{by}~\eqref{77103}, \eqref{77101}, \eqref{77102})\\
&\succeq x_1x_2(x_3\cdots x_my_1\cdots y_n)^3. &&(\text{by}~\eqref{77110})
\end{align*}
The remainder of the argument follows the same pattern as $\bq=x_1x_2^2\cdots x_m^2y_1^3\cdots y_n^3$.
By symmetry, the case $m(x_2, \bu_p)=2, m(x_1, \bu_p)=0$ follows the same reasoning as the case $m(x_1, \bu_p)=2, m(x_2, \bu_p)=0$.
This completes the proof.
\end{proof}

\begin{pro}\label{pro73301}
$\mathsf{V}(S_{(4, 733)})$ is the ai-semiring variety defined by the identities
\begin{align}
& x^3\approx x^2; \label{73302}\\
& xy\approx yx; \label{73303}\\
& x^2 \preceq x; \label{73304}\\
& xyz \preceq xy; \label{73301}\\
& xy \approx x^2y+xy^2. \label{73305}
\end{align}
\end{pro}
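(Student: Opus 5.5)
The plan is to follow the pattern of Propositions~\ref{pro74701} and~\ref{pro4210}: identify $S_{(4,733)}$ as a subdirect product built from $S_{44}$ and a $2$-element ai-semiring, read off necessary conditions on $\bq\preceq\bu$ from Lemmas~\ref{nlemma1} and~\ref{lem4401}, and then derive $\bu\succeq\bq$ by multiplying a suitable word of $D_\bq(\bu)$ into $\bq$.

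\textbf{Step 1 (setup).} First I check that $S_{(4,733)}$ satisfies \eqref{73302}--\eqref{73305}. Next I look for two congruences on $S_{(4,733)}$, expecting nontrivial blocks such as $\{1,2,3\}$ and $\{3,4\}$ or $\{1,2\}$ and $\{2,3,4\}$. Through these I expect to exhibit $S_{(4,733)}$ as a subdirect product of $S_{44}$ and $D_2$; alternatively $D_2$ may embed as a subalgebra. This is a guess that must be confirmed against the Cayley table.

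Granting this, let $\bq\preceq\bu$ be a nontrivial inequality of $S_{(4,733)}$. Lemma~\ref{lem4401} then gives $\ell(\bq)\ge 2$ and $D_\bq(\bu)\neq\emptyset$. It also gives, for each $y\in M_1(\bq)$, some $\bu_\ell\in D_\bq(\bu)$ with $m(y,\bu_\ell)\le 1$.

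\textbf{Step 2 (the basic move).} For any $\bu_i\in D_\bq(\bu)$ and any word $\bp$ with $c(\bp)\subseteq c(\bq)$, I derive $\bu\succeq\bu_i\bp$.
\begin{itemize}
\item If $\ell(\bu_i)\ge 2$, this follows from \eqref{73301} together with \eqref{73303}, since $xyz\preceq xy$ lets us append letters.
\item If $\ell(\bu_i)=1$, say $\bu_i=x$, I first use \eqref{73304} to get $\bu\succeq x\succeq x^2$, and then apply \eqref{73301} to $x^2$.
\end{itemize}

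\textbf{Step 3 (case $M_1(\bq)=\emptyset$).} By \eqref{73302} and \eqref{73303}, $\bq\approx x_1^2\cdots x_m^2$. Choose any $\bu_i\in D_\bq(\bu)$. Step~2 gives $\bu\succeq\bu_i\bq$. Because $c(\bu_i)\subseteq c(\bq)$ and every variable of $\bq$ already occurs at least twice, \eqref{73302} and \eqref{73303} give $\bu_i\bq\approx\bq$.

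\textbf{Step 4 (case $M_1(\bq)\neq\emptyset$).} Write $\bq\approx x_1^2\cdots x_s^2y_1\cdots y_t$. Using \eqref{73305} repeatedly, as in Case~2 of Proposition~\ref{pro74701}, I split $\bq$ into the sum over $j$ of words each having exactly one variable $y_j$ of multiplicity one. So it suffices to treat $\bq=x_1^2\cdots x_k^2y$. Take $\bu_\ell\in D_\bq(\bu)$ with $m(y,\bu_\ell)\le 1$, and let
\[
\bq_1=\begin{cases}\bq & \text{if } m(y,\bu_\ell)=0,\\ x_1^2\cdots x_k^2 & \text{if } m(y,\bu_\ell)=1.\end{cases}
\]
Step~2 gives $\bu\succeq\bu_\ell\bq_1$, and \eqref{73302} with \eqref{73303} give $\bu_\ell\bq_1\approx\bq$.

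\textbf{Main obstacle.} The main obstacle is Step~1: confirming the structural decomposition, and in particular that every constraint the algebra imposes is captured by $S_{44}$ plus $D_2$. If there is an extra factor, for instance one forcing $L_{\ge2}$-type conditions, I would add the corresponding substitution argument into $S_{(4,733)}$. A secondary check is the $\ell(\bu_i)=1$ subcase of Step~2.
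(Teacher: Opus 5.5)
Your plan has a genuine gap, and it comes from the Step~1 guess, which is false.

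\textbf{Step~1 fails.} The only nontrivial congruences of $S_{(4,733)}$ are those with nontrivial block $\{2,3\}$ or $\{2,3,4\}$, plus the full relation. So $S_{(4,733)}$ is subdirectly irreducible, and it is not a subdirect product of $S_{44}$ and $D_2$.

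You do have $S_{44}\cong S_{(4,733)}/\rho$, where $\{2,3\}$ is the nontrivial $\rho$-block, and $D_2\cong\{1,4\}$. So the conditions you extract from Lemma~\ref{lem4401} are valid necessary conditions. They are not sufficient, however. The nontrivial inequality $x^2y\preceq y$ holds in $S_{44}$ and in $D_2$ and meets every condition you list. Yet it fails in $S_{(4,733)}$: put $x\mapsto 1$, $y\mapsto 3$, so that $x^2y\mapsto 1\cdot 3=2$, and $2\not\leq 3$. Since $S_{(4,733)}$ satisfies \eqref{73302}--\eqref{73305}, no derivation that uses only those conditions can succeed.

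\textbf{Where the argument breaks.} The failure is in Step~2. For $\bu_i=x$ you only obtain $\bu\succeq x^2\bp$, not $\bu\succeq x\bp$. Indeed $xy\preceq x$ fails in $S_{(4,733)}$ under $x\mapsto 3$, $y\mapsto 1$.

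This is harmless in Step~3, and in Step~4 when $\bu_\ell=x_i$. But if the word supplied by Lemma~\ref{lem4401} is the single letter $\bu_\ell=y$, you get $\bu\succeq y^2x_1^2\cdots x_k^2$, which is not $\bq$.

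\textbf{The missing ingredient} is a substitution argument in $S_{(4,733)}$ itself. Take $\varphi(y)=3$, $\varphi(x_i)=1$ for all $i$, and $\varphi(z)=4$ otherwise. Then $\varphi(\bq)=2$. Every word outside $D_\bq(\bu)$, every word containing $y$ at least twice, and the word $y$ itself are sent into $\{3,4\}$. Hence there is $\bu_j\in D_\bq(\bu)$ of one of two kinds.
\begin{itemize}
\item If $m(y,\bu_j)\leq 1$ and $\ell(\bu_j)\geq 2$, then \eqref{73301}, \eqref{73302}, \eqref{73303} give $\bu\succeq\bu_j\bq_1\approx\bq$, with $\bq_1$ as in your Step~4.
\item If $\bu_j=x_i$ for some $i$, then \eqref{73304} and \eqref{73301} give $\bu\succeq x_i^2\bq\approx\bq$.
\end{itemize}
This is how the paper closes Case~2. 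Your Step~3 and the reduction to $\bq=x_1^2\cdots x_k^2y$ via \eqref{73305} agree with the paper.
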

\begin{proof}
A direct verification shows that $S_{(4,733)}$ satisfies identities \eqref{73302}--\eqref{73305}.
It remains to prove that every identity holding in $S_{(4,733)}$ can be derived from \eqref{73302}--\eqref{73305}.
Let $\bq\preceq \bu$ be a nontrivial inequality, where
$\bu=\bu_1+\cdots+\bu_n$ and $\bu_i,\bq\in X^+$ for $1\leq i\leq n$.
Since $S_{44}$ is isomorphic to the quotient $S_{(4,733)}/\rho$,
where $\{2,3\}$ is the nontrivial $\rho$-block, it follows that $S_{44}$ satisfies $\bq\preceq\bu$.
By Lemma~\ref{lem4401}, there exists $\bu_i\in\bu$ such that $c(\bu_i)\subseteq c(\bq)$ and $\ell(\bq)\geq 2$.

\textbf{Case 1.}
$M_1(\bq)$ is empty. Then $m(x, \bq)\geq 2$ for all $x\in c(\bq)$.
If $\ell(\bu_i)\geq 2$ for some $\bu_i\in D_\bq(\bu)$, then
\[
\bu \succeq \bu_i \stackrel{\eqref{73301}}\succeq \bu_i\bq \stackrel{\eqref{73302}, \eqref{73303}}\approx \bq.
\]
Otherwise, $\ell(\bu_i)=1$ for every $\bu_i\in D_\bq(\bu)$; then $\bu_i\in c(\bq)$, and hence
\[
\bu \succeq \bu_i \stackrel{\eqref{73304}}\succeq \bu_i^2 \stackrel{\eqref{73301}}\succeq \bu_i^2\bq \stackrel{\eqref{73302}, \eqref{73303}}\approx \bq.
\]
Thus the desired inequality $\bu\succeq\bq$ is established in this case.

\textbf{Case 2.}
$M_1(\bq)$ is nonempty. Lemma~\ref{lem4401} tells us that for any $x \in M_{1}(\bq)$,
there exists $\bu_j \in D_\bq(\bu)$ such that $m(x, \bu_j)\leq 1$.
Write
\[
c(\bq)=\{x_1,\ldots,x_r,y_1,\ldots,y_s\},
\]
where $m(x_i,\bq)\geq 2$ for $0\leq i\leq r$, and $m(y_j,\bq)=1$ for $1\leq j\leq s$.
Then, by \eqref{73302} and \eqref{73303},
\[
\bq \approx x_1^2x_2^2\cdots x_r^2\, y_1y_2\cdots y_s,
\]
and applying \eqref{73302}, \eqref{73303}, and \eqref{73305} yields
\[
\bq \approx \sum_{1\leq j\leq s} x_1^2x_2^2\cdots x_r^2\, y_1^2\cdots y_{j-1}^2\, y_{j+1}^2\cdots y_s^2\, y_j.
\]
Thus it suffices to consider the case $\bq=x_1^2x_2^2\cdots x_k^2y$.

We shall show that there exists $\bu_j \in D_\bq(\bu)$ such that
either $ m(y, \bu_j)\leq 1$, $\ell(\bu_j)\geq 2$ or $\bu_j=x_i$ for some $1\leq i\leq k$.
Suppose that this is not true.
Let us consider the semiring homomorphism $\varphi: P_f(X^+) \to S_{(4, 733)}$ defined by
$\varphi(y)=3$, $\varphi(x_i)=1$, and $\varphi(x)=4$ otherwise.
It is easy to see that $\varphi(\bq)=2$ and $\varphi(\bu)=3$ or $4$, a contradiction.
So there exists $\bu_j \in D_\bq(\bu)$ such that
either $m(y, \bu_j)\leq 1$, $\ell(\bu_j)\geq 2$ or $\bu_j=x_i$ with $1\leq i\leq k$.
If $m(y, \bu_j)\leq 1$, $\ell(\bu_j)\geq 2$ for some $\bu_j \in D_\bq(\bu)$, then
\[
\bu \succeq \bu_j \stackrel{\eqref{73301}}\succeq \bu_j\bq_1 \stackrel{\eqref{73302}, \eqref{73303}}\approx \bq.
\]
If $\bu_j=x_i$ for some $\bu_j \in D_\bq(\bu)$, then
\[
\bu \succeq \bu_j=x_i \stackrel{\eqref{73304}}\succeq x_i^2 \stackrel{\eqref{73301}}\succeq x_i^2\bq_1
\stackrel{\eqref{73302}, \eqref{73303}}\approx \bq.
\]
This derives the inequality $\bu \succeq\bq$.
\end{proof}

\begin{pro}\label{pro73401}
$\mathsf{V}(S_{(4, 734)})$ is the ai-semiring variety defined by the identities
\begin{align}
& x^3\approx x^2; \label{73403}\\
& xyz \approx yxz; \label{73401}\\
& x^2y^2 \approx y^2x^2; \label{73402}\\
& yx \preceq x; \label{73404}\\
& x^2y \preceq x; \label{73407}\\
& x^2y \preceq x^2; \label{73405}\\
& xy \approx x^2y+xy^2; \label{73406}\\
&  x^2y \preceq yx^2. \label{73408}
\end{align}
\end{pro}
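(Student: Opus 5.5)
We follow the template used for Propositions~\ref{pro73301} and~\ref{pro77401}. First we check directly that $S_{(4,734)}$ satisfies \eqref{73403}--\eqref{73408}. It then remains to derive every nontrivial inequality $\bq\preceq\bu$ of $S_{(4,734)}$, with $\bu=\bu_1+\cdots+\bu_n$, from these identities. The structural input comes from a $3$-element quotient or subalgebra. We expect $S_{(4,734)}$ to have $S_{44}$ as a quotient (collapsing a block such as $\{2,3\}$, as for $S_{(4,733)}$) and $S_{46}$ (or a copy of $R_2$ together with $D_2$) as a subalgebra or quotient. Lemmas~\ref{lem4401} and~\ref{lem4601} then give: $\ell(\bq)\geq 2$; some $\bu_i\in D_\bq(\bu)$; for each $x\in M_1(\bq)$ some $\bu_k\in D_\bq(\bu)$ with $m(x,\bu_k)\le 1$; and, if $m(t(\bq),\bq)=1$, some $\bu_j\in D_\bq(\bu)$ with $t(\bq)\notin c(p(\bu_j))$.

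The first reduction is a normal form. By \eqref{73403}, \eqref{73401} and \eqref{73402}, every word equals a product of squares $x_1^2\cdots x_r^2$ followed by the letters of multiplicity one, with only the last letter's position rigid. Applying \eqref{73406} as in Proposition~\ref{pro73301}, $\bq$ splits into a sum of words each having at most one letter of multiplicity one, and that letter can be placed either at the tail or in the interior. So it suffices to treat three shapes: $\bq=x_1^2\cdots x_k^2$; $\bq=x_1^2\cdots x_k^2y$ with $y=t(\bq)$; and $\bq=x_1^2\cdots x_k^2\,y\,z^2$, which \eqref{73408} and \eqref{73401} let us rewrite so that $y$ sits immediately before a square tail.

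\textbf{Case 1: $M_1(\bq)=\emptyset$.} Take $\bu_i\in D_\bq(\bu)$ and use \eqref{73404} to get $\bu\succeq\bu_i\succeq \bq\bu_i$. Since every variable of $\bu_i$ already occurs squared in $\bq$, \eqref{73403}, \eqref{73401} and \eqref{73402} collapse $\bq\bu_i$ to $\bq$; if the tail order causes trouble, \eqref{73408} should repair it.

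\textbf{Case 2: the single letter is the tail.} Here $\bq=\bp^2y$, and Lemma~\ref{lem4601} gives $\bu_j$ with $y\notin c(p(\bu_j))$. If $y\notin c(\bu_j)$, then \eqref{73404} gives $\bu_j\succeq \bq\bu_j$; next \eqref{73407} (of the form $x^2y\preceq x$) yields $\bq\bu_j\approx\bp^2 y\bu_j$, and we collapse using \eqref{73403}, \eqref{73401}. If $y=t(\bu_j)$, then \eqref{73404} gives $\bu_j\succeq \bp^2\bu_j$, and since $c(p(\bu_j))\subseteq c(\bp)$ this equals $\bq$.

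\textbf{Case 3: the single letter is interior.} Use the $S_{44}$ witness $\bu_k$ with $m(y,\bu_k)\le1$ together with \eqref{73405} and \eqref{73407}. These identities absorb an extra factor on the right, which lets us multiply $\bu_k$ on the right by the missing squares.

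We expect Case 3 to be the main obstacle, together with pinning down exactly which $3$-element algebras and which lemma conditions apply. We would first verify the quotient and subalgebra structure from the Cayley table. If the lemmas do not suffice in some subcase, we would construct a separating homomorphism into $S_{(4,734)}$, as in Proposition~\ref{pro73301}, to force the existence of a $\bu_j$ of the required form.
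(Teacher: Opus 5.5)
Your proposal has a genuine gap: the structural input you import from Lemma~\ref{lem4601} is false for $S_{(4,734)}$, and this breaks both single-letter cases. The relevant $3$-element algebra is only $S_{44}$ (the quotient by the block $\{2,3\}$). Neither $S_{46}$ nor $R_2$ lies in $\mathsf{V}(S_{(4,734)})$. Indeed, \eqref{73408} itself, $x^2y\preceq yx^2$, is an inequality of $S_{(4,734)}$ with $\bq=x^2y$, $m(y,\bq)=1$ and $\bu=yx^2$. Here no summand satisfies $t(\bq)\notin c(p(\bu_j))$, or even has tail $y$.

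So in your Case~2 the only witness is the $S_{44}$ one: some $\bu_j\in D_\bq(\bu)$ with $m(y,\bu_j)\le 1$. In it, $y$ may occur once but not as its tail, a subcase you never treat. Your subcase $y\notin c(\bu_j)$ also fails. The word $\bq\bu_j=\bp^2y\bu_j$ has its tail in $c(\bp)$, so \eqref{73403}, \eqref{73401}, \eqref{73402} collapse it to $y\bp^2$, not to $\bp^2y$. These differ in $S_{(4,734)}$: under $x\mapsto 1$, $y\mapsto 3$ one gets $yx^2\mapsto 2$ but $x^2y\mapsto 3$.

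The missing step is exactly \eqref{73408}, which you never use here. In the paper's argument, suppose $y\notin c(\bu_j)$, or $y$ occurs once in $\bu_j$ but not as its tail. Then \eqref{73404} gives $\bu_j\succeq\bp^2y\bu_j$ or $\bu_j\succeq\bp^2\bu_j$, this word is $\approx y\bp^2$, and \eqref{73408} gives $y\bp^2\succeq\bp^2y=\bq$. Only when $t(\bu_j)=y$ does $\bp^2\bu_j\approx\bq$ hold directly.

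Case~3 ($\bq\approx y\bp^2$, the single letter not at the tail) has the complementary gap. Here the conclusions of Lemma~\ref{lem4401} are not sufficient. The pair $\bq=yx^2$, $\bu=x^2y$ satisfies all of them, yet fails in $S_{(4,734)}$ under the same valuation ($2$ versus $3$). So when the $S_{44}$ witness has $y$ as its tail, no choice of identities can complete the derivation.

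What you need is a summand $\bu_\ell\in D_\bq(\bu)$ with $m(y,\bu_\ell)\le 1$ and $t(\bu_\ell)\neq y$. This must be extracted from $S_{(4,734)}$ itself via the homomorphism $y\mapsto 3$, $x_i\mapsto 1$, all other variables $\mapsto 4$. It sends $\bq$ to $2$ and every summand lacking this property to $3$ or $4$. Given such a $\bu_\ell$, \eqref{73404} alone finishes, since $\bu_\ell\succeq\bq\bu_\ell$ or $\bu_\ell\succeq\bp^2\bu_\ell$, and both are $\approx\bq$. Your use of \eqref{73405}, \eqref{73407} cannot repair the tail-witness situation, as the example shows.

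Your closing remark about a separating homomorphism points the right way. However, without the correct target condition and valuation, Case~3 is not established. A minor point: \eqref{73408} is only an inequality, so it cannot be used to rewrite $\bq$ in your normal-form step; \eqref{73401} alone moves $y$ to the front there.
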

\begin{proof}
It is easy to check that $S_{(4, 734)}$ satisfies the identities \eqref{73403}--\eqref{73408}.
In the remainder it is enough to prove that every identity that holds in $S_{(4, 734)}$
can be derived by \eqref{73403}--\eqref{73408}.
Let $\bq\preceq \bu$ be such a nontrivial inequality, where
$\bu=\bu_1+\bu_2+\cdots+\bu_n$ and $\bu_i, \bq \in X^+$, $1 \leq i \leq n$.
Since $S_{44}$ is isomorphic to the quotient algebra $S_{(4, 734)}/\rho$,
where $\{2, 3\}$ is the nontrivial block of $\rho$,
we have that $S_{44}$ satisfies $\bq\preceq \bu$.
By Lemma \ref{lem4401},  there exists $\bu_i\in \bu$ such that $c(\bu_i)\subseteq c(\bq)$ and $\ell(\bq)\geq 2$.

\textbf{Case 1.}
$M_1(\bq)$ is empty. Then $m(x, \bq)\geq 2$ for all $x\in c(\bq)$.
Consequently,
\[
\bu \succeq \bu_i \stackrel{\eqref{73404}}\succeq \bq_1\bu_i
\stackrel{\eqref{73403}, \eqref{73401}, \eqref{73402}}\approx \bq.
\]
This derives the inequality $\bu \succeq\bq$.

\textbf{Case 2.}
$M_1(\bq)$ is nonempty.
Let
$c(\bq)=\{x_1, \ldots, x_r, y_1, \ldots, y_s\}$,
where $m(x_i, \bq )\geq 2$, $m(y_j, \bq )=1$, $0 \leq i \leq r$, $1 \leq j \leq s$.
By the identity \eqref{73401}, $\bq \approx x_1^2x_2^2\cdots x_r^2y_1y_2\cdots y_s$ or $y_1y_2\cdots y_sx_1^2x_2^2\cdots x_r^2$.
If $\bq \approx x_1^2x_2^2\cdots x_r^2y_1y_2\cdots y_s$, then
\begin{align*}
\bq
&\approx x_1^2x_2^2\cdots x_r^2y_1y_2\cdots y_s \\
&\approx\sum\limits_{j=1}^{s-1}x_1^2\cdots x_r^2y_1^2\cdots y_{j-1}^2y_jy_{j+1}^2\cdots y_s^2+x_1^2\cdots x_r^2y_1^2\cdots y_{s-1}^2y_s
&&(\text{by}~\eqref{73403}, \eqref{73401}, \eqref{73406})\\
&\approx\sum\limits_{j=1}^{s-1}y_jx_1^2\cdots x_r^2y_1^2\cdots y_{j-1}^2y_{j+1}^2\cdots y_s^2+x_1^2\cdots x_r^2y_1^2\cdots y_{s-1}^2y_s.&&(\text{by}~\eqref{73401})
\end{align*}
If $\bq \approx y_1y_2\cdots y_sx_1^2x_2^2\cdots x_r^2$, then
\begin{align*}
\bq
&\approx y_1y_2\cdots y_sx_1^2x_2^2\cdots x_r^2 \\
&\approx\sum\limits_{1\leq j\leq s}y_jy_1^2\cdots y_{j-1}^2y_{j+1}^2\cdots y_s^2x_1^2x_2^2\cdots x_r^2.
&&(\text{by}~\eqref{73403}, \eqref{73401}, \eqref{73406})
\end{align*}
So we only need to consider $\bq=x_1^2x_2^2\cdots x_k^2y$ or $yx_1^2x_2^2\cdots x_k^2$.

\textbf{Subcase 2.1.} $\bq=x_1^2x_2^2\cdots x_k^2y$.
Lemma~\ref{lem4401} tells us that for any $y \in M_{1}(\bq)$,
there exists $\bu_j \in D_\bq(\bu)$ such that $m(y, \bu_j)\leq 1$.
If $m(y, \bu_j)=0$, then
\begin{align*}
\bu
&\succeq \bu_j \\
&\succeq x_1^2x_2^2\cdots x_k^2y\bu_j &&(\text{by}~{\eqref{73404}}) \\
&\approx yx_1^2x_2^2\cdots x_k^2 &&(\text{by}~{\eqref{73403}, \eqref{73401}, \eqref{73402}}) \\
&\approx y(x_1x_2\cdots x_k)^2 &&(\text{by}~{\eqref{73401}})\\
&\succeq (x_1x_2\cdots x_k)^2y &&(\text{by}~{\eqref{73408}})\\
&\approx x_1^2x_2^2\cdots x_k^2y. &&(\text{by}~{\eqref{73401}})
\end{align*}
If $m(y, \bu_j)=1$, then
\[
\bu \succeq \bu_j=p(\bu_j)y \stackrel{\eqref{73404}}\succeq x_1^2x_2^2\cdots x_k^2p(\bu_j)y \stackrel{\eqref{73403}, \eqref{73401}}\approx x_1^2x_2^2\cdots x_k^2y
\]
or
\[
\bu \succeq \bu_j=y\bu_j' \stackrel{\eqref{73404}}\succeq yx_1^2\cdots x_k^2\bu_j' \stackrel{\eqref{73403}, \eqref{73401}, \eqref{73402}}\approx y(x_1\cdots x_k)^2  \stackrel{\eqref{73408}}\succeq (x_1\cdots x_k)^2y \stackrel{\eqref{73401}}\approx \bq.
\]
This derives the inequality $\bu \succeq\bq$.

\textbf{Subcase 2.2.} $\bq=yx_1^2x_2^2\cdots x_k^2$.
We claim that there exists $\bu_\ell\in D_\bq(\bu)$ such that
$m(y,\bu_\ell)\leq 1$ and $t(\bu_\ell)\neq y$.
Suppose otherwise. Define $\varphi:P_f(X^+)\to S_{(4,734)}$ by
$\varphi(y)=3$, $\varphi(x_i)=1$ for $1\leq i\leq k$, and $\varphi(z)=4$ for all other $z$.
Then $\varphi(\bq)=2$ while $\varphi(\bu)\in\{3,4\}$, a contradiction.
Hence such $\bu_\ell$ exists.
Furthermore, we have
\[
\bu \succeq \bu_k=p(\bu_k)t(\bu_k) \stackrel{\eqref{73404}}\succeq \bq_1p(\bu_k)t(\bu_k) \stackrel{\eqref{73403}, \eqref{73401}, \eqref{73402}}\approx yx_1^2x_2^2\cdots x_k^2=\bq.
\]
This derives the inequality $\bu \succeq\bq$.
\end{proof}

\begin{cor}
The ai-semiring $S_{(4, 772)}$ is finitely based.
\end{cor}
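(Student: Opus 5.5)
The natural route is the one the paper uses for every other corollary of this shape. We do not build a new basis for $S_{(4,772)}$; instead we show it has the dual multiplication of an algebra already handled in this section. The preceding result is Proposition~\ref{pro73401}, which gives a finite basis for $S_{(4,734)}$.

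First, I would check from the Cayley tables in Table~\ref{tb1} that the multiplication table of $S_{(4,772)}$ is the transpose of that of $S_{(4,734)}$. Both have the same additive reduct, the chain $1>2>3>4$, so only this check is needed. Concretely, we must verify $a\cdot_{772}b=b\cdot_{734}a$ for all $a,b\in\{1,2,3,4\}$.

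Once this holds, the elementary fact from the introduction applies: ai-semirings with the same additive reduct and dual multiplications have the same finite basis property. The reason is that an identity holds in one exactly when its word-reversed form holds in the other. So reversing every word in the identities \eqref{73403}--\eqref{73408} gives a finite equational basis for $\mathsf{V}(S_{(4,772)})$, and Proposition~\ref{pro73401} then shows $S_{(4,772)}$ is finitely based.

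The only possible obstacle is the table verification. If the transpose of $S_{(4,734)}$'s table turned out to match only up to a relabeling of elements, I would check that the relabeling is an additive automorphism. Since the additive reduct is a chain, its only automorphism is the identity, so the match must be exact and the check is routine. If the duality failed outright, the fallback would be to look for a subdirect decomposition of $S_{(4,772)}$ into $3$-element factors with known bases, as in earlier sections. I expect the duality with $S_{(4,734)}$ to hold, since that is evidently why the corollary follows Proposition~\ref{pro73401}.
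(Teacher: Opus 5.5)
Your proposal is correct and follows the paper's proof exactly: the paper simply notes that $S_{(4,772)}$ and $S_{(4,734)}$ have dual multiplications and invokes Proposition~\ref{pro73401}. The transpose check you describe does hold exactly: both tables have rows $(1,2,\cdot,4),(2,4,4,4),(\cdot,4,4,4),(4,4,4,4)$, with entries $2,3$ swapped between positions $(1,3)$ and $(3,1)$.
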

\begin{proof}
Notice that $S_{(4, 772)}$ and $S_{(4, 734)}$ have dual multiplications,
therefore, $S_{(4, 772)}$ is also finitely based.
\end{proof}

\begin{pro}\label{pro76301}
$\mathsf{V}(S_{(4, 763)})$ is the ai-semiring variety defined by the identities
\begin{align}
& x^3 \approx x^2; \label{76302}\\
& xyx \approx x^2y; \label{76301}\\
& x^2y^2 \approx (xy)^2; \label{76304}\\
& x^2y^2 \approx y^2x^2; \label{76305}\\
& x_1xx_2 \preceq x; \label{76303}\\
& x_1^2xx_2^2 \preceq x_1^2x^2x_2^2+x_1^2x_2^2; \label{76306}\\
& x_1^2xx_2^2yx_3^2 \approx x_1^2xx_2^2y^2x_3^2+x_1^2x^2x_2^2yx_3^2, \label{76307}
\end{align}
where $x_1$ and $x_2$ may be empty in \eqref{76303} and \eqref{76306},
$x_1$, $x_2$ and $x_3$ may be empty in \eqref{76307}.
\end{pro}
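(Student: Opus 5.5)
We follow the pattern of Propositions~\ref{pro77401} and~\ref{pro73401}. First one checks directly that $S_{(4,763)}$ satisfies \eqref{76302}--\eqref{76307}. For the converse, let $\bq\preceq\bu$ be a nontrivial inequality of $S_{(4,763)}$, with $\bu=\bu_1+\cdots+\bu_n$. We first locate a $3$-element algebra controlling $S_{(4,763)}$. We expect $S_{44}$ to be a quotient or subalgebra of $S_{(4,763)}$ (as for $S_{(4,733)}$ and $S_{(4,734)}$), so Lemma~\ref{lem4401} gives $\ell(\bq)\geq 2$, $D_\bq(\bu)\neq\emptyset$, and for each $x\in M_1(\bq)$ some $\bu_i\in D_\bq(\bu)$ with $m(x,\bu_i)\leq1$.

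Next we put $\bq$ into normal form. From \eqref{76302}, \eqref{76301}, \eqref{76304} and \eqref{76305}, every variable that occurs at least twice can be squared and moved freely past other squares. So $\bq$ is equivalent to a word in which all squared letters form commuting blocks and the letters of multiplicity $1$ keep their relative order. Identity \eqref{76307} then splits $\bq$ into a sum of words, each having at most one letter of multiplicity $1$, namely $x_1^2xx_2^2$ with $x_1^2,x_2^2$ products of squares. This reduction parallels the use of \eqref{77404} in Proposition~\ref{pro77401}. It therefore suffices to treat $\bq=\bw^2$ and $\bq=\bp^2 x\,\bs^2$.

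\textbf{Case 1: $M_1(\bq)=\emptyset$.} Choose $\bu_i\in D_\bq(\bu)$. Then \eqref{76303} gives $\bu\succeq\bu_i\succeq \bq\bu_i\bq$, and this is equivalent to $\bq$ by the square-absorption identities, since $c(\bu_i)\subseteq c(\bq)$.

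\textbf{Case 2: $\bq=\bp^2x\bs^2$.} Take $\bu_k\in D_\bq(\bu)$ with $m(x,\bu_k)\leq1$. If $m(x,\bu_k)=0$, then \eqref{76303} yields $\bu\succeq \bp^2x\bs^2\bu_k\ldots$, and the letters of $\bu_k$ are absorbed into the squares. If $m(x,\bu_k)=1$, write $\bu_k=\bu'x\bu''$. Surrounding it by squares via \eqref{76303} gives $\bu\succeq \bp^2\bu'x\bu''\bs^2$. The letters of $\bu'$ and $\bu''$ lie in $c(\bq)$, so using \eqref{76301} and \eqref{76304} we can try to absorb them into $\bp^2$ and $\bs^2$.

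The main obstacle is this last step. A letter of $\bu'$ lying only in $\bs$, or a letter of $\bu''$ lying only in $\bp$, sits on the wrong side of $x$. Since \eqref{76301} only allows $xyx\approx x^2y$, such a letter cannot simply be moved across $x$. Here we expect to need \eqref{76306}, which lets us pass from $x_1^2x^2x_2^2+x_1^2x_2^2$ to $x_1^2xx_2^2$. The idea is to first derive $\bu\succeq \bp^2x^2\bs^2$ (by Case 1 applied to the squared word) and $\bu\succeq\bp^2\bs^2$ (from a $\bu_i$ avoiding $x$, or otherwise by a separating substitution). We then combine them with \eqref{76306}. If no suitable $\bu_k$ exists, we would construct a homomorphism $\varphi$ into $S_{(4,763)}$ sending $x$ to a non-idempotent element and $c(\bq)\setminus\{x\}$ to the identity-like element, which contradicts $\bq\preceq\bu$. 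Pinning down exactly which configuration $\varphi$ rules out is where the case analysis will be delicate.
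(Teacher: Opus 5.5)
You follow the paper's proof closely: Lemma~\ref{lem4401} via the subalgebra $\{1,3,4\}\cong S_{44}$, reduction by \eqref{76302}--\eqref{76305} and \eqref{76307} to $\bq=\bp^2x\mathbf{s}^2$, Case~1 by \eqref{76303}, and then a split on $m(x,\bu_k)$. Your direct argument $\bu_k\succeq\bq\bu_k\approx\bq$ for $m(x,\bu_k)=0$ is even shorter than the paper's route through \eqref{76306}.

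\textbf{The gap.} You leave the decisive step open. You never show that $D_\bq(\bu)$ contains a word $\bu_k$ with $m(x,\bu_k)=0$, or with $\bu_k=\bu'x\bu''$ and $c(\bu')\cap c(\mathbf{s})=\emptyset$, and the substitution you sketch cannot give it. Sending $x\mapsto 3$, $c(\bq)\setminus\{x\}\mapsto 1$ and all other letters to $4$ only tests whether $c(\bu_i)\subseteq c(\bq)$ and $m(x,\bu_i)\le 1$. It stays inside $\{1,3,4\}\cong S_{44}$, so it gives nothing beyond Lemma~\ref{lem4401}. For instance, $xy^2\preceq yx$ fails in $S_{(4,763)}$ but passes that test.

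\textbf{The missing idea.} Use the element $2$, for which $3\cdot 2=3$ but $2\cdot 3=4$. Put $\varphi(x)=3$, $\varphi(z)=2$ for $z\in c(\mathbf{s})$, $\varphi(z)=1$ for $z\in c(\bp)$, and $\varphi(z)=4$ otherwise. Then $\varphi(\bq)=3$. Moreover, $\varphi(\bu_i)\neq 4$ holds exactly when $c(\bu_i)\subseteq c(\bq)$, $m(x,\bu_i)\le 1$, and no letter of $\mathbf{s}$ precedes $x$ in $\bu_i$. Since $4$ is the additive minimum, $\bq\preceq\bu$ forces such a $\bu_k$ to exist. The paper argues with $\varphi(x_k)=3$ and $\varphi(x_{k+i})=2$ for one offending later letter; sending all of $c(\mathbf{s})$ to $2$ handles every $\bu_i$ at once.

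\textbf{Why your fallback via \eqref{76306} does not work.} Once a good $\bu_k$ is available, the ``wrong side'' obstacle never arises. Without it, \eqref{76306} cannot rescue the bad configuration. If some $x$-free word lies in $D_\bq(\bu)$, you are already in the case $m(x,\bu_k)=0$, so \eqref{76306} adds nothing. If none does, then $\bp^2\mathbf{s}^2\preceq\bu$ fails in $D_2\cong\{1,4\}$, so $\bu\succeq\bp^2\mathbf{s}^2$ is not derivable.

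Letters of $\bu''$ lying in $\bp$ are also harmless, contrary to what you suggest. In $\bp^2\bu'x\bu''\mathbf{s}^2$ they have an earlier occurrence, so \eqref{76301} and \eqref{76302} absorb them to the left. Only letters of $\mathbf{s}$ occurring in $\bu'$ cause trouble, and these are exactly what the substitution above excludes.
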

\begin{proof}
It is easy to check that $S_{(4, 763)}$ satisfies the identities \eqref{76302}--\eqref{76307}.
In the remainder it is enough to prove that every identity that holds in $S_{(4, 763)}$
can be derived by \eqref{76302}--\eqref{76307}.
Let $\bq\preceq \bu$ be such a nontrivial inequality, where
$\bu=\bu_1+\bu_2+\cdots+\bu_n$ and $\bu_i, \bq \in X^+$, $1 \leq i \leq n$.
It is easy to see that $S_{44}$ and $S_{45}$ can be embedded into $S_{(4, 763)}$.
So both $S_{44}$ and $S_{45}$ satisfy $\bq\preceq \bu$.
By Lemma~\ref{lem4401}, $\ell(\bq)\geq 2$ and there exists $\bu_i\in\bu$ such that $c(\bu_i)\subseteq c(\bq)$.

\textbf{Case 1.}
$M_1(\bq)$ is empty. Then $m(x, \bq)\geq 2$ for all $x\in c(\bq)$.
Let $i(\bq)=x_1x_2\cdots x_m$.
By \eqref{76301} and \eqref{76302}, we have $\bq\approx x_1^2x_2^2\cdots x_m^2$.
Then
\[
\bu \succeq \bu_i\stackrel{\eqref{76303}}\succeq \bq\bu_i \stackrel{\eqref{76301}, \eqref{76302}}\approx \bq.
\]
This derives the inequality $\bu \succeq\bq$.

\textbf{Case 2.}
$M_1(\bq)$ is nonempty. Let
$c(\bq)=\{x_1, x_2, \ldots, x_m\}$.
Then
\begin{align*}
\bq
&\approx x_1^{l_1}x_2^{l_1}\cdots x_m^{l_m} &&(\text{by}~ \eqref{76301}, \eqref{76302})\\
&\approx\sum\limits_{1\leq k\leq m}x_1^2\cdots x_{k-1}^2x_kx_{k+1}^2\cdots x_m^2,
&&(\text{by}~\eqref{76307})
\end{align*}
where $l_i=1$ or $2$.
So we only need to consider $\bq=x_1^2\cdots x_{k-1}^2x_kx_{k+1}^2\cdots x_m^2$ for all $1\leq k\leq m$.
We shall show that for every $x_k\in M_1(\bq)$,
there exists $\bu_k \in D_\bq(\bu)$ such that
either $m(x_k, \bu_k)=0$ or $m(x_k, \bu_k)=1$$(\bu_k=\bu_k'x_k\bu_k'')$ and $x_{k+i}\not\in c(\bu_k')$ for all $x_{k+i}\in\{x_{k+1},\dots,x_m\}$.
Suppose that this is not true.
Then there exists $x_k\in M_1(\bq)$, for any $\bu_k \in D_\bq(\bu)$,
either $m(x_k, \bu_k)\geq2$ or $m(x_k, \bu_k)=1$ and $x_{k+i}\in c(\bu_k')$ for some $x_{k+i}\in\{x_{k+1},\dots,x_m\}$.

If $m(x_k, \bu_k)\geq2$,
let us consider the semiring homomorphism $\varphi: P_f(X^+) \to S_{(4, 763)}$ defined by
$\varphi(x_k)=3$, $\varphi(x)=1$ if $x\in c(\bq)\setminus{\{x_k\}}$, and $\varphi(x)=4$ otherwise.
It is easy to see that $\varphi(\bu)=4$ and $\varphi(\bq)=3$, a contradiction.
If $m(x_k, \bu_k)=1$ and $x_{k+i}\in c(\bu_k')$ for some $x_{k+i}\in\{x_{k+1},\dots,x_m\}$,
 consider the homomorphism $\varphi: P_f(X^+) \to S_{(4, 763)}$ defined by
$\varphi(x_k)=3$, $\varphi(x_{k+i})=2$, $\varphi(x)=1$ if $x\in c(\bq)\setminus{\{x_k, x_{k+i}\}}$, and $\varphi(x)=4$ otherwise.
It follows that $\varphi(\bu)=4$ and $\varphi(\bq)=3$, a contradiction.
So there exists $\bu_k \in D_\bq(\bu)$ such that
either $m(x_k, \bu_k)=0$ or $m(x_k, \bu_k)=1$$(\bu_k=\bu_k'x_k\bu_k'')$ and $x_{k+i}\not\in c(\bu_k')$ for all $x_{k+i}\in\{x_{k+1},\dots,x_m\}$.

\textbf{Subcase 2.1.} There exists $\bu_k \in D_\bq(\bu)$ such that
$ m(x_k, \bu_k)=0$. Then
\[
\bu \succeq \bu_k\stackrel{\eqref{76303}}\succeq x_1^2\cdots x_{k-1}^2x_{k+1}^2\cdots x_m^2\bu_k \stackrel{\eqref{76301}, \eqref{76302}}\approx x_1^2\cdots x_{k-1}^2x_{k+1}^2\cdots x_m^2
\]
and
\[
\bu \succeq \bu_i\stackrel{\eqref{76303}}\succeq x_1^2x_2^2\cdots x_m^2\bu_i \stackrel{\eqref{76301}, \eqref{76302}}\approx x_1^2x_2^2\cdots x_m^2.
\]
This implies the inequalities
$\bu \succeq x_1^2\cdots x_{k-1}^2x_{k+1}^2\cdots x_m^2$ and $\bu \succeq x_1^2x_2^2\cdots x_m^2$.
Furthermore, we have
\begin{align*}
\bu
&\succeq x_1^2x_2^2\cdots x_m^2+x_1^2\cdots x_{k-1}^2x_{k+1}^2\cdots x_m^2  \\
&\approx (x_1\cdots x_{k-1})^2x_k^2(x_{k+1}\cdots x_m)^2+(x_1\cdots x_{k-1})^2(x_{k+1}\cdots x_m)^2 &&(\text{by}~\eqref{76304})\\
&\succeq (x_1\cdots x_{k-1})^2x_k(x_{k+1}\cdots x_m)^2 &&(\text{by}~\eqref{76306})\\
&\approx x_1^2\cdots x_{k-1}^2x_kx_{k+1}^2\cdots x_m^2. &&(\text{by}~\eqref{76304})
\end{align*}

\textbf{Subcase 2.2.} There exists $\bu_k \in D_\bq(\bu)$ such that
$ m(x_k, \bu_k)=1$ and $x_{k+i}\not\in c(\bu_k')$ for all $x_{k+i}\in\{x_{k+1},\dots,x_m\}$. Then
\begin{align*}
\bu
&\succeq \bu_k \\
&\succeq x_1^2\cdots x_{k-1}^2\bu_k'x_k\bu_k''x_{k+1}^2\cdots x_m^2  \\
&\approx x_1^2\cdots x_{k-1}^2x_k\bu_k''x_{k+1}^2\cdots x_m^2  &&(\text{by}~\eqref{76301}, \eqref{76302})\\
&\approx x_1^2\cdots x_{k-1}^2x_kx_{k+1}^2\cdots x_m^2. &&(\text{by}~\eqref{76301}, \eqref{76302}, \eqref{76305})
\end{align*}
This completes the proof.
\end{proof}

\begin{cor}
The ai-semiring $S_{(4, 766)}$ is finitely based.
\end{cor}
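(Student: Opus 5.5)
The plan is to use the same device as every other corollary of this shape in the paper: the finite basis property is preserved when the additive reduct is kept and the multiplication is replaced by its dual. First I would check from the Cayley tables in Table~\ref{tb1} that $S_{(4,766)}$ and $S_{(4,763)}$ have dual multiplications, i.e.\ that $a\cdot b$ in $S_{(4,766)}$ equals $b\cdot a$ in $S_{(4,763)}$ for all $a,b\in\{1,2,3,4\}$. Both algebras share the chain additive reduct of Figure~\ref{figure01}, so this check is the only computational input.

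Once the duality is confirmed, I would use the elementary fact recorded in the introduction. If two ai-semirings have the same additive reduct and dual multiplications, then an identity $\bu\approx\bv$ holds in one exactly when its reversal (every word read backwards) holds in the other. Reversal is a bijection on identities that respects equational consequence, so it carries finite bases to finite bases. By Proposition~\ref{pro76301}, $\mathsf{V}(S_{(4,763)})$ is finitely based, with basis \eqref{76302}--\eqref{76307}. Hence $S_{(4,766)}$ is finitely based as well, and reversing those identities gives an explicit basis for it.

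The only possible obstacle is the table check. If the multiplications turned out not to be dual on the nose, I would look for a bijection of $\{1,2,3,4\}$ that respects the chain order (on a chain, only the identity does this) composed with dualization. Failing that, I would redo the argument of Proposition~\ref{pro76301} directly with the reversed identities, using the dual versions of Lemma~\ref{lem4401} and of the embedded copies of $S_{44}$ and $S_{45}$. I expect the plain duality check to succeed, as it does in the parallel corollaries.
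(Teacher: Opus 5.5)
Your proposal is correct and matches the paper's own proof: the multiplication table of $S_{(4,766)}$ is exactly the transpose of that of $S_{(4,763)}$, so the duality check succeeds and no fallback is needed. The finite basis property then transfers from Proposition~\ref{pro76301} by reversing every word in the basis \eqref{76302}--\eqref{76307}.
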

\begin{proof}
Notice that $S_{(4, 766)}$ and $S_{(4, 763)}$ have dual multiplications,
therefore, $S_{(4, 766)}$ is also finitely based.
\end{proof}

\subsection*{Acknowledgment}
This work is supported by National Natural Science Foundation of China (Grant Nos. 12371024, 12571020).

\bibliographystyle{amsplain}

\begin{table}[htbp]
\caption{The multiplicative tables of $4$-element ai-semirings whose additive reducts form a chain} \label{tb1}
\begin{tabular}{cccccc}
\hline
Semiring & $\cdot$ & Semiring & $\cdot$ & Semiring & $\cdot$\\
\hline
$S_{(4, 481)}$
&
\begin{tabular}{cccc}
1 & 1 & 1 & 1\\
1 & 1 & 1 & 1 \\
1 & 1 & 1 & 1 \\
1 & 1 & 1 & 1 \\
\end{tabular}
&
$S_{(4, 482)}$
&
\begin{tabular}{cccc}
1 & 1 & 1 & 1\\
1 & 1 & 1 & 1 \\
1 & 1 & 1 & 1 \\
1 & 1 & 1 & 2 \\
\end{tabular}
&
$S_{(4, 483)}$
&
\begin{tabular}{cccc}
1 & 1 & 1 & 1\\
1 & 1 & 1 & 1 \\
1 & 1 & 1 & 1 \\
1 & 1 & 1 & 3 \\
\end{tabular}\\
\hline
$S_{(4, 484)}$
&
\begin{tabular}{cccc}
1 & 1 & 1 & 1\\
1 & 1 & 1 & 1 \\
1 & 1 & 1 & 1 \\
1 & 1 & 1 & 4 \\
\end{tabular}
&
$S_{(4, 485)}$
&
\begin{tabular}{cccc}
1 & 1 & 1 & 1\\
1 & 1 & 1 & 1 \\
1 & 1 & 1 & 2 \\
1 & 1 & 1 & 2 \\
\end{tabular}
&
$S_{(4, 486)}$
&
\begin{tabular}{cccc}
1 & 1 & 1 & 1\\
1 & 1 & 1 & 1 \\
1 & 1 & 1 & 3 \\
1 & 1 & 1 & 4 \\
\end{tabular}\\
\hline
$S_{(4, 487)}$
&
\begin{tabular}{cccc}
1 & 1 & 1 & 1\\
1 & 1 & 1 & 2 \\
1 & 1 & 1 & 2 \\
1 & 1 & 1 & 4 \\
\end{tabular}
&
$S_{(4, 488)}$
&
\begin{tabular}{cccc}
1 & 1 & 1 & 1\\
1 & 1 & 1 & 2 \\
1 & 1 & 1 & 3 \\
1 & 1 & 1 & 4 \\
\end{tabular}
&
$S_{(4, 489)}$
&
\begin{tabular}{cccc}
1 & 1 & 1 & 1\\
1 & 1 & 1 & 3 \\
1 & 1 & 1 & 3 \\
1 & 1 & 1 & 4 \\
\end{tabular}\\
\hline
$S_{(4, 490)}$
&
\begin{tabular}{cccc}
1 & 1 & 1 & 4\\
1 & 1 & 1 & 4 \\
1 & 1 & 1 & 4 \\
1 & 1 & 1 & 4 \\
\end{tabular}
&
$S_{(4, 491)}$
&
\begin{tabular}{cccc}
1 & 1 & 1 & 1\\
1 & 1 & 1 & 1 \\
1 & 1 & 1 & 1 \\
1 & 1 & 2 & 2 \\
\end{tabular}
&
$S_{(4, 492)}$
&
\begin{tabular}{cccc}
1 & 1 & 1 & 1\\
1 & 1 & 1 & 1 \\
1 & 1 & 1 & 2 \\
1 & 1 & 2 & 2 \\
\end{tabular}\\
\hline
$S_{(4, 493)}$
&
\begin{tabular}{cccc}
1 & 1 & 1 & 1\\
1 & 1 & 1 & 1 \\
1 & 1 & 1 & 2 \\
1 & 1 & 2 & 3 \\
\end{tabular}
&
$S_{(4, 494)}$
&
\begin{tabular}{cccc}
1 & 1 & 1 & 1\\
1 & 1 & 1 & 1 \\
1 & 1 & 1 & 1 \\
1 & 1 & 3 & 4 \\
\end{tabular}
&
$S_{(4, 495)}$
&
\begin{tabular}{cccc}
1 & 1 & 1 & 1\\
1 & 1 & 1 & 1 \\
1 & 1 & 1 & 3 \\
1 & 1 & 3 & 4 \\
\end{tabular}\\
\hline
$S_{(4, 496)}$
&
\begin{tabular}{cccc}
1 & 1 & 1 & 1\\
1 & 1 & 1 & 2 \\
1 & 1 & 1 & 3 \\
1 & 1 & 3 & 4 \\
\end{tabular}
&
$S_{(4, 497)}$
&
\begin{tabular}{cccc}
1 & 1 & 1 & 1\\
1 & 1 & 1 & 1 \\
1 & 1 & 2 & 2 \\
1 & 1 & 2 & 2 \\
\end{tabular}
&
$S_{(4, 498)}$
&
\begin{tabular}{cccc}
1 & 1 & 1 & 1\\
1 & 1 & 1 & 1 \\
1 & 1 & 3 & 3 \\
1 & 1 & 3 & 3 \\
\end{tabular}\\
\hline
$S_{(4, 499)}$
&
\begin{tabular}{cccc}
1 & 1 & 1 & 1\\
1 & 1 & 1 & 1 \\
1 & 1 & 3 & 3 \\
1 & 1 & 3 & 4 \\
\end{tabular}
&
$S_{(4, 500)}$
&
\begin{tabular}{cccc}
1 & 1 & 1 & 1\\
1 & 1 & 1 & 1 \\
1 & 1 & 3 & 4 \\
1 & 1 & 3 & 4 \\
\end{tabular}
&
$S_{(4, 501)}$
&
\begin{tabular}{cccc}
1 & 1 & 1 & 1\\
1 & 1 & 1 & 2 \\
1 & 1 & 3 & 3 \\
1 & 1 & 3 & 4 \\
\end{tabular}\\
\hline
$S_{(4, 502)}$
&
\begin{tabular}{cccc}
1 & 1 & 1 & 1\\
1 & 1 & 1 & 1 \\
1 & 1 & 3 & 3 \\
1 & 1 & 4 & 4 \\
\end{tabular}
&
$S_{(4, 503)}$
&
\begin{tabular}{cccc}
1 & 1 & 1 & 1\\
1 & 1 & 1 & 1 \\
1 & 1 & 3 & 4 \\
1 & 1 & 4 & 4 \\
\end{tabular}
&
$S_{(4, 504)}$
&
\begin{tabular}{cccc}
1 & 1 & 1 & 4\\
1 & 1 & 1 & 4 \\
1 & 1 & 3 & 4 \\
1 & 1 & 4 & 4 \\
\end{tabular}\\
\hline
$S_{(4, 505)}$
&
\begin{tabular}{cccc}
1 & 1 & 1 & 1\\
1 & 1 & 1 & 1 \\
1 & 1 & 4 & 4 \\
1 & 1 & 4 & 4 \\
\end{tabular}
&
$S_{(4, 506)}$
&
\begin{tabular}{cccc}
1 & 1 & 1 & 1\\
1 & 1 & 2 & 2 \\
1 & 1 & 3 & 3 \\
1 & 1 & 3 & 3 \\
\end{tabular}
&
$S_{(4, 507)}$
&
\begin{tabular}{cccc}
1 & 1 & 1 & 1\\
1 & 1 & 2 & 2 \\
1 & 1 & 3 & 3 \\
1 & 1 & 3 & 4 \\
\end{tabular}\\
\hline
$S_{(4, 508)}$
&
\begin{tabular}{cccc}
1 & 1 & 1 & 1\\
1 & 1 & 2 & 2 \\
1 & 1 & 3 & 4 \\
1 & 1 & 3 & 4 \\
\end{tabular}
&
$S_{(4, 509)}$
&
\begin{tabular}{cccc}
1 & 1 & 1 & 1\\
1 & 1 & 2 & 2 \\
1 & 1 & 3 & 3 \\
1 & 1 & 4 & 4 \\
\end{tabular}
&
$S_{(4, 510)}$
&
\begin{tabular}{cccc}
1 & 1 & 1 & 1\\
1 & 1 & 2 & 2 \\
1 & 1 & 3 & 4 \\
1 & 1 & 4 & 4 \\
\end{tabular}\\
\hline
$S_{(4, 511)}$
&
\begin{tabular}{cccc}
1 & 1 & 1 & 4\\
1 & 1 & 2 & 4 \\
1 & 1 & 3 & 4 \\
1 & 1 & 4 & 4 \\
\end{tabular}
&
$S_{(4, 512)}$
&
\begin{tabular}{cccc}
1 & 1 & 1 & 1\\
1 & 1 & 2 & 2 \\
1 & 1 & 4 & 4 \\
1 & 1 & 4 & 4 \\
\end{tabular}
&
$S_{(4, 513)}$
&
\begin{tabular}{cccc}
1 & 1 & 3 & 3 \\
1 & 1 & 3 & 3 \\
1 & 1 & 3 & 3 \\
1 & 1 & 3 & 3 \\
\end{tabular}\\
\hline
$S_{(4, 514)}$
&
\begin{tabular}{cccc}
1 & 1 & 3 & 3\\
1 & 1 & 3 & 3 \\
1 & 1 & 3 & 3 \\
1 & 1 & 3 & 4 \\
\end{tabular}
&
$S_{(4, 515)}$
&
\begin{tabular}{cccc}
1 & 1 & 3 & 4 \\
1 & 1 & 3 & 4 \\
1 & 1 & 3 & 4 \\
1 & 1 & 3 & 4 \\
\end{tabular}
&
$S_{(4, 516)}$
&
\begin{tabular}{cccc}
1 & 1 & 4 & 4 \\
1 & 1 & 4 & 4 \\
1 & 1 & 4 & 4 \\
1 & 1 & 4 & 4 \\
\end{tabular}\\
\hline
\end{tabular}
\end{table}

\begin{table}[htbp]
\label{tb2}
\begin{tabular}{cccccc}
\hline
Semiring & $\cdot$ & Semiring & $\cdot$ & Semiring & $\cdot$\\
\hline
$S_{(4, 517)}$
&
\begin{tabular}{cccc}
1 & 1 & 1 & 1\\
1 & 1 & 1 & 1 \\
1 & 1 & 1 & 1 \\
1 & 2 & 2 & 4 \\
\end{tabular}
&
$S_{(4, 518)}$
&
\begin{tabular}{cccc}
1 & 1 & 1 & 1 \\
1 & 1 & 1 & 2 \\
1 & 1 & 1 & 2 \\
1 & 2 & 2 & 4 \\
\end{tabular}
&
$S_{(4, 519)}$
&
\begin{tabular}{cccc}
1 & 1 & 1 & 1 \\
1 & 1 & 1 & 2 \\
1 & 1 & 1 & 3 \\
1 & 2 & 2 & 4 \\
\end{tabular}\\
\hline
$S_{(4, 520)}$
&
\begin{tabular}{cccc}
1 & 1 & 1 & 1\\
1 & 1 & 1 & 1 \\
1 & 1 & 1 & 1 \\
1 & 2 & 3 & 4 \\
\end{tabular}
&
$S_{(4, 521)}$
&
\begin{tabular}{cccc}
1 & 1 & 1 & 1 \\
1 & 1 & 1 & 1 \\
1 & 1 & 1 & 3 \\
1 & 2 & 3 & 4 \\
\end{tabular}
&
$S_{(4, 522)}$
&
\begin{tabular}{cccc}
1 & 1 & 1 & 1 \\
1 & 1 & 1 & 2 \\
1 & 1 & 1 & 2 \\
1 & 2 & 3 & 4 \\
\end{tabular}\\
\hline
$S_{(4, 523)}$
&
\begin{tabular}{cccc}
1 & 1 & 1 & 1\\
1 & 1 & 1 & 2 \\
1 & 1 & 1 & 3 \\
1 & 2 & 3 & 4 \\
\end{tabular}
&
$S_{(4, 524)}$
&
\begin{tabular}{cccc}
1 & 1 & 1 & 1 \\
1 & 1 & 1 & 3 \\
1 & 1 & 1 & 3 \\
1 & 2 & 3 & 4 \\
\end{tabular}
&
$S_{(4, 525)}$
&
\begin{tabular}{cccc}
1 & 1 & 1 & 1 \\
1 & 1 & 1 & 2 \\
1 & 1 & 2 & 3 \\
1 & 2 & 3 & 4 \\
\end{tabular}\\
\hline
$S_{(4, 526)}$
&
\begin{tabular}{cccc}
1 & 1 & 1 & 1\\
1 & 1 & 1 & 1 \\
1 & 1 & 3 & 3 \\
1 & 2 & 3 & 4 \\
\end{tabular}
&
$S_{(4, 527)}$
&
\begin{tabular}{cccc}
1 & 1 & 1 & 1 \\
1 & 1 & 1 & 2 \\
1 & 1 & 3 & 3 \\
1 & 2 & 3 & 4 \\
\end{tabular}
&
$S_{(4, 528)}$
&
\begin{tabular}{cccc}
1 & 1 & 1 & 1 \\
1 & 1 & 2 & 2 \\
1 & 1 & 3 & 3 \\
1 & 2 & 3 & 4 \\
\end{tabular}\\
\hline
$S_{(4, 529)}$
&
\begin{tabular}{cccc}
1 & 1 & 3 & 3\\
1 & 1 & 3 & 3 \\
1 & 1 & 3 & 3 \\
1 & 2 & 3 & 4 \\
\end{tabular}
&
$S_{(4, 530)}$
&
\begin{tabular}{cccc}
1 & 1 & 1 & 1 \\
1 & 1 & 1 & 1 \\
1 & 1 & 1 & 1 \\
1 & 3 & 3 & 4 \\
\end{tabular}
&
$S_{(4, 531)}$
&
\begin{tabular}{cccc}
1 & 1 & 1 & 1 \\
1 & 1 & 1 & 2 \\
1 & 1 & 1 & 3 \\
1 & 3 & 3 & 4 \\
\end{tabular}\\
\hline
$S_{(4, 532)}$
&
\begin{tabular}{cccc}
1 & 1 & 1 & 1\\
1 & 1 & 1 & 3 \\
1 & 1 & 1 & 3 \\
1 & 3 & 3 & 4 \\
\end{tabular}
&
$S_{(4, 533)}$
&
\begin{tabular}{cccc}
1 & 1 & 3 & 4 \\
1 & 1 & 3 & 4 \\
1 & 1 & 3 & 4 \\
1 & 3 & 3 & 4 \\
\end{tabular}
&
$S_{(4, 534)}$
&
\begin{tabular}{cccc}
1 & 1 & 1 & 1 \\
1 & 1 & 1 & 1 \\
1 & 2 & 3 & 3 \\
1 & 2 & 3 & 3 \\
\end{tabular}\\
\hline
$S_{(4, 535)}$
&
\begin{tabular}{cccc}
1 & 1 & 1 & 1\\
1 & 1 & 1 & 1 \\
1 & 2 & 3 & 3 \\
1 & 2 & 3 & 4 \\
\end{tabular}
&
$S_{(4, 536)}$
&
\begin{tabular}{cccc}
1 & 1 & 1 & 1 \\
1 & 1 & 1 & 1 \\
1 & 2 & 3 & 4 \\
1 & 2 & 3 & 4 \\
\end{tabular}
&
$S_{(4, 537)}$
&
\begin{tabular}{cccc}
1 & 1 & 1 & 1 \\
1 & 1 & 1 & 2 \\
1 & 2 & 3 & 3 \\
1 & 2 & 3 & 4 \\
\end{tabular}\\
\hline
$S_{(4, 538)}$
&
\begin{tabular}{cccc}
1 & 1 & 1 & 1 \\
1 & 1 & 1 & 1 \\
1 & 2 & 3 & 3 \\
1 & 2 & 4 & 4 \\
\end{tabular}
&
$S_{(4, 539)}$
&
\begin{tabular}{cccc}
1 & 1 & 1 & 1\\
1 & 1 & 1 & 1 \\
1 & 2 & 3 & 4 \\
1 & 2 & 4 & 4 \\
\end{tabular}
&
$S_{(4, 540)}$
&
\begin{tabular}{cccc}
1 & 1 & 1 & 1 \\
1 & 1 & 1 & 1 \\
1 & 2 & 4 & 4 \\
1 & 2 & 4 & 4 \\
\end{tabular}\\
\hline
$S_{(4, 541)}$
&
\begin{tabular}{cccc}
1 & 1 & 1 & 1 \\
1 & 1 & 2 & 2 \\
1 & 2 & 3 & 3 \\
1 & 2 & 3 & 3 \\
\end{tabular}
&
$S_{(4, 542)}$
&
\begin{tabular}{cccc}
1 & 1 & 1 & 1 \\
1 & 1 & 2 & 2 \\
1 & 2 & 3 & 3 \\
1 & 2 & 3 & 4 \\
\end{tabular}
&
$S_{(4, 543)}$
&
\begin{tabular}{cccc}
1 & 1 & 1 & 1 \\
1 & 1 & 2 & 2 \\
1 & 2 & 3 & 4 \\
1 & 2 & 3 & 4 \\
\end{tabular}\\
\hline
$S_{(4, 544)}$
&
\begin{tabular}{cccc}
1 & 1 & 1 & 1 \\
1 & 1 & 2 & 2 \\
1 & 2 & 3 & 3 \\
1 & 2 & 4 & 4 \\
\end{tabular}
&
$S_{(4, 545)}$
&
\begin{tabular}{cccc}
1 & 1 & 1 & 1 \\
1 & 1 & 2 & 2 \\
1 & 2 & 3 & 4 \\
1 & 2 & 4 & 4 \\
\end{tabular}
&
$S_{(4, 546)}$
&
\begin{tabular}{cccc}
1 & 1 & 1 & 1 \\
1 & 1 & 2 & 2 \\
1 & 2 & 4 & 4 \\
1 & 2 & 4 & 4 \\
\end{tabular}\\
\hline
$S_{(4, 547)}$
&
\begin{tabular}{cccc}
1 & 1 & 1 & 1 \\
1 & 2 & 2 & 2 \\
1 & 2 & 2 & 2 \\
1 & 2 & 2 & 2 \\
\end{tabular}
&
$S_{(4, 548)}$
&
\begin{tabular}{cccc}
1 & 1 & 1 & 1 \\
1 & 2 & 2 & 2 \\
1 & 2 & 2 & 2 \\
1 & 2 & 2 & 3 \\
\end{tabular}
&
$S_{(4, 549)}$
&
\begin{tabular}{cccc}
1 & 1 & 1 & 1 \\
1 & 2 & 2 & 2 \\
1 & 2 & 2 & 2 \\
1 & 2 & 2 & 4 \\
\end{tabular}\\
\hline
$S_{(4, 550)}$
&
\begin{tabular}{cccc}
1 & 1 & 1 & 1 \\
1 & 2 & 2 & 2 \\
1 & 2 & 2 & 3 \\
1 & 2 & 2 & 4 \\
\end{tabular}
&
$S_{(4, 551)}$
&
\begin{tabular}{cccc}
1 & 1 & 1 & 1 \\
1 & 2 & 2 & 4 \\
1 & 2 & 2 & 4 \\
1 & 2 & 2 & 4 \\
\end{tabular}
&
$S_{(4, 552)}$
&
\begin{tabular}{cccc}
1 & 1 & 1 & 1 \\
1 & 2 & 2 & 2 \\
1 & 2 & 2 & 2 \\
1 & 2 & 3 & 4 \\
\end{tabular}\\
\hline
$S_{(4, 553)}$
&
\begin{tabular}{cccc}
1 & 1 & 1 & 1 \\
1 & 2 & 2 & 2 \\
1 & 2 & 2 & 3 \\
1 & 2 & 3 & 4 \\
\end{tabular}
&
$S_{(4, 554)}$
&
\begin{tabular}{cccc}
1 & 1 & 1 & 1 \\
1 & 2 & 2 & 2 \\
1 & 2 & 3 & 3 \\
1 & 2 & 3 & 3 \\
\end{tabular}
&
$S_{(4, 555)}$
&
\begin{tabular}{cccc}
1 & 1 & 1 & 1 \\
1 & 2 & 2 & 2 \\
1 & 2 & 3 & 3 \\
1 & 2 & 3 & 4 \\
\end{tabular}\\
\hline
\end{tabular}
\end{table}

\begin{table}[htbp]
\label{tb3}
\begin{tabular}{cccccc}
\hline
Semiring & $\cdot$ & Semiring & $\cdot$ & Semiring & $\cdot$\\
\hline
$S_{(4, 556)}$
&
\begin{tabular}{cccc}
1 & 1 & 1 & 1 \\
1 & 2 & 2 & 2 \\
1 & 2 & 3 & 4 \\
1 & 2 & 3 & 4 \\
\end{tabular}
&
$S_{(4, 557)}$
&
\begin{tabular}{cccc}
1 & 1 & 1 & 1 \\
1 & 2 & 2 & 2 \\
1 & 2 & 3 & 3 \\
1 & 2 & 4 & 4 \\
\end{tabular}
&
$S_{(4, 558)}$
&
\begin{tabular}{cccc}
1 & 1 & 1 & 1 \\
1 & 2 & 2 & 2 \\
1 & 2 & 3 & 4 \\
1 & 2 & 4 & 4 \\
\end{tabular}\\
\hline
$S_{(4, 559)}$
&
\begin{tabular}{cccc}
1 & 1 & 1 & 1 \\
1 & 2 & 2 & 4 \\
1 & 2 & 3 & 4 \\
1 & 2 & 4 & 4 \\
\end{tabular}
&
$S_{(4, 560)}$
&
\begin{tabular}{cccc}
1 & 1 & 1 & 1 \\
1 & 2 & 2 & 2 \\
1 & 2 & 4 & 4 \\
1 & 2 & 4 & 4 \\
\end{tabular}
&
$S_{(4, 561)}$
&
\begin{tabular}{cccc}
1 & 1 & 1 & 1 \\
1 & 2 & 3 & 3 \\
1 & 2 & 3 & 3 \\
1 & 2 & 3 & 3 \\
\end{tabular}\\
\hline
$S_{(4, 562)}$
&
\begin{tabular}{cccc}
1 & 1 & 1 & 1 \\
1 & 2 & 3 & 3 \\
1 & 2 & 3 & 3 \\
1 & 2 & 3 & 4 \\
\end{tabular}
&
$S_{(4, 563)}$
&
\begin{tabular}{cccc}
1 & 1 & 1 & 1 \\
1 & 2 & 3 & 4 \\
1 & 2 & 3 & 4 \\
1 & 2 & 3 & 4 \\
\end{tabular}
&
$S_{(4, 564)}$
&
\begin{tabular}{cccc}
1 & 1 & 1 & 1 \\
1 & 2 & 4 & 4 \\
1 & 2 & 4 & 4 \\
1 & 2 & 4 & 4 \\
\end{tabular}\\
\hline
$S_{(4, 565)}$
&
\begin{tabular}{cccc}
1 & 1 & 1 & 1 \\
1 & 2 & 2 & 2 \\
1 & 2 & 2 & 2 \\
1 & 4 & 4 & 4 \\
\end{tabular}
&
$S_{(4, 566)}$
&
\begin{tabular}{cccc}
1 & 1 & 1 & 1 \\
1 & 2 & 2 & 4 \\
1 & 2 & 2 & 4 \\
1 & 4 & 4 & 4 \\
\end{tabular}
&
$S_{(4, 567)}$
&
\begin{tabular}{cccc}
1 & 1 & 1 & 4 \\
1 & 2 & 2 & 4 \\
1 & 2 & 2 & 4 \\
1 & 4 & 4 & 4 \\
\end{tabular}\\
\hline
$S_{(4, 568)}$
&
\begin{tabular}{cccc}
1 & 1 & 1 & 1 \\
1 & 2 & 2 & 2 \\
1 & 2 & 3 & 4 \\
1 & 4 & 4 & 4 \\
\end{tabular}
&
$S_{(4, 569)}$
&
\begin{tabular}{cccc}
1 & 1 & 1 & 1 \\
1 & 2 & 2 & 4 \\
1 & 2 & 3 & 4 \\
1 & 4 & 4 & 4 \\
\end{tabular}
&
$S_{(4, 570)}$
&
\begin{tabular}{cccc}
1 & 1 & 1 & 4 \\
1 & 2 & 2 & 4 \\
1 & 2 & 3 & 4 \\
1 & 4 & 4 & 4 \\
\end{tabular}\\
\hline
$S_{(4, 571)}$
&
\begin{tabular}{cccc}
1 & 1 & 1 & 1 \\
1 & 2 & 3 & 4 \\
1 & 2 & 3 & 4 \\
1 & 4 & 4 & 4 \\
\end{tabular}
&
$S_{(4, 572)}$
&
\begin{tabular}{cccc}
1 & 1 & 1 & 4 \\
1 & 2 & 3 & 4 \\
1 & 2 & 3 & 4 \\
1 & 4 & 4 & 4 \\
\end{tabular}
&
$S_{(4, 573)}$
&
\begin{tabular}{cccc}
1 & 1 & 1 & 1 \\
1 & 2 & 2 & 2 \\
1 & 3 & 3 & 3 \\
1 & 3 & 3 & 3 \\
\end{tabular}\\
\hline
$S_{(4, 574)}$
&
\begin{tabular}{cccc}
1 & 1 & 1 & 1 \\
1 & 2 & 2 & 2 \\
1 & 3 & 3 & 3 \\
1 & 3 & 3 & 4 \\
\end{tabular}
&
$S_{(4, 575)}$
&
\begin{tabular}{cccc}
1 & 1 & 1 & 1 \\
1 & 2 & 3 & 3 \\
1 & 3 & 3 & 3 \\
1 & 3 & 3 & 3 \\
\end{tabular}
&
$S_{(4, 576)}$
&
\begin{tabular}{cccc}
1 & 1 & 1 & 1 \\
1 & 2 & 3 & 3 \\
1 & 3 & 3 & 3 \\
1 & 3 & 3 & 4 \\
\end{tabular}\\
\hline
$S_{(4, 577)}$
&
\begin{tabular}{cccc}
1 & 1 & 1 & 1 \\
1 & 2 & 3 & 4 \\
1 & 3 & 3 & 4 \\
1 & 3 & 3 & 4 \\
\end{tabular}
&
$S_{(4, 578)}$
&
\begin{tabular}{cccc}
1 & 1 & 3 & 3 \\
1 & 2 & 3 & 3 \\
1 & 3 & 3 & 3 \\
1 & 3 & 3 & 3 \\
\end{tabular}
&
$S_{(4, 579)}$
&
\begin{tabular}{cccc}
1 & 1 & 3 & 3 \\
1 & 2 & 3 & 3 \\
1 & 3 & 3 & 3 \\
1 & 3 & 3 & 4 \\
\end{tabular}\\
\hline
$S_{(4, 580)}$
&
\begin{tabular}{cccc}
1 & 1 & 3 & 4 \\
1 & 2 & 3 & 4 \\
1 & 3 & 3 & 4 \\
1 & 3 & 3 & 4 \\
\end{tabular}
&
$S_{(4, 581)}$
&
\begin{tabular}{cccc}
1 & 1 & 1 & 1 \\
1 & 2 & 2 & 2 \\
1 & 3 & 3 & 3 \\
1 & 4 & 4 & 4 \\
\end{tabular}
&
$S_{(4, 582)}$
&
\begin{tabular}{cccc}
1 & 1 & 1 & 1 \\
1 & 2 & 2 & 4 \\
1 & 3 & 3 & 4 \\
1 & 4 & 4 & 4 \\
\end{tabular}\\
\hline
$S_{(4, 583)}$
&
\begin{tabular}{cccc}
1 & 1 & 1 & 4 \\
1 & 2 & 2 & 4 \\
1 & 3 & 3 & 4 \\
1 & 4 & 4 & 4 \\
\end{tabular}
&
$S_{(4, 584)}$
&
\begin{tabular}{cccc}
1 & 1 & 1 & 1 \\
1 & 2 & 3 & 3 \\
1 & 3 & 3 & 3 \\
1 & 4 & 4 & 4 \\
\end{tabular}
&
$S_{(4, 585)}$
&
\begin{tabular}{cccc}
1 & 1 & 1 & 1 \\
1 & 2 & 3 & 4 \\
1 & 3 & 3 & 4 \\
1 & 4 & 4 & 4 \\
\end{tabular}\\
\hline
$S_{(4, 586)}$
&
\begin{tabular}{cccc}
1 & 1 & 1 & 4 \\
1 & 2 & 3 & 4 \\
1 & 3 & 3 & 4 \\
1 & 4 & 4 & 4 \\
\end{tabular}
&
$S_{(4, 587)}$
&
\begin{tabular}{cccc}
1 & 1 & 1 & 1 \\
1 & 2 & 3 & 4 \\
1 & 3 & 4 & 4 \\
1 & 4 & 4 & 4 \\
\end{tabular}
&
$S_{(4, 588)}$
&
\begin{tabular}{cccc}
1 & 1 & 1 & 1 \\
1 & 2 & 4 & 4 \\
1 & 3 & 4 & 4 \\
1 & 4 & 4 & 4 \\
\end{tabular}\\
\hline
$S_{(4, 589)}$
&
\begin{tabular}{cccc}
1 & 1 & 4 & 4 \\
1 & 2 & 4 & 4 \\
1 & 3 & 4 & 4 \\
1 & 4 & 4 & 4 \\
\end{tabular}
&
$S_{(4, 590)}$
&
\begin{tabular}{cccc}
1 & 1 & 1 & 1 \\
1 & 2 & 2 & 2 \\
1 & 4 & 4 & 4 \\
1 & 4 & 4 & 4 \\
\end{tabular}
&
$S_{(4, 591)}$
&
\begin{tabular}{cccc}
1 & 1 & 1 & 1 \\
1 & 2 & 3 & 4 \\
1 & 4 & 4 & 4 \\
1 & 4 & 4 & 4 \\
\end{tabular}\\
\hline
$S_{(4, 592)}$
&
\begin{tabular}{cccc}
1 & 1 & 1 & 1 \\
1 & 2 & 4 & 4 \\
1 & 4 & 4 & 4 \\
1 & 4 & 4 & 4 \\
\end{tabular}
&
$S_{(4, 593)}$
&
\begin{tabular}{cccc}
1 & 1 & 4 & 4 \\
1 & 2 & 4 & 4 \\
1 & 4 & 4 & 4 \\
1 & 4 & 4 & 4 \\
\end{tabular}
&
$S_{(4, 594)}$
&
\begin{tabular}{cccc}
1 & 1 & 1 & 1 \\
1 & 3 & 3 & 3 \\
1 & 3 & 3 & 3 \\
1 & 3 & 3 & 3 \\
\end{tabular}\\
\hline
\end{tabular}
\end{table}

\begin{table}[htbp]
\label{tb4}
\begin{tabular}{cccccc}
\hline
Semiring & $\cdot$ & Semiring & $\cdot$ & Semiring & $\cdot$\\
\hline
$S_{(4, 595)}$
&
\begin{tabular}{cccc}
1 & 1 & 1 & 1 \\
1 & 3 & 3 & 3 \\
1 & 3 & 3 & 3 \\
1 & 3 & 3 & 4 \\
\end{tabular}
&
$S_{(4, 596)}$
&
\begin{tabular}{cccc}
1 & 1 & 1 & 1 \\
1 & 3 & 3 & 4 \\
1 & 3 & 3 & 4 \\
1 & 3 & 3 & 4 \\
\end{tabular}
&
$S_{(4, 597)}$
&
\begin{tabular}{cccc}
1 & 1 & 1 & 1 \\
1 & 3 & 3 & 3 \\
1 & 3 & 3 & 3 \\
1 & 4 & 4 & 4 \\
\end{tabular}\\
\hline
$S_{(4, 598)}$
&
\begin{tabular}{cccc}
1 & 1 & 1 & 1 \\
1 & 3 & 3 & 4 \\
1 & 3 & 3 & 4 \\
1 & 4 & 4 & 4 \\
\end{tabular}
&
$S_{(4, 599)}$
&
\begin{tabular}{cccc}
1 & 1 & 1 & 4 \\
1 & 3 & 3 & 4 \\
1 & 3 & 3 & 4 \\
1 & 4 & 4 & 4 \\
\end{tabular}
&
$S_{(4, 600)}$
&
\begin{tabular}{cccc}
1 & 1 & 1 & 1 \\
1 & 3 & 4 & 4 \\
1 & 4 & 4 & 4 \\
1 & 4 & 4 & 4 \\
\end{tabular}\\
\hline
$S_{(4, 601)}$
&
\begin{tabular}{cccc}
1 & 1 & 1 & 1 \\
1 & 4 & 4 & 4 \\
1 & 4 & 4 & 4 \\
1 & 4 & 4 & 4 \\
\end{tabular}
&
$S_{(4, 602)}$
&
\begin{tabular}{cccc}
1 & 2 & 2 & 2 \\
1 & 2 & 2 & 2 \\
1 & 2 & 2 & 2 \\
1 & 2 & 2 & 2 \\
\end{tabular}
&
$S_{(4, 603)}$
&
\begin{tabular}{cccc}
1 & 2 & 2 & 2 \\
1 & 2 & 2 & 2 \\
1 & 2 & 2 & 2 \\
1 & 2 & 2 & 3 \\
\end{tabular}\\
\hline
$S_{(4, 604)}$
&
\begin{tabular}{cccc}
1 & 2 & 2 & 2 \\
1 & 2 & 2 & 2 \\
1 & 2 & 2 & 2 \\
1 & 2 & 2 & 4 \\
\end{tabular}
&
$S_{(4, 605)}$
&
\begin{tabular}{cccc}
1 & 2 & 2 & 2 \\
1 & 2 & 2 & 2 \\
1 & 2 & 2 & 3 \\
1 & 2 & 2 & 4 \\
\end{tabular}
&
$S_{(4, 606)}$
&
\begin{tabular}{cccc}
1 & 2 & 2 & 4 \\
1 & 2 & 2 & 4 \\
1 & 2 & 2 & 4 \\
1 & 2 & 2 & 4 \\
\end{tabular}\\
\hline
$S_{(4, 607)}$
&
\begin{tabular}{cccc}
1 & 2 & 2 & 2 \\
1 & 2 & 2 & 2 \\
1 & 2 & 2 & 2 \\
1 & 2 & 3 & 4 \\
\end{tabular}
&
$S_{(4, 608)}$
&
\begin{tabular}{cccc}
1 & 2 & 2 & 2 \\
1 & 2 & 2 & 2 \\
1 & 2 & 2 & 3 \\
1 & 2 & 3 & 4 \\
\end{tabular}
&
$S_{(4, 609)}$
&
\begin{tabular}{cccc}
1 & 2 & 2 & 2 \\
1 & 2 & 2 & 2 \\
1 & 2 & 3 & 3 \\
1 & 2 & 3 & 3 \\
\end{tabular}\\
\hline
$S_{(4, 610)}$
&
\begin{tabular}{cccc}
1 & 2 & 2 & 2 \\
1 & 2 & 2 & 2 \\
1 & 2 & 3 & 3 \\
1 & 2 & 3 & 4 \\
\end{tabular}
&
$S_{(4, 611)}$
&
\begin{tabular}{cccc}
1 & 2 & 2 & 2 \\
1 & 2 & 2 & 2 \\
1 & 2 & 3 & 4 \\
1 & 2 & 3 & 4 \\
\end{tabular}
&
$S_{(4, 612)}$
&
\begin{tabular}{cccc}
1 & 2 & 2 & 2 \\
1 & 2 & 2 & 2 \\
1 & 2 & 3 & 3 \\
1 & 2 & 4 & 4 \\
\end{tabular}\\
\hline
$S_{(4, 613)}$
&
\begin{tabular}{cccc}
1 & 2 & 2 & 2 \\
1 & 2 & 2 & 2 \\
1 & 2 & 3 & 4 \\
1 & 2 & 4 & 4 \\
\end{tabular}
&
$S_{(4, 614)}$
&
\begin{tabular}{cccc}
1 & 2 & 2 & 4 \\
1 & 2 & 2 & 4 \\
1 & 2 & 3 & 4 \\
1 & 2 & 4 & 4 \\
\end{tabular}
&
$S_{(4, 615)}$
&
\begin{tabular}{cccc}
1 & 2 & 2 & 2 \\
1 & 2 & 2 & 2 \\
1 & 2 & 4 & 4 \\
1 & 2 & 4 & 4 \\
\end{tabular}\\
\hline
$S_{(4, 616)}$
&
\begin{tabular}{cccc}
1 & 2 & 2 & 4 \\
1 & 2 & 4 & 4 \\
1 & 2 & 4 & 4 \\
1 & 2 & 4 & 4 \\
\end{tabular}
&
$S_{(4, 617)}$
&
\begin{tabular}{cccc}
1 & 2 & 3 & 3 \\
1 & 2 & 3 & 3 \\
1 & 2 & 3 & 3 \\
1 & 2 & 3 & 3 \\
\end{tabular}
&
$S_{(4, 618)}$
&
\begin{tabular}{cccc}
1 & 2 & 3 & 3 \\
1 & 2 & 3 & 3 \\
1 & 2 & 3 & 3 \\
1 & 2 & 3 & 4 \\
\end{tabular}\\
\hline
$S_{(4, 619)}$
&
\begin{tabular}{cccc}
1 & 2 & 3 & 4 \\
1 & 2 & 3 & 4 \\
1 & 2 & 3 & 4 \\
1 & 2 & 3 & 4 \\
\end{tabular}
&
$S_{(4, 620)}$
&
\begin{tabular}{cccc}
1 & 2 & 4 & 4 \\
1 & 2 & 4 & 4 \\
1 & 2 & 4 & 4 \\
1 & 2 & 4 & 4 \\
\end{tabular}
&
$S_{(4, 621)}$
&
\begin{tabular}{cccc}
1 & 3 & 3 & 3 \\
1 & 3 & 3 & 3 \\
1 & 3 & 3 & 3 \\
1 & 3 & 3 & 3 \\
\end{tabular}\\
\hline
$S_{(4, 622)}$
&
\begin{tabular}{cccc}
1 & 3 & 3 & 3 \\
1 & 3 & 3 & 3 \\
1 & 3 & 3 & 3 \\
1 & 3 & 3 & 4 \\
\end{tabular}
&
$S_{(4, 623)}$
&
\begin{tabular}{cccc}
1 & 3 & 3 & 4 \\
1 & 3 & 3 & 4 \\
1 & 3 & 3 & 4 \\
1 & 3 & 3 & 4 \\
\end{tabular}
&
$S_{(4, 624)}$
&
\begin{tabular}{cccc}
1 & 4 & 4 & 4 \\
1 & 4 & 4 & 4 \\
1 & 4 & 4 & 4 \\
1 & 4 & 4 & 4 \\
\end{tabular}\\
\hline
$S_{(4, 625)}$
&
\begin{tabular}{cccc}
1 & 1 & 1 & 1 \\
1 & 1 & 1 & 1 \\
1 & 1 & 1 & 1 \\
4 & 4 & 4 & 4 \\
\end{tabular}
&
$S_{(4, 626)}$
&
\begin{tabular}{cccc}
1 & 1 & 1 & 4 \\
1 & 1 & 1 & 4 \\
1 & 1 & 1 & 4 \\
4 & 4 & 4 & 4 \\
\end{tabular}
&
$S_{(4, 627)}$
&
\begin{tabular}{cccc}
1 & 1 & 1 & 4 \\
1 & 1 & 1 & 4 \\
1 & 1 & 2 & 4 \\
4 & 4 & 4 & 4 \\
\end{tabular}\\
\hline
$S_{(4, 628)}$
&
\begin{tabular}{cccc}
1 & 1 & 1 & 1 \\
1 & 1 & 1 & 1 \\
1 & 1 & 3 & 4 \\
4 & 4 & 4 & 4 \\
\end{tabular}
&
$S_{(4, 629)}$
&
\begin{tabular}{cccc}
1 & 1 & 1 & 4 \\
1 & 1 & 1 & 4 \\
1 & 1 & 3 & 4 \\
4 & 4 & 4 & 4 \\
\end{tabular}
&
$S_{(4, 630)}$
&
\begin{tabular}{cccc}
1 & 1 & 1 & 4 \\
1 & 1 & 2 & 4 \\
1 & 1 & 3 & 4 \\
4 & 4 & 4 & 4 \\
\end{tabular}\\
\hline
$S_{(4, 631)}$
&
\begin{tabular}{cccc}
1 & 1 & 3 & 4 \\
1 & 1 & 3 & 4 \\
1 & 1 & 3 & 4 \\
4 & 4 & 4 & 4 \\
\end{tabular}
&
$S_{(4, 632)}$
&
\begin{tabular}{cccc}
1 & 1 & 1 & 1 \\
1 & 1 & 1 & 1 \\
1 & 2 & 3 & 4 \\
4 & 4 & 4 & 4 \\
\end{tabular}
&
$S_{(4, 633)}$
&
\begin{tabular}{cccc}
1 & 1 & 1 & 4 \\
1 & 1 & 1 & 4 \\
1 & 2 & 3 & 4 \\
4 & 4 & 4 & 4 \\
\end{tabular}\\
\hline
\end{tabular}
\end{table}

\begin{table}[htbp]
\label{tb5}
\begin{tabular}{cccccc}
\hline
Semiring & $\cdot$ & Semiring & $\cdot$ & Semiring & $\cdot$\\
\hline
$S_{(4, 634)}$
&
\begin{tabular}{cccc}
1 & 1 & 1 & 4 \\
1 & 1 & 2 & 4 \\
1 & 2 & 3 & 4 \\
4 & 4 & 4 & 4 \\
\end{tabular}
&
$S_{(4, 635)}$
&
\begin{tabular}{cccc}
1 & 1 & 1 & 1 \\
1 & 2 & 2 & 4 \\
1 & 2 & 2 & 4 \\
4 & 4 & 4 & 4 \\
\end{tabular}
&
$S_{(4, 636)}$
&
\begin{tabular}{cccc}
1 & 1 & 1 & 4 \\
1 & 2 & 2 & 4 \\
1 & 2 & 2 & 4 \\
4 & 4 & 4 & 4 \\
\end{tabular}\\
\hline
$S_{(4, 637)}$
&
\begin{tabular}{cccc}
1 & 1 & 1 & 1 \\
1 & 2 & 2 & 4 \\
1 & 2 & 3 & 4 \\
4 & 4 & 4 & 4 \\
\end{tabular}
&
$S_{(4, 638)}$
&
\begin{tabular}{cccc}
1 & 1 & 1 & 4 \\
1 & 2 & 2 & 4 \\
1 & 2 & 3 & 4 \\
4 & 4 & 4 & 4 \\
\end{tabular}
&
$S_{(4, 639)}$
&
\begin{tabular}{cccc}
1 & 1 & 1 & 1 \\
1 & 2 & 3 & 4 \\
1 & 2 & 3 & 4 \\
4 & 4 & 4 & 4 \\
\end{tabular}\\
\hline
$S_{(4, 640)}$
&
\begin{tabular}{cccc}
1 & 1 & 1 & 4 \\
1 & 2 & 3 & 4 \\
1 & 2 & 3 & 4 \\
4 & 4 & 4 & 4 \\
\end{tabular}
&
$S_{(4, 641)}$
&
\begin{tabular}{cccc}
1 & 1 & 1 & 1 \\
1 & 2 & 2 & 4 \\
1 & 3 & 3 & 4 \\
4 & 4 & 4 & 4 \\
\end{tabular}
&
$S_{(4, 642)}$
&
\begin{tabular}{cccc}
1 & 1 & 1 & 4 \\
1 & 2 & 2 & 4 \\
1 & 3 & 3 & 4 \\
4 & 4 & 4 & 4 \\
\end{tabular}\\
\hline
$S_{(4, 643)}$
&
\begin{tabular}{cccc}
1 & 1 & 1 & 1 \\
1 & 2 & 3 & 4 \\
1 & 3 & 3 & 4 \\
4 & 4 & 4 & 4 \\
\end{tabular}
&
$S_{(4, 644)}$
&
\begin{tabular}{cccc}
1 & 1 & 1 & 4 \\
1 & 2 & 3 & 4 \\
1 & 3 & 3 & 4 \\
4 & 4 & 4 & 4 \\
\end{tabular}
&
$S_{(4, 645)}$
&
\begin{tabular}{cccc}
1 & 1 & 3 & 4 \\
1 & 2 & 3 & 4 \\
1 & 3 & 3 & 4 \\
4 & 4 & 4 & 4 \\
\end{tabular}\\
\hline
$S_{(4, 646)}$
&
\begin{tabular}{cccc}
1 & 1 & 1 & 1 \\
1 & 3 & 3 & 4 \\
1 & 3 & 3 & 4 \\
4 & 4 & 4 & 4 \\
\end{tabular}
&
$S_{(4, 647)}$
&
\begin{tabular}{cccc}
1 & 1 & 1 & 4 \\
1 & 3 & 3 & 4 \\
1 & 3 & 3 & 4 \\
4 & 4 & 4 & 4 \\
\end{tabular}
&
$S_{(4, 648)}$
&
\begin{tabular}{cccc}
1 & 2 & 2 & 4 \\
1 & 2 & 2 & 4 \\
1 & 2 & 2 & 4 \\
4 & 4 & 4 & 4 \\
\end{tabular}\\
\hline
$S_{(4, 649)}$
&
\begin{tabular}{cccc}
1 & 2 & 2 & 4 \\
1 & 2 & 2 & 4 \\
1 & 2 & 3 & 4 \\
4 & 4 & 4 & 4 \\
\end{tabular}
&
$S_{(4, 650)}$
&
\begin{tabular}{cccc}
1 & 2 & 3 & 4 \\
1 & 2 & 3 & 4 \\
1 & 2 & 3 & 4 \\
4 & 4 & 4 & 4 \\
\end{tabular}
&
$S_{(4, 651)}$
&
\begin{tabular}{cccc}
1 & 3 & 3 & 4 \\
1 & 3 & 3 & 4 \\
1 & 3 & 3 & 4 \\
4 & 4 & 4 & 4 \\
\end{tabular}\\
\hline
$S_{(4, 652)}$
&
\begin{tabular}{cccc}
1 & 1 & 1 & 1 \\
1 & 1 & 1 & 1 \\
3 & 3 & 3 & 3 \\
3 & 3 & 3 & 3 \\
\end{tabular}
&
$S_{(4, 653)}$
&
\begin{tabular}{cccc}
1 & 1 & 1 & 1 \\
1 & 1 & 1 & 1 \\
3 & 3 & 3 & 3 \\
3 & 3 & 3 & 4 \\
\end{tabular}
&
$S_{(4, 654)}$
&
\begin{tabular}{cccc}
1 & 1 & 1 & 1 \\
1 & 1 & 1 & 2 \\
3 & 3 & 3 & 3 \\
3 & 3 & 3 & 4 \\
\end{tabular}\\
\hline
$S_{(4, 655)}$
&
\begin{tabular}{cccc}
1 & 1 & 3 & 3 \\
1 & 1 & 3 & 3 \\
3 & 3 & 3 & 3 \\
3 & 3 & 3 & 3 \\
\end{tabular}
&
$S_{(4, 656)}$
&
\begin{tabular}{cccc}
1 & 1 & 3 & 3 \\
1 & 1 & 3 & 3 \\
3 & 3 & 3 & 3 \\
3 & 3 & 3 & 4 \\
\end{tabular}
&
$S_{(4, 657)}$
&
\begin{tabular}{cccc}
1 & 1 & 3 & 4 \\
1 & 1 & 3 & 4 \\
3 & 3 & 3 & 4 \\
3 & 3 & 3 & 4 \\
\end{tabular}\\
\hline
$S_{(4, 658)}$
&
\begin{tabular}{cccc}
1 & 1 & 1 & 1 \\
1 & 2 & 3 & 3 \\
3 & 3 & 3 & 3 \\
3 & 3 & 3 & 3 \\
\end{tabular}
&
$S_{(4, 659)}$
&
\begin{tabular}{cccc}
1 & 1 & 1 & 1 \\
1 & 2 & 3 & 3 \\
3 & 3 & 3 & 3 \\
3 & 3 & 3 & 4 \\
\end{tabular}
&
$S_{(4, 660)}$
&
\begin{tabular}{cccc}
1 & 1 & 3 & 3 \\
1 & 2 & 3 & 3 \\
3 & 3 & 3 & 3 \\
3 & 3 & 3 & 3 \\
\end{tabular}\\
\hline
$S_{(4, 661)}$
&
\begin{tabular}{cccc}
1 & 1 & 3 & 3 \\
1 & 2 & 3 & 3 \\
3 & 3 & 3 & 3 \\
3 & 3 & 3 & 4 \\
\end{tabular}
&
$S_{(4, 662)}$
&
\begin{tabular}{cccc}
1 & 1 & 3 & 4 \\
1 & 2 & 3 & 4 \\
3 & 3 & 3 & 4 \\
3 & 3 & 3 & 4 \\
\end{tabular}
&
$S_{(4, 663)}$
&
\begin{tabular}{cccc}
1 & 2 & 3 & 3 \\
1 & 2 & 3 & 3 \\
3 & 3 & 3 & 3 \\
3 & 3 & 3 & 3 \\
\end{tabular}\\
\hline
$S_{(4, 664)}$
&
\begin{tabular}{cccc}
1 & 2 & 3 & 3 \\
1 & 2 & 3 & 3 \\
3 & 3 & 3 & 3 \\
3 & 3 & 3 & 4 \\
\end{tabular}
&
$S_{(4, 665)}$
&
\begin{tabular}{cccc}
1 & 2 & 3 & 4 \\
1 & 2 & 3 & 4 \\
3 & 3 & 3 & 4 \\
3 & 3 & 3 & 4 \\
\end{tabular}
&
$S_{(4, 666)}$
&
\begin{tabular}{cccc}
1 & 1 & 1 & 1 \\
1 & 1 & 1 & 1 \\
3 & 3 & 3 & 3 \\
4 & 4 & 4 & 4 \\
\end{tabular}\\
\hline
$S_{(4, 667)}$
&
\begin{tabular}{cccc}
1 & 1 & 1 & 1 \\
1 & 1 & 1 & 3 \\
3 & 3 & 3 & 3 \\
4 & 4 & 4 & 4 \\
\end{tabular}
&
$S_{(4, 668)}$
&
\begin{tabular}{cccc}
1 & 1 & 1 & 4 \\
1 & 1 & 1 & 4 \\
3 & 3 & 3 & 4 \\
4 & 4 & 4 & 4 \\
\end{tabular}
&
$S_{(4, 669)}$
&
\begin{tabular}{cccc}
1 & 1 & 3 & 3 \\
1 & 1 & 3 & 3 \\
3 & 3 & 3 & 3 \\
4 & 4 & 4 & 4 \\
\end{tabular}\\
\hline
$S_{(4, 670)}$
&
\begin{tabular}{cccc}
1 & 1 & 3 & 4 \\
1 & 1 & 3 & 4 \\
3 & 3 & 3 & 4 \\
4 & 4 & 4 & 4 \\
\end{tabular}
&
$S_{(4, 671)}$
&
\begin{tabular}{cccc}
1 & 1 & 3 & 4 \\
1 & 1 & 3 & 4 \\
3 & 3 & 4 & 4 \\
4 & 4 & 4 & 4 \\
\end{tabular}
&
$S_{(4, 672)}$
&
\begin{tabular}{cccc}
1 & 1 & 4 & 4 \\
1 & 1 & 4 & 4 \\
3 & 3 & 4 & 4 \\
4 & 4 & 4 & 4 \\
\end{tabular}\\
\hline
\end{tabular}
\end{table}

\begin{table}[htbp]
\label{tb6}
\begin{tabular}{cccccc}
\hline
Semiring & $\cdot$ & Semiring & $\cdot$ & Semiring & $\cdot$\\
\hline
$S_{(4, 673)}$
&
\begin{tabular}{cccc}
1 & 1 & 1 & 1 \\
1 & 2 & 3 & 3 \\
3 & 3 & 3 & 3 \\
4 & 4 & 4 & 4 \\
\end{tabular}
&
$S_{(4, 674)}$
&
\begin{tabular}{cccc}
1 & 1 & 1 & 4 \\
1 & 2 & 3 & 4 \\
3 & 3 & 3 & 4 \\
4 & 4 & 4 & 4 \\
\end{tabular}
&
$S_{(4, 675)}$
&
\begin{tabular}{cccc}
1 & 1 & 3 & 3 \\
1 & 2 & 3 & 3 \\
3 & 3 & 3 & 3 \\
4 & 4 & 4 & 4 \\
\end{tabular}\\
\hline
$S_{(4, 676)}$
&
\begin{tabular}{cccc}
1 & 1 & 3 & 4 \\
1 & 2 & 3 & 4 \\
3 & 3 & 3 & 4 \\
4 & 4 & 4 & 4 \\
\end{tabular}
&
$S_{(4, 677)}$
&
\begin{tabular}{cccc}
1 & 1 & 3 & 4 \\
1 & 2 & 3 & 4 \\
3 & 3 & 4 & 4 \\
4 & 4 & 4 & 4 \\
\end{tabular}
&
$S_{(4, 678)}$
&
\begin{tabular}{cccc}
1 & 1 & 4 & 4 \\
1 & 2 & 4 & 4 \\
3 & 3 & 4 & 4 \\
4 & 4 & 4 & 4 \\
\end{tabular}\\
\hline
$S_{(4, 679)}$
&
\begin{tabular}{cccc}
1 & 2 & 3 & 3 \\
1 & 2 & 3 & 3 \\
3 & 3 & 3 & 3 \\
4 & 4 & 4 & 4 \\
\end{tabular}
&
$S_{(4, 680)}$
&
\begin{tabular}{cccc}
1 & 2 & 3 & 4 \\
1 & 2 & 3 & 4 \\
3 & 3 & 3 & 4 \\
4 & 4 & 4 & 4 \\
\end{tabular}
&
$S_{(4, 681)}$
&
\begin{tabular}{cccc}
1 & 2 & 3 & 4 \\
1 & 2 & 3 & 4 \\
3 & 3 & 4 & 4 \\
4 & 4 & 4 & 4 \\
\end{tabular}\\
\hline
$S_{(4, 682)}$
&
\begin{tabular}{cccc}
1 & 2 & 4 & 4 \\
1 & 2 & 4 & 4 \\
3 & 3 & 4 & 4 \\
4 & 4 & 4 & 4 \\
\end{tabular}
&
$S_{(4, 683)}$
&
\begin{tabular}{cccc}
1 & 1 & 1 & 1 \\
1 & 1 & 1 & 1 \\
4 & 4 & 4 & 4 \\
4 & 4 & 4 & 4 \\
\end{tabular}
&
$S_{(4, 684)}$
&
\begin{tabular}{cccc}
1 & 1 & 3 & 4 \\
1 & 1 & 3 & 4 \\
4 & 4 & 4 & 4 \\
4 & 4 & 4 & 4 \\
\end{tabular}\\
\hline
$S_{(4, 685)}$
&
\begin{tabular}{cccc}
1 & 1 & 4 & 4 \\
1 & 1 & 4 & 4 \\
4 & 4 & 4 & 4 \\
4 & 4 & 4 & 4 \\
\end{tabular}
&
$S_{(4, 686)}$
&
\begin{tabular}{cccc}
1 & 1 & 1 & 1 \\
1 & 2 & 3 & 4 \\
4 & 4 & 4 & 4 \\
4 & 4 & 4 & 4 \\
\end{tabular}
&
$S_{(4, 687)}$
&
\begin{tabular}{cccc}
1 & 1 & 1 & 1 \\
1 & 2 & 4 & 4 \\
4 & 4 & 4 & 4 \\
4 & 4 & 4 & 4 \\
\end{tabular}\\
\hline
$S_{(4, 688)}$
&
\begin{tabular}{cccc}
1 & 1 & 3 & 4 \\
1 & 2 & 3 & 4 \\
4 & 4 & 4 & 4 \\
4 & 4 & 4 & 4 \\
\end{tabular}
&
$S_{(4, 689)}$
&
\begin{tabular}{cccc}
1 & 1 & 4 & 4 \\
1 & 2 & 4 & 4 \\
4 & 4 & 4 & 4 \\
4 & 4 & 4 & 4 \\
\end{tabular}
&
$S_{(4, 690)}$
&
\begin{tabular}{cccc}
1 & 2 & 3 & 4 \\
1 & 2 & 3 & 4 \\
4 & 4 & 4 & 4 \\
4 & 4 & 4 & 4 \\
\end{tabular}\\
\hline
$S_{(4, 691)}$
&
\begin{tabular}{cccc}
1 & 2 & 4 & 4 \\
1 & 2 & 4 & 4 \\
4 & 4 & 4 & 4 \\
4 & 4 & 4 & 4 \\
\end{tabular}
&
$S_{(4, 692)}$
&
\begin{tabular}{cccc}
1 & 1 & 1 & 1 \\
2 & 2 & 2 & 2 \\
2 & 2 & 2 & 2 \\
2 & 2 & 2 & 2 \\
\end{tabular}
&
$S_{(4, 693)}$
&
\begin{tabular}{cccc}
1 & 1 & 1 & 1 \\
2 & 2 & 2 & 2 \\
2 & 2 & 2 & 2 \\
2 & 2 & 2 & 3 \\
\end{tabular}\\
\hline
$S_{(4, 694)}$
&
\begin{tabular}{cccc}
1 & 1 & 1 & 1 \\
2 & 2 & 2 & 2 \\
2 & 2 & 2 & 2 \\
2 & 2 & 2 & 4 \\
\end{tabular}
&
$S_{(4, 695)}$
&
\begin{tabular}{cccc}
1 & 1 & 1 & 1 \\
2 & 2 & 2 & 2 \\
2 & 2 & 2 & 3 \\
2 & 2 & 2 & 4 \\
\end{tabular}
&
$S_{(4, 696)}$
&
\begin{tabular}{cccc}
1 & 1 & 1 & 1 \\
2 & 2 & 2 & 2 \\
2 & 2 & 2 & 2 \\
2 & 2 & 3 & 4 \\
\end{tabular}\\
\hline
$S_{(4, 697)}$
&
\begin{tabular}{cccc}
1 & 1 & 1 & 1 \\
2 & 2 & 2 & 2 \\
2 & 2 & 2 & 3 \\
2 & 2 & 3 & 4 \\
\end{tabular}
&
$S_{(4, 698)}$
&
\begin{tabular}{cccc}
1 & 1 & 1 & 1 \\
2 & 2 & 2 & 2 \\
2 & 2 & 3 & 3 \\
2 & 2 & 3 & 3 \\
\end{tabular}
&
$S_{(4, 699)}$
&
\begin{tabular}{cccc}
1 & 1 & 1 & 1 \\
2 & 2 & 2 & 2 \\
2 & 2 & 3 & 3 \\
2 & 2 & 3 & 4 \\
\end{tabular}\\
\hline
$S_{(4, 700)}$
&
\begin{tabular}{cccc}
1 & 1 & 1 & 1 \\
2 & 2 & 2 & 2 \\
2 & 2 & 3 & 4 \\
2 & 2 & 3 & 4 \\
\end{tabular}
&
$S_{(4, 701)}$
&
\begin{tabular}{cccc}
1 & 1 & 1 & 1 \\
2 & 2 & 2 & 2 \\
2 & 2 & 3 & 3 \\
2 & 2 & 4 & 4 \\
\end{tabular}
&
$S_{(4, 702)}$
&
\begin{tabular}{cccc}
1 & 1 & 1 & 1 \\
2 & 2 & 2 & 2 \\
2 & 2 & 3 & 4 \\
2 & 2 & 4 & 4 \\
\end{tabular}\\
\hline
$S_{(4, 703)}$
&
\begin{tabular}{cccc}
1 & 1 & 1 & 1 \\
2 & 2 & 2 & 2 \\
2 & 2 & 4 & 4 \\
2 & 2 & 4 & 4 \\
\end{tabular}
&
$S_{(4, 704)}$
&
\begin{tabular}{cccc}
1 & 2 & 2 & 2 \\
2 & 2 & 2 & 2 \\
2 & 2 & 2 & 2 \\
2 & 2 & 2 & 2 \\
\end{tabular}
&
$S_{(4, 705)}$
&
\begin{tabular}{cccc}
1 & 2 & 2 & 2 \\
2 & 2 & 2 & 2 \\
2 & 2 & 2 & 2 \\
2 & 2 & 2 & 3 \\
\end{tabular}\\
\hline
$S_{(4, 706)}$
&
\begin{tabular}{cccc}
1 & 2 & 2 & 2 \\
2 & 2 & 2 & 2 \\
2 & 2 & 2 & 2 \\
2 & 2 & 2 & 4 \\
\end{tabular}
&
$S_{(4, 707)}$
&
\begin{tabular}{cccc}
1 & 2 & 2 & 2 \\
2 & 2 & 2 & 2 \\
2 & 2 & 2 & 3 \\
2 & 2 & 2 & 4 \\
\end{tabular}
&
$S_{(4, 708)}$
&
\begin{tabular}{cccc}
1 & 2 & 2 & 4 \\
2 & 2 & 2 & 4 \\
2 & 2 & 2 & 4 \\
2 & 2 & 2 & 4 \\
\end{tabular}\\
\hline
$S_{(4, 709)}$
&
\begin{tabular}{cccc}
1 & 2 & 2 & 2 \\
2 & 2 & 2 & 2 \\
2 & 2 & 2 & 2 \\
2 & 2 & 3 & 4 \\
\end{tabular}
&
$S_{(4, 710)}$
&
\begin{tabular}{cccc}
1 & 2 & 2 & 2 \\
2 & 2 & 2 & 2 \\
2 & 2 & 2 & 3 \\
2 & 2 & 3 & 4 \\
\end{tabular}
&
$S_{(4, 711)}$
&
\begin{tabular}{cccc}
1 & 2 & 2 & 2 \\
2 & 2 & 2 & 2 \\
2 & 2 & 3 & 3 \\
2 & 2 & 3 & 3 \\
\end{tabular}\\
\hline
\end{tabular}
\end{table}

\begin{table}[htbp]
\label{tb7}
\begin{tabular}{cccccc}
\hline
Semiring & $\cdot$ & Semiring & $\cdot$ & Semiring & $\cdot$\\
\hline
$S_{(4, 712)}$
&
\begin{tabular}{cccc}
1 & 2 & 2 & 2 \\
2 & 2 & 2 & 2 \\
2 & 2 & 3 & 3 \\
2 & 2 & 3 & 4 \\
\end{tabular}
&
$S_{(4, 713)}$
&
\begin{tabular}{cccc}
1 & 2 & 2 & 2 \\
2 & 2 & 2 & 2 \\
2 & 2 & 3 & 4 \\
2 & 2 & 3 & 4 \\
\end{tabular}
&
$S_{(4, 714)}$
&
\begin{tabular}{cccc}
1 & 2 & 2 & 2 \\
2 & 2 & 2 & 2 \\
2 & 2 & 3 & 3 \\
2 & 2 & 4 & 4 \\
\end{tabular}\\
\hline
$S_{(4, 715)}$
&
\begin{tabular}{cccc}
1 & 2 & 2 & 2 \\
2 & 2 & 2 & 2 \\
2 & 2 & 3 & 4 \\
2 & 2 & 4 & 4 \\
\end{tabular}
&
$S_{(4, 716)}$
&
\begin{tabular}{cccc}
1 & 2 & 2 & 4 \\
2 & 2 & 2 & 4 \\
2 & 2 & 3 & 4 \\
2 & 2 & 4 & 4 \\
\end{tabular}
&
$S_{(4, 717)}$
&
\begin{tabular}{cccc}
1 & 2 & 2 & 2 \\
2 & 2 & 2 & 2 \\
2 & 2 & 4 & 4 \\
2 & 2 & 4 & 4 \\
\end{tabular}\\
\hline
$S_{(4, 718)}$
&
\begin{tabular}{cccc}
1 & 2 & 3 & 3 \\
2 & 2 & 3 & 3 \\
2 & 2 & 3 & 3 \\
2 & 2 & 3 & 3 \\
\end{tabular}
&
$S_{(4, 719)}$
&
\begin{tabular}{cccc}
1 & 2 & 3 & 3 \\
2 & 2 & 3 & 3 \\
2 & 2 & 3 & 3 \\
2 & 2 & 3 & 4 \\
\end{tabular}
&
$S_{(4, 720)}$
&
\begin{tabular}{cccc}
1 & 2 & 3 & 4 \\
2 & 2 & 3 & 4 \\
2 & 2 & 3 & 4 \\
2 & 2 & 3 & 4 \\
\end{tabular}\\
\hline
$S_{(4, 721)}$
&
\begin{tabular}{cccc}
1 & 2 & 3 & 4 \\
2 & 2 & 4 & 4 \\
2 & 2 & 4 & 4 \\
2 & 2 & 4 & 4 \\
\end{tabular}
&
$S_{(4, 722)}$
&
\begin{tabular}{cccc}
1 & 2 & 4 & 4 \\
2 & 2 & 4 & 4 \\
2 & 2 & 4 & 4 \\
2 & 2 & 4 & 4 \\
\end{tabular}
&
$S_{(4, 723)}$
&
\begin{tabular}{cccc}
1 & 1 & 1 & 1 \\
2 & 2 & 2 & 2 \\
2 & 2 & 2 & 2 \\
4 & 4 & 4 & 4 \\
\end{tabular}\\
\hline
$S_{(4, 724)}$
&
\begin{tabular}{cccc}
1 & 1 & 1 & 4 \\
2 & 2 & 2 & 4 \\
2 & 2 & 2 & 4 \\
4 & 4 & 4 & 4 \\
\end{tabular}
&
$S_{(4, 725)}$
&
\begin{tabular}{cccc}
1 & 1 & 1 & 1 \\
2 & 2 & 2 & 2 \\
2 & 2 & 3 & 4 \\
4 & 4 & 4 & 4 \\
\end{tabular}
&
$S_{(4, 726)}$
&
\begin{tabular}{cccc}
1 & 1 & 1 & 4 \\
2 & 2 & 2 & 4 \\
2 & 2 & 3 & 4 \\
4 & 4 & 4 & 4 \\
\end{tabular}\\
\hline
$S_{(4, 727)}$
&
\begin{tabular}{cccc}
1 & 1 & 1 & 1 \\
2 & 2 & 2 & 2 \\
2 & 4 & 4 & 4 \\
4 & 4 & 4 & 4 \\
\end{tabular}
&
$S_{(4, 728)}$
&
\begin{tabular}{cccc}
1 & 2 & 2 & 2 \\
2 & 2 & 2 & 2 \\
2 & 2 & 2 & 2 \\
4 & 4 & 4 & 4 \\
\end{tabular}
&
$S_{(4, 729)}$
&
\begin{tabular}{cccc}
1 & 2 & 2 & 4 \\
2 & 2 & 2 & 4 \\
2 & 2 & 2 & 4 \\
4 & 4 & 4 & 4 \\
\end{tabular}\\
\hline
$S_{(4, 730)}$
&
\begin{tabular}{cccc}
1 & 2 & 2 & 2 \\
2 & 2 & 2 & 2 \\
2 & 2 & 3 & 4 \\
4 & 4 & 4 & 4 \\
\end{tabular}
&
$S_{(4, 731)}$
&
\begin{tabular}{cccc}
1 & 2 & 2 & 4 \\
2 & 2 & 2 & 4 \\
2 & 2 & 3 & 4 \\
4 & 4 & 4 & 4 \\
\end{tabular}
&
$S_{(4, 732)}$
&
\begin{tabular}{cccc}
1 & 2 & 3 & 4 \\
2 & 2 & 3 & 4 \\
2 & 2 & 3 & 4 \\
4 & 4 & 4 & 4 \\
\end{tabular}\\
\hline
$S_{(4, 733)}$
&
\begin{tabular}{cccc}
1 & 2 & 2 & 4 \\
2 & 4 & 4 & 4 \\
2 & 4 & 4 & 4 \\
4 & 4 & 4 & 4 \\
\end{tabular}
&
$S_{(4, 734)}$
&
\begin{tabular}{cccc}
1 & 2 & 3 & 4 \\
2 & 4 & 4 & 4 \\
2 & 4 & 4 & 4 \\
4 & 4 & 4 & 4 \\
\end{tabular}
&
$S_{(4, 735)}$
&
\begin{tabular}{cccc}
1 & 4 & 4 & 4 \\
2 & 4 & 4 & 4 \\
2 & 4 & 4 & 4 \\
4 & 4 & 4 & 4 \\
\end{tabular}\\
\hline
$S_{(4, 736)}$
&
\begin{tabular}{cccc}
1 & 1 & 1 & 1 \\
2 & 2 & 2 & 2 \\
3 & 3 & 3 & 3 \\
3 & 3 & 3 & 3 \\
\end{tabular}
&
$S_{(4, 737)}$
&
\begin{tabular}{cccc}
1 & 1 & 1 & 1 \\
2 & 2 & 2 & 2 \\
3 & 3 & 3 & 3 \\
3 & 3 & 3 & 4 \\
\end{tabular}
&
$S_{(4, 738)}$
&
\begin{tabular}{cccc}
1 & 1 & 3 & 3 \\
2 & 2 & 3 & 3 \\
3 & 3 & 3 & 3 \\
3 & 3 & 3 & 3 \\
\end{tabular}\\
\hline
$S_{(4, 739)}$
&
\begin{tabular}{cccc}
1 & 1 & 3 & 3 \\
2 & 2 & 3 & 3 \\
3 & 3 & 3 & 3 \\
3 & 3 & 3 & 4 \\
\end{tabular}
&
$S_{(4, 740)}$
&
\begin{tabular}{cccc}
1 & 1 & 3 & 4 \\
2 & 2 & 3 & 4 \\
3 & 3 & 3 & 4 \\
3 & 3 & 3 & 4 \\
\end{tabular}
&
$S_{(4, 741)}$
&
\begin{tabular}{cccc}
1 & 2 & 2 & 2 \\
2 & 2 & 2 & 2 \\
3 & 3 & 3 & 3 \\
3 & 3 & 3 & 3 \\
\end{tabular}\\
\hline
$S_{(4, 742)}$
&
\begin{tabular}{cccc}
1 & 2 & 2 & 2 \\
2 & 2 & 2 & 2 \\
3 & 3 & 3 & 3 \\
3 & 3 & 3 & 4 \\
\end{tabular}
&
$S_{(4, 743)}$
&
\begin{tabular}{cccc}
1 & 2 & 3 & 3 \\
2 & 2 & 3 & 3 \\
3 & 3 & 3 & 3 \\
3 & 3 & 3 & 3 \\
\end{tabular}
&
$S_{(4, 744)}$
&
\begin{tabular}{cccc}
1 & 2 & 3 & 3 \\
2 & 2 & 3 & 3 \\
3 & 3 & 3 & 3 \\
3 & 3 & 3 & 4 \\
\end{tabular}\\
\hline
$S_{(4, 745)}$
&
\begin{tabular}{cccc}
1 & 2 & 3 & 4 \\
2 & 2 & 3 & 4 \\
3 & 3 & 3 & 4 \\
3 & 3 & 3 & 4 \\
\end{tabular}
&
$S_{(4, 746)}$
&
\begin{tabular}{cccc}
1 & 2 & 3 & 3 \\
2 & 3 & 3 & 3 \\
3 & 3 & 3 & 3 \\
3 & 3 & 3 & 3 \\
\end{tabular}
&
$S_{(4, 747)}$
&
\begin{tabular}{cccc}
1 & 2 & 3 & 3 \\
2 & 3 & 3 & 3 \\
3 & 3 & 3 & 3 \\
3 & 3 & 3 & 4 \\
\end{tabular}\\
\hline
$S_{(4, 748)}$
&
\begin{tabular}{cccc}
1 & 2 & 3 & 4 \\
2 & 3 & 3 & 4 \\
3 & 3 & 3 & 4 \\
3 & 3 & 3 & 4 \\
\end{tabular}
&
$S_{(4, 749)}$
&
\begin{tabular}{cccc}
1 & 3 & 3 & 3 \\
2 & 3 & 3 & 3 \\
3 & 3 & 3 & 3 \\
3 & 3 & 3 & 3 \\
\end{tabular}
&
$S_{(4, 750)}$
&
\begin{tabular}{cccc}
1 & 3 & 3 & 3 \\
2 & 3 & 3 & 3 \\
3 & 3 & 3 & 3 \\
3 & 3 & 3 & 4 \\
\end{tabular}\\
\hline
\end{tabular}
\end{table}

\begin{table}[htbp]
\label{tb8}
\begin{tabular}{cccccc}
\hline
Semiring & $\cdot$ & Semiring & $\cdot$ & Semiring & $\cdot$\\
\hline
$S_{(4, 751)}$
&
\begin{tabular}{cccc}
1 & 3 & 3 & 4 \\
2 & 3 & 3 & 4 \\
3 & 3 & 3 & 4 \\
3 & 3 & 3 & 4 \\
\end{tabular}
&
$S_{(4, 752)}$
&
\begin{tabular}{cccc}
1 & 1 & 1 & 1 \\
2 & 2 & 2 & 2 \\
3 & 3 & 3 & 3 \\
4 & 4 & 4 & 4 \\
\end{tabular}
&
$S_{(4, 753)}$
&
\begin{tabular}{cccc}
1 & 1 & 1 & 4 \\
2 & 2 & 2 & 4 \\
3 & 3 & 3 & 4 \\
4 & 4 & 4 & 4 \\
\end{tabular}\\
\hline
$S_{(4, 754)}$
&
\begin{tabular}{cccc}
1 & 1 & 3 & 3 \\
2 & 2 & 3 & 3 \\
3 & 3 & 3 & 3 \\
4 & 4 & 4 & 4 \\
\end{tabular}
&
$S_{(4, 755)}$
&
\begin{tabular}{cccc}
1 & 1 & 3 & 4 \\
2 & 2 & 3 & 4 \\
3 & 3 & 3 & 4 \\
4 & 4 & 4 & 4 \\
\end{tabular}
&
$S_{(4, 756)}$
&
\begin{tabular}{cccc}
1 & 1 & 3 & 4 \\
2 & 2 & 3 & 4 \\
3 & 3 & 4 & 4 \\
4 & 4 & 4 & 4 \\
\end{tabular}\\
\hline
$S_{(4, 757)}$
&
\begin{tabular}{cccc}
1 & 1 & 4 & 4 \\
2 & 2 & 4 & 4 \\
3 & 3 & 4 & 4 \\
4 & 4 & 4 & 4 \\
\end{tabular}
&
$S_{(4, 758)}$
&
\begin{tabular}{cccc}
1 & 2 & 2 & 2 \\
2 & 2 & 2 & 2 \\
3 & 3 & 3 & 3 \\
4 & 4 & 4 & 4 \\
\end{tabular}
&
$S_{(4, 759)}$
&
\begin{tabular}{cccc}
1 & 2 & 2 & 4 \\
2 & 2 & 2 & 4 \\
3 & 3 & 3 & 4 \\
4 & 4 & 4 & 4 \\
\end{tabular}\\
\hline
$S_{(4, 760)}$
&
\begin{tabular}{cccc}
1 & 2 & 3 & 3 \\
2 & 2 & 3 & 3 \\
3 & 3 & 3 & 3 \\
4 & 4 & 4 & 4 \\
\end{tabular}
&
$S_{(4, 761)}$
&
\begin{tabular}{cccc}
1 & 2 & 3 & 4 \\
2 & 2 & 3 & 4 \\
3 & 3 & 3 & 4 \\
4 & 4 & 4 & 4 \\
\end{tabular}
&
$S_{(4, 762)}$
&
\begin{tabular}{cccc}
1 & 2 & 3 & 4 \\
2 & 2 & 3 & 4 \\
3 & 3 & 4 & 4 \\
4 & 4 & 4 & 4 \\
\end{tabular}\\
\hline
$S_{(4, 763)}$
&
\begin{tabular}{cccc}
1 & 2 & 3 & 4 \\
2 & 2 & 4 & 4 \\
3 & 3 & 4 & 4 \\
4 & 4 & 4 & 4 \\
\end{tabular}
&
$S_{(4, 764)}$
&
\begin{tabular}{cccc}
1 & 2 & 4 & 4 \\
2 & 2 & 4 & 4 \\
3 & 3 & 4 & 4 \\
4 & 4 & 4 & 4 \\
\end{tabular}
&
$S_{(4, 765)}$
&
\begin{tabular}{cccc}
1 & 2 & 2 & 2 \\
2 & 2 & 2 & 2 \\
3 & 4 & 4 & 4 \\
4 & 4 & 4 & 4 \\
\end{tabular}\\
\hline
$S_{(4, 766)}$
&
\begin{tabular}{cccc}
1 & 2 & 3 & 4 \\
2 & 2 & 3 & 4 \\
3 & 4 & 4 & 4 \\
4 & 4 & 4 & 4 \\
\end{tabular}
&
$S_{(4, 767)}$
&
\begin{tabular}{cccc}
1 & 2 & 3 & 4 \\
2 & 2 & 4 & 4 \\
3 & 4 & 4 & 4 \\
4 & 4 & 4 & 4 \\
\end{tabular}
&
$S_{(4, 768)}$
&
\begin{tabular}{cccc}
1 & 2 & 4 & 4 \\
2 & 2 & 4 & 4 \\
3 & 4 & 4 & 4 \\
4 & 4 & 4 & 4 \\
\end{tabular}\\
\hline
$S_{(4, 769)}$
&
\begin{tabular}{cccc}
1 & 2 & 3 & 3 \\
2 & 3 & 3 & 3 \\
3 & 3 & 3 & 3 \\
4 & 4 & 4 & 4 \\
\end{tabular}
&
$S_{(4, 770)}$
&
\begin{tabular}{cccc}
1 & 2 & 3 & 4 \\
2 & 3 & 3 & 4 \\
3 & 3 & 3 & 4 \\
4 & 4 & 4 & 4 \\
\end{tabular}
&
$S_{(4, 771)}$
&
\begin{tabular}{cccc}
1 & 2 & 3 & 4 \\
2 & 3 & 4 & 4 \\
3 & 4 & 4 & 4 \\
4 & 4 & 4 & 4 \\
\end{tabular}\\
\hline
$S_{(4, 772)}$
&
\begin{tabular}{cccc}
1 & 2 & 2 & 4 \\
2 & 4 & 4 & 4 \\
3 & 4 & 4 & 4 \\
4 & 4 & 4 & 4 \\
\end{tabular}
&
$S_{(4, 773)}$
&
\begin{tabular}{cccc}
1 & 2 & 3 & 4 \\
2 & 4 & 4 & 4 \\
3 & 4 & 4 & 4 \\
4 & 4 & 4 & 4 \\
\end{tabular}
&
$S_{(4, 774)}$
&
\begin{tabular}{cccc}
1 & 2 & 4 & 4 \\
2 & 4 & 4 & 4 \\
3 & 4 & 4 & 4 \\
4 & 4 & 4 & 4 \\
\end{tabular}\\
\hline
$S_{(4, 775)}$
&
\begin{tabular}{cccc}
1 & 3 & 3 & 3 \\
2 & 3 & 3 & 3 \\
3 & 3 & 3 & 3 \\
4 & 4 & 4 & 4 \\
\end{tabular}
&
$S_{(4, 776)}$
&
\begin{tabular}{cccc}
1 & 3 & 3 & 4 \\
2 & 3 & 3 & 4 \\
3 & 3 & 3 & 4 \\
4 & 4 & 4 & 4 \\
\end{tabular}
&
$S_{(4, 777)}$
&
\begin{tabular}{cccc}
1 & 3 & 3 & 4 \\
2 & 4 & 4 & 4 \\
3 & 4 & 4 & 4 \\
4 & 4 & 4 & 4 \\
\end{tabular}\\
\hline
$S_{(4, 778)}$
&
\begin{tabular}{cccc}
1 & 4 & 4 & 4 \\
2 & 4 & 4 & 4 \\
3 & 4 & 4 & 4 \\
4 & 4 & 4 & 4 \\
\end{tabular}
&
$S_{(4, 779)}$
&
\begin{tabular}{cccc}
1 & 1 & 1 & 1 \\
2 & 2 & 2 & 2 \\
4 & 4 & 4 & 4 \\
4 & 4 & 4 & 4 \\
\end{tabular}
&
$S_{(4, 780)}$
&
\begin{tabular}{cccc}
1 & 1 & 3 & 4 \\
2 & 2 & 3 & 4 \\
4 & 4 & 4 & 4 \\
4 & 4 & 4 & 4 \\
\end{tabular}\\
\hline
$S_{(4, 781)}$
&
\begin{tabular}{cccc}
1 & 1 & 4 & 4 \\
2 & 2 & 4 & 4 \\
4 & 4 & 4 & 4 \\
4 & 4 & 4 & 4 \\
\end{tabular}
&
$S_{(4, 782)}$
&
\begin{tabular}{cccc}
1 & 2 & 2 & 2 \\
2 & 2 & 2 & 2 \\
4 & 4 & 4 & 4 \\
4 & 4 & 4 & 4 \\
\end{tabular}
&
$S_{(4, 783)}$
&
\begin{tabular}{cccc}
1 & 2 & 3 & 4 \\
2 & 2 & 3 & 4 \\
4 & 4 & 4 & 4 \\
4 & 4 & 4 & 4 \\
\end{tabular}\\
\hline
$S_{(4, 784)}$
&
\begin{tabular}{cccc}
1 & 2 & 3 & 4 \\
2 & 2 & 4 & 4 \\
4 & 4 & 4 & 4 \\
4 & 4 & 4 & 4 \\
\end{tabular}
&
$S_{(4, 785)}$
&
\begin{tabular}{cccc}
1 & 2 & 4 & 4 \\
2 & 2 & 4 & 4 \\
4 & 4 & 4 & 4 \\
4 & 4 & 4 & 4 \\
\end{tabular}
&
$S_{(4, 786)}$
&
\begin{tabular}{cccc}
1 & 2 & 3 & 4 \\
2 & 4 & 4 & 4 \\
4 & 4 & 4 & 4 \\
4 & 4 & 4 & 4 \\
\end{tabular}\\
\hline
$S_{(4, 787)}$
&
\begin{tabular}{cccc}
1 & 2 & 4 & 4 \\
2 & 4 & 4 & 4 \\
4 & 4 & 4 & 4 \\
4 & 4 & 4 & 4 \\
\end{tabular}
&
$S_{(4, 788)}$
&
\begin{tabular}{cccc}
1 & 4 & 4 & 4 \\
2 & 4 & 4 & 4 \\
4 & 4 & 4 & 4 \\
4 & 4 & 4 & 4 \\
\end{tabular}
&
$S_{(4, 789)}$
&
\begin{tabular}{cccc}
1 & 1 & 1 & 1 \\
3 & 3 & 3 & 3 \\
3 & 3 & 3 & 3 \\
3 & 3 & 3 & 3 \\
\end{tabular}\\
\hline
\end{tabular}
\end{table}

\begin{table}[htbp]
\label{tb9}
\begin{tabular}{cccccc}
\hline
Semiring & $\cdot$ & Semiring & $\cdot$ & Semiring & $\cdot$\\
\hline
$S_{(4, 790)}$
&
\begin{tabular}{cccc}
1 & 1 & 1 & 1 \\
3 & 3 & 3 & 3 \\
3 & 3 & 3 & 3 \\
3 & 3 & 3 & 4 \\
\end{tabular}
&
$S_{(4, 791)}$
&
\begin{tabular}{cccc}
1 & 2 & 3 & 3 \\
3 & 3 & 3 & 3 \\
3 & 3 & 3 & 3 \\
3 & 3 & 3 & 3 \\
\end{tabular}
&
$S_{(4, 792)}$
&
\begin{tabular}{cccc}
1 & 2 & 3 & 3 \\
3 & 3 & 3 & 3 \\
3 & 3 & 3 & 3 \\
3 & 3 & 3 & 4 \\
\end{tabular}\\
\hline
$S_{(4, 793)}$
&
\begin{tabular}{cccc}
1 & 2 & 3 & 4 \\
3 & 3 & 3 & 4 \\
3 & 3 & 3 & 4 \\
3 & 3 & 3 & 4 \\
\end{tabular}
&
$S_{(4, 794)}$
&
\begin{tabular}{cccc}
1 & 3 & 3 & 3 \\
3 & 3 & 3 & 3 \\
3 & 3 & 3 & 3 \\
3 & 3 & 3 & 3 \\
\end{tabular}
&
$S_{(4, 795)}$
&
\begin{tabular}{cccc}
1 & 3 & 3 & 3 \\
3 & 3 & 3 & 3 \\
3 & 3 & 3 & 3 \\
3 & 3 & 3 & 4 \\
\end{tabular}\\
\hline
$S_{(4, 796)}$
&
\begin{tabular}{cccc}
1 & 3 & 3 & 4 \\
3 & 3 & 3 & 4 \\
3 & 3 & 3 & 4 \\
3 & 3 & 3 & 4 \\
\end{tabular}
&
$S_{(4, 797)}$
&
\begin{tabular}{cccc}
1 & 1 & 1 & 1 \\
3 & 3 & 3 & 3 \\
3 & 3 & 3 & 3 \\
4 & 4 & 4 & 4 \\
\end{tabular}
&
$S_{(4, 798)}$
&
\begin{tabular}{cccc}
1 & 1 & 1 & 4 \\
3 & 3 & 3 & 4 \\
3 & 3 & 3 & 4 \\
4 & 4 & 4 & 4 \\
\end{tabular}\\
\hline
$S_{(4, 799)}$
&
\begin{tabular}{cccc}
1 & 2 & 3 & 3 \\
3 & 3 & 3 & 3 \\
3 & 3 & 3 & 3 \\
4 & 4 & 4 & 4 \\
\end{tabular}
&
$S_{(4, 800)}$
&
\begin{tabular}{cccc}
1 & 2 & 3 & 4 \\
3 & 3 & 3 & 4 \\
3 & 3 & 3 & 4 \\
4 & 4 & 4 & 4 \\
\end{tabular}
&
$S_{(4, 801)}$
&
\begin{tabular}{cccc}
1 & 2 & 3 & 4 \\
3 & 4 & 4 & 4 \\
3 & 4 & 4 & 4 \\
4 & 4 & 4 & 4 \\
\end{tabular}\\
\hline
$S_{(4, 802)}$
&
\begin{tabular}{cccc}
1 & 3 & 3 & 3 \\
3 & 3 & 3 & 3 \\
3 & 3 & 3 & 3 \\
4 & 4 & 4 & 4 \\
\end{tabular}
&
$S_{(4, 803)}$
&
\begin{tabular}{cccc}
1 & 3 & 3 & 4 \\
3 & 3 & 3 & 4 \\
3 & 3 & 3 & 4 \\
4 & 4 & 4 & 4 \\
\end{tabular}
&
$S_{(4, 804)}$
&
\begin{tabular}{cccc}
1 & 3 & 3 & 4 \\
3 & 4 & 4 & 4 \\
3 & 4 & 4 & 4 \\
4 & 4 & 4 & 4 \\
\end{tabular}\\
\hline
$S_{(4, 805)}$
&
\begin{tabular}{cccc}
1 & 4 & 4 & 4 \\
3 & 4 & 4 & 4 \\
3 & 4 & 4 & 4 \\
4 & 4 & 4 & 4 \\
\end{tabular}
&
$S_{(4, 806)}$
&
\begin{tabular}{cccc}
1 & 1 & 1 & 1 \\
4 & 4 & 4 & 4 \\
4 & 4 & 4 & 4 \\
4 & 4 & 4 & 4 \\
\end{tabular}
&
$S_{(4, 807)}$
&
\begin{tabular}{cccc}
1 & 2 & 2 & 4 \\
4 & 4 & 4 & 4 \\
4 & 4 & 4 & 4 \\
4 & 4 & 4 & 4 \\
\end{tabular}\\
\hline
$S_{(4, 808)}$
&
\begin{tabular}{cccc}
1 & 2 & 3 & 4 \\
4 & 4 & 4 & 4 \\
4 & 4 & 4 & 4 \\
4 & 4 & 4 & 4 \\
\end{tabular}
&
$S_{(4, 809)}$
&
\begin{tabular}{cccc}
1 & 2 & 4 & 4 \\
4 & 4 & 4 & 4 \\
4 & 4 & 4 & 4 \\
4 & 4 & 4 & 4 \\
\end{tabular}
&
$S_{(4, 810)}$
&
\begin{tabular}{cccc}
1 & 3 & 3 & 4 \\
4 & 4 & 4 & 4 \\
4 & 4 & 4 & 4 \\
4 & 4 & 4 & 4 \\
\end{tabular}\\
\hline
$S_{(4, 811)}$
&
\begin{tabular}{cccc}
1 & 4 & 4 & 4 \\
4 & 4 & 4 & 4 \\
4 & 4 & 4 & 4 \\
4 & 4 & 4 & 4 \\
\end{tabular}
&
$S_{(4, 812)}$
&
\begin{tabular}{cccc}
2 & 2 & 2 & 2 \\
2 & 2 & 2 & 2 \\
2 & 2 & 2 & 2 \\
2 & 2 & 2 & 2 \\
\end{tabular}
&
$S_{(4, 813)}$
&
\begin{tabular}{cccc}
2 & 2 & 2 & 2 \\
2 & 2 & 2 & 2 \\
2 & 2 & 2 & 2 \\
2 & 2 & 2 & 3 \\
\end{tabular}\\
\hline
$S_{(4, 814)}$
&
\begin{tabular}{cccc}
2 & 2 & 2 & 2 \\
2 & 2 & 2 & 2 \\
2 & 2 & 2 & 2 \\
2 & 2 & 2 & 4 \\
\end{tabular}
&
$S_{(4, 815)}$
&
\begin{tabular}{cccc}
2 & 2 & 2 & 2 \\
2 & 2 & 2 & 2 \\
2 & 2 & 2 & 3 \\
2 & 2 & 2 & 4 \\
\end{tabular}
&
$S_{(4, 816)}$
&
\begin{tabular}{cccc}
2 & 2 & 2 & 4 \\
2 & 2 & 2 & 4 \\
2 & 2 & 2 & 4 \\
2 & 2 & 2 & 4 \\
\end{tabular}\\
\hline
$S_{(4, 817)}$
&
\begin{tabular}{cccc}
2 & 2 & 2 & 2 \\
2 & 2 & 2 & 2 \\
2 & 2 & 2 & 2 \\
2 & 2 & 3 & 4 \\
\end{tabular}
&
$S_{(4, 818)}$
&
\begin{tabular}{cccc}
2 & 2 & 2 & 2 \\
2 & 2 & 2 & 2 \\
2 & 2 & 2 & 3 \\
2 & 2 & 3 & 4 \\
\end{tabular}
&
$S_{(4, 819)}$
&
\begin{tabular}{cccc}
2 & 2 & 2 & 2 \\
2 & 2 & 2 & 2 \\
2 & 2 & 3 & 3 \\
2 & 2 & 3 & 3 \\
\end{tabular}\\
\hline
$S_{(4, 820)}$
&
\begin{tabular}{cccc}
2 & 2 & 2 & 2 \\
2 & 2 & 2 & 2 \\
2 & 2 & 3 & 3 \\
2 & 2 & 3 & 4 \\
\end{tabular}
&
$S_{(4, 821)}$
&
\begin{tabular}{cccc}
2 & 2 & 2 & 2 \\
2 & 2 & 2 & 2 \\
2 & 2 & 3 & 4 \\
2 & 2 & 3 & 4 \\
\end{tabular}
&
$S_{(4, 822)}$
&
\begin{tabular}{cccc}
2 & 2 & 2 & 2 \\
2 & 2 & 2 & 2 \\
2 & 2 & 3 & 3 \\
2 & 2 & 4 & 4 \\
\end{tabular}\\
\hline
$S_{(4, 823)}$
&
\begin{tabular}{cccc}
2 & 2 & 2 & 2 \\
2 & 2 & 2 & 2 \\
2 & 2 & 3 & 4 \\
2 & 2 & 4 & 4 \\
\end{tabular}
&
$S_{(4, 824)}$
&
\begin{tabular}{cccc}
2 & 2 & 2 & 4 \\
2 & 2 & 2 & 4 \\
2 & 2 & 3 & 4 \\
2 & 2 & 4 & 4 \\
\end{tabular}
&
$S_{(4, 825)}$
&
\begin{tabular}{cccc}
2 & 2 & 2 & 2 \\
2 & 2 & 2 & 2 \\
2 & 2 & 4 & 4 \\
2 & 2 & 4 & 4 \\
\end{tabular}\\
\hline
$S_{(4, 826)}$
&
\begin{tabular}{cccc}
2 & 2 & 3 & 3 \\
2 & 2 & 3 & 3 \\
2 & 2 & 3 & 3 \\
2 & 2 & 3 & 3 \\
\end{tabular}
&
$S_{(4, 827)}$
&
\begin{tabular}{cccc}
2 & 2 & 3 & 3 \\
2 & 2 & 3 & 3 \\
2 & 2 & 3 & 3 \\
2 & 2 & 3 & 4 \\
\end{tabular}
&
$S_{(4, 828)}$
&
\begin{tabular}{cccc}
2 & 2 & 3 & 4 \\
2 & 2 & 3 & 4 \\
2 & 2 & 3 & 4 \\
2 & 2 & 3 & 4 \\
\end{tabular}\\
\hline
\end{tabular}
\end{table}

\begin{table}[htbp]
\label{tb10}
\begin{tabular}{cccccc}
\hline
Semiring & $\cdot$ & Semiring & $\cdot$ & Semiring & $\cdot$\\
\hline
$S_{(4, 829)}$
&
\begin{tabular}{cccc}
2 & 2 & 4 & 4 \\
2 & 2 & 4 & 4 \\
2 & 2 & 4 & 4 \\
2 & 2 & 4 & 4 \\
\end{tabular}
&
$S_{(4, 830)}$
&
\begin{tabular}{cccc}
2 & 2 & 2 & 2 \\
2 & 2 & 2 & 2 \\
2 & 2 & 2 & 2 \\
4 & 4 & 4 & 4 \\
\end{tabular}
&
$S_{(4, 831)}$
&
\begin{tabular}{cccc}
2 & 2 & 2 & 4 \\
2 & 2 & 2 & 4 \\
2 & 2 & 2 & 4 \\
4 & 4 & 4 & 4 \\
\end{tabular}\\
\hline
$S_{(4, 832)}$
&
\begin{tabular}{cccc}
2 & 2 & 2 & 2 \\
2 & 2 & 2 & 2 \\
2 & 2 & 3 & 4 \\
4 & 4 & 4 & 4 \\
\end{tabular}
&
$S_{(4, 833)}$
&
\begin{tabular}{cccc}
2 & 2 & 2 & 4 \\
2 & 2 & 2 & 4 \\
2 & 2 & 3 & 4 \\
4 & 4 & 4 & 4 \\
\end{tabular}
&
$S_{(4, 834)}$
&
\begin{tabular}{cccc}
2 & 2 & 3 & 4 \\
2 & 2 & 3 & 4 \\
2 & 2 & 3 & 4 \\
4 & 4 & 4 & 4 \\
\end{tabular}\\
\hline
$S_{(4, 835)}$
&
\begin{tabular}{cccc}
2 & 2 & 2 & 2 \\
2 & 2 & 2 & 2 \\
3 & 3 & 3 & 3 \\
3 & 3 & 3 & 3 \\
\end{tabular}
&
$S_{(4, 836)}$
&
\begin{tabular}{cccc}
2 & 2 & 2 & 2 \\
2 & 2 & 2 & 2 \\
3 & 3 & 3 & 3 \\
3 & 3 & 3 & 4 \\
\end{tabular}
&
$S_{(4, 837)}$
&
\begin{tabular}{cccc}
2 & 2 & 3 & 3 \\
2 & 2 & 3 & 3 \\
3 & 3 & 3 & 3 \\
3 & 3 & 3 & 3 \\
\end{tabular}\\
\hline
$S_{(4, 838)}$
&
\begin{tabular}{cccc}
2 & 2 & 3 & 3 \\
2 & 2 & 3 & 3 \\
3 & 3 & 3 & 3 \\
3 & 3 & 3 & 4 \\
\end{tabular}
&
$S_{(4, 839)}$
&
\begin{tabular}{cccc}
2 & 2 & 3 & 4 \\
2 & 2 & 3 & 4 \\
3 & 3 & 3 & 4 \\
3 & 3 & 3 & 4 \\
\end{tabular}
&
$S_{(4, 840)}$
&
\begin{tabular}{cccc}
2 & 2 & 2 & 2 \\
2 & 2 & 2 & 2 \\
3 & 3 & 3 & 3 \\
4 & 4 & 4 & 4 \\
\end{tabular}\\
\hline
$S_{(4, 841)}$
&
\begin{tabular}{cccc}
2 & 2 & 2 & 4 \\
2 & 2 & 2 & 4 \\
3 & 3 & 3 & 4 \\
4 & 4 & 4 & 4 \\
\end{tabular}
&
$S_{(4, 842)}$
&
\begin{tabular}{cccc}
2 & 2 & 3 & 3 \\
2 & 2 & 3 & 3 \\
3 & 3 & 3 & 3 \\
4 & 4 & 4 & 4 \\
\end{tabular}
&
$S_{(4, 843)}$
&
\begin{tabular}{cccc}
2 & 2 & 3 & 4 \\
2 & 2 & 3 & 4 \\
3 & 3 & 3 & 4 \\
4 & 4 & 4 & 4 \\
\end{tabular}\\
\hline
$S_{(4, 844)}$
&
\begin{tabular}{cccc}
2 & 2 & 3 & 4 \\
2 & 2 & 3 & 4 \\
3 & 3 & 4 & 4 \\
4 & 4 & 4 & 4 \\
\end{tabular}
&
$S_{(4, 845)}$
&
\begin{tabular}{cccc}
2 & 2 & 4 & 4 \\
2 & 2 & 4 & 4 \\
3 & 3 & 4 & 4 \\
4 & 4 & 4 & 4 \\
\end{tabular}
&
$S_{(4, 846)}$
&
\begin{tabular}{cccc}
2 & 2 & 2 & 2 \\
2 & 2 & 2 & 2 \\
4 & 4 & 4 & 4 \\
4 & 4 & 4 & 4 \\
\end{tabular}\\
\hline
$S_{(4, 847)}$
&
\begin{tabular}{cccc}
2 & 2 & 3 & 4 \\
2 & 2 & 3 & 4 \\
4 & 4 & 4 & 4 \\
4 & 4 & 4 & 4 \\
\end{tabular}
&
$S_{(4, 848)}$
&
\begin{tabular}{cccc}
2 & 2 & 4 & 4 \\
2 & 2 & 4 & 4 \\
4 & 4 & 4 & 4 \\
4 & 4 & 4 & 4 \\
\end{tabular}
&
$S_{(4, 849)}$
&
\begin{tabular}{cccc}
2 & 3 & 3 & 3 \\
3 & 3 & 3 & 3 \\
3 & 3 & 3 & 3 \\
3 & 3 & 3 & 3 \\
\end{tabular}\\
\hline
$S_{(4, 850)}$
&
\begin{tabular}{cccc}
2 & 3 & 3 & 3 \\
3 & 3 & 3 & 3 \\
3 & 3 & 3 & 3 \\
3 & 3 & 3 & 4 \\
\end{tabular}
&
$S_{(4, 851)}$
&
\begin{tabular}{cccc}
2 & 3 & 3 & 4 \\
3 & 3 & 3 & 4 \\
3 & 3 & 3 & 4 \\
3 & 3 & 3 & 4 \\
\end{tabular}
&
$S_{(4, 852)}$
&
\begin{tabular}{cccc}
2 & 3 & 3 & 3 \\
3 & 3 & 3 & 3 \\
3 & 3 & 3 & 3 \\
4 & 4 & 4 & 4 \\
\end{tabular}\\
\hline
$S_{(4, 853)}$
&
\begin{tabular}{cccc}
2 & 3 & 3 & 4 \\
3 & 3 & 3 & 4 \\
3 & 3 & 3 & 4 \\
4 & 4 & 4 & 4 \\
\end{tabular}
&
$S_{(4, 854)}$
&
\begin{tabular}{cccc}
2 & 3 & 4 & 4 \\
3 & 4 & 4 & 4 \\
4 & 4 & 4 & 4 \\
4 & 4 & 4 & 4 \\
\end{tabular}
&
$S_{(4, 855)}$
&
\begin{tabular}{cccc}
2 & 4 & 4 & 4 \\
4 & 4 & 4 & 4 \\
4 & 4 & 4 & 4 \\
4 & 4 & 4 & 4 \\
\end{tabular}\\
\hline
$S_{(4, 856)}$
&
\begin{tabular}{cccc}
3 & 3 & 3 & 3 \\
3 & 3 & 3 & 3 \\
3 & 3 & 3 & 3 \\
3 & 3 & 3 & 3 \\
\end{tabular}
&
$S_{(4, 857)}$
&
\begin{tabular}{cccc}
3 & 3 & 3 & 3 \\
3 & 3 & 3 & 3 \\
3 & 3 & 3 & 3 \\
3 & 3 & 3 & 4 \\
\end{tabular}
&
$S_{(4, 858)}$
&
\begin{tabular}{cccc}
3 & 3 & 3 & 4 \\
3 & 3 & 3 & 4 \\
3 & 3 & 3 & 4 \\
3 & 3 & 3 & 4 \\
\end{tabular}\\
\hline
$S_{(4, 859)}$
&
\begin{tabular}{cccc}
3 & 3 & 3 & 3 \\
3 & 3 & 3 & 3 \\
3 & 3 & 3 & 3 \\
4 & 4 & 4 & 4 \\
\end{tabular}
&
$S_{(4, 860)}$
&
\begin{tabular}{cccc}
3 & 3 & 3 & 4 \\
3 & 3 & 3 & 4 \\
3 & 3 & 3 & 4 \\
4 & 4 & 4 & 4 \\
\end{tabular}
&
$S_{(4, 861)}$
&
\begin{tabular}{cccc}
3 & 3 & 4 & 4 \\
3 & 3 & 4 & 4 \\
4 & 4 & 4 & 4 \\
4 & 4 & 4 & 4 \\
\end{tabular}\\
\hline
$S_{(4, 862)}$
&
\begin{tabular}{cccc}
3 & 3 & 4 & 4 \\
3 & 4 & 4 & 4 \\
4 & 4 & 4 & 4 \\
4 & 4 & 4 & 4 \\
\end{tabular}
&
$S_{(4, 863)}$
&
\begin{tabular}{cccc}
3 & 4 & 4 & 4 \\
3 & 4 & 4 & 4 \\
4 & 4 & 4 & 4 \\
4 & 4 & 4 & 4 \\
\end{tabular}
&
$S_{(4, 864)}$
&
\begin{tabular}{cccc}
3 & 3 & 4 & 4 \\
4 & 4 & 4 & 4 \\
4 & 4 & 4 & 4 \\
4 & 4 & 4 & 4 \\
\end{tabular}\\
\hline
$S_{(4, 865)}$
&
\begin{tabular}{cccc}
3 & 4 & 4 & 4 \\
4 & 4 & 4 & 4 \\
4 & 4 & 4 & 4 \\
4 & 4 & 4 & 4 \\
\end{tabular}
&
$S_{(4, 866)}$
&
\begin{tabular}{cccc}
4 & 4 & 4 & 4 \\
4 & 4 & 4 & 4 \\
4 & 4 & 4 & 4 \\
4 & 4 & 4 & 4 \\
\end{tabular}
&
\\
\hline
\end{tabular}
\end{table}

\end{document}